\documentclass[11pt,reqno]{amsart}
\usepackage{amsmath,amsbsy,amssymb,amscd,amsfonts,latexsym,amstext,delarray, amsmath,color,caption,comment}
\usepackage{tikz,lscape}
\usetikzlibrary{matrix,arrows}
\usepackage{graphicx}
\usepackage{multicol}
\usepackage{scrextend}
\usepackage{mathtools}
\usepackage{amssymb} 
\usepackage{calligra}
\usepackage{calrsfs}
\usepackage{eucal}
\usepackage{calrsfs}
\usepackage{epsfig}
\usepackage{xcolor,import}
\usepackage{transparent}
\usepackage{tikz-cd}
\usepackage{epstopdf}
\usepackage{subcaption}
\usepackage{enumitem}
\usepackage{float}
\numberwithin{equation}{section}
\numberwithin{figure}{section}
\usepackage{hyperref}
\usepackage{amsthm}
\usepackage{thmtools}
\usepackage{cleveref}
\usepackage{mathrsfs} 
\DeclareFontFamily{U}{mathb}{\hyphenchar\font45}
\DeclareFontShape{U}{mathb}{m}{n}{
      <5> <6> <7> <8> <9> <10> gen * mathb
      <10.95> mathb10 <12> <14.4> <17.28> <20.74> <24.88> mathb12
      }{}
\DeclareSymbolFont{mathb}{U}{mathb}{m}{n}

\DeclareMathSymbol{\precneq}{3}{mathb}{"AC}

\usepackage{mathtools}

\usepackage{graphicx}
\usepackage{tikz}
\tikzset{
  font={\fontsize{10pt}{12}\selectfont}}
  \usetikzlibrary{arrows.meta,arrows}
  \tikzset{>=latex}
\usepackage{caption, cleveref} 
\usepackage{xfrac}  
\usetikzlibrary{arrows, fit, backgrounds, patterns, positioning, calc, intersections, decorations.markings, decorations.pathmorphing, decorations.pathreplacing, shapes.misc}
\usetikzlibrary{shapes.multipart}
\usepackage[utf8]{inputenc}
\usepackage{multicol}
\usepackage{subcaption}
\usepackage{enumitem}

\usepackage[normalem]{ulem} 

\DeclareMathAlphabet{\pazocal}{OMS}{zplm}{m}{n}

\input xy

\xyoption{all}
    \advance \topmargin by -\headheight
    \advance \topmargin by -\headsep

    \evensidemargin \oddsidemargin
\makeatletter
\newcommand{\leqnomode}{\tagsleft@true\let\veqno\@@leqno}
\newcommand{\reqnomode}{\tagsleft@false\let\veqno\@@eqno}
\makeatother

\newtheorem {theorem}    {Theorem}[section]

\newtheorem {question}    {Question}
\theoremstyle{definition}
\newtheorem {lemma}      [theorem]    {Lemma}
\newtheorem {corollary}  [theorem]    {Corollary}

\theoremstyle{definition}
\newtheorem{definition}[theorem]{Definition}
\theoremstyle{definition}
\newtheorem{remark}[theorem]{Remark}

\newcommand{\defeq}{\vcentcolon=}
\newcommand{\gen}[1]{\langle #1 \rangle}

\def\TM{{\rm TM}}

\def\a{\alpha}                
\def\b{\beta}

\def\eps{\varepsilon}

\def\a{\alpha}
\def\b{\beta}

\def\N{\mathbb{N}}     
\def\lab{{\text{Lab}}}

\DeclareMathOperator{\CL}{CL}

\newcommand{\tm}{\mathrm{tm}}

\newcommand{\REACH}{\mathsf{REACH}}

\renewcommand{\int}{\mathrm{int}}

\def\b{\beta}

\def\vertexradius{.1}
\def\vertex(#1){\fill (#1) circle (\vertexradius)}

\begin{document}

\title{\bf Conjugator Lengths and Isoperimetric functions}
\maketitle
\begin{center}

Conan Gillis, Francis Wagner

\end{center}

\bigskip

\begin{center}

\textbf{Abstract}

\end{center}

We show that for any recognizing $S$-machine $\textbf{S}$ with superadditive time function $f$, there exists a finitely presented group whose conjugator length function is quadratic and whose Dehn function grows like the square of $f$.  
This result is dual to that of a previous paper of the authors, which constructed a family of groups with cubic Dehn function that have various conjugator length functions.
We thereby give the first known example of a finitely presented group whose conjugator length function is recursive, but whose Word and Conjugacy Problems are both undecidable.  This answers an analogue of a question of Rips. 
Moreover, given a finitely presented group $G$ with decidable Word Problem, we obtain a finitely presented group with decidable Conjugacy Problem whose Dehn function grows faster than that of $G$.  
Finally, combining this result with its dual, for a wide array of pairs of functions $(f,g)$ we furnish an example of a finitely presented group with Dehn function equivalent to $f$ and conjugator length function equivalent to $g$.  This shows that the two invariants are very strongly independent and making significant progress on a question of Bridson, Riley, and Sale.

\bigskip

\bigskip


\section{Introduction}

This article studies two invariants of finitely presented groups: the Dehn function and the conjugator length function.  When combined with the present authors' previous paper \cite{GW}, the results developed herein may be summarized as exhibiting that the two invariants are independent in a very strong sense.


Let $G=\langle X\mid R\rangle$ be a finitely presented group. The Dehn and conjugator length functions, $\delta_G$ and $\CL_G$, are invariants of $G$ which respectively quantify the algorithmic notions of the Word and Conjugacy problems in terms of the intrinisic geometry of $G$. The Dehn function in particular has been studied for its relationship to the computational complexity of the Word Problem: $\delta_G$ is a recursive function if and only if the Word Problem for $G$ is decidable \cite{Gersten} (see \cite{Sapir06} for a survey on additional connections). Assuming the Word Problem for $G$ is decidable, a similar Gersten-style argument gives that $\CL_G$ is recursive if and only if the Conjugacy Problem for $G$ is decidable \cite{BRS}. Indeed, just as the complexity of the Conjugacy Problem for $G$ gives an upper bound for the complexity of its Word Problem, $\CL_G$ and the complexity of the Word Problem for $G$ can be used together to bound the complexity of the Conjugacy Problem for $G$. 

This paper's main motivation is to exhibit the necessity of the assumption that the Word Problem for $G$ is decidable, as the above complexity bounds are destroyed when considering only the functions $\delta_G$ and $\CL_G$. In particular, we show there exist finitely presented groups with recursive (indeed quadratic) conjugator length function but non-recursive Dehn function and, therefore, undecidable Word and Conjugacy Problem. This result is, in a sense, analogous to the fact that a group with quadratic Dehn function need not have solvable conjugacy problem, as shown by Ol'shanskii and Sapir in answering a question of Rips \cite{OS20} (see \cite{OS06} as well). 
These same methods, however, can also be used to show that for any recursive function $f$ which may be realized as the Dehn function of a finitely presented group, there exists a finitely presented group whose Dehn function grows at least as fast as $f$ and whose Conjugacy Problem is decidable.

Beyond decidability questions, our second motivation is to answer a question of Bridson, Riley, and Sale relating to the image of the map $\mu: G\mapsto (\delta_G, \CL_G)$ defined for finitely presented groups, where both functions are considered up to a suitable equivalence $\sim$ defined below \cite{BRS}. It is well-known that $\delta_G\sim n$  if and only if $G$ is Gromov-hyperbolic, and this in turn implies $\CL_G(n)\sim n$ \cite{BridsonHaefliger1999}. On the other hand, the authors showed in \cite{GW} that, for any $f$ which is the Dehn function of \textit{some} finitely presented group and grows at least as fast as $n^2$, there is a group $G_f$ with $\CL_{G_f}(n)\sim f$ and $\delta_{G_f}(n)\sim n^3$. We elucidate the image of $\mu$ further, adding all pairs where the coordinates are, at least, polynomials of not-too-small degree.

\subsection{Formulation of results} \

In order to state the main results effectively, we require some definitions. 

\begin{definition}

Fix a presentation $\pazocal{P}=\gen{X\mid\pazocal{R}}$ for a group $G$ with $|X|<\infty$.  Given a string $w=x_1x_2\cdots x_n$ with $x_i\in X\cup X^{-1}$, henceforth referred to as a \textit{word over $X$}, the \textit{area} of $w$ is the minimal number of relations in $\pazocal{R}\cup \pazocal{R}^{-1}$ that must be inserted in or deleted from $w$ to take it to the empty word, perhaps with intermediate free cancellations or insertions; if no such sequence exists, then $w$ does not represent the identity in $G$ and is assigned area 0. The Dehn function $\delta_\pazocal{P}$ is then the function whose output $\delta_\pazocal{P}(n)$ is the maximal area of all words of length at most $n$. 

\end{definition}

\begin{definition}

Fix a generating set $X$ for a group $G$ with $|X|<\infty$.  For two words $u,v$ over $X$, the \textit{conjugator distance} is the length of the shortest word $\gamma$ over $X$ such that $\gamma u \gamma^{-1}=v$ in $G$; if no such $\gamma$ exists, then analogous to the previous definition the conjugator distance is assigned to be $0$.  The \textit{conjugator length function} $\CL_{G,X}(n)$ is then the maximal conjugator distance over all pairs of words of total length at most $n$. 

\end{definition}

For expositions of both functions defined above, and additional perspectives, we refer the reader to \cite{Riley2017} and \cite{BRS} respectively.

To make these notions into invariants of the group itself and not dependent on either the particular generating set or presentation, we consider these functions up to a particular equivalence relation.

\begin{definition} For non-decreasing functions $f,g:\N\to[0,\infty)$, we say $f\preceq g$ if there exists a constant $C\in\N$ such that $f(n)\leq Cg(Cn+C)+Cn+C$.  As this defines a preorder on the family of non-decreasing functions $\N\to[0,\infty)$, it induces an equivalence relation $\sim$ on this family, {\frenchspacing i.e. $f\sim g$ if and only if $f\preceq g$ and $g\preceq f$.}\end{definition}

The notions of $\sim$ and $\preceq$ make precise the notions of a function ``growing as" or ``growing faster than" another, phrasing used in the introductory discussion of our results above.  It is crucial to observe the following facts regarding the functions of interest:

\begin{itemize}

\item If $X$ and $Y$ are two finite generating sets of the same group, then $\CL_{G,X}\sim\CL_{G,Y}$.

\item If $\pazocal{P}$ and $\pazocal{S}$ are two finite presentations of the same group (or even quasi-isometric groups), then $\delta_\pazocal{P}\sim\delta_\pazocal{S}$.

\end{itemize}

As such, given a finitely presented group $G$, we may define the functions $\delta_G$ and $\CL_G$ to be the equivalence class of the corresponding function taken with respect to any finite presentation, yielding $\sim$-invariants of the group alone.

With these preliminaries defined, the main result of this article may be stated.

\begin{theorem} \label{main-theorem}

Let $\textbf{S}$ be a recognizing $S$-machine with time function $\TM_\textbf{S}$.  Suppose $\TM_\textbf{S}(n)\geq n$ for all $n$ and $\TM_\textbf{S}\preceq f$ for some positive superadditive function $f$, {\frenchspacing i.e. a function $f:\N\to[0,\infty)$ such that $f(n+m)\geq f(n)+f(m)$ for all $n,m$ and $f(n)\geq1$ for all $n\neq0$}.  Then there exists a finitely presented group $G_\textbf{S}$ such that $\CL_{G_\textbf{S}}(n)\sim n^2$ and $\TM_\textbf{S}(n)^2\preceq\delta_{G_\textbf{S}}(n)\preceq f(n)^2$.

\end{theorem}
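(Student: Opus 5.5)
We will build $G_\textbf{S}$ in the Sapir--Birget--Rips / Ol'shanskii style, running it "dually" to the construction of \cite{GW}. Step one is to modify $\textbf{S}$ to a machine $\textbf{S}'$ with the following properties. It recognizes the same language. Its time function is still $\sim\TM_\textbf{S}$, though the upper bounds we keep track of are in terms of $f$. Its computations are "reduced" in the usual sense: the history words and the configurations obey linear-type bounds, and every non-accepted configuration has a controlled (bounded-length) computation. We then form the associated group $M$. Its generators are the state letters, tape letters and rule letters $\theta$. Its relations are the $\theta$-relations encoding the rules together with commutation of rule letters with tape letters. Finally we set $G_\textbf{S}$ to be $M$ modulo the hub relation (the accept configuration, possibly taken in $N$ disjoint copies arranged cyclically), perhaps amalgamated or HNN-extended so that it lies in a standard diagram setting.

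The Dehn function estimates follow the standard pattern. For the lower bound $\TM_\textbf{S}(n)^2\preceq\delta$, we use the "disk" word: it is a $\theta$-annulus boundary built from an accepted input of length $n$ whose accepting computation has length about $\TM_\textbf{S}(n)$. Every diagram for this word has at least $\TM^2$ cells, which we prove by a $\theta$-band/$q$-band counting argument: there are $\TM$ many $\theta$-bands crossing $\sim\TM$ many $a$-cells each, or one uses the standard lemma bounding the area of a trapezium from below. For the upper bound $\preceq f^2$, we take a minimal diagram and remove hubs. In the diagram $\Delta$, the graph of hubs with $q$-band connections is a forest, so some hub has almost all of its $q$-bands ending on the boundary (Ol'shanskii's lemma on hub graphs). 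We cut off the resulting subdiagram and estimate it by a disk whose perimeter is bounded linearly, so its area is bounded by $f(\text{perimeter})^2$. Superadditivity of $f$, which makes $f^2$ superadditive as well, lets us sum over the pieces inductively, using a weighted-length argument with the design's $\theta$-band lengths. Hub-free diagrams are handled by the quadratic estimate for $M$ and its trapezia.

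For $\CL\sim n^2$, the lower bound comes from a subgroup distorted quadratically. An example is conjugating a tape word by a rule letter power inside a "corridor" of $M$, where the shortest conjugator between words of length $n$ requires length $\sim n^2$; equivalently, the group contains a Baumslag--Solitar-free but quadratically distorted configuration. For the upper bound, take $u,v$ with $\gamma u\gamma^{-1}=v$ and a minimal annular diagram $\Delta$. The key point, and the main obstacle, is that the conjugator must be bounded \emph{independently of $\TM_\textbf{S}$}. The approach is to choose $\Delta$ minimal with respect to the number of hubs and then show that hubs can be excised or avoided in the annular setting. We show that if a hub appears, a $q$-band path connects it to both boundaries, or the hub can be pulled off across a disk. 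In either case we can reroute the shortest path between the boundaries to pass around the hubs, along the sides of $\theta$-bands, with length bounded by the number of $\theta$- and $q$-bands, which is $O(n)$, times the band lengths, which are $O(n)$. This gives $O(n^2)$. The delicate case is when many $\theta$-bands run around the annulus ($\theta$-annuli). Here we argue that the ring structure of $\textbf{S}'$ forces such annuli to be trapezia whose side-length is controlled by the boundary length. This holds because the machine is "quadratic" on reduced computations with fixed history, not on accepted inputs. Establishing this annular lemma, namely that no hub-generated long conjugator exists, is the step I expect to require the most work, and I would first try to prove it by modifying the $N$-hub trick so that every hub in a minimal annular diagram yields a contradiction with minimality via a transposition argument.
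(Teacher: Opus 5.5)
Your outline of the Dehn-function side follows the paper's spirit (Sapir--Birget--Rips and Ol'shanskii style hubs, a hub with almost all spokes on the boundary, then induction). The conjugator-length upper bound, however, has a genuine gap exactly where you locate the difficulty.

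\textbf{The conjugator-length upper bound.} Your claim that paths can be rerouted around hubs along bands of length $O(n)$ is the entire content of what must be proved, and it fails for an arbitrary minimal annular diagram. Disk relators are arbitrary accepted configurations, so a disk's perimeter and its $t$-spokes are not a priori controlled by $|\partial\Delta|$. Moreover, $\theta$-bands may spiral and cross one spoke many times. Your fallback, that every hub contradicts minimality via transposition, cannot work either: if $u,v$ are conjugate in $G(\textbf{M}_\textbf{S})$ but not in $M(\textbf{M}_\textbf{S})$, every annular diagram for them contains a disk. The machine property that actually matters is not the one you posit. It is that any computation long relative to its endpoints is mostly spent in the $\textbf{LR}_k$ step, so its history contains controlled subwords (\Cref{long history controlled}); this is why $\textbf{LR}$ is replaced by $\textbf{LR}_k$.

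The paper shows that when a disk is present there is a path between the boundary components of length \emph{linear} in $|\partial\Delta|$ (\Cref{annular disks}). The quadratic term comes only from diskless diagrams (\Cref{annular diskless}, via the defective-base bound of \Cref{universal complexity}). To get the linear bound, the paper works with $n$-minimal diagrams, which minimize $|\partial\Delta|+\rho_\lambda(\Delta)$ over all minimal annular diagrams realizing the conjugacy. It uses spears and directed designs (\Cref{G directed design}) to run the scope analysis of \cite{OS20} in the annulus and rule out scopes of width $\geq L-k$. Consequently the disk from \Cref{annular-graph} has $t$-spokes reaching both boundary components. By \Cref{two theta-bands about disk}, the histories along spokes on the two sides concatenate into a reduced computation. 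If a spoke is long relative to $|\partial\Delta|$, this computation contains a controlled subword. That subword yields either a non-contractible loop trivial in $G$, contradicting \Cref{n-minimal trivial path}, or, for spirals, two identical parallel loops that can be excised (\Cref{exciseTwoPaths}).

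\textbf{The lower bound on $\CL$.} The bound $n^2\preceq\CL$ is not argued: you name no specific pair $(u,v)$, and showing that \emph{every} conjugator is long would need its own annular-diagram analysis. The paper sidesteps this by setting $G_\textbf{S}=G(\textbf{M}_\textbf{S})\times H$, with $H$ Stallings' group or Thompson's group $F$, for which $\delta_H\sim\CL_H\sim n^2$. Both functions of a direct product are maxima over the factors, and $n^2\preceq f(n)^2$ since $f$ is superadditive with $f(1)\geq 1$.

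\textbf{The Dehn side.} Several points need correcting:
\begin{itemize}
\item The hub graph is not a forest. It is a planar graph without $1$- and $2$-gons whose interior vertices have degree $L$ (\Cref{circular-graph}).
\item Bounding the removed disk's perimeter by the boundary it sits on is the crux of the upper bound. It requires disk weights, $G$-area for big trapezia, mixtures and shafts.
\item Superadditivity of $f^2$ is too weak for the induction, which needs $\phi(x)-\phi(x-y)\geq xyg(x)$ (\Cref{phi properties}). This is why the paper passes to $\phi(x)=x^2g(x)$.
\item The lower bound needs the historical sectors, which carry a word of length $\geq\tm_\textbf{S}(V)$ for $\geq\tm_\textbf{S}(V)$ steps. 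It also needs the covering and transposition argument of \Cref{Dehn lower bound}, which reduces any diagram for $I(V)$ to one hub surrounded by $\theta$-annuli.
\end{itemize}
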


Note that, since time functions are considered up to $\sim$-equivalence, the assumption $\TM_\textbf{S}(n)\geq n$ is not at all restrictive.  Moreover, the existence of $f$ is also not restrictive, as one may take the `superadditive closure' defined to be $f(n)=\max(\TM_\textbf{S}(n_1)+\dots+\TM_\textbf{S}(n_k))$ for all non-negative partitions $n=n_1+\dots+n_k$ (see Remark 1.5 of \cite{OS12}).

An immediate consequence of \Cref{main-theorem} is the first known example of a finitely presented group with recursive conjugator length function and undecidable Word and Conjugacy Problems.


\begin{corollary} There exists a finitely presented group with quadratic conjugator length function and undecidable Word Problem.
\end{corollary}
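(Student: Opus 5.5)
The corollary follows from \Cref{main-theorem} once we choose $\textbf{S}$ so that the Dehn function of $G_\textbf{S}$ is non-recursive. Start from a Turing machine recognizing a recursively enumerable but non-recursive set $L\subseteq\N$, for instance the halting set. By the standard simulation of Turing machines by $S$-machines (Sapir--Birget--Rips, as used in \cite{OS12}), there is a recognizing $S$-machine $\textbf{S}$ whose language of accepted inputs is, up to a computable encoding, exactly $L$. Its time function $\TM_\textbf{S}$ is then non-recursive. Indeed, if $\TM_\textbf{S}$ were bounded above by a recursive function $g$, then membership of an input of length $n$ in $L$ would be decided by running $\textbf{S}$ for $g(n)$ steps, since $\textbf{S}$ has finitely many commands and so computations of bounded length can be enumerated. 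After replacing $\TM_\textbf{S}$ by $\max(\TM_\textbf{S}(n),n)$, which does not change its $\sim$-class and keeps it non-recursive, the hypothesis $\TM_\textbf{S}(n)\geq n$ holds.

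Next I would take $f$ to be the superadditive closure $f(n)=\max(\TM_\textbf{S}(n_1)+\dots+\TM_\textbf{S}(n_k))$ over partitions of $n$, as in the remark after \Cref{main-theorem}. It is superadditive, satisfies $f\geq\TM_\textbf{S}\geq n\geq1$ for $n\neq0$, and so $\TM_\textbf{S}\preceq f$. \Cref{main-theorem} then produces a finitely presented group $G_\textbf{S}$ with $\CL_{G_\textbf{S}}(n)\sim n^2$ and $\TM_\textbf{S}(n)^2\preceq\delta_{G_\textbf{S}}(n)$.

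Suppose the Word Problem of $G_\textbf{S}$ were decidable. By \cite{Gersten}, $\delta_{G_\textbf{S}}$ would then be bounded by a recursive function $h$. The relation $\TM_\textbf{S}^2\preceq\delta_{G_\textbf{S}}$ gives $\TM_\textbf{S}(n)\leq \TM_\textbf{S}(n)^2\leq Ch(Cn+C)+Cn+C$, which is a recursive bound on $\TM_\textbf{S}$ and contradicts the previous paragraph. Hence the Word Problem of $G_\textbf{S}$ is undecidable, and so is its Conjugacy Problem, since the Word Problem reduces to conjugacy with the trivial element.

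The main point needing care is the first step. We must have an honest recognizing $S$-machine with a non-recursive time function, and we must check that the time function in the paper's sense, as a maximum over accepted inputs of size at most $n$ of the minimal accepting time, is non-recursive rather than merely "large". The argument above handles this: a recursive upper bound would bound the search for accepting computations and so decide $L$.
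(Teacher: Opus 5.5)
Your argument is correct and follows the paper's route. You obtain via \cite{SBR} an $S$-machine recognizing a recursively enumerable non-recursive language, apply Theorem~\ref{main-theorem}, and conclude with \cite{Gersten}. The one difference is that the paper deduces non-recursiveness of $\TM_\textbf{S}$ from that of $\TM_\pazocal{T}$ through $\TM_\textbf{S}\sim\TM_\pazocal{T}^3$, whereas you show directly by bounded search that $\TM_\textbf{S}$ admits no recursive upper bound; that stronger property is exactly what survives passage through $\preceq$, and the paper's word ``non-recursive'' only tacitly means it.
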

\begin{proof}

Take $\pazocal{T}$ to be any non-deterministic multi-tape Turing machine which recognizes a language which is recursively enumerable but not recursive.  The time function $\TM_\pazocal{T}$ of $\pazocal{T}$ must then be non-recursive, while the main result of \cite{SBR} yields an $S$-machine $\textbf{S}$ with $\TM_\textbf{S}\sim\TM_\pazocal{T}^3$.  But then $\TM_\textbf{S}^2$ is also non-recursive, so that the group $G_\textbf{S}$ given by \Cref{main-theorem} satisfies the statement.


\end{proof}

Another consequence of our main result is the realization of a finitely presented group with decidable Conjugacy Problem whose Dehn function grows at least as fast as a prescribed recursive Dehn function.  

%
%


\begin{corollary} \label{cor Dehn+CP}

For any finitely presented group $G$ with decidable Word Problem, there exists a finitely presented group $H$ with $\delta_H\succeq\delta_G^2$ such that the Conjugacy Problem for $H$ is decidable.

\end{corollary}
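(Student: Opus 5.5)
We obtain $H$ as $G_\textbf{S}$ from \Cref{main-theorem}, for an $S$-machine $\textbf{S}$ built from $G$ whose time function dominates $\delta_G$. Since $\CL_{G_\textbf{S}}(n)\sim n^2$ is recursive, the criterion of \cite{BRS} recalled in the introduction shows that the Conjugacy Problem for $H$ is decidable provided its Word Problem is. The argument therefore has three parts:
\begin{enumerate}
\item choose $\textbf{S}$ so that $\TM_\textbf{S}\succeq\delta_G$, which gives $\delta_H\succeq\TM_\textbf{S}^2\succeq\delta_G^2$;
\item make $\TM_\textbf{S}$ recursive;
\item show that $\delta_H$ is then recursive, so that $H$ has decidable Word Problem.
\end{enumerate}

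For the first two parts, note that since $G$ has decidable Word Problem, $\delta_G$ is recursive by \cite{Gersten}. Moreover, $\delta_G(n)$ is computable by an explicit procedure. First list all words of length at most $n$, and decide which of them are trivial. For each trivial word, search through van Kampen diagrams of increasing area until one with that boundary is found; this search terminates and certifies an upper bound. The difficulty is that certifying the \emph{minimal} area requires exhausting all diagrams of smaller area. That is also a finite search, so $\delta_G$ is computable, say in time $T(n)$.

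Now take a deterministic Turing machine $\pazocal{T}$ which, on input $a^n$, computes $\delta_G(n)$ and then counts down $\delta_G(n)$ steps before accepting; let it accept exactly the words $a^n$. Its time function satisfies $\delta_G(n)\le\TM_\pazocal{T}(n)$, and it is recursive because the computation halts on every input $a^n$. Other inputs are rejected, or the machine is padded so that they never appear as accepted words, which contributes nothing to the time function. By \cite{SBR} there is an $S$-machine $\textbf{S}$ with $\TM_\textbf{S}\sim\TM_\pazocal{T}^3$. This gives $\TM_\textbf{S}\succeq\delta_G$, and $\TM_\textbf{S}$ is recursive. We may assume $\TM_\textbf{S}(n)\ge n$. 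We let $f$ be its superadditive closure, as in the remark after \Cref{main-theorem}; $f$ is recursive because it is a finite maximum over partitions. Then \Cref{main-theorem} gives $H=G_\textbf{S}$ with $\TM_\textbf{S}^2\preceq\delta_H\preceq f^2$.

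For the third part, $\delta_H\preceq f^2$ means $\delta_H(n)\le Cf(Cn+C)^2+Cn+C$ for some constant $C$, so $\delta_H$ is bounded above by a recursive function. A word $w$ of length $n$ over the generators of $H$ is trivial if and only if it is a product of at most that bound many conjugates of relators, and these conjugates may be taken with conjugators of bounded length, since a van Kampen diagram of bounded area has bounded diameter. This is a finite check, so the Word Problem for $H$ is decidable. Together with $\CL_H\sim n^2$ and the result of \cite{BRS}, it follows that the Conjugacy Problem for $H$ is decidable.

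The main obstacle is the middle step. We must ensure that the language-recognition setup demanded by \cite{SBR} is compatible with encoding the count of $\delta_G(n)$ steps, and that the resulting time function is genuinely $\succeq\delta_G$ rather than collapsing on inputs where a shorter accepting computation exists. Using a deterministic $\pazocal{T}$ whose only accepting computations are the full ones addresses the second concern.
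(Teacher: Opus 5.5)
Your argument is correct, but it obtains the $S$-machine by a different route from the paper. The paper takes $\textbf{S}$ directly from the proof of Theorem 1.4 of \cite{GW}, which gives a recognizing $S$-machine with $\TM_\textbf{S}\sim\delta_G$. It uses decidability of the Word Problem of $G$ only to make this time function recursive. It leaves implicit the passage from a recursive bound $\delta_H\preceq f^2$ and $\CL_H\sim n^2$ to decidability of the Conjugacy Problem via \cite{BRS}.

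You instead build a deterministic Turing machine that computes $\delta_G(n)$ by Gersten's exhaustive search and then idles for $\delta_G(n)$ steps, and you convert it with \cite{SBR}. This needs nothing beyond \cite{Gersten} and \cite{SBR}, and you make the decidability chain explicit. The cost is much coarser control. $\TM_\pazocal{T}(n)$ includes the time needed to compute $\delta_G(n)$, which may vastly exceed $\delta_G(n)$, and is then cubed. So you only get $\delta_H\succeq\TM_\pazocal{T}^6$, with an upper bound governed by $\TM_\pazocal{T}$ rather than by $\delta_G$. The paper's route squeezes $\delta_H$ between $\delta_G^2$ and the square of the superadditive closure of $\TM_\textbf{S}\sim\delta_G$. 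That is what underlies its remark that, under the Guba--Sapir conjecture, one gets $\delta_H\sim\delta_G^2$. For the statement as written, your weaker lower bound suffices.

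One step should be repaired: ``$\TM_\textbf{S}$ is recursive'' does not follow from $\TM_\textbf{S}\sim\TM_\pazocal{T}^3$, since a function $\sim$-equivalent to a recursive function need not be recursive. There are two easy fixes:
\begin{itemize}
\item Let $f$ be the superadditive closure of the recursive function $n\mapsto C\TM_\pazocal{T}(Cn+C)^3+Cn+C$, which dominates $\TM_\textbf{S}$ pointwise for a suitable constant $C$.
\item Observe directly that $\TM_\textbf{S}$ is computable: the accepted language is decidable, and a shortest accepting computation from an accepted configuration can be found by breadth-first search over histories.
\end{itemize}
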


\begin{proof}

The proof of Theorem 1.4 of \cite{GW} exhibits a recognizing $S$-machine $\textbf{S}$ such that $\TM_\textbf{S}\sim \delta_G$.  Hence, taking $H$ to be the group $G_\textbf{S}$ given by \Cref{main-theorem} satisfies the statement.

\end{proof}

A conjecture of Guba and Sapir (Conjecture 1 of \cite{GubaSapir99}) posits that every Dehn function is equivalent to a superadditive function.  Assuming the veracity of this conjecture, the statement of \Cref{cor Dehn+CP} may be sharpened, replacing the asymptotic bound with equivalence.  Indeed, given another analogous conjecture of \cite{SBR} regarding what functions may be realized as time functions, one may obtain a finitely presented group with any prescribed recursive Dehn function and decidable Conjugacy Problem.  

As a concrete example of this, for any $\a\geq2$ computable in double-exponential time, combining \Cref{cor Dehn+CP} with the constructions of \cite{O18} yields a finitely presented group with decidable Conjugacy Problem and Dehn function equivalent to $n^\a$.

Of course, \Cref{cor Dehn+CP} is of particular interest for its application to the opposite end of the recursive spectrum, showing that finitely presented groups with decidable Conjugacy Problem can still have exceedingly large Dehn function, {\frenchspacing e.g. growing as fast as the Ackermann function \cite{Dison-Einstein-Riley}}.

Finally, we obtain a consequence relating the possible pairs of functions which may be realized as the Dehn and conjugator length functions of a finitely presented group.

\begin{corollary}\label{DehnCLIndep} Fix a pair of non-decreasing functions $f\succeq n^3$ and $g\succeq n^2$ such that:

\begin{itemize}

\item $f$ is a positive superadditive function which is the time function of some $S$-machine $\textbf{S}$

\item $g\sim \delta_H$ for some finitely presented $H$.

\end{itemize}

Then there exists a group $G_{f,g}$ such that $\delta_{G_{f,g}}(n)\sim f$ and $\CL_{G_{f,g}}(n)\sim g$.
\end{corollary}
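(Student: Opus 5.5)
Take $G_{f,g}=G_1\times G_2$, where $G_1$ is the group of \Cref{main-theorem} with $\CL_{G_1}\sim n^2$ and $\delta_{G_1}\sim f$, and $G_2$ is the group $G_g$ of \cite{GW} with $\CL_{G_2}\sim g$ and $\delta_{G_2}\sim n^3$. The product then carries the maximum of the two functions in each coordinate.

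\emph{Building $G_1$.} Applying \Cref{main-theorem} to $\textbf{S}$ directly would give a Dehn function between $\TM_\textbf{S}^2=f^2$ and $f^2$, which is too large. The first move is therefore to find an $S$-machine $\textbf{S}'$ whose time function is about $\sqrt f$. I would try to do this with the constructions of \cite{SBR} and \cite{GW} (Theorem 1.4 there builds an $S$-machine with prescribed time function), or by reparametrizing the input length of $\textbf{S}$. We also need a superadditive $f'\sim\TM_{\textbf{S}'}$ with $f'^2\sim f$; one would take $f'$ to be the superadditive closure of $\TM_{\textbf{S}'}$ and check that its square is still equivalent to $f$. \Cref{main-theorem} then gives $G_1$ with $\CL_{G_1}\sim n^2$ and $\delta_{G_1}\sim f$. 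This step is the main obstacle. Getting a square root of a time function is not automatic, and I suspect the paper either strengthens the hypothesis implicitly or uses an internal version of the construction in which $\delta\sim\TM$ directly, with the squaring coming from a separate amplification that can be omitted. I would try that internal variant first.

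\emph{Building $G_2$.} Since $g\succeq n^2$ is equivalent to the Dehn function of a finitely presented group, the result of \cite{GW} quoted in the introduction gives $G_2$ with $\CL_{G_2}\sim g$ and $\delta_{G_2}\sim n^3$.

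\emph{Combining.} Let $G=G_1\times G_2$. For the Dehn function, the standard product formula gives $\delta_{G_1\times G_2}\sim\max(\delta_{G_1},\delta_{G_2},n^2)$. Here this is $\max(f,n^3,n^2)\sim f$, because $f\succeq n^3$.

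For the conjugator length function, write elements as pairs $(u_1,u_2)$ and conjugators as pairs $(\gamma_1,\gamma_2)$. Conjugacy in the product holds coordinatewise. Each projection has length at most a constant times the length of the original word, so $\CL_G\preceq\CL_{G_1}+\CL_{G_2}$. Conversely, taking pairs that are trivial in one factor gives $\CL_G\succeq\max(\CL_{G_1},\CL_{G_2})$. One subtlety is that a word in the product generators representing an element of $G_1$ need not be a word in $G_1$'s generators alone; this is handled by the retraction onto each factor. Hence $\CL_G\sim\max(n^2,g)\sim g$, since $g\succeq n^2$.
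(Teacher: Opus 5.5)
Your combining step is exactly the paper's proof. The paper takes $G_{f,g}=G_\textbf{S}\times G_H$, with $G_\textbf{S}$ obtained from \Cref{main-theorem} applied to $\textbf{S}$ itself and $G_H$ from \cite[Corollary 1.4]{GW}, and uses that the Dehn and conjugator length functions of a direct product are the maxima over the factors. Your verification of those product formulas (coordinatewise conjugators, retractions onto the factors, $\delta_{G_1\times G_2}\sim\max(\delta_{G_1},\delta_{G_2},n^2)$) is fine.

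The gap is in your construction of $G_1$. You need an $S$-machine $\textbf{S}'$ with $\TM_{\textbf{S}'}\sim\sqrt f$, plus a superadditive majorant whose square is still $\sim f$. Nothing in the hypotheses or the cited results supplies one:
\begin{itemize}
\item \cite{SBR} goes from Turing-machine time $T$ to $S$-machine time $\sim T^3$, which is the wrong direction.
\item The machine in the proof of Theorem 1.4 of \cite{GW} has time function equivalent to the Dehn function of a given group. But $\sqrt f$ need not be a Dehn function at all: for $f\sim n^3$ it is $n^{3/2}$, which lies in the isoperimetric gap.
\item Reparametrizing the input length gives a square root only for special $f$, such as polynomials, not in general.
\end{itemize}
Your other fallback, an internal variant of the construction with $\delta\sim\TM_\textbf{S}$, does not exist either. \Cref{Dehn lower bound} shows that any diagram for $I(V)$ has area at least $\frac12\tm_\textbf{S}(V)^2$, because a historical sector already carries a trapezium of area $\|H\|^2$. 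So the squaring is intrinsic to the construction, not a removable amplification.

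The discrepancy you noticed is nonetheless real. Of your two guesses, the first (an implicitly different hypothesis) is the right one, and the fix is to read the hypothesis correctly rather than to extract a square root. Taken literally ($f=\TM_\textbf{S}$), the paper's one-line proof gives $\delta_{G_{f,g}}\sim\max(f^2,n^3)\sim f^2$, which differs from $f$ already for $f=n^3$. The statement is consistent with its proof only when $f$ plays the role of the Dehn function of $G_\textbf{S}$, i.e.\ $f\sim T^2$ for a positive superadditive $T$ that is the time function of some $S$-machine. Two features of the paper support this reading: the threshold $f\succeq n^3$ is exactly what is needed to absorb the cubic Dehn function of the \cite{GW} factor, and the example that follows has Dehn function $n^\alpha$ with $\alpha\ge 3$. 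Under that reading, apply \Cref{main-theorem} to the machine with time function $T$, taking $T$ itself as the superadditive majorant; note $T(n)\geq n$ by positivity and superadditivity. This gives $\delta_{G_\textbf{S}}\sim T^2\sim f$ and $\CL_{G_\textbf{S}}\sim n^2$, and your product argument then finishes the proof with no square-root step.
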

\begin{proof}
Take the direct product of $G_\textbf{S}$, as given in Theorem \ref{main-theorem}, with $G_H$, as constructed in \cite[Corollary 1.4]{GW}. The Dehn and conjugator length functions of the product are then the maxima of the respective functions of the factors. 

\end{proof}

For example, as above for $\a\geq3$ and $\b\geq2$ computable in double-exponential time, there exists a finitely presented group with Dehn function equivalent to $n^\a$ and conjugator length function equivalent to $n^\b$.  Indeed, assuming the conjectures referenced above, there exists a finitely presented group whose Dehn and conjugator length functions are equivalent to just about any pair of viable functions.

\medskip

\subsection{Paramaterized Complexity} \

An illustrative interpretation of our results comes from the field of parameterized complexity. Fix a finitely presented group $G=\gen{X\mid\pazocal{R}}$. Define the \textit{Parameterized Conjugacy Problem} for $G$ to be the decision problem which asks, for a pair of input words $(u,v)$ over $X$ and a parameter $k$, does there exist a word $\gamma$ over $X$ of length at most $k$ such that $\gamma u\gamma^{-1}=v$ in $G$?  

Note that the Word Problem for $G$ can be reduced to the Parameterized Conjugacy Problem by fixing one of the inputs as the empty word and taking $k=1$.  On the other hand, given an algorithm $\mathsf{W}$ solving the Word Problem for $G$, one has an algorithm $\overline{\mathsf{W}}$ which simply checks the above equation for each possible $\gamma$. 

If $\delta_G$ is a computable function, there exists an algorithm $\mathsf{Ger}$ due to Gersten
\cite{Gerstenl1} which solves the Word Problem for $G$ by computing $\delta_G(n)$, where $n$ is the length of the input word, and (essentially) reading the boundary label of all van Kampen diagrams of smaller or equal area.

Our results, particularly Corollary \ref{DehnCLIndep}, show that there are finitely presented groups where the algorithm $\overline{\mathsf{Ger}}$ has arbitrarily bad time complexity, and indeed groups where the algorithm can fail altogether, even when the conjugator length is given as a parameter of the problem. Moreover, $\overline{\mathsf{Ger}}$ fails precisely when the Word Problem for $G$ is undecidable. Thus, the failure of $\overline{\mathsf{Ger}}$ implies that the Parameterized Conjugacy Problem is, itself, undecidable. 

With these interpretations, one can see that a `direct attack' approach to the Conjugacy Problem for the constructed groups is something of a `needle in a haystack': Given two words $u$ and $v$ over the generating set which represent conjugate elements of the group, it is difficult to find a conjugator even though there is one which is not too long compared to the lengths of the two input words.

\medskip

\subsection{Open Problems} \

There are just a few gaps remaining to fully understanding the image of $\mu$.  

First, a question of \cite{BRS} asks whether there exists a finitely presented group whose conjugator length is sub-quadratic but not linear (see \cite[Problem 1']{GW}). Even if such a group exists, though, it is unclear what the growth of the Dehn function might be, or if there is a similar independence as displayed by \Cref{DehnCLIndep}.

\begin{question}
If a finitely presented group has sub-quadratic conjugator length function, what (if any) constraints does this impose on the Dehn function?
\end{question}

Such constraints are likely to be quite loose, given that BS$(1,2)$ has linear conjugator length \cite[Theorem 2]{sale2016conjugacy} and exponential Dehn Function. The most promising approach to finding conjugator length functions between linear and quadratic seems to be analyzing the Anderaa-Higman-Rotman-Rips construction of a group with undecidable Word Problem \cite{Rotman}, however this group has a BS$(1,2)$ subgroup, which will inflate the Dehn function to at least $2^n$.

On the other side, while \Cref{DehnCLIndep} leaves open the possible conjugator length functions which may be realized by groups with subcubic Dehn function.

\begin{question}

If a finitely presented group $G$ satisfies $n^2\preceq\delta_G(n)\prec n^3$, what (if any) constraints does this impose on $\CL_G$?

\end{question}

Finally, while the proof of Theorem 1.4 of \cite{GW} restricts Dehn functions to a subset of the class of functions which may be realized as the time function of some $S$-machine (an analogue of a result of \cite{SBR} which applies to multi-tape non-deterministic Turing machines), the same is not immediate for conjugator length functions.  In particular, it is unclear if the results of \cite{GW} exhibit \textit{all} possible functions which may be realized as the conjugator length function of a finitely presented group.

\begin{question}

Let $f\succeq n^2$ be the conjugator length function of some finitely presented group. Must it be the time function of some $S$-machine? In particular, must $f$ also be a Dehn function of some finitely presented group?

\end{question}

\medskip


\subsection{Discussion of contents} \

As in \cite{GW}, our primary tool is the computational model of $S$-machines.  We refer the reader to our previous paper for a conceptual overview dedicated to our purpose. For a standard general exposition of the power of this tool, see \cite{Sapir06}. 

The article is organized as follows. In Section \ref{sec-S-machines} we restate the definition of $S$-machines, as well as several basic lemmas concerning their operation which are particularly useful in our context. The only major deviation here from the setup of \cite{GW} is the definition of the `$k$-primitive machines' $\textbf{LR}_k$, $\textbf{RL}_k$ obtained from the primitive machines $\textbf{LR}$, $\textbf{RL}$ by composing $k$ copies; the practice of using these embellishments is established in \cite{OS20}, \cite{WEmb}, and others. Section \ref{sec-parameters} introduces the \textit{highest parameter principle}, a standard technique in the long and technical inductive arguments of this nature (see \cite{O}). In \Cref{sec-enhanced}, we largely repeat the main construction of  $\textbf{E}_{\textbf{S}}$ from \cite{GW} to obtain the `$k$-enhanced machine' $\textbf{E}_{\textbf{S},k}$, differing from its predecessor only in that one of the submachines which operates as a primitive machine is replaced with the corresponding $k$-primitive machine. A large choice of $k$ ensures that the execution of primitive machines will comprise the vast majority of the runtime for $\textbf{E}_{\textbf{S},k}$, which will be essential in Section \ref{sec-minimal-diagrams}. Finally, the main machine $\textbf{M}_\textbf{S}$ is obtained by concatenating a very large number of copies of the $k$-enhanced machine together, allowing for arguments in the sections that follow which resemble the standard tools of small-cancellation theory.

The typical constructions of the finitely presented groups associated to $S$-machines are recited in Section \ref{sec-groups}, along with many standard tools used for their study. The key point here, and throughout the literature, is that the execution of $S$-machines is intimately related to the structure of van Kampen (disk) and Schupp (annular) diagrams over these presentations, and so directly dictate the decidability and complexity of the Word and Conjugacy Problems of these associated groups.

In Sections 6, 8, and 9, we adapt many of the arguments of \cite{O18} and \cite{OS20} to this setting to obtain an upper bound on the area of a generic class of van Kampen diagrams over the finite presentation of the main group, effectively finding an upper bound on the group's Dehn function.  On the other side, in Sections 7, 8, 9, and 10, we adapt the authors' arguments in \cite{GW} and combine them with those above to study a generic class of Schupp diagrams over this presentation, effectively obtaining an upper bound on the group's conjugator length function.  These arguments are all combined to form a proof of \Cref{main-theorem} in \Cref{sec-main-proof}.

A major technical innovation of this paper is the concept of spears and directed designs introduced in \Cref{sec-spears}.  This generalizes the concept of shafts and designs introduced by Ol'shanskii in \cite{O18} for the study of van Kampen diagrams.  Crucially, spears and directed designs adapt these tools to the setting of annular diagrams, facilitating the main arguments which obtain the desired bounds on the conjugator length function.

\bigskip

\textbf{Acknowledgements:} The authors express their deep gratitude to Tim Riley and Alexei Miasnikov for many helpful suggestions and discussions relating to this work.


\section{\texorpdfstring{$S$}--machines} \label{sec-S-machines}

In this section, we give a formal definition of the computational model of $S$-machines.  This definition is laid out in much the same way as it is in \cite{GW} and \cite{WEmb}.

\subsection{Rewriting systems} \

Following \cite{BORS}, \cite{O18}, \cite{OS01}, \cite{OSconj}, \cite{OS06}, \cite{OS20}, \cite{SBR}, \cite{WEmb}, \cite{W} among others, we give here a description of $S$-machines as rewriting system for group words.


The hardware of an $S$-machine is defined by a set $Q$ of state letters and a set $Y$ of tape letters, each of which are decomposed into a number of subsets, $Q=\sqcup_{i=0}^NQ_i$ and $Y=\sqcup_{i=1}^NY_i$, called parts. Elements of the inverse sets $Q^{-1},Y^{-1}$ are also called state or tape letters, respectively. Here $N$ is a fixed number and part of the data of the hardware.






Let $W$ be a reduced word over $Y\cup Q$ of the form $q_0^{\eps_0}w_1q_1^{\eps_1}\dots w_sq_s^{\eps_s}$, where $w_i$ is a word over $Y_{k(i)}$, $q_i\in Q_{j(i)}$, and $\eps_i\in \{\pm1\}$. We call $W$ an \textit{admissible word} if, for all $i$, the following conditions are satisfied:

\begin{enumerate}

\item $k(i)=j(i)+1$ if $\eps_i=1$ and $k(i)=j(i)$ if $\eps_i=-1$.

\item Either $j(i+1)=j(i)+\eps_i$ and $\eps_{i+1}=\eps_i$ or $j(i+1)=j(i)$, $q_{i+1}=q_i$, and $\eps_{i+1}=-\eps_i$.

\end{enumerate}



The \textit{base} of an admissible word $W$ is the (perhaps unreduced) word $Q_{j(0)}^{\eps_0}Q_{j(1)}^{\eps_1}\ldots Q_{j(s)}^{\eps_s}$, and the subword $q_{i-1}^{\eps_{i-1}}w_iq_i^{\eps_i}$ is called an \textit{$Q_{j(i)-1}^{\eps_{i-1}}Q_{j(i)}^{\eps_i}$-sector} ($W$ may contain several such sectors). The base $Q_0Q_1\ldots Q_N$ is called the \textit{standard base}, and a \textit{configuration} is an admissible word with the standard base.


%
%
%
%
%


The \textit{$a$-length} and \textit{$q$-length} of an admissible word $W$, denoted $|W|_a$ and $|W|_q$, are the number of $a$-letters and $q$-letters, respectively, comprising $W$.







An \textit{$S$-rule} on the hardware $(Y,Q)$ is a rewriting rule $\theta$ defined by:

\begin{itemize}

\item A pair $q_i,q_i'\in Q_i$ for all $i$

\item A subset $ Y_i(\theta)\subseteq Y_i$ for all $i$.

\item A set of words $\{\a_{i,\theta},\omega_{i,\theta}\in F(Y_i(\theta)):i=1,\dots,N\}$, with the proviso that $\omega_{0,\theta}=\a_{N,\theta}=1$.

\end{itemize}







An admissible word $W$ is \textit{$\theta$-admissible} if every letter comprising it is a letter of $Q(\theta)=\{q_0,\dots, q_N\}$, a letter of $Y(\theta)=\sqcup_{i=0}^N Y_i(\theta)$, or an inverse of one of these letters.  Applying $\theta$ to $W$ returns the admissible word $W\cdot\theta$ defined by:

\begin{itemize}

\item Replacing any occurrence of $q_i^{\pm1}$ with $(\omega_{i,\theta}q_i'\a_{i+1,\theta})^{\pm1}$.

\item Completely freely reducing the result of the previous step

\item Removing any prefix or suffix consisting of tape letters, so that the result begins and ends with $q$-letters.

\end{itemize}



We abbreviate $\theta$ by:
$$\theta=[q_0\to q_0'\a_{1,\theta}, \ q_1\to \omega_{1,\theta}q_1'\a_{2,\theta}, \ \dots, \ q_{N-1}\to \omega_{N-1,\theta}q_{N-1}'\a_{N,\theta}, \ q_N\to \omega_{N,\theta}q_N']$$
If $Y_i(\theta)=\emptyset$, then $\theta$ is said to \textit{lock the $Q_{i-1}Q_i$-sector}.  In this case, we take $\a_{i,\theta}=\omega_{i,\theta}=1$ and write  $q_{i-1}\xrightarrow{\ell}\omega_{i-1,\theta}q_i'$ in the representation of the rule.

Note that this notation omits the \textit{domain} $Y(\theta)$; unless otherwise noted we adopt the convention that $Y_i(\theta)$ equals $Y_i$ or $\emptyset$, with the latter case reflected by the locking notation.







The \textit{inverse $S$-rule} of $\theta$ has:

\begin{itemize}

\item the same defining $q$-letters, with the roles of $q_i$ and $q_i'$ switched,

\item $Y_i(\theta^{-1})=Y_i(\theta)$, and

\item $\omega_{\theta^{-1},i}=\omega_{\theta,i}^{-1}$ and $\a_{\theta^{-1},i}=\a_{\theta,i}^{-1}$.

\end{itemize}


An admissible word $W$ is $\theta$-admissible if and only if $(W\cdot\theta)\cdot\theta^{-1}$ is defined and equals $W$.


A finite symmetric set $\Theta(\textbf{S})$ of $S$-rules over $(Y,Q)$ defines the \textit{software} of the given $S$-machine $\textbf{S}$ with hardware $(Y,Q)$.  We refine this further by arbitrarily partitioning $\Theta(\textbf{S})=\Theta^+(\textbf{S})\sqcup\Theta^-(\textbf{S})$ such that $\theta\in\Theta^+(\textbf{S})$ if and only if $\theta^{-1}\in\Theta^-(\textbf{S})$; elements of $\Theta(\textbf{S})^+$ are called \textit{positive rules} of $\textbf{S}$ and elements of $\Theta(\textbf{S})^-$ are called \textit{negative rules}.


We call a finite sequence $W_0,\dots, W_t$ of admissible words, along with a sequence of with fixed rules $\theta_1,\dots,\theta_t\in \Theta(\textbf{S})$ such that $W_{i-1}\cdot\theta_i\equiv W_i$, a \textit{computation} of \textit{length} $t$ with \textit{history }$\theta_1\cdots\theta_t$. We denote a computation by $\pazocal{C}:W_0\to\dots\to W_t$ and call it \textit{reduced} if its history is a reduced word over $\Theta^+(\textbf{S})$; clearly, any computation can be transformed into a reduced computation with identical beginning and ending words.



If $Q=\sqcup_{i=0}^NQ_i$ is the set of state letters of an $S$-machine, we fix a \textit{start} and \textit{end} letter in each $Q_i$, and call a configuration a \textit{start} or \textit{end} configuration if all its state letters are, respectively, start or end letters.  A \textit{recognizing} $S$-machine is a machine with a fixed subset $I$ of $\{1,\dots,N\}$, where the $Q_{i-1}Q_i$-sector is deemed an \textit{input sector} if and only if $i\in I$.  In this case, an \textit{input configuration} of such a machine is a start configuration whose only tape letters are in input sectors; the \textit{input} of such a configuration is its projection onto the tape alphabet. The \textit{accept configuration} is the end configuration with no tape letters at all.

A configuration $W$ is \textit{accepted} if there exists an \textit{accepting computation} starting from $W$ and ending with the accept configuration.  The input of an accepted input configuration is, by abuse of notation, also said to be accepted.

For a configuration $W$ accepted by $\textbf{S}$, $\tm_\textbf{S}(W)$ is the length of the shortest accepting computation for $W$.  The \textit{time function} $\TM_{\textbf{S}}:\N\to\N$ of $\textbf{S}$ is then: $$\TM_{\textbf{S}}(n)=\max\{\tm_\textbf{S}(W): W\text{ is an accepted input configuration of } \textbf{S}, \ |W|_a\leq n\}$$
The \textit{generalized time function} of $\textbf{S}$ is defined similarly, but with the maximum ranging over all accepted configurations rather than simply input ones.


Two recognizing $S$-machines are \textit{equivalent} if they accept the same set of input words and have $\Theta$-equivalent time functions.



The next statement allows us to simplify many of the combinatorial arguments that follow.

\begin{lemma} [Lemma 2.1 of \cite{O18}] \label{simplify rules}
 
Every recognizing $S$-machine $\textbf{S}$ is equivalent to a recognizing $S$-machine in which $|\a_{\theta,i}|_a,|\omega_{\theta,i}|_a\leq1$ for every rule $\theta$.

\end{lemma}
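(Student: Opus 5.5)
We construct an equivalent machine $\textbf{S}'$ by subdividing each rule of $\textbf{S}$ into a chain of rules, each of which inserts or deletes at most one tape letter at either end of each sector. Then we check that $\textbf{S}'$ accepts exactly the same inputs as $\textbf{S}$ and that its time function is $\Theta$-equivalent to that of $\textbf{S}$.

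\emph{Construction.} For each positive rule $\theta$ of $\textbf{S}$, let $L_\theta$ be the maximum of $|\a_{\theta,i}|_a$ and $|\omega_{\theta,i}|_a$ over all $i$. Write each $\a_{\theta,i}$ as a product of letters $a_1\cdots a_r$ with $r\le L_\theta$, and each $\omega_{\theta,i}$ as $b_1\cdots b_s$ with $s\le L_\theta$. Introduce fresh intermediate state letters $q_i(\theta,j)\in Q_i$ for $0\le j\le L_\theta$ and each $i$, with $q_i(\theta,0)=q_i$ and $q_i(\theta,L_\theta)=q_i'$. Replace $\theta$ by the rules $\theta_1,\dots,\theta_{L_\theta}$. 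The rule $\theta_j$ changes every $q_i(\theta,j-1)$ to $q_i(\theta,j)$, with the same domain $Y_i(\theta_j)=Y_i(\theta)$ (so locked sectors stay locked). It inserts the $j$-th letter of $\omega_{\theta,i}$ on the left and the $j$-th letter of $\a_{\theta,i+1}$ on the right, or inserts nothing once those words are exhausted. The letters must be added in the correct order so that the composite equals $\omega_{\theta,i}q_i'\a_{\theta,i+1}$: the right-hand word $\a$ is built from its first letter outward (append $a_1$, then $a_2$ to the right of $a_1$, and so on), and symmetrically $\omega$ is built from its last letter inward. Free reduction is compatible with this stepwise insertion, since applying letters one at a time and reducing gives the same reduced word as inserting the whole word and then reducing.

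\emph{Inputs and acceptance.} Start and end letters are unchanged, so input configurations coincide. Any computation of $\textbf{S}$ lifts to a computation of $\textbf{S}'$ whose length is multiplied by at most $L=\max_\theta L_\theta$, a constant. This gives $\TM_{\textbf{S}'}\le L\,\TM_{\textbf{S}}$. The domain check is preserved at each step: $W$ is $\theta$-admissible exactly when each intermediate word is $\theta_j$-admissible, because all the $\theta_j$ share the tape domain of $\theta$.

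\emph{Converse.} A configuration whose state letters are ones of $\textbf{S}$, i.e.\ of the form $q_i(\theta,0)$ or $q_i(\theta,L_\theta)$, can only leave via $\theta_1$ or $\theta_{L_\theta}^{-1}$. Once inside the chain for $\theta$, the intermediate state letters force the next rule to be $\theta_{j\pm1}$ of the same chain. Hence a reduced computation of $\textbf{S}'$ between configurations of $\textbf{S}$ decomposes into full traversals of chains, plus backtracks that cancel in a reduced history. Each full traversal realizes $\theta^{\pm1}$. So accepting computations of $\textbf{S}'$ project to accepting computations of $\textbf{S}$ of no greater length, giving $\TM_\textbf{S}\le\TM_{\textbf{S}'}$.

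The main obstacle is this last step, specifically ruling out partial traversals of a chain. The concern is whether the intermediate state letters keep configurations within a single chain. They do, because the intermediate letters are distinct for distinct $\theta$, and admissibility requires every state letter of the word to lie in $Q(\theta_j)$. The remaining bookkeeping is confirming that the convention "$\omega_{0}=\a_{N}=1$" is maintained, which holds because the subdivision never adds letters outside the sectors where $\theta$ already does.
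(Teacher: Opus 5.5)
Your argument is correct and is the standard proof of this fact. The paper gives no proof of its own, only the citation to Lemma 2.1 of \cite{O18}, and the argument there is the same: each rule is subdivided into a chain of rules through fresh intermediate state letters, reduced computations between original configurations are forced to traverse whole chains, and so time functions change by at most a constant factor.

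One slip needs fixing. A part $q\to b\,q'a$ inserts letters immediately beside the state letter, so later insertions end up closer to $q_i'$. Hence for $\a_{\theta,i+1}=a_1\cdots a_r$ you must insert $a_r$ first and $a_1$ last, and for $\omega_{\theta,i}=b_1\cdots b_s$ you must insert $b_1$ first and $b_s$ last. The order you state is the reverse of this and would produce the reversed words.
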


As we assume the words $\a_{\theta,i}$ and $\omega_{\theta,i}$ are formed over the domain of $\theta$, \Cref{simplify rules} allows us to assume that each part of every rule of an $S$-machine is of the form $q_i\to bq_i'a$ where $\|a\|,\|b\|\leq1$.  Hence, by an abuse of notation we interpret the corresponding part of $\theta^{-1}$ as $q_i'\to b^{-1}q_ia^{-1}$. 



\medskip


\subsection{Elementary properties} \

This subsection lists, with references to the literature, some statements that are very frequently useful in analyzing $S$-machines.


\begin{lemma}[Lemma 2.2 of \cite{WEmb}] \label{locked sectors}

If the rule $\theta$ locks the $Q_iQ_{i+1}$-sector, 
then the base of any $\theta$-admissible word has no subword of the form $Q_iQ_i^{-1}$ or $Q_{i+1}^{-1}Q_{i+1}$.

\end{lemma}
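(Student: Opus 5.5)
The plan is to argue by contradiction directly from the definitions of admissible words and of $\theta$-admissibility. Suppose $W$ is $\theta$-admissible and its base contains a subword $Q_iQ_i^{-1}$, so that $W$ contains a subword $q^{\eps}wq'^{-\eps}$ with $\eps=1$ and $q,q'\in Q_i$. By condition (2) in the definition of admissible words, the second option must occur, so $q'=q$ and the subword is $q\,w\,q^{-1}$. By condition (1), since $\eps_i=1$, the tape word $w$ lies over $Y_{i+1}$. The case $Q_{i+1}^{-1}Q_{i+1}$ is symmetric: the subword is $q^{-1}wq$ with $q\in Q_{i+1}$, and again $w$ is a word over $Y_{i+1}$ by condition (1), now applied with $\eps=-1$.

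Next I use $\theta$-admissibility. Every letter of $W$ must lie in $Y(\theta)\cup Q(\theta)$ or be the inverse of such a letter. Since $\theta$ locks the $Q_iQ_{i+1}$-sector, we have $Y_{i+1}(\theta)=\emptyset$, so $w$ must be the empty word. The subword then becomes $qq^{-1}$ (respectively $q^{-1}q$), which is not freely reduced. This contradicts the requirement that an admissible word be reduced.

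The only point needing care is the bookkeeping of indices in conditions (1) and (2). I need to confirm that a $Q_iQ_i^{-1}$ turn forces the intermediate sector to be the $Q_iQ_{i+1}$-sector, with tape letters from $Y_{i+1}$, rather than the $Q_{i-1}Q_i$-sector. This follows from $k=j+1$ when the exponent is $+1$. Likewise, a $Q_{i+1}^{-1}Q_{i+1}$ turn uses $k=j=i+1$. Once this is checked, the argument is immediate and I expect no real obstacle.
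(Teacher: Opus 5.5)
Your argument is correct. It is the standard argument behind the cited Lemma 2.2 of \cite{WEmb}; the paper itself only quotes that lemma and gives no proof. The argument runs: locking forces the tape word inside the turn to be empty, which leaves $q_iq_i^{-1}$ or $q_{i+1}^{-1}q_{i+1}$ in a word that is required to be reduced. A small remark: $q=q'$ also follows directly from $\theta$-admissibility, since $Q(\theta)$ contains exactly one letter of each part. So your proof does not need the clause $q_{i+1}=q_i$ of condition (2), which some versions of the definition of admissible words in the literature omit.
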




As is common in the $S$-machine literature (e.g. \cite{GW,WEmb}), we consider words over disjoint alphabets $A$ and $B$, where the alphabets are connected by an injection (and/or bijection) $\varphi:A\to B$, usually clear from context.  For any word $w$ over $A$, its \textit{copy} over the alphabet $B$ (with respect to $\varphi$) is the word obtained by applying $\varphi$ to each letter (and preserving exponents). The map $\varphi$ usually arises from re-indexing ($x_i\mapsto x_{i+1}$) or the use of primes ($x\mapsto x'$).


\begin{lemma} [Lemma 2.7 of \cite{O18}] \label{multiply one letter}

Let $X_i$ be a subset of $Y_i\cup Y_i^{-1}$ with $X_i\cap X_i^{-1}=\emptyset$.  Let $\Theta_i^+$ be a set of positive rules in correspondence with $X_i$ such that each rule multiplies the $Q_{i-1}Q_i$-sector by the corresponding letter on the left (respectively on the right).  Let $\pazocal{C}:W_0\to\dots\to W_t$ be a reduced computation with base $Q_{i-1}Q_i$ and history $H\in F(\Theta_i^+)$.  Denote the tape word of $W_j$ as $u_j$ for each $0\leq j\leq t$.  Then:


\begin{enumerate} [label=({\alph*})]

\item $H$ is the natural copy of the reduced form of the word $u_tu_0^{-1}$ read from right to left (respectively the word $u_0^{-1}u_t$ read left to right). In particular, if $u_0\equiv u_t$, then the computation is empty

\item $\|H\|\leq\|u_0\|+\|u_t\|$

\item if $\|u_{j-1}\|<\|u_j\|$ for some $1\leq j\leq t-1$, then $\|u_j\|<\|u_{j+1}\|$

\item $\|u_j\|\leq\max(\|u_0\|,\|u_t\|)$

\end{enumerate}

\end{lemma}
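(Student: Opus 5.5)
The plan is to reduce everything to the free group on the single-sector tape alphabet and read the computation as successive one-letter multiplications. Since the base is $Q_{i-1}Q_i$ and every rule of $H$ comes from $\Theta_i^+$, each rule $\theta$ acts on the tape word $u$ by $u\mapsto x_\theta u$ followed by free reduction, where $x_\theta\in X_i$ is the letter attached to $\theta$. (I treat the left-multiplication case; the right case is symmetric.) Applying a negative rule $\theta^{-1}$ multiplies by $x_\theta^{-1}$. Write $H=\theta_1^{\eps_1}\cdots\theta_t^{\eps_t}$. Then $u_j=x_j^{\eps_j}\cdots x_1^{\eps_1}u_0$ in $F(Y_i)$, so $u_tu_0^{-1}=x_t^{\eps_t}\cdots x_1^{\eps_1}$ in the free group. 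Because $X_i\cap X_i^{-1}=\emptyset$, the correspondence $\theta^{\pm1}\mapsto x_\theta^{\pm1}$ is injective on $\Theta_i^+\cup(\Theta_i^+)^{-1}$. Hence a reduced history maps to a freely reduced word $x_t^{\eps_t}\cdots x_1^{\eps_1}$, which must therefore be the reduced form of $u_tu_0^{-1}$. Reading it right to left recovers $H$, which proves (a). If $u_0\equiv u_t$, this reduced word is empty, so $t=0$.

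Part (b) follows from (a): $\|H\|=\|u_tu_0^{-1}\|_{\mathrm{red}}\le\|u_0\|+\|u_t\|$.

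For (c), each step changes the length by exactly $\pm1$. It cannot be $0$, because left multiplication by a letter followed by free reduction either cancels the first letter or prepends one. Suppose $\|u_j\|=\|u_{j-1}\|+1$. Then $u_j\equiv x_j^{\eps_j}u_{j-1}$, with first letter $x_j^{\eps_j}$. If $\|u_{j+1}\|<\|u_j\|$, the next letter $x_{j+1}^{\eps_{j+1}}$ must be $x_j^{-\eps_j}$. By injectivity this forces $\theta_{j+1}^{\eps_{j+1}}=\theta_j^{-\eps_j}$, contradicting that $H$ is reduced. So the length increases again.

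For (d), the sequence $\|u_j\|$ changes by $\pm1$ at each step. By (c), once it increases it keeps increasing, so it first decreases (weakly) and then increases. Its maximum is therefore attained at $j=0$ or $j=t$.

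The only delicate point is the bookkeeping with inverse rules and the removal of any prefix or suffix in the definition of $W\cdot\theta$. With base $Q_{i-1}Q_i$, both ends are $q$-letters, so no tape letters are stripped, and the description of the tape action as free multiplication is exact. I expect this to be the main thing to check carefully, along with the injectivity of $\theta^{\pm1}\mapsto x_\theta^{\pm1}$, which is what makes a reduced history correspond to a reduced word.
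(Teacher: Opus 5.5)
Your proof is correct and is the standard argument. The paper gives no proof of its own and simply cites Lemma 2.7 of \cite{O18}, which rests on exactly this identification of a reduced history with a freely reduced product of the letters $x_\theta^{\pm1}$, with injectivity of $\theta^{\pm1}\mapsto x_\theta^{\pm1}$ coming from $X_i\cap X_i^{-1}=\emptyset$.

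One small wording fix: the stripping step in the definition of $W\cdot\theta$ does remove tape letters, namely those the rule places outside the $Q_{i-1}Q_i$-sector. It never touches the sector itself, however, so your description of the action as free left (or right) multiplication is still exact.
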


For any two words $w_1,w_2\in F(Y_i)$ over $X_i$, there exists a computation of the machine in \Cref{multiply one letter} which has initial and terminal tape words $w_1$ and $w_2$, respectively.

%
%
%
%
%
%
%

\begin{lemma} [Lemma 2.8 of \cite{O18}] \label{multiply two letters}

Let $X_\ell$ and $X_r$ be disjoint subsets of $Y_i$ which are copies of some set $X$.  Let $\Theta_i^+$ be a set of positive (or negative) rules in correspondence with $X$ such that each rule multiplies the $Q_{i-1}Q_i$-sector on the left by the corresponding letter's copy in $X_\ell$ and on the right by the copy in $X_r$.  Let $\pazocal{C}:W_0\to\dots\to W_t$ be a reduced computation with base $Q_{i-1}Q_i$ and history $H\in F(\Theta_i^+)$.  Denote the tape word of $W_j$ as $u_j$ for each $0\leq j\leq t$.  Then:


\begin{enumerate}[label=({\alph*})]

\item if $\|u_{j-1}\|<\|u_j\|$ for some $0\leq j\leq t-1$, then $\|u_j\|<\|u_{j+1}\|$

\item $\|u_j\|\leq\max(\|u_0\|,\|u_t\|)$ for each $j$

\item $\|H\|\leq\frac{1}{2}(\|u_0\|+\|u_t\|)$.

\end{enumerate}

\end{lemma}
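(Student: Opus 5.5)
The plan is to reduce everything to a one-dimensional bookkeeping argument on the tape word of the single sector. By hypothesis, each positive rule $\theta\in\Theta_i^+$ corresponding to $x\in X$ acts on the tape word by $u\mapsto x_\ell\, u\, x_r$, where $x_\ell\in X_\ell$ and $x_r\in X_r$ are the copies of $x$, followed by free reduction. A negative rule acts by $u\mapsto x_\ell^{-1}u x_r^{-1}$. Since $X_\ell$ and $X_r$ are disjoint, the left and right ends of the tape word can be tracked independently. If $H\equiv\theta_1\cdots\theta_t$ is reduced and $\theta_j$ corresponds to $x_j^{\pm1}$, then in the free group
\[ u_j = (x_{j}\cdots x_1)_\ell\, u_0\, (x_1\cdots x_j)_r, \]
with the evident exponents.

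For part (a), the key observation is what happens at a step where $\|u_{j-1}\|<\|u_j\|$. Because the letters added on the two sides come from disjoint alphabets, a single letter is cancelled on one side only if the corresponding end of $u_{j-1}$ is the inverse of that letter. Hence a strict increase by one application means the step added letters without cancellation on at least one side, and in fact on both sides, since the length change is $+2$, $0$, or $-2$. A change of $0$ would require cancellation on exactly one side.

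I claim the case of no cancellation at step $j$ forces no cancellation at step $j+1$. If $u_j$ begins with $x_{j,\ell}^{\pm1}$ and $\theta_{j+1}$ cancelled it, then $\theta_{j+1}$ would have to be $\theta_j^{-1}$, contradicting reducedness. The same holds on the right. So increases persist. One subtlety is that an empty or short $u_{j-1}$ could let the left and right letters interact through the middle. I would handle this by noting that $X_\ell$-letters and $X_r$-letters never cancel each other, so a letter can only cancel against a same-alphabet neighbour.

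Part (b) follows from (a). The length sequence is non-increasing until its first strict increase and strictly increasing thereafter, so it is "valley-shaped", and its maximum is attained at an endpoint.

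For part (c), the tape word changes length only in steps of $\pm2$ or $0$. I will show that a step with length change $0$ cannot occur. Such a step cancels on exactly one side, say the left, so $u_{j-1}$ starts with $x_\ell^{-1}$ while its right end is not $x_r^{-1}$. I expect to rule this out, or at least bound the number of such steps, using the structure given by the formula above: the $X_\ell$-part and the $X_r$-part of $u_j$ are mirror copies accumulated from the same history, so one-sided cancellation requires $u_0$ itself to be unbalanced.

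This is the main obstacle. With no further assumption on $u_0$, a neutral step seems possible in general. My fallback is to argue more directly via the valley shape. In the decreasing phase, each step removes one letter of $X_\ell$ from the left part and one of $X_r$ from the right part. So the number of strictly decreasing steps is at most $\|u_0\|/2$, and similarly the number of increasing steps is at most $\|u_t\|/2$.

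For neutral steps, I would show they can occur only once the left (respectively right) end of the original $u_0$ has been exhausted by cancellation. After that the word is a two-sided product built from $H$, and reducedness of $H$ forbids one-sided cancellation. Summing the phases then gives $\|H\|\le\frac12(\|u_0\|+\|u_t\|)$.
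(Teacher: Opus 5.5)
Your arguments for (a) and (b) are sound. A strict increase means neither end cancelled, so $u_j$ starts with $x_{j,\ell}^{\pm1}$ and ends with $x_{j,r}^{\pm1}$, and only $\theta_{j+1}=\theta_j^{-1}$ could cancel these letters; the resulting valley shape gives (b).

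Part (c), however, is not established. Your first plan (no length-preserving steps) is false, as you suspected: for $u_0=a_\ell^{-1}$ and $H=\theta_a$ one gets $u_1=a_r$. Your fallback rests on two claims that are also false:
\begin{itemize}
\item A neutral step does not require an end of $u_0$ to have been exhausted beforehand; the same example shows this.
\item Once an end is ``exhausted'', one-sided cancellation is still possible. For $u_0=a_r^{-1}$, the rule $\theta_a$ gives $u_1=a_\ell$, cancelling only on the right, although $u_0$ has no $X_\ell$-letters at all.
\end{itemize}
In fact your two claims together would say that neutral steps never occur, which you had already conceded is false. The bounds $d\le\|u_0\|/2$ and $i\le\|u_t\|/2$ on decreasing and increasing steps do not rescue this. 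With $n_0$ neutral steps one has $t=2d+n_0+\frac12(\|u_t\|-\|u_0\|)$, so (c) is \emph{equivalent} to $2d+n_0\le\|u_0\|$, and nothing in your proposal correctly controls $n_0$.

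The missing idea is that a letter inserted by a rule is never cancelled afterwards. Consequently every cancellation, whether at a decreasing or at a neutral step, deletes a distinct letter of $u_0$.

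The cleanest way to see this uses the formula from your first paragraph. The word $u_t$ is the free reduction of $L\,u_0\,R$, where $L=(x_t\cdots x_1)_\ell$ and $R=(x_1\cdots x_t)_r$ are \emph{reduced} words of length $t$, because $H$ is reduced. In reducing $L u_0 R$:
\begin{itemize}
\item $L$ can only cancel against a prefix of $u_0$ made of $X_\ell^{\pm1}$-letters, say of length $p$.
\item $R$ can only cancel against a suffix of $u_0$ made of $X_r^{\pm1}$-letters, say of length $q$.
\item These two subwords of $u_0$ are disjoint, so $p+q\le\|u_0\|$.
\item Since $X_\ell\cap X_r=\emptyset$, no letter of $L$ ever cancels a letter of $R$, even if $u_0$ is consumed entirely.
\end{itemize}
Hence $\|u_t\|=2t+\|u_0\|-2(p+q)\ge 2t-\|u_0\|$, which is (c). Step by step, the same fact appears as follows: the first inserted letter to be cancelled would have to have been inserted at the immediately preceding step, which forces $\theta_j=\theta_{j-1}^{-1}$ and contradicts reducedness.
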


%
%
%
%
%
%

\begin{lemma} [Lemma 3.6 of \cite{OS12}] \label{unreduced base}

Suppose $\pazocal{C}:W_0\to\dots\to W_t$ is a reduced computation of an $S$-machine with base $Q_iQ_i^{-1}$ (respectively $Q_i^{-1}Q_i$). For $0\leq j\leq t$, let $u_j$ be the tape word of $W_j$. Suppose each rule of $\pazocal{C}$ multiplies the $Q_iQ_{i+1}$-sector (respectively the $Q_{i-1}Q_i$-sector) by a letter from the left (respectively from the right), with different rules corresponding to different letters. Then there exists a factorization $H\equiv H_1H_2^\ell \bar{H}_2H_3$ such that:

\begin{enumerate}

\item $\ell\in\N$

\item $\bar{H_2}$ is a proper prefix of $H_2$.

\item $|W_i|_a=|W_{i-1}|_a-2$ for all $1\leq i\leq\|H_1\|$

\item $|W_i|_a=|W_{i-1}|_a=\|H_2\|$ for all $\|H_1\|+1\leq i\leq t-\|H_3\|$

\item $|W_i|_a=|W_{i-1}|_a+2$ for all $t-\|H_3\|+1\leq i\leq t$

\end{enumerate}


\end{lemma}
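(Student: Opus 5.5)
The plan is to make the computation completely explicit at the level of free groups, and then read the three phases off the geometry of conjugation in a free group. I treat the base $Q_iQ_i^{-1}$; the case $Q_i^{-1}Q_i$ is symmetric (right multiplication, mirrored words).

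\textbf{Setup.} Write $W_j\equiv q^{(j)}u_j(q^{(j)})^{-1}$. By hypothesis each rule $\theta$ of $\pazocal{C}$ acts on the $Q_iQ_{i+1}$-sector as $q\to q'a_\theta$, and distinct rules carry distinct letters $a_\theta$. Applying $\theta_j$ to $W_{j-1}$ therefore gives
\[
u_j=a_{\theta_j}^{-1}u_{j-1}a_{\theta_j}
\]
after free reduction, with the sign convention fixed by the rule. So $u_j$ is the reduced form of $g_j^{-1}u_0g_j$, where $g_j\equiv a_{\theta_1}\cdots a_{\theta_j}$. Because different rules correspond to different letters and $H$ is reduced, the word $g_j$ is freely reduced. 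Moreover $g_j$ is obtained from $g_{j-1}$ by appending one letter with no cancellation.

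Since $W_0$ is reduced, $u_0$ is nonempty. Write its reduced form as $u_0\equiv svs^{-1}$ with $v$ cyclically reduced and nonempty. Then $|u_j|_a=\|u_j\|$ equals the length of the reduced form of $(s^{-1}g_j)^{-1}v(s^{-1}g_j)$.

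\textbf{Key computation.} For a reduced word $x$ and a cyclically reduced $v$, the reduced length of $x^{-1}vx$ is $\|v\|+2\|x'\|$. Here $x'$ is what remains after deleting from $x$ the longest prefix $p$ that is a prefix of some power $v^{m}$ (or of $v^{-m}$). In that case $p^{-1}vp$ is a cyclic permutation of $v$.

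I would track the walk $x_j=s^{-1}g_j$ as $j$ increases; note that $x_j$ is the reduced form of $x_{j-1}a_{\theta_j}$.
\begin{enumerate}
\item The first letters of $g_j$ either cancel into $s^{-1}$ or do not. While they cancel, $\|x_j\|$ drops by one each step, so $|W_j|_a$ drops by $2$. This is phase $H_1$.
\item Once $s^{-1}$ is consumed, each new letter either extends $x_j$ as a prefix of a power of $v^{\pm1}$, keeping length $\|v\|$ (a cyclic rotation), or it leaves that pattern.
\item After leaving the pattern, $\|x_j'\|$ grows by one per step, so $|W_j|_a$ grows by $2$. It can never shrink again, because $g_j$ is reduced and the appended letters cannot cancel.
\end{enumerate}
The constant phase is forced letter by letter. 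Each step must append the next letter of $v^{\pm1}$ read cyclically, which is the natural copy of $v$. Hence the history there is $H_2^\ell\bar H_2$, where $H_2$ is the copy of a cyclic permutation of $v$ and $\bar H_2$ is a proper prefix of it. This also gives $\|H_2\|=\|v\|=|W_j|_a$ in that phase.

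\textbf{Main obstacle: monotonicity of the phase order.} One must show that decreasing steps precede constant steps, which precede increasing steps, and that the constant phase cannot be re-entered after an increase. The first fact uses that a letter which does not cancel $s^{-1}$ ends the cancellation permanently, since $g_j$ is reduced. The second uses that once $x_j'$ is nonempty, further reduced extensions keep it nonempty, since the prefix $p$ is maximal and letters only accumulate. A parallel argument, as in \Cref{multiply one letter}(c), handles the transition from $H_1$ directly into $H_3$ or into the constant phase.

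Two degenerate situations need checking separately: the case where $s$ is not fully cancelled before the increase begins, and the possibility of reversing direction within powers of $v$. In both, the constant phase is empty ($\ell=0$ and $\bar H_2$ empty, or the constant steps are absent). Reversing direction inside powers of $v$ would force $g_j$ to be unreduced, which is excluded.
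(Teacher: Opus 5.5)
Your proof is correct, but it takes a different route from the one the paper relies on. The paper gives no argument of its own and defers to \cite{OS12}. The argument there is essentially local, one step at a time:
\begin{itemize}
\item conjugating a nonempty reduced tape word by a single letter does one of three things: it strips a matching pair of end letters, rotates one letter from one end to the other, or adds a letter at each end;
\item since $a_{\theta_{j+1}}\neq a_{\theta_j}^{-1}$, one checks directly that no decrease can follow a rotation or an increase, and that only increases follow an increase (as in \Cref{multiply one letter}(c));
\item the rotation direction cannot switch, so in the constant phase the rule is determined by the current tape word and the history is periodic with period equal to the word's length.
\end{itemize}

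You instead fix the normal form $u_0\equiv svs^{-1}$ and read all three phases off the single formula $\|v\|+2\|x_j'\|$. This costs a normal form and some bookkeeping. In exchange it gives a sharper description:
\begin{itemize}
\item $\|H_2\|$ is the cyclically reduced length of $u_0$;
\item $\|H_1\|\le\|s\|$, with equality whenever the constant phase is nonempty;
\item the constant part of $H$ is the copy of a prefix of $v^{\infty}$ or of $v^{-\infty}$.
\end{itemize}

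A few points you leave implicit should be stated:
\begin{itemize}
\item During the cancellation phase, the maximal prefix $p$ of $x_j=s_j'^{-1}$ is empty, because $svs^{-1}$ is reduced at both junctions with $v$. This is what turns ``$\|x_j\|$ drops by one'' into ``$|W_j|_a$ drops by $2$''. It is also why the constant phase is empty when $s$ is only partially cancelled.
\item The cleanest reason the direction cannot switch is that a cyclically reduced $v$ and $v^{-1}$ begin with different letters. So a nonempty prefix of $v^{\infty}$ is never a prefix of $v^{-\infty}$.
\item When $\ell=0$, the word $H_2$ is not visible in $H$. It must simply be chosen of length $\|v\|$ with prefix $\bar H_2$.
\end{itemize}

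Finally, with the paper's rule format $q_i\to q_i'a$ one gets $u_j=a\,u_{j-1}a^{-1}$. So the letters you append to $g_j$ are the $a_\theta^{-1}$; this changes nothing in your argument.
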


Despite the different phrasing, which is due to our specific purpose in this paper, Lemma \ref{unreduced base} has the same proof as in \cite{OS12}.  Further, each instance of $H_2$ cyclically permutes the tape word, while $\bar{H}_2$ cyclically permutes just a proper prefix or suffix of the word.  The former fact implies, for $h_i=\|H_i\|$ and $0\leq k\leq \ell$, that $W_{h_1}\equiv W_{h_1+kh_2}$.


\begin{lemma}[Lemma 2.6 of \cite{GW}] \label{unreduced base quadratic}

Let $\textbf{S}$ be an $S$-machine satisfying \Cref{simplify rules} and suppose every rule of $\textbf{S}$ multiplies the $Q_iQ_{i+1}$ sector (respectively the $Q_{i-1}Q_i$-sector) by a letter from the left (respectively from the right), with different rules corresponding to different letters.  Let $\pazocal{C}:W_0\to\dots\to W_t$ be a nonempty reduced computation whose base $B$ contains a subword of the form $Q_iQ_i^{-1}$ (respectively $Q_i^{-1}Q_i)$.  Suppose $\pazocal{C}$ has minimal length among computations between $W_0$ and $W_t$.  Then there exists a two-letter subword $UV$ of $B$ such that:

\begin{enumerate}

\item There exists a rule in the history of $\pazocal{C}$ which multiplies the $UV$-sector by a letter on the left or on the right

\item Letting $\pazocal{C}':W_0'\to\dots\to W_t'$ the restriction of $\pazocal{C}$ to the base $UV$, we have $t\leq 12n^2+2n$ for $n=\max(|W_0'|_a,|W_t'|_a)$.

\end{enumerate}

\end{lemma}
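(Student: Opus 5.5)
The plan is to split the computation at the first place where a ``cancelling'' sector $Q_iQ_i^{-1}$ (say; the case $Q_i^{-1}Q_i$ is symmetric) begins behaving periodically, and to use minimality of $\pazocal{C}$ to rule out long periodic stretches. Fix a subword $Q_iQ_i^{-1}$ of $B$. By hypothesis every rule of $\textbf{S}$ multiplies the $Q_iQ_{i+1}$-sector on the left by a letter, with distinct rules giving distinct letters. The neighbouring sector of the $q_i$-letter in the subword $Q_iQ_i^{-1}$ of $B$ is of the form $Q_iQ_i^{-1}$ as a two-letter base, so every rule of $\pazocal{C}$ acts nontrivially on it. Take $UV=Q_iQ_i^{-1}$; then condition (1) holds automatically since $\pazocal{C}$ is nonempty.

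For (2), apply \Cref{unreduced base} to the restriction $\pazocal{C}'$ of $\pazocal{C}$ to the base $UV$; its history equals $H$, which is reduced. This gives $H\equiv H_1H_2^\ell\bar H_2H_3$, where the $a$-length drops by $2$ at each step of $H_1$, stays equal to $\|H_2\|$ in the middle, and rises by $2$ at each step of $H_3$. Hence $\|H_1\|\le n/2$, $\|H_3\|\le n/2$, and $\|H_2\|\le n$, where $n=\max(|W_0'|_a,|W_t'|_a)$. The only thing left to bound is $\ell$.

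This is the main step. By the remark after \Cref{unreduced base}, each copy of $H_2$ returns the restricted word $W'_{h_1}$ to itself. I do not expect it to return the full word $W_{h_1}$, since other sectors may change. My approach is to apply a pigeonhole argument to the whole configuration. Each rule changes each sector of $B$ by at most two letters, by \Cref{simplify rules}, so during the periodic part the other sectors vary. However, $H_2^{k}$ acts on every two-letter sector of $B$ by a fixed ``word translation''. In a sector of type $Q_jQ_{j+1}$ with reduced base this means multiplication by fixed words, and \Cref{multiply one letter}(b) and \Cref{multiply two letters}(c) bound the history length by the tape lengths at the ends. The subtlety is that those lemmas assume a restricted alphabet of rules.

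To avoid this, I would first try to choose $UV$ more cleverly. Among all sectors of $B$ that are acted on nontrivially, pick the one whose restricted computation has the largest $n$, or equivalently consider all unreduced-base subwords. If every nontrivially acted-on sector had $\ell$ large compared with $n^2$, then comparing the powers $H_2^{k}$ for $k\le \ell$ would give two indices $k<k'$ at which every sector's tape word coincides. In cancelling sectors this follows from periodicity. In the remaining sectors the number of possible tape words of length at most $\max$ of the end lengths is bounded, and the lengths are controlled by the monotonicity statements (c)/(d) of the lemmas. Equal full configurations at two times contradict minimality of $\pazocal{C}$, because we could excise the portion in between. Tracking the constants ($\ell\le 12n$ or similar, giving $t\le \|H_2\|\ell+n+n\le 12n^2+2n$) is the routine part. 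The genuine obstacle I expect is justifying that the non-cancelling sectors are forced to repeat within $O(n)$ periods, rather than drifting with unbounded length growth. I would handle this by taking the $UV$ with maximal $n$, so that \Cref{multiply two letters}(b) bounds all intermediate lengths.
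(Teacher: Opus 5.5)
Your first steps are fine. Applying \Cref{unreduced base} to the $Q_iQ_i^{-1}$-sector gives $H\equiv H_1H_2^{\ell}\bar H_2H_3$ with $2h_1+h_2\le n_0$ and $2h_3+h_2\le n_0$, where $n_0$ is the end length of that sector. Your eventual choice of $UV$ (an acted-on sector with largest $n$) would also be acceptable. The gap is the one nontrivial step, the linear bound on $\ell$.

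First, $\ell$ cannot be bounded in terms of $n_0$. Take one state letter per part and two positive rules $\theta_a,\theta_b$ that multiply the $Q_1Q_2$-sector on the left by $a$, resp.\ $b$, and the $Q_0Q_1$-sector on the right by the same letter $x$. In the base $Q_0Q_1Q_1^{-1}Q_0^{-1}$, the history $(\theta_b\theta_a)^{\ell}$ only cycles $ab\to ba\to ab$ in the $Q_1Q_1^{-1}$-sector, while it takes the $Q_0Q_1$-sector from $1$ to $x^{2\ell}$. This computation is minimal of length $2\ell$, yet $n_0=2$. So the ``drift'' of other sectors that you want to rule out really occurs, and it is exactly what has to be exploited.

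Your pigeonhole argument cannot do this. Counting configurations whose sectors have length at most $m$ only forces a repetition after exponentially many periods, so it can never give $\ell=O(n)$. You also have no a priori bound on intermediate lengths. \Cref{multiply one letter} and \Cref{multiply two letters} require every rule to multiply the sector by a letter, with distinct rules giving distinct letters. Choosing the sector with maximal $n$ does nothing to secure that hypothesis; in the example both rules multiply $Q_0Q_1$ by the same $x$.

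The missing idea is the free-group dichotomy underlying \Cref{history as powers}.
In every sector of $B$, the fixed history $H_2$ acts as $w\mapsto AwB$ for fixed $A,B$ in the free group, so $H_2^k$ acts as $w\mapsto A^kwB^k$. Either $AwB=w$, or the length grows linearly in $k$; quantitatively, $\ell\le|w_{h_1}|+|w_{h_1+\ell h_2}|+2h_2$ for the tape words at the two ends of the periodic part.

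If every sector is fixed, then $W_{h_1}\equiv W_{h_1+h_2}$, and deleting one copy of $H_2$ contradicts minimality; no pigeonhole is needed. Otherwise some sector $UV$ is not fixed, so some rule acts on it, which gives (1). Each rule changes a sector's tape by at most two letters, and $2h_1\le n_0$, $2(\bar h_2+h_3)\le 2n_0$, $2h_2\le 2n_0$. Hence $\ell\le 2n_{UV}+5n_0$.

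Let $m=\max(n_{UV},n_0)$, and use whichever of $UV$ and $Q_iQ_i^{-1}$ realizes $m$; both satisfy (1). Then $t\le h_1+h_3+(\ell+1)h_2\le n_0+\ell n_0\le 7m^2+m\le 12m^2+2m$.
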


\medskip


\subsection{Primitive machines} \

This section introduces the \textit{primitive machines} discussed in \cite{GW} as well as some generalizations introduced in previous literature.  


The first primitive machine is $\textbf{LR}(Y)$, where $Y$ is some alphabet.  The standard base of the machine is $PQR$, with $P=\{p_1,p_2\}$, $Q=\{q_1,q_2\}$, and $R=\{r_1,r_2\}$; $p_1$, $q_1$, and $r_1$ are the start sate letters, while $p_2$, $q_2$, $r_2$ are the end state letters. The tape letters for the $PQ$ and $QR$ sectors are (respectively) two copies $Y_1,Y_2$ of $Y$, created by affixing the subscripts $y_1,y_2$ to each $y\in Y$.

The positive rules of $\textbf{LR}(Y)$ are defined as follows:

\begin{itemize}
	
	\item For every $y\in Y$, a leftward-translation $\tau_1(y)=[p_1\to p_1, \ q_1\to y_1^{-1}q_1y_2, \ r_1\to r_1]$
	
	\item The \textit{connecting rule} $\zeta=[p_1\xrightarrow{\ell} p_2, \ q_1\to q_2, \ r_1\to r_2]$
	
	\item For every $y\in Y$, a rightward-translation $\tau_2(y)=[p_2\to p_2, \ q_2\to y_1q_2y_2^{-1}, \ r_2\to r_2]$.

\end{itemize}

For any configuration $W$ of $\textbf{LR}(Y)$, let $w$ be the concatenation of all copies of the tape words over $Y$ (i.e. the pre-images of the mappings $Y\to Y_i$); applying any rule leaves the value of $w$ in $F(Y)$ unchanged. The use of this or a similar observation is called a \textit{projection argument}.



Projection arguments give the following useful description of this machine's operation:

\begin{lemma}[Lemma 3.1 of \cite{O18}] \label{primitive computations}

Let $\pazocal{C}:W_0\to\cdots\to W_t$ be a reduced computation of $\textbf{LR}(Y)$ in the standard base. Then:

\begin{enumerate}[label=({\arabic*})]

\item if $|W_{i-1}|_a<|W_i|_a$ for some $1\leq i\leq t-1$, then $|W_i|_a<|W_{i+1}|_a$

\item $|W_i|_a\leq\max(|W_0|_a,|W_t|_a)$ for each $i$

\item Suppose $W_0\equiv p_1~u~q_1r_1$ and $W_t\equiv p_2~v~q_2r_2$ for some $u,v\in F(Y_1)$.  Then $u\equiv v$, $|W_i|_a=\|u\|\defeq\ell$ for each $i$, $t=2\ell+1$, and the $PQ$-sector is locked in the rule $W_\ell\to W_{\ell+1}$. Moreover, letting $\bar{u}$ be the word obtained by reading $u$ from right to left, the history $H$ of $\pazocal{C}$ is a copy of $\bar{u}\zeta u$.

\item if $W_0\equiv p_j~u~q_jr_j$ (resp $W_0\equiv p_jq_j~u~r_j$) and $W_t\equiv p_j~v~q_jr_j$ (resp $W_t\equiv p_jq_j~v~r_j$) for some $u,v$ and $j\in\{1,2\}$, then $u\equiv v$ and the computation is empty (i.e $t=0$)


\item if $W_0$ is of the form $p_j~u~q_jr_j$ or $p_jq_j~v~r_j$ for $j\in\{1,2\}$, then $|W_i|_a\geq|W_0|_a$ for every $i$.

\end{enumerate}

\end{lemma}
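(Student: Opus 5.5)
The plan is to analyze the shape of a reduced history $H$ first, and then reduce each of the statements to the one-sector lemmas already quoted, chiefly \Cref{multiply one letter} and \Cref{multiply two letters}, together with a projection argument.

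\emph{Shape of the history.} The rules $\tau_1(y)$ only apply to configurations with state letters $p_1,q_1,r_1$, and the rules $\tau_2(y)$ only to those with $p_2,q_2,r_2$. The only rules changing the state letters are $\zeta^{\pm1}$. Hence a reduced history decomposes as $H\equiv H_0\zeta^{\pm1}H_1\zeta^{\mp1}H_2\cdots$, where each $H_j$ is a word in the $\tau_1$'s or in the $\tau_2$'s, alternating. Since $\zeta$ locks the $PQ$-sector, the $PQ$-sector is empty at every application of $\zeta^{\pm1}$. Within a block $H_j$, every rule multiplies the $PQ$-sector on the right by a letter of $Y_1^{\pm1}$ and the $QR$-sector on the left by the corresponding letter of $Y_2^{\pm1}$, with different rules corresponding to different letters. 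By \Cref{multiply one letter}(a), applied to the $PQ$-sector, the restriction of $H_j$ is a copy of the reduced form of $u_{\mathrm{end}}u_{\mathrm{start}}^{-1}$ (read appropriately). The analogous statement holds for the $QR$-sector. In particular, the two sectors are controlled by the same word.

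\emph{Parts (1), (2), (4), (5).} Within a single block, each rule changes the $a$-length of each sector by $\pm1$. Using the projection invariant $w\in F(Y)$ (the concatenation of the pre-images of the two tape words), the two sectors cancel or grow in lockstep. I expect to show that $|W_i|_a-|W_{i-1}|_a\in\{-2,0,2\}$, and that the value $0$ is impossible inside a reduced block away from the endpoints. Then the argument of \Cref{multiply two letters}(a) gives (1) inside a block.

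Across a $\zeta^{\pm1}$, the $a$-length is unchanged, and the $PQ$-sector is empty. So the $a$-length at that moment equals the length of the $QR$-tape. I will check that a decrease before $\zeta$ followed by an increase after is the only possibility, ruling out an increase followed by $\zeta$ and then a decrease. Concretely, after $\zeta$ the $QR$-tape can only shrink by moving letters back into $PQ$ until a cancellation, and a reduced history forbids immediately undoing the previous block. This gives (1) globally, and (2) follows from (1) since the length function is then ``valley-shaped''.

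For (4), if the history is nonempty and the endpoints have the same state, then either it is a single block, in which case \Cref{multiply one letter}(a) forces it to be empty because the $QR$-sector (resp.\ $PQ$) tape is empty at both ends, or it contains $\zeta\cdots\zeta^{-1}$. In the latter case I will derive a contradiction with the projection invariant and reducedness. Part (5) follows similarly: starting with one sector empty, the first step cannot decrease length.

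\emph{Part (3).} By the shape analysis together with (4), the history must be $H_0\zeta H_1$ with exactly one $\zeta$, where $H_0$ is a $\tau_1$-word and $H_1$ a $\tau_2$-word. The $PQ$-sector is empty before $\zeta$, so \Cref{multiply one letter}(a) gives that $H_0$ is a copy of $\bar u$ and the $QR$-tape is then the copy of $u$. Symmetrically, $H_1$ is a copy of $u$, the final $PQ$-tape is $u$ (so $v\equiv u$), $t=2\ell+1$, and the $a$-length stays $\ell$ throughout. The main obstacle is making (1) rigorous at the transition through $\zeta$ and excluding the length-$0$ steps in a block. I would handle this by a careful case check on cancellation at the $q$-letter, which is exactly where the hypothesis that distinct rules correspond to distinct letters is used.
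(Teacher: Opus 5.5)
Your skeleton is sound. Splitting $H$ into $\tau$-blocks separated by letters $\zeta^{\pm1}$, noting that the $PQ$-tape is empty whenever $\zeta^{\pm1}$ is applied, and reading each block off with \Cref{multiply one letter}(a) are the right ingredients. Your derivation of (3) is also correct once you know $H$ contains a single $\zeta$.

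The genuine problem is your plan for (1). Two of its claims are false: that the two tapes ``cancel or grow in lockstep'', and that a step with $|W_i|_a=|W_{i-1}|_a$ is impossible inside a reduced block. The rule $\tau_1(y)$ multiplies the $PQ$-tape on the right by $y_1^{-1}$ and the $QR$-tape on the left by $y_2$, so it typically \emph{transfers} a letter across $q_1$. The projection invariant says only that the product of the two tapes is constant, which is the opposite of lockstep. In the very computation of part (3), every step leaves the $a$-length unchanged, including the two steps adjacent to $\zeta$. So your claim that $\zeta$ is preceded by a decrease and followed by an increase fails as well, and an argument built on excluding zero steps cannot be completed.

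The correct argument for (1) is local:
\begin{itemize}
\item If $|W_{i-1}|_a<|W_i|_a$, the step was a $\tau$-rule that lengthened both tapes, creating new letters immediately to the left and to the right of the $q$-letter.
\item The next rule cannot be $\zeta^{\pm1}$, since it locks the now nonempty $PQ$-sector.
\item A next $\tau$-rule can cancel either new letter only if it is the inverse of the previous rule, because distinct rules correspond to distinct letters.
\end{itemize}
So both tapes grow again; this is just \Cref{multiply one letter}(c) applied to each sector within the block.

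For (3)--(5) you should also state explicitly the structural fact you are relying on: $H$ contains at most one letter $\zeta^{\pm1}$. Indeed, a block between two consecutive such letters starts and ends with an empty $PQ$-tape. By \Cref{multiply one letter}(a) it is then empty, which puts $\zeta^{\pm1}\zeta^{\mp1}$ into $H$ and contradicts reducedness. This is what should replace the unspecified ``contradiction with the projection invariant'' in (4).

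Finally, for (5) it is not enough to know that the first step does not decrease the length, since a zero step followed by a $-2$ step is not yet excluded. Argue instead as follows. The initially empty tape grows at every step of the first block. The $PQ$-tape, empty at the unique $\zeta^{\pm1}$, grows at every step of the second block. Hence no step ever decreases the $a$-length.
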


It is exceedingly useful to note that there exists a reduced computation of $\textbf{LR}(Y)$ of the form described in \Cref{primitive computations}(3) for any $u\in F(Y)$.


Two more useful statements are:


\begin{lemma}[Lemma 2.8 of \cite{GW}] \label{primitive time}

Let $\pazocal{C}:W_0\to\dots\to W_t$ be a reduced computation of $\textbf{LR}(Y)$ in the standard base.  Then $t\leq2\max(|W_0|_a,|W_t|_a)+1$.

\end{lemma}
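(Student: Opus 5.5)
The statement to prove is \Cref{primitive time}: a reduced computation $\pazocal{C}:W_0\to\dots\to W_t$ of $\textbf{LR}(Y)$ in the standard base satisfies $t\leq 2\max(|W_0|_a,|W_t|_a)+1$. The plan is to split according to whether the connecting rule $\zeta^{\pm1}$ occurs in the history $H$, using \Cref{primitive computations}.

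First, $\zeta^{\pm1}$ occurs at most once in $H$. Every rule other than $\zeta^{\pm1}$ preserves the state letters, and $\zeta$ changes $p_1,q_1,r_1$ into $p_2,q_2,r_2$. So between two occurrences of $\zeta^{\pm1}$ the states are all of index $2$ (or all of index $1$), and the two occurrences must be $\zeta^{-1}$ then $\zeta$, or $\zeta$ then $\zeta^{-1}$. The second one is applied to a configuration in which the $PQ$-sector is empty, since $\zeta$ locks that sector. For the computation to be reduced, the subcomputation strictly between them must then be nonempty with history over the $\tau_2$'s (or over the $\tau_1$'s).

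Such a subcomputation starts and ends with an empty $PQ$-sector. It multiplies the $PQ$-sector by single letters on the right and the $QR$-sector correspondingly on the left. By \Cref{multiply one letter}(a) applied to the $PQ$-sector, the history is a copy of the reduced form of $u_t u_0^{-1}=1$, so it is empty. This is a contradiction.

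\emph{Case 1: no connecting rule.} $H$ is then a reduced word over the $\tau_1$'s or over the $\tau_2$'s. Restrict to the $PQ$-sector; the effect there is multiplication by one letter per rule. \Cref{multiply one letter}(b) gives $t=\|H\|\leq \|u_0\|+\|u_t\|\leq |W_0|_a+|W_t|_a\leq 2\max(|W_0|_a,|W_t|_a)$.

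\emph{Case 2: exactly one connecting rule.} Write $H\equiv H_1\zeta^{\pm1}H_2$ and let $W_s$ be the configuration at which $\zeta^{\pm1}$ is applied. Because $\zeta$ locks the $PQ$-sector, $W_s$ (and $W_{s+1}$) has the form $p_jq_j\,v\,r_j$. Apply Case 1 to the subcomputation $W_0\to\dots\to W_s$, but use the $QR$-sector this time, where the $\tau_i$ also multiply by one letter. Since the $PQ$-sector of $W_s$ is empty, \Cref{multiply one letter}(a),(b) on both sectors give $\|H_1\|$ equal to the length of the $PQ$-tape of $W_0$. They also give that this is at most $|W_0|_a$, with more care as follows.

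The cleaner route is the projection argument. The product $w$ of the projected tape words is invariant, and the $PQ$-tape of $W_0$ is erased letter by letter. So $\|H_1\|\leq |W_0|_a - \|v\|$ is not immediate, but $\|H_1\|\leq |W_0|_a$ certainly is. Symmetrically, $\|H_2\|\leq |W_t|_a$. Hence $t\leq |W_0|_a+|W_t|_a+1\leq 2\max+1$.

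The main obstacle is bookkeeping Case 2 carefully. One has to check that \Cref{multiply one letter}(b) applied to the $PQ$-sector of the subcomputation $W_0\to W_s$ really gives $\|H_1\|\leq \|u_0\|+0\leq |W_0|_a$. It does, because the terminal $PQ$-tape is empty.
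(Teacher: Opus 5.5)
Your proof is correct. It is presumably the same argument as the cited Lemma 2.8 of \cite{GW}, though this paper gives no proof of its own to compare against. \Cref{multiply one letter}(a) on the $PQ$-sector rules out two occurrences of $\zeta^{\pm1}$. Splitting at the single occurrence, the $PQ$-tape is empty there because $\zeta$ locks that sector, so \Cref{multiply one letter}(b) on the $PQ$-sector gives $\|H_1\|\leq|W_0|_a$ and $\|H_2\|\leq|W_t|_a$.

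When writing it up, cut the Case 2 detour through the $QR$-sector and the projection argument. On the $QR$-sector, \Cref{multiply one letter}(b) only gives the weaker bound $\|H_1\|\leq\|v_0\|+\|v\|$, where $v_0$ and $v$ are the $QR$-tapes of $W_0$ and $W_s$. The one-line $PQ$-sector estimate you state at the end is the entire step.
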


%
%
%
%
%


\begin{lemma} [Lemma 3.4 of \cite{OS20}] \label{primitive unreduced} Suppose $W_0\to\dots\to W_t$ is a reduced computation of $\textbf{LR}(Y)$ with base $PQQ^{-1}P^{-1}$ (or $R^{-1}Q^{-1}QR$) such that $W_0\equiv p_jq_j~u~q_j^{-1}p_j^{-1}$ (or $W_0\equiv r_j^{-1}q_j^{-1}~v~q_jr_j$) for $j\in\{1,2\}$ and some word $u$ (or $v$). Then $|W_0|_a\leq\dots\leq|W_t|_a$ and all state letters of $W_t$ have the index $j$.  In particular, if $W_t\equiv p_jq_j~u'~q_j^{-1}p_j^{-1}$ (or $W_t\equiv r_j^{-1}q_j^{-1}~v'~q_jr_j$), then $t=0$.

\end{lemma}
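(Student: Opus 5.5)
Set up a projection argument in the same spirit as the one after the definition of $\textbf{LR}(Y)$. Every rule of $\textbf{LR}(Y)$ fixes the $p$- and $r$-letters within an index and multiplies only around the $q$-letters. For the base $PQQ^{-1}P^{-1}$, write $W_i\equiv p_{j_i}\,u_i\,q_{j_i}\,v_i\,q_{j_i}^{-1}\,w_i\,p_{j_i}^{-1}$. Here $u_i,w_i\in F(Y_1)$ and $v_i\in F(Y_2)$. Note that $W_0$ has $u_0=w_0=1$, so its only tape word is $v_0=u$, which lies in the $QQ^{-1}$-sector.

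\textbf{First step: the connecting rule cannot occur.} The rule $\zeta$ locks the $PQ$-sector, so by \Cref{locked sectors} it cannot apply to a word whose base contains $Q^{-1}$ adjacent to it. More concretely, $\zeta$ requires the $PQ$-sectors to be empty. I would show that, as long as the index is $j$, the $PQ$-sector and the $Q^{-1}P^{-1}$-sector are nonempty after any nontrivial reduced computation. I also need to check that $W_0$ itself, with empty $PQ$-sectors, cannot start with $\zeta$ unless the result still has all indices $j$. This needs care: if $j=1$ and $u_0=w_0=1$, then $\zeta$ could apply to $W_0$. I would handle this by noting that applying a rule replaces $q_j$ and $q_j^{-1}$ simultaneously, so the $QQ^{-1}$-sector is conjugated by $y_2\cdots$. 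The configuration $p_1q_1\,u\,q_1^{-1}p_1^{-1}$ can transit to index $2$ via $\zeta$. This suggests the intended statement restricts to histories of a single index, and I would verify the argument via the monotonicity below. The claim ``all state letters of $W_t$ have index $j$'' must then be read with this in mind. My plan is to argue that after $\zeta$ one can never return, since the subsequent $\tau_2$ rules only lengthen the word. I therefore expect the main obstacle to be pinning down exactly which transitions are possible.

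\textbf{Main step: monotonicity of lengths.} With the index fixed, the history is a reduced word in $\tau_j(y)^{\pm1}$. Each such rule multiplies the $PQ$-sector on the right by $y_1^{\mp1}$ and the $Q^{-1}P^{-1}$-sector on the left by $y_1^{\pm1}$. It also conjugates the $QQ^{-1}$-sector by $y_2^{\pm1}$. The $PQ$-sector starts empty, so by \Cref{multiply one letter}(a) its tape word $u_i$ is exactly the copy of the history read right to left, with no cancellation. Hence $\|u_i\|=i$, and symmetrically $\|w_i\|=i$. The middle sector is $h_i^{-1}uh_i$ for a word $h_i$ determined by the history. Its length is bounded below by $\|u\|-2i$, but this bound alone is not monotone. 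I would instead use the projection argument: the product $u_iv_iw_i$, read through the projections, equals $u$ in $F(Y)$, and this lets me compare lengths. Since $\|u_i\|+\|w_i\|=2i$ grows by exactly $2$ per step and the middle sector changes by at most $2$, I get $|W_{i}|_a\ge |W_{i-1}|_a$.

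\textbf{Conclusion.} If $W_t$ also has empty outer sectors, then $\|u_t\|=t=0$, so $t=0$. The other base $R^{-1}Q^{-1}QR$ is handled symmetrically.
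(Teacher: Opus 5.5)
You are right about the connecting rule, and this is a genuine defect of the statement as reproduced here. The paper gives no proof of its own, only the citation to OS20, so there is no argument to compare your route with.

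For the base $PQQ^{-1}P^{-1}$, the middle $QQ^{-1}$-sector carries the $QR$-alphabet $Y_2$, while $\zeta$ locks only the $PQ$-sector. Hence $W_0\equiv p_1q_1\,u\,q_1^{-1}p_1^{-1}$ is $\zeta$-admissible, and $W_0\to p_2q_2\,u\,q_2^{-1}p_2^{-1}$ is a reduced computation whose last word has index $2$. The same happens with $\zeta^{-1}$ when $j=2$.

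Your monotonicity count is correct. Starting from empty outer sectors, \Cref{multiply one letter}(a) makes each outer sector exactly a copy of the reduced history, so together they gain $2$ letters per $\tau$-step. Conjugating the middle sector by one letter changes its length by at most $2$. The projection identity $u_iv_iw_i=u$ is true but does no work.

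The proposal has three problems. It does not finish: it stops at a ``plan'' and an ``expected obstacle'' exactly where the index clause has to be settled. It opens by misapplying \Cref{locked sectors}: for a rule locking the $PQ$-sector, that lemma forbids subwords $PP^{-1}$ and $Q^{-1}Q$, not $QQ^{-1}$. And ``handled symmetrically'' is inaccurate for the other base.

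To close the argument, note that $\zeta^{\pm1}$ applies only when both outer sectors are empty. After $s$ consecutive $\tau$-steps from empty outer sectors, each outer sector has length exactly $s$. So a connecting rule can only be the first letter $\theta_1$ of the history. Any later one would have to be $\theta_2=\theta_1^{-1}$, contradicting reducedness.

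Thus the history is either a reduced word in the $\tau_j(y)^{\pm1}$, or it is $\zeta^{\pm1}H'$ with $H'$ a reduced word in the $\tau_{3-j}(y)^{\pm1}$. In both cases your count gives $|W_0|_a\le\dots\le|W_t|_a$, since the connecting step changes nothing and the outer sectors are still empty after it. The index clause holds exactly in the first case; in the second case every state letter of $W_t$ has index $3-j$. The ``in particular'' clause holds unconditionally. Index $j$ at $W_t$ with $t\ge1$ forces the first case, where the $PQ$-sector of $W_t$ has length $t$, so it is empty only when $t=0$.

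For the base $R^{-1}Q^{-1}QR$ the situation is different, not symmetric. There the middle $Q^{-1}Q$-sector carries $Y_1$, so \Cref{locked sectors} excludes $\zeta^{\pm1}$ outright and the lemma holds verbatim. That is the base on which your opening appeal to \Cref{locked sectors} is actually valid.
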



We also call upon the `flipped' version of $\textbf{LR}(Y)$, denoted $\textbf{RL}(Y)$. Formally, identifying the hardware of this machine with that of $\textbf{LR}(Y)$, the positive rules of $\textbf{RL}(Y)$ are the same as $\textbf{LR}(Y)$, save that the subscripts of the state letters in each $\tau_1(y)$ and $\tau_2(y)$ are switched. The obvious analogues of Lemmas \ref{primitive computations}--\ref{primitive unreduced} hold for $\textbf{RL}(Y)$.

We will find it useful to concatenate these primitive machines together. Formally, for a fixed $k\geq1$, the primitive machine $\textbf{LR}_k(Y)$ has standard base $PQR$ with $P=\{p_i\}_{i=1}^{2k}$, $Q=\{q_i\}_{i=1}^{2k}$, and $R=\{r_i\}_{i=1}^{2k}$, while the tape alphabets are again copies of $Y$.  The state letters with subscript $1$ are again the start state letters, while those with subscript $2k$ are now taken as the end state letters.  

The positive rules of $\textbf{LR}_k(Y)$ are then given as follows:

\begin{itemize}
	
	\item For every $i\in\{1,\dots,k\}$ and every $y\in Y$, the leftward-translation
	\begin{align*}
	\tau_{2i-1}(y)&=[p_{2i-1}\to p_{2i-1}, \ q_{2i-1}\to y_1^{-1}q_{2i-1}y_2, \ r_{2i-1}\to r_{2i-1}] 
	\intertext{\item For every $i\in\{1,\dots,k\}$, the connecting rule}
	\zeta_{2i-1}&=\{p_{2i-1}\xrightarrow{\ell} p_{2i}, \ q_{2i-1}\to q_{2i}, \ r_{2i-1}\to r_{2i}] 
	\intertext{\item For every $i\in\{1,\dots,k\}$ and every $y\in Y$, the rightward-translation}
	\tau_{2i}(y)&=[p_{2i}\to p_{2i}, \ q_{2i}\to y_1q_{2i}y_2^{-1}, \ r_{2i}\to r_{2i}] \\\intertext{\item For every $i\in\{1,\dots,k-1\}$, the connecting rule}
	\zeta_{2i}&=[p_{2i}\to p_{2i+1}, \ q_{2i}\xrightarrow{\ell} q_{2i+1}, \ r_{2i}\to r_{2i+1}]
	\end{align*}
	
\end{itemize}

Note that, per this convention, $\textbf{LR}_1(Y)=\textbf{LR}(Y)$.


The analogues of Lemmas \ref{primitive computations} and \ref{primitive unreduced} hold for the machines $\textbf{LR}_k(Y)$. We will make specific use of the analogue of Lemma \ref{primitive computations}(3):

\begin{lemma}[Lemma 2.8 of \cite{WEmb}] \label{LR_k analogue}

Let $\pazocal{C}:W_0\to\dots\to W_t$ be a reduced computation of $\textbf{LR}_k(Y)$ in the standard base. If $W_0\equiv p_1uq_1~r_1$ and $W_t\equiv p_{2k}vq_{2k}~r_{2k}$ for some $u,v\in F(Y_1)$, then $u\equiv v$, $|W_i|_a=|W_0|_a\defeq l$ for each $i$, and $t=2lk+2k-1$.

\end{lemma}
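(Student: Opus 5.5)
The plan is to prove Lemma \ref{LR_k analogue} by induction on $k$, with the case $k=1$ being exactly \Cref{primitive computations}(3). The idea is to cut the computation at the connecting rules $\zeta_j$. These are the only rules that change the index of the state letters, and the index can only move between $j$ and $j+1$ through $\zeta_j^{\pm1}$. Since $W_0$ has index $1$ and $W_t$ has index $2k$, the history must contain each $\zeta_j$ for $j=1,\dots,2k-1$ at least once.

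First I would show that, in a reduced history, each $\zeta_j$ appears exactly once and in positive form. Take the last occurrence of $\zeta_{2k-1}^{\pm1}$ and split the computation there. Before the split the indices lie in $\{1,\dots,2k-1\}$. Look at the configurations immediately before the first $\zeta_{2k-2}$ (or immediately after the last one) in each segment. Such a configuration is forced to have a particular sector empty: $\zeta_{2i}$ locks the $QR$-sector, and $\zeta_{2i-1}$ locks the $PQ$-sector. This is where \Cref{primitive computations}(4),(5) come in. A subcomputation that stays inside one index block $\{2i-1,2i\}$ acts as $\textbf{LR}(Y)$. If it started and ended with the same empty sector and the same index, \Cref{primitive computations}(4) would force it to be empty, which would give a cancellation $\zeta\zeta^{-1}$ and contradict reducedness. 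So the index sequence along the history is monotone.

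The history therefore factors as $H_1\zeta_1H_2\zeta_2\cdots\zeta_{2k-1}H_{2k}$, where each $H_j$ consists of translations $\tau_j(y)^{\pm1}$ only. Each block $H_{2i-1}\zeta_{2i-1}H_{2i}$ is then a computation of a copy of $\textbf{LR}(Y)$ whose endpoints are governed by the locking conditions. At the start of the block the $QR$-sector is empty: for $i=1$ this holds by hypothesis, and for $i>1$ it holds because $\zeta_{2i-2}$ locked it. At the end of the block the $QR$-sector is empty again: for $i=k$ this holds by hypothesis, and otherwise because $\zeta_{2i}$ is about to be applied. So each block starts in the form $p\,u\,q\,r$ and ends in the form $p\,v\,q\,r$. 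By \Cref{primitive computations}(3), the word is unchanged, the $a$-length stays constant at $l$, and the block has length $2l+1$.

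Across the $k$ blocks the word $u$ is preserved. The total length is $k(2l+1)$ from the blocks plus the $k-1$ connecting rules $\zeta_{2i}$, which gives $2lk+k+k-1=2lk+2k-1$. The $a$-length is constant because the $\zeta_{2i}$ do not change the tape.

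I expect the main obstacle to be the monotonicity step, that is, excluding back-and-forth passes through the connecting rules. The way I would handle it is to apply \Cref{primitive computations}(4) to the maximal subcomputations between two consecutive occurrences of connecting rules with the same index.
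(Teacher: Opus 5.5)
Your argument is correct, and it is the natural one (the paper gives no proof and simply cites \cite{WEmb}). Any failure of monotonicity in the index walk produces a subword $\zeta_j^{\epsilon}H'\zeta_j^{-\epsilon}$ with $H'$ free of connecting rules. The corresponding subcomputation begins and ends with the sector locked by $\zeta_j$ empty and at the same index, so \Cref{primitive computations}(4) forces $H'$ to be empty, which contradicts reducedness. Applying \Cref{primitive computations}(3) to each block $H_{2i-1}\zeta_{2i-1}H_{2i}$ then gives $u\equiv v$, constant $a$-length, and $t=k(2l+1)+(k-1)$.

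The only blemish is the detour through the last occurrence of $\zeta_{2k-1}^{\pm1}$. The assertion that all earlier indices lie in $\{1,\dots,2k-1\}$ is only true once monotonicity is known. It is also unnecessary, since the turnaround argument in your final paragraph suffices on its own. Likewise, \Cref{primitive computations}(5) is never needed, and the proof is a direct block decomposition rather than an induction on $k$.
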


As above, it is very useful to observe that there exists a computation of $\textbf{LR}_k(Y)$ of the form detailed in \Cref{LR_k analogue} for any $u\in F(Y)$.

The next statement is the analogue of \Cref{primitive time} for this setting.  Its proof can be adapted from the proof presented for the analogous statement in \cite{GW}.

\begin{lemma} \label{LR_k primitive time}

Let $\pazocal{C}:W_0\to\dots\to W_t$ be a reduced computation of $\textbf{LR}_k(Y)$ in the standard base.  Then $t\leq 2k\max(|W_0|_a,|W_t|_a)+2k-1$.

\end{lemma}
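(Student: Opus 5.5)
Write $M=\max(|W_0|_a,|W_t|_a)$. The plan is to reduce to the $k=1$ case of \Cref{primitive time} by cutting the history at the connecting rules. Each connecting rule $\zeta_j$ moves all state letters from index $j$ to index $j+1$. In the standard base every configuration has all three state letters sharing a common index, because every rule changes the three letters in lockstep. So the index is a well-defined \emph{phase} $j\in\{1,\dots,2k\}$ of each $W_i$, and the history factors as
$$H\equiv H_0\zeta^{\pm1}_{j_1}H_1\zeta^{\pm1}_{j_2}\cdots H_s,$$
where each $H_m$ is a word in translations $\tau_j(y)$ of a single phase.

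First I show the phase sequence is monotone, i.e.\ it never turns back. Suppose some $H_m$ is bracketed by $\zeta_j$ and $\zeta_j^{-1}$; say we pass $j\to j+1$, apply $\tau_{j+1}$-rules, and return $j+1\to j$. The connecting rule $\zeta_j$ locks one sector (the $PQ$-sector if $j$ is odd, the $QR$-sector if $j$ is even). Hence both the word entering $H_m$ and the word leaving it have that sector empty, so the $\tau_{j+1}$-rules give a computation of the multiply-one-letter type on that sector, starting and ending with empty tape. By \Cref{multiply one letter}(a), $H_m$ is then empty, and $\zeta_j\zeta_j^{-1}$ contradicts reducedness. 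Consequently $s\le 2k-1$, and the phases visited form an interval $j_0,j_0\pm1,\dots$ of length $s+1$.

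Next I bound each block. A translation in phase $j$ moves one letter between the $PQ$- and $QR$-sectors, so the projection argument holds along the whole computation. Within a block the lengths behave as in \Cref{multiply one letter}(c),(d), so $|W_i|_a\le M$ throughout; I expect the analogue of \Cref{primitive computations}(1),(2) to extend across connecting rules, since those rules do not change tape words. For an interior block $H_m$ with $0<m<s$, both ends are configurations in which a sector is locked. In the entering word the tape sits entirely in one sector, of length $\ell\le M$, and the block moves letters one at a time. By \Cref{multiply one letter}(a) the block length is at most $\ell\le M$; in fact it is exactly $\ell$, since the next connecting rule requires the other sector to be empty. The two end blocks $H_0$ and $H_s$ each have length at most $M$ by \Cref{multiply one letter}(b), applied on one sector with one endpoint having that sector empty or full. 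Here I will use the monotonicity of lengths to control the sector contents by $M$.

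Summing, the $s\le 2k-1$ connecting rules and $s+1\le 2k$ blocks give $t\le 2kM+2k-1$. The main obstacle is the end blocks. For example, if $s=0$ the whole computation is a single-phase translation computation with arbitrary endpoints. Then I would apply \Cref{multiply one letter}(b) to the $QR$-sector tape words $u_0,u_t$ directly to get $t\le |u_0|+|u_t|\le 2M$. I would need to check that this is still at most $2kM+2k-1$, which holds because $k\ge1$. In general, the slack from the non-maximal number of blocks absorbs the possible factor $2$ in the end blocks, exactly as in the $k=1$ proof in \cite{GW}.
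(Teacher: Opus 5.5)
Your proof is correct and is essentially the intended adaptation of the $k=1$ argument from \cite{GW}: cut the history at connecting rules, rule out backtracking via \Cref{multiply one letter}(a), and bound each block by \Cref{multiply one letter}, which gives $t\le (s+1)M+s$ for $s\ge1$ and $t\le 2M$ for $s=0$. Two small tightenings. First, $\ell\le M$ follows directly from the projection argument, since $\ell$ is the length of the reduced projection $w$ and hence $\ell\le|W_0|_a$; you do not need to appeal to the analogue of \Cref{primitive computations}(2). Second, for $s\ge1$ no ``slack'' is needed, and in fact there is none when $s=2k-1$: applying \Cref{multiply one letter}(b) to the sector emptied at the adjacent connecting rule already gives $\|H_0\|\le|W_0|_a$ and $\|H_s\|\le|W_t|_a$.
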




We include $\textbf{LR}_k(Y)$ as a primitive machine as well and, when the alphabet $Y$ is contextually clear, we denote all the primitive machines constructed above by $\textbf{LR}$, $\textbf{RL}$, and $\textbf{LR}_k$.

\medskip
	

\section{Parameters} \label{sec-parameters} \

The arguments spanning the rest of this paper are reliant on the \textit{highest parameter principle}, the dual to the lowest parameter principle described in \cite{O}. In particular, we introduce the relation $<<$ on parameters defined as follows:

If $\a_1,\a_2,\dots,\a_n$ are parameters with $\a_1<<\a_2<<\dots<<\a_n$, then for all $2\leq i\leq n$, it is understood that $\a_1,\dots,\a_{i-1}$ are assigned prior to the assignment of $\a_i$ and that the assignment of $\a_i$ is dependent on the assignment of its predecessors. The resulting inequalities are then understood as `$\a_i\geq$(any expression used henceforth involving $\a_1,\dots,\a_{i-1}$)'.

Specifically, the assignment of parameters we use here is:
\begin{align*}
\lambda^{-1}<&<k<<N<<c_0<<c_1<<c_2<<c_3<<L_0<< L<<K_0<<K\\
&<<J<<\delta^{-1}<<C_1<<C_2<<C_3<<N_1<<N_2<<N_3<<N_4<<N_5
\end{align*}

\medskip
	

\section{Computational constructions} \label{sec-enhanced}

Our goal in this section is to construct an $S$-machine whose computational makeup is sufficient for the proof of \Cref{main-theorem}.  Per the hypotheses of that statement, this construction begins with an arbitrary recognizing $S$-machine $\textbf{S}$ with designated time function $\TM_\textbf{S}$. 

An analogous objective was accomplished in Section 3 of \cite{GW} through the construction of the `enhanced machine' $\textbf{E}_\textbf{S}$.  This section begins with a very similar construction, but with two crucial alterations: (1) The five submachines are composed in a different order, and (2) The parallel function of the primitive machine in the `fourth step' is replaced with the parallel function of $\textbf{LR}_k$ (where $k$ is the parameter listed in \Cref{sec-parameters}).  Despite these changes, the proofs of many of the statements pertaining to this initial construction are virtually identical to those that were presented for the analogous statements in \cite{GW}, and are thus omitted or reduced to a brief discussion of any necessary deviations.

From there, the constructed machine is concatenated with a `mirror copy' of itself to introduce a certain level of symmetry which is necessary for the arguments of \Cref{sec-scopes}. Finally, the resulting machine is concatenated with a large number of copies of itself, enabling a version of small-cancellation in the corresponding diagrams (see \Cref{sec-minimal-diagrams}) which aids with the necessary combinatorial arguments.

\subsection{Historical sectors} \

As in \cite{GW}, the first step is to introduce `historical sectors' that keep track of the rules applied in a computation and in turn control the time and space complexity of the machine.

For this, let $Q_0Q_1\dots Q_s$ be the standard base of the machine $\textbf{S}$ and $\Phi^+$ be its positive rules.  For each $i\in\{0,\dots,s\}$, let $Q_{i,\ell}$ and $Q_{i,r}$ be two disjoint copies of $Q_i$ and let $X_{i,\ell}$, $X_{i,r}$ be two disjoint copies of $\Phi^+$.  We then construct the machine $\textbf{S}_h$ with standard base
$$Q_{0,\ell}Q_{0,r}Q_{1,\ell}Q_{1,r}\dots Q_{s,\ell}Q_{s,r}$$
The tape alphabet of the $Q_{i,\ell}Q_{i,r}$-sector is $X_{i,\ell}\sqcup X_{i,r}$, and the tape alphabet of the $Q_{i-1,r}Q_{i,\ell}$-sector is identified with that of the $Q_{i-1}Q_i$-sector of $\textbf{S}$.

The positive rules of $\textbf{S}_h$ are identified with $\Phi^+$, with the rule $\theta\in\Phi^+$ operating in the $Q_{i-1,r}Q_{i,\ell}$-sector as it does in the $Q_{i-1}Q_i$-sector of $\textbf{S}$ and multiplying the tape word of the \textit{historical sector} $Q_{i,\ell}Q_{i,r}$ on the left by the copy of $\theta^{-1}$ over the \textit{left historical alphabet} $X_{i,\ell}$ and on the right by the copy of $\theta$ over the \textit{right historical alphabet} $X_{i,r}$.

The rest of the details of the machine are defined in the obvious way: The input sectors are exactly the \textit{working sectors} $Q_{i-1,r}Q_{i,\ell}$ for which the $Q_{i-1}Q_i$-sector of $\textbf{S}$ is an input sector, while the start and end letters in $Q_{i,\ell},Q_{i,r}$ are the copies of those of $Q_i$.

The following statements exhibit the purpose and utility of this construction.

\begin{lemma}[Lemma 3.9 of \cite{O18}] \label{one alphabet historical words}

Let $W_0\to\dots\to W_t$ be a reduced computation of $\textbf{S}_h$ with base $Q_{i,\ell}Q_{i,r}$ and history $H$.  Suppose the $a$-letters of $W_0$ are all from the left (respectively right) historical alphabet.  Then $\|H\|\leq|W_t|_a$ and $|W_0|_a\leq|W_t|_a$.  Moreover, if $t\geq1$, then the tape word of $W_t$ contains letters from the right (respectively left) historical alphabet.

\end{lemma}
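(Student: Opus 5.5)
Since the computation has base $Q_{i,\ell}Q_{i,r}$, every positive rule $\theta\in\Phi^+$ multiplies the tape word on the left by the copy of $\theta^{-1}$ in $X_{i,\ell}$ and on the right by the copy of $\theta$ in $X_{i,r}$. Because $X_{i,\ell}$ and $X_{i,r}$ are disjoint alphabets, no cancellation can occur between a left-alphabet letter and a right-alphabet letter. So the plan is to project the tape word onto the two alphabets separately. I would write the tape word of $W_j$ as $u_j v_j$, with $u_j\in F(X_{i,\ell})$ and $v_j\in F(X_{i,r})$. I would first check that this decomposition persists: if it holds for $W_{j-1}$, then applying a rule produces $x u_{j-1} v_{j-1} y$ with $x$ a left letter and $y$ a right letter, and free reduction acts only within each factor. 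The sector keeps its $q$-letters at both ends, so no prefix or suffix removal occurs.

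The computation then splits into two independent one-sided computations. On the right factor, each positive rule multiplies on the right by a distinct letter, so \Cref{multiply one letter}(a) applies. The history $H$ is the natural copy of the reduced form of $v_0^{-1}v_t$. Symmetrically, reading $H$ as a word in $\Theta^+$ but acting on the left factor by the inverse letters, $H$ (read appropriately) is the copy of the reduced form of $u_tu_0^{-1}$. In the case where $W_0$ has only left-alphabet letters, $v_0$ is empty, so $H$ is the natural copy of $v_t$ itself, and since $H$ is reduced, $\|H\|=\|v_t\|$.

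For the length bound, note that $\|H\|=\|v_t\|\le |W_t|_a$ immediately. For $|W_0|_a\le|W_t|_a$, I would use the left factor. $u_t$ is the reduced form of $h^{-1}u_0$, where $h$ is the copy of $H$ over $X_{i,\ell}$ with appropriate orientation. So $\|u_0\|\le \|u_t\|+\|H\|=\|u_t\|+\|v_t\|=|W_t|_a$, and $|W_0|_a=\|u_0\|$. For the final claim, if $t\ge1$ then $H$ is nonempty, so $v_t$ is nonempty and $W_t$ contains right-alphabet letters. The respectively case is symmetric, with the roles of the two factors exchanged.

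The only step requiring care is the bookkeeping of orientation: left multiplication by $\theta^{-1}$ means $u_t\equiv$ the reduced form of $\theta_t^{-1}\cdots\theta_1^{-1}u_0$ in copied letters, while $v_t\equiv$ the reduced form of $v_0\theta_1\cdots\theta_t$. I would verify that reducedness of $H$ ensures no cancellation inside $\theta_1\cdots\theta_t$. That is immediate, since $H$ is a reduced word and the copy map is a bijection onto letters.
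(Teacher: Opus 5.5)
Your proof is correct. Under the hypothesis, the tape word stays of the form $u_jv_j$ with $u_j\in F(X_{i,\ell})$ and $v_j\in F(X_{i,r})$; reducedness of $H$ forces $v_t$ to be the copy of $H$, and since $u_0$ equals the left copy of $H$ times $u_t$ in the free group, $|W_0|_a\leq\|u_t\|+\|H\|=|W_t|_a$. The paper gives no proof of this lemma and simply cites Lemma 3.9 of \cite{O18}, and your projection argument is the natural direct proof of it.
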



\begin{lemma}[Lemma 5.14 of \cite{WEmb}] \label{one alphabet historical words unreduced}

Let $W_0\to\dots\to W_t$ be a reduced computation of $\textbf{S}_h$ with base $Q_{i,\ell}Q_{i,\ell}^{-1}$ (respectively $Q_{i,r}^{-1}Q_{i,r}$) and history $H$.  Suppose the $a$-letters of $W_0$ are all from the corresponding right (respectively left) historical alphabet.  Then $|W_0|_a=|W_t|_a-2\|H\|$.  Moreover, if $t\geq1$, then the tape word of $W_t$ contains letters from both the left and right historical alphabets.

\end{lemma}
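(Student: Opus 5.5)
The plan is to track the tape word of the single sector explicitly, which is possible because conjugation by left‑alphabet letters never interacts with right‑alphabet letters. I treat the base $Q_{i,\ell}Q_{i,\ell}^{-1}$; the case $Q_{i,r}^{-1}Q_{i,r}$ is symmetric, with the right historical alphabet and multiplication on the right.

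\emph{Effect of one rule.} Write $W_j\equiv q^{(j)}\,w_j\,(q^{(j)})^{-1}$ with $q^{(j)}\in Q_{i,\ell}$. A rule $\theta$ of $\textbf{S}_h$ has the part $q_{i,\ell}\to q_{i,\ell}'\,x_\theta$, where $x_\theta$ is the copy of $\theta^{-1}$ over $X_{i,\ell}$. Applying $\theta$ to $q\,w\,q^{-1}$ replaces $q$ by $q'x_\theta$ and $q^{-1}$ by $x_\theta^{-1}q'^{-1}$. Hence, before free reduction,
$$w_{j}=x_{\theta_j}\,w_{j-1}\,x_{\theta_j}^{-1}.$$
It follows that $w_t$ is freely equal to $h\,w_0\,h^{-1}$, where $h\equiv x_{\theta_t}\cdots x_{\theta_1}$ is a copy over $X_{i,\ell}$ of the history read backwards (up to inversion of letters).

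\emph{No cancellation.} I claim by induction on $j$ that the reduced form of $w_j$ is graphically $h_j\,w_0\,h_j^{-1}$, where $h_j\equiv x_{\theta_j}\cdots x_{\theta_1}$ is a reduced word. The key point is that $w_0$ is a nonempty word over the right alphabet $X_{i,r}$, while $h_j$ is over the left alphabet $X_{i,\ell}$. Therefore the only possible cancellation at step $j$ is between $x_{\theta_j}$ and the first letter $x_{\theta_{j-1}}$ of $h_{j-1}$. This cancellation happens exactly when $\theta_j=\theta_{j-1}^{-1}$, which is excluded because the history is reduced.

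I need $w_0\ne 1$, and I read the hypothesis that the $a$-letters of $W_0$ lie in the right alphabet as including this. If $w_0$ were empty, then $h_jh_j^{-1}$ would cancel completely and the claim "contains letters from both alphabets" would fail. So this degenerate case must be excluded, or handled as trivially outside the statement.

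\emph{Conclusion.} Counting letters gives $|W_t|_a=|W_0|_a+2\|H\|$, which is the stated equality read in the correct direction: the length grows by $2\|H\|$ from $W_0$ to $W_t$. If $t\ge1$, then $h_t$ is nonempty and consists of left‑alphabet letters, while $w_0$ contributes right‑alphabet letters, so $W_t$ contains both.

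The only step requiring care is the no‑cancellation induction. Its content is that the two historical alphabets are disjoint and that reducedness of $H$ forbids adjacent cancellation within $h$. Everything else is direct bookkeeping, exactly as in the proof of \Cref{one alphabet historical words}.
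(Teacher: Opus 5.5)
Your argument is correct. The paper gives no proof of its own here and simply cites Lemma 5.14 of \cite{WEmb}. Your direct bookkeeping is the standard argument behind that citation: each rule conjugates the tape word by a single left-alphabet letter, the nonempty right-alphabet word $w_0$ blocks any interaction between the two ends, and reducedness of $H$ rules out cancellation inside $h_j$.

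One small correction: $w_0\neq 1$ is not an extra reading of the hypothesis but is automatic. By condition (2) of admissibility, the two state letters of a word with base $Q_{i,\ell}Q_{i,\ell}^{-1}$ are the same letter $q$ with opposite exponents. An empty tape word would therefore make $W_0\equiv qq^{-1}$ unreduced, hence not admissible. This is the fact the paper uses elsewhere when it notes that unreduced sectors of admissible words have non-empty tape words.
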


\begin{lemma}[Lemma 3.12 of \cite{O18}] \label{M_2 bound}

Let $\pazocal{C}:W_0\to\dots\to W_t$ be a reduced computation of $\textbf{S}_h$.  If the base of $\pazocal{C}$ has length at least 3, then $|W_i|_a\leq9(|W_0|_a+|W_t|_a)$ for all $0\leq i\leq t$.

\end{lemma}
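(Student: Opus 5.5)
The plan is to control the total $a$-length of $W_i$ sector by sector, and to use the historical sectors as ``clocks'' that bound the length $h=\|H\|$ of the history in terms of the endpoint lengths. Write $B$ for the base, and for each sector $s$ of $B$ let $u_j(s)$ be its tape word in $W_j$. By \Cref{simplify rules}, each rule changes $|u_j(s)|$ by at most $2$ in any sector. Since $B$ has length at least $3$, it contains at least two consecutive sectors. Because working and historical sectors alternate in the standard base, every working sector of $B$ is adjacent to some historical sector of $B$ (either $Q_{i,\ell}Q_{i,r}$, its inverse, or an unreduced piece $Q_{i,\ell}Q_{i,\ell}^{-1}$ / $Q_{i,r}^{-1}Q_{i,r}$). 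Conversely, each historical sector is adjacent to at most two working sectors.

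First I would treat a historical sector with reduced base $Q_{i,\ell}Q_{i,r}$ (or its inverse). Each positive rule multiplies it on the left by a letter of $X_{i,\ell}$ and on the right by the corresponding letter of $X_{i,r}$, which is exactly the setting of \Cref{multiply two letters}. That lemma gives both $|u_j(s)|\le\max(|u_0(s)|,|u_t(s)|)$ and $h\le\frac12(|u_0(s)|+|u_t(s)|)$. Next I would bound an adjacent working sector $s'$ crudely: $|u_j(s')|\le|u_0(s')|+2\min(j,t-j)$, together with the symmetric bound from $u_t(s')$, so $|u_j(s')|\le|u_0(s')|+|u_t(s')|+2h\le |u_0(s')|+|u_t(s')|+|u_0(s)|+|u_t(s)|$. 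I would then charge the excess $2h$ of each working sector to a chosen neighbouring reduced historical sector. Each historical sector is charged at most twice. Summing over all sectors therefore gives $|W_j|_a\le C(|W_0|_a+|W_t|_a)$ with $C$ a small absolute constant; the bookkeeping should produce the stated $9$, after also accounting for the $q$-letters and the fact that endpoints appear with multiplicity.

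The main obstacle is the case where the only historical sectors available to charge against are unreduced, e.g.\ $Q_{i,\ell}Q_{i,\ell}^{-1}$. There a rule conjugates the tape word by a single historical letter, so \Cref{multiply two letters} does not apply and $h$ is not bounded by the endpoint lengths. For such sectors I would use \Cref{one alphabet historical words unreduced} when the tape word is over one alphabet. In general I would use \Cref{unreduced base}, which gives the factorization $H\equiv H_1H_2^\ell\bar H_2H_3$ with the length strictly decreasing, then constant, then strictly increasing. The historical sector itself then stays bounded by the maximum of its endpoint lengths. What remains is to show that the adjacent working sector cannot accumulate length during the periodic part $H_2^\ell$. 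For this I would first try \Cref{locked sectors}: a rule locking a neighbouring sector forbids the unreduced subword. Failing that, I would try a projection argument, comparing the working sector's contents after each full period $H_2$ with a mirror copy on the other side of the unreduced base to show they stay balanced. This step is where I expect the real difficulty to lie. If it cannot be pushed through, the argument would need to restrict to bases in which every working sector has a reduced historical neighbour.
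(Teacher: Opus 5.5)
Your charging argument works whenever a working sector has a reduced historical neighbour, and your observation that working and historical sectors alternate along any base is correct. The gap is the case you leave open, and it cannot be avoided: the length-$3$ base $Q_{i-1,r}Q_{i,\ell}Q_{i,\ell}^{-1}$ has no reduced historical sector.

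In the sector $Q_{i,\ell}Q_{i,\ell}^{-1}$ every rule conjugates the tape word by a letter of $X_{i,\ell}$. If the word contains a letter of $X_{i,r}$, a direct free-group count still gives $t\le\frac12(\|w_0\|+\|w_t\|)$. Note that \Cref{one alphabet historical words unreduced} covers only words entirely over $X_{i,r}$, which is the harmless alphabet, not the dangerous one. The bad case is a word entirely over $X_{i,\ell}$. Then $H\equiv H_1H_2^\ell\bar H_2H_3$ with $\ell$ arbitrary, and $t$ is not bounded in terms of $|W_0|_a+|W_t|_a$ at all. For example, take a rule of $\textbf{S}$ that fixes the state letters of $Q_{i-1},Q_i$ and locks the $Q_{i-1}Q_i$-sector. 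It can be applied any number of times to the word with empty working tape and with historical tape word equal to the left copy of that rule, and it leaves this word unchanged.

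So any bound on the working sector of the form ``endpoint $+2t$'' is hopeless here; you need a direct space bound. Neither of your remedies supplies one:
\begin{itemize}
\item \Cref{locked sectors} cannot exclude $Q_{i,\ell}Q_{i,\ell}^{-1}$, because every rule of $\textbf{S}_h$ multiplies every historical sector. Locking a working sector only excludes unreduced \emph{working} pieces.
\item The mirror/projection idea needs $Q_{i,\ell}^{-1}Q_{i-1,r}^{-1}$ in the base, which fails for the base above. Even when that sector is present, it only controls the product of the two working words up to conjugacy.
\end{itemize}
Falling back to bases where every working sector has a reduced historical neighbour would not prove the stated lemma.

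Here is one way to close the gap. Each occurrence of $H_2$ acts on the neighbouring working tape word by $u\mapsto AuB$, for fixed words $A,B$ with $\|A\|,\|B\|\le\|H_2\|$ by \Cref{simplify rules}. So at period boundaries the working word is the reduced form of $A^kuB^k$, for $0\le k\le\ell$.

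Write $A=pap^{-1}$ and $B=qbq^{-1}$ with $a,b$ cyclically reduced. Then $\|A^kuB^k\|=d(A^{-k},uB^k)$ differs by at most $\|p\|+\|q\|$ from the distance, in the Cayley tree of the free group, between $pa^{-k}$ and $uqb^k$. These two points move at constant speed along geodesics, so their distance is convex in $k$. Hence $\|A^kuB^k\|\le\max(\|u\|,\|A^\ell uB^\ell\|)+\|A\|+\|B\|$. Within a single period you lose at most another $2\|H_2\|$.

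Finally, \Cref{unreduced base} gives $\|H_1\|\le\frac12\|w_0\|$, $\|\bar H_2H_3\|\le\|H_2\|+\frac12\|w_t\|$, and $\|H_2\|\le\min(\|w_0\|,\|w_t\|)$, where $w_0,w_t$ are the historical tape words. Together these bound the working sector linearly in its own endpoint lengths and those of its historical neighbour. Your charging scheme then goes through unchanged.
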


\medskip


\subsection{Composition of machines} \

The next machine in our construction, called the \textit{standard $k$-enhanced machine} and denoted $\textbf{E}_{\textbf{S},k}^0$, functions as the \textit{composition} of (a `padded' version of) $\textbf{S}_h$ with four other recognizing machines.

This construction is carried out in much the same way as in \cite{GW}: The machine is made up of five recognizing `submachines' $\textbf{E}_{\textbf{S},k}^0(1),\dots,\textbf{E}_{\textbf{S},k}^0(5)$ which are connected through the \textit{transition rules} $\sigma(i,i+1)$ which switch the state letters from the end letters of $\textbf{E}_{\textbf{S},k}^0(i)$ to the start letters of $\textbf{E}_{\textbf{S},k}^0(i+1)$ (with some restricted domains).  Naturally, the start letters of $\textbf{E}_{\textbf{S},k}^0(1)$ and the end letters of $\textbf{E}_{\textbf{S},k}^0(5)$ are taken to be the start and end letters of the machine, respectively, while the input sectors of $\textbf{E}_{\textbf{S},k}^0(1)$ are the input sectors of the machine.  The standard base of $\textbf{E}_{\textbf{S},k}^0$ is:
$$B_0\equiv(P_0Q_{0,\ell}Q_{0,r}R_0)(P_1Q_{1,\ell}Q_{1,r}R_1)\dots(P_sQ_{s,\ell}Q_{s,r}R_s)$$
In what follows, we will take the length of $B_0$ to be $(N-1)/2$, where $N$ is the parameter listed in \Cref{sec-parameters}.  Note that this assignment is possible since we may add as many locked sectors to $\textbf{S}$ as we desire without changing its computational makeup, and so can take $N$ as large as necessary.

The tape alphabet of the $R_{i-1}P_i$-sector is identified with that of the $Q_{i-1,r}Q_{i,\ell}$-sector of $\textbf{S}_h$, while the tape alphabets of the $P_iQ_{i,\ell}$-, $Q_{i,\ell}Q_{i,r}$-, and $Q_{i,r}R_i$-sectors consist of \textit{left} and \textit{right historical alphabets} which are copies of $\Phi^+$.  As such, we adopt the terminology from the previous section that the $R_{i-1}P_i$-sectors are called \textit{working} while the others are called \textit{historical}.

Both the parts of the state letters and the positive rules of the machine are the disjoint union of those of the five submachines.  As such, we fully understand the machine by describing the makeup of the five submachines and the domains of the transition rules.  To this end:

\begin{itemize}

\item Each part of the state letters of the submachine $\textbf{E}_{\textbf{S},k}^0(1)$ consists of a single letter, functioning as both the start and end state letter of the corresponding part.  The input sectors are the working sectors corresponding to the input sectors of $\textbf{S}$.  The positive rules of the machine are in one-to-one correspondence with $\Phi^+$, with the rule corresponding to $\theta\in\Phi^+$ multiplying each $Q_{i,\ell}Q_{i,r}$-sector on the left by the copy of $\theta$ in the left historical alphabet (the left alphabet being its domain in this sector) and locking all other non-input sectors.

\item The submachine $\textbf{E}_{\textbf{S},k}^0(2)$ operates `in parallel' as the primitive machine $\textbf{RL}=\textbf{RL}(\Phi^+)$ on each of the subwords $P_iQ_{i,\ell}Q_{i,r}$ of the standard base, taking the corresponding left historical alphabets as the copies of $\Phi^+$ and locking all other non-input sectors.  To match the makeup of $\textbf{RL}$, each part of the state letters contains two letters, with the transition from the start to the end letters given by the copy of the connecting rule.

\item $\textbf{E}_{\textbf{S},k}^0(3)$ functions as a copy of $\textbf{S}_h$, with each rule operating on the working sectors and the $Q_{i,\ell}Q_{i,r}$-sectors as its analogue and locking all other sectors.

\item The submachine $\textbf{E}_{\textbf{S},k}^0(4)$ operates in parallel as the primitive machine $\textbf{LR}_k=\textbf{LR}_k(\Phi^+)$ on each of the subwords $Q_{i,\ell}Q_{i,r}R_i$ of the standard base, taking the corresponding right historical alphabets as the copies of $\Phi^+$ and locking all other sectors.  Each part of the state letters contains $2k$ letters.

\item The submachine $\textbf{E}_{\textbf{S},k}^0(5)$ functions in the same way as $\textbf{E}_{\textbf{S},k}^0(1)$, except that the rule corresponding to $\theta\in\Phi^+$ multiplies each $Q_{i,\ell}Q_{i,r}$-sector on the right by the copy of $\theta^{-1}$ in the right alphabet (its domain in this sector) and locks all other (including input) sectors.

\item The transition rules $\sigma(12)$ and $\sigma(23)$ have identical domains: In each input sector it is the entire tape alphabet, in each $Q_{i,\ell}Q_{i,r}$-sector it is the left historical alphabet, and in any other sector it is empty.

\item The transition rules $\sigma(34)$ and $\sigma(45)$ also have identical domains.  However, their domain in each $Q_{i,\ell}Q_{i,r}$-sector is the right historical alphabet, while that in all other sectors (including the input sectors) is empty.

\end{itemize}

As discussed in the introduction to this section, this construction can be interpreted as a generalization of that of the standard enhanced machine in \cite{GW}.  Indeed, the standard enhanced machine can be interpreted as the machine obtained from $\textbf{E}_{\textbf{S},k}^0$ by:

\begin{itemize}

\item Taking $k=1$.

\item Switching the side on which the rules of the submachines $\textbf{E}_{\textbf{S},1}^0(1)$ and $\textbf{E}_{\textbf{S},1}^0(5)$ multiply the tape words of the $Q_{i,\ell}Q_{i,r}$-sectors.

\item Altering $\textbf{E}_{\textbf{S},1}^0(2)$ so that it operates in parallel as $\textbf{LR}$ on the subwords $Q_{i,\ell}Q_{i,r}R_i$.

\item Performing the analogous modification to $\textbf{E}_{\textbf{S},1}^0(4)$.

\end{itemize}  

One may understand these points as `swapping' the submachines (1),(5) and (2),(4), though strictly speaking this is not quite the case since we must also swap the roles of the tape alphabets in each.

As in \cite{GW}, the composition above functions to control the computational makeup of the machine: 


\begin{itemize}

\item The submachines $\textbf{E}_{\textbf{S},k}^0(1)$ and $\textbf{E}_{\textbf{S},k}^0(5)$ function to add and delete history words, making the time function equivalent to $\TM_\textbf{S}$ (whereas the time function of $\textbf{S}_h$ is linear).

\item The submachines $\textbf{E}_{\textbf{S},k}^0(2)$ and $\textbf{E}_{\textbf{S},k}^0(4)$, on the other hand, give tight control on how a reduced computation with unreduced base can proceed.

\end{itemize}

Moreover, as will be made precise in the next section, the introduction of the parameter $k$ allows us to also control how long certain computations operate as $\textbf{E}_{\textbf{S},k}^0(4)$.

Note that by the assignment of the input sectors, there is a correspondence between the input configurations of $\textbf{S}$ and those of $\textbf{E}_{\textbf{S},k}^0$ given by simply adding several locked sectors.  Given an input configuration $W$ of $\textbf{S}$, we denote the corresponding input configuration of $\textbf{E}_{\textbf{S},k}^0$ by $I_0(W)$.  Note that the input of $I_0(W)$ is the same as that of $W$.

\medskip


\subsection{Computations of the standard $k$-enhanced machine} \label{sec-standard-computations} \

In order to understand the computational makeup of $\textbf{E}_{\textbf{S},k}^0$, it is useful to break down any reduced computation into its subcomputations which operate as a submachines $\textbf{E}_{\textbf{S},k}^0(j)$.  To keep track of this, we consider the computation's `step history', a concept used in the same way in \cite{GW} and developed in \cite{O18}, \cite{OS20}.



By construction, the history $H$ of a reduced computation of $\textbf{E}_{\textbf{S},k}^0$ is the product of transition rules and maximal nonempty products of rules of one of the five defining submachines $\textbf{E}_{\textbf{S},k}^0(j)$. Replacing each transition rule $\sigma(ij)$ by $(ij)$ and condensing each (maximal) product of rules of $\textbf{E}_{\textbf{S},k}^0(i)$ to $(i)$ gives the \textit{step-history} of $H$. We refer the reader to \cite{WEmb} for a more detailed description, and here merely set out the convention that the step $(ij)$ may be omitted whenever its presence is implied by context (i.e. $(2)(23)(3)$ will be written as $(2)(3)$).


A key step in our argument is studying which subwords are \textit{impossible} to have in a step history:
\begin{lemma}[Compare with Lemma 3.3 of \cite{GW}] \label{E primitive step history}

Let $\pazocal{C}:W_0\to\dots\to W_t$ be a reduced computation of $\textbf{E}_{\textbf{S},k}^0$ with base $B$.

\begin{enumerate}[label=(\alph*)]

\item If $B$ contains a subword $B'$ of the form $(P_iQ_{i,\ell}Q_{i,r})^{\pm1}$, of the form $P_iQ_{i,\ell}Q_{i,\ell}^{-1}P_i^{-1}$, or of the form $Q_{i,r}^{-1}Q_{i,\ell}^{-1}Q_{i,\ell}Q_{i,r}$, then the step history of $\pazocal{C}$ is not $(12)(2)(21)$ or $(32)(2)(23)$.

\item If $B$ contains a subword $B'$ of the form $(Q_{i,\ell}Q_{i,r}R_i)^{\pm1}$, of the form $Q_{i,\ell}Q_{i,r}Q_{i,r}^{-1}Q_{i,\ell}^{-1}$, or of the form $R_i^{-1}Q_{i,r}^{-1}Q_{i,r}R_i$, then the step history of $\pazocal{C}$ is not $(34)(4)(43)$ or $(54)(4)(45)$.

\end{enumerate}

\end{lemma}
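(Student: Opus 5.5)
Argue by contradiction, restricting $\pazocal{C}$ to the subword $B'$; restriction preserves reducedness of the history and the step history. By symmetry (inverting the computation swaps $(12)(2)(21)$ with itself and exchanges the roles of $\sigma(12)$ and $\sigma(23)$ only up to relabelling), it suffices to treat $(12)(2)(21)$ and $(32)(2)(23)$ in part (a). Part (b) is then the mirror image, with $\textbf{LR}_k$ in place of $\textbf{RL}$, right historical alphabets in place of left ones, and $\sigma(34),\sigma(45)$ in place of $\sigma(12),\sigma(23)$.

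Write $H\equiv\sigma H'\sigma^{-1}$ with $\sigma=\sigma(12)$ (respectively $\sigma(32)=\sigma(23)^{-1}$) and $H'$ a nonempty reduced word in the rules of $\textbf{E}_{\textbf{S},k}^0(2)$. The first step reads off the state letters. Since $W_0\cdot\sigma$ and $W_{t-1}$ are both $\sigma^{-1}$-admissible, every state letter of $W_1$ and of $W_{t-1}$ is a start letter of the copy of $\textbf{RL}$ (after $\sigma(12)$), respectively an end letter (after $\sigma(32)$, i.e.\ the inverse of $\sigma(23)$ applied to the start letters of step 3). The second step reads off the tape words. The domains of $\sigma(12)$ and $\sigma(23)$ are empty in the $P_iQ_{i,\ell}$-sectors and are the left historical alphabet in $Q_{i,\ell}Q_{i,r}$. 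Hence in $W_1$ and $W_{t-1}$ the $P_iQ_{i,\ell}$-sectors are empty and the $Q_{i,\ell}Q_{i,r}$-sectors contain only left letters, which are the letters $\textbf{RL}$ acts on.

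Case $B'=(P_iQ_{i,\ell}Q_{i,r})^{\pm1}$. Here $W_1\to\dots\to W_{t-1}$ is a reduced nonempty computation of $\textbf{RL}$ in its standard base (or its inverse). It starts and ends with all state letters of the same index $j$, and one of the two sectors is empty, so it has the form $p_jq_j\,u\,r_j$ to $p_jq_j\,v\,r_j$. By the $\textbf{RL}$ analogue of \Cref{primitive computations}(4), such a computation is empty, contradicting the nonemptiness of $H'$.

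Case $B'=P_iQ_{i,\ell}Q_{i,\ell}^{-1}P_i^{-1}$ or $Q_{i,r}^{-1}Q_{i,\ell}^{-1}Q_{i,\ell}Q_{i,r}$. Now the configurations have the form $p_jq_j\,u\,q_j^{-1}p_j^{-1}$ at both ends, with all indices equal to $j$ and the outer $P$-sectors empty. The $\textbf{RL}$ analogue of \Cref{primitive unreduced} then forces $t-2=0$, again a contradiction. One point needs care here: in $\textbf{RL}$ the left/right roles are flipped relative to $\textbf{LR}$, so I must match each of the two unreduced bases to the correct one of the two forms in \Cref{primitive unreduced}, with $P^{-1}$ playing the role of $R$ under the flip. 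The second base also involves $Q_{i,r}$, on which step 2 does not act apart from locking. So I will check that this outer sector is locked or empty under $\sigma$, which reduces the argument to the inner base $Q_{i,\ell}^{-1}Q_{i,\ell}$ together with the adjacent sectors; \Cref{locked sectors} controls which unreduced bases are admissible.

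I expect the main obstacle to be this bookkeeping: confirming, for each of the four listed subwords and each of the two step histories, that the boundary configurations really are of the shape required by the primitive-machine lemma, given the exact domains of the transition rules. For part (b), the extra ingredient is that the end state of $\textbf{LR}_k$ has index $2k$ while the start has index $1$. I will therefore use the $\textbf{LR}_k$ analogues of \Cref{primitive computations}(4) and \Cref{primitive unreduced}, which the paper asserts hold, each applied with a single index $j\in\{1,2k\}$ at both ends.
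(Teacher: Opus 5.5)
Your overall route matches the paper's, which proves this by citing the argument of \cite{GW} with the $\textbf{LR}_k$ analogues of \Cref{primitive computations}(4) and \Cref{primitive unreduced}. You restrict to $B'$, read off the state letters and tape words forced by the transition domains, and apply the primitive-machine lemmas. Your treatment of $B'=(P_iQ_{i,\ell}Q_{i,r})^{\pm1}$ and of $B'=P_iQ_{i,\ell}Q_{i,\ell}^{-1}P_i^{-1}$ is correct.

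The case $B'=Q_{i,r}^{-1}Q_{i,\ell}^{-1}Q_{i,\ell}Q_{i,r}$, however, is handled wrongly.
\begin{itemize}
\item The words $W_1,W_{t-1}$ are not of the shape $p_jq_j\,u\,q_j^{-1}p_j^{-1}$ here. The two outer sectors of this base are copies of the $Q_{i,\ell}Q_{i,r}$-sector.
\item That sector is exactly where $\textbf{E}_{\textbf{S},k}^0(2)$ acts: it is the ``$QR$''-sector of $\textbf{RL}$ on $P_iQ_{i,\ell}Q_{i,r}$, where the tape word sits before and after step $(2)$. So your claim that step $(2)$ does not act on the sectors through $Q_{i,r}$ is false.
\item $\sigma(12)$ and $\sigma(23)$ have the left historical alphabet as domain there. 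Hence your planned check that ``this outer sector is locked or empty under $\sigma$'' fails.
\end{itemize}
The sector that matters is the middle one, $Q_{i,\ell}^{-1}Q_{i,\ell}$, which is a folded copy of the $P_iQ_{i,\ell}$-sector. Since $\sigma(12)^{\pm1}$ and $\sigma(23)^{\pm1}$ lock $P_iQ_{i,\ell}$, \Cref{locked sectors} forbids the subword $Q_{i,\ell}^{-1}Q_{i,\ell}$ in the base of any word admissible for these rules. So this case is simply vacuous; no primitive-machine lemma is needed.

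The same confusion carries into your ``mirror image'' for (b). There the vacuous case and the genuine case appear in the opposite order in the statement from the pattern you set up in (a).
\begin{itemize}
\item $Q_{i,\ell}Q_{i,r}Q_{i,r}^{-1}Q_{i,\ell}^{-1}$ contains $Q_{i,r}Q_{i,r}^{-1}$. This is forbidden by \Cref{locked sectors}, since $\sigma(34)$ and $\sigma(45)$ lock the $Q_{i,r}R_i$-sector, so this case is vacuous. It is not a $p_jq_j\,u\,q_j^{-1}p_j^{-1}$-type application of \Cref{primitive unreduced}: its outer sectors carry the right historical letters, and it is the inner sector that is locked.
\item $R_i^{-1}Q_{i,r}^{-1}Q_{i,r}R_i$ is the case with content. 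After the transition, the outer sectors (copies of $Q_{i,r}R_i$) are empty and the middle sector carries right historical letters. So the boundary words have the shape $r_j^{-1}q_j^{-1}\,v\,q_jr_j$ with the same $j\in\{1,2k\}$ at both ends, and the $\textbf{LR}_k$ analogue of \Cref{primitive unreduced} gives emptiness of the step-$(4)$ subcomputation, the required contradiction.
\end{itemize}
With these two one-line corrections the argument is complete.
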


%
%

\begin{proof}

The proof is identical to that presented in \cite{GW}, using the analogues of \Cref{primitive computations}(4) and \Cref{primitive unreduced} for $\textbf{LR}_k$.

\end{proof}

\begin{lemma}[Lemma 3.4 of \cite{GW}] \label{E run step history}

Let $\pazocal{C}:W_0\to\dots\to W_t$ be a reduced computation of $\textbf{E}_{\textbf{S},k}^0$ with base $B$.

\begin{enumerate}[label=(\alph*)]

\item If $B$ contains a subword $UV$ of the form $(Q_{i,\ell}Q_{i,r})^{\pm1}$ or of the form $Q_{i,r}^{-1}Q_{i,r}$, then the step history of $\pazocal{C}$ is not $(23)(3)(32)$.

\item If $B$ contains a subword $UV$ of the form $(Q_{i,\ell}Q_{i,r})^{\pm1}$ or of the form $Q_{i,\ell}Q_{i,\ell}^{-1}$, then the step history of $\pazocal{C}$ is not $(43)(3)(34)$.

\end{enumerate}

\end{lemma}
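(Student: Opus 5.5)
We argue by contradiction and treat (a) and (b) separately; the two parts are symmetric, with the roles of the left and right historical alphabets interchanged. For (a), suppose the step history is $(23)(3)(32)$. Then $W_0$ and $W_t$ are both admissible for $\sigma(32)=\sigma(23)^{-1}$. By the description of the domain of $\sigma(23)$, the tape words of $W_0$ and $W_t$ in every $Q_{i,\ell}Q_{i,r}$-sector (and in the related unreduced sectors) consist only of letters from the left historical alphabet. The subcomputation $W_1\to\dots\to W_{t-1}$ operates as $\textbf{E}_{\textbf{S},k}^0(3)$, a copy of $\textbf{S}_h$, and its history $H'$ is nonempty and reduced. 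Transition rules do not alter tape words, since their domain is a subset of the tape alphabet and they have trivial $\a,\omega$. So $W_1$ and $W_{t-1}$ also have tape words in the $UV$-sector over the left alphabet only.

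Next restrict this subcomputation to the two-letter base $UV$. If $UV=Q_{i,\ell}Q_{i,r}$, apply \Cref{one alphabet historical words} to $W_1\to\dots\to W_{t-1}$ restricted to this sector. The initial word has only left-alphabet letters and the length is at least $1$, so the final tape word must contain a letter of the right historical alphabet. This contradicts the observation above. If $UV=(Q_{i,\ell}Q_{i,r})^{-1}$, invert the orientation of the sector and argue the same way. If $UV=Q_{i,r}^{-1}Q_{i,r}$, the tape letters are again left-alphabet letters, now in an unreduced sector. Here \Cref{one alphabet historical words unreduced}, in its respective form for base $Q_{i,r}^{-1}Q_{i,r}$ and left-alphabet content, forces $W_{t-1}$ to contain letters from both alphabets, which is again a contradiction.

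For (b) we argue symmetrically with step history $(43)(3)(34)$. The domain of $\sigma(34)$ in the $Q_{i,\ell}Q_{i,r}$-sectors is the right historical alphabet, so the restricted words contain only right-alphabet letters at both ends of the $(3)$-subcomputation. We then apply \Cref{one alphabet historical words} in its ``respectively'' form, and \Cref{one alphabet historical words unreduced} for the base $Q_{i,\ell}Q_{i,\ell}^{-1}$.

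The only delicate point is making sure the restriction of $\pazocal{C}$ to the sector $UV$ is still a reduced computation. A reduced history of $\textbf{S}_h$ restricted to a sector remains reduced as a word in the rules, and each rule of $\textbf{S}_h$ acts nontrivially on every historical sector, multiplying by a letter on each side. So the restricted history has the same length $t-2\geq1$, and the cited lemmas apply. This nontriviality, rather than any computation, is the step to check carefully.
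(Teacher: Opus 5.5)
Your argument is correct and is essentially the proof in \cite{GW}: the tape domains of $\sigma(23)$ (resp.\ $\sigma(34)$) force the $UV$-tape words of $W_1$ and $W_{t-1}$ to lie in the left (resp.\ right) historical alphabet. Then \Cref{one alphabet historical words} and \Cref{one alphabet historical words unreduced} show that a nonempty reduced run of $\textbf{E}_{\textbf{S},k}^0(3)$ started from such a word introduces letters of the other alphabet, so $W_{t-1}$ cannot be $\sigma(32)$- (resp.\ $\sigma(34)$-) admissible. Two small remarks: $W_0$ and $W_t$ are $\sigma(23)$-admissible rather than $\sigma(32)$-admissible (the rules have the same tape domain, so nothing changes), and the ``delicate point'' you flag is automatic, because the restricted computation carries the same reduced history of length $t-2\geq 1$.
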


%
%
%

\begin{lemma}[Compare with Lemma 3.5 of \cite{GW}] \label{inputs accepted}

Let $\pazocal{D}$ be a reduced computation of $\textbf{S}$ accepting the input configuration $W$.  Letting $H\in F(\Phi^+)$ be the history of $\pazocal{D}$, there exists a reduced computation of $\textbf{E}_{\textbf{S},k}^0$ with step history $(1)(2)(3)(4)(5)$ and length $(2k+5)\|H\|+(2k+4)$ which accepts the input configuration $I_0(W)$.

\end{lemma}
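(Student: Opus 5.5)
The plan is to build the accepting computation explicitly, one step at a time. Write $H\equiv\theta_1\cdots\theta_t$ with $t=\|H\|$, and let $h_\ell$ and $h_r$ denote the copies of $H$ over the left and right historical alphabets. The exact orientation of each copy (read left-to-right or right-to-left) is fixed by the requirement in step (3) below. In every step I will check two things: that the configuration is admissible for each rule applied, and that it lies in the domain of each transition rule.

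\emph{Step (1).} Starting from $I_0(W)$, apply the $t$ rules of $\textbf{E}_{\textbf{S},k}^0(1)$ corresponding to the letters of $H$, in the order dictated by step (3). These rules lock all non-input sectors except the $Q_{i,\ell}Q_{i,r}$-sectors and leave the input sectors unchanged. The result has $h_\ell$ in every $Q_{i,\ell}Q_{i,r}$-sector, the input in the working sectors, and all other sectors empty. This configuration lies in the domain of $\sigma(12)$, so we apply it.

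\emph{Step (2).} Since $\textbf{E}_{\textbf{S},k}^0(2)$ acts as $\textbf{RL}$ in parallel on the subwords $P_iQ_{i,\ell}Q_{i,r}$, the analogue of \Cref{primitive computations}(3) for $\textbf{RL}$ applies. It gives a computation of length $2t+1$ which moves $h_\ell$ across the $P_iQ_{i,\ell}$-sector and back, ending at the end state letters with the same tape words. Then apply $\sigma(23)$, whose domain coincides with that of $\sigma(12)$.

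\emph{Step (3).} Run the copy of $\textbf{S}_h$ with history $H$. In the working sectors this reproduces $\pazocal{D}$, ending with empty working tapes since the accept configuration has no tape letters. In each historical sector, rule $\theta_j$ multiplies on the left by the left copy of $\theta_j^{-1}$ and on the right by the right copy of $\theta_j$. The orientation of $h_\ell$ chosen in step (1) is exactly the one making the left multiplications cancel $h_\ell$ letter by letter. So after $t$ rules each $Q_{i,\ell}Q_{i,r}$-sector contains only $h_r$. This is where I expect the main care to be needed: the bookkeeping of orientations and sides, since here (unlike in \cite{GW}) step (1) multiplies on the left and step (5) on the right. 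The resulting configuration has only right historical letters in the $Q_{i,\ell}Q_{i,r}$-sectors and empty tapes elsewhere, so it lies in the domain of $\sigma(34)$.

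\emph{Steps (4) and (5).} The submachine $\textbf{E}_{\textbf{S},k}^0(4)$ runs $\textbf{LR}_k$ in parallel on the subwords $Q_{i,\ell}Q_{i,r}R_i$. By \Cref{LR_k analogue}, and the remark following it that such computations exist for every word, there is a computation of length $2kt+2k-1$ returning $h_r$ to the $Q_{i,\ell}Q_{i,r}$-sector at the end state letters. Then apply $\sigma(45)$. In step (5), the $t$ rules of $\textbf{E}_{\textbf{S},k}^0(5)$ multiply each $Q_{i,\ell}Q_{i,r}$-sector on the right by the right copies of $\theta_j^{-1}$. Taken in the appropriate order they erase $h_r$, leaving the accept configuration.

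\emph{Length count.} The total length is
$$t+(2t+1)+t+(2kt+2k-1)+t+4=(2k+5)t+(2k+4),$$
where the $4$ counts the transition rules. The history is reduced, because consecutive rules either belong to different submachines or are the reduced rule sequences described above. This gives the asserted step history $(1)(2)(3)(4)(5)$ and the stated length.
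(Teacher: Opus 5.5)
Your proposal is correct and takes the same route as the paper. The paper just reruns the explicit five-stage construction of \cite{GW}, with \Cref{LR_k analogue} supplying the $2k\|H\|+2k-1$ rules of step (4) in place of the $2\|H\|+1$ rules of \textbf{LR}; your domain checks, your orientation bookkeeping (step (1) applied with history the reverse of $H$, so that step (3) consumes the left copy of $H$ from the left and step (5) erases the right copy from the right), and the count $t+(2t+1)+t+(2kt+2k-1)+t+4=(2k+5)t+(2k+4)$ are exactly what that reference leaves implicit.
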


\begin{proof}

The discrepancy between this statement and its analogue in \cite{GW} arises from the application of \Cref{LR_k analogue} in place of \Cref{primitive computations}(3) for the subcomputation with step history (4).  Otherwise, the proof is analogous.

\end{proof}

\begin{lemma}[Lemma 3.6 of \cite{GW}] \label{E0 language}

The language of accepted inputs of $\textbf{E}_{\textbf{S},k}^0$ is the same as that of $\textbf{S}$.  Moreover, the time function of $\textbf{E}_{\textbf{S},k}^0$ is $\sim$-equivalent to $\TM_\textbf{S}$.

\end{lemma}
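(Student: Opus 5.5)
Two directions have to be shown: the accepted languages of $\textbf{E}_{\textbf{S},k}^0$ and $\textbf{S}$ coincide, and $\TM_{\textbf{E}_{\textbf{S},k}^0}\sim\TM_\textbf{S}$. Each splits into an easy inclusion (or upper bound) and a harder converse.

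For the easy direction, take an input configuration $W$ of $\textbf{S}$ accepted with a shortest history $H$, so $\|H\|=\tm_\textbf{S}(W)$. \Cref{inputs accepted} then yields an accepting computation of $I_0(W)$ of length $(2k+5)\|H\|+(2k+4)$. Since $k$ is a fixed parameter and $|I_0(W)|_a=|W|_a$, this gives both the inclusion of languages and $\TM_{\textbf{E}_{\textbf{S},k}^0}\preceq\TM_\textbf{S}$.

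For the converse, let $\pazocal{C}$ be a reduced accepting computation of $I_0(W)$ of minimal length.
\begin{enumerate}
\item I will first show that its step history must be $(1)(2)(3)(4)(5)$. The input configuration has only start letters of submachine (1), and the accept configuration has only end letters of (5). The transition rules have restricted domains (for instance, $\sigma(34)$ and $\sigma(45)$ require the input sectors to be empty), so the step history is a path in the chain $1-2-3-4-5$ beginning at $1$ and ending at $5$. \Cref{E primitive step history} excludes the backtracking patterns $(12)(2)(21)$, $(32)(2)(23)$, $(34)(4)(43)$ and $(54)(4)(45)$, since the standard base contains the relevant subwords. \Cref{E run step history} excludes $(23)(3)(32)$ and $(43)(3)(34)$. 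The remaining possible backtracks $(21)(1)(12)$ and $(45)(5)(54)$ are not excluded by these lemmas; I will handle them by minimality, or by observing that the rules of (1) and (5) only modify historical sectors in a way that commutes with the adjacent steps, so such detours can be deleted. This leaves a monotone step history.
\item During step (3), the history sectors record the $\textbf{S}_h$-history. By \Cref{primitive computations}(3) and \Cref{LR_k analogue}, steps (2) and (4) only transfer the history word between the left and right alphabets. The content written in step (1) must therefore be exactly the history $H'$ of the step-(3) computation, which is a computation of $\textbf{S}$ on the working sectors accepting $W$.
\item Consequently $W$ is accepted by $\textbf{S}$, and $\|\pazocal{C}\|\geq\|H'\|\geq\tm_\textbf{S}(W)$. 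Hence $\TM_\textbf{S}\preceq\TM_{\textbf{E}_{\textbf{S},k}^0}$.
\end{enumerate}

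The main obstacle is the step-history analysis, and specifically the bookkeeping in the historical sectors. I need to check that step (1) writes a left-alphabet word $u$ in each $Q_{i,\ell}Q_{i,r}$-sector, and that step (2) moves this word from the $Q_{i,\ell}Q_{i,r}$-sectors into the $P_iQ_{i,\ell}$-sectors and back, reversing its order. The domains of $\sigma(23)$ and $\sigma(34)$ then force step (3) to convert the left-alphabet word entirely into a right-alphabet word. By \Cref{one alphabet historical words} this means the history of step (3) equals the stored word. Finally, step (5) must be able to erase this word, which forces consistency with the accept configuration.

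I will follow the argument of \cite{GW} for this. The only changes are replacing $\textbf{LR}$ by $\textbf{LR}_k$ in step (4), which uses \Cref{LR_k analogue} in place of \Cref{primitive computations}(3), and switching which side is multiplied in steps (1) and (5). Neither change affects the logic of the argument.
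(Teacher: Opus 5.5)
Your overall skeleton is right, and it matches the argument the paper imports from \cite{GW} (no separate proof is given here). \Cref{inputs accepted} gives one inclusion and $\TM_{\textbf{E}_{\textbf{S},k}^0}\preceq\TM_\textbf{S}$, using $|I_0(W)|_a=|W|_a$ and the fact that every input configuration of $\textbf{E}_{\textbf{S},k}^0$ is some $I_0(W)$. The converse reads an accepting computation of $\textbf{S}$ off the step-(3) part of an accepting computation of $I_0(W)$. Two pieces of your write-up need fixing.

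First, your step 1 is not justified as written, and the ``commuting'' argument is false. Consider a detour $\sigma(45)H_5\sigma(54)$ with $H_5$ a nonempty reduced (5)-history. It genuinely changes the right-alphabet word in each $Q_{i,\ell}Q_{i,r}$-sector (\Cref{multiply one letter}(a)). Moreover, the (5)-rules act at the same end ($Q_{i,r}$) of that sector as the (4)-rules. So the detour can neither be deleted nor commuted past its neighbours. The same holds for $\sigma(21)H_1\sigma(12)$, where (1)- and (2)-rules both act at $Q_{i,\ell}$. A genuine minimality argument does exist: after the first $\sigma(45)$ the configuration is erased by a pure (5)-computation, and any return through (4) costs strictly more by \Cref{LR_k analogue}.

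But you do not need monotonicity at all. The start letters force the step history to begin with $(1)(12)(2)$. \Cref{E primitive step history}(a) rules out $(12)(2)(21)$, and \Cref{E run step history}(a) rules out $(23)(3)(32)$. So every accepting computation of $I_0(W)$ begins $(1)(12)(2)(23)(3)(34)$, and that first (3)-block is all you use.

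Second, the historical-sector bookkeeping you call the main obstacle is irrelevant to this lemma. Your item 2 also misdescribes it: steps (2) and (4) move a word between adjacent sectors within the left (resp.\ right) alphabet, and only step (3) trades left letters for right ones. Only the working sectors matter:
\begin{itemize}
\item Submachines (1) and (2), together with $\sigma(12)$ and $\sigma(23)$, leave the input sectors unchanged and the other working sectors empty.
\item Submachine (3) acts on the working sectors exactly as $\textbf{S}$, taking start state letters to end state letters.
\item $\sigma(34)$ has empty domain in every working sector.
\end{itemize}
Hence the first (3)-block, restricted to the working sectors, is a reduced accepting computation of $\textbf{S}$ for $W$ with history $H'$. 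This shows $W$ is accepted and gives $\tm_\textbf{S}(W)\le\|H'\|\le\|\pazocal{C}\|$, so $\TM_\textbf{S}\preceq\TM_{\textbf{E}_{\textbf{S},k}^0}$.
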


\begin{lemma}[Compare with Lemma 3.7 of \cite{GW}] \label{E standard one-step}

Let $\pazocal{C}:W_0\to\dots\to W_t$ be a reduced computation of $\textbf{E}_{\textbf{S},k}^0$ with step history $(j)$ for some $j\in\{1,\dots,5\}$.  Suppose the base of $\pazocal{C}$ is of the form $(P_iQ_{i,\ell}Q_{i,r}R_i)^{\pm1}$.

\begin{enumerate}[label=(\alph*)]

\item If $j\in\{1,5\}$, then $t\leq2\max(|W_0|_a,|W_t|_a)$.

\item If $j=3$, then $t\leq\max(|W_0|_a,|W_t|_a)$.  Moreover, if $W_0$ is $\sigma(32)$- or $\sigma(34)$-admissible, then $|W_0|_a\leq|W_t|_a$.

\item If $j=2$, then $t\leq2\max(|W_0|_a,|W_t|_a)+1$.  Moreover, if $W_0$ is $\sigma(j,j-1)$- or $\sigma(j,j+1)$-admissible, then $|W_0|_a\leq|W_t|_a$.

\item If $j=4$, then $t\leq2k\max(|W_0|_a,|W_t|_a)+2k-1$.  Moreover, if $W_0$ is $\sigma(j,j-1)$- or $\sigma(j,j+1)$-admissible, then $|W_0|_a\leq|W_t|_a$.

\end{enumerate}

\end{lemma}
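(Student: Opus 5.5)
We treat the four cases separately, each time restricting $\pazocal{C}$ to the single sector(s) where the rules of $\textbf{E}_{\textbf{S},k}^0(j)$ actually act, and invoking the appropriate earlier lemma on that restriction. Throughout we use that, on a base $(P_iQ_{i,\ell}Q_{i,r}R_i)^{\pm1}$, each submachine locks or fixes all sectors other than those named in its definition. Hence $t$ equals the length of the restricted computation, and the $a$-lengths of the restricted words are bounded by $|W_0|_a$ and $|W_t|_a$ respectively. By symmetry (inverting the base) it suffices to treat the base $P_iQ_{i,\ell}Q_{i,r}R_i$.

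For (a), the rules of $\textbf{E}_{\textbf{S},k}^0(1)$ (resp.\ (5)) multiply only the $Q_{i,\ell}Q_{i,r}$-sector, on one side, by a letter of one historical alphabet. Distinct rules give distinct letters. \Cref{multiply one letter}(b) then gives $t=\|H\|\leq\|u_0\|+\|u_t\|\leq 2\max(|W_0|_a,|W_t|_a)$.

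For (b), $\textbf{E}_{\textbf{S},k}^0(3)$ acts as $\textbf{S}_h$, and in the historical sector $Q_{i,\ell}Q_{i,r}$ each rule multiplies on the left by a letter of $X_{i,\ell}$ and on the right by its copy in $X_{i,r}$. \Cref{multiply two letters}(c) gives $t\leq\frac12(\|u_0\|+\|u_t\|)\leq\max(|W_0|_a,|W_t|_a)$. For the monotonicity claim, note that $\sigma(32)$ and $\sigma(34)$ require the historical tape word to lie entirely in the left, respectively right, historical alphabet. \Cref{one alphabet historical words} then yields $|W_0|_a\leq|W_t|_a$ for the historical sector. The working sectors are not in this base, and the other historical sectors are locked, so the inequality holds for the whole word.

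For (c) and (d), the restriction to $P_iQ_{i,\ell}Q_{i,r}$ (resp.\ $Q_{i,\ell}Q_{i,r}R_i$) is a reduced computation of $\textbf{RL}$ (resp.\ $\textbf{LR}_k$) in the standard base, while the remaining sector is locked. The time bounds follow directly from the $\textbf{RL}$-analogue of \Cref{primitive time} and from \Cref{LR_k primitive time}. For the "moreover" parts, $\sigma(j,j\pm1)$-admissibility forces $W_0$ to have state letters all start or all end, with one of the two sectors empty (locked in the transition rule's domain). This is exactly the form $p_j u q_j r_j$ or $p_jq_j v r_j$, so the analogue of \Cref{primitive computations}(5) gives $|W_s|_a\geq|W_0|_a$ for all $s$.

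The main obstacle is (d). We must verify that the $\textbf{LR}_k$-analogue of \Cref{primitive computations}(5) holds for any configuration admissible for $\sigma(34)$ or $\sigma(45)$. We also need that the endpoint states of $\textbf{LR}_k$ (indices $1$ and $2k$) behave like those of $\textbf{LR}$. Our approach is to decompose the computation at the connecting rules $\zeta_m$ into $\textbf{LR}$-like pieces and apply \Cref{primitive computations}(1),(5) piece by piece. This uses that each connecting rule locks one sector, so the $a$-length is carried through each junction.
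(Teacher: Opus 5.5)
Your proposal is correct and follows the paper's route: (a)--(c) are the same sector-restriction arguments as in the cited lemma of \cite{GW}, and (d) is exactly \Cref{LR_k primitive time} plus the $\textbf{LR}_k$-analogue of \Cref{primitive computations}(5), which the paper simply takes for granted. Your piece-by-piece justification of that analogue at the connecting rules works, though a projection argument gives it at once: the product of the two tape words is invariant in $F(Y)$ under every rule of $\textbf{LR}_k$, so $|W_s|_a\geq|W_0|_a$ whenever one sector of $W_0$ is empty.
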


\begin{proof}

%
%

The proof presented in \cite{GW} immediately implies (a)-(c).  For (d), we apply \Cref{LR_k primitive time} and the analogue of \Cref{primitive computations}(5) for $\textbf{LR}_k$.

\end{proof}

\begin{lemma}[Lemma 3.8 of \cite{GW}] \label{E time (12)}

Let $\pazocal{C}:W_0\to\dots\to W_t$ be a reduced computation of $\textbf{E}_{\textbf{S},k}^0$ whose base is of the form $(P_iQ_{i,\ell}Q_{i,r}R_i)^{\pm1}$.  If the step history of $\pazocal{C}$ or its inverse is a subword of $(2)(3)(4)$, then $t\leq c_0\max(|W_0|_a,|W_t|_a)+c_0$.  Moreover, if $W_0$ is $\sigma(21)$- or $\sigma(45)$-admissible, then $|W_0|_a\leq|W_t|_a$.

\end{lemma}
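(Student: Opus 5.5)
The proof splits $\pazocal{C}$ along its step history and applies \Cref{E standard one-step} to each piece. The step history (or its inverse) is a subword of $(2)(3)(4)$, so it is one of $(2)$, $(3)$, $(4)$, $(2)(3)$, $(3)(4)$, or $(2)(3)(4)$. We may assume it is not its own inverse, since an inverted computation has $W_0$ and $W_t$ swapped and the bound is symmetric. The single-step cases follow directly from \Cref{E standard one-step}(b)--(d) once $c_0 \geq 2k+2$; this is allowed because $k<<c_0$.

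For several steps, write $\pazocal{C}$ as subcomputations $\pazocal{C}_1,\pazocal{C}_2,\pazocal{C}_3$, separated by single transition rules. The key observation concerns the interior endpoints. Each is $\sigma(23)$- or $\sigma(34)$-admissible, possibly after inverting the relevant subcomputation so that the admissible configuration becomes its initial word. The ``moreover'' clauses of \Cref{E standard one-step}(b)--(d) then give monotonicity of $a$-length from that admissible word outward through each adjacent piece. Concretely:
\begin{itemize}
\item A $(2)$-piece ending in a $\sigma(23)$-admissible word $V$ has $|V|_a \leq |W_0|_a$. Apply (c) to the inverse computation; $V$ is $\sigma(23)$-admissible as a word of step $(2)$.
\item A $(4)$-piece starting at a $\sigma(34)$-admissible word satisfies $|\cdot|_a \leq |W_t|_a$, by (d).
\item The middle $(3)$-piece starts at a $\sigma(32)$-admissible word, so by (b) its initial length is at most its final length. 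Transition rules do not change $a$-length, since they lock sectors or only change state letters.
\end{itemize}
It follows that every interior configuration $V$ satisfies $|V|_a \leq \max(|W_0|_a,|W_t|_a) =: m$, with care in the three-step case. There the $(3)$-piece's start is bounded by $|W_0|_a$ and its end by $|W_t|_a$, via the $(2)$ and $(4)$ pieces respectively.

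Summing the length bounds of \Cref{E standard one-step} gives the estimate. The total is at most $(2m+1) + m + (2km+2k-1) + 2$ including the transition rules, hence $\leq c_0 m + c_0$ for $c_0 \geq 2k+4$. This requires the single-step lemma to apply to subcomputations whose base is $(P_iQ_{i,\ell}Q_{i,r}R_i)^{\pm1}$, which holds since restriction to pieces preserves the base.

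The main obstacle is the final ``moreover'' claim. If $W_0$ is $\sigma(21)$-admissible, the first step must be $(2)$, and $W_0$ is admissible for a transition at the $(2)$ boundary. By (c), the $a$-length along the $(2)$-piece is at least $|W_0|_a$, so its end $V_1$ has $|V_1|_a \geq |W_0|_a$. The $(3)$-piece then is nondecreasing by (b). The $(4)$-piece starts $\sigma(34)$-admissible, so by (d) it is nondecreasing. The $\sigma(45)$ case is symmetric, applied to the inverse. I expect the delicate point to be checking the admissibility hypotheses at each junction, especially the domains of $\sigma(23)$ and $\sigma(34)$ in the historical sectors. I would verify these directly from the listed domains: after $\sigma(23)$ only left historical letters appear in $Q_{i,\ell}Q_{i,r}$ and the $P_iQ_{i,\ell}$, $Q_{i,r}R_i$ sectors are empty. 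This is exactly the setting of \Cref{primitive computations}(5) and its analogues.
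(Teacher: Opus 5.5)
Your proposal is correct and is essentially the argument the paper has in mind: its proof simply defers to Lemma 3.8 of \cite{GW}. That argument splits $\pazocal{C}$ along its step history, bounds each one-step piece by \Cref{E standard one-step}, uses admissibility for the transition rules at the junctions both to bound the intermediate $a$-lengths and to build the monotone chain for the ``moreover'' clause, and absorbs the new factor $2k$ from part (d) via $c_0>>k$. The only nit is that the $\sigma(45)$ case does not come from inverting the computation, which would reverse the inequality; instead, run the same monotone chain through $(4)(43)(3)(32)(2)$, where it works verbatim.
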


\begin{proof}

The difference between the estimate provided in this statement and the one in its analogue in \cite{GW} is accounted for by the alteration in \Cref{E standard one-step}.  The proof follows in just the same way, using the parameter assignment $c_0>>k$.

%
%
%
%

\end{proof}


\begin{lemma}[Compare with Lemma 3.9 of \cite{GW}] \label{E standard no (1)}

Let $\pazocal{C}:W_0\to\dots\to W_t$ be a reduced computation of $\textbf{E}_{\textbf{S},k}^{0}$ whose base is of the form $(P_iQ_{i,\ell}Q_{i,r}R_i)^{\pm1}$.  If the step history of $\pazocal{C}$ does not contain the letters $(1)$, $(12)$, or $(21)$, then $t\leq 3c_0\max(|W_0|_a,|W_t|_a)+2c_0$.

\end{lemma}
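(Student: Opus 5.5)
We break the computation into pieces according to its step history. Each piece will be bounded either by \Cref{E time (12)} or by the one-step estimates of \Cref{E standard one-step}. The argument follows the proof of Lemma 3.9 of \cite{GW}; only the constant changes, and it is absorbed by $c_0>>k$.

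\emph{Possible step histories.} Since $(1)$, $(12)$ and $(21)$ do not occur, the step history is a word in the steps $(2),(3),(4),(5)$ together with the transitions between consecutive ones. The base is $(P_iQ_{i,\ell}Q_{i,r}R_i)^{\pm1}$, which contains subwords of the forms listed in \Cref{E primitive step history} and \Cref{E run step history}. These lemmas forbid the subwords $(32)(2)(23)$, $(23)(3)(32)$, $(43)(3)(34)$, $(34)(4)(43)$ and $(54)(4)(45)$. So a reduced step history never returns: once it moves from $(j)$ to $(j+1)$ it cannot come back to $(j)$ through $(j+1)$, and likewise in the descending direction. Using this, we show that the step history (or its inverse) is a subword of $(2)(3)(4)(5)$ or of $(5)(4)(3)(2)$, or else has the form $(5)(45)(5)$ or $(5)(54)(5)$ with an intermediate $(4)$, which is excluded. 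We check this case by case, as in \cite{GW}. Hence the history is a subword of $(2)(3)(4)(5)$ up to inversion.

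\emph{Estimating the pieces.} Write the computation as $\pazocal{C}_1\pazocal{C}_2$, where $\pazocal{C}_1$ has step history a subword of $(2)(3)(4)$ and $\pazocal{C}_2$ has step history $(5)$ (possibly empty), joined by the single transition $\sigma(45)$. Inverting the computation if necessary, we may assume the step $(5)$, if present, comes last. Let $W_s$ be the configuration where $\sigma(45)$ is applied.
\begin{itemize}
\item By \Cref{E time (12)}, $\pazocal{C}_1$ has length at most $c_0\max(|W_0|_a,|W_s|_a)+c_0$.
\item By \Cref{E standard one-step}(a), $\pazocal{C}_2$ has length at most $2\max(|W_{s+1}|_a,|W_t|_a)$.
\end{itemize}
It remains to bound $|W_s|_a=|W_{s+1}|_a$ by $\max(|W_0|_a,|W_t|_a)$.

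\emph{Bounding the middle configuration.} The ``moreover'' clause of \Cref{E time (12)}, applied to the inverse of $\pazocal{C}_1$, starts from the $\sigma(45)$-admissible word $W_s$. It gives $|W_s|_a\leq |W_0|_a$. Then
\[
t\leq c_0\max(|W_0|_a,|W_t|_a)+c_0+1+2\max(|W_0|_a,|W_t|_a)\leq 3c_0\max(|W_0|_a,|W_t|_a)+2c_0 .
\]

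\emph{Main obstacle.} The delicate step is the combinatorial elimination of step histories. We must make sure that every forbidden pattern needed is covered by \Cref{E primitive step history} and \Cref{E run step history} for this particular base, in particular the pattern $(45)(5)(54)$, where step $(5)$ sits between two transitions. If that pattern is not excluded, we would instead bound $(5)$-runs through \Cref{one alphabet historical words}. A $(5)$ subcomputation between two $\sigma(45)$-admissible words only erases or inserts right-alphabet letters in the $Q_{i,\ell}Q_{i,r}$-sector, so reducedness forces it to be empty, exactly as in \cite{GW}.
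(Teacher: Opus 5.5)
Your estimates are fine for the step histories you actually treat, but the combinatorial reduction they rest on is false. \Cref{E primitive step history}(b) forbids $(34)(4)(43)$ and $(54)(4)(45)$, and \Cref{E run step history} forbids $(23)(3)(32)$ and $(43)(3)(34)$. Nothing forbids $(45)(5)(54)$, so your ``never returns'' property fails exactly at $j=4$. Under the hypotheses, the step history is a subword of $(2)(3)(4)(5)(4)(3)(2)$, not (up to inversion) a subword of $(2)(3)(4)(5)$. In particular, the $(5)$-run may sit strictly inside the computation, and your reduction to the case where $(5)$ comes last is not available.

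Your fallback, that such an interior $(5)$-run must be empty by reducedness, is also wrong. By \Cref{multiply one letter}(a), a run of $(5)$ is empty only if the tape words of the $Q_{i,\ell}Q_{i,r}$-sector at its two ends coincide, and nothing forces this. For instance, start with a configuration carrying a right-alphabet copy of $H$ in that sector. Apply $\sigma(45)$, use rules of $(5)$ to turn the word into a copy of any $H'\neq H$, apply $\sigma(54)$, and run $(4)$ backwards. This is a reduced computation with step history $(4)(5)(4)$. Note also that the length of the $(5)$-run was never the issue, since \Cref{E standard one-step}(a) already bounds it. What needs control is the $a$-length of the run's endpoints in terms of $|W_0|_a$ and $|W_t|_a$.

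The repair is a two-sided version of the argument you already have, and it is exactly the paper's proof.
\begin{itemize}
\item If there is no step $(5)$, \Cref{E time (12)} applies directly.
\item Otherwise, since $(54)(4)(45)$ is forbidden, there is a unique nonempty maximal subcomputation $W_x\to\dots\to W_y$ with step history $(5)$.
\item The prefix $W_0\to\dots\to W_{x-1}$ and the suffix $W_{y+1}\to\dots\to W_t$ each have step history that, up to inversion, is a subword of $(2)(3)(4)$.
\item Both $W_{x-1}$ and $W_{y+1}$ are $\sigma(45)$-admissible.
\item Apply \Cref{E time (12)}, including its ``moreover'' clause, to the inverse of the prefix and to the suffix. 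This gives $|W_x|_a\leq|W_0|_a$ and $|W_y|_a\leq|W_t|_a$, with lengths at most $c_0|W_0|_a+c_0$ and $c_0|W_t|_a+c_0$ respectively.
\item \Cref{E standard one-step}(a) gives $y-x\leq2\max(|W_x|_a,|W_y|_a)$.
\end{itemize}
Combining these, $t\leq c_0|W_0|_a+c_0|W_t|_a+2\max(|W_0|_a,|W_t|_a)+2c_0+2$. This is at most $3c_0\max(|W_0|_a,|W_t|_a)+2c_0$ once $c_0$ is large. The extra $2$ from the two transitions is absorbed because the nonempty $(5)$-run forces $\max(|W_0|_a,|W_t|_a)\geq1$.
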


\begin{proof}

By Lemmas \ref{E primitive step history} and \ref{E run step history}, the step history of $\pazocal{C}$ must be a subword of $(2)(3)(4)(5)(4)(3)(2)$.  Moreover, by \Cref{E time (12)} it suffices to assume that $\pazocal{C}$ contains a nonempty maximal subcomputation $W_x\to\dots\to W_y$ with step history $(5)$.

\Cref{E standard one-step}(a) then implies $y-x\leq2\max(|W_x|_a,|W_y|_a)$, so that we may assume $x>0$ or $y<t$.  In the former case, the subcomputation $W_0\to\dots\to W_{x-1}$ satisfies the hypotheses of \Cref{E time (12)}, so that $x\leq c_0|W_0|_a+c_0$ and $|W_x|_a\leq|W_0|_a$.  In the latter, the same argument applies to the subcomputation $W_{y+1}\to\dots\to W_t$, so that $t-y\leq c_0|W_t|_a+c_0$ and $|W_y|_a\leq|W_t|_a$.  Thus, $t\leq c_0|W_0|_a+c_0|W_t|_a+2\max(|W_0|_a,|W_t|_a)+2c_0$, and so the statement is given by taking $c_0\geq2$.

\end{proof}

The next statement follows in just the same way:

\begin{lemma}[Compare with Lemma 3.10 of \cite{GW}] \label{E standard no (5)}

Let $\pazocal{C}:W_0\to\dots\to W_t$ be a reduced computation of $\textbf{E}_{\textbf{S},k}^{0}$ whose base is of the form $(P_iQ_{i,\ell}Q_{i,r}R_i)^{\pm1}$.  If the step history of $\pazocal{C}$ does not contain the letters $(5)$, $(54)$, or $(45)$, then $t\leq3c_0\max(|W_0|_a,|W_t|_a)+2c_0$.

\end{lemma}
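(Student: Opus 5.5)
The plan is to mirror the proof of \Cref{E standard no (1)}, with the roles of the submachines $(1)$ and $(5)$ exchanged.

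First I pin down the possible step histories. The step history of $\pazocal{C}$ avoids $(5)$, $(54)$ and $(45)$. The base is $(P_iQ_{i,\ell}Q_{i,r}R_i)^{\pm1}$, so it contains subwords of the forms $(P_iQ_{i,\ell}Q_{i,r})^{\pm1}$, $(Q_{i,\ell}Q_{i,r}R_i)^{\pm1}$ and $(Q_{i,\ell}Q_{i,r})^{\pm1}$. Hence \Cref{E primitive step history} forbids the patterns $(12)(2)(21)$, $(32)(2)(23)$ and $(34)(4)(43)$. Likewise, \Cref{E run step history} forbids $(23)(3)(32)$ and $(43)(3)(34)$. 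Since transitions only go between consecutive submachines, the step history must then be a subword of $(4)(3)(2)(1)(2)(3)(4)$. If it contains no $(1)$, it (or its inverse) is a subword of $(2)(3)(4)$. In that case \Cref{E time (12)} gives $t\le c_0\max(|W_0|_a,|W_t|_a)+c_0$ and we are done.

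Otherwise, let $W_x\to\dots\to W_y$ be the unique nonempty maximal subcomputation with step history $(1)$. By \Cref{E standard one-step}(a), $y-x\le 2\max(|W_x|_a,|W_y|_a)$.

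1. If $x>0$, the subcomputation $W_0\to\dots\to W_{x-1}$ ends with a $\sigma(21)$-admissible word. Read backwards, its step history is a subword of $(2)(3)(4)$ starting at a $\sigma(21)$-admissible word. So \Cref{E time (12)}, applied to the inverse computation, gives $x-1\le c_0|W_0|_a+c_0$ and $|W_{x-1}|_a\le|W_0|_a$.
2. The transition rule $\sigma(21)$ does not change the $a$-length in this base. Its domain is the left historical alphabet in $Q_{i,\ell}Q_{i,r}$ and empty elsewhere, and its parts only switch state letters. Hence $|W_x|_a=|W_{x-1}|_a\le|W_0|_a$.
3. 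Symmetrically, if $y<t$, then $t-y\le c_0|W_t|_a+c_0+1$ and $|W_y|_a\le|W_t|_a$.

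Adding these estimates gives
$$t\le c_0|W_0|_a+c_0|W_t|_a+2\max(|W_0|_a,|W_t|_a)+2c_0+2.$$
This is at most $3c_0\max(|W_0|_a,|W_t|_a)+2c_0$, since $c_0\ge2$ absorbs the extra terms. The small additive slack from counting the transition rules is absorbed in the same way; the earlier lemma is read with that convention.

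The main point to verify is the application of \Cref{E time (12)} in the ``moreover'' direction. Its hypothesis is that the starting word is $\sigma(21)$-admissible. Here I apply it to the reversed piece $W_{x-1}\to\dots\to W_0$, whose step history is a subword of $(2)(3)(4)$ beginning with $(2)$. Since $W_{x-1}$ is $\sigma(21)$-admissible, the lemma applies and yields $|W_{x-1}|_a\le|W_0|_a$. With that orientation checked, the argument is identical to the proof of \Cref{E standard no (1)}.
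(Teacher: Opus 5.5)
Your argument is correct and follows the paper's route. The paper proves this lemma by mirroring the proof of \Cref{E standard no (1)}: the step history is forced into a subword of $(4)(3)(2)(1)(2)(3)(4)$, the unique $(1)$-piece is bounded by \Cref{E standard one-step}(a), and the flanking pieces are controlled by \Cref{E time (12)}. You also apply \Cref{E time (12)} to the inverse computation, so that the $\sigma(21)$-admissible word is the starting word, which is the right way to read its ``moreover'' clause.

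On the extra $+2$ from the two transition rules, which the paper itself silently drops: it is not absorbed by $c_0\geq 2$ alone. It is absorbed because a nonempty $(1)$-piece forces $\max(|W_0|_a,|W_t|_a)\geq 1$ by \Cref{multiply one letter}(a), and $c_0$ is large (e.g.\ $c_0\geq 4$ suffices).
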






Similarly, the next two statements are proved in just the same way as their analogues, using \Cref{LR_k analogue} in place of \Cref{primitive computations}(3) for the maximal subcomputations with step history (4).

\begin{lemma}[Lemma 3.11 of \cite{GW}] \label{E standard accepted (4)}

If $\pazocal{C}:W_0\to\dots\to W_t$ is a reduced computation of $\textbf{E}_{\textbf{S},k}^0$ in the standard base with step history $(34)(4)(45)$, then $W_t$ is accepted by a reduced computation of $\textbf{E}_{\textbf{S},k}^0(5)$.

\end{lemma}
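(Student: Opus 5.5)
We work sector by sector in the standard base and use the projection argument of \Cref{LR_k analogue}. Let $W_0\to W_1$ be the application of $\sigma(34)$, $W_{t-1}\to W_t$ the application of $\sigma(45)$, and $W_1\to\dots\to W_{t-1}$ the maximal subcomputation with step history $(4)$.

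First we read off $W_0$ and $W_{t-1}$ from the domains of the transition rules. The domain of $\sigma(34)$ is empty in every sector other than the $Q_{i,\ell}Q_{i,r}$-sectors, and in those sectors it is the right historical alphabet. Hence in $W_1$ the only nonempty tape words are words $u_i$ over the right historical alphabet in the $Q_{i,\ell}Q_{i,r}$-sectors, and all state letters are start letters of $\textbf{E}_{\textbf{S},k}^0(4)$. The domain of $\sigma(45)$ is the same, so $W_{t-1}$ also has state letters equal to the end letters of $\textbf{E}_{\textbf{S},k}^0(4)$. Its only nonempty sectors are the $Q_{i,\ell}Q_{i,r}$-sectors, with tape words $v_i$ over the right historical alphabet.

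Next we restrict the subcomputation $W_1\to\dots\to W_{t-1}$ to each subword $Q_{i,\ell}Q_{i,r}R_i$. On this subword the submachine operates as a copy of $\textbf{LR}_k(\Phi^+)$, with the $Q_{i,\ell}Q_{i,r}$-sector playing the role of the $PQ$-sector. All other sectors are locked, so they stay empty. The restriction is a reduced computation in the standard base of $\textbf{LR}_k$, starting at $p_1u_iq_1r_1$ and ending at $p_{2k}v_iq_{2k}r_{2k}$. \Cref{LR_k analogue} then gives $u_i\equiv v_i$. (The histories are forced to coincide across $i$, but we only need the endpoints.) Thus $W_t$ is the start configuration of $\textbf{E}_{\textbf{S},k}^0(5)$ whose $Q_{i,\ell}Q_{i,r}$-sectors contain the words $u_i$ over the right historical alphabet, with all other sectors empty.

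Finally, $\textbf{E}_{\textbf{S},k}^0(5)$ has one-letter parts, so start letters coincide with end letters, and its rules multiply each $Q_{i,\ell}Q_{i,r}$-sector on the right by the copy of $\theta^{-1}$. This is the same $\theta$ in every sector simultaneously. So $W_t$ is accepted, meaning reduced by rules of $\textbf{E}_{\textbf{S},k}^0(5)$ to the configuration with empty tape, if and only if all the words $u_i$ are copies of one common word $u\in F(\Phi^+)$. In that case, applying the rules along $u$ read right to left erases every sector simultaneously, and the computation is reduced because $u$ is reduced.

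This last requirement, that the $u_i$ are all copies of one common word, is the main obstacle. I would handle it with the parallel structure of submachine (4). Every rule $\tau_{2j-1}(y)$ or $\tau_{2j}(y)$ acts with the same letter $y$ in every block $Q_{i,\ell}Q_{i,r}R_i$ at once. Take the first phase, up to $\zeta_1$, which locks the $Q_{i,\ell}Q_{i,r}$-sector. By \Cref{LR_k analogue} this phase must completely transfer $u_i$ to the $Q_{i,r}R_i$-sector, with a history that is the copy of $\bar u_i$. Since this history is common to all $i$, the words $\bar u_i$ all equal one word, so the $u_i$ are copies of a single $u$. With that established, the acceptance in the previous paragraph completes the proof.
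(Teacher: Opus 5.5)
Your proof is correct and follows essentially the paper's route, which reruns the argument of Lemma 3.11 of \cite{GW} with \Cref{LR_k analogue} in place of \Cref{primitive computations}(3). As in that argument, you read off the sector contents from the domains of $\sigma(34)$ and $\sigma(45)$, get $u_i\equiv v_i$ block by block, use the common history to force all $u_i$ to be copies of one word, and then erase in parallel with $\textbf{E}_{\textbf{S},k}^0(5)$. The only quibble is a citation: the fact that the phase before the first $\zeta_1$ has history a copy of $\bar u_i$ comes from \Cref{multiply one letter}(a), applied to the $Q_{i,\ell}Q_{i,r}$-sector that $\zeta_1$ locks, not from \Cref{LR_k analogue}, which as stated says nothing about histories.
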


%
%
%

\begin{lemma}[Lemma 3.12 of \cite{GW}] \label{E standard accepted (3)}

If $\pazocal{C}:W_0\to\dots\to W_t$ is a reduced computation of $\textbf{E}_{\textbf{S},k}^0$ in the standard base with step history $(23)(3)(34)$, then $W_t$ is accepted by a reduced computation with step history $(4)(5)$.

\end{lemma}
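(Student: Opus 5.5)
The plan is to follow the structure of the analogous argument for the machine $\textbf{E}_\textbf{S}$ in \cite{GW}. We track what a computation with step history $(23)(3)(34)$ does to each sector of the standard base, and then explicitly build the continuation with step history $(4)(5)$.

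First, record the form of $W_0$. Since $W_0$ is $\sigma(23)$-admissible in the standard base, it is a start configuration of $\textbf{E}_{\textbf{S},k}^0(3)$ with the following properties:
\begin{itemize}
\item the only nonempty sectors are the working sectors $R_{i-1}P_i$ (with arbitrary tape words, the input domain being the full alphabet) and the $Q_{i,\ell}Q_{i,r}$-sectors;
\item each $Q_{i,\ell}Q_{i,r}$-sector contains only letters of the left historical alphabet.
\end{itemize}
Similarly, $W_t$ is $\sigma(34)$-admissible. So all of its state letters are end letters of the copy of $\textbf{S}_h$, all working sectors are empty, and each $Q_{i,\ell}Q_{i,r}$-sector contains only right historical letters.

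Next, identify what the subcomputation $W_1\to\dots\to W_{t-1}$ does. It is a reduced computation of $\textbf{S}_h$ with history $H\in F(\Phi^+)$, and in each historical sector it multiplies on the left by the copy of $H^{-1}$ (read appropriately) over the left alphabet and on the right by the copy of $H$ over the right alphabet. The key step is to show that the tape word of each $Q_{i,\ell}Q_{i,r}$-sector of $W_{t-1}$ is the same word: the copy $u$ of $H$ over the right historical alphabet.
\begin{itemize}
\item Write the tape word of this sector in $W_1$ as $v_\ell$, a word over the left alphabet.
\item The tape word in $W_{t-1}$ is then the reduced form of $\bar H_\ell^{-1} v_\ell H_r$, where $\bar H_\ell$ and $H_r$ denote the left and right copies of $H$.
\item This word must contain no left letters. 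Since the two alphabets are disjoint, the left part $\bar H_\ell^{-1}v_\ell$ must freely cancel, which forces $v_\ell$ to be the left copy of $H$ (suitably ordered).
\item The remaining right part is then exactly the right copy of $H$, independent of $i$.
\end{itemize}
This is the projection-style argument of \Cref{multiply one letter}(a) applied to each alphabet separately. In particular, every historical sector of $W_t$ carries the same word $u$ over the right historical alphabet, and all other sectors are empty.

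Then construct the accepting computation $W_t\to\dots$. By \Cref{LR_k analogue} and the remark following it, there is a reduced computation of $\textbf{LR}_k(\Phi^+)$ that starts from a configuration carrying $u$ in the sector immediately after $Q$ and ends in the end configuration carrying the same word. Run this computation in parallel on each subword $Q_{i,\ell}Q_{i,r}R_i$; this is legitimate because every such subword carries the same word $u$ and the rules act identically on each. Two points must be checked:
\begin{itemize}
\item The admissibility of $\sigma(34)$ and $\sigma(45)$ holds at the ends, since the computation returns the word to the $Q_{i,\ell}Q_{i,r}$-sector with all other sectors empty.
\item After applying $\sigma(45)$, each historical sector holds $u$ over the right alphabet. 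The rules of $\textbf{E}_{\textbf{S},k}^0(5)$ multiply on the right by inverses of right letters, so we apply them according to the letters of $u$ read from right to left. This erases $u$ simultaneously in every sector and reaches the accept configuration.
\end{itemize}
The resulting computation has step history $(4)(5)$.

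The main obstacle is the uniformity claim in the second step: that all historical sectors of $W_t$ carry the identical word $u$. This uniformity is what permits the parallel $\textbf{LR}_k$ and $(5)$ computations. It relies on a careful check of the side on which the rules of $\textbf{E}_{\textbf{S},k}^0(3)$ multiply, and of the resulting cancellation of the left-alphabet part.
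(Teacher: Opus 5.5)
Your proposal is correct and follows essentially the paper's route, which is the argument of Lemma 3.12 of \cite{GW} with \Cref{LR_k analogue} supplying the step $(4)$: the cancellation argument in each $Q_{i,\ell}Q_{i,r}$-sector places the same right-alphabet word $u$ in every such sector. After that, the parallel $\textbf{LR}_k$ computation, followed by erasing $u$ from right to left with the rules of $\textbf{E}_{\textbf{S},k}^0(5)$ (equivalently, an appeal to \Cref{E standard accepted (4)}), gives the required accepting computation. The only blemishes are off-by-one index slips: it is $W_1$ (not $W_0$) that is a start configuration of $\textbf{E}_{\textbf{S},k}^0(3)$, and $W_{t-1}$ (not $W_t$) that is $\sigma(34)$-admissible; this is harmless because transition rules do not change tape words.
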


%
%
%
%

Let $W$ be a configuration of $\textbf{E}_{\textbf{S},k}^0$.  Similar to the setup in \cite{GW}, we define:

\begin{itemize}

\item $W^{(i)}$ to be the admissible subword of $W$ with base $P_iQ_{i,\ell}Q_{i,r}R_i$.

\item The \textit{working length} of $W$, $|W|_{wk}$, to be the number of tape letters from the working sectors of $W$, {\frenchspacing i.e. $|W|_{wk}=|W|_a-\sum|W^{(i)}|_a$}.

\end{itemize}

Note that if $W$ is accepted, then since each rule of $\textbf{E}_{\textbf{S},k}^0$ operates in parallel on the relevant sectors, $W^{(i)}$ and $W^{(j)}$ are copies of one other (as the same is true for the accept configuration).  



\begin{lemma}[Compare with Lemma 3.13 of \cite{GW}] \label{E standard (1)}

Let $W$ be an accepted configuration of $\textbf{E}_{\textbf{S},k}^0$ whose state letters belong to the hardware of the submachine $\textbf{E}_{\textbf{S},k}^0(1)$.  Then there exists a reduced computation accepting $W$ with step history $(1)(2)(3)(4)(5)$ and length at most $$(2k+5)\TM_\textbf{S}(|W|_{wk})+|W^{(i)}|_a+2k+4$$

\end{lemma}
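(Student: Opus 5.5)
Since $W$ is accepted and its state letters belong to $\textbf{E}_{\textbf{S},k}^0(1)$, I will first pin down its shape. The rules of $\textbf{E}_{\textbf{S},k}^0(1)$ lock every historical sector except the $Q_{i,\ell}Q_{i,r}$-sectors, and there they multiply only on the left by left historical letters. The blocks $W^{(i)}$ are copies of one another, because $W$ is accepted. The plan is then as follows.

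First, take an accepting computation of $W$. Its step history must begin with $(1)(12)$, since any other route out of the hardware of $\textbf{E}_{\textbf{S},k}^0(1)$ is impossible. Following the argument of Lemma 3.13 of \cite{GW}, I will show that $W$ has the form: each $Q_{i,\ell}Q_{i,r}$-sector contains a word $u$ over the left historical alphabet, all other historical sectors are empty, and the working sectors form a configuration $V$ of $\textbf{S}$ (up to the added locked sectors). This uses \Cref{E primitive step history}, \Cref{E run step history}, and the domains of $\sigma(12)$ and $\sigma(23)$. The key step is a projection argument. Write $H_1$ for the subcomputation before the first $(12)$, and let $H$ be the history of the $(3)$-phase. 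The $\textbf{E}_{\textbf{S},k}^0(1)$ rules only grow the historical words on the left. The $(2)$-phase, by \Cref{primitive computations}(3), transports the historical word unchanged. \Cref{one alphabet historical words} applied to the $(3)$-phase, together with the requirement that $\sigma(34)$ sees only right historical letters, then forces the following: the left historical content after phase (1) is exactly the copy of $H$, and $H$ is the history of a computation of $\textbf{S}$ taking $V$ to the accept configuration, i.e. an accepting computation of $V$ by $\textbf{S}$.

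Second, I construct the short computation explicitly. Let $\pazocal{D}$ be a shortest accepting computation of $\textbf{S}$ for $V$, with history $H'$. Because $V$ is an input configuration of $\textbf{S}$ with $|V|_a=|W|_{wk}$, we have $\|H'\|\leq\TM_\textbf{S}(|W|_{wk})$. The computation then runs in five phases:
\begin{enumerate}
\item In phase (1), erase the letters of $u$ and replace them with a copy of $H'$. Since the phase-(1) rules are free left multiplications, reaching the copy of $H'$ from $u$ takes at most $\|u\|+\|H'\|$ steps, and $\|u\|\le|W^{(i)}|_a$.
\item Phases (2) through (5) then proceed exactly as in the proof of \Cref{inputs accepted}. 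By \Cref{primitive computations}(3) and \Cref{LR_k analogue}, these phases have lengths $2\|H'\|+1$, $\|H'\|$, $2k\|H'\|+2k-1$ and $\|H'\|$ respectively, plus four transition rules.
\end{enumerate}
Summing gives at most $(2k+5)\|H'\|+2k+4+\|u\|$. The tally gives $2k+5$ because phase (1) contributes $\|H'\|$ and the other phases contribute $2+1+2k+1$ times $\|H'\|$. This yields the bound in the statement.

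The main obstacle is the first step, namely verifying that the working part $V$ is an input configuration of $\textbf{S}$ with tape only in input sectors, so that $\TM_\textbf{S}$ (rather than the generalized time function) applies. I would get this from the locking in $\textbf{E}_{\textbf{S},k}^0(1)$ and the domains of $\sigma(12)$ and $\sigma(23)$: non-input working sectors are locked there, so they must already be empty in $W$, exactly as in \cite{GW}. I would also check that $u$ need not be reduced against $H'$ in the count, which is harmless.
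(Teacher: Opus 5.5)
Your proposal is correct and is essentially the paper's argument. The paper erases the historical sectors in parallel with $\textbf{E}_{\textbf{S},k}^0(1)$ at a cost of $|W^{(i)}|_a$ steps and then concatenates with the computation of \Cref{inputs accepted}. Your merged erase-and-rewrite phase (1) amounts to the same thing.

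Your projection analysis of an arbitrary accepting computation is more than you need. The configuration obtained by erasing the historical sectors is $I_0(V)$, which is accepted because it is reachable from $W$. So \Cref{E0 language} already shows that $V$ is accepted by $\textbf{S}$, with $|V|_a=|W|_{wk}$.
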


\begin{proof}

As in the proof of the analogous statement in \cite{GW}, there exists a reduced computation of $\textbf{E}_{\textbf{S},k}^0(1)$ of length $|W^{(i)}|_a$ which erases all historical sectors in parallel.  As this results in an input configuration, the statement follows by applying \Cref{inputs accepted} and concatenating the computations.

%
%
%

\end{proof}

\begin{lemma}[Compare with Lemma 3.14 of \cite{GW}] \label{E generalized time}

For any accepted configuration $W$ of $\textbf{E}_{\textbf{S},k}^0$, there exists an accepting computation of length at most $c_0\TM_\textbf{S}(2c_0\|W\|)+c_0|W|_a+2c_0$.  

\end{lemma}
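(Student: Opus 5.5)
The plan is to split according to which submachine's hardware contains the state letters of $W$, and in each case build an accepting computation explicitly. Since the submachines are applied in the order $(1)(2)(3)(4)(5)$, the computation will follow some suffix of this step history. Throughout, $|W^{(i)}|_a\leq|W|_a$ and $|W|_{wk}\leq\|W\|$. We will also use repeatedly that, since $W$ is accepted, the $W^{(i)}$ are copies of one another.

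\emph{Case 1: state letters in $\textbf{E}_{\textbf{S},k}^0(1)$.} \Cref{E standard (1)} gives an accepting computation of length at most
$$(2k+5)\TM_\textbf{S}(|W|_{wk})+|W^{(i)}|_a+2k+4.$$
With $c_0>>k$ this is at most $c_0\TM_\textbf{S}(2c_0\|W\|)+c_0|W|_a+2c_0$, since $\TM_\textbf{S}$ is non-decreasing.

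\emph{Case 2: state letters in $\textbf{E}_{\textbf{S},k}^0(2)$.} Fix any accepting computation, reduced and of minimal length. I would show it can be taken to have step history a suffix of $(2)(3)(4)(5)$. I would rule out any return to $(1)$ by a direct construction rather than a structural analysis.
\begin{itemize}
\item First run $\textbf{E}_{\textbf{S},k}^0(2)$ in parallel as $\textbf{RL}$ to reach the end letters of (2). This costs at most $2|W^{(i)}|_a+1$ rules by \Cref{primitive time}, and keeps the $a$-length unchanged.
\item Apply $\sigma(23)$. Its admissibility requires the historical sectors to consist of left letters in the $Q_{i,\ell}Q_{i,r}$-sector, with the $P_iQ_{i,\ell}$-sectors empty. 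This is exactly what acceptance forces, via \Cref{primitive computations}(3) and the fact that $W$ is accepted; I would argue it using projection.
\item From the start of (3), the left historical word $h$ in each $Q_{i,\ell}Q_{i,r}$-sector must be the history of an $\textbf{S}_h$-computation, run backwards, that turns the working sectors into accepted end data. Running that computation takes $\|h\|\leq|W|_a$ steps. After it, (4) takes at most $2k|W|_a+2k-1$ steps by \Cref{LR_k analogue}, and (5) takes at most $|W|_a$ steps.
\item The $a$-length stays linearly bounded in $|W|_a$ throughout, by \Cref{M_2 bound}.
\end{itemize}
The total is therefore bounded by $c_0|W|_a+c_0$.

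\emph{Cases 3, 4, 5: state letters in $\textbf{E}_{\textbf{S},k}^0(3)$, $(4)$ or $(5)$.} These go the same way using \Cref{E standard accepted (3)}, \Cref{E standard accepted (4)} and \Cref{E standard one-step}. Subcase (4) reduces to (5) after at most $2k\max+2k-1$ steps. Subcase (5) erases the right historical words in at most $|W^{(i)}|_a$ steps.

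\emph{Main obstacle.} The delicate point is Case 3, and likewise Case 2 once it is in (3). There one must show that the historical content $h$ already encodes a terminating $\textbf{S}_h$-computation, rather than needing to go back through (2) and (1). If that fails, I would instead take an arbitrary accepting computation, which exists, and use \Cref{E primitive step history} and \Cref{E run step history}. These force the step history to be a subword of $(3)(2)(1)(2)(3)(4)(5)$ or something similar. In that situation:
\begin{itemize}
\item The passage through (1) yields an accepted configuration in Case 1.
\item The $a$-length of that configuration is bounded by $c_0\|W\|$ using \Cref{E time (12)}, which controls the $a$-length of the prefix with step history $(3)(2)$.
\end{itemize}
Applying Case 1 there gives the term $\TM_\textbf{S}(2c_0\|W\|)$. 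This is exactly why the argument of $\TM_\textbf{S}$ in the statement is $2c_0\|W\|$ rather than $|W|_{wk}$.
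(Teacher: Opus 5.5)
Your Case~1 is fine, but Cases~2 and~3 rest on a false claim. Acceptance of $W$ does not force the left historical word $h$ to be the history of an accepting computation of the current working sectors. So an accepted configuration with state letters in $\textbf{E}_{\textbf{S},k}^0(2)$ or $\textbf{E}_{\textbf{S},k}^0(3)$ need not be accepted by a forward computation with step history a suffix of $(2)(3)(4)(5)$.

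Here is a concrete counterexample. Let $W$ carry the start letters of $\textbf{E}_{\textbf{S},k}^0(3)$, a copy of a nonempty input $w$ accepted by $\textbf{S}$ in the input sectors, and empty tape words elsewhere. It is accepted: apply $\sigma(32)$, run $\textbf{E}_{\textbf{S},k}^0(2)$ backwards (only its connecting rule, since the historical words are empty), and apply $\sigma(21)$. This reaches the input configuration with input $w$, which is accepted by \Cref{inputs accepted}.

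No accepting computation, however, goes straight to $\sigma(34)$. A reduced history $H'$ of $(3)$ leaves the copy of $(H')^{-1}$ over the left historical alphabet in each $Q_{i,\ell}Q_{i,r}$-sector, so $\sigma(34)$-admissibility forces $H'$ to be empty. With $H'$ empty, the working sectors still hold $w$, which is outside the domain of $\sigma(34)$. Moreover, $(32)(2)(23)$ is excluded by \Cref{E primitive step history}. Hence every accepting computation returns to $(1)$ and then, inside $(3)$, runs a computation of $\textbf{S}$ accepting $w$. Its length is therefore at least $\tm_\textbf{S}(w)$, which is not $O(|W|_a)$ in general. So the bound $c_0|W|_a+c_0$ you derive in Case~2 (and assert for Case~3) is false, and Case~2 fails the same way whenever $h$ is not an accepting history of the input.

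What you describe under ``Main obstacle'' is essentially the paper's proof, so it must be the argument rather than a contingency. As written it is missing several pieces.

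\begin{itemize}
\item An arbitrary accepting computation does not suffice. \Cref{E primitive step history} and \Cref{E run step history} do not exclude the reversals $(21)(1)(12)$ and $(45)(5)(54)$, so the step history can oscillate. For instance, a prefix $(5)(4)(3)(2)$ before reaching $(1)$ is not covered by \Cref{E time (12)}.
\item Instead, take an accepting computation $W\equiv W_0\to\dots\to W_t$ whose step history has minimal length. Together with \Cref{E standard accepted (3)}, this forces the step history to be a suffix of $(3)(32)(2)(21)(1)(12)(2)(23)(3)(34)(4)(45)(5)$.
\item If no $(1)$, $(12)$ or $(21)$ occurs, then \Cref{E standard no (1)} and $|W_t|_a=0$ give $t\le 3c_0|W^{(i)}|_a+2c_0\le c_0|W|_a+2c_0$. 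This uses that the $W^{(i)}$ are copies of one another and that $N$ is large.
\item Otherwise, replace everything from the first word $W_x$ of the $(1)$-segment onward by the computation given by \Cref{E standard (1)}.
\item If $x>0$, the prefix $W_0\to\dots\to W_{x-1}$ has step history a suffix of $(3)(32)(2)$, and $W_{x-1}$ is $\sigma(21)$-admissible. Applying \Cref{E time (12)} to the inverse of the prefix gives $x-1\le c_0|W_0^{(i)}|_a+c_0$ and $|W_x^{(i)}|_a\le|W_0^{(i)}|_a$.
\item \Cref{E standard (1)} bounds the tail in terms of the working length $|W_x|_{wk}$. \Cref{E time (12)} does not control this quantity, since it only sees the sectors $P_iQ_{i,\ell}Q_{i,r}R_i$. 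You need \Cref{simplify rules} to get $|W_x|_{wk}\le|W_0|_{wk}+2sx\le 2c_0\|W\|$, and this is exactly where the argument $2c_0\|W\|$ of $\TM_\textbf{S}$ comes from.
\end{itemize}
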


\begin{proof}

Let $\pazocal{C}:W\equiv W_0\to\dots\to W_t$ be an accepting computation whose step history is of minimal length and let $\pazocal{C}^{(i)}:W_0^{(i)}\to\dots\to W_t^{(i)}$ be its restriction to the subword $P_iQ_{i,\ell}Q_{i,r}R_i$ of the standard base.  By Lemmas \ref{E primitive step history}, \ref{E run step history}, and \ref{E standard accepted (3)}, the step history of $\pazocal{C}$ is then a suffix of $$(3)(32)(2)(21)(1)(12)(2)(23)(3)(34)(4)(45)(5)$$
Let $W_y\to\dots\to W_t$ be the maximal subcomputation whose step history does not contain the letter $(1)$ or $(12)$.  By \Cref{E standard no (1)}, it follows that $t-y\leq3c_0\max(|W_y^{(i)}|_a,|W_t^{(i)}|_a)+2c_0$.  But $W_t$ is the accept configuration, so that $|W_t|_a=0$, and hence $t-y\leq3c_0|W_y^{(i)}|_a+2c_0$.  

Hence, taking $N$ sufficiently large implies $t-y\leq c_0|W_y|_a+2c_0$, and so as a result it suffices to assume $y>0$.

Now, let $W_x\to\dots\to W_{y-1}$ be the maximal subcomputation with step history $(1)$.  By \Cref{E standard (1)}, we may assume without loss of generality that $t-x\leq(2k+5)\TM_\textbf{S}(|W_x|_{wk})+|W_x^{(i)}|_a+2k+4$.  Hence, by the parameter assignment $c_0>>k$ it suffices to assume $x>0$.

As a result, the step history of the initial subcomputation $W_0\to\dots\to W_{x-1}$ is a suffix of $(3)(2)$.  But then since $W_{x-1}$ is $\sigma(21)$-admissible, applying \Cref{E time (12)} to the inverse computation $W_{x-1}^{(i)}\to\dots\to W_0^{(i)}$ implies $|W_x^{(i)}|_a\leq|W_0^{(i)}|_a$ and $x-1\leq c_0|W_0^{(i)}|_a+c_0$ for all $i\in\{0,\dots,s\}$.

By \Cref{simplify rules}, it follows that $|W_x|_{wk}\leq|W_0|_{wk}+2sx\leq|W_0|_{wk}+2c_0\sum (|W_0^{(i)}|_a+2)\leq2c_0\|W\|$.  The statement thus follows from the parameter assignment $c_0>>k$.

\end{proof}

The next statements have no analogue in \cite{GW} and begin to speak to the purpose of the generalization from enhanced machines to $k$-enhanced machines.

\begin{lemma} \label{start to end mostly (4)}

If $\pazocal{C}:W_0\to\dots\to W_t$ is a reduced computation of $\textbf{E}_{\textbf{S},k}^0$ in the standard base with step history $(12)(2)(23)(3)(34)(4)(45)$, then:

\begin{enumerate}[label=(\alph*)]

\item There exists a word $H\in F(\Phi^+)$ such that $W_0$ has the copy of $H$ over the corresponding left historical alphabet written in each $Q_{i,\ell}Q_{i,r}$-sector

\item $t=(2k+3)\|H\|+(2k+4)$

\item The length of the subcomputation of $\pazocal{C}$ with step history $(34)(4)(45)$ is $2k\|H\|+2k+1$.

\item $W_t$ has the copy of $H$ over the corresponding right alphabet written in each $Q_{i,\ell}Q_{i,r}$-sector.

\end{enumerate}

\end{lemma}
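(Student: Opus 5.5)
We decompose $\pazocal{C}$ along its step history as $W_0\to W_1$ (the rule $\sigma(12)$), then a maximal subcomputation $W_1\to\dots\to W_x$ of $\textbf{E}_{\textbf{S},k}^0(2)$, the rule $\sigma(23)$, a subcomputation $W_{x+1}\to\dots\to W_y$ of $\textbf{E}_{\textbf{S},k}^0(3)$, the rule $\sigma(34)$, a subcomputation $W_{y+1}\to\dots\to W_z$ of $\textbf{E}_{\textbf{S},k}^0(4)$, and finally $\sigma(45)$. Each piece is then analysed by restricting to the subwords $P_iQ_{i,\ell}Q_{i,r}R_i$ of the standard base, which are all copies of one another since every rule acts in parallel.

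For the first piece, $W_0$ is $\sigma(12)$-admissible, so its only historical letters lie in the left alphabets of the $Q_{i,\ell}Q_{i,r}$-sectors, and the $P_iQ_{i,\ell}$- and $Q_{i,r}R_i$-sectors are empty. Let $H$ be the word such that the $Q_{i,\ell}Q_{i,r}$-sector of $W_1$ contains the copy of $H$ over the left alphabet; since $\sigma(12)$ does not alter tape words, this gives (a) for $W_0$. The step (2) subcomputation is, on each $P_iQ_{i,\ell}Q_{i,r}$, a reduced computation of $\textbf{RL}$ that starts with start state letters and ends with end state letters, the tape word lying in the $Q_{i,\ell}Q_{i,r}$-sector at both ends (by the $\sigma(12)$ and $\sigma(23)$ domains). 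The $\textbf{RL}$-analogue of \Cref{primitive computations}(3) therefore gives length $2\|H\|+1$, and $W_{x+1}$ carries the same $H$ in the left alphabet. For step (3), $W_{x+1}$ has only left historical letters and $W_{y+1}$ only right historical ones, because $\sigma(34)$ has the right alphabet as its domain. Each rule of $\textbf{S}_h$ multiplies the $Q_{i,\ell}Q_{i,r}$-sector by $\theta^{-1}$ on the left and $\theta$ on the right, so a projection argument in the style of \Cref{multiply two letters} and \Cref{one alphabet historical words} will show that a reduced history $H'$ taking the left-alphabet copy of $H$ to a purely right-alphabet word must satisfy $H'\equiv H$. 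Thus the step (3) part has length $\|H\|$, and $W_{y+1}$ carries the copy of $H$ in the right alphabet.

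For step (4), the restriction to each $Q_{i,\ell}Q_{i,r}R_i$ is a reduced computation of $\textbf{LR}_k$ from start to end letters with the tape word in the $Q_{i,\ell}Q_{i,r}$-sector at both ends. \Cref{LR_k analogue} then gives the same word, and hence (d), together with length $2k\|H\|+2k-1$; adding the two transition rules $\sigma(34),\sigma(45)$ gives (c). For (b) we sum the pieces: $1+(2\|H\|+1)+1+\|H\|+1+(2k\|H\|+2k-1)+1=(2k+3)\|H\|+2k+4$.

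The main obstacle is step (3). We must show that the history is forced to be exactly $H$ and that no cancellation ambiguity occurs. The working sectors carry arbitrary content, so the argument has to rely only on the historical sector. The point is that the tape word there is $\bar H'^{-1}_\ell\,H_\ell\,H'_r$ (suitably indexed), freely reduced, and we need the left part to vanish entirely while the history stays reduced. Reading the result letter by letter as in \Cref{multiply one letter}(a) should give this.
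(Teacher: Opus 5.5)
Your proposal follows essentially the paper's proof: the same decomposition along the step history, \Cref{primitive computations}(3) (in its $\textbf{RL}$ form) for step (2), identification of the step-(3) history with $H$, \Cref{LR_k analogue} for step (4), and the same sum. In step (3), your explicit computation of the historical sector as $(H'_\ell)^{-1}uH'_r$ actually pins down $\|H'\|=\|H\|$ more fully than the paper's bare citation of \Cref{one alphabet historical words}, which by itself only gives inequalities.

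One justification needs repair: your claim that the blocks $P_iQ_{i,\ell}Q_{i,r}R_i$ are copies of one another ``since every rule acts in parallel'' is not valid as stated, because parallel action alone does not force the initial contents of different blocks to agree. The paper gets this from each $W_i$ being accepted (via \Cref{E standard accepted (4)}). You can also get it from your own step (2) analysis: the common history is a copy of $\bar u_i\zeta u_i$ in every block, so all the $u_i$ coincide.
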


\begin{proof}

By \Cref{E standard accepted (4)} (or \Cref{E standard accepted (3)}), each $W_i$ is an accepted configuration.  So since each rule acts in parallel on each $Q_{i,\ell}Q_{i,r}$-sector, $W_0$ has a copy of the same word written in each such sector.  Hence, (a) follows by noting that $W_0$ is $\sigma(12)$-admissible.

Now, decompose $\pazocal{C}$ into the subcomputations $\pazocal{C}_2:W_1\to\dots\to W_x$, $\pazocal{C}_3:W_{x+1}\to\dots\to W_y$, and $\pazocal{C}_4:W_{y+1}\to\dots\to W_{t-1}$ such that $\pazocal{C}_j$ is a maximal subcomputation with step history $(j)$.  

As a result:

\begin{itemize}

\item \Cref{primitive computations}(3) implies $x-1=2\|H\|+1$,

\item \Cref{one alphabet historical words} implies $y-x-1=\|H\|$, and

\item \Cref{LR_k analogue} implies $t-y-2=2k\|H\|+2k-1$.

\end{itemize}

(b) and (c) thus follow immediately, while (d) follows from applications of Lemmas \ref{primitive computations}(3), \ref{one alphabet historical words}, and \ref{LR_k analogue}.

\end{proof}

\begin{lemma} \label{(1) to (5) mostly (4)}

Let $\pazocal{C}:W_0\to\dots\to W_t$ be a reduced computation of $\textbf{E}_{\textbf{S},k}^0$ in the standard base.  Suppose the step history of $\pazocal{C}$ has first letter $(12)$ or $(54)$, and last letter $(21)$ or $(45)$.  Then:

\begin{enumerate}[label=(\alph*)]

\item $W_0$ is an accepted configuration.

\item Letting $W_i'$ be the admissible subword of $W_i$ with base $Q_{i,\ell}Q_{i,r}$, $t\geq2k\max(|W_0'|_a,|W_t'|_a)$.

\item The sum of the lengths of the subcomputations of $\pazocal{C}$ with step history $(34)(4)(45)$ or $(54)(4)(43)$ is at least $(1-\frac{5}{2k})t$.

\end{enumerate}

\end{lemma}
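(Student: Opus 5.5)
The plan is first to pin down the possible step histories. Suppose the first letter is $(12)$ or $(54)$ and the last letter is $(21)$ or $(45)$. By Lemmas \ref{E primitive step history} and \ref{E run step history}, a reduced computation in the standard base cannot turn back inside step (2), (3), or (4) without passing to the opposite end of the relevant submachine. I will therefore show that the step history, read between these transitions, must be a concatenation of the blocks
\[
(12)(2)(23)(3)(34)(4)(45)
\quad\text{and}\quad
(54)(4)(43)(3)(32)(2)(21),
\]
separated by maximal subcomputations with step history $(1)$ or $(5)$. The reason is as follows. After $(12)$ the computation runs through $(2)$. The pattern $(12)(2)(21)$ is forbidden by \Cref{E primitive step history}(a), so the computation must continue with $(23)$. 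It then runs through $(3)$, where $(23)(3)(32)$ is forbidden by \Cref{E run step history}(a), so it continues with $(34)$. Next it runs through $(4)$, where $(34)(4)(43)$ is forbidden by \Cref{E primitive step history}(b), so it reaches $(45)$. The case starting with $(54)$ is symmetric.

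Part (a) then follows by applying \Cref{E standard accepted (4)} to the first block, or its inverse version, to conclude that $W_0$ is accepted. Here I would argue exactly as in the proof of \Cref{start to end mostly (4)}. In the $(54)$ case the block $(54)\dots(43)$ is the inverse of a $(34)(4)(45)$ computation, so its endpoint configurations are accepted via $(5)$. Since acceptance propagates along any computation, every $W_i$ is then accepted.

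For (b) and (c), I apply \Cref{start to end mostly (4)} to each block, using the inverse computation for blocks of the second type. A block whose historical word is $H_j$ has length $(2k+3)\|H_j\|+2k+4$, and its $(4)$-part has length $2k\|H_j\|+2k+1$. Between consecutive blocks sits a $(1)$- or $(5)$-subcomputation. Every such configuration is accepted, and each $Q_{i,\ell}Q_{i,r}$-sector holds a word over a single historical alphabet. By \Cref{multiply one letter}(b), such a subcomputation has length at most $\|H_j\|+\|H_{j+1}\|$, which I would bound by the lengths of the adjacent blocks. Summing gives a total non-$(4)$ time of at most $\sum_j(3\|H_j\|+3)+\sum_j 2\|H_j\|$. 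For $k$ large this is at most $\frac{5}{2k}$ times the total time, since the $(4)$-parts contribute about $2k\sum(\|H_j\|+1)$; this yields (c). For (b), the first block alone has length at least $2k\|H_1\|$, and $\|H_1\|=|W_0'|_a$ because the sector holds exactly a copy of $H_1$. The analogous statement holds for the last block and $W_t'$.

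The main obstacle I expect is the bookkeeping in (c). The $(1)$/$(5)$ gaps and the additive constants must be absorbed cleanly into the ratio $\frac{5}{2k}$. This is delicate when some $H_j$ is empty, in which case the $(4)$-part still has length $2k+1$ against roughly $4$ transition steps. I would first try charging each gap to its two neighbouring blocks, half to each, and checking the inequality block by block. If the constant $5$ turns out to be too tight, I would re-examine whether \Cref{multiply one letter}(a) forces the gap to have length exactly $\big|\|H_{j+1}\|-\|H_j\|\big|$ or $\|H_j\|+\|H_{j+1}\|$, and use the sharper bound.
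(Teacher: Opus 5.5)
Your proposal is correct and is essentially the paper's proof: the same decomposition into blocks $(12)(2)(23)(3)(34)(4)(45)$ and $(54)(4)(43)(3)(32)(2)(21)$ separated by $(1)$/$(5)$ gaps, acceptance from \Cref{E standard accepted (4)}, block lengths from \Cref{start to end mostly (4)}, and gap lengths from \Cref{multiply one letter}. The bookkeeping you worry about closes without any largeness of $k$: the time outside the $(34)(4)(45)$/$(54)(4)(43)$ subcomputations is at most $\sum_j(5\|H_j\|+3)\le\frac{5}{2k}\sum_j(2k\|H_j\|+2k+1)\le\frac{5}{2k}t$. (Your gap bound $\|H_j\|+\|H_{j+1}\|$ is what \Cref{multiply one letter}(b) actually gives; the paper states $\max(\|H_j\|,\|H_{j+1}\|)$, but its final estimate $t\le(2k+5)\sum_j(\|H_j\|+1)$ only needs your sum bound.)
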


\begin{proof}

In any case, Lemmas \ref{E primitive step history} and \ref{E run step history} imply that the step history of $\pazocal{C}$ contains a subword of the form $(34)(4)(45)$ or $(54)(4)(43)$.  Hence, (a) follows by \Cref{E standard accepted (4)}.

Moreover, these lemmas imply there exists a factorization of $\pazocal{C}$ given by $\pazocal{C}_1\pazocal{D}_1\dots\pazocal{C}_{m-1}\pazocal{D}_{m-1}\pazocal{C}_m$ with $m\geq1$ such that:

\begin{itemize}

\item Each subcomputation $\pazocal{C}_j$ has step history $(12)(2)(23)(3)(34)(4)(45)$ or has step history $(54)(4)(43)(3)(32)(2)(21)$.

\item Each subcomputation $\pazocal{D}_i$ has step history $(1)$ or $(5)$.

\end{itemize}

For all $1\leq j\leq m$, fix $0\leq x_j<y_j\leq t$ such that $\pazocal{C}_j:W_{x_j}\to\dots\to W_{y_j}$.  We may then apply \Cref{start to end mostly (4)} to $\pazocal{C}_j$ (or its inverse), meaning there exists $H_j\in F(\Phi^+)$ such that $W_{x_j}$ and $W_{y_j}$ both have copies of $H_j$ written in each $Q_{i,\ell}Q_{i,r}$-sector.  \Cref{start to end mostly (4)}(b) then implies $y_j-x_j=(2k+3)\|H_j\|+(2k+4)$, while \Cref{start to end mostly (4)}(c) says that each $\pazocal{C}_j$ has a subcomputation $\pazocal{C}_j'$ with step history $(34)(4)(45)$ or $(54)(4)(43)$ of length $2k\|H_j\|+2k+1$.  

What's more, for $1\leq j\leq m-1$, the restriction of $\pazocal{D}_j$ to any $Q_{i,\ell}Q_{i,r}$-sector satisfies the hypotheses of \Cref{multiply one letter}, and so has length at most $\max(\|H_j\|,\|H_{j+1}\|)$.

Hence, $\displaystyle t\leq\sum_{j=1}^m((2k+3)\|H_j\|+2k+4)+\sum_{j=1}^{m-1}\max(\|H_j\|,\|H_{j+1}\|)\leq(2k+5)\sum_{j=1}^m(\|H_j\|+1)$ while the sum $\ell$ of the lengths of the subcomputations of $\pazocal{C}$ with step history $(34)(4)(45)$ or $(54)(4)(43)$ is $\displaystyle\sum_{j=1}^m(2k\|H_j\|+2k+1)\geq2k\sum_{j=1}^m(\|H_j\|+1)$.  Thus, $\ell\geq\frac{2k}{2k+5}t\geq(1-\frac{5}{2k})t$.

\end{proof}

\begin{lemma} \label{long history mostly (4)}

Let $\pazocal{C}:W_0\to\dots\to W_t$ be a reduced computation of $\textbf{E}_{\textbf{S},k}^0$ in the standard base.  Suppose $t>c_1\max(\|W_0\|,\|W_t\|)$.  Then:

\begin{enumerate}[label=(\alph*)]

\item $W_0$ is an accepted configuration.

\item The sum of the lengths of the subcomputations of $\pazocal{C}$ with step history $(34)(4)(45)$ or $(54)(4)(43)$ is at least $\left(1-\frac{4}{k}\right)t$.

\end{enumerate}

\end{lemma}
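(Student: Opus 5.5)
Since $t>c_1\max(\|W_0\|,\|W_t\|)$ and $c_1\gg c_0$, the computation is long relative to its endpoints. The plan is to use the restrictions on step histories to reduce to \Cref{(1) to (5) mostly (4)}. By Lemmas \ref{E primitive step history} and \ref{E run step history}, the step history of $\pazocal{C}$ is a word whose consecutive letters move monotonically through $(1),\dots,(5)$ between turnarounds. Turnarounds happen only at $(1)$ or $(5)$; a turnaround at $(2)$, $(3)$ or $(4)$ is excluded by those lemmas applied to the standard base. So we factor $\pazocal{C}=\pazocal{A}\,\pazocal{C}'\,\pazocal{B}$ as follows. $\pazocal{C}'$ begins with the first transition rule of the form $(12)$ or $(54)$ and ends with the last transition of the form $(21)$ or $(45)$. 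The prefix $\pazocal{A}$ and suffix $\pazocal{B}$ then contain at most one maximal $(1)$- or $(5)$-block, plus a monotone piece in $(2)(3)(4)$.

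First, bound $\pazocal{A}$ and $\pazocal{B}$. The step history of $\pazocal{A}$ avoids either $(1),(12),(21)$ or $(5),(54),(45)$; the same holds for $\pazocal{B}$. Restricting to each block $P_iQ_{i,\ell}Q_{i,r}R_i$, Lemmas \ref{E standard no (1)} and \ref{E standard no (5)} give a length bound. These two lemmas bound a restriction whose endpoint may be an interior configuration. To control that endpoint, I use the $\sigma$-admissibility monotonicity statements (\Cref{E time (12)}, \Cref{E standard one-step}). The endpoint adjacent to $\pazocal{C}'$ is $\sigma(12)$/$\sigma(21)$- or $\sigma(45)$/$\sigma(54)$-admissible, so its $a$-length in each block is at most that of $W_0^{(i)}$ (resp.\ $W_t^{(i)}$). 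This yields $|\pazocal{A}|,|\pazocal{B}|\le 3c_0\max(\|W_0\|,\|W_t\|)+2c_0$, and also the bound $\le\max(\|W_0\|,\|W_t\|)$ on the historical-sector lengths at the ends of $\pazocal{C}'$. If $\pazocal{C}'$ is empty, then $t\le 6c_0\max+4c_0+O(1)<c_1\max(\|W_0\|,\|W_t\|)$, a contradiction. So $\pazocal{C}'$ is nonempty; it may contain $(1)$/$(5)$ blocks in its interior.

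Next, apply \Cref{(1) to (5) mostly (4)} to $\pazocal{C}'$. Part (a) says its first configuration is accepted. Every configuration in a computation from an accepted one is accepted, since we can reverse the computation and append, and configurations in a computation are connected. Hence $W_0$ is accepted, giving (a). Part (c) of that lemma gives that the $(34)(4)(45)$ and $(54)(4)(43)$ subcomputations have total length at least $(1-\frac{5}{2k})|\pazocal{C}'|$. Finally, $|\pazocal{C}'|\ge t-O(c_0)\max(\|W_0\|,\|W_t\|)\ge(1-\frac{6c_0+4c_0}{c_1})t$. Hence the total is at least $(1-\frac5{2k})(1-\frac{c_0'}{c_1})t\ge(1-\frac4k)t$, by $c_1\gg c_0\gg k$.

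The main obstacle is the first step: showing the prefix and suffix are short when the endpoint sits mid-$(1)$ or mid-$(5)$. \Cref{E standard no (1)} requires the whole piece to avoid $(1)$, but $\pazocal{A}$ may itself begin with a partial $(1)$-block. I would handle this by splitting off that block. Its restriction to each $Q_{i,\ell}Q_{i,r}$-sector is governed by \Cref{multiply one letter}, and its length is at most the historical length at the start or end of the block. The end of the block is the $\sigma(12)$-admissible configuration, whose length we control by \Cref{start to end mostly (4)}(a) and (d): it equals $\|H_1\|$. Then $\|H_1\|\le|\pazocal{C}'|/2k$, which is absorbed into the slack $\frac{4}{k}-\frac{5}{2k}$ rather than into $\max(\|W_0\|,\|W_t\|)$.
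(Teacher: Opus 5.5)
Your route is the paper's, and the splitting in your last paragraph is exactly its key step. The paper takes the maximal subcomputation $\pazocal{C}'\colon W_r\to\dots\to W_s$ satisfying \Cref{(1) to (5) mostly (4)} and gets (a) from it. It then splits each end piece into two parts. The first is the $(1)$/$(5)$-block adjacent to $\pazocal{C}'$, bounded via \Cref{multiply one letter} by the historical length at the junction, which is at most $(s-r)/2k$. The second is a monotone piece handled by \Cref{E time (12)}. This gives $r,\ t-s\le 3t/4k$, and it finishes with $(1-\frac{5}{2k})(1-\frac{3}{2k})\ge 1-\frac4k$.

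What needs correcting is how often that splitting is needed. Your first step asserts $|\pazocal{A}|,|\pazocal{B}|\le 3c_0\max(\|W_0\|,\|W_t\|)+2c_0$, with the historical lengths at the ends of $\pazocal{C}'$ bounded by $\max(\|W_0\|,\|W_t\|)$. This fails whenever $\pazocal{A}$ or $\pazocal{B}$ contains a $(1)$- or $(5)$-block at all, not only when $W_0$ or $W_t$ sits inside one. The monotonicity in \Cref{E time (12)} controls only the junction between the $(2)(3)(4)$-piece and that block. It says nothing about the junction with $\pazocal{C}'$, since the block can erase or write a history word of length up to about $|\pazocal{C}'|/2k$. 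For example, $\pazocal{B}$ with step history $(1)(12)(2)$ can erase the long word $H_m$ left in the $Q_{i,\ell}Q_{i,r}$-sectors by $\pazocal{C}'$ and then stop at a short $W_t$ lying in $(2)$. The symmetric case is $\pazocal{A}$ with step history $(2)(21)(1)$. In such cases $|\pazocal{B}|$ is of order $t/2k$, not $O(c_0)\max(\|W_0\|,\|W_t\|)$.

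So apply the splitting from your last paragraph in every case. This gives $t-|\pazocal{C}'|\le O(c_0)\max(\|W_0\|,\|W_t\|)+|\pazocal{C}'|/k$ in place of your $O(c_0)\max(\|W_0\|,\|W_t\|)$. With that change your final arithmetic, absorbing $1/k$ into the slack $\frac4k-\frac5{2k}$, closes just as in the paper.
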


\begin{proof}

By the parameter choice $c_1>>c_0$, \Cref{E standard no (1)}, and \Cref{E standard no (5)}, the step history of $\pazocal{C}$ must contain both:

\begin{itemize}

\item a letter $(1)$, $(12)$, or $(21)$; and

\item a letter $(5)$, $(54)$, or $(45)$.

\end{itemize}

Lemmas \ref{E primitive step history} and \ref{E run step history} then imply the existence of a maximal subcomputation $\pazocal{C}':W_r\to\dots\to W_s$ satisfying the hypotheses of \Cref{(1) to (5) mostly (4)}.  Hence, $W_r$ is an accepted configuration, implying (a).

Now, the step history of the subcomputation $\pazocal{D}:W_s\to\dots\to W_t$ must be a prefix of either:

\begin{itemize}

\item $(1)(12)(2)(23)(3)(34)(4)$, or

\item $(5)(54)(4)(43)(3)(32)(2)$.

\end{itemize}

Let $\pazocal{E}:W_s\to\dots\to W_x$ be the maximal subcomputation with step history $(1)$ or $(5)$ and let $\pazocal{E}':W_s'\to\dots\to W_x'$ be its restriction to a $Q_{i,\ell}Q_{i,r}$-sector.   \Cref{(1) to (5) mostly (4)}(b) implies $s-r\geq2k|W_s'|_a$.  Further, $\pazocal{E}'$ satisfies the hypotheses of \Cref{multiply one letter}, so that $x-s\leq\max(|W_s'|_a,|W_x'|_a)$.  

Suppose $t>x$.  Then for the subcomputation $\pazocal{F}:W_x\to\dots\to W_t$ following $\pazocal{E}$, for any $i$ the restriction $\pazocal{F}^{(i)}:W_x^{(i)}\to\dots\to W_t^{(i)}$ satisfies the hypotheses of \Cref{E time (12)}.  In particular, $|W_x'|_a=|W_x^{(i)}|_a\leq|W_t^{(i)}|_a$ and $t-x\leq c_0|W_t^{(i)}|_a+c_0$.

The parameter choice $c_0>>N$ then implies $t-s\leq\max(|W_s'|_a,|W_t|_a)+c_0\|W_t\|$.  But $\|W_t\|<t/c_1$ and $|W_s'|_a\leq\frac{1}{2k}(s-r)\leq t/2k$, so that the parameter choices $c_1>>c_0>>k$ imply $t-s\leq3t/4k$.


A symmetric argument implies $r\leq 3t/4k$, so that $s-r\geq(1-\frac{3}{2k})t$.

Thus, \Cref{(1) to (5) mostly (4)}(c) implies the sum of the lengths of the subcomputations of $\pazocal{C}$ with step history $(34)(4)(45)$ or $(54)(4)(43)$ is at least $(1-\frac{5}{2k})(s-r)\geq(1-\frac{5}{2k})(1-\frac{3}{2k})t\geq(1-\frac{4}{k})t$.

\end{proof}

\medskip


\subsection{Mirror copies} \

The next step in our construction is to concatenate $\textbf{E}_{\textbf{S},k}^0$ with a `mirror copy' of itself, introducing a certain level of symmetry into the computational model.  This is performed in much the same way as in \cite{OS20}, \cite{WCubic}, and \cite{WMal} (and to some extent \cite{WEmb}).

Let $B_0'\equiv P_0'Q_{0,\ell}'Q_{0,r}'R_0'\dots P_s'Q_{s,\ell}'Q_{s,r}'R_s'$ be a copy of the standard base $B_0$ of $\textbf{E}_{\textbf{S},k}^0$.  The \textit{mirror $k$-enhanced machine} $\textbf{E}_{\textbf{S},k}^1$ then has standard base $B_{std}\equiv B_0(B_0')^{-1}$.  Note that, in accordance with the assignment of the previous sections the length of the standard base $B_{std}$ of $\textbf{E}_{\textbf{S},k}^1$ is taken to be the parameter $N-1$.

The tape alphabets of the sectors formed by letters of $B_0$ are identified with those of $\textbf{E}_{\textbf{S},k}^0$, those of the sectors formed by letters of $(B_0')^{-1}$ are taken to be copies of their analogues, and that of the $R_s(R_s')^{-1}$-sector is empty.

The start and end letters of each part correspond to those of $\textbf{E}_{\textbf{S},k}^0$.  On the other hand, the number of input sectors is doubled, with any `mirror' of an input sector taken to be another input sector.

The software of $\textbf{E}_{\textbf{S},k}^1$ is in one-to-one correspondence with that of $\textbf{E}_{\textbf{S},k}^0$, with each rule operating on admissible subwords with base a reduced subword of $B_0$ or of $B_0'$ in the same way as its analogue (and obviously locking the $R_s(R_s')^{-1}$-sector).  As such, a positive rule operates in the standard base in symmetry on the `mirror copies' of the standard base of $\textbf{E}_{\textbf{S},k}^0$.

For example, if a rule of $\textbf{E}_{\textbf{S},k}^0$ multiplies the $Q_{i,r}R_i$-sector on the left by the tape letter $a$, then its analogue in $\textbf{E}_{\textbf{S},k}^1$ does the same to the $Q_{i,r}R_i$-sector, but also multiplies the $(R_i')^{-1}(Q_{i,r}')^{-1}$ sector on the right by $(a')^{-1}$, where $a'$ is the copy of $a$ in the corresponding tape alphabet.

By construction, we may view the standard $k$-enhanced machine as the composition of the five submachines $\textbf{E}_{\textbf{S},k}^1(1),\dots,\textbf{E}_{\textbf{S},k}^1(5)$ through the transition rules $\sigma(i,i+1)$.  

For any configuration $W$ of $\textbf{E}_{\textbf{S},k}^1$, there exist configurations $W_1$ and $W_2$ of $\textbf{E}_{\textbf{S},k}^0$ such that $W$ is the concatenation of $W_1$ and the copy of $W_2^{-1}$ with base $(B_0')^{-1}$.  In this case, we write $W=\mathscr{C}_1(W_1,W_2)$.

If $W$ is accepted, then the symmetry of the accept configuration implies $W_1\equiv W_2$.  Hence, the next statement follows immediately.

\begin{lemma} \label{E1 language}

A configuration $W$ of $\textbf{E}_{\textbf{S},k}^1$ is accepted if and only if there exists an accepted configuration $W_0$ of $\textbf{E}_{\textbf{S},k}^0$ such that $W\equiv\mathscr{C}_1(W_0,W_0)$.  In this case, any computation of $\textbf{E}_{\textbf{S},k}^0$ accepting $W_0$ corresponds to a computation of $\textbf{E}_{\textbf{S},k}^1$ of the same length accepting $W$.

\end{lemma}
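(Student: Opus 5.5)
We prove both directions via the decomposition $W=\mathscr{C}_1(W_1,W_2)$, using the fact that every rule of $\textbf{E}_{\textbf{S},k}^1$ acts on the two halves of the standard base $B_{std}\equiv B_0(B_0')^{-1}$ independently and in the same way as its analogue in $\textbf{E}_{\textbf{S},k}^0$.

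First we record the basic correspondence. Let $\theta$ be a rule of $\textbf{E}_{\textbf{S},k}^1$ and $\theta_0$ its analogue in $\textbf{E}_{\textbf{S},k}^0$. Since $\theta$ locks the $R_s(R_s')^{-1}$-sector, that sector is empty in any $\theta$-admissible configuration. Hence $\mathscr{C}_1(W_1,W_2)$ is $\theta$-admissible if and only if both $W_1$ and $W_2$ are $\theta_0$-admissible. In that case
$$\mathscr{C}_1(W_1,W_2)\cdot\theta\equiv\mathscr{C}_1(W_1\cdot\theta_0,\,W_2\cdot\theta_0).$$
This holds because the action on the mirror half $(B_0')^{-1}$ is, by construction, the inverse-copy of the action on $B_0$. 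Free reduction cannot interact across the $R_s(R_s')^{-1}$ junction because that sector's tape alphabet is empty. Inducting along histories, a computation of $\textbf{E}_{\textbf{S},k}^1$ with history $H$ starting at $\mathscr{C}_1(W_1,W_2)$ corresponds exactly to a pair of computations of $\textbf{E}_{\textbf{S},k}^0$ with the same history $H$, starting at $W_1$ and $W_2$ respectively. The pair has the same length.

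For the forward direction, suppose $W\equiv\mathscr{C}_1(W_1,W_2)$ is accepted by a computation with history $H$. The accept configuration of $\textbf{E}_{\textbf{S},k}^1$ is $\mathscr{C}_1(A,A)$, where $A$ is the accept configuration of $\textbf{E}_{\textbf{S},k}^0$. By the correspondence, $W_1\cdot H\equiv A$ and $W_2\cdot H\equiv A$. Applying $H^{-1}$ to $A$ gives $W_1\equiv A\cdot H^{-1}\equiv W_2$, since computations are deterministic given the history. So $W_0:=W_1$ is accepted and $W\equiv\mathscr{C}_1(W_0,W_0)$.

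For the converse, suppose $W_0$ is accepted by a computation with history $H$. Then $W_0$ is admissible for each successive rule, so by the correspondence the same history applied to $\mathscr{C}_1(W_0,W_0)$ is defined at every step. It yields $\mathscr{C}_1(A,A)$ with the same length. This proves both the characterization and the length statement.

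The only delicate point is verifying the displayed identity, in particular the third step of rule application (stripping tape prefixes and suffixes) and free reduction at the junction. Both are handled by the locking of the $R_s(R_s')^{-1}$-sector, so no real obstacle arises.
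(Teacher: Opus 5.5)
Your proof is correct and follows the same route as the paper. Each rule of $\textbf{E}_{\textbf{S},k}^1$ acts on the halves $B_0$ and $(B_0')^{-1}$ as its analogue in $\textbf{E}_{\textbf{S},k}^0$, so running an accepting computation backwards from the symmetric accept configuration forces $W_1\equiv W_2$, and conversely an accepting computation of $W_0$ lifts with the same history. You have simply written out the sector-by-sector correspondence, including the empty $R_s(R_s')^{-1}$-sector, that the paper leaves implicit when it says the statement ``follows immediately.''
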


The following analogue of \Cref{E generalized time} thus follows immediately.

\begin{lemma} \label{E1 generalized time}

For any accepted configuration $W$ of $\textbf{E}_{\textbf{S},k}^1$, there exists an accepting computation of length at most $c_0\TM_\textbf{S}(c_0\|W\|)+c_0|W|_a+2c_0$.

\end{lemma}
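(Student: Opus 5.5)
By \Cref{E1 language}, $W$ is accepted by $\textbf{E}_{\textbf{S},k}^1$ if and only if $W\equiv\mathscr{C}_1(W_0,W_0)$ for some accepted configuration $W_0$ of $\textbf{E}_{\textbf{S},k}^0$. Moreover, any accepting computation of $W_0$ in $\textbf{E}_{\textbf{S},k}^0$ transfers to an accepting computation of $W$ of the same length. The plan is therefore to apply \Cref{E generalized time} to $W_0$ and then compare the sizes of $W_0$ and $W$.

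\Cref{E generalized time} gives an accepting computation of $W_0$ of length at most $c_0\TM_\textbf{S}(2c_0\|W_0\|)+c_0|W_0|_a+2c_0$. Since $W$ is the concatenation of $W_0$ with a copy of $W_0^{-1}$, we have $|W|_a=2|W_0|_a$ and $\|W\|=2\|W_0\|$ (the $R_s(R_s')^{-1}$-sector is empty). Hence $2c_0\|W_0\|=c_0\|W\|$ and $c_0|W_0|_a\leq c_0|W|_a$. Substituting these gives exactly the claimed bound $c_0\TM_\textbf{S}(c_0\|W\|)+c_0|W|_a+2c_0$.

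There is no real obstacle. The only point to check is the length bookkeeping. Here $\|W\|$ counts all letters, state letters included, and the mirror half contributes the same number of state and tape letters as $W_0$. So the factor of $2$ matches exactly, and no monotonicity property of $\TM_\textbf{S}$ is needed.
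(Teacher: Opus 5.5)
Your proposal is correct and is the paper's argument. The paper states that this lemma follows immediately from \Cref{E1 language} and \Cref{E generalized time}, and your bookkeeping $\|W\|=2\|W_0\|$ and $|W|_a=2|W_0|_a$ is exactly what turns $2c_0\|W_0\|$ into $c_0\|W\|$.
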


The other statements of the previous section also have natural analogues for $\textbf{E}_{\textbf{S},k}^1$.  However, the next statement exhibits the main reason for the introduction of this mirror symmetry.

\begin{lemma} \label{standard one-step}

Let $\pazocal{C}:W_0\to\dots\to W_t$ be a reduced computation of $\textbf{E}_{\textbf{S},k}^1(j)$ in the standard base. Suppose there exists $z\in\{1,\dots,t\}$ such that $|W_z|_a>3|W_0|_a$. Then there exist two-letter subwords $U_\ell V_\ell$ and $U_rV_r$ of the standard base such that:

\begin{enumerate}

\item Letting $\pazocal{C}_\ell:W_{0,\ell}\to\dots\to W_{t,\ell}$ and $\pazocal{C}_r:W_{0,r}\to\dots\to W_{t,r}$ be the restrictions of $\pazocal{C}$ to the $U_\ell V_\ell$- and $U_rV_r$-sectors, respectively, $|W_{z,\ell}|_a<\dots<|W_{t,\ell}|_a$ and $|W_{z,r}|_a<\dots<|W_{t,r}|_a$.

\item Every rule of the subcomputation $W_z\to\dots\to W_t$ multiplies the $U_\ell V_\ell$-sector by one letter on the left and the $U_rV_r$-sector by one letter on the right.

\end{enumerate}

\end{lemma}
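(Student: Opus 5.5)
We argue by cases on $j\in\{1,\dots,5\}$, using the structure of each submachine on a single block together with the mirror symmetry of $\textbf{E}_{\textbf{S},k}^1$. The main tool is the monotonicity supplied by \Cref{multiply one letter}(c), \Cref{primitive computations}(1), and their $\textbf{LR}_k$-analogues: once the $a$-length of a suitable sector strictly increases at some step, it keeps strictly increasing. Because every rule of $\textbf{E}_{\textbf{S},k}^1$ acts identically on $B_0$ and on its mirror $(B_0')^{-1}$, a sector of $B_0$ multiplied on the left corresponds to a sector of $(B_0')^{-1}$ multiplied on the right by the inverse copy of the same letter. This is how we obtain the pair $U_\ell V_\ell$, $U_rV_r$. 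The plan is therefore to produce, in each case, a single sector of $B_0$ in which every rule of $W_z\to\dots\to W_t$ multiplies by one letter on a fixed side and whose length grows strictly from step $z$ on. Its mirror sector is then the second one.

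\emph{Cases $j=1,5$.} Every rule multiplies each historical sector $Q_{i,\ell}Q_{i,r}$ by one letter on a fixed side, and in $(1)$ it may also alter the input sectors. In $(5)$ all other sectors are locked, so only the historical sectors change. Since the copies in all blocks are identical, $|W_z|_a>3|W_0|_a$ forces some historical sector to have grown from step $0$ to step $z$. By \Cref{multiply one letter}(c) it then grows strictly from the first increasing step onward. We also need that it grows at step $z$ itself. If it did not, \Cref{multiply one letter}(c) says its length would be non-increasing up to $z$, contradicting the growth. For $j=1$ the input sectors are handled similarly, but it is simpler to use the historical sectors and note that input sectors are left unchanged by $(1)$-rules. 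In that case the whole growth lies in historical sectors.

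\emph{Cases $j=2,4$.} On each block the computation is a copy of $\textbf{RL}$ or $\textbf{LR}_k$, and we apply \Cref{primitive computations}(1),(2) to the restriction to $P_iQ_{i,\ell}Q_{i,r}$ or $Q_{i,\ell}Q_{i,r}R_i$. Part (2) bounds intermediate lengths by the endpoints, so growth beyond $|W_0|_a$ must persist to $W_t$. Part (1) makes the growth strict after $z$. The factor $3$ rules out the possibility that $z$ sits at a connecting rule, since connecting rules do not change length. The translation rules $\tau$ multiply the $Q$-adjacent sector on a fixed side (the $PQ$-sector on the left of $q$, i.e.\ on its right end), so we take that sector.

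\emph{Case $j=3$.} This is a copy of $\textbf{S}_h$. Working sectors have no monotonicity, so we must use \Cref{M_2 bound}. Its bound $|W_i|_a\le 9(|W_0|_a+|W_t|_a)$ gives growth toward $W_t$, but it alone does not yield strict monotonicity. Instead we use the historical sectors, where each rule multiplies on the left by $\theta^{-1}$ and on the right by $\theta$. By \Cref{multiply two letters}, the length of a historical sector is non-increasing until some point and increases strictly afterward. Since the working sectors have length controlled by the historical ones via \Cref{one alphabet historical words}-type estimates, a growth factor of $3$ must appear in some historical sector. We expect this case to be the main obstacle: the constant $3$ has to be checked against the $\le 1$ letter-per-rule bound from \Cref{simplify rules}. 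In this case a single historical sector already supplies both the left and the right multiplication, so one may even take $U_\ell V_\ell=U_rV_r$.
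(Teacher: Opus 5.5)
\emph{There is a genuine gap at the step ``its mirror sector is then the second one.''} The rules of $\textbf{E}_{\textbf{S},k}^1$ act identically on $B_0$ and on $(B_0')^{-1}$. The \emph{tape words}, however, are unrelated: $W_0=\mathscr{C}_1(W',W'')$ with $W'\equiv W''$ only when $W_0$ is accepted, and the lemma concerns arbitrary configurations. Your remark that ``the copies in all blocks are identical'' fails for the same reason.

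So while $Q_{x,\ell}Q_{x,r}$ grows under left multiplication, the mirror sector $(Q_{x,r}')^{-1}(Q_{x,\ell}')^{-1}$ may end with exactly the letters that the right multiplications cancel, and then it shrinks at every step.

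For $j=1,5$ the missing idea is a count, which is what the paper uses. Each rule changes each of the $s+1$ sectors $Q_{i,\ell}Q_{i,r}$ and each of their $s+1$ mirrors by exactly $\pm1$, and changes nothing else. Take a step where $|W|_a$ increases; the paper first discards non-increasing initial steps to reduce to $|W_0|_a<|W_1|_a$. At that step the mirror sectors cannot all shrink, since the total change would then be at most $(s+1)-(s+1)=0$; likewise the unprimed sectors cannot all shrink. Hence some $Q_{x,\ell}Q_{x,r}$ and some $(Q_{y,r}')^{-1}(Q_{y,\ell}')^{-1}$ both grow at that step, and \Cref{multiply one letter}(c) carries this growth on to $W_t$. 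The same count shows some block has $x=y$, so your pairing can be rescued, but only by choosing the block this way.

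For $j=2,4$ the mirror is not needed at all, and invoking it just reintroduces the problem. Take one block, $P_xQ_{x,\ell}Q_{x,r}$ (respectively $Q_{x,\ell}Q_{x,r}R_x$, or a mirror block), that gains $2$ letters at some step. By \Cref{primitive computations}(1) and its $\textbf{LR}_k$-analogue, it then gains $2$ letters at every later step. So only translation rules occur afterwards, each multiplying the sector before the middle state letter on the right and the sector after it on the left by one letter. Thus one block supplies both sectors. The factor $3$ plays no role here, contrary to your remark about connecting rules.

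\emph{In case $j=3$ the decisive step is left open.} Your final choice $U_\ell V_\ell=U_rV_r$ equal to a historical sector is right. However, \Cref{one alphabet historical words} is not the relevant tool, and no historical sector needs to grow by a factor $3$.

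It suffices that some historical sector $UV$ grows at some step $\le z$; \Cref{multiply two letters}(a) then gives strict growth through $W_t$. To obtain this, suppose $|W_{z,UV}|_a\le|W_{0,UV}|_a$ for every one of the $2(s+1)$ historical sectors.
\begin{itemize}
\item \Cref{multiply two letters}(c), applied to $W_0\to\dots\to W_z$, gives $z\le|W_{0,UV}|_a$ for each such sector, so $|W_0|_a\ge 2sz$.
\item The only other sectors not locked by $\textbf{E}_{\textbf{S},k}^1(3)$ are the $2s$ working sectors, each changing by at most $2$ letters per rule.
\item Hence $|W_z|_a-|W_0|_a\le 4sz\le 2|W_0|_a$, contradicting $|W_z|_a>3|W_0|_a$.
\end{itemize}
This is where the constant $3$ is actually used.
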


\begin{proof}

First, suppose $|W_0|_a\geq|W_1|_a$.  Then we must of course have $z>1$, and so $|W_z|_a>3|W_1|_a$.  So, in this case the shorter subcomputation $W_1\to\dots\to W_t$ satisfies the hypotheses of the statement.  Hence, as the conclusion applies only to a `tail' of the computation, the statement follows by inducting on the length of the computation.

Thus, we may assume without loss of generality that $|W_0|_a<|W_1|_a$.  

We now proceed in five cases based on the value of $j$.

\textbf{1.} Suppose $j=1$.

For each $x\in\{0,\dots,s\}$ and $0\leq i\leq t$, let $W_{i,x}$ and $W_{i,x}'$ be the admissible subwords with base $Q_{x,\ell}Q_{x,r}$ and $(Q_{x,r}')^{-1}(Q_{x,\ell}')^{-1}$, respectively.  Then, setting $\a_x=|W_{1,x}|_a-|W_{0,x}|_a$ and $\a_x'=|W_{1,x}'|_a-|W_{0,x}'|_a$, by construction we have:

\begin{itemize}

\item $\a_x,\a_x'\in\{\pm1\}$ for all $x$

\item $\displaystyle|W_1|_a-|W_0|_a=\sum_{x=0}^s\a_x+\a_x'$ 

\end{itemize}

Hence, $|W_1|_a>|W_0|_a$ implies there exists $x,y\in\{0,\dots,s\}$ such that $\a_x=\a_y'=1$.  But the restriction of $\pazocal{C}$ to the $Q_{x,\ell}Q_{x,r}$- and $(Q_{y,r}')^{-1}(Q_{y,\ell}')^{-1}$-sectors satisfy the hypotheses of \Cref{multiply one letter}, so that the statement is satisfied for $U_\ell V_\ell\equiv Q_{x,\ell}Q_{x,r}$ and $U_rV_r\equiv (Q_{y,r}')^{-1}(Q_{y,\ell}')^{-1}$.

%
%
%
%
%
%
%

\textbf{2.} Suppose $j=2$.

This time, let $V_{i,x}$ be the admissible subword of $W_i$ with base $P_xQ_{x,\ell}Q_{x,r}$ and $V_{i,x}'$ that with base $(Q_{x,r}')^{-1}(Q_{x,\ell}')^{-1}(P_x')^{-1}$.  Then for $\b_x=|V_{1,x}|_a-|V_{0,x}|_a$ and $\b_x'=|V_{1,x}'|_a-|V_{0,x}'|_a$, we have:

\begin{itemize}

\item $\b_x,\b_x'\in\{-2,0,2\}$ for all $x$

\item $\displaystyle |W_1|_a-|W_0|_a=\sum_{x=0}^s\b_x+\b_x'$.

\end{itemize}

Hence, $|W_1|_a>|W_0|_a$ implies there exists $x$ such that $\b_x=2$ or $\b_x'=2$.  Assuming $\b_x=2$, then \Cref{primitive computations} implies the statement for $U_\ell V_\ell\equiv Q_{x,\ell}Q_{x,r}$ and $U_rV_r\equiv P_xQ_{x,\ell}$; otherwise, \Cref{primitive computations} again implies the statement for $U_\ell V_\ell\equiv (Q_{x,\ell}')^{-1}(P_x')^{-1}$ and $U_rV_r\equiv (Q_{x,r}')^{-1}(Q_{x,\ell}')^{-1}$.

%
%

\textbf{3.} Suppose $j=3$.

For $UV\equiv Q_{i,\ell}Q_{i,r}$ or $UV\equiv(Q_{i,r}')^{-1}(Q_{i,\ell}')^{-1}$, let $\pazocal{C}_{UV}:W_{0,UV}\to\dots\to W_{t,UV}$ be the restriction of $\pazocal{C}$ to the $UV$-sector.  Then $\pazocal{C}_{UV}$ satisfies the hypotheses of \Cref{multiply two letters}, so that $z\leq\max(|W_{0,UV}|_a,|W_{z,UV}|_a)$.

Suppose $|W_{0,UV}|_a\geq|W_{z,UV}|_a$ for all $UV$.  Then $|W_0|_a\geq2sz$.  Further, note that in any of the working sectors, a transition alters the $a$-length by at most $2$.  So, since there are $2s$ working sectors and all sectors not already mentioned are locked, $|W_z|_a-|W_0|_a\leq4sz$.  But then $|W_z|_a\leq3|W_0|_a$, contradicting the hypothesis.

Hence, there exists $UV$ such that $|W_{z,UV}|_a>|W_{0,UV}|_a$, and so there exists $0\leq i\leq z$ such that $|W_{i,UV}|_a>|W_{i-1,UV}|_a$.  But then \Cref{multiply two letters} implies the statement for $U_\ell V_\ell=UV=U_rV_r$.

\textbf{4.} Suppose $j=4$.

The argument follows in much the same way as for $j=2$, considering the restrictions of $\pazocal{C}$ to the subwords $Q_{x,\ell}Q_{x,r}R_x$ and the mirror copies.

\textbf{5.} Suppose $j=5$.

The argument then follows in much the same way as for $j=1$, but this time by taking $U_\ell V_\ell\equiv (Q_{y,r}')^{-1}(Q_{y,\ell}')^{-1}$ and $U_rV_r\equiv Q_{x,\ell}Q_{x,r}$.

\end{proof}

\medskip


\subsection{The $k$-enhanced machine} \

The penultimate machine in our construction, the \textit{$k$-enhanced machine} $\textbf{E}_{\textbf{S},k}$, is the `circular analogue' of (a small tweak to) the mirror $k$-enhanced machine $\textbf{E}_{\textbf{S},k}^1$.  The definition of this machine is given in much the same way as in \cite{GW}, \cite{O18}, \cite{WEmb}, \cite{W}, and many others.

In particular, the standard base of $\textbf{E}_{\textbf{S},k}$ is $\{t\}B_{std}$, where each part comprising $B_{std}$ is identical to its counterpart in $\textbf{E}_{\textbf{S},k}^1$ and the part $\{t\}$ is a singleton.  Note that the length of this base is the parameter $N$.

The tape alphabet of the $\{t\}P_0$-sector is empty, while those of all other sectors are carried over from the hardware of $\textbf{E}_{\textbf{S},k}^1$.
The novelty of this machine is that it has an additional tape alphabet for the sector $(P_0')^{-1}\{t\}$-sector, representing the space after $B_{std}$, making $$(R_s')^{-1}(Q_{s,r}')^{-1}(Q_{s,\ell}')^{-1}(P_s')^{-1}\{t\}P_0Q_{0,\ell}Q_{0,\ell}^{-1}P_0^{-1}\{t\}^{-1}P_s'Q_{s,\ell}'$$ a legitimate base for an admissible word.
This sort of $S$-machine is called a \textit{cyclic machine}. 

In this setting (and analogous to most other settings in previous literature), the tape alphabet assigned to the $(P_s')^{-1}\{t\}$-sector is empty.

The positive rules of $\textbf{E}_{\textbf{S},k}$ correspond to those of $\textbf{E}_{\textbf{S},k}^1$, operating on the copy the hardware of $\textbf{E}_{\textbf{S},k}^1$ in the same way and, naturally, locking the new sector.  The input sectors are also assigned in just the same way as for $\textbf{E}_{\textbf{S},k}^1$. The corresponding definitions (for example, submachines, historical sectors, working sectors, {\frenchspacing etc.) then} extend in the obvious way, as do all statements pertaining to the machine.

Now, fix an arbitrary cyclic $S$-machine $\textbf{M}$. The base of an admissible word in the hardware of $\textbf{M}$ is said to be \textit{circular} if it starts and ends with the same base letter (and the base isn't a single letter).   Specifically, an unreduced circular base is called \textit{defective}.  As in previous literature, a circular base is said to be \textit{revolving} if none of its proper subwords is circular, while a revolving defective base is called \textit{faulty}.

Suppose $W$ is an admissible word in the hardware of $\textbf{M}$ whose base $B\equiv xvx$ is circular.  Let $v\equiv v_1yv_2$ where $y$ is a single letter and $v_1,v_2$ are (perhaps empty) base words.  Then the circular base word $B'\equiv yv_2xv_1y$ is called a \textit{cyclic permutation} of $B$.

As long as the first and last state letters of $W$ are identical, we can then form from $W$ a corresponding admissible word $W'$ with base $B'$ by:

\begin{itemize}

\item Cyclically permuting $W$ so that its state letter from $y$ is the first letter.

\item Merging the two identical state letters of $x$ into one letter.

\item Adding an identical copy of the state letter of $y$ to the end of the word.

\end{itemize}

Through an abuse of notation we also call the admissible word $W'$ a \textit{cyclic permutation} of $W$ in this case.

Note that if $U$ is an admissible word with circular base which is $\theta$-admissible for some $S$-rule $\theta$, then by the definition of $S$-rules $U$ must have identical first and last letters.  As such, given a reduced computation $\pazocal{C}:W_0\to\dots\to W_t$ with circular base $B$, for any cyclic permutation $B'$ of $B$ we may form the cyclic permutation $W_i'$ of $W_i$ with base $B'$.  It is then a consequence of the definition that there exists a reduced computation $\pazocal{C}':W_0'\to\dots\to W_t'$ with the same history as $\pazocal{C}$.  Through another abuse of notation, we also call $\pazocal{C}'$ a \textit{cyclic permutation} of $\pazocal{C}$.



As in \cite{CW} and \cite{GW}, we define the \textit{universal reach relation} of $\textbf{M}$ to be the binary relation $\REACH^{uni}_\textbf{M}$ on the set of admissible words of $\textbf{M}$ with circular base given by $(W_1,W_2)\in \REACH^{uni}_\textbf{M}$ if and only if there exists a reduced computation between $W_1$ and $W_2$.

Given $(W_1,W_2)\in\REACH^{uni}_\textbf{M}$, a reduced computation between $W_1$ and $W_2$ is said to be \textit{minimal} if the length of its history is minimal amongst all computations realizing the relation.

\medskip


\subsection{Controlled history and reduced circular bases} \

We restrict our attention in this section to elements $(W_1,W_2)\in\REACH^{uni}_{\textbf{E}_{\textbf{S},k}}$ such that $W_1$ and $W_2$ have reduced circular bases.  Unlike in \cite{GW}, though, we do not bound the length of a minimal computation between $W_1$ and $W_2$, but rather analyze the consequences of the existence of a reduced computation between $W_1$ and $W_2$ whose history is long.  

To aid with this, we first adapt the following definition from previous literature {\frenchspacing (e.g. \cite{O18}, \cite{OS20}, \cite{WEmb}, and \cite{W})}.

\begin{definition}

The history $H$ of a reduced computation $\pazocal{C}$ of $\textbf{E}_{\textbf{S},k}$ is called \textit{controlled} if it (or its inverse) can be factored $H\equiv\zeta_{i-1}H'\zeta_i$ such that:

\begin{itemize}

\item $H'$ is the history of a subcomputation with step history $(4)$ whose history has no occurrences of transition or connecting rules

\item $i=1,\dots,2k$ so that $\zeta_{i-1}$ and $\zeta_i$ are the copies of consecutive connecting rules of $\textbf{LR}_k$, with $\zeta_0$ and $\zeta_{2k}$ taken to be $\sigma(34)$ and $\sigma(45)$, respectively.

\end{itemize}

\end{definition}

As in \Cref{sec-standard-computations}, given a configuration $W$ of $\textbf{E}_{\textbf{S},k}$ and $0\leq j\leq s$, let $W^{(j)}$ be the admissible subword with base $P_jQ_{j,\ell}Q_{j,r}R_j$.  Extend this to let $\bar{W}^{(j)}$ be the admissible subword with base $(R_j')^{-1}(Q_{j,r}')^{-1}(Q_{j,\ell}')^{-1}(P_j')^{-1}$.

The next statement then follows in just the same way as its analogue in \cite{W}, applying Lemmas \ref{locked sectors}, \ref{multiply one letter}, \ref{primitive computations}, \ref{E standard accepted (4)}, and \ref{E1 language}.

\begin{lemma}[Compare with Lemma 4.18 of \cite{W}] \label{enhanced controlled}

Let $\pazocal{C}:W_0\to\dots\to W_t$ be a reduced computation of $\textbf{E}_{\textbf{S},k}$ with controlled history $H$.  Then the base $B$ of the computation is reduced and the computation is uniquely determined by $H$ and $B$.  Moreover, if $\pazocal{C}$ is a computation in the standard base, then $|W_i|_a=|W_0|_a$ for all $0\leq i\leq t$, $\|H\|=|W_0^{(j)}|_a+2=|\bar{W}_0^{(j)}|_a+2$ for all $0\leq j\leq s$, and $W_0$ is accepted by $\textbf{E}_{\textbf{S},k}$.

\end{lemma}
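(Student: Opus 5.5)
The plan is to read off everything from the two connecting rules $\zeta_{i-1},\zeta_i$ bounding $H$, together with the fact that every rule of $H'$ is a translation of a single stage of $\textbf{LR}_k$ acting in parallel.

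\textbf{Reducedness of $B$.} Both $\zeta_{i-1}$ and $\zeta_i$ lock certain sectors. Whichever of $i-1,i$ is odd gives a connecting rule locking every $P_jQ_{j,\ell}$-type sector of the relevant primitive subword $Q_{j,\ell}Q_{j,r}R_j$. The other gives a rule locking the $Q_{j,\ell}Q_{j,r}$-type sector. In the boundary cases, $\sigma(34)$ and $\sigma(45)$ lock all sectors except the $Q_{j,\ell}Q_{j,r}$-sectors, whose domain is the right historical alphabet. Every $S$-rule of $\textbf{E}_{\textbf{S},k}$ also locks the $\{t\}P_0$- and $(P_s')^{-1}\{t\}$-sectors. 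By \Cref{locked sectors}, the first and last words of the computation therefore cannot have a base containing $UU^{-1}$ or $U^{-1}U$ for any part $U$ adjacent to a locked sector. The one remaining candidate is a subword $Q_{j,r}Q_{j,r}^{-1}$ (and its mirror analogues), whose adjacent $Q_{j,r}R_j$-sector is unlocked. I would handle this case with \Cref{primitive unreduced}, in its $\textbf{LR}_k$ form with the $P,Q,R$ roles shifted. A computation in base $Q_{j,\ell}Q_{j,r}Q_{j,r}^{-1}Q_{j,\ell}^{-1}$ that starts admissible for $\zeta_{i-1}$ and only applies $\tau$-rules of one stage has nondecreasing length and keeps the state letters at the same index. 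The $a$-length is constant precisely when $H'$ is empty. Applying $\zeta_i$ at the end needs the other sector locked, i.e.\ empty, and this forces $H'$ to be trivial, contradicting reducedness (the history would be $\zeta_{i-1}\zeta_i$ with mismatched locking). I expect this case analysis over all possible unreduced two-letter subwords to be the main obstacle, since it has to be run uniformly over every stage index $i$ and both endpoint types.

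\textbf{Uniqueness.} A rule and its input word determine the output, so the computation is determined once $W_0$ is determined by $H$ and $B$. Locking by $\zeta_{i-1}$ empties every sector except the ones that $H'$ translates in. In those sectors the tape word is determined by $H'$ and the final locking, via \Cref{multiply one letter}(a): the translated word must be exactly the copy of $H'$ so that it ends in the single allowed sector. The state letters are fixed by $\zeta_{i-1}$.

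\textbf{Standard base.} In each $Q_{j,\ell}Q_{j,r}R_j$ block and its mirror, the $\tau$-rules move letters between the two sectors, so $|W_i|_a$ is constant. Counting the letters moved gives $\|H'\|=|W_0^{(j)}|_a$, hence $\|H\|=|W_0^{(j)}|_a+2$, and symmetrically for $\bar W_0^{(j)}$. For acceptance, $W_0$ has all its content in one sector type of each block, with identical copies of the same word across the blocks and the mirror. Running the remaining stages of $\textbf{LR}_k$ and then $\textbf{E}_{\textbf{S},k}(5)$ accepts it, by \Cref{E standard accepted (4)} together with \Cref{E1 language}.
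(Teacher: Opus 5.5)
The overall route is the paper's; its proof just cites \Cref{locked sectors}, \Cref{multiply one letter}, \Cref{primitive computations}, \Cref{E standard accepted (4)} and \Cref{E1 language}. However, your reducedness step rests on a misreading of which sectors the connecting rules lock.

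Under the identification $P\mapsto Q_{j,\ell}$, $Q\mapsto Q_{j,r}$, $R\mapsto R_j$ defining $\textbf{E}_{\textbf{S},k}(4)$, the locking pattern is as follows:
\begin{itemize}
\item an odd connecting rule $\zeta_{2m-1}$ locks the $Q_{j,\ell}Q_{j,r}$-sectors;
\item an even connecting rule $\zeta_{2m}$ locks the $Q_{j,r}R_j$-sectors;
\item every rule of this submachine, including both kinds of connecting rule, also locks every sector outside the subwords $Q_{j,\ell}Q_{j,r}R_j$ and their mirror copies;
\item $\sigma(34)$ and $\sigma(45)$ lock everything except the $Q_{j,\ell}Q_{j,r}$-sectors.
\end{itemize}
So for every $i\in\{1,\dots,2k\}$, the rules $\zeta_{i-1}$ and $\zeta_i$ together lock every sector. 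The paper uses exactly this in the proof of \Cref{faulty}.

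Since $W_0$ is $\zeta_{i-1}$-admissible and $W_{t-1}$ is $\zeta_i$-admissible with the same base, \Cref{locked sectors} excludes every subword $UU^{-1}$ and $U^{-1}U$ at once. In particular, $Q_{j,r}Q_{j,r}^{-1}$ is excluded by whichever of the two rules locks $Q_{j,r}R_j$. Your ``remaining candidate'' does not exist, and \Cref{primitive unreduced} is not needed.

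The argument you gave for that case would not have worked anyway. The word $\zeta_{i-1}\zeta_i$ is reduced, so an empty $H'$ contradicts nothing about reducedness of the history; any contradiction must come from admissibility. Also, under your locking pattern you would have had to treat $R_j^{-1}R_j$ as well.

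The corrected observation also gives uniqueness more directly. Every sector of $B$ is locked by $\zeta_{i-1}$ or by $\zeta_i$, so its tape word is empty in $W_0$ or in $W_t$. Since rules act sector by sector and are invertible, $H$ then determines every tape word, and the first letter of $H$ fixes the state letters.

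The rest of your plan is fine, with one point to make explicit. That each $\tau$-rule shortens one sector and lengthens the other, rather than lengthening both, comes from \Cref{multiply one letter}(c),(d). Apply it to the two sectors, one of which is empty at the start and the other at the end. This yields constant $|W_i|_a$ and $\|H'\|=|W_0^{(j)}|_a$. The resulting identical copies of $H'$ across all blocks and mirrors are what allow $\textbf{E}_{\textbf{S},k}(5)$ to complete the acceptance.
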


\begin{lemma} \label{long history controlled}

Let $\pazocal{C}:W_0\to\dots\to W_t$ be a reduced computation of $\textbf{E}_{\textbf{S},k}$ in the standard base with a fixed subcomputation $\pazocal{D}:W_x\to\dots\to W_y$ of $\pazocal{C}$.  Suppose $y-x\geq\frac{1}{4}\lambda t$ and $t>c_1\max(\|W_0\|,\|W_t\|)$.  Then for any factorization $H\equiv H'H''H'''$ of the history $H$ of $\pazocal{D}$ such that $\|H'\|+\|H'''\|\leq\lambda(y-x)$, $H''$ contains a controlled subword.

\end{lemma}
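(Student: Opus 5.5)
The plan is to invoke \Cref{long history mostly (4)} (its natural analogue for $\textbf{E}_{\textbf{S},k}$ in the standard base, obtained via \Cref{E1 language} and the mirror symmetry). Since $t>c_1\max(\|W_0\|,\|W_t\|)$, the subcomputations of $\pazocal{C}$ with step history $(34)(4)(45)$ or $(54)(4)(43)$ have total length at least $(1-\frac4k)t$. Equivalently, the set of indices of $H_\pazocal{C}$ not lying inside such a subcomputation has size at most $\frac4k t$. Each such subcomputation has history of the form $\sigma(34)\zeta\ldots$, which can be factored as $\zeta_0H_1'\zeta_1H_2'\zeta_2\cdots H_{2k}'\zeta_{2k}$. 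Here each $\zeta_{i-1}H_i'\zeta_i$ is a controlled word, since each $H_i'$ is a maximal product of translation rules of $\textbf{LR}_k$ containing no transition or connecting rules.

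The next step is to bound the lengths of the blocks $H_i'$. By \Cref{LR_k analogue} and \Cref{start to end mostly (4)}, in such a subcomputation every $H_i'$ has the same length $\|H_j\|$, where $H_j$ is the word written in the historical sectors. This is a copy of $\bar u$ or $u$ in the notation of \Cref{primitive computations}(3). So a $(34)(4)(45)$-subcomputation of length $2k\|H_j\|+2k+1$ is tiled by $2k$ consecutive controlled words, each of length $\|H_j\|+2$, overlapping only at the connecting letters. Consequently, any subword $V$ of $H$ of length at least $\ell$ that lies entirely inside one $(4)$-block and satisfies $\ell\ge 2(\|H_j\|+2)$ contains a full controlled subword.

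Now fix the factorization $H\equiv H'H''H'''$ with $\|H'\|+\|H'''\|\le\lambda(y-x)$, so that $\|H''\|\ge(1-\lambda)(y-x)\ge(1-\lambda)\frac{\lambda}{4}t$. I argue by contradiction: suppose $H''$ contains no controlled subword. Then the part of $H''$ covered by $(4)$-blocks lies in at most two partial blocks, one at each end of $H''$, together with the complete controlled pieces between them. Each partial end meets at most one controlled piece of length $\|H_j\|+2$ without containing it. Hence $H''$ consists of at most two such partial pieces plus indices outside $(4)$-blocks, which gives
\[
\|H''\|\le 2\max_j(\|H_j\|+2)+\tfrac4k t .
\]
Each block has length $2k\|H_j\|+2k+1\le t$, so $\|H_j\|+2\le \frac{t}{2k}+2$. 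Since $t>c_1$, we also have $2\le t/k$. Together these give $\|H''\|\le \frac{t}{k}+\frac{2t}{k}+\frac{4t}{k}=\frac{7t}{k}$. This contradicts $\|H''\|\ge\frac{\lambda}{8}t$ once $k$ is large relative to $\lambda^{-1}$, which holds by the parameter choice $\lambda^{-1}<<k$.

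The main obstacle is justifying that \Cref{long history mostly (4)} and \Cref{start to end mostly (4)} transfer verbatim to the cyclic machine $\textbf{E}_{\textbf{S},k}$ in the standard base. The extra sectors, the mirror copy and the locked $\{t\}$-sectors, act in parallel. By \Cref{E1 language}, a standard-base computation restricts to a computation of $\textbf{E}_{\textbf{S},k}^0$ on the $B_0$ part with the same history and with $\|\cdot\|$ only smaller, so the hypothesis $t>c_1\max(\|W_0\|,\|W_t\|)$ passes to the restriction. A second point needing care is boundary bookkeeping: the first and last $(4)$-blocks of $\pazocal{C}$ may be truncated by the endpoints of $\pazocal{C}$ itself, but this is already accounted for in the $\frac4k t$ error term.
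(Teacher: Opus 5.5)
Your proof is correct and follows the paper's route. You restrict to $B_0$, apply \Cref{long history mostly (4)}, and use that each $(34)(4)(45)$- or $(54)(4)(43)$-block is a concatenation, along connecting rules, of $2k$ controlled words of length $l+2$. The only difference is the final count: the paper pigeonholes a single block that meets $H''$ in at least $\frac{1}{k}$ of its length, hence in at least $2l+2$ letters, whereas you argue by contradiction that a controlled-free $H''$ can meet at most two blocks, each in only $O(t/k)$ letters, which is an equivalent way to finish.
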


\begin{proof}

Note that the restriction $\pazocal{C}_0:V_0\to\dots\to V_t$ of $\pazocal{C}$ to the base $B_0$ satisfies the hypotheses of \Cref{long history mostly (4)}.

By construction, the subcomputation $\pazocal{D}'':W_r\to\dots\to W_s$ of $\pazocal{D}$ with history $H''$ has length $\|H''\|=s-r\geq(1-\lambda)(y-x)\geq\frac{1}{4}\lambda(1-\lambda)t$.  Taking $\lambda^{-1}\geq5$ then implies $s-r\geq\lambda^2t$.  The parameter assignment $k>>\lambda^{-1}$ then allows us to assume $s-r\geq\frac{6}{k}t$.

\Cref{long history mostly (4)} then implies there exists a subcomputation $\pazocal{E}$ of $\pazocal{C}$ with history $H_0$ satisfying:

\begin{itemize}

\item The step history of $\pazocal{E}$ is $(34)(4)(45)$ or $(54)(4)(43)$

\item There exists a subword $H_0''$ of $H_0$ shared with $H''$ such that $\|H_0''\|\geq\frac{1}{k}\|H_0\|$.

\end{itemize}

By \Cref{LR_k analogue}, $\|H_0\|=2kl+2k+1$ where $l$ is the $a$-length of the admissible subword with base $P_iQ_{i,\ell}Q_{i,r}R_i$ of any admissible word comprising the defining sequence of $\pazocal{E}$ (indeed, this is the same if we instead consider the admissible subword with base $Q_{i,\ell}Q_{i,r}R_i$ or a mirror copy of one of these).

\Cref{primitive computations} also implies $H_0$ consists of the concatenation along connecting rules of $2k$ controlled histories of length $l+2$.  So, any subword of $H_0$ of length $2l+2$ contains a controlled subword.  

But then $\|H_0''\|\geq\frac{1}{k}(2kl+2k+1)\geq2l+2$, implying the statement.

\end{proof}

\begin{lemma} \label{reduced revolving controlled}

Let $\pazocal{C}:W_0\to\dots\to W_t$ be a reduced computation of $\textbf{E}_{\textbf{S},k}$ whose base $B$ is circular and contains a reduced revolving subword $B'$.  Then letting $\pazocal{C}':W_0'\to\dots\to W_t'$ be the restriction of $\pazocal{C}$ to the base $B'$, either $t\leq c_1\max(\|W_0'\|,\|W_t'\|)$ or:

\begin{enumerate}[label=(\alph*)]

\item The history $H$ of $\pazocal{C}$ contains a controlled subword

\item $B$ or its inverse is a cyclic permutation of $(\{t\}B_{std})^\ell\{t\}$ for some $\ell\geq1$

\item $W_0^{\pm1}$ is a cyclic permuation of $V^\ell t$ for some accepted configuration $V$ of $\textbf{E}_{\textbf{S},k}$.

\end{enumerate}

\end{lemma}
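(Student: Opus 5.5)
We assume $t>c_1\max(\|W_0'\|,\|W_t'\|)$ and derive (a)--(c). The plan is to first pin down the shape of $B'$, then reduce to the standard base, and then use Lemmas \ref{long history mostly (4)} and \ref{long history controlled}.

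First we classify the reduced revolving subword $B'$. Reduced circular words in the hardware of $\textbf{E}_{\textbf{S},k}$ are governed by which two-letter subwords are legitimate. The only sector that connects the end of $B_{std}$ back to its beginning passes through the part $\{t\}$. Hence a reduced revolving base, which starts and ends with the same letter and has no proper circular subword, must be a cyclic permutation of $(\{t\}B_{std})^{\pm1}\{t\}^{\pm1}$ (or the corresponding single period beginning at another letter). By cyclically permuting $\pazocal{C}'$, which keeps the history $H$ unchanged, we may assume $B'\equiv\{t\}B_{std}\{t\}$ or its inverse, so that $\pazocal{C}'$ is a computation in the standard base (with $t$ appended). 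Now apply Lemma \ref{long history mostly (4)}, via its natural analogue for $\textbf{E}_{\textbf{S},k}$ (through Lemma \ref{E1 language} and restriction to $B_0$), to $\pazocal{C}'$ with the assumed bound $t>c_1\max(\|W_0'\|,\|W_t'\|)$. This gives that $W_0'$ is accepted and that most of the history consists of subcomputations with step history $(34)(4)(45)$ or $(54)(4)(43)$. Lemma \ref{long history controlled}, applied with $\pazocal{D}=\pazocal{C}'$ and trivial $H',H'''$, then gives a controlled subword of $H$, which is (a).

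For (b) we use controlledness. By Lemma \ref{enhanced controlled}, any reduced computation with controlled history has reduced base. Restricting $\pazocal{C}$ to the controlled subcomputation shows that $B$ is reduced. A reduced circular base is then a power of a revolving one, that is $B'$ read $\ell$ times. To justify this, we note that locking by the rules of a controlled history forbids the subwords $Q_iQ_i^{-1}$ in the relevant places (Lemma \ref{locked sectors}). Moreover, the part $\{t\}$ is the only letter allowed to repeat around a cycle, since the sectors adjacent to $t$ have empty alphabets except $(P_0')^{-1}\{t\}$, and these are locked by rules of step $(4)$. This gives (b).

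For (c), every cyclic piece of $W_0$ with base $\{t\}B_{std}\{t\}$ satisfies the same hypotheses as $W_0'$ with respect to the controlled subcomputation, so each piece is accepted by Lemma \ref{enhanced controlled}. A controlled history determines the configuration uniquely, since $\|H\|=|W^{(j)}|_a+2$ and the rule sequence of $\textbf{LR}_k$ spells out the historical tape words. So all the pieces coincide with a single accepted $V$, after translating the controlled subcomputation back to time $0$ using the determinism from Lemma \ref{enhanced controlled}, and $W_0^{\pm1}$ is a cyclic permutation of $V^\ell t$.

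The main obstacle is this last propagation. We must show that the agreement of the pieces, obtained at an intermediate time where the history is controlled, transfers back to $W_0$ even though the pieces of $B$ beyond $B'$ do not satisfy the length hypothesis. The approach is to observe that all pieces run the same history in identical bases, so each piece's initial word is obtained from the common word at the controlled time by applying the same inverse history. Since the computation is uniquely determined by $H$ and $B$, this forces all the pieces of $W_0$ to be equal.
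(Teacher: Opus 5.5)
Your proposal is correct and follows the paper's proof. You pass from the revolving subword to a standard-base computation with no larger norms, apply \Cref{long history controlled} with the trivial factorization to get (a), and read off (b) and (c) from \Cref{enhanced controlled} applied to the controlled subcomputation; your determinism argument for propagating back to $W_0$ spells out what the paper leaves implicit. One small correction: the sector $(P_0')^{-1}\{t\}$ closing the base up at $\{t\}$ also has empty tape alphabet, which is what the paper uses to get $|W_i'|_a=|W_i''|_a$. Also, your locking remarks in (b) are unnecessary, since once $B$ is reduced the admissibility conditions alone force $B$ or its inverse to be a cyclic permutation of $(\{t\}B_{std})^\ell\{t\}$.
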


\begin{proof}

As the $(P_s')^{-1}\{t\}$-sector has empty tape alphabet, there exists a reduced computation $\pazocal{C}'':W_0''\to\dots\to W_t''$ in the standard base such that $|W_i'|_a=|W_i''|_a$ for all $i$.

If $t>c_1\max(\|W_0'\|,\|W_t'\|)$, then $t>c_1\max(\|W_0''\|,\|W_t''\|)$, so that \Cref{long history controlled} implies (a).  Conditions (b) and (c) then follow from \Cref{enhanced controlled}.

\end{proof}

\medskip


\subsection{Defective bases} \label{sec-defective} \

In this section, we consider pairs $(W_1,W_2)\in\REACH_{\textbf{E}_{\textbf{S},k}}^{uni}$ such that each $W_i$ has defective base $B$, with the goal to bound the length of a minimal computation between $W_1$ and $W_2$.  In light of \Cref{reduced revolving controlled}, it suffices to restrict our attention to the case where $B$ has no reduced revolving subword.
%
As a first step toward this, we recall the following statement from \cite{GW}, which is given in just the same way.

\begin{lemma}[Lemma 3.19 of \cite{GW}] \label{Defective (34)(4)(45)}

Let $\pazocal{C}:W_0\to\dots\to W_t$ be a reduced computation of $\textbf{E}_{\textbf{S},k}$ with base $B$.  If the step history of $\pazocal{C}$ is $(34)(4)(45)$ or $(54)(4)(43)$, then $B$ must be reduced.

\end{lemma}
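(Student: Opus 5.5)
The plan is to argue by contradiction: assume the base $B$ contains an unreduced two-letter subword $UU^{-1}$ or $U^{-1}U$, and show that the transition rules $\sigma(34)$ and $\sigma(45)$ (or $\sigma(54)$ and $\sigma(43)$) force this subword into a configuration where the subcomputation with step history $(4)$ must be empty. That is impossible, because the computation is reduced and $\sigma(34)\sigma(43)$ would cancel.

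The first step is to use the domains of the transition rules. By construction, $\sigma(34)$ and $\sigma(45)$ lock every sector except the $Q_{i,\ell}Q_{i,r}$-sectors and their mirror copies; the latter have the right historical alphabet as domain. By \Cref{locked sectors}, applied to $W_0$ (which is $\sigma(34)$- or $\sigma(54)$-admissible), the base has no subword $UU^{-1}$ or $U^{-1}U$ where $U$ borders a locked sector on the relevant side. The only surviving possibilities for an unreduced subword are therefore $Q_{i,\ell}Q_{i,\ell}^{-1}$ and $Q_{i,r}^{-1}Q_{i,r}$, together with their mirror and cyclic analogues (and $\{t\}$ cannot occur this way, since its neighboring sectors are locked). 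The $Q_{i,\ell}Q_{i,\ell}^{-1}$ case is then excluded: the $P_iQ_{i,\ell}$-sector is locked, so the base around that letter must pass through $Q_{i,r}$ on both sides, giving $Q_{i,r}^{-1}Q_{i,\ell}^{-1}\cdots$. This is the step I would check most carefully. If the case survives, I would instead note that the neighbouring letters on both sides of $Q_{i,\ell}^{\pm1}$ must then be $Q_{i,r}^{\pm1}$, so the base contains $Q_{i,\ell}Q_{i,r}Q_{i,r}^{-1}Q_{i,\ell}^{-1}$ and again has a subword $Q_{i,r}Q_{i,r}^{-1}$. That subword borders the $Q_{i,r}R_i$-sector, which is locked by $\sigma(34)$, a contradiction. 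Either way, we are left with a subword of the form $R_i^{-1}Q_{i,r}^{-1}Q_{i,r}R_i$ or $Q_{i,\ell}Q_{i,r}Q_{i,r}^{-1}Q_{i,\ell}^{-1}$, up to mirror copies.

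With that reduction, apply \Cref{E primitive step history}(b) (the version for $\textbf{E}_{\textbf{S},k}$, whose proof rests on the $\textbf{LR}_k$ analogues of \Cref{primitive computations}(4) and \Cref{primitive unreduced}). Its hypothesis is exactly a subword of the form $Q_{i,\ell}Q_{i,r}Q_{i,r}^{-1}Q_{i,\ell}^{-1}$ or $R_i^{-1}Q_{i,r}^{-1}Q_{i,r}R_i$. Since $B$ contains such a subword, the step history cannot be $(34)(4)(43)$ or $(54)(4)(45)$; what remains is to exclude $(34)(4)(45)$ itself. Here I would use \Cref{primitive unreduced} for $\textbf{LR}_k$ in the base $R_i^{-1}Q_{i,r}^{-1}Q_{i,r}R_i$. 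The configuration after $\sigma(34)$ has all state letters with index $1$, and by that lemma every configuration reachable by the $(4)$-subcomputation keeps all state letters at index $1$. But applying $\sigma(45)$ requires index $2k$, which is a contradiction. The mirror and inverse cases, including $(54)(4)(43)$, follow symmetrically.

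The main obstacle is the bookkeeping in the second paragraph: making sure every placement of an unreduced subword really does reduce to one of the two forms covered by \Cref{primitive unreduced}, including those involving the mirror alphabet and the $\{t\}$ letter of the cyclic machine.
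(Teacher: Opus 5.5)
Your reduction to two ``surviving'' subwords breaks down at the case $Q_{i,\ell}Q_{i,\ell}^{-1}$.

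Locking the $P_iQ_{i,\ell}$-sector only forbids $P_iP_i^{-1}$ and $Q_{i,\ell}^{-1}Q_{i,\ell}$ (\Cref{locked sectors}). It does not prevent $Q_{i,\ell}$ from being preceded by $P_i$. So an unreduced pair $Q_{i,\ell}Q_{i,\ell}^{-1}$ sits inside $P_iQ_{i,\ell}Q_{i,\ell}^{-1}P_i^{-1}$ (or a mirror copy). Its middle sector carries the alphabet of the $Q_{i,\ell}Q_{i,r}$-sector, which is exactly the sector that $\sigma(34)$ and $\sigma(45)$ leave open.

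Your claim that the neighbours must be $Q_{i,r}^{\pm1}$, giving $Q_{i,\ell}Q_{i,r}Q_{i,r}^{-1}Q_{i,\ell}^{-1}$, describes a different base. That base does not contain $Q_{i,\ell}Q_{i,\ell}^{-1}$ at all. You correctly kill it using the locked $Q_{i,r}R_i$-sector, so it cannot then reappear in your ``either way'' list.

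As a result, the base $P_iQ_{i,\ell}Q_{i,\ell}^{-1}P_i^{-1}$ is never handled. \Cref{E primitive step history}(b) does not cover it, since it only excludes $(34)(4)(43)$ and $(54)(4)(45)$. Nor does \Cref{primitive unreduced} in the base $R_i^{-1}Q_{i,r}^{-1}Q_{i,r}R_i$. Separately, your opening plan (``$\sigma(34)\sigma(43)$ would cancel'') concerns the step history $(34)(4)(43)$, not the ones in the statement.

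The missing observation is that the history must contain a connecting rule of $\textbf{E}_{\textbf{S},k}(4)$, and it makes the whole case analysis unnecessary. After $\sigma(34)$ the state letters are the start letters of the submachine, and before $\sigma(45)$ they must be its end letters. Only connecting rules change state letters inside the submachine, so the copy of $\zeta_1^{\pm1}$ must occur; the same holds for $(54)(4)(43)$. That rule locks the $Q_{i,\ell}Q_{i,r}$-sectors and their mirror copies, which are precisely the sectors not locked by the transition rule.

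The base is the same for every word of the computation. Applying \Cref{locked sectors} to both rules therefore excludes every unreduced two-letter subword, including those around $\{t\}$ and $R_s(R_s')^{-1}$. Hence $B$ is reduced. This is the argument the paper relies on; compare the opening of the proof of \Cref{strongly defective (34) R}.

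Your correct treatment of $R_i^{-1}Q_{i,r}^{-1}Q_{i,r}R_i$ is really this same fact in disguise: the state letters stay at index $1$ because $\zeta_1$ is never admissible there. The same reasoning also disposes of $P_iQ_{i,\ell}Q_{i,\ell}^{-1}P_i^{-1}$.
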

%
%
%

%
%
%
%
%
%
%
%

\begin{lemma}[Compare with Lemma 3.21 of \cite{GW}] \label{Defective PQQR}

Let $\pazocal{C}:W_0\to\dots\to W_t$ be a reduced computation of $\textbf{E}_{\textbf{S},k}$ with defective base $B$.  Suppose $B$ contains a subword $B'$ of the form $(P_iQ_{i,\ell}Q_{i,r}R_i)^{\pm1}$ or a mirror copy of such a word.  Then letting $\pazocal{C}':W_0'\to\dots\to W_t'$ be the restriction of $\pazocal{C}$ to $B'$, we have $t\leq3c_0\max(|W_0'|_a,|W_t'|_a)+2c_0$.

\end{lemma}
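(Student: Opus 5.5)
The plan is to show that, because $B$ is defective, the step history of $\pazocal{C}$ can contain neither $(1)$-type letters nor $(5)$-type letters. The bound then follows from \Cref{E standard no (1)} or \Cref{E standard no (5)} applied to the restriction $\pazocal{C}'$. The restriction $\pazocal{C}'$ is a reduced computation with base $(P_iQ_{i,\ell}Q_{i,r}R_i)^{\pm1}$ (or its mirror copy, to which the same lemmas apply by symmetry). Its history and step history coincide with those of $\pazocal{C}$. So the whole task reduces to the following claim: the step history of $\pazocal{C}$ cannot contain both a letter from $\{(1),(12),(21)\}$ and a letter from $\{(5),(54),(45)\}$.

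To prove the claim, suppose the step history contains letters from both sets. Lemmas \ref{E primitive step history} and \ref{E run step history} apply here because $B$ contains $B'$, which is of the form $(P_iQ_{i,\ell}Q_{i,r})^{\pm1}$ extended, and also contains $(Q_{i,\ell}Q_{i,r})^{\pm1}$. Together they force any passage between the $(1)$-region and the $(5)$-region to run through every intermediate step. That is, the step history of $\pazocal{C}$ must contain a subword $(34)(4)(45)$ or $(54)(4)(43)$. This is exactly the argument used at the start of the proof of \Cref{(1) to (5) mostly (4)}, which I would repeat here verbatim, noting that it only used the presence of the subword $B'$ in the base. The corresponding subcomputation of $\pazocal{C}$ then has base $B$ and step history $(34)(4)(45)$ or $(54)(4)(43)$. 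By \Cref{Defective (34)(4)(45)}, $B$ must be reduced, which contradicts the assumption that $B$ is defective.

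With the claim in hand, either the step history avoids $(1),(12),(21)$ and \Cref{E standard no (1)} gives $t\leq3c_0\max(|W_0'|_a,|W_t'|_a)+2c_0$, or it avoids $(5),(54),(45)$ and \Cref{E standard no (5)} gives the same bound. These lemmas were stated for $\textbf{E}_{\textbf{S},k}^0$, but the rules of $\textbf{E}_{\textbf{S},k}$ act on a subword $(P_iQ_{i,\ell}Q_{i,r}R_i)^{\pm1}$ or its mirror exactly as their analogues do. So restricting to $B'$ yields a genuine computation of $\textbf{E}_{\textbf{S},k}^0$ with the same length.

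The main obstacle I expect is justifying rigorously that Lemmas \ref{E primitive step history} and \ref{E run step history} forbid every ``shortcut'' from $(1)$ to $(5)$. In particular, I need to rule out patterns like $(12)(2)(21)$ and $(23)(3)(32)$, together with their mirror analogues near $(4)$, when only the subword $B'$ (rather than the whole standard base) is guaranteed. Here I would check case by case that $B'$, or its inverse, contains the required subwords $(P_iQ_{i,\ell}Q_{i,r})^{\pm1}$, $(Q_{i,\ell}Q_{i,r}R_i)^{\pm1}$ and $(Q_{i,\ell}Q_{i,r})^{\pm1}$.
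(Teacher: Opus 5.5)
Your proof is correct and follows essentially the same route as the paper: use Lemmas \ref{E primitive step history}, \ref{E run step history} and \ref{Defective (34)(4)(45)} to show that a defective base forces the step history to avoid either all of $(1),(12),(21)$ or all of $(5),(54),(45)$, then apply \Cref{E standard no (1)} or \Cref{E standard no (5)} to the restriction, handling the mirror case by symmetry. The obstacle you anticipate is not a real one, because any passage from a $(1)$-type letter to a $(5)$-type letter already produces $(34)(4)(45)$ or $(54)(4)(43)$ from the linear order in which the transition rules connect the submachines.
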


\begin{proof}

Noting that every rule of $\textbf{E}_{\textbf{S},k}$ operates on the mirror copy of $B_0$ in the symmetric manner, it suffices to assume $B'\equiv P_iQ_{i,\ell}Q_{i,r}R_i$ for some $i$.

Lemmas \ref{E primitive step history}, \ref{E run step history}, and \ref{Defective (34)(4)(45)} imply that the step history of $\pazocal{C}$ either:

\begin{itemize}

\item does not have any occurrence of the letters $(1)$, $(12)$, or $(21)$; or 

\item does not have any occurrence of the letters $(5)$, $(54)$, and $(45)$.  

\end{itemize}

The statement then follows by \Cref{E standard no (1)} or \Cref{E standard no (5)}.

\end{proof}

As a result of \Cref{Defective PQQR}, it suffices to restrict our attention to defective bases that contain no subword of the form $(P_iQ_{i,\ell}Q_{i,r}R_i)^{\pm1}$ or a mirror copy of such a word.  Indeed, even if a cyclic permutation of a defective base contains such a subword, then \Cref{Defective PQQR} may be applied.  

To this end, a defective base is said to be \textit{strongly defective} if none of its cyclic permutations contain a subword of the form $(P_iQ_{i,\ell}Q_{i,r}R_i)^{\pm1}$ or a mirror copy.

\begin{lemma} \label{strongly defective (3)}

Let $\pazocal{C}:W_0\to\dots\to W_t$ be a minimal computation with strongly defective base $B$ and step history $(3)$.  
Then there exists a two-letter subword $UV$ of $B$ such that:

\begin{enumerate}[label=(\alph*)]

\item There exists a rule in the history of $\pazocal{C}$ which multiplies the $UV$-sector by a letter on the left or on the right.

\item Letting $\pazocal{C}':W_0'\to\dots\to W_t'$ be the restriction of $\pazocal{C}$ to the base $UV$, we have $t\leq 12n^2+2n$ for $n=\max(|W_0'|_a,|W_t'|_a)$.

\end{enumerate}

\end{lemma}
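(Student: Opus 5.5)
The plan is to reduce to \Cref{unreduced base quadratic}, applied to the submachine $\textbf{E}_{\textbf{S},k}(3)$, which functions as a copy of $\textbf{S}_h$. Since the step history is $(3)$, every rule of $\pazocal{C}$ is a rule of this submachine. Each such rule locks every sector except the working sectors and the historical sectors $Q_{i,\ell}Q_{i,r}$ (together with their mirror copies). In the historical sectors it multiplies on the left by a copy of $\theta^{-1}$ from the left alphabet and on the right by a copy of $\theta$ from the right alphabet, with different rules corresponding to different letters. The computation is also minimal by hypothesis, which is exactly the minimality assumption required by \Cref{unreduced base quadratic}.

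The first step is to locate in $B$ a subword of the form $Q_{i,\ell}Q_{i,\ell}^{-1}$ or $Q_{i,r}^{-1}Q_{i,r}$ (or a mirror analogue). Since $B$ is defective, it contains a subword $UU^{-1}$ or $U^{-1}U$. I would then use \Cref{locked sectors} to restrict which letters can occur there. Because $\pazocal{C}$ is nonempty with step history $(3)$, its rules lock the sectors $\{t\}P_0$, $P_iQ_{i,\ell}$, $Q_{i,r}R_i$, $R_s(R_s')^{-1}$ and the cyclic sector $(P_0')^{-1}\{t\}$. This excludes the forms $P_iP_i^{-1}$, $Q_{i,\ell}^{-1}Q_{i,\ell}$, $Q_{i,r}Q_{i,r}^{-1}$, $R_i^{-1}R_i$ and the $\{t\}$-turns. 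The surviving turns are $R_iR_i^{-1}$ and $P_i^{-1}P_i$, which border working sectors, together with $Q_{i,\ell}Q_{i,\ell}^{-1}$ and $Q_{i,r}^{-1}Q_{i,r}$. To handle a turn at $R_i$ or $P_i$, I would follow the base outward from it: the locked sectors force the base to continue through the full blocks, and strong defectiveness forbids any subword $(P_iQ_{i,\ell}Q_{i,r}R_i)^{\pm1}$ in any cyclic permutation. Together these should force the base to turn again inside a $P_iQ_{i,\ell}Q_{i,r}R_i$-block, and only at one of the two admissible $Q$-turns.

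I expect this case analysis on the shape of a strongly defective base to be the main obstacle. The concern is configurations in which the base reverses only at $R_iR_i^{-1}$ or $P_i^{-1}P_i$ and the adjacent locked sectors make the computation look trivial. Since $\pazocal{C}$ is nonempty and the base is circular, a base of that kind would have to contain a full block, which is excluded.

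Once a turn $Q_{i,\ell}Q_{i,\ell}^{-1}$ or $Q_{i,r}^{-1}Q_{i,r}$ is found, the corresponding historical sector $Q_{i,\ell}Q_{i,r}$ (respectively $Q_{i-1,r}$-side sector) is multiplied by a letter on the appropriate side by every rule. The hypotheses of \Cref{unreduced base quadratic} are then satisfied, possibly after restricting $\pazocal{C}$ to a subbase containing the relevant portion, and that lemma yields the sector $UV$ with properties (a) and (b) directly.
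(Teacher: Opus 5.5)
Your proposal is correct and is essentially the paper's argument. You use \Cref{locked sectors} to rule out turns at the locked sectors. Strong defectiveness then excludes $(Q_{i,\ell}Q_{i,r})^{\pm1}$, so any $P$-, $R$- or $t$-letter (or a turn beside a working sector) forces a turn $Q_{i,\ell}Q_{i,\ell}^{-1}$ or $Q_{i,r}^{-1}Q_{i,r}$, or a mirror copy, after which you invoke \Cref{unreduced base quadratic}. The hedge ``possibly after restricting $\pazocal{C}$ to a subbase'' is unnecessary, and restricting could even destroy the minimality hypothesis: the lemma applies to $\pazocal{C}$ itself, and since $B$ is circular every cyclically adjacent pair of letters is already a genuine subword of $B$.
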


\begin{proof}

Note that since every rule of the submachine $\textbf{E}_{\textbf{S},k}(3)$ locks the $P_iQ_{i,\ell}$- and $Q_{i,r}R_i$-sectors, the strongly defective condition precludes $B$ from having any subword of the form $(Q_{i,\ell}Q_{i,r})^{\pm1}$.  In just the same way, $B$ cannot have any subword of the form $(Q_{i,\ell}'Q_{i,r}')^{\pm1}$.

Now, if $B$ contains a letter of the form $P_i^{\pm1}$, then since every rule locks the $P_iQ_{i,\ell}$-sector it follows that $B$ must have a subword of the form $Q_{i,\ell}Q_{i,\ell}^{-1}$.  Similarly, the presence of a letter of the form $(P_i')^{\pm1}$ necessitates the presence of a subword of the form $Q_{i,\ell}'(Q_{i,\ell}')^{-1}$.  

In the same way, since every rule locks the $Q_{i,r}R_i$-sector, if $B$ has a letter of the form $R_i^{\pm1}$ or $(R_i')^{\pm1}$ then it also has a subword of the form $Q_{i,r}^{-1}Q_{i,r}$ or $(Q_{i,r}')^{-1}Q_{i,r}'$.

Finally, if $B$ has a letter of the form $\{t\}^{\pm1}$, then since every rule of $\textbf{E}_{\textbf{S},k}$ locks the $\{t\}P_0$-sector we see as above that $Q_{0,\ell}Q_{0,\ell}^{-1}$ is a subword of $B$.

Hence, $B$ has a two-letter subword of the form $Q_{i,\ell}Q_{i,\ell}^{-1}$, $Q_{i,r}^{-1}Q_{i,r}$, or a mirror copy of one of these.  Thus, $\pazocal{C}$ satisfies the hypotheses of \Cref{unreduced base quadratic}, implying the statement.

%
%
%
%

\end{proof}

\begin{lemma} \label{strongly defective (34) R}

Let $\pazocal{C}:W_0\to\dots\to W_t$ be a minimal computation with strongly defective base $B$.  Suppose the history of $\pazocal{C}$ contains a letter of the form $\sigma(34)^{\pm1}$ which is neither the first nor the last letter.  Then there exists a cyclic permutation $B'$ of $B$ with a factorization $$B'\equiv B_1'C_1'\dots B_m'C_m'$$ such that:

\begin{enumerate}[label=(\alph*)]

\item Each subword $B_j'$ is of the form $R_i^{-1}Q_{i,r}^{-1}Q_{i,r}R_i$ or a mirror copy.

\item The subwords $C_j'$ do not contain any letters of the form $Q_{i,r}^{\pm1}$, $(Q_{i,r}')^{\pm1}$, $R_i^{\pm1}$, or $(R_i')^{\pm1}$ except perhaps for the last letter of $C_m'$.

\item Any occurrence in $C_j'$ of a letter of the form $Q_{i,\ell}^{\pm1}$ or $(Q_{i,\ell}')^{\pm1}$ is part of a subword of the form $Q_{i,\ell}Q_{i,\ell}^{-1}$ or $Q_{i,\ell}'(Q_{i,\ell}')^{-1}$.

\end{enumerate}


\end{lemma}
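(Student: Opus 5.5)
The idea is to use the interior occurrence of $\sigma(34)^{\pm1}$ to see which sectors are locked and which base letters must be adjacent. Replacing $\pazocal{C}$ by its inverse if needed, the history factors as $H_1\sigma(34)H_2$ with $H_1,H_2$ nonempty. Consider the words $W_{x-1}\to W_x$ around this letter.

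The domain of $\sigma(34)$ is empty in every sector except the $Q_{i,\ell}Q_{i,r}$-sectors and their mirrors. It therefore locks every $P_iQ_{i,\ell}$-, $Q_{i,r}R_i$-, $R_{i-1}P_i$-, $\{t\}P_0$- and $(P_0')^{-1}\{t\}$-sector, and all their mirror copies. By \Cref{locked sectors}, $B$ contains no subword $U U^{-1}$ or $U^{-1}U$ across a locked sector. Concretely, $B$ cannot contain $Q_{i,\ell}^{-1}Q_{i,\ell}$, $Q_{i,r}Q_{i,r}^{-1}$, $P_i^{\pm}$-turns, $R_iR_i^{-1}$, and so on. The only unreduced two-letter subwords allowed are $Q_{i,\ell}Q_{i,\ell}^{-1}$ and $R_i^{-1}R_i$-type turns facing the $Q_{i,r}R_i$-sector from the correct side, together with their mirrors and the analogous $Q_{i,r}^{-1}Q_{i,r}$.

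Next I use the fact that the computation is nonempty on both sides of $\sigma(34)$. Immediately after it we are in $\textbf{E}_{\textbf{S},k}(4)$, and immediately before it in $\textbf{E}_{\textbf{S},k}(3)$. I expect the main obstacle to be showing that $B$ contains no subword $(Q_{i,\ell}Q_{i,r})^{\pm1}$ and no turn $Q_{i,r}^{-1}Q_{i,r}$ unless it extends to $R_i^{-1}Q_{i,r}^{-1}Q_{i,r}R_i$. For this I would invoke \Cref{E run step history}(b), \Cref{E primitive step history}(b) and \Cref{Defective (34)(4)(45)}, restricting $\pazocal{C}$ to suitable subwords. By minimality, the maximal $(4)$-subcomputation after $\sigma(34)$ either returns via $\sigma(43)$ or passes to $\sigma(45)$. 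The step history $(34)(4)(43)$ is excluded on subwords $Q_{i,\ell}Q_{i,r}R_i$ or $R_i^{-1}Q_{i,r}^{-1}Q_{i,r}R_i$ by \Cref{E primitive step history}(b). Passing to $(45)$ would force the base to be reduced by \Cref{Defective (34)(4)(45)}, unless the computation ends first. Similarly, the $(3)$-part before $\sigma(34)$ is controlled by \Cref{E run step history}(b), which excludes $(43)(3)(34)$ on subwords containing $Q_{i,\ell}Q_{i,r}$ or $Q_{i,\ell}Q_{i,\ell}^{-1}$. Which of these pieces applies depends on whether $H_1,H_2$ contain further transitions, so I would split into cases according to the step history around the chosen $\sigma(34)$.

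With the adjacency constraints established, I would read off the structure of $B$ directly. Each letter $Q_{i,r}^{\pm1}$ or $R_i^{\pm1}$ (or a mirror copy) can only occur inside a block $R_i^{-1}Q_{i,r}^{-1}Q_{i,r}R_i$; this uses the locked $Q_{i,r}R_i$-sector and the strongly defective hypothesis, which excludes $Q_{i,\ell}Q_{i,r}R_i$. Likewise, each letter $Q_{i,\ell}^{\pm1}$ can only occur in a turn $Q_{i,\ell}Q_{i,\ell}^{-1}$, because $Q_{i,\ell}Q_{i,r}$ is excluded. Since $B$ is circular and must contain at least one $R$-block (otherwise the $(4)$-step acts trivially on all remaining sectors, contradicting reducedness or minimality), I cyclically permute $B$ to start at such a block. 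Cutting at each block then gives the factorization $B_1'C_1'\dots B_m'C_m'$. The last letter of $C_m'$ is left as the exception because the circular base repeats its first letter at the end.
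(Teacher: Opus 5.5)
Your last paragraph, together with your locking analysis of $\sigma(34)$ and your minimality argument that some block must exist, is essentially the paper's proof. However, the middle paragraph misidentifies both the difficulty and the tool. Suppose a cyclic permutation of $B$ contained $Q_{i,\ell}Q_{i,r}$. Since $\sigma(34)$ locks the $P_iQ_{i,\ell}$- and $Q_{i,r}R_i$-sectors, \Cref{locked sectors} excludes $Q_{i,\ell}^{-1}Q_{i,\ell}$ and $Q_{i,r}Q_{i,r}^{-1}$. So the subword extends to $P_iQ_{i,\ell}Q_{i,r}R_i$, contradicting strong defectiveness. The same two locks force every turn $Q_{i,r}^{-1}Q_{i,r}$ to extend to $R_i^{-1}Q_{i,r}^{-1}Q_{i,r}R_i$. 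No step histories are needed, and in fact they would not suffice. \Cref{E run step history}, \Cref{E primitive step history} and \Cref{Defective (34)(4)(45)} only forbid the step histories $(43)(3)(34)$, $(34)(4)(43)$, $(54)(4)(45)$ and $(34)(4)(45)$. They are silent when, say, the step history is $(3)(34)(4)$, with the computation starting inside $(3)$ and ending inside $(4)$. If your argument rested on that case analysis, this case would remain open.

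There are also two smaller corrections.

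First, $R_i^{-1}R_i$ is not an allowed turn. Its sector carries the alphabet of the $Q_{i,r}R_i$-sector, which $\sigma(34)$ locks, and $R_iR_i^{-1}$ is excluded by the lock on the $R_iP_{i+1}$-sector. You actually need this to conclude that every $R_i^{\pm1}$ lies inside a block, so your list of allowed turns contradicts your own final paragraph.

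Second, suppose $B$ has no letter $Q_{i,r}^{\pm1}$ or $(Q_{i,r}')^{\pm1}$. To conclude that the $(4)$-part ``acts trivially'', you must also exclude connecting rules, since they change state letters even when tapes are fixed. The first connecting rule of $\textbf{E}_{\textbf{S},k}(4)$ locks the $Q_{i,\ell}Q_{i,r}$-sectors and their mirrors, which are exactly the sectors $\sigma(34)$ leaves open. So by \Cref{locked sectors} no word with unreduced base is admissible for both rules. The maximal $(4)$-subcomputation after $\sigma(34)$ therefore consists only of $\tau$-rules, which fix states and, in this situation, tapes. Deleting it contradicts minimality, as in the paper.
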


\begin{proof}

Perhaps passing to the inverse computation, we may assume the history $H$ of $\pazocal{C}$ can be factored $H'\sigma(34)H''$.  By construction, $H''$ has a non-empty prefix $H_4$ which is the history of a maximal subcomputation $\pazocal{C}_4$ with step history $(4)$.

Note that $\sigma(34)$ locks all sectors of the standard base other than those of the form $Q_{i,\ell}Q_{i,r}$ and $(Q_{i,r}')^{-1}(Q_{i,\ell}')^{-1}$, while the first connecting rule of $\textbf{E}_{\textbf{S},k}(4)$ locks these exceptional sectors.  So, since $B$ is unreduced, \Cref{locked sectors} implies $H$ does not contain a connecting rule.

If $B$ has no letter of the form $Q_{i,r}^{\pm1}$ or $(Q_{i,r}')^{\pm1}$, then no rule of $\textbf{E}_{\textbf{S},k}(4)$ alters the tape words of any admissible word with base $B$.  So, the first and last admissible words of $\pazocal{C}_4$ are identical.  But then $H_4$ may be removed from $H$ to produce a shorter computation between $W_0$ and $W_t$, contradicting the minimality of $\pazocal{C}$.  Hence, $B$ must contain a letter of the form $Q_{i,r}^{\pm1}$ or $(Q_{i,r}')^{\pm1}$.

Now, since $\sigma(34)$ locks all sectors of the form $P_iQ_{i,\ell}$, $Q_{i,r}R_i$, and their mirror copies, $B$ being strongly defective implies it cannot contain a subword of the form $(Q_{i,\ell}Q_{i,r})^{\pm1}$ or a mirror copy.  In particular, the presence of a letter of the form $Q_{i,r}^{\pm1}$ necessitates a subword of a cyclic permutation of $B$ of the form $R_i^{-1}Q_{i,r}^{-1}Q_{i,r}R_i$ containing this letter, while the presence of a letter of the form $(Q_{i,r}')^{\pm1}$ necessitates such a subword which is a mirror copy.  The same is true for any occurrence of a letter of the form $R_i^{\pm1}$ or $(R_i')^{\pm1}$.

Let $B'$ be a cyclic permutation of $B$ such that a subword of one of these forms is a prefix.  Then it follows from the discussion above that $B'$ can be factored in such a way to satisfy (a) and (b).  Analogous arguments may be applied for occurrences of $Q_{i,\ell}^{\pm1}$ or $(Q_{i,\ell}')^{\pm1}$ in $B$, implying (c).

%
%
%

\end{proof}

\begin{lemma} \label{strongly defective (34) L}

Let $\pazocal{C}:W_0\to\dots\to W_t$ be a minimal computation with strongly defective base $B$.  Suppose the history of $\pazocal{C}$ contains a letter of the form $\sigma(34)^{\pm1}$ which is neither the first nor the last letter and $B$ contains a letter of the form $Q_{i,\ell}^{\pm1}$ or $(Q_{i,\ell}')^{\pm1}$.  Then there exists a subword $B''$ of a cyclic permutation of $B$ such that:

\begin{itemize}

\item $\|B''\|=7$

\item Letting $\pazocal{C}'':W_0''\to\dots\to W_t''$ be the restriction to $B''$ of the corresponding cyclic permutation of $\pazocal{C}$, we have $t\leq3\max(|W_0''|_a,|W_t''|_a)$.

\end{itemize}

\end{lemma}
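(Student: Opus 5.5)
The plan is to first pin down the step history of $\pazocal{C}$, and then choose a $7$-letter subword $B''$ that contains both an unreduced historical piece active during step $(3)$ and an unreduced piece active during step $(4)$. Passing to the inverse computation if necessary, write $H\equiv H'\sigma(34)H''$ with $H',H''$ nonempty. The proof of \Cref{strongly defective (34) R} shows that $H$ contains no connecting rule. Hence the step history of $\pazocal{C}$ alternates between $(3)$ and $(4)$ through letters $\sigma(34)^{\pm1}$. By \Cref{strongly defective (34) R}(c) and the hypothesis that $B$ has a letter $Q_{i,\ell}^{\pm1}$ (or a mirror copy), a cyclic permutation of $B$ contains a subword $Q_{i,\ell}Q_{i,\ell}^{-1}$. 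By \Cref{strongly defective (34) R}(a), it also contains a subword $R_j^{-1}Q_{j,r}^{-1}Q_{j,r}R_j$. \Cref{E run step history}(b) then excludes $(43)(3)(34)$, and \Cref{E primitive step history}(b) excludes $(34)(4)(43)$. Since $\sigma(34)$ is interior, the step history must be exactly $(3)(34)(4)$.

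Next I would locate $B''$. Inside a factor $C_j'$ of \Cref{strongly defective (34) R}, the letters adjacent to a block $R_{i-1}^{-1}Q_{i-1,r}^{-1}Q_{i-1,r}R_{i-1}$ can only be $P_i^{\pm1}$ (through the working $R_{i-1}P_i$-sector) or $\{t\}^{\pm1}$. The letter after $P_i$ must then start a piece $Q_{i,\ell}Q_{i,\ell}^{-1}$, using \Cref{strongly defective (34) R}(b),(c) and the absence of $Q_{i,\ell}Q_{i,r}$. So, up to inversion and mirror copies, I expect
$$B''\equiv R_{i-1}^{-1}Q_{i-1,r}^{-1}Q_{i-1,r}R_{i-1}P_iQ_{i,\ell}Q_{i,\ell}^{-1}.$$
The case involving $\{t\}$ would be handled the same way, using $Q_{0,\ell}Q_{0,\ell}^{-1}$ and the fact that the $\{t\}P_0$-sector is locked. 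Checking that such a $7$-letter window always exists in some cyclic permutation is the step I expect to be the main obstacle. It requires a careful case analysis of which neighbours each letter can have in a strongly defective base, given the locking pattern of $\sigma(34)$.

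Let $\sigma(34)$ be applied at $W_x\to W_{x+1}$. For the $(3)$-part, note that the domain of $\sigma(34)$ in the $Q_{i,\ell}Q_{i,\ell}^{-1}$-sector is the right historical alphabet. Then \Cref{one alphabet historical words unreduced}, applied to the inverse computation $W_x\to\dots\to W_0$, gives $|W''_0|_a\ge 2x$ on that sector. For the $(4)$-part, $\sigma(34)$ locks the $Q_{i-1,r}R_{i-1}$-sectors, so those sectors of $W_{x+1}''$ are empty. From there, every rule of $\textbf{E}_{\textbf{S},k}(4)$ preceding the first connecting rule multiplies both of them by a letter. By \Cref{multiply one letter} (or the $\textbf{LR}_k$ analogue of \Cref{primitive unreduced}), the $a$-length grows by $2$ at each step, so $t-x-1\le |W''_t|_a$. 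The remaining sectors of $B''$ are locked during one of the two phases, which controls the working $R_{i-1}P_i$-sector. Summing the two estimates gives $t\le \tfrac12|W_0''|_a+|W_t''|_a+1\le 3\max(|W_0''|_a,|W_t''|_a)$, after absorbing the constant using $|W_t''|_a\ge 2$ whenever $t>x+1$.
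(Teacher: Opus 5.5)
Your window
$B''\equiv R_{i-1}^{-1}Q_{i-1,r}^{-1}Q_{i-1,r}R_{i-1}P_iQ_{i,\ell}Q_{i,\ell}^{-1}$ and your two estimates match the paper. For the $(3)$-part you use \Cref{one alphabet historical words unreduced} on the $Q_{i,\ell}Q_{i,\ell}^{-1}$-sector. For the $(4)$-part you use one-letter growth of the two initially empty sectors of the $R$-block. Your closing arithmetic is also fine.

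The first gap is that the existence of this window is never proved, and it is the real content of the lemma. Your sketch also describes the cases incorrectly. An $R$-letter is never adjacent to $\{t\}^{\pm1}$. On the other hand, $R_s$ can be followed by $(R_s')^{-1}$ through the empty $R_s(R_s')^{-1}$-sector, so a block need not be followed by a $P$-letter at all. You also never use the hypothesis that $B$ contains a $Q_{i,\ell}^{\pm1}$-letter.

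The paper argues backwards from the first subword $UV$ of the form $Q_{i,\ell}Q_{i,\ell}^{-1}$ (or a mirror copy) in some $C_j'$:
\begin{itemize}
\item since $\sigma(34)$ locks $P_iQ_{i,\ell}$, the letter before $U$ is $P_i$;
\item the letter before $P_i$ cannot be $P_i^{-1}$;
\item if it were $\{t\}^{\pm1}$, the locking pattern would force a $(Q_{0,\ell}')^{-1}$ earlier in $C_j'$, contradicting the choice of $UV$;
\item so it is an $R$-letter, i.e.\ the last letter of $B_j'$.
\end{itemize}
Your forward route can also be completed, but only after adding the missing case. A block of index $p<s$ is forced by \Cref{locked sectors} and the absence of $(Q_{p+1,\ell}Q_{p+1,r})^{\pm1}$ to be followed by $P_{p+1}Q_{p+1,\ell}Q_{p+1,\ell}^{-1}$. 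A block of index $s$ (or a mirror copy) forces the whole circular base to consist of $R_s^{\pm1}$, $Q_{s,r}^{\pm1}$ and their mirror copies, which contradicts the $Q_{i,\ell}$ hypothesis.

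The second gap is that your determination of the step history as exactly $(3)(34)(4)$ is incomplete. \Cref{E run step history}(b) and \Cref{E primitive step history}(b) exclude $(43)(3)(34)$ and $(34)(4)(43)$. Nothing you cite excludes a prefix $(2)(23)$ before the $(3)$-block, and such a prefix needs no connecting rule. In that case your application of \Cref{one alphabet historical words unreduced} to $W_x\to\dots\to W_0$ is not valid.

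The repair is the final clause of that same lemma. The first word of the maximal $(3)$-block has letters of both historical alphabets in the $Q_{i,\ell}Q_{i,\ell}^{-1}$-sector. So it lies in the domain of neither $\sigma(23)^{\pm1}$ nor $\sigma(34)^{\pm1}$, and the $(3)$-block must begin at $W_0$. This is how the paper gets $x=0$, and likewise $z=t$ from \Cref{primitive unreduced}, without classifying step histories in advance.
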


\begin{proof}

Let $B'\equiv B_1'C_1'\dots B_m'C_m'$ be the cyclic permutation of $B$ with factorization given by \Cref{strongly defective (34) R}.  By hypothesis and \Cref{strongly defective (34) R}(c), there exists $j$ such that $C_j'$ has a subword of the form $Q_{i,\ell}Q_{i,\ell}^{-1}$ or a mirror copy.  

Let $UV$ be the first such subword in $C_j'$, {\frenchspacing i.e. given that $U_0UV$ is a prefix of $C_j'$, $U_0$ has no such subword}.  Note then that \Cref{strongly defective (34) R}(c) tells us that $U_0$ contains no letter of the form $Q_{i,\ell}^{\pm1}$ or $(Q_{i,\ell}')^{\pm1}$.  Since $\sigma(34)$ locks the $P_iQ_{i,\ell}$- and $(Q_{i,\ell}')^{-1}(P_i')^{-1}$-sectors, $U_0$ is non-empty and its last letter $T$ is either $P_i$ or $P_i'$.  

Suppose $T$ is preceded in $B'$ by the letter $P_i^{-1}$ or $(P_i')^{-1}$.  That $\sigma(34)$ locks the $P_iQ_{i,\ell}$- and $(Q_{i,\ell}')^{-1}(P_i')^{-1}$-sectors then implies that this letter is preceded by $Q_{i,\ell}^{-1}$ or $(Q_{i,\ell}')^{-1}$.  But then these letters must be part of $C_j'$ and so $U_0$, yielding a contradiction.  Similarly, if $T$ is preceded by the letter $\{t\}^{\pm1}$, then since $\sigma(34)$ locks the sectors of the form $(Q_{0,\ell}')^{-1}(P_0')^{-1}$, $(P_0')^{-1}\{t\}$, $\{t\}P_0$, $P_0Q_{0,\ell}$, and their mirror copies, we can again find an occurrence of $Q_{0,\ell}^{-1}$ or $(Q_{0,\ell}')^{-1}$ in $U_0$.

Hence, the letter preceding $T$ in $B'$ must be of the form $R_i$ or $R_i'$.  But then this is the last letter of $B_j'$, so that $U_0\equiv T$. 

Let $B''\equiv B_j'TUV$.  Note then that $\|B''\|=7$.

Now, as in the proof of \Cref{strongly defective (34) R}, we may assume that the history $H$ of $\pazocal{C}$ can be factored $H'\sigma(34)H''$ and so that $H''$ has a prefix which is the history of a maximal subcomputation $\pazocal{C}_4:W_{y+1}\to\dots\to W_z$ with step history $(4)$.  The restriction $\pazocal{C}_4^{(j)}:W_{y+1}^{(j)}\to\dots\to W_z^{(j)}$ of $\pazocal{C}_4$ to the base $B_j'$ then satisfies the hypotheses of \Cref{primitive unreduced}, so that $z=t$ and $t-y-1\leq|W_t^{(j)}|_a$.

In much the same way, $H'$ has a non-empty suffix which is the history of a maximal subcomputation $\pazocal{C}_3:W_x\to\dots\to W_y$ with step history $(3)$.  Then the restriction $\pazocal{C}_3':W_x'\to\dots\to W_y'$ of $\pazocal{C}_3$ to the $UV$-sector satisfies the hypotheses of \Cref{one alphabet historical words unreduced}, so that $x=0$ and $y\leq|W_0'|_a$.

Thus, $t\leq|W_0''|_a+|W_t''|_a+1$, so that the statement follows from noting $|W_0''|_a,|W_t''|_a\geq1$ since $B''$ is unreduced.

\end{proof}

\begin{lemma} \label{strongly defective (34)}

Let $\pazocal{C}:W_0\to\dots\to W_t$ be a minimal computation with strongly defective base $B$.  Suppose the history of $\pazocal{C}$ contains a letter of the form $\sigma(34)^{\pm1}$ which is neither the first nor the last letter.  Then there exists a subword $B''$ of a cyclic permutation of $B$ such that:

\begin{itemize}

\item $\|B''\|\leq7$

\item Letting $\pazocal{C}'':W_0''\to\dots\to W_t''$ be the restriction to $B''$ of the corresponding cyclic permutation of $\pazocal{C}$, we have $t\leq12n^2+6n$ for $n=\max(|W_0''|_a,|W_t''|_a)$.

\end{itemize}

\end{lemma}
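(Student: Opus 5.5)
The proof splits into two cases, according to whether $B$ contains a letter of the form $Q_{i,\ell}^{\pm1}$ or $(Q_{i,\ell}')^{\pm1}$.

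\textbf{Case 1: $B$ contains such a letter.} Then \Cref{strongly defective (34) L} applies directly. It produces a subword $B''$ of a cyclic permutation of $B$ with $\|B''\|=7$ and $t\leq3\max(|W_0''|_a,|W_t''|_a)\leq12n^2+6n$. So in what follows we may assume $B$ contains no letter of the form $Q_{i,\ell}^{\pm1}$ or $(Q_{i,\ell}')^{\pm1}$.

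\textbf{Case 2: $B$ contains no such letter.} Take the cyclic permutation $B'\equiv B_1'C_1'\dots B_m'C_m'$ given by \Cref{strongly defective (34) R}, and write $H\equiv H'\sigma(34)H''$ as before. As in the proof of \Cref{strongly defective (34) R}, $H$ contains no connecting rule. Hence:
\begin{itemize}
\item $H''$ is a (possibly empty) product of rules of $\textbf{E}_{\textbf{S},k}(4)$, all operating as $\tau_1$-type translations in the first copy of $\textbf{LR}_k$;
\item possibly followed by nothing else, since leaving step $(4)$ requires either $\sigma(45)$, which needs the end letters and hence connecting rules, or $\sigma(43)$, which would give the step history $(34)(4)(43)$.
\end{itemize}
That last possibility is excluded by \Cref{E primitive step history} applied to the subword $B_1'=R_i^{-1}Q_{i,r}^{-1}Q_{i,r}R_i$. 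Symmetrically, $H'$ is a product of rules of step $(3)$ only, since entering from step $(2)$ needs the subword structure that is now forbidden. Any rules of step $(2)$ would in any case act trivially, because no $Q_{i,\ell}$-letters are present, and minimality removes them.

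Restricted to $B_1'$, the tail $\pazocal{C}_4:W_{y+1}\to\dots\to W_t$ satisfies the hypotheses of \Cref{primitive unreduced}. So its $a$-length is non-decreasing and $t-y-1\leq|W_t^{(1)}|_a$. For the head $W_0\to\dots\to W_y$ with step history $(3)$, no sector of $B$ other than the $Q_{i,r}^{-1}Q_{i,r}$-type historical sectors and the working sectors is unlocked. The working sectors require $R_{i-1}P_i$, and $P_i$ letters force $Q_{i,\ell}$ letters, which we have excluded. So the only active sectors are those inside the $B_j'$.

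The restriction of the head to the $Q_{i,r}^{-1}Q_{i,r}$-sector of $B_1'$ should fall under \Cref{one alphabet historical words unreduced}, read in reverse. At $W_y$, which is $\sigma(34)$-admissible, the tape word consists of right-alphabet letters only, so $|W_0'|_a=|W_y'|_a+2y$ when the computation is read in reverse. This gives $y\leq|W_0''|_a$, and therefore $t\leq|W_0''|_a+|W_t''|_a+1\leq12n^2+6n$ with $B''=B_1'$ of length $4\leq7$.

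The main obstacle is the bookkeeping in Case 2: showing that steps other than $(3)$, $(34)$ and $(4)$ cannot occur. The cleanest fallback, if the reverse application of \Cref{one alphabet historical words unreduced} fails, is to isolate a step-$(3)$ subcomputation and invoke \Cref{strongly defective (3)}. That gives the $12n^2+2n$ bound, and adding the linear $(4)$-part bound would still yield $12n^2+6n$.
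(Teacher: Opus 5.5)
Your Case~1 is correct, but the main argument of Case~2 fails at the application of \Cref{one alphabet historical words unreduced}. For a base $Q_{i,r}^{-1}Q_{i,r}$, that lemma requires the tape word to be over the \emph{left} historical alphabet. A step-$(3)$ rule conjugates the tape word of a $Q_{i,r}^{-1}Q_{i,r}$-sector by the right-alphabet copy of the rule. At the $\sigma(34)$-admissible word $W_y$ that tape word is over the \emph{right} alphabet. So, read in either direction, you are conjugating an element of the free group on the right alphabet by right letters, and its length need not grow: for instance, the one-letter word $a$ is fixed by repeated conjugation by $a$.

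Hence $|W_0'|_a=|W_y'|_a+2y$ is false, and you get no linear bound on $y$. The trick works in \Cref{strongly defective (34) L} only because there the sector is $Q_{i,\ell}Q_{i,\ell}^{-1}$, which is conjugated by left letters. This is exactly why the statement carries a $12n^2$ term. Your argument would have given $t\le|W_0''|_a+|W_t''|_a+1$, which is stronger than anything the paper claims. For the same reason, an arbitrary $B_j'$ cannot serve as $B''$: one $Q_{i,r}^{-1}Q_{i,r}$-type sector may stay fixed while another grows.

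Your fallback is the right tool, and it is what the paper does, but two steps are missing.

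First, \Cref{strongly defective (3)} bounds $y-x$ only through one specific two-letter subword $UV$. You must show that $UV$ lies inside some $B_j'$, so that a single $B''$ of length at most $7$ also controls the step-$(4)$ part. The paper argues this as follows. Since $B$ has no $P_i$ or $Q_{i,\ell}$ letters (nor their mirror copies), $UV$ must be $Q_{i,r}^{-1}Q_{i,r}$, $R_iR_i^{-1}$, or a mirror copy. The form $R_iR_i^{-1}$ is excluded because $\sigma(34)$ locks the $R_iP_{i+1}$-sectors, the $R_s(R_s')^{-1}$-sector and their mirror copies. One then takes $B''=B_j'\supseteq UV$.

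Second, your claim that $H'$ consists only of step-$(3)$ rules is unjustified. You excluded entry from step $(2)$ only, but step $(3)$ may be entered via $\sigma(43)$. \Cref{E run step history}(b) does not forbid $(43)(3)(34)$ here, since $B$ has no $Q_{i,\ell}$. The paper handles this case:
\begin{itemize}
\item Under these conjugations the tape word of $UV$ stays nonempty and over the right alphabet, so $W_x$ is not $\sigma(32)$-admissible.
\item Hence the transition into $W_x$ is $\sigma(43)$.
\item \Cref{primitive unreduced} applied to the prefix $W_0\to\dots\to W_{x-1}$ then gives $x-1\le|W_0''|_a$ and $|W_x''|_a\le|W_0''|_a$.
\end{itemize}
This is what converts the bound in terms of $\max(|W_x''|_a,|W_y''|_a)$ into a bound in terms of $n$, yielding $t\le12n^2+4n+2\le12n^2+6n$.
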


\begin{proof}

By \Cref{strongly defective (34) L} it suffices to assume $B$ has no letter of the form $Q_{i,\ell}^{\pm1}$ or $(Q_{i,\ell}')^{\pm1}$.  Moreover, since $\sigma(34)$ locks the $P_iQ_{i,\ell}$-sectors and their mirror copies, $B$ also has no letter of the form $P_i^{\pm1}$ or $(P_i')^{\pm1}$.

Let $B'\equiv B_1'C_1'\dots B_m'C_m'$ be the cyclic permutation with factorization given by \Cref{strongly defective (34) R}.  As in the proof of that statement, we may assume the history $H$ of $\pazocal{C}$ can be factored $H\equiv H'\sigma(34)H''$.

As in previous proofs, let $H_4$ be the non-empty prefix of $H''$ which is the history of a maximal subcomputation $\pazocal{C}_4:W_{y+1}\to\dots\to W_z$ with step history $(4)$ and $H_3$ be the non-empty suffix of $H'$ which is the history of a maximal subcomputation $\pazocal{C}_3:W_x\to\dots\to W_y$ with step history $(3)$.  \Cref{strongly defective (3)} then implies there exists a two-letter subword $UV$ of $B$ such that for the restriction $\pazocal{C}_3':W_x'\to\dots\to W_y'$ of $\pazocal{C}_3$ to $UV$, there exists some transition of $\pazocal{C}_3'$ which multiplies the tape word in the $UV$-sector on the left or right and $y-x\leq12m^2+2m$ for $m=\max(|W_x'|_a,|W_y'|_a)$.

Recall that the rules of $\textbf{E}_{\textbf{S},k}(3)$ only operate in the working sectors, the $Q_{i,\ell}Q_{i,r}$-sectors, and the $(Q_{i,r}')^{-1}(Q_{i,\ell}')^{-1}$-sectors.  So, since $B$ has no letter of the form $P_i^{\pm1}$, $Q_{i,\ell}^{\pm1}$, or a mirror copy, $UV$ must be of the form $Q_{i,r}^{-1}Q_{i,r}$, $R_iR_i^{-1}$, or a mirror copy of one of these.  But $\sigma(34)$ locks the $R_iP_{i+1}$-sectors, the $R_s(R_s')^{-1}$-sector, and the $(P_{i+1}')^{-1}(R_i')^{-1}$-sectors, so that $B$ cannot contain a subword of the form $R_iR_i^{-1}$ or a mirror copy.

Hence, $UV$ is of the form $Q_{i,r}^{-1}Q_{i,r}$ or a mirror copy.  But then by \Cref{strongly defective (34) R} $UV$ is a part of a subword $B_j'$ of $B'$.  Setting $B''=B_j'$, then the above argument implies $y-x\leq12m^2+2m$ for $m=\max(|W_x''|_a,|W_y''|_a)$.



Let $\pazocal{C}_4'':W_{y+1}''\to\dots\to W_z''$ be the restriction of $\pazocal{C}_4$ to the subword $B''$.  Then $\pazocal{C}_4''$ may be viewed as a computation of a primitive machine satisfying the hypotheses of the analogue of \Cref{primitive unreduced} adapted for $\textbf{LR}_k$.  As a result, $W_z''$ is neither $\sigma(45)$- or $\sigma(43)$-admissible, so that $z=t$.  Further, $|W_{y+1}''|_a\leq|W_t''|_a$ and $t-y-1\leq|W_t''|_a$.

Hence, it suffices to assume $x>0$, as otherwise $t\leq12n^2+3n+1\leq12n^2+4n$ since $B''$ being unreduced implies $n\geq1$.

Now, since $W_y$ is $\sigma(34)$-admissible, its admissible subword $W_y'$ with base $UV$ must have non-empty tape word consisting entirely of letters from the corresponding right historical alphabet.  The inverse computation of $\pazocal{C}_3$ conjugates this word by a word over this same alphabet, and so the admissible subword $W_x'$ of $W_x$ with this base also has non-empty tape word consisting entirely of letters from the right historical alphabet.  

Hence, $W_x$ is not $\sigma(32)$-admissible, {\frenchspacing i.e. the transition $W_{x-1}\to W_x$ is given by $\sigma(43)$}.

But then applying the same arguments as above implies the subcomputation $W_0\to\dots\to W_{x-1}$ has step history $(4)$ with $x-1\leq|W_0''|_a$ and $|W_x''|_a\leq|W_0''|_a$.  Thus, $t\leq12n^2+4n+2\leq12n^2+6n$.

\end{proof}

\begin{lemma} \label{strongly defective (23) L}

Let $\pazocal{C}:W_0\to\dots\to W_t$ be a minimal computation with strongly defective base $B$.  Suppose the history of $\pazocal{C}$ contains a letter of the form $\sigma(23)^{\pm1}$ which is neither the first nor the last letter.  Then there exists a cyclic permutation $B'$ of $B$ with a factorization $$B'\equiv B_1'C_1'\dots B_m'C_m'$$ such that:

\begin{enumerate}[label=(\alph*)]

\item Each subword $B_j'$ is of the form $P_iQ_{i,\ell}Q_{i,\ell}^{-1}P_i^{-1}$ or a mirror copy.

\item The subwords $C_j'$ do not contain any letters of the form $Q_{i,\ell}^{\pm1}$, $(Q_{i,\ell}')^{\pm1}$, $P_i^{\pm1}$, or $(P_i')^{\pm1}$ except perhaps for the last letter of $C_m'$.

\item Any occurrence in $C_j'$ of a letter of the form $Q_{i,r}^{\pm1}$ or $(Q_{i,r}')^{\pm1}$ is part of a subword of the form $Q_{i,r}^{-1}Q_{i,r}$ or $(Q_{i,r}')^{-1}Q_{i,r}'$.

\end{enumerate}

%
%
%

\end{lemma}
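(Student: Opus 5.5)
This is the mirror image of \Cref{strongly defective (34) R}, with the roles of $\textbf{E}_{\textbf{S},k}(4)$ and $\textbf{E}_{\textbf{S},k}(2)$ exchanged, and of $(Q_{i,r},R_i)$ and $(Q_{i,\ell},P_i)$ exchanged. I would follow that proof line by line.

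\emph{Setup.} Perhaps passing to the inverse computation, factor the history as $H\equiv H'\sigma(23)^{-1}H''$, so that the computation enters $\textbf{E}_{\textbf{S},k}(2)$ after this letter. Then $H''$ has a nonempty prefix $H_2$ which is the history of a maximal subcomputation $\pazocal{C}_2$ with step history $(2)$. (If instead $\sigma(23)$ is followed by step $(3)$, apply the argument to the inverse computation; either way some side of $\sigma(23)^{\pm1}$ is a nonempty step-(2) block.)

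\emph{No connecting rule in $H_2$.} The rule $\sigma(23)$ locks every sector except the working sectors and the $Q_{i,\ell}Q_{i,r}$-sectors (and their mirror copies). The connecting rule of the parallel copies of $\textbf{RL}$ locks the $Q_{i,\ell}Q_{i,r}$-sectors. Since $B$ is unreduced, \Cref{locked sectors} then forbids any connecting rule from occurring in $H_2$. Hence every rule of $H_2$ acts only on the $P_iQ_{i,\ell}$- and $Q_{i,\ell}Q_{i,r}$-sectors by translation.

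\emph{$B$ contains a letter $Q_{i,\ell}^{\pm1}$, $(Q_{i,\ell}')^{\pm1}$, $P_i^{\pm1}$ or $(P_i')^{\pm1}$.} Suppose it does not. Then no rule of $\pazocal{C}_2$ changes any tape word of an admissible word with base $B$, so the first and last words of $\pazocal{C}_2$ coincide. Deleting $H_2$ then gives a shorter computation between $W_0$ and $W_t$, contradicting minimality.

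\emph{Structure of $B$.} Because $\sigma(23)$ locks the $P_iQ_{i,\ell}$- and $Q_{i,r}R_i$-sectors and their mirrors, and $B$ is strongly defective, $B$ contains no subword $(Q_{i,\ell}Q_{i,r})^{\pm1}$ or a mirror copy. Every rule locks the $R_{i-1}P_i$-sectors? No: $\sigma(23)$ does not lock working sectors. So instead I use that $\sigma(23)$ locks $P_iQ_{i,\ell}$, together with \Cref{locked sectors}. This forbids $P_i^{-1}P_i$ and $Q_{i,\ell}Q_{i,\ell}^{-1}$ adjacent across that locked sector, and so forces each occurrence of $Q_{i,\ell}^{\pm1}$ (respectively $P_i^{\pm1}$) to lie in a subword of a cyclic permutation of the form $P_iQ_{i,\ell}Q_{i,\ell}^{-1}P_i^{-1}$, or a mirror copy. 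More carefully: around a letter $Q_{i,\ell}$, one neighbour must be $P_i$ (since neighbour $Q_{i,r}$ is excluded). The other neighbour then cannot be $Q_{i,r}$, so it must be $Q_{i,\ell}^{-1}$, and that letter's next neighbour must be $P_i^{-1}$. Starting from a letter $P_i$, the non-working neighbour is $Q_{i,\ell}$, which reduces to the previous case. The $\{t\}$-letters are handled as in earlier proofs, since $\{t\}P_0$ is locked.

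\emph{Conclusion.} Take $B'$ to be a cyclic permutation beginning with one such subword. Cut $B'$ at each of these four-letter blocks to get the factorization $B_1'C_1'\dots B_m'C_m'$. This gives (a) and (b); the exception for the last letter of $C_m'$ accounts for the wraparound of the circular base. For (c), the same locking argument with $Q_{i,r}R_i$ locked and $Q_{i,\ell}Q_{i,r}$ excluded shows that any $Q_{i,r}^{\pm1}$ in $C_j'$ has neighbours $Q_{i,r}^{\mp1}$ and $R_i$, i.e.\ lies in $Q_{i,r}^{-1}Q_{i,r}$ or its mirror.

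The main obstacle is this local case analysis of neighbours, including the $\{t\}$ and $R_s(R_s')^{-1}$ junctions. These should be treated exactly as in \Cref{strongly defective (34) L}.
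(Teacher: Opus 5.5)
Your route is the paper's: mirror \Cref{strongly defective (34) R}, use minimality to force $Q_{i,\ell}$- or $P_i$-letters into $B$, then read off the block structure from the locked sectors and strong defectiveness. Your final conclusions are correct, but two steps are wrong as written.

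First, you misapply \Cref{locked sectors}. Because $\sigma(23)$ locks the $P_iQ_{i,\ell}$-sector, the forbidden subwords are $P_iP_i^{-1}$ and $Q_{i,\ell}^{-1}Q_{i,\ell}$, not $P_i^{-1}P_i$ and $Q_{i,\ell}Q_{i,\ell}^{-1}$. Your version would exclude the very block $P_iQ_{i,\ell}Q_{i,\ell}^{-1}P_i^{-1}$ asserted in (a). It would also exclude $P_i^{-1}P_i$, which really can occur when $R_{i-1}P_i$ is an input sector; the paper uses exactly this in the proof of \Cref{strongly defective (23)}. Your neighbour analysis then reaches the right answer via a non sequitur (``one neighbour must be $P_i$ since $Q_{i,r}$ is excluded''). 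The correct chain is:
\begin{itemize}
\item a letter $Q_{i,\ell}$ is preceded by $P_i$, because $Q_{i,\ell}^{-1}Q_{i,\ell}$ is forbidden;
\item it is followed by $Q_{i,\ell}^{-1}$, because $Q_{i,\ell}Q_{i,r}$ is excluded by strong defectiveness;
\item that $Q_{i,\ell}^{-1}$ is followed by $P_i^{-1}$, again because $Q_{i,\ell}^{-1}Q_{i,\ell}$ is forbidden;
\item a letter $P_i$ is followed by $Q_{i,\ell}$, because $P_iP_i^{-1}$ is forbidden.
\end{itemize}

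Second, ``no connecting rule in $H_2$'' does not follow from unreducedness of $B$ and \Cref{locked sectors} alone. Neither $\sigma(23)$ nor the connecting rule $\zeta$ of $\textbf{E}_{\textbf{S},k}(2)$ locks the input sectors. (Note that $\sigma(23)$ does lock the non-input working sectors, contrary to what you wrote.) So a priori $B$ could be unreduced only through subwords $P_i^{-1}P_i$ or $R_{i-1}R_{i-1}^{-1}$ at input sectors, and you must invoke strong defectiveness here. Take $P_i^{-1}P_i$:
\begin{itemize}
\item the $\sigma(23)$-lock forces $P_iQ_{i,\ell}$;
\item the $\zeta$-lock forbids $Q_{i,\ell}Q_{i,\ell}^{-1}$, forcing $Q_{i,\ell}Q_{i,r}$;
\item the $\sigma(23)$-lock forces $Q_{i,r}R_i$.
\end{itemize}
This gives a forbidden $P_iQ_{i,\ell}Q_{i,r}R_i$. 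The case $R_{i-1}R_{i-1}^{-1}$ is symmetric, and every other unreduced two-letter subword is excluded by one of the two locks.

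This step is not cosmetic. If $H_2$ contained $\zeta^{\pm1}$ (as it must if, e.g., $H_2$ is followed by $\sigma(21)$), the first and last words of $\pazocal{C}_2$ would differ in their state letters, and deleting $H_2$ would not be legitimate. With these two repairs the argument is complete and matches the paper's sketch.
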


\begin{proof}

The proof follows in much the same way as \Cref{strongly defective (34) R}: If $B$ has no letter of the form $Q_{i,\ell}^{\pm1}$ or $(Q_{i,\ell}')^{\pm1}$, then any maximal subcomputation with step history $(2)$ may be removed to contradict the minimality of $\pazocal{C}$, and so the statement follows from the structure of strongly defective bases.

\end{proof}


\begin{lemma} \label{strongly defective (23) R}

Let $\pazocal{C}:W_0\to\dots\to W_t$ be a minimal computation with strongly defective base $B$.  Suppose the history of $\pazocal{C}$ contains a letter of the form $\sigma(23)^{\pm1}$ which is neither the first nor the last letter and $B$ contains a letter of the form $Q_{i,r}^{\pm1}$ or $(Q_{i,r}')^{\pm1}$.  Then there exists a subword $B''$ of a cyclic permutation of $B$ such that:

\begin{itemize}

\item $\|B''\|=7$

\item Letting $\pazocal{C}'':W_0''\to\dots\to W_t''$ be the restriction to $B''$ of the corresponding cyclic permutation of $\pazocal{C}$, we have $t\leq3\max(|W_0''|_a,|W_t''|_a)$.

\end{itemize}

\end{lemma}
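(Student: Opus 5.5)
The argument will mirror the proof of \Cref{strongly defective (34) L}, with the roles of the submachines $(3),(4)$ replaced by $(2),(3)$ and the roles of the left and right historical alphabets exchanged.

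First, let $B'\equiv B_1'C_1'\dots B_m'C_m'$ be the cyclic permutation of $B$ given by \Cref{strongly defective (23) L}. The hypothesis and \Cref{strongly defective (23) L}(c) give an index $j$ such that $C_j'$ contains a subword $UV$ of the form $Q_{i,r}^{-1}Q_{i,r}$ or a mirror copy. Choose the first such subword, so $C_j'\equiv U_0UV\cdots$ and $U_0$ contains no letter $Q_{i,r}^{\pm1}$ or mirror copy. Since $\sigma(23)$ locks the $Q_{i,r}R_i$-sectors and their mirror copies, $U_0$ is nonempty and its first letter should be $R_i^{-1}$ or its mirror. Running the argument of \Cref{strongly defective (34) L} in the opposite direction, the letter following that $R_i^{-1}$ cannot be $R_i$, since this would force a $Q_{i,r}$ into $U_0$. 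It also cannot be a $\{t\}$ or working-sector letter, since $\sigma(23)$ locks the $R_iP_{i+1}$-, $R_s(R_s')^{-1}$- and $\{t\}$-adjacent sectors, and any such letter would again force a $Q_{i,r}$ into $U_0$.

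There is a complication here. The working sectors are not locked by $\sigma(23)$, so the left-to-right bookkeeping is asymmetric. It may be more natural to place the letter adjacent to $B_j'$ on the other side, taking $B''\equiv UVTB_{j+1}'$ or $B_j'TUV$ after reversing orientation. I would instead consider the $UV$ occurring at the end of $C_{j-1}'$, i.e.\ the last such subword, followed by $R_i$ and then $B_j'$ (since $B_j'$ starts with $P_i$ and $R_iP_i$ is impossible). The working sector $R_{i-1}P_i$ is not locked by $\sigma(23)$, so this adjacency must be checked carefully. This is the point I expect to be the main obstacle, namely showing that the $7$-letter word $B''$ consisting of $B_j'$ ($4$ letters), one connecting letter $T$, and $UV$ is forced. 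If working sectors intervene, I would use that $\sigma(12)$ and $\sigma(23)$ lock the non-input working sectors, together with the strongly defective structure, to exclude them.

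Once $B''$ is in hand, the timing bound follows as in \Cref{strongly defective (34) L}. Write $H\equiv H'\sigma(23)H''$, perhaps after inverting. Let $\pazocal{C}_2$ be the maximal subcomputation with step history $(2)$ adjacent to $\sigma(23)$. Restricted to $B_j'\equiv P_iQ_{i,\ell}Q_{i,\ell}^{-1}P_i^{-1}$ it satisfies \Cref{primitive unreduced} (the $\textbf{RL}$ version), so it runs to the end of the computation and has length at most the final $a$-length there. Let $\pazocal{C}_3$ be the maximal subcomputation with step history $(3)$ on the other side. Restricted to the $UV$-sector $Q_{i,r}^{-1}Q_{i,r}$, which $\sigma(23)$ forces to carry only left historical letters, it satisfies \Cref{one alphabet historical words unreduced}, so it too runs to the end, with length at most $|W''|_a$ there. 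Summing gives $t\leq|W_0''|_a+|W_t''|_a+1\leq3\max(|W_0''|_a,|W_t''|_a)$, using $|W''|_a\geq1$ because $B''$ is unreduced.
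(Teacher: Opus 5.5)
Your overall plan matches the paper's: factor a cyclic permutation of $B$ via \Cref{strongly defective (23) L}, take $B''\equiv B_j'TUV$ with $T$ an $R^{-1}$-letter, and bound the two sides of $\sigma(23)$ using \Cref{primitive unreduced} and \Cref{one alphabet historical words unreduced}. Your timing paragraph is fine. The gap is the step you flag yourself: you never show that $U_0$ is a single letter, i.e. that $B_j'TUV$ is a contiguous subword of length $7$. The reasoning you do give for it is misdirected.

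\begin{itemize}
\item The locking of the $Q_{i,r}R_i$-sector, via \Cref{locked sectors}, forbids $Q_{i,r}Q_{i,r}^{-1}$ and $R_i^{-1}R_i$. So the letter immediately \emph{before} $U=Q_{i,r}^{-1}$ is $T=R_i^{-1}$, i.e. $T$ is the \emph{last} letter of $U_0$. What must be analysed is the letter preceding $T$, not the one following it. Your remark that $R_i$ ``would force a $Q_{i,r}$ into $U_0$'' is the right idea, but it applies to that preceding letter.
\item Your claim that this letter cannot be a working-sector letter because $\sigma(23)$ locks the $R_iP_{i+1}$-sectors is wrong on two counts. First, $\sigma(23)$ does not lock input working sectors. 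Second, in the correct configuration the letter before $T$ is exactly $P_{i+1}^{-1}$: the reversed working sector $P_{i+1}^{-1}R_i^{-1}$ is how $B_j'$ attaches to $T$.
\item Your fallback, excluding working sectors by locking, cannot work. If $R_iP_{i+1}$ is an input sector, then $R_iR_i^{-1}$ is permitted by \Cref{locked sectors}.
\end{itemize}

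The missing idea is to use the choice of $UV$ as the \emph{first} such subword of $C_j'$. The letter before $T=R_i^{-1}$ is either $R_i$ itself, or the inverse of the successor of $R_i$ in the cyclic standard base. That inverse is $P_{i+1}^{-1}$, or $R_s'$ when $i=s$.

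\begin{itemize}
\item \emph{Case $R_i$ or $R_s'$.} Locking of the $Q_{i,r}R_i$-sector (respectively its mirror copy) forces the letter before it to be $Q_{i,r}$ (respectively $Q_{s,r}'$). Since $B_j'$ ends in a letter $P^{-1}$ or $(P')^{-1}$, both letters lie in $U_0$. But by \Cref{strongly defective (23) L}(c) and the minimality of $UV$, $U_0$ contains no letter $Q_{i,r}^{\pm1}$ or $(Q_{i,r}')^{\pm1}$, a contradiction.
\item \emph{Case $P_{i+1}^{-1}$.} This letter cannot lie in $C_j'$ by \Cref{strongly defective (23) L}(b), so it is the last letter of $B_j'$.
\end{itemize}

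Hence $U_0\equiv T$ and
$$B''\equiv P_{i+1}Q_{i+1,\ell}Q_{i+1,\ell}^{-1}P_{i+1}^{-1}R_i^{-1}Q_{i,r}^{-1}Q_{i,r}$$
or its mirror copy. The working sector inside $B''$, locked or not, only adds to $|W_0''|_a$ and $|W_t''|_a$ and plays no role in the estimate. Your argument with \Cref{primitive unreduced} and \Cref{one alphabet historical words unreduced} then finishes the proof.
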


\begin{proof}

The proof follows in much the same way as that of \Cref{strongly defective (34) L}: 

Let $B'\equiv B_1'C_1'\dots B_m'C_m'$ be the factorization of a cyclic permutation of $B$ given by \Cref{strongly defective (23) L}.  Then by hypothesis there exists a $j$ such that $C_j'$ has a subword of the form $Q_{i,r}^{-1}Q_{i,r}$ or a mirror copy.  Letting $UV$ be the first such subword of $C_j'$, we argue that there exists a letter $T$ which is either of the form $R_i^{-1}$ or of the form $(R_i')^{-1}$ so that $TUV$ is a prefix of $C_j'$.  Setting $B''\equiv B_j'TUV$, the statement follows from applications of \Cref{primitive unreduced} and \Cref{one alphabet historical words unreduced}.

\end{proof}

\begin{lemma} \label{strongly defective (23)}

Let $\pazocal{C}:W_0\to\dots\to W_t$ be a minimal computation with strongly defective base $B$.  Suppose the history of $\pazocal{C}$ contains a letter of the form $\sigma(23)^{\pm1}$ which is neither the first nor the last letter.  Then there exists a subword $B''$ of a cyclic permutation of $B$ such that:

\begin{itemize}

\item $\|B''\|\leq7$

\item Letting $\pazocal{C}'':W_0''\to\dots\to W_t''$ be the restriction to $B''$ of the corresponding cyclic permutation of $\pazocal{C}$, we have $t\leq12n^2+6n$ for $n=\max(|W_0''|_a,|W_t''|_a)$.

\end{itemize}

\end{lemma}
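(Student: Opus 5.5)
The proof will mirror that of \Cref{strongly defective (34)}, with the roles of the submachines (2) and (4), and of the left and right historical alphabets, interchanged. By \Cref{strongly defective (23) R}, we may assume $B$ has no letter of the form $Q_{i,r}^{\pm1}$ or $(Q_{i,r}')^{\pm1}$. Since $\sigma(23)$ locks the $Q_{i,r}R_i$-sectors and their mirror copies, $B$ then also has no letter of the form $R_i^{\pm1}$ or $(R_i')^{\pm1}$. Take the factorization $B'\equiv B_1'C_1'\dots B_m'C_m'$ from \Cref{strongly defective (23) L}. Passing to the inverse computation if necessary, write $H\equiv H'\sigma(23)H''$. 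Let $H_2$ be the nonempty suffix of $H'$ which is the history of a maximal subcomputation $\pazocal{C}_2:W_x\to\dots\to W_y$ with step history $(2)$. Let $H_3$ be the nonempty prefix of $H''$ which is the history of a maximal subcomputation $\pazocal{C}_3:W_{y+1}\to\dots\to W_z$ with step history $(3)$.

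For $\pazocal{C}_3$, apply \Cref{strongly defective (3)} to obtain a two-letter subword $UV$ on which some rule of $\pazocal{C}_3$ acts, with $z-y-1\leq 12m^2+2m$. The rules of $\textbf{E}_{\textbf{S},k}(3)$ act only on the working sectors and on the $Q_{i,\ell}Q_{i,r}$-sectors (and their mirrors), and $B$ contains no $Q_{i,r}$ or $R_i$ letters. Hence $UV$ is of the form $Q_{i,\ell}Q_{i,\ell}^{-1}$, $P_i^{-1}P_i$, or a mirror copy. The second form is excluded because $\sigma(23)$ locks the $R_{i-1}P_i$-sectors only partially; more precisely, its domain there is the input alphabet only, so this exclusion must be checked. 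If a working-sector base $P_i^{-1}P_i$ can occur, I would instead argue that it is flanked by $Q_{i,\ell}^{-1}$ and $Q_{i,\ell}$, because $\sigma(23)$ locks $P_iQ_{i,\ell}$. Either way, one can select the subword $B''=B_j'$ containing a $Q_{i,\ell}Q_{i,\ell}^{-1}$ (of length $4\le7$) so that $UV\subset B''$ and $y$-to-$z$ is bounded by $12n^2+2n$. This case distinction on $UV$ is the step I expect to be the main obstacle.

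Next, restrict $\pazocal{C}_2$ to $B''\equiv P_iQ_{i,\ell}Q_{i,\ell}^{-1}P_i^{-1}$. This is an unreduced-base computation of a primitive machine $\textbf{RL}$, so the analogue of \Cref{primitive unreduced} applies backward from the $\sigma(23)$-admissible word $W_y$. It gives that $W_x''$ is neither $\sigma(21)$- nor $\sigma(12)$-admissible unless $x=0$, together with the bounds $y-x\leq|W_0''|_a$ and $|W_y''|_a\leq|W_0''|_a$. Hence $x=0$.

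It remains to handle the portion after $\pazocal{C}_3$. If $z=t$, combining the estimates gives $t\leq12n^2+4n$, using $n\geq1$ since $B''$ is unreduced. Otherwise the transition after $W_z$ is $\sigma(32)$ or $\sigma(34)$. Since $W_{y+1}''$ has a nonempty tape word in the $Q_{i,\ell}Q_{i,\ell}^{-1}$-sector over the left alphabet, and step (3) conjugates it by left-alphabet letters, $W_z$ is not $\sigma(34)$-admissible. So the next transition is $\sigma(32)$, and the symmetric application of the analogue of \Cref{primitive unreduced} bounds the final step-(2) segment by $|W_t''|_a$ and forces it to end at $t$. Summing the three segments gives $t\leq12n^2+6n$, as required.
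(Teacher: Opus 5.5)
Your overall architecture matches the paper's. You reduce via \Cref{strongly defective (23) R}, use the factorization of \Cref{strongly defective (23) L}, bound the step-(3) segment quadratically by \Cref{strongly defective (3)}, control the flanking step-(2) segments with the $\textbf{RL}$-analogue of \Cref{primitive unreduced}, and exclude $\sigma(34)$ via the left-alphabet tape word. The gap is exactly at the step you flagged, and your resolution of it is wrong.

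First, $P_i^{-1}P_i$ is not excluded. In an input sector the domain of $\sigma(23)$ is the entire tape alphabet, so \Cref{locked sectors} forbids $P_i^{-1}P_i$ only for non-input working sectors. For input sectors it can occur, and step-(3) rules genuinely act there. Second, your flanking observation is correct but shows the opposite of your conclusion. Since $\sigma(23)$ locks the $P_iQ_{i,\ell}$-sector, such a $UV$ sits inside $Q_{i,\ell}^{-1}P_i^{-1}P_iQ_{i,\ell}$. So it is formed by the last letter of some $B_j'$ and the first letter of $B_{j+1}'$, and it lies in no single factor $B_j'$. With $B''=B_j'$, the bound $z-y-1\leq 12m^2+2m$ is in terms of a sector outside $B''$. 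Nothing then bounds that sector's length by $n=\max(|W_0''|_a,|W_t''|_a)$, and the final estimate does not follow.

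The repair, as in the paper, is to take $B''$ to be $B_j'$ with the first letter of $B_{j+1}'$ appended, i.e. $P_iQ_{i,\ell}Q_{i,\ell}^{-1}P_i^{-1}P_i$, with $\|B''\|=5\leq 7$. This still contains the $PQQ^{-1}P^{-1}$-block needed for \Cref{primitive unreduced}, and the $Q_{i,\ell}Q_{i,\ell}^{-1}$-sector used to rule out $\sigma(34)$. Step-(2) rules do not alter tape words in input sectors, so the $UV$-sector is constant along the step-(2) segments. Hence $|W_{y+1}''|_a\leq|W_0''|_a$ and $|W_z''|_a\leq|W_t''|_a$ hold for this $B''$, and your count $t\leq 12n^2+4n+2\leq 12n^2+6n$ goes through.
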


\begin{proof}

The proof begins in much the same that for \Cref{strongly defective (34)}:

By \Cref{strongly defective (23) R} we may assume $B$ has no letter of the form $Q_{i,r}^{\pm1}$ or $(Q_{i,r}')^{\pm1}$, and so no letter of the form $R_i^{\pm1}$ or $(R_i')^{\pm1}$.  Applying \Cref{unreduced base quadratic} to the maximal subcomputation with step history $(3)$ produces a two-letter subword $UV$ of $B$ on which this subcomputation operates.

Let $B'\equiv B_1'C_1'\dots B_m'C_m'$ be a cyclic permutation of $B$ with factorization given by \Cref{strongly defective (23) L}.  If $UV$ is contained in a factor $B_j'$, then setting $B''\equiv B_j'$ the proof is completed in just the same way as in \Cref{strongly defective (34)}.

However, since $\sigma(23)$ does not lock the input sectors, $UV$ need not be a subword of a factor $B_j'$.  But if it is not, then it is of the form $P_i^{-1}P_i$ or a mirror copy, and so there exists $j$ such that $UV$ is formed by the last letter of $B_j'$ and the first letter of $B_{j+1}'$.  In this case, set $B''$ to be the subword of $B'$ obtained by appending the first letter of $B_{j+1}'$ to the end of $B_j'$.  Hence, in this case $\|B''\|=5$ and the bound is again given by applying \Cref{primitive unreduced} and \Cref{strongly defective (3)}.

\end{proof}

\begin{lemma} \label{strongly defective (5)}

Let $\pazocal{C}:W_0\to\dots\to W_t$ be a non-empty minimal computation with strongly defective base $B$ and step history $(5)$.  
Then there exists a subword $B''$ of a cyclic permutation of $B$ such that:

\begin{enumerate}[label=(\alph*)]

\item $B''$ is of the form $R_i^{-1}Q_{i,r}^{-1}Q_{i,r}R_i$ or a mirror copy.

\item Letting $\pazocal{C}'':W_0''\to\dots\to W_t''$ be the restriction of the corresponding cyclic permutation of $\pazocal{C}$ to the base $B''$, we have $t\leq 12n^2+2n$ for $n=\max(|W_0''|_a,|W_t''|_a)$.

\end{enumerate}

\end{lemma}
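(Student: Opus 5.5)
The plan is to mirror the proof of \Cref{strongly defective (3)}. First I will show that the strongly defective base $B$ must contain a subword of the form $Q_{i,r}^{-1}Q_{i,r}$ or a mirror copy. Then I will locate this subword inside a factor $R_i^{-1}Q_{i,r}^{-1}Q_{i,r}R_i$ (or a mirror copy) of a cyclic permutation of $B$, and apply \Cref{unreduced base quadratic} to the two-letter subword.

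\textbf{Structure of $B$.} Every rule of $\textbf{E}_{\textbf{S},k}(5)$ locks all sectors except the $Q_{i,\ell}Q_{i,r}$-sectors and their mirror copies. In particular, it locks the $P_iQ_{i,\ell}$-, $Q_{i,r}R_i$-, $R_iP_{i+1}$-, $\{t\}P_0$-, $R_s(R_s')^{-1}$-sectors and their mirrors. By \Cref{locked sectors}, any occurrence in $B$ of $R_i^{\pm1}$ therefore sits inside a subword $R_i^{-1}Q_{i,r}^{-1}Q_{i,r}R_i$, $Q_{i,r}R_i$, or its inverse. Similarly, any occurrence of $P_i^{\pm1}$ sits next to $Q_{i,\ell}$, and so on.

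Since $B$ is strongly defective, no cyclic permutation of it contains $(P_iQ_{i,\ell}Q_{i,r}R_i)^{\pm1}$ or a mirror copy. Combining this with the locking pattern, I will argue that $B$ must contain one of the following: a subword $Q_{i,\ell}Q_{i,\ell}^{-1}$, a subword $Q_{i,r}^{-1}Q_{i,r}$, or a mirror copy of one of these. Indeed, a reduced run along locked sectors would force the full block $P_iQ_{i,\ell}Q_{i,r}R_i$.

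\textbf{Ruling out the $Q_{i,\ell}Q_{i,\ell}^{-1}$ case.} If the only unreduced pieces are of the form $Q_{i,\ell}Q_{i,\ell}^{-1}$, then the rules of step $(5)$ multiply the $Q_{i,\ell}Q_{i,r}$-sector on the \emph{right}. Consequently they do not alter the tape word in a $Q_{i,\ell}Q_{i,\ell}^{-1}$-sector, which is the sector between $Q_{i,\ell}$ and its inverse (the rule's left factor for that state letter is empty). More generally, if $B$ has no letter $Q_{i,r}^{\pm1}$ or $(Q_{i,r}')^{\pm1}$, no rule of $\textbf{E}_{\textbf{S},k}(5)$ changes any tape word of an admissible word with base $B$. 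Then $W_0\equiv W_t$, and a nonempty minimal computation is impossible.

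So $B$ contains some $Q_{i,r}^{\pm1}$ or a mirror copy. Since the $Q_{i,r}R_i$-sector is locked and $Q_{i,\ell}Q_{i,r}R_i$ cannot appear, this letter lies in a subword $R_i^{-1}Q_{i,r}^{-1}Q_{i,r}R_i$ of a cyclic permutation of $B$, or in a mirror copy of one. For the $(Q_{i,r}')^{\pm1}$ case I will use the mirror symmetry of the rules. Take $B''$ to be this subword, which gives (a).

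\textbf{The bound.} In the sector $Q_{i,r}^{-1}Q_{i,r}$, every rule of $\textbf{E}_{\textbf{S},k}(5)$ multiplies the tape word from the left by the inverse of a right-alphabet letter, with distinct rules giving distinct letters. This is precisely the setting of \Cref{unreduced base quadratic}, applied to the restriction of $\pazocal{C}$ to $B''$, whose only unlocked sector is $Q_{i,r}^{-1}Q_{i,r}$. Minimality of $\pazocal{C}$ passes to this restriction: by \Cref{unreduced base} (or directly by \Cref{multiply one letter}), the history of a computation with this base is determined by the reduced form of the tape words, so any shorter computation in base $B''$ would give the same shorter history for $B$. Hence \Cref{unreduced base quadratic} yields $t\leq12n^2+2n$, which is (b).

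The main obstacle is this last step: justifying that minimality with respect to $B$ transfers to the restriction to $B''$. I would handle it by noting that the entire computation is determined by its history, and that all sectors of $B$ other than the $Q_{i,r}^{-1}Q_{i,r}$-type ones transform in a way dictated by the same history. If necessary, I would instead argue directly with the factorization $H_1H_2^\ell\bar H_2H_3$ of \Cref{unreduced base}: minimality forces $\ell\le1$, since $W_{h_1}\equiv W_{h_1+h_2}$ holds simultaneously in every such sector.
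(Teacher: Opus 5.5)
Your structural analysis matches the paper's. If $B$ has no letter $Q_{i,r}^{\pm1}$ or $(Q_{i,r}')^{\pm1}$, then $W_0\equiv W_t$. Otherwise, the locking pattern together with strong defectiveness forces every multiplied sector to be of the form $Q_{i,r}^{-1}Q_{i,r}$ (or a mirror copy), sitting inside a block $R_i^{-1}Q_{i,r}^{-1}Q_{i,r}R_i$.

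\textbf{The gap.} In the last step you fix such a block $B''$ first and then apply \Cref{unreduced base quadratic} to the restriction of $\pazocal{C}$ to $B''$. That lemma needs the restricted computation to be minimal, and this does not follow from the minimality of $\pazocal{C}$; both of your justifications are false. In a $Q_{i,r}^{-1}Q_{i,r}$-sector, a rule $\theta$ of step $(5)$ acts by conjugation, $w\mapsto x_\theta w x_\theta^{-1}$, where $x_\theta$ is the right-alphabet copy of $\theta$. So the history is determined by the tape words only up to the centralizer of $w$; this is exactly the $H_2^\ell$ factor of \Cref{unreduced base}. Moreover, a strongly defective base can contain several copies of this sector carrying unrelated words.

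\textbf{A counterexample.} Let $B$ be cyclically $(Q_{i,r}^{-1}Q_{i,r}R_iP_{i+1}Q_{i+1,\ell}Q_{i+1,\ell}^{-1}P_{i+1}^{-1}R_i^{-1})^2Q_{i,r}^{-1}$. Write $x_\theta$ in the first copy of the $Q_{i,r}^{-1}Q_{i,r}$-sector and $x_{\theta'}$ ($\theta'\neq\theta$) in the second, with fixed nonempty right-alphabet words in the $Q_{i+1,\ell}Q_{i+1,\ell}^{-1}$-sectors. Apply $\theta^N$ with $N\geq 15$. Any computation between the same endpoints must conjugate by an element centralizing $x_\theta$ and sending $x_{\theta'}$ to $x_\theta^Nx_{\theta'}x_\theta^{-N}$. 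That forces $x_\theta^N$, so this computation is minimal. Its restriction to the first block, however, goes from $x_\theta$ to $x_\theta$ in $N$ steps. That restriction is not minimal. Also, $H_2=\theta$ does not fix the second copy, so ``$W_{h_1}\equiv W_{h_1+h_2}$ in every sector'' fails. Finally, $t\leq 12n^2+2n$ fails for this choice of $B''$, since $n=1$.

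\textbf{The fix.} The block cannot be chosen arbitrarily; it must be selected by the quadratic lemma. The paper applies \Cref{unreduced base quadratic} to $\pazocal{C}$ itself, whose minimality is a hypothesis and whose base contains $Q_{i,r}^{-1}Q_{i,r}$ (or a mirror copy). The lemma returns a two-letter subword $UV$ that some rule multiplies and for which $t\leq 12n^2+2n$ with $n$ measured on $UV$. By your structural analysis, $UV$ must be of the form $Q_{i,r}^{-1}Q_{i,r}$ or a mirror copy. The locking of the $Q_{i,r}R_i$-sectors then enlarges $UV$ to a block $B''$. Since the $a$-length over $B''$ is at least that over $UV$, the bound persists. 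With this change in the last step, your argument becomes the paper's.
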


\begin{proof}

If $B$ contains no letter of the form $Q_{i,r}^{\pm1}$ or $(Q_{i,r}')^{\pm1}$, then $W_0\equiv W_t$ so that $t=0$.  

Otherwise, since every rule of the submachine $\textbf{E}_{\textbf{S},k}(5)$ locks the sectors of the form $P_iQ_{i,\ell}$, $Q_{i,r}R_i$, and mirror copies, $B$ cannot contain a subword of the form $(Q_{i,\ell}Q_{i,r})^{\pm1}$ or a mirror copy.  In particular, $B$ must have a subword of the form $Q_{i,r}^{-1}Q_{i,r}$ or a mirror copy.

Hence, $\pazocal{C}$ satisfies the hypotheses of \Cref{unreduced base quadratic}, and thus has a subword $UV$ which it operates on for which the restriction satisfies the desired quadratic bound.  Since the submachine $\textbf{E}_{\textbf{S},k}(5)$ only operates on the $Q_{i,\ell}Q_{i,r}$-sectors on the right, though, it follows from above that $UV$ is of the form $Q_{i,r}^{-1}Q_{i,r}$ or a mirror copy.  Since the rules lock the $Q_{i,r}R_i$-sectors and their mirror copies, it follows that $UV$ is contained in a subword $B''$ of a cyclic permutation of $B$ satisfying the statement.

\end{proof}


\begin{lemma} \label{strongly defective (45)}

Let $\pazocal{C}:W_0\to\dots\to W_t$ be a minimal computation with strongly defective base.  Suppose the history of $\pazocal{C}$ contains a letter of the form $\sigma(45)^{\pm1}$ which is neither the first nor the last letter.  Then there exists a subword $B''$ of a cyclic permutation of $B$ such that:

\begin{enumerate}[label=(\alph*)]

\item $B''$ is of the form $R_i^{-1}Q_{i,r}^{-1}Q_{i,r}R_i$ or a mirror copy.

\item Letting $\pazocal{C}'':W_0''\to\dots\to W_t''$ be the restriction of the corresponding cyclic permutation of $\pazocal{C}$ to the base $B''$, we have $t\leq 12n^2+6n$ for $n=\max(|W_0''|_a,|W_t''|_a)$.

\end{enumerate}

\end{lemma}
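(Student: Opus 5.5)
Perhaps passing to the inverse computation, I will assume the history factors as $H\equiv H'\sigma(45)H''$ with $H'$, $H''$ nonempty. Then $H'$ has a nonempty suffix $H_4$, the history of a maximal subcomputation $\pazocal{C}_4:W_x\to\dots\to W_y$ with step history $(4)$, and $H''$ has a nonempty prefix $H_5$, the history of a maximal subcomputation $\pazocal{C}_5:W_{y+1}\to\dots\to W_z$ with step history $(5)$.

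\textbf{Choice of $B''$ and the bound on step $(5)$.} Since $H_5\neq1$ and $\pazocal{C}_5$ is minimal, I apply \Cref{strongly defective (5)} to $\pazocal{C}_5$. This gives a subword $B''$ of a cyclic permutation of $B$ of the form $R_i^{-1}Q_{i,r}^{-1}Q_{i,r}R_i$ (or a mirror copy) with $z-y-1\leq 12m^2+2m$, where $m$ is the maximal $a$-length of the restriction to $B''$ at $W_{y+1}$ and $W_z$. I take this $B''$ as the required subword.

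\textbf{Step $(4)$ and its neighbours.} The restriction of $\pazocal{C}_4$ to $B''$ is a computation of (a copy of) $\textbf{LR}_k$ with base $R^{-1}Q^{-1}QR$. Its last word is $\sigma(45)$-admissible, so the state letters carry the end index $2k$. By \Cref{primitive unreduced}, adapted to $\textbf{LR}_k$ and read on the inverse computation, the $a$-length is monotone along $\pazocal{C}_4$. As in the proof of \Cref{strongly defective (34)}, the inverse computation starting from index $2k$ can never return to a word admissible for a different transition. Hence $W_x$ is not $\sigma(34)$-admissible, which forces $x=0$. The same monotonicity gives $y\leq|W_0''|_a$ and $|W_y''|_a\leq|W_0''|_a$, since each rule of $\textbf{LR}_k$ changes the length of this unreduced base by $2$ and the connecting rules are excluded by \Cref{locked sectors}.

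\textbf{The segment after step $(5)$.} If $z=t$, combining the bounds gives $t\leq 12n^2+2n+|W_0''|_a+1\leq 12n^2+6n$, using $n\geq1$ because $B''$ is unreduced and $\sigma(45)$ requires nonempty right-historical words there. If $z<t$, the next transition is $\sigma(54)$. Here I use minimality: in the $B''$-sectors, step $(5)$ only multiplies by right historical letters on the right, so by \Cref{multiply one letter} the words before and after are controlled. The remaining subcomputation $W_{z+1}\to\dots\to W_t$ has step history $(4)$ (possibly followed by more), and its restriction to $B''$ again satisfies the analogue of \Cref{primitive unreduced}. This forces it to end at $t$ with length at most $|W_t''|_a$, and $|W_z''|_a\leq|W_t''|_a$. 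Summing the pieces gives $t\leq 12n^2+2n+2n+2\leq 12n^2+6n$.

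The main obstacle is justifying that the $(4)$-segments on either side cannot be followed by further transitions, that is, that $x=0$ and that nothing comes after the second $(4)$-block. This needs a careful check that the $\textbf{LR}_k$ analogue of \Cref{primitive unreduced} applies from index $2k$ with the correct orientation on the base $R_i^{-1}Q_{i,r}^{-1}Q_{i,r}R_i$.
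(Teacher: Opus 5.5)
Your proof is correct and is the paper's argument run in the inverse orientation. The paper writes the history as $H'\sigma(54)H''$, bounds the $(5)$-block by \Cref{strongly defective (5)}, uses the $\textbf{LR}_k$ analogue of \Cref{primitive unreduced} to force the following $(4)$-block to run to $W_t$, and, if the $(5)$-block does not start at $W_0$, bounds the preceding $\sigma(45)$-and-$(4)$ piece the same way, reaching $t\leq 12n^2+4n+2$ exactly as you do.

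Two cosmetic points. What rules out a transition into your first $(4)$-block is that $W_x$ is neither $\sigma(43)$- nor $\sigma(45)$-admissible (its $Q_{i,r}R_i$-type side sectors are nonempty); it is not that $W_x$ fails to be ``$\sigma(34)$-admissible''. Also, the appeal to \Cref{multiply one letter} in your last paragraph is unnecessary.
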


\begin{proof}

Perhaps passing to the inverse computation, we may assume the history $H$ of $\pazocal{C}$ can be factored $H'\sigma(54)H''$.  Then, $H'$ has a non-empty suffix which is the history of a maximal subcomputation $\pazocal{C}_5:W_x\to\dots\to W_y$ with step history $(5)$ and $H''$ has a non-empty prefix which is the history of a maximal subcomputation $\pazocal{C}_4:W_{y+1}\to\dots\to W_z$ with step history $(4)$.

Applying \Cref{strongly defective (5)} to $\pazocal{C}_5$ immediately implies the existence of $B''$ satisfying (a).  The restriction $\pazocal{C}'':W_0''\to\dots\to W_t''$ of $\pazocal{C}$ to $B''$ then satisfies $y-x\leq12m^2+2m$ for $m=\max(|W_x''|_a,|W_y''|_a)$.

Further, the restriction of $\pazocal{C}_4$ to the base $B''$ satisfies the hypotheses of the analogue of \Cref{primitive unreduced} for $\textbf{LR}_k$, so that $z=t$, $|W_{y+1}''|_a\leq|W_t''|_a$, and $t-y-1\leq|W_t''|_a$.  

Hence, if $x=0$ then we have $t\leq12n^2+3n+1\leq12n^2+4n$ since $n\geq1$.

Otherwise, the transition $W_{x-1}\to W_x$ must be given by $\sigma(45)$.  But then the same argument as above implies the subcomputation $W_0\to\dots\to W_{x-1}$ has step history $(4)$ with $|W_{x-1}''|_a\leq|W_0''|_a$ and $x-1\leq|W_0''|_a$.  Thus, in this case $t\leq12n^2+4n+2\leq12n^2+6n$.

\end{proof}

The next pair of statements then follow from analogous arguments as those presented for Lemmas \ref{strongly defective (5)} and \ref{strongly defective (45)}.

\begin{lemma} \label{strongly defective (1)}

Let $\pazocal{C}:W_0\to\dots\to W_t$ be a non-empty minimal computation with strongly defective base $B$ and step history $(1)$.  
Then there exists a subword $B''$ of a cyclic permutation of $B$ such that:

\begin{enumerate}[label=(\alph*)]

\item $B''$ is of the form $P_iQ_{i,\ell}Q_{i,\ell}^{-1}P_i^{-1}$ or a mirror copy.

\item Letting $\pazocal{C}'':W_0''\to\dots\to W_t''$ be the restriction of the corresponding cyclic permutation of $\pazocal{C}$ to the base $B''$, we have $t\leq 12n^2+2n$ for $n=\max(|W_0''|_a,|W_t''|_a)$.

\end{enumerate}

\end{lemma}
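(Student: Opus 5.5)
The plan is to mirror the proof of \Cref{strongly defective (5)}, swapping the roles of the left and right historical alphabets and of the $P_iQ_{i,\ell}$- and $Q_{i,r}R_i$-sectors.

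First, if $B$ contains no letter of the form $Q_{i,\ell}^{\pm1}$ or $(Q_{i,\ell}')^{\pm1}$, then no rule of $\textbf{E}_{\textbf{S},k}(1)$ alters the tape word of any sector of an admissible word with base $B$. Indeed, these rules only multiply the $Q_{i,\ell}Q_{i,r}$-sectors (and mirror copies) on the left, and they lock every non-input sector; the input (working) sectors cannot be altered by the rules of step $(1)$ either. Hence $W_0\equiv W_t$, so minimality forces $t=0$, contradicting non-emptiness.

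So we may assume $B$ has such a letter. Every rule of $\textbf{E}_{\textbf{S},k}(1)$ locks the $P_iQ_{i,\ell}$- and $Q_{i,r}R_i$-sectors and their mirror copies. By \Cref{locked sectors} and the strongly defective hypothesis, $B$ therefore cannot contain $(Q_{i,\ell}Q_{i,r})^{\pm1}$ or a mirror copy. The presence of $Q_{i,\ell}^{\pm1}$ then forces a subword $Q_{i,\ell}Q_{i,\ell}^{-1}$ (or a mirror copy), since the locked $P_iQ_{i,\ell}$-sector prevents the other pattern $Q_{i,\ell}^{-1}Q_{i,\ell}$ from arising with nonlocked neighbors. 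Hence $\pazocal{C}$ satisfies the hypotheses of \Cref{unreduced base quadratic}. For this, note that each rule of the submachine multiplies the relevant sector on the left by a distinct letter, and \Cref{simplify rules} is in force. This yields a two-letter subword $UV$ on which some rule acts, with $t\le 12n^2+2n$ for the restriction to $UV$.

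Since rules of $\textbf{E}_{\textbf{S},k}(1)$ act nontrivially only in the $Q_{i,\ell}Q_{i,r}$-sectors, on the left, $UV$ must be $Q_{i,\ell}Q_{i,\ell}^{-1}$ or a mirror copy. The locking of the $P_iQ_{i,\ell}$-sector then forces $UV$ to be flanked by $P_i$ and $P_i^{-1}$ in a cyclic permutation of $B$, because the only permissible neighbors of $Q_{i,\ell}$ across a locked sector produce no further cancellation. This gives $B''\equiv P_iQ_{i,\ell}Q_{i,\ell}^{-1}P_i^{-1}$ (or a mirror copy), proving (a). The $a$-length of the restriction to $B''$ is at least that of the restriction to $UV$, since the $P_iQ_{i,\ell}$-sectors are locked and hence have empty tape words; this gives (b).

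The main care is needed in the step excluding the other neighbor configurations of the letter $Q_{i,\ell}$, namely ensuring that the neighbor on the outside is $P_i$ rather than some cancelling letter. This is handled exactly as in \Cref{strongly defective (3)}, using the locked sectors and \Cref{locked sectors}.
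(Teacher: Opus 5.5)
Your proof is correct and follows essentially the paper's route. The paper proves this lemma by direct analogy with \Cref{strongly defective (5)}: it disposes of the case with no $Q_{i,\ell}^{\pm1}$ or $(Q_{i,\ell}')^{\pm1}$ letters by minimality, uses the locking of the $P_iQ_{i,\ell}$- and $Q_{i,r}R_i$-sectors together with strong defectiveness to force a subword $Q_{i,\ell}Q_{i,\ell}^{-1}$ (or a mirror copy), applies \Cref{unreduced base quadratic}, and enlarges $UV$ by its forced locked neighbours $P_i$ and $P_i^{-1}$. Two points could be stated more cleanly: $Q_{i,\ell}^{-1}Q_{i,\ell}$ is excluded outright by \Cref{locked sectors} (not via ``nonlocked neighbors''), and the input sectors left unlocked by the rules of $\textbf{E}_{\textbf{S},k}(1)$ cause no trouble because these rules never change their tape words.
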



\begin{lemma} \label{strongly defective (12)}

Let $\pazocal{C}:W_0\to\dots\to W_t$ be a minimal computation with strongly defective base.  Suppose the history of $\pazocal{C}$ contains a letter of the form $\sigma(12)^{\pm1}$ which is neither the first nor the last letter.  Then there exists a subword $B''$ of a cyclic permutation of $B$ such that:

\begin{enumerate}[label=(\alph*)]

\item $B''$ is of the form $P_iQ_{i,\ell}Q_{i,\ell}^{-1}P_i^{-1}$ or a mirror copy.

\item Letting $\pazocal{C}'':W_0''\to\dots\to W_t''$ be the restriction of the corresponding cyclic permutation of $\pazocal{C}$ to the base $B''$, we have $t\leq 12n^2+6n$ for $n=\max(|W_0''|_a,|W_t''|_a)$.

\end{enumerate}

\end{lemma}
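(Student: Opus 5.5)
The plan is to mirror the proof of \Cref{strongly defective (45)}, replacing the roles of submachines (5),(4) by (1),(2) and the right-hand subwords $R_i^{-1}Q_{i,r}^{-1}Q_{i,r}R_i$ by the left-hand subwords $P_iQ_{i,\ell}Q_{i,\ell}^{-1}P_i^{-1}$. Perhaps passing to the inverse computation, factor the history as $H\equiv H'\sigma(21)H''$, so that $H'$ has a nonempty suffix which is the history of a maximal subcomputation $\pazocal{C}_1:W_x\to\dots\to W_y$ with step history $(1)$, and $H''$ has a nonempty prefix which is the history of a maximal subcomputation $\pazocal{C}_2:W_{y+1}\to\dots\to W_z$ with step history $(2)$. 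The subcomputation $\pazocal{C}_1$ is itself minimal (as a subcomputation of a minimal one) with strongly defective base. It is also nonempty: otherwise $\sigma(21)$ would be adjacent to $\sigma(12)$ and $H$ would not be reduced. So \Cref{strongly defective (1)} applies and gives $B''$ of the required form (a), with $y-x\leq12m^2+2m$ for $m=\max(|W_x''|_a,|W_y''|_a)$.

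Next, restrict $\pazocal{C}_2$ to $B''$. Since $\textbf{E}_{\textbf{S},k}(2)$ operates as $\textbf{RL}$ on the subwords $P_iQ_{i,\ell}Q_{i,r}$ with the left historical alphabet, this restriction is a computation of $\textbf{RL}$ in base $PQQ^{-1}P^{-1}$ (or a mirror copy) starting at state letters of index $1$. The analogue of \Cref{primitive unreduced} for $\textbf{RL}$ then gives monotonicity $|W_{y+1}''|_a\leq\dots\leq|W_z''|_a$ and that all state letters stay at index $1$. Hence the connecting rule never fires, so $W_z$ is neither $\sigma(23)$- nor $\sigma(21)$-admissible, unless $z=y+1$, which would contradict reducedness. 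Therefore $z=t$. The tape word in the $Q_{i,\ell}Q_{i,\ell}^{-1}$-sector grows by $2$ with each rule, which gives $t-y-1\leq|W_t''|_a$ and $|W_y''|_a=|W_{y+1}''|_a\leq|W_t''|_a$.

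If $x=0$, we get $t\leq12n^2+2n+n+1\leq12n^2+4n$, using $n\geq1$ since $B''$ is unreduced. Otherwise the transition $W_{x-1}\to W_x$ is $\sigma(12)$. The same $\textbf{RL}$ argument applied to the inverse of the initial subcomputation $W_0\to\dots\to W_{x-1}$ shows that it has step history $(2)$, with $x-1\leq|W_0''|_a$ and $|W_x''|_a\leq|W_0''|_a$. Summing gives $t\leq12n^2+4n+2\leq12n^2+6n$.

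The main obstacle is the second step: one must check that the monotonicity statement of \Cref{primitive unreduced} really transfers to $\textbf{RL}$ on the left-hand unreduced base with the correct index. After $\sigma(12)$ the state letters are the start letters of $\textbf{RL}$, and the leftward/rightward roles are swapped relative to $\textbf{LR}$. If needed, I would verify this directly via \Cref{one alphabet historical words unreduced}-style reasoning: after $\sigma(12)$ the $Q_{i,\ell}Q_{i,\ell}^{-1}$-sector holds only left historical letters, and each translation rule conjugates by a letter, so there is no cancellation until the connecting rule, and the connecting rule is locked out by the $P_iQ_{i,\ell}$-locking together with \Cref{locked sectors}.
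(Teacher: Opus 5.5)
Your proposal is correct and follows the paper's own route: the paper proves this by mirroring \Cref{strongly defective (45)}, applying \Cref{strongly defective (1)} to the maximal step-(1) subcomputation and the $\textbf{RL}$-analogue of \Cref{primitive unreduced} on $B''$ to the adjacent step-(2) subcomputation(s). There are two small slips to fix. First, the transition out of step (1) is $\sigma(12)$, so factor $H\equiv H'\sigma(12)H''$, and the transition $W_{x-1}\to W_x$ is $\sigma(21)$. Second, the per-rule growth occurs in the outer sectors: $P_iQ_{i,\ell}$ and $Q_{i,\ell}^{-1}P_i^{-1}$ each gain one letter. The middle $Q_{i,\ell}Q_{i,\ell}^{-1}$-sector is only conjugated by left historical letters and can shrink, so your ``no cancellation'' fallback is false, although $t-y-1\leq|W_t''|_a$ still follows from the outer sectors.
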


\begin{lemma} \label{strongly defective (4) no connecting}

Let $\pazocal{C}:W_0\to\dots\to W_t$ be a non-empty minimal computation with strongly defective base $B$ and step history $(4)$.  If the history of $\pazocal{C}$ contains no connecting rule, then there exists a two-letter subword $UV$ of $B$ such that for $\pazocal{C}':W_0'\to\dots\to W_t'$ the restriction of $\pazocal{C}$ to the base $UV$, we have $t\leq 12n^2+2n$ for $n=\max(|W_0'|_a,|W_t'|_a)$.

\end{lemma}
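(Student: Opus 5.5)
The plan is to reduce to \Cref{unreduced base quadratic}, as in the proofs of Lemmas \ref{strongly defective (3)} and \ref{strongly defective (5)}. Since the step history is $(4)$ and the history has no connecting rule, all rules of $\pazocal{C}$ are translation rules $\tau_{j}(y)$ (or inverses) of a single stage $j\in\{1,\dots,2k\}$ of the parallel copies of $\textbf{LR}_k$; otherwise a connecting rule would have to separate two stages. Hence every rule of $\pazocal{C}$ multiplies each $Q_{i,r}R_i$-sector (and mirror copy) by one letter from a fixed side, and each $Q_{i,\ell}Q_{i,r}$-sector (and mirror) by one letter from the opposite side. Different rules correspond to different letters of the right historical alphabet, and all other sectors are locked.

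First I handle the degenerate case. If $B$ contains no letter of the form $Q_{i,r}^{\pm1}$ or $(Q_{i,r}')^{\pm1}$, then no rule alters the tape words of an admissible word with base $B$. Then $W_0\equiv W_t$, so minimality forces $t=0$, contradicting non-emptiness.

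So $B$ contains such a letter. Since rules of $\textbf{E}_{\textbf{S},k}(4)$ lock the $P_iQ_{i,\ell}$-sectors, the working sectors, the $\{t\}$-sectors and mirror copies, \Cref{locked sectors} constrains the neighbours of this letter. Because $B$ is strongly defective, a neighbourhood of the form $(Q_{i,\ell}Q_{i,r}R_i)^{\pm1}$ can occur only partially. I would argue as in the proof of \Cref{strongly defective (3)}: since $B$ is circular and unreduced, but cannot contain $(P_iQ_{i,\ell}Q_{i,r}R_i)^{\pm1}$ in any cyclic permutation, tracing along the base from the $Q_{i,r}$-letter must eventually produce a two-letter subword of the form $Q_{i,r}^{-1}Q_{i,r}$, $Q_{i,\ell}Q_{i,\ell}^{-1}$, $R_iR_i^{-1}$, or a mirror copy.

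The step I expect to be the main obstacle is verifying that this cancellation occurs at a letter whose adjacent sector is multiplied on a single side in the way required by \Cref{unreduced base quadratic}. The subword $R_iR_i^{-1}$ is impossible, since the $R_iP_{i+1}$-sector and the $R_s(R_s')^{-1}$-sector are locked. For $Q_{i,r}^{-1}Q_{i,r}$, the relevant neighbouring sector is $Q_{i,r}R_i$, which is multiplied from the left. For $Q_{i,\ell}Q_{i,\ell}^{-1}$, it is $Q_{i,\ell}Q_{i,r}$, multiplied from the right. Both fit the hypothesis of \Cref{unreduced base quadratic} (left multiplication of the $Q_iQ_{i+1}$-sector for $Q_iQ_i^{-1}$, respectively right multiplication of the $Q_{i-1}Q_i$-sector for $Q_i^{-1}Q_i$), possibly after passing to mirror copies or the inverse computation. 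I expect checking this side-bookkeeping against the $\tau_{2i-1}$ versus $\tau_{2i}$ conventions to be fiddly but routine.

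Finally, \Cref{simplify rules} holds by construction, and $\pazocal{C}$ is minimal, so \Cref{unreduced base quadratic} applies directly. It yields a two-letter subword $UV$ of $B$ such that the restriction $\pazocal{C}'$ satisfies $t\leq12n^2+2n$ with $n=\max(|W_0'|_a,|W_t'|_a)$, which is the claim.
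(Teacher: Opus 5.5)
Your strategy is the same as the paper's: dispose of the case with no $Q_{i,r}^{\pm1}$ or $(Q_{i,r}')^{\pm1}$, use strong defectiveness to exclude $(Q_{i,\ell}Q_{i,r}R_i)^{\pm1}$, then apply \Cref{unreduced base quadratic}. However, the key step fails as written, because the list of unreduced subwords you import from \Cref{strongly defective (3)} is the list for step $(3)$, not step $(4)$, and your side bookkeeping is reversed.

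\textbf{Which sector each pair encloses.} In $Q_jQ_j^{-1}$ the enclosed tape word lies in the $Q_jQ_{j+1}$-sector, while in $Q_j^{-1}Q_j$ it lies in the $Q_{j-1}Q_j$-sector.
\begin{itemize}
\item $Q_{i,\ell}Q_{i,\ell}^{-1}$ encloses a $Q_{i,\ell}Q_{i,r}$-sector. \Cref{unreduced base quadratic} would need that sector multiplied from the \emph{left}. But the rules of $\textbf{E}_{\textbf{S},k}(4)$ fix $Q_{i,\ell}$ (it plays the role of $p$ in $\textbf{LR}_k$) and write only next to $Q_{i,r}$. So this sector is never changed, and the lemma does not apply. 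The same is true of $R_j^{-1}R_j$.
\item $Q_{i,r}^{-1}Q_{i,r}$ encloses a $Q_{i,\ell}Q_{i,r}$-sector, not a $Q_{i,r}R_i$-sector. It does satisfy the hypothesis (via right multiplication), but not for the reason you give.
\item Most importantly, you never consider $Q_{i,r}Q_{i,r}^{-1}$, and it can be the only usable subword. For $i\geq1$ the circular base
\[P_iQ_{i,\ell}Q_{i,r}Q_{i,r}^{-1}Q_{i,\ell}^{-1}P_i^{-1}R_{i-1}^{-1}R_{i-1}P_i\]
respects every lock imposed by the non-connecting rules of step $(4)$ and is strongly defective. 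Its only unreduced two-letter subwords are $Q_{i,r}Q_{i,r}^{-1}$ and $R_{i-1}^{-1}R_{i-1}$, neither of which is on your list. ``Tracing along the base'' therefore yields nothing you can feed into the lemma.
\end{itemize}

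\textbf{The repair.} The fix, which is the paper's argument, is purely local.
\begin{itemize}
\item Each neighbour of an occurrence of $Q_{i,r}$ is either its standard neighbour ($Q_{i,\ell}$ on the left, $R_i$ on the right) or $Q_{i,r}^{-1}$.
\item Suppose both neighbours are standard. The locked $P_iQ_{i,\ell}$-sector forbids $Q_{i,\ell}^{-1}Q_{i,\ell}$ by \Cref{locked sectors}, so the letter before $Q_{i,\ell}$ must be $P_i$. This gives $P_iQ_{i,\ell}Q_{i,r}R_i$ in a cyclic permutation of $B$, contradicting strong defectiveness. The same argument works for $Q_{i,r}^{-1}$ and for mirror copies.
\item Hence $B$ contains $Q_{i,r}^{-1}Q_{i,r}$, whose enclosed $Q_{i,\ell}Q_{i,r}$-sector every rule multiplies from the right, or $Q_{i,r}Q_{i,r}^{-1}$, whose enclosed $Q_{i,r}R_i$-sector every rule multiplies from the left, or a mirror copy of one of these.
\item In each case the hypotheses of \Cref{unreduced base quadratic} hold exactly.
\end{itemize}
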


\begin{proof}

If $B$ has no letter of the form $Q_{i,r}^{\pm1}$ or $(Q_{i,r}')^{\pm1}$, then $W_0\equiv W_t$ so that $t=0$.  Hence, we assume $B$ has such a letter.

Since every rule of $\textbf{E}_{\textbf{S},k}(4)$ locks the $P_iQ_{i,\ell}$-sectors and their mirror copies, the strongly defective condition implies no cyclic permutation of $B$ contains a subword of the form $(Q_{i,\ell}Q_{i,r}R_i)^{\pm1}$ or a mirror copy.  In particular, $B$ must have a subword of the form $Q_{i,r}^{-1}Q_{i,r}$, $Q_{i,r}Q_{i,r}^{-1}$, or a mirror copy of one of these.  In any case, we apply \Cref{unreduced base quadratic} to obtain the statement.

\end{proof}

An analogous proof implies the following statement.

\begin{lemma} \label{strongly defective (2) no connecting}

Let $\pazocal{C}:W_0\to\dots\to W_t$ be a non-empty minimal computation with strongly defective base $B$ and step history $(2)$.  If the history of $\pazocal{C}$ contains no connecting rule, then there exists a two-letter subword $UV$ of $B$ such that for $\pazocal{C}':W_0'\to\dots\to W_t'$ the restriction of $\pazocal{C}$ to the base $UV$, we have $t\leq 12n^2+2n$ for $n=\max(|W_0'|_a,|W_t'|_a)$.

\end{lemma}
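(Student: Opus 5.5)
The plan is to mirror the proof of \Cref{strongly defective (4) no connecting}, with the roles of the $Q_{i,r}$/$R_i$ letters exchanged for the $Q_{i,\ell}$/$P_i$ letters, since $\textbf{E}_{\textbf{S},k}(2)$ operates in parallel as $\textbf{RL}$ on the subwords $P_iQ_{i,\ell}Q_{i,r}$. We may assume $t>0$.

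First we dispose of the trivial case. The rules of $\textbf{E}_{\textbf{S},k}(2)$ with no connecting rule are all translations $\tau_j(y)$ of $\textbf{RL}$. These alter only the tape words adjacent to the letters $Q_{i,\ell}^{\pm1}$ or $(Q_{i,\ell}')^{\pm1}$; they lock all other sectors except the input sectors. On the input sectors, however, every rule of (2) acts trivially, since only the historical sectors are multiplied. Hence, if $B$ has no letter of the form $Q_{i,\ell}^{\pm1}$ or $(Q_{i,\ell}')^{\pm1}$, then $W_0\equiv W_t$. Minimality then forces $t=0$, contradicting non-emptiness. So we may assume $B$ contains such a letter.

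Next we use the structure of the base to locate an unreduced two-letter subword. Every non-connecting rule of $\textbf{E}_{\textbf{S},k}(2)$ locks the $Q_{i,r}R_i$-sectors and their mirror copies. Since $B$ is strongly defective, no cyclic permutation of $B$ contains $(P_iQ_{i,\ell}Q_{i,r})^{\pm1}$ followed by $R_i$. Combined with the locking and \Cref{locked sectors}, this means that around any occurrence of $Q_{i,\ell}^{\pm1}$ the base cannot continue in the reduced pattern $P_iQ_{i,\ell}Q_{i,r}R_i$.

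Following a $Q_{i,\ell}$ letter along the base, one of two things happens. Either the base turns back immediately, giving a subword $Q_{i,\ell}Q_{i,\ell}^{-1}$ or $Q_{i,\ell}^{-1}Q_{i,\ell}$. Or it reaches $Q_{i,r}$ and is forced to turn back there, giving $Q_{i,r}Q_{i,r}^{-1}$. Mirror copies are handled in the same way. This step needs the most care: we must check which sectors the translations actually lock, and confirm that the turning point occurs at a pair of letters whose sector is multiplied by the rules on the side required in \Cref{unreduced base quadratic}. For $\textbf{RL}$, the translations multiply the $P_iQ_{i,\ell}$-sector on one side and the $Q_{i,\ell}Q_{i,r}$-sector on the other, with distinct rules giving distinct letters.

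Finally, we choose the appropriate unreduced subword $Q_iQ_i^{-1}$ or $Q_i^{-1}Q_i$ and apply \Cref{unreduced base quadratic} to $\pazocal{C}$, with its ``respectively'' clause matched to the side of multiplication. That lemma applies because $\pazocal{C}$ is minimal, and the machine may be taken to satisfy \Cref{simplify rules}. It yields a two-letter subword $UV$ of $B$ such that $t\leq12n^2+2n$ for $n=\max(|W_0'|_a,|W_t'|_a)$, which is exactly the statement.
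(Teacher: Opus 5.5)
Your route is the one the paper intends: it proves this lemma by rerunning the argument of \Cref{strongly defective (4) no connecting} with the roles of the $Q_{i,\ell}$- and $Q_{i,r}$-letters swapped. However, your case analysis of the base has a wrong branch.

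The alternative ``it reaches $Q_{i,r}$ and is forced to turn back there, giving $Q_{i,r}Q_{i,r}^{-1}$'' cannot happen. Every non-connecting rule of $\textbf{E}_{\textbf{S},k}(2)$ locks the $Q_{i,r}R_i$-sectors and their mirror copies, so by \Cref{locked sectors} the base contains no subword $Q_{i,r}Q_{i,r}^{-1}$. Even if it did, your final step would fail there. The $Q_{i,r}Q_{i,r}^{-1}$-sector carries the tape alphabet of the $Q_{i,r}R_i$-sector, and no rule of the computation multiplies that sector by a letter, so the hypothesis of \Cref{unreduced base quadratic} is not met. As written, one of your two cases ends by invoking the quadratic lemma where it does not apply.

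The missing observation is that you must look at the letter preceding $Q_{i,\ell}$, not only the one following it. Suppose $Q_{i,\ell}$ is followed by $Q_{i,r}$. Locking forces $Q_{i,r}$ to be followed by $R_i$. Strong defectiveness then forbids $P_i$ immediately before $Q_{i,\ell}$, so that letter must be $Q_{i,\ell}^{-1}$.

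Put as in the paper: no cyclic permutation of $B$ contains $(P_iQ_{i,\ell}Q_{i,r})^{\pm1}$ or a mirror copy. Hence every occurrence of $Q_{i,\ell}^{\pm1}$ (or its mirror) lies in a subword $Q_{i,\ell}Q_{i,\ell}^{-1}$ or $Q_{i,\ell}^{-1}Q_{i,\ell}$ (or a mirror copy). These are exactly the subwords where \Cref{unreduced base quadratic} applies. Each translation multiplies the $Q_{i,\ell}Q_{i,r}$-sector by a letter on the left and the $P_iQ_{i,\ell}$-sector by a letter on the right, with distinct rules giving distinct letters. Replacing your second branch with this argument makes the proof complete.
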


Thus, we arrive at the desired statement regarding minimal computations.

\begin{lemma}[Compare with Lemma 3.30 of \cite{GW}] \label{universal complexity}

Suppose $(W_1,W_2)\in\REACH_{\textbf{E}_{\textbf{S},k}}^{uni}$ such that $W_1$ and $W_2$ have defective base $B$.  Then there exists a subword $B''$ of a cyclic permutation of $B$ such that:

\begin{enumerate}[label=(\alph*)]

\item $\|B''\|\leq N+1$ 

\item Letting $W_i''$ be the admissible subword of the corresponding cyclic permutation of $W_i$ with base $B''$, the length of a minimal computation between $W_1$ and $W_2$ is at most $12n^2+c_1n+c_2$ for $n=\max(|W_1''|_a,|W_2''|_a)$.

\end{enumerate}

%
%
%


\end{lemma}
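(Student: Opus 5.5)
Fix a minimal computation $\pazocal{C}:W_0\to\dots\to W_t$ between $W_1$ and $W_2$, with history $H$. The plan is to split according to whether $B$ is strongly defective, and in the strongly defective case according to the step history of $H$, feeding each case into one of the lemmas of \Cref{sec-defective}.

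\textbf{Not strongly defective.} Then some cyclic permutation of $B$ has a subword $B'$ of the form $(P_iQ_{i,\ell}Q_{i,r}R_i)^{\pm1}$ or a mirror copy. Pass to the corresponding cyclic permutation of $\pazocal{C}$, which has the same history and so is still minimal. \Cref{Defective PQQR} then gives $t\leq 3c_0\max(|W_0'|_a,|W_t'|_a)+2c_0$ with $\|B'\|=4\leq N+1$. This is absorbed by $12n^2+c_1n+c_2$ by the choice $c_1,c_2\gg c_0$.

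\textbf{Strongly defective.} First, if some transition rule $\sigma(12)^{\pm1},\sigma(23)^{\pm1},\sigma(34)^{\pm1},\sigma(45)^{\pm1}$ occurs in $H$ neither as its first nor as its last letter, we apply respectively \Cref{strongly defective (12)}, \Cref{strongly defective (23)}, \Cref{strongly defective (34)}, or \Cref{strongly defective (45)}. Each gives $B''$ with $\|B''\|\leq7\leq N+1$ and $t\leq 12n^2+6n$.

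Otherwise every transition rule of $H$ is its first or last letter. So $H\equiv \sigma_1 H_0\sigma_2$, where $\sigma_1,\sigma_2$ are possibly empty transition rules and $H_0$ has single-step history $(j)$.
\begin{itemize}
\item For $j=3$, use \Cref{strongly defective (3)}.
\item For $j=1$ or $j=5$, use \Cref{strongly defective (1)} or \Cref{strongly defective (5)}.
\item For $j\in\{2,4\}$ with no connecting rule in $H_0$, use \Cref{strongly defective (2) no connecting} or \Cref{strongly defective (4) no connecting}.
\end{itemize}
In each of these, $H_0$ is a subcomputation of a minimal computation and hence itself minimal, so the lemma applies to it. Adding the at most two boundary letters raises the bound to $12n^2+2n+2$. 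The admissible words at the ends of $H_0$ differ from $W_0,W_t$ by one rule, so their $a$-lengths on the chosen base change by a bounded amount, at most $2\|B''\|\leq 2N+2$. Since $c_1\gg N$, the extra terms are absorbed into $c_1n+c_2$.

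\textbf{Expected obstacle: $j\in\{2,4\}$ with connecting rules.} The step-(4) submachine acts as $\textbf{LR}_k$ in parallel on the subwords $Q_{i,\ell}Q_{i,r}R_i$, and a connecting rule locks some sectors (the $Q_{i,\ell}Q_{i,r}$- or $Q_{i,r}R_i$-type sectors, alternately). By \Cref{locked sectors}, an unreduced base containing the relevant letters cannot pass through a connecting rule that locks the sector adjacent to a cancellation. I would argue as follows.
\begin{itemize}
\item If $B$ contains no $Q_{i,r}^{\pm1}$ letters or mirror copies, the tape words are untouched, so minimality forces $t=0$.
\item Otherwise $B$ contains $Q_{i,r}^{-1}Q_{i,r}$, $Q_{i,r}Q_{i,r}^{-1}$, or a mirror copy. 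Since $\zeta_{2i-1}$ locks $Q_{i,\ell}Q_{i,r}$-type sectors and $\zeta_{2i}$ locks $Q_{i,r}R_i$-type sectors, each connecting rule forbids certain cancellations.
\item The expectation is that at most one connecting rule can occur. Then $H_0$ splits into two connecting-free pieces around a single connecting rule, each bounded quadratically by \Cref{unreduced base quadratic}, or by the analogue of \Cref{primitive unreduced} for $\textbf{LR}_k$ on a subword of length at most 4.
\end{itemize}
Summing gives a bound of the form $24n^2+O(n)$. If the constant $12$ must be kept exactly, I would instead use \Cref{primitive unreduced} to show that the piece after the connecting rule has length at most linear in $n$. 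In that case only one quadratic piece survives and the total is $12n^2+c_1n+c_2$.

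The role of $\|B''\|\leq N+1$ is to allow taking $B''$ to be a full cyclic stretch when several short subwords must be combined; in every case above the chosen subword has length at most $7$.
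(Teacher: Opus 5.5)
Your case structure is the paper's. You use \Cref{Defective PQQR} when $B$ is not strongly defective, then the lemmas of \Cref{sec-defective} for strongly defective $B$ after discarding boundary rules, which leaves steps $(2)$ and $(4)$ with an interior connecting rule. Dropping the paper's separate treatment of a reduced revolving subword (via \Cref{reduced revolving controlled}) is legitimate. Such a subword is a cyclic permutation of $\{t\}B_{std}\{t\}$ or its inverse, so it contains a full block $(P_iQ_{i,\ell}Q_{i,r}R_i)^{\pm1}$ or a mirror copy, and \Cref{Defective PQQR} already covers it. The gap is in the residual case, which you leave as an expectation. The bounding scheme you sketch there does not prove the statement.

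First, you never show that at most one connecting rule occurs. Here is the argument for step $(4)$. Every rule of $\textbf{E}_{\textbf{S},k}(4)$ locks all sectors except those of the form $Q_{i,\ell}Q_{i,r}$, $Q_{i,r}R_i$ and their mirror copies. The two types of connecting rules lock these two families respectively. If both types occurred in $H$, every sector would be locked by some rule of $H$, and \Cref{locked sectors} would force $B$ to be reduced. Since consecutive connecting rules of $\textbf{LR}_k$ have opposite type, all connecting rules in $H$ are then copies of a single $\zeta^{\pm1}$. In step $(2)$ there is only one connecting rule to begin with.

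Once $B$ contains a letter on which the submachine acts, the strongly defective condition puts a cancellation inside a four-letter subword $B''$, or a mirror copy, of the form:
\begin{itemize}
\item $Q_{i,\ell}Q_{i,r}Q_{i,r}^{-1}Q_{i,\ell}^{-1}$ or $R_i^{-1}Q_{i,r}^{-1}Q_{i,r}R_i$ in step $(4)$;
\item $Q_{i,r}^{-1}Q_{i,\ell}^{-1}Q_{i,\ell}Q_{i,r}$ in step $(2)$.
\end{itemize}

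Second, your bound does not close. Applying \Cref{unreduced base quadratic} to the two pieces gives two possibly different two-letter subwords. It also gives bounds in terms of the $a$-length of the intermediate word at $\zeta$, which you never relate to $n$ or to a single $B''$. Distant subwords cannot in general be merged into one stretch of length at most $N+1$. So neither your $24n^2$ estimate nor the fallback is established.

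What you missed is that \Cref{primitive unreduced}, or its $\textbf{LR}_k$ analogue, applies on both sides of $\zeta$. Write $H\equiv H_1\zeta^{\pm1}H_2$. The word at which $\zeta$ is applied is $\zeta$-admissible, so its locked sector is empty. That is exactly the hypothesis of the lemma, both for the piece with history $H_2$ and for the inverse of the piece with history $H_1$. Restricting to $B''$ gives $\|H_2\|\leq|W_t''|_a$ and $\|H_1\|\leq|W_0''|_a$, so $t$ is linear in $n$. The same lemma shows no second occurrence of $\zeta^{\pm1}$ can follow, and no quadratic term arises in this case at all.

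A smaller slip: if $B$ has none of the relevant $q$-letters, minimality does not force $t=0$, because connecting rules may still be needed to change state letters. You only get that the history consists of connecting rules plus the two boundary transitions, so $t\leq 2k+1$, which $c_2$ absorbs. The paper sidesteps this by also discarding boundary connecting rules, so that such a history contradicts its setup.
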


\begin{proof}

Note that since $B$ is defective, \Cref{enhanced controlled} implies the history of a minimal computation between $W_1$ and $W_2$ cannot contain a controlled subword.  If a cyclic permutation of $B$ has a reduced revolving subword, then taking $B''$ to be this subword, $\|B''\|=N+1$ and, by \Cref{reduced revolving controlled}, the length of any reduced computation between $W_1$ and $W_2$ is at most $c_1n+c_1(N+1)$.  Hence, the statement is given by the parameter choices $c_2>>c_1>>N$.

Next, if $B$ is not strongly defective, then we may take $B''$ to be a subword of a cyclic permutation of $B$ of the form $(P_iQ_{i,\ell}Q_{i,r}R_i)^{\pm1}$ or a mirror copy.  The statement then follows from \Cref{Defective PQQR}.


Hence, we assume $B$ is strongly defective.

Let $\pazocal{C}:W_1\equiv V_0\to\dots\to V_t\equiv W_2$ be a minimal computation between $W_1$ and $W_2$.  Note that the application of a transition or connecting rule does not alter the tape word of any sector.  So, taking $c_2\geq2$, it suffices to assume that neither the first nor the last letter of the history of $\pazocal{C}$ is such a rule and show that $t\leq12n^2+c_1n$.

Taking $N\geq6$, the parameter assignment $c_1>>N$ and Lemmas \ref{strongly defective (3)}--\ref{strongly defective (2) no connecting} then imply the statement in all cases except:

\begin{enumerate}[label=(\roman*)]

\item The step history of $\pazocal{C}$ is $(2)$ and the history $H$ of $\pazocal{C}$ has a connecting rule which is neither the first nor the last letter.

\item The step history of $\pazocal{C}$ is $(4)$ and the history $H$ of $\pazocal{C}$ has a connecting rule which is neither the first nor the last letter.

\end{enumerate}

Suppose $\pazocal{C}$ satisfies (i).  So, there exists a factorization $H\equiv H_1\zeta^{\pm1} H_2$ where $\zeta$ is the connecting rule of $\textbf{E}_{\textbf{S},k}(2)$ and $H_1,H_2$ are non-empty.  

If $B$ has no occurrence of a letter of the form $Q_{i,\ell}^{\pm1}$ or $(Q_{i,\ell}')^{\pm1}$, then no rule of $\pazocal{C}$ alters any tape word of the admissible words.  So, any rule of $H$ which does not alter the state letters may be removed to reduce the length of the computation, so that the minimality of $\pazocal{C}$ implies no such rule exists.  But then every letter of $H$ must be a connecting rule, contradicting the setup of (i).

So, we assume $B$ contains a letter of the form $Q_{i,\ell}^{\pm1}$ or $(Q_{i,\ell}')^{\pm1}$.  Noting that every rule of $\textbf{E}_{\textbf{S},k}(2)$ locks the $Q_{i,r}R_i$-sectors and their mirror copies, the strongly defective condition implies $B$ contains no subword of the form $(P_iQ_{i,\ell}Q_{i,r})^{\pm1}$ or a mirror copy.  Since $\zeta$ locks the $Q_{i,\ell}Q_{i,r}$-sectors and their mirror copies, though, there must be a subword $B''$ of a cyclic permutation of $B$ of the form $Q_{i,r}^{-1}Q_{i,\ell}^{-1}Q_{i,\ell}Q_{i,r}$ or a mirror copy.

Let $\pazocal{C}'':V_0''\to\dots\to V_t''$ be the restriction of the corresponding cyclic permutation of $\pazocal{C}$ to the subword $B''$.  Then, the subcomputation of $\pazocal{C}''$ with history $H_2$ satisfies the hypotheses of \Cref{primitive unreduced}, so that $\|H_2\|\leq|V_t''|_a$.  Similarly, the subcomputation of the inverse of $\pazocal{C}''$ with history $H_1^{-1}$ satisfies the hypotheses of \Cref{primitive unreduced}, so that $\|H_1\|\leq|V_0''|_a$.  Hence, $t\leq2n+1\leq3n$.

If $\pazocal{C}$ satisfies (ii), then note that the connecting rules of $\textbf{E}_{\textbf{S},k}(4)$ come in two types: (1) Those that lock the $Q_{i,\ell}Q_{i,r}$-sectors and their mirror copies, and (2) those that lock the $Q_{i,r}R_i$-sectors and their mirror copies.  Since all other sectors are locked by every rule, that $B$ is unreduced implies only one type of connecting rule can appear in $H$.

As in the argument above for (i), since $B$ is strongly defective, we may find a subword $B''$ of a cyclic permutation of $B$ of the form:

\begin{itemize}

\item $Q_{i,\ell}Q_{i,r}Q_{i,r}^{-1}Q_{i,\ell}^{-1}$ or a mirror copy if $H$ has a connecting rule of type (1).

\item $R_i^{-1}Q_{i,r}^{-1}Q_{i,r}R_i$ or a mirror copy if $H$ has a connecting rule of type (2).

\end{itemize}

In either case, applications of \Cref{primitive unreduced} as above yield $t\leq3n$.

\end{proof}

\begin{remark} \label{rmk-quadratic}

As in \cite{GW}, the quadratic term in the statement of \Cref{universal complexity} accounts for the quadratic term in the upper bound of \Cref{main-theorem}.  See Remark 3.31 of \cite{GW} for a discussion of the difficulty in removing this factor from the estimates.

\end{remark}

\medskip


\subsection{Faulty bases} \label{sec-faulty} \

The goal of this section is to achieve a linear bound on the space of a reduced computation with faulty base in terms of the $a$-lengths of its initial and terminal words. The quadratic term in \Cref{universal complexity} makes it unsuitable for achieving this immediately.  

However, one case can be addressed with a previous statement.

\begin{lemma} \label{faulty PQQR}

Let $\pazocal{C}:W_0\to\dots\to W_t$ be a reduced computation of $\textbf{E}_{\textbf{S},k}$ with faulty base $B$.  Suppose a cyclic permutation of $B$ has a subword of the form $(P_jQ_{j,\ell}Q_{j,r}R_j)^{\pm1}$ or a mirror copy.  Then $|W_i|_a\leq c_1\max(|W_0|_a,|W_t|_a)$ for all $0\leq i\leq t$.

\end{lemma}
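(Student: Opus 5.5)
We bound the time first, and then convert that time bound into a space bound. Since $B$ is faulty it is defective, so after passing to the cyclic permutation of $\pazocal{C}$ we may assume $B$ itself contains a subword $B'$ of the form $(P_jQ_{j,\ell}Q_{j,r}R_j)^{\pm1}$ or a mirror copy. Passing to a cyclic permutation changes neither the lengths $|W_i|_a$ nor the history, since the $a$-letters are only redistributed. Let $\pazocal{C}':W_0'\to\dots\to W_t'$ be the restriction to $B'$. \Cref{Defective PQQR} applies directly, because it does not require minimality, only a reduced computation with defective base. It gives
$$t\leq3c_0\max(|W_0'|_a,|W_t'|_a)+2c_0\leq3c_0\max(|W_0|_a,|W_t|_a)+2c_0.$$

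Next we bound $|W_i|_a$ by combining this time bound with the observation that each rule changes the $a$-length by a bounded amount. By \Cref{simplify rules}, each rule changes the tape word of each sector by at most $2$ letters. Since $B$ is faulty, it is revolving, so $\|B\|\leq N+1$ (a revolving base cannot contain a proper circular subword, so no base letter repeats except at the ends). Hence a single rule changes $|W|_a$ by at most $2(N+1)$. It follows that for every $i$,
$$|W_i|_a\leq|W_0|_a+2(N+1)t\leq|W_0|_a+6(N+1)c_0\max(|W_0|_a,|W_t|_a)+4(N+1)c_0.$$

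The remaining issue is the additive constant $4(N+1)c_0$, which must be absorbed into $c_1\max(|W_0|_a,|W_t|_a)$. Since $B$ is unreduced, it contains a subword of the form $UU^{-1}$. If some such sector is never locked during $\pazocal{C}$, we would like a positive lower bound on its $a$-length, but in general an unreduced sector can be empty, so this does not come for free. Instead we argue as follows. If $\max(|W_0'|_a,|W_t'|_a)=0$, then we show directly that $t$ is bounded by a constant that depends only on the step-history structure. By \Cref{E standard no (1)} or \Cref{E standard no (5)} with zero $a$-length, $t\leq2c_0$, and then $|W_i|_a\leq|W_0|_a+4(N+1)c_0$. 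In this case we would need $|W_0|_a\geq1$ or $|W_t|_a\geq1$; otherwise all $W_i$ along a computation of bounded length starting from an empty-tape word are controlled, and this is the main point to check. If the additive constant cannot be removed, the fallback is to read the statement with the standard convention that such constants are absorbed into $c_1$ via $c_1>>c_0>>N$, as in the preceding lemmas. In the main case $\max\geq1$, the choice $c_1\geq 6(N+1)c_0+4(N+1)c_0+1$ finishes the proof. The expected obstacle is exactly this degenerate zero-length case; everything else is immediate from \Cref{Defective PQQR}.
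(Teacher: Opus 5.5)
Your first two steps match the paper's proof: apply \Cref{Defective PQQR} to the cyclic permutation to get $t\leq 3c_0\max(|W_0|_a,|W_t|_a)+2c_0$, then use \Cref{simplify rules} to get $|W_i|_a\leq |W_0|_a+2\|B\|t$. One small correction: $\|B\|\leq N+1$ is wrong, and the correct bound is $\|B\|\leq 2N+1$. A faulty base is unreduced, so it contains some base letter $U$ and also $U^{-1}$, and these are distinct base letters. Your "no letter repeats" argument therefore allows up to $2N$ distinct letters plus the repeated endpoint. This only changes constants.

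The real gap is the final step. You assert that an unreduced sector can have empty tape word. As a result you leave the case $\max(|W_0|_a,|W_t|_a)=0$ unresolved. Your fallback of absorbing the additive constant by convention is not available, because the statement has no additive term.

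The missing observation is that this case never occurs. Admissible words are reduced by definition. By condition (2) of admissibility, an unreduced sector has the form $q^{\varepsilon}wq^{-\varepsilon}$ with the same state letter on both sides. So $w$ must be nonempty, or else $q^{\varepsilon}q^{-\varepsilon}$ would cancel. Since $B$ is faulty, hence unreduced, every $W_i$ (in particular $W_0$ and $W_t$) satisfies $|W_i|_a\geq 1$. The additive term, a constant depending only on $c_0$ and $N$, is then at most that constant times $\max(|W_0|_a,|W_t|_a)$, and $c_1>>c_0>>N$ finishes the proof. This is exactly how the paper concludes, and it uses the same fact elsewhere, e.g. in \Cref{strongly defective (34) L}.

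Your case split on $\max(|W_0'|_a,|W_t'|_a)=0$ is also misdirected. $B'$ is a reduced subword, so $W_0'$ and $W_t'$ can genuinely have zero $a$-length. The positivity comes from the unreduced part of $B$, not from $B'$.
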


\begin{proof}

Applying \Cref{Defective PQQR} to a cyclic permutation of $\pazocal{C}$ whose base has the relevant subword yields $t\leq3c_0\max(|W_0|_a,|W_t|_a)+2c_0$.   \Cref{simplify rules} then implies $|W_i|_a\leq|W_0|_a+2\|B\|t$ for all $i$.  But faulty bases have length at most $2N+1$, so that $$|W_i|_a\leq7(2N+1)c_0\max(|W_0|_a,|W_t|_a)+2(2N+1)c_0$$  As unreduced sectors of admissible words must have non-empty tape words, the statement then follows by the parameter choices $c_1>>c_0>>N$.

\end{proof}

\begin{lemma} \label{faulty no transition/connecting}

Let $\pazocal{C}:W_0\to\dots\to W_t$ be a reduced computation of $\textbf{E}_{\textbf{S},k}(j)$ with faulty base $B$.  Suppose either:

\begin{itemize}

\item $j\in\{1,5\}$

\item $j\in\{2,4\}$ and the history of $\pazocal{C}$ has no occurrence of a connecting rule.

\end{itemize}

Then $|W_i|_a\leq2\max(|W_0|_a,|W_t|_a)$ for all $0\leq i\leq t$.

\end{lemma}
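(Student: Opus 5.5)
The plan is to decompose the $a$-length of each $W_i$ sector by sector over the faulty base $B$. We then show that in every sector that is not locked, the $a$-length along the computation is bounded by the maximum of its values at the two endpoints, up to a factor that is absorbed by the constant $2$. Since no transition rule appears (the computation is one of $\textbf{E}_{\textbf{S},k}(j)$), the set of sectors not locked by the rules is fixed throughout. In every locked sector the tape word is empty and constant. So it suffices to treat each unlocked two-letter sector $UV$ of $B$, or short blocks of such sectors, separately and sum.

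For $j\in\{1,5\}$, every rule multiplies only the $Q_{i,\ell}Q_{i,r}$-sectors and their mirror copies, each on a single fixed side, by a letter of one fixed historical alphabet. The input sectors also change in case $j=1$, but there the rules do not alter the tape word. If a sector of $B$ is of the form $(Q_{i,\ell}Q_{i,r})^{\pm1}$ or a mirror copy, its restriction satisfies the hypotheses of \Cref{multiply one letter}. Part (d) of that lemma then gives $|W_{i}'|_a\leq\max(|W_0'|_a,|W_t'|_a)$. If instead the sector is of the form $Q_{i,r}^{-1}Q_{i,r}$ or $Q_{i,\ell}Q_{i,\ell}^{-1}$ (or mirrors), the restriction is as in \Cref{unreduced base}. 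Its $a$-length first decreases by $2$ per step, then stays constant, then increases, so it is again bounded by the maximum of its endpoint values. For $j\in\{2,4\}$ with no connecting rule, the history lies in the free group on the translation rules $\tau_{m}(y)$ for a single fixed index $m$. Each such rule multiplies a $PQ$-type sector on one side and the adjacent $QR$-type sector on the other side. For reduced subwords of the relevant $PQR$-type base we use the projection/monotonicity statements \Cref{primitive computations}(1),(2) (or their $\textbf{LR}_k$ analogues), applied to a maximal reduced block of length $3$ in $B$. For unreduced blocks we use \Cref{unreduced base} or \Cref{primitive unreduced}, and for isolated two-letter sectors we use \Cref{multiply one letter}(d). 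Each block then contributes at most $\max$ of its endpoint lengths. Summing over the disjoint blocks of $B$ gives $|W_i|_a\leq\sum_{\text{blocks}}(|W_0^{b}|_a+|W_t^{b}|_a)\leq|W_0|_a+|W_t|_a\leq2\max(|W_0|_a,|W_t|_a)$. This is exactly the factor $2$ in the statement.

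The main obstacle is the bookkeeping for $j\in\{2,4\}$. We must check that a faulty base, which has length at most $2N+1$ and is cyclically permuted, decomposes into blocks on which one of the cited lemmas applies verbatim. In particular, consider a sector that is multiplied on one side within one block and on the other side within a neighbouring block; this arises because the $Q$-letter is shared between the $PQ$- and $QR$-sectors. Such a sector must be grouped with its neighbour and not treated alone. I would first try to group maximal runs of the base between locked sectors: by \Cref{locked sectors} such runs have length at most $3$ of the form $PQR$-type or are of the shapes handled by \Cref{primitive unreduced}. Then I would verify that in each shape the projection argument yields the monotonicity conclusion (1) of \Cref{primitive computations}, since that conclusion alone forces the block's $a$-length to be bounded by its endpoint values.
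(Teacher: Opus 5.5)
Your argument is essentially the paper's: bound the $a$-length of each two-letter sector of $B$ by the larger of its endpoint values, using either constancy of the tape word, \Cref{multiply one letter}(d), or \Cref{unreduced base}. Summing then gives $|W_0|_a+|W_t|_a\leq 2\max(|W_0|_a,|W_t|_a)$.

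The block grouping you propose for $j\in\{2,4\}$ is unnecessary, and your claim that maximal runs have length at most $3$ is not accurate. Appealing to \Cref{primitive unreduced} would also be illegitimate, since its special hypothesis on $W_0$ need not hold. The translation rules act trivially on the $P$- and $R$-type state letters, so every reduced sector is multiplied on at most one side. The only sectors altered on both sides are the unreduced ones of type $QQ^{-1}$ or $Q^{-1}Q$; these are conjugated and fall directly under \Cref{unreduced base}, exactly as in your treatment of $j\in\{1,5\}$.
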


\begin{proof}

Let $B\equiv U_0U_1\dots U_m$ where each $U_j$ is a single letter.  Then for $j\in\{1,\dots,m\}$, let $\pazocal{C}^{(j)}:W_0^{(j)}\to\dots\to W_t^{(j)}$ be the restriction of $\pazocal{C}$ to the base $U_{j-1}U_j$.

In any case, each $\pazocal{C}^{(j)}$ either has fixed tape word, satisfies the hypotheses of \Cref{multiply one letter}, or satisfies the hypotheses of \Cref{unreduced base}.  Hence, for all $0\leq i\leq t$ and $1\leq j\leq m$ we have $|W_i^{(j)}|_a\leq\max(|W_0^{(j)}|_a,|W_t^{(j)}|_a)$.  Thus, for all $i$ we have
\begin{align*}
|W_i|_a&\leq\sum_{j=1}^m|W_i^{(j)}|_a\leq\sum_{j=1}^m\max(|W_0^{(j)}|_a,|W_t^{(j)}|_a) \\
&\leq\sum_{j=1}^m(|W_0^{(j)}|_a+|W_t^{(j)}|_a)=|W_0|_a+|W_t|_a \\
&\leq2\max(|W_0|_a,|W_t|_a)
\end{align*}

\end{proof}

\begin{lemma} \label{faulty (3)}

Let $\pazocal{C}:W_0\to\dots\to W_t$ be a reduced computation of $\textbf{E}_{\textbf{S},k}(3)$ with faulty base $B$.  Then $|W_i|_a\leq18\max(|W_0|_a,|W_t|_a)$ for all $0\leq i\leq t$.

\end{lemma}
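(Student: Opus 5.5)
The idea is to compare the computation with one of $\textbf{S}_h$ and apply \Cref{M_2 bound}, whose constant $9$ becomes $18$ after splitting. Every rule of $\textbf{E}_{\textbf{S},k}(3)$ locks all sectors other than the working sectors, the $Q_{i,\ell}Q_{i,r}$-sectors and their mirror copies. Hence a faulty base $B$ decomposes into maximal subwords on which $\pazocal{C}$ acts nontrivially, separated by locked sectors. By \Cref{locked sectors}, a locked sector cannot sit next to a cancelling pair, so the unreduced part of $B$ lies inside these maximal subwords. The tape words in the locked sectors are empty, so $|W_i|_a$ is the sum of the $a$-lengths of the restrictions $\pazocal{C}^{(1)},\dots,\pazocal{C}^{(m)}$ to these subwords.

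For each restriction we argue as follows. Its base $B^{(j)}$, read with the copies of $\textbf{S}_h$ letters, is a base of $\textbf{S}_h$ or of its mirror. Indeed, the parts $R_{i-1},P_i$ bounding a working sector act as the single part $Q_{i-1,r},Q_{i,\ell}$ pair of $\textbf{S}_h$ (up to merging the locked $P_iQ_{i,\ell}$- and $Q_{i,r}R_i$-sectors, whose tape words are empty). So $\pazocal{C}^{(j)}$ is a reduced computation of $\textbf{S}_h$ with the same $a$-lengths at every step.

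We then split by the length of $B^{(j)}$. If the base has length at least $3$ (at least two sectors), \Cref{M_2 bound} gives $|W_i^{(j)}|_a\leq 9(|W_0^{(j)}|_a+|W_t^{(j)}|_a)$. If $B^{(j)}$ has two letters, it is a single sector, and we bound it as follows:
\begin{itemize}
\item For a historical sector of the form $Q_{i,\ell}Q_{i,r}$, \Cref{multiply two letters}(b) gives $|W_i^{(j)}|_a\leq\max(|W_0^{(j)}|_a,|W_t^{(j)}|_a)$.
\item For a working sector of $\textbf{S}$, any $S$-rule on a single sector multiplies the tape word on the left and right by fixed letters. I expect to argue, as in \cite{O18}, that a length-one base of $\textbf{S}_h$ still has the monotonicity property, or that this case can be absorbed by also keeping a neighboring historical sector. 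Failing that, it may be excluded outright, since a faulty base is revolving and contains an unreduced pair, so each maximal nonlocked subword attached to the cancelling pair has at least two sectors.
\end{itemize}

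Summing over $j$ then gives $|W_i|_a\leq 9(|W_0|_a+|W_t|_a)\leq 18\max(|W_0|_a,|W_t|_a)$.

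The main obstacle is the identification step: checking that each restricted maximal subword really is a legitimate (possibly unreduced) base of $\textbf{S}_h$, so that \Cref{M_2 bound} applies verbatim. The issue is that \Cref{M_2 bound} is stated for arbitrary bases, whereas the mirror letters, the $\{t\}$ sector, and the $R_s(R_s')^{-1}$-sector must be shown to be locked by every rule of step $(3)$. These separators only break the base into pieces, which is harmless since the estimate is additive.
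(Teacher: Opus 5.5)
Your overall route is the paper's. You cut $B$ only at the $t$-letters and at the $R_s(R_s')^{-1}$-junction, whose sectors have empty tape alphabet. You merge the locked $P_iQ_{i,\ell}$- and $Q_{i,r}R_i$-sectors so that each piece becomes a computation of $\textbf{S}_h$, apply \Cref{M_2 bound} to pieces of length at least $3$, and add up. Read literally, your first paragraph cuts at \emph{every} locked sector, which would leave only single sectors and make \Cref{M_2 bound} useless; the merged decomposition you describe afterwards is the right one.

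The gap is in the two-letter pieces. The ones that actually occur are not reduced sectors but unreduced historical sectors, together with their mirror copies:
\begin{itemize}
\item $Q_{0,\ell}Q_{0,\ell}^{-1}$, arising from a piece $P_0Q_{0,\ell}Q_{0,\ell}^{-1}P_0^{-1}$ between two $t$-letters;
\item $Q_{s,r}^{-1}Q_{s,r}$, arising from $R_s^{-1}Q_{s,r}^{-1}Q_{s,r}R_s$ between two junctions.
\end{itemize}
Your case list omits them, and your fallback claim that two-letter pieces can be excluded is false. For example,
\[
\{t\}P_0Q_{0,\ell}Q_{0,\ell}^{-1}P_0^{-1}\{t\}^{-1}P_0'Q_{0,\ell}'(Q_{0,\ell}')^{-1}(P_0')^{-1}\{t\}
\]
is a faulty base admissible for the rules of $\textbf{E}_{\textbf{S},k}(3)$. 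Its only pieces carrying tape letters are of exactly this type, so your argument gives no bound at all for such computations. The error comes from misreading \Cref{locked sectors}: it forbids a cancelling pair whose own sector is locked, not a locked sector adjacent to a cancelling pair.

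Neither tool you cite handles these pieces. \Cref{multiply two letters} needs the left and right letters to come from disjoint copies of an alphabet. In the $Q_{0,\ell}Q_{0,\ell}^{-1}$-sector, however, every rule conjugates the tape word by one letter of the left historical alphabet, so the history can be long while the length stays constant.

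The missing ingredient is \Cref{unreduced base}. Each rule of step $(3)$ multiplies the $Q_{0,\ell}Q_{0,r}$-sector on the left (respectively the $Q_{s,\ell}Q_{s,r}$-sector on the right) by a letter, with distinct rules giving distinct letters. So the $a$-length first drops by $2$ per step, then stays constant, then grows by $2$ per step, whence $|W_i^{(j)}|_a\le\max(|W_0^{(j)}|_a,|W_t^{(j)}|_a)$. This is precisely how the paper closes the case.

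The lone working sector you were unsure about never occurs. Inside a piece, a working sector is preceded and followed, after merging, by historical sectors: the turnarounds $R_{i-1}^{-1}R_{i-1}$ and $P_iP_i^{-1}$ are excluded by \Cref{locked sectors}, and no cut happens next to it. Such a piece therefore has length at least $3$. This matters, because a single working sector of an arbitrary $\textbf{S}$ satisfies no linear space bound, so the monotonicity you hoped to borrow would not be available.
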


\begin{proof}

By the definition of $B_{std}$, we may factor $B\equiv B_1\dots B_m$ where each $B_j$ is either (1) a word over $B_0^{\pm1}$, (2) a word over $(B_0')^{\pm1}$, or (3) a single letter of the form $\{t\}^{\pm1}$.  For $1\leq j\leq m$ let $\pazocal{C}^{(j)}:W_0^{(j)}\to\dots\to W_t^{(j)}$ be the restriction of $\pazocal{C}$ to the base $B_j$.

Since the tape alphabets of the $R_s(R_s')^{-1}$-, $(P_0')^{-1}\{t\}$-, and $\{t\}P_0$-sectors are all empty, it follows that $|W_i|_a=\sum|W_i^{(j)}|_a$.

If $B_j$ is a single letter, then of course $|W_i^{(j)}|_a=0$ for all $i$.  In the other cases, $\pazocal{C}_j$ can be identified with a reduced computation $\pazocal{C}_j'$ of $\textbf{S}_h$ with base $B_j'$.  If $\|B_j'\|\geq3$, then \Cref{M_2 bound} implies $|W_i^{(j)}|_a\leq 9(|W_0^{(j)}|_a+|W_t^{(j)}|_a)$ for all $0\leq i\leq t$.  Otherwise, the makeup of the standard base dictates $B_j'=Q_{0,\ell}Q_{0,\ell}^{-1}$ or $Q_{s,r}^{-1}Q_{s,r}$, so that $\pazocal{C}_j'$ satisfies the hypotheses of \Cref{unreduced base}, meaning $|W_i^{(j)}|_a\leq\max(|W_0^{(j)}|_a,|W_t^{(j)}|_a)$ for all $i$.  Thus, for all $i$ we have
$$|W_i|_a=\sum|W_i^{(j)}|_a\leq\sum9(|W_0^{(j)}|_a+|W_t^{(j)}|_a)\leq9(|W_0|_a+|W_t|_a)\leq18\max(|W_0|_a,|W_t|_a)$$

\end{proof}

Thus, we arrive at the main objective of the section, achieving a linear bound on the space of a reduced computation with faulty base.

\begin{lemma} \label{faulty}

Let $\pazocal{C}:W_0\to\dots\to W_t$ be a reduced computation of $\textbf{E}_{\textbf{S},k}$ with faulty base $B$.  Then $|W_i|_a\leq c_1\max(|W_0|_a,|W_t|_a)$ for all $0\leq i\leq t$.

\end{lemma}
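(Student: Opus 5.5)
We plan to reduce to the one-step lemmas by decomposing $\pazocal{C}$ according to its step history and the occurrences of transition and connecting rules. First, by \Cref{faulty PQQR} we may assume that no cyclic permutation of $B$ contains a subword $(P_jQ_{j,\ell}Q_{j,r}R_j)^{\pm1}$ or a mirror copy, i.e. $B$ is strongly defective. Then, as in the proofs of Lemmas \ref{strongly defective (34) R}--\ref{strongly defective (12)}, the locking properties of transition rules strongly restrict the step history. The analogues of \Cref{E primitive step history}, \Cref{E run step history} and \Cref{Defective (34)(4)(45)} for unreduced bases should show that the step history has bounded length (at most a fixed number, say $5$, of maximal one-step subcomputations). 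Moreover, each transition $\sigma(ij)^{\pm1}$ occurring inside $\pazocal{C}$ forces $B$ to contain a subword like $P_iQ_{i,\ell}Q_{i,\ell}^{-1}P_i^{-1}$ or $R_i^{-1}Q_{i,r}^{-1}Q_{i,r}R_i$, as in Lemmas \ref{strongly defective (34) R} and \ref{strongly defective (23) L}.

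The key monotonicity step is this. For a maximal subcomputation $W_x\to\dots\to W_y$ with a single step history $(j)$, adjacent to a transition, \Cref{primitive unreduced} (or its $\textbf{LR}_k$ analogue) or \Cref{one alphabet historical words unreduced}, applied on the forced unreduced subword, shows the following. Lengths are monotone away from the transition on the relevant sectors, and the transition word is reachable only once. Combined with \Cref{faulty no transition/connecting} and \Cref{faulty (3)}, each one-step piece satisfies $|W_i|_a\le 18\max(|W_x|_a,|W_y|_a)$. Connecting rules inside steps (2) and (4) are handled as in cases (i),(ii) of \Cref{universal complexity}. Only one type of connecting rule can occur, the forced subword $Q_{i,r}^{-1}Q_{i,\ell}^{-1}Q_{i,\ell}Q_{i,r}$ (etc.) appears, and \Cref{primitive unreduced} gives that the $a$-length is nondecreasing moving away from the connecting rule in each direction. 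So within such a step the space is again bounded by twice the maximum of the endpoint lengths, after splitting at the connecting rule.

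Next we chain the pieces. At each interior transition word $W_y$, we need $|W_y|_a\le C\max(|W_0|_a,|W_t|_a)$. We aim to show, using \Cref{simplify rules} and the structure of a faulty base (length at most $2N+1$), that the $a$-length of transition words is controlled by the endpoints. Each transition is flanked by subcomputations whose length on the forced subword is monotone toward the endpoints, as in the proofs of Lemmas \ref{strongly defective (34)} and \ref{strongly defective (45)}. With a bounded number of pieces, multiplying the constants ($18$ per piece, a bounded number of pieces) gives a constant depending only on $N$, which is absorbed by $c_1>>c_0>>N$.

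The main obstacle is the chaining step. Monotonicity is only guaranteed on the particular forced unreduced sectors, not on all of $W_i$, so transition words could a priori be large in other sectors. To address this, I would first try to bound the lengths of the neighbouring pieces in terms of the forced sector, as in the proof of \Cref{strongly defective (34) L}. Then I would convert time bounds into space bounds via $|W_i|_a\le|W_0|_a+2\|B\|t$, as in \Cref{faulty PQQR}, accepting a constant depending on $N$. In step (3) only a quadratic time bound is available, and there I would instead use \Cref{faulty (3)} directly on the subcomputation.
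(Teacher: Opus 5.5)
Your plan has a genuine gap at the chaining step. You flag it as the obstacle, but the remedies you suggest do not close it.

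\textbf{Why the chaining fails.} A one-step space bound, from \Cref{faulty no transition/connecting} or \Cref{faulty (3)}, has the form $|W_i|_a\le C\max(|W_x|_a,|W_y|_a)$ in terms of that piece's \emph{own} endpoints. Those endpoints are exactly the interior transition words you cannot yet control, so ``multiplying the constants over a bounded number of pieces'' proves nothing. In particular, applying \Cref{faulty (3)} directly to a step-$(3)$ piece is circular. For step history $(3)(34)(4)(43)(3)$ with only $Q_{i,\ell}Q_{i,\ell}^{-1}$-type unreduced subwords, step $(4)$ freezes every tape. You are then left with two \Cref{faulty (3)} bounds that both involve the unbounded $|W_y|_a$.

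\textbf{The forced subwords are also unjustified.} Lemmas \ref{strongly defective (34) R} and \ref{strongly defective (23) L} use \emph{minimality} to guarantee that $B$ contains $Q_{i,r}^{\pm1}$ (resp. $Q_{i,\ell}^{\pm1}$) letters; otherwise the step-$(4)$ (resp. step-$(2)$) part could be deleted. \Cref{faulty} concerns arbitrary reduced computations, so a transition need not force $R_i^{-1}Q_{i,r}^{-1}Q_{i,r}R_i$ or $P_iQ_{i,\ell}Q_{i,\ell}^{-1}P_i^{-1}$. For instance, with $\sigma(34)$ and no $Q_{i,r}^{\pm1}$ letters, the forced subword is $Q_{i,\ell}Q_{i,\ell}^{-1}$ (or a mirror copy), and this case has to be treated separately.

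\textbf{The missing idea: a reduction that removes chaining.} The paper first argues by induction on $t$ that one may assume $|W_j|_a>\max(|W_0|_a,|W_t|_a)$ for every $0<j<t$; otherwise split at $W_j$ and apply induction to both halves. This has two consequences.
\begin{itemize}
\item The first and last rules must change tapes, so they are not transition or connecting rules.
\item In each case it suffices to exhibit a single interior $W_x$ with $|W_x|_a\le|W_t|_a$, which contradicts the assumption.
\end{itemize}

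\textbf{How the cases close.} When the forced subword is $R_i^{-1}Q_{i,r}^{-1}Q_{i,r}R_i$, $P_iQ_{i,\ell}Q_{i,\ell}^{-1}P_i^{-1}$, or $Q_{i,\ell}Q_{i,r}Q_{i,r}^{-1}Q_{i,\ell}^{-1}$, apply \Cref{primitive unreduced} on it. Add the observation that during the adjacent step $(2)$ or $(4)$ every other sector's tape word is frozen. Then monotonicity on the forced subwords is monotonicity of the total $a$-length, which gives the required interior $W_x$.

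In the remaining $\sigma(34)$ and $\sigma(23)$ cases, only $Q_{i,\ell}Q_{i,\ell}^{-1}$ or $Q_{i,r}^{-1}Q_{i,r}$ is present. There \Cref{one alphabet historical words unreduced}, not a quadratic bound, does the work. It shows that the flanking step-$(3)$ subcomputation must reach $W_0$ (resp. $W_t$) and has length at most $|W_0''|_a$. \Cref{simplify rules} then converts this into $|W_i|_a\le c_0|W_0|_a$. The step-$(4)$ part in between is frozen, and the other side is symmetric. Your time-to-space conversion is the right tool in these cases, but you set it aside for step $(3)$ on the mistaken belief that only a quadratic time bound is available there.
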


\begin{proof}

It is obvious that we may assume $t\geq2$, as otherwise $W_i\equiv W_0$ or $W_i\equiv W_t$ for all $i$.

By an inductive argument, we may assume $|W_j|_a>\max(|W_0|_a,|W_t|_a)$ for all $0<j<t$, as otherwise we may consider the shorter computations $W_0\to\dots\to W_j$ and $W_j\to\dots\to W_t$.  

In particular, since transition rules and connecting rules do not alter the tape words of an admissible word, we may assume that neither the first nor the last letter of the history $H$ of $\pazocal{C}$ is a letter corresponding to such a rule.  However, taking $c_0\geq18$, it follows from Lemmas \ref{faulty no transition/connecting} and \ref{faulty (3)} that we may assume $H$ does have some letter corresponding to such a rule, and so in particular we must have $t\geq3$.

Moreover, by \Cref{faulty PQQR} it suffices to assume that $B$ is strongly defective.

We now proceed in cases.

\textbf{1.} Suppose $H$ has a letter in $\{\sigma(34)^{\pm1},\sigma(45)^{\pm1}\}$ and $B$ has a letter of the form $Q_{i,r}^{\pm1}$ or $(Q_{i,r}')^{\pm1}$.

Note that both $\sigma(34)$ and $\sigma(45)$ lock the sectors of the standard base of the form $P_iQ_{i,\ell}$, $Q_{i,r}R_i$, and the mirror copies of these.  So, as it is strongly defective, $B$ cannot contain a subword of the form $(Q_{i,\ell}Q_{i,r})^{\pm1}$ or a mirror copy.

Hence, as we assume that $B$ has a letter of the form $Q_{i,r}^{\pm1}$ or $(Q_{i,r}')^{\pm1}$, the proof of \Cref{strongly defective (34) R} may be repeated to show there exists a cyclic permutation $B'\equiv B_1'C_1'\dots B_m'C_m'$ such that:

\begin{itemize}

\item Each subword $B_j'$ is of the form $R_i^{-1}Q_{i,r}^{-1}Q_{i,r}R_i$ or a mirror copy.

\item The subwords $C_j'$ do not contain any letters of the form $Q_{i,r}^{\pm1}$, $(Q_{i,r}')^{\pm1}$, $R_i^{\pm1}$, or $(R_i')^{\pm1}$ except perhaps for the last letter of $C_m'$.

\end{itemize}

Perhaps passing to the inverse computation we may assume there is a non-empty subcomputation $\pazocal{C}_4:W_x\to\dots\to W_y$ with step history $(4)$ such that the transition $W_{x-1}\to W_x$ is given by $\sigma(45)$ or $\sigma(43)$.  For each $1\leq j\leq m$, let $\pazocal{C}_4^{(j)}:W_x^{(j)}\to\dots\to W_y^{(j)}$ be the restriction of the corresponding cyclic permutation of $\pazocal{C}_4$ to the subword $B_j'$.

Then each $\pazocal{C}_4^{(j)}$ may be viewed as a reduced computation of a primitive machine satisfying the hypotheses of \Cref{primitive unreduced}.  Since $m\geq1$, it then follows that $y=t$ and $|W_x^{(j)}|_a\leq|W_t^{(j)}|_a$.

By construction the restriction to any two-letter subword not contained in $B_j'$ has constant tape word throughout $\pazocal{C}_4$, so that $|W_x|_a\leq|W_t|_a$.  But $0<x<t$, so that this contradicts the assumptions above.

\textbf{2.} Suppose $H$ has a letter in $\{\sigma(12)^{\pm1},\sigma(23)^{\pm1}\}$ and $B$ has a letter of the form $Q_{i,\ell}^{\pm1}$ or $(Q_{i,\ell}')^{\pm1}$.

Then we reach a contradiction in much the same way as in the previous case: 

Since $\sigma(12)$ and $\sigma(23)$ lock the same sectors, that $B$ is strongly defective allows us to find a cyclic permutation of $B$ in the same form as \Cref{strongly defective (23) L}.  So, assuming without loss of generality that there exists a non-empty subcomputation $W_x\to\dots\to W_y$ with step history $(2)$ such that the transition $W_{x-1}\to W_x$ is given by $\sigma(23)$ or $\sigma(21)$, \Cref{primitive unreduced} implies $y=t$ and $|W_x|_a\leq|W_t|_a$.

\textbf{3.} Suppose $H$ has a letter of the form $\sigma(45)^{\pm1}$.

In light of the first case, we may assume $B$ has no letter of the form $Q_{i,r}^{\pm1}$ or $(Q_{i,r}')^{\pm1}$.

Note that $\sigma(45)$ locks all sectors of the standard base except those of the form $Q_{i,\ell}Q_{i,r}$ and their mirror copies, while the `last' connecting rule of $\textbf{E}_{\textbf{S},k}(4)$ locks these remaining sectors.  So, since $B$ is unreduced, \Cref{locked sectors} implies $H$ has no letter corresponding to a connecting rule.  Hence, the step history of $\pazocal{C}$ is a concatenation of the letters $(4)$ and $(5)$.

But by the makeup of these submachines, in this case no rule of $H$ alters the tape word of any admissible word with base $B$, implying $|W_0|_a=\dots=|W_t|_a$.

\textbf{4.} Suppose $H$ has a letter of the form $\sigma(12)^{\pm1}$.

From the second case, we may assume $B$ has no occurrence of the letters $Q_{i,\ell}^{\pm1}$ or $(Q_{i,\ell}')^{\pm1}$.  So, if the step history of $\pazocal{C}$ is a concatenation of the letters $(1)$ and $(2)$, then we reach the contradiction $|W_0|_a=\dots=|W_t|_a$ in the same way as in the previous case.

Hence, it suffices to assume the step history has an occurrence of the letter $(3)$, and so a subword of the form $(12)(2)(23)$.

In particular, $H$ must have a letter corresponding to a connecting rule of $\textbf{E}_{\textbf{S},k}(2)$.  This rule locks the $Q_{i,\ell}Q_{i,r}$-sectors and their mirror copies, while $\sigma(12)$ locks the $P_iQ_{i,\ell}$-sectors, the $Q_{i,r}R_i$-sectors, and their mirror copies.  The strongly defective condition along with \Cref{locked sectors} then implies $B$ can have only letters of the form $\{t\}^{\pm1}$.  

But the tape alphabets of the $\{t\}P_0$- and $(P_0')^{-1}\{t\}$-sectors are both empty, so that we cannot form a faulty base with just letters of the form $\{t\}^{\pm1}$.

\textbf{5.} Suppose $H$ has a letter of the form $\sigma(34)^{\pm1}$.

Again, the first case allows us to assume $B$ has no letter of the form $Q_{i,r}^{\pm1}$ or $(Q_{i,r}')^{\pm1}$.  So since $\sigma(34)$ locks every sector of the standard base except for those of the form $Q_{i,\ell}Q_{i,r}$ and their mirror copies, $B$ must have a two-letter subword $B''$ of the form $Q_{i,\ell}Q_{i,\ell}^{-1}$ or a mirror copy (and indeed all unreduced two-letter subwords of $B$ must be of this form).

Perhaps passing to the inverse computation, we assume $H$ has a subword $H_3\sigma(34)H_4$ where $H_3$ and $H_4$ are non-empty histories of maximal subcomputations with step history $(3)$ and $(4)$, respectively.

Let $\pazocal{C}_3:W_x\to\dots\to W_y$ be the subcomputation with history $H_3$ and let $\pazocal{C}_3'':W_x''\to\dots\to W_y''$ be its restriction to the subword $B''$.  Since $W_y$ is $\sigma(34)$-admissible, the tape word of $W_y''$ is non-empty and comprised of letters from the corresponding right historical alphabet.  But then the inverse computation of $\pazocal{C}_3''$ satisfies the hypotheses of \Cref{one alphabet historical words unreduced}, so that $x=0$ and $y\leq|W_0''|_a$.  Thus, since faulty bases can have length at most $2N+1$, \Cref{simplify rules} then implies that for all $0\leq i\leq y$
$$|W_i|_a\leq|W_0|_a+2(\|B\|-1)i\leq|W_0|_a+4Ny\leq c_0|W_0|_a$$ 
by the parameter assignment $c_0>>N$.

Next, let $\pazocal{C}_4:W_{y+1}\to\dots\to W_z$ be the subcomputation with history $H_4$.  Since $B$ has no letter of the form $Q_{i,r}^{\pm1}$ or $(Q_{i,r}')^{\pm1}$, we necessarily have $|W_y|_a=|W_{y+1}|_a=\dots=|W_z|_a$.  So, we must have $z<t$.  Moreover, the presence of the subword $B''$ implies $H_4$ can have no connecting rule, so that $W_z$ cannot be $\sigma(45)$-admissible.  Hence, the transition $W_z\to W_{z+1}$ must be given by $\sigma(43)$.

But then we may apply the same argument as above to the maximal subcomputation with step history $(3)$ starting with $W_{z+1}$, obtaining $|W_i|_a\leq c_0|W_t|_a$ for all $z\leq i\leq t$.  

Thus, the desired bound is given by the parameter assignment $c_1>>c_0$.

\textbf{6.} Suppose $H$ has a letter of the form $\sigma(23)^{\pm1}$.

Again, the second case allows us to assume $B$ has no letter of the form $Q_{i,\ell}^{\pm1}$ or $(Q_{i,\ell}')^{\pm1}$.  

If $B$ does have a subword $B''$ of the form $Q_{i,r}^{-1}Q_{i,r}$ or a mirror copy, then we may repeat the argument of the previous case, using \Cref{one alphabet historical words unreduced} to obtain $|W_i|_a\leq c_0\max(|W_0|_a,|W_t|_a)$ for all $i$.

Otherwise, then as in the fourth case the strongly defective condition implies $B$ can consist only of letters of the form $\{t\}^{\pm1}$, which is impossible.


\textbf{7.} Suppose $H$ has a connecting rule $\zeta$ of $\textbf{E}_{\textbf{S},k}(4)$ which locks the $Q_{i,\ell}Q_{i,r}$-sectors.

By the previous cases, we may assume the step history of $\pazocal{C}$ is $(4)$.  So, if $B$ has no letter of the form $Q_{i,r}^{\pm1}$ or $(Q_{i,r}')^{\pm1}$, then $|W_0|_a=\dots=|W_t|_a$.  Hence, we may assume $B$ contains such a letter.

Noting that consecutive connecting rules of $\textbf{E}_{\textbf{S},k}(4)$ combine to lock every sector of the standard base, \Cref{locked sectors} implies every every occurrence of a connecting rule in $H$ is of the form $\zeta^{\pm1}$.  As such, there exists a subword $H_1\zeta H_2$ of $H$ such that $H_1$ and $H_2$ are non-empty maximal subwords with no connecting rules.  Let $\pazocal{C}_2:W_y\to\dots\to W_z$ be the subcomputation with history $H_2$.

Since $\zeta$ locks the $P_iQ_{i,\ell}$-sectors and their mirror copies, then since $B$ is strongly defective any occurrence of $Q_{i,r}^{\pm1}$ belongs to a subword $B''$ of a cyclic permutation of $B$ of the form $Q_{i,\ell}Q_{i,r}Q_{i,r}^{-1}Q_{i,\ell}^{-1}$.  For $\pazocal{C}_2'':W_y''\to\dots\to W_z''$ the restriction of the corresponding cyclic permutation of $\pazocal{C}_2$ to such a subword, $\pazocal{C}_2''$ may be viewed as a reduced computation of a primitive machine satisfying the hypotheses of \Cref{primitive unreduced}.  So if such a $B''$ exists, $z=t$ and $|W_y''|_a\leq|W_t''|_a$.

Similarly, any occurrence in $B$ of $(Q_{i,r}')^{\pm1}$ belongs to a subword of a cyclic permutation of $B$ of the form $Q_{i,\ell}'Q_{i,r}'(Q_{i,r}')^{-1}(Q_{i,\ell}')^{-1}$, for which we can make the analogous conclusion.

Hence, as we assume $B$ contains a letter of the form $Q_{i,r}^{\pm1}$ or $(Q_{i,r}')^{\pm1}$, we must have $z=t$ and, as the tape word of any other sector is fixed, $|W_y|_a\leq|W_t|_a$.  But $H_2$ is non-empty, so that $0<y<t$.

\textbf{8.} Suppose $\pazocal{C}$ has step history $(4)$.

In light of the previous case, we may assume $H$ has no connecting rule which locks the $Q_{i,\ell}Q_{i,r}$-sectors.  By all the previous cases, though, $H$ must contain a connecting rule of some sort, and so must contain one that locks the $Q_{i,r}R_i$-sectors.

But then we reach a contradiction in much the same way as in the previous case: $B$ must contain a letter of the form $Q_{i,r}^{\pm1}$ or $(Q_{i,r}')^{\pm1}$, while the strongly defective condition implies such a letter must be contained in a subword $B''$ of a cyclic permutation of $B$ of the form $R_i^{-1}Q_{i,r}^{-1}Q_{i,r}R_i$ or a mirror copy.  By \Cref{primitive unreduced} we may then find $0<y<t$ with $|W_y|_a\leq|W_t|_a$.

\textbf{9.} Thus, we may assume $\pazocal{C}$ has step history $(2)$ and $H$ has a connecting rule $\zeta$.

But this leads to a contradiction in much the same way as in the previous two cases:

If $B$ has no letter of the form $Q_{i,\ell}^{\pm1}$ or $(Q_{i,\ell}')^{\pm1}$, then no rule of $\textbf{E}_{\textbf{S},k}(2)$ alters the tape words of an admissible word with base $B$, so that $|W_0|_a=\dots=|W_t|_a$.

Since $\zeta$ locks the $Q_{i,\ell}Q_{i,r}$- and $Q_{i,r}R_i$-sectors, the strongly defective condition implies any occurrence in $B$ of a letter of the form $Q_{i,\ell}^{\pm1}$ must be contained in a subword of a cyclic permutation of $B$ of the form $P_iQ_{i,\ell}Q_{i,\ell}^{-1}P_i^{-1}$.  Similarly, any occurrence of $(Q_{i,\ell}')^{\pm1}$ is contained in the mirror copy of such a subword.  Taking a maximal subcomputation whose history has no connecting rule, the restriction of a cyclic permutation to such a subword can be viewed as a reduced computation of a primitive machine satisfying the hypotheses of \Cref{primitive unreduced}, yielding $0<y<t$ such that $|W_y|_a\leq|W_t|_a$ as in the previous cases.

\end{proof}

\medskip


\subsection{Main machine} \label{sec-main-machine} \

The main machine $\textbf{M}_\textbf{S}$ of our construction is the concatenation of the $k$-enhanced machine $\textbf{E}_{\textbf{S},k}$ a number of times.  This is achieved in much the same way as in \cite{O18} and \cite{OS20}, but with notation closer to the more general constructions in \cite{WMal}, \cite{WEmb}, and \cite{W}.

Specifically, let $B_{std}(1),\dots,B_{std}(L)$ be $L$ copies of the standard base of the mirror machine $\textbf{E}_{\textbf{S},k}^1$ (where $L$ is the parameter mentioned in \Cref{sec-parameters}).  Then the standard base of $\textbf{M}_\textbf{S}$ is
$$\{t(1)\}B_{std}(1)\{t(2)\}B_{std}(2)\dots\{t(L)\}B_{std}(L)$$
As in the construction of the $k$-enhanced machine, the parts of the form $\{t(i)\}$ are singletons; a state letter from such a part (or its inverse) is called a \textit{$t$-letter}.  Hence, the standard base of $\textbf{M}_\textbf{S}$ may be viewed as a concatenation of $L$ copies of the standard base of $\textbf{E}_{\textbf{S},k}$.

The tape alphabets of the sectors arising from a single copy of the standard base of $\textbf{E}_{\textbf{S},k}$ is a copy of the corresponding tape alphabet of the $k$-enhanced machine.  That of any other sector (including that of the $(P_0'(L))^{-1}\{t(1)\}$-sector) is taken to be empty.  As such, any sector of the standard base formed by a $t$-letter has empty tape alphabet.

The start and end state letters of each sector are those corresponding to the start and end letters of the $k$-enhanced machines, while the input sectors are taken to be all the copies of such sectors.  Note than that the number of input sectors of $\textbf{M}_\textbf{S}$ is $L$ times the number of such sectors in $\textbf{E}_{\textbf{S},k}$ or $\textbf{E}_{\textbf{S},k}^1$, and $2L$ times the number in $\textbf{E}_{\textbf{S},k}^0$ or $\textbf{S}$.

The software of $\textbf{M}_\textbf{S}$ is identified with that of $\textbf{E}_{\textbf{S},k}$, with each rule operating on every copy of $\{t\}B_{std}$ as the corresponding rule and, of course, locking every other sector.

Given a configuration $W$ of $\textbf{M}_\textbf{S}$, the \textit{$i$-th component} of $W$ is the admissible subword $W(i)$ with base $\{t(i)\}B_{std}(i)$.  Note that the makeup of the tape alphabets implies $W\equiv W(1)\cdots W(L)$.

If $V$ is an admissible word with base $B$ where every letter of $B$ is in $(\{t(i)\}B_{std}(i))^{\pm1}$, then a \textit{coordinate shift} of $V$ is an admissible word $V'$ whose base $B'$ consists of letters from $\{t(j)\}B_{std}(j))^{\pm1}$ obtained by changing the `coordinate' of each state letter from $i$ to $j$ and taking the natural copy of the tape words over the corresponding tape alphabets.  Note that the parallel nature of the rules implies that if $W$ is an accepted configuration, then $W(i)$ and $W(j)$ are coordinate shifts of one another.  The accept configuration of $\textbf{M}_\textbf{S}$ is denoted $W_{ac}$, while the input configuration with input $w$ is denoted $I(w)$.

As opposed to the constructions in \cite{WMal}, \cite{WEmb}, and \cite{W} (where one component operated in a different manner to the others), the computational makeup of $\textbf{M}_\textbf{S}$ is immediately deduced from that of $\textbf{E}_{\textbf{S},k}$.  For example, the notion of `almost-extending' a computation introduced in \cite{W} (see Lemma 5.12) may be strengthened: Given a reduced computation $V_0\to\dots\to V_t$ with base $\{t(j)\}B_{std}(j)$, there exists a reduced computation $W_0\to\dots\to W_t$ in the standard base such that $W_i(j)\equiv V_i$ for all $i$.  Conversely, any reduced computation of $\textbf{M}_\textbf{S}$ can be identified with a reduced computation of $\textbf{E}_{\textbf{S},k}$ by removing any coordinates and taking the natural copies of the tape words.

\bigskip


\section{Groups Associated to an \texorpdfstring{$S$}--machine and their Diagrams} \label{sec-groups}

\subsection{The groups} \label{sec-associated-groups} \

As in previous literature (for example \cite{O18}, \cite{OS20}, \cite{WCubic}, \cite{WEmb}, \cite{W}), we now associate a pair of finitely presented groups to any recognizing cyclic $S$-machine $\textbf{M}$. 


Denote the hardware of $\textbf{M}$ by $(Y,Q)$ with $Q=\sqcup_{i=0}^s Q_i$ and $Y=\sqcup_{i=1}^{s+1} Y_i$.  Further, denote the software by $\Theta=\Theta^+\sqcup\Theta^-$. 

By Lemma \ref{simplify rules}, we may consider all elements of $\Theta^+$ as adding at most one $a$-letter to each side of a $q$-letter.  In other words, any $\theta\in\Theta^+$ is of the form $$\theta=[q_0\to v_{s+1}q_0'u_1, \ q_1\to v_1q_1'u_2, \ \dots, \ q_{s-1}\to v_{s-1}q_{s-1}'u_s, \ q_s\to v_sq_s'u_{s+1}]$$ where $q_i,q_i'\in Q_i$ and $u_i,v_i\in Y_i(\theta)^{\pm1}\cup \{1\}$. Also, some sectors may be locked, in which case some of the arrows above take the form $\xrightarrow{\ell}$. 

Define $\Theta^+_*=\{\theta_i: \theta\in\Theta^+,0\leq i\leq s\}$. 
  The group $M(\textbf{S})$ is then defined to be the finitely presented group with generating set $\pazocal{X}=Y\cup Q\cup \Theta^+_*$ and set of relators given by
\begin{equation}\notag\label{thetaRels}\begin{matrix}\theta_iy& = & y\theta_i &&&&\text{for all }&i\in \{1,\ldots, s\}, &y\in Y_i(\theta)\\ q_{i}\theta_{i+1}&=&\theta_{i}v_iq_i'u_{i+1}&&&&\text{for all }&i\in \{1,\ldots, s\},  &\theta\in \Theta^+ \end{matrix}\end{equation}
where, by convention, arithmetic in all subscripts is performed modulo $s+1$. 

We define \textit{$q$-letters} and \textit{$a$-letters} in these groups analogously to their counterparts in the computational realm. Letters from $(\Theta^+_*)^{\pm1}=\Theta^+_*\sqcup\Theta^-_*$ are called \textit{$\theta$-letters}. The relations of the form $q_i\theta_{i+1}=\theta_iv_iq_i'u_{i+1}$ are called \textit{$(\theta,q)$-relations}, while those of the form $\theta_ia=a\theta_i$ are called \textit{$(\theta,a)$-relations}.
Note that the number of $a$-letters in any part of $\theta$, and so in any defining relation of $M(\textbf{M})$, is at most two.  Further, note that if $\theta$ locks the $i$-th sector, then $Y_i(\theta)=\emptyset$ so that each $\theta_j$ has no relation with the elements of $Y_i$.

Finally, we form the finitely presented group $G(\textbf{M})$ from $M(\textbf{M})$ by simply adding a single relation, called the \textit{hub relation}, which sets the accept configuration $W_{ac}$ of $\textbf{M}$ to $1$.  In other words, $G(\textbf{M})\cong M(\textbf{M})/\gen{\gen{W_{ac}}}$.

\medskip


\subsection{Bands and annuli} \label{sec-bands-annuli} \

The arguments presented in the forthcoming sections rely heavily on van Kampen (circular) and Schupp (annular) diagrams over (presentations of) the groups $M(\textbf{M}_\textbf{S})$ and $G(\textbf{M}_\textbf{S})$ introduced in \Cref{sec-associated-groups}.  It is assumed that the reader is well acquainted with these notions; for reference, see \cite{Lyndon-Schupp} and \cite{O}.




As we exclusively consider diagrams on spaces homeomorphic to a disk or an annulus, we are able to adopt the convention that the boundary of any cell and of any boundary component of a diagram or subdiagram is traced in the counterclockwise direction.

Fix a cyclic recognizing $S$-machine $\textbf{M}$.  For any diagram over (the canonical presentations of) $M(\textbf{M})$ and $G(\textbf{M})$, every (positive) edge is labelled by an $a$-letter, a $q$-letter, or a $\theta$-letter.  As such, these edges are called \textit{$a$-edges}, \textit{$q$-edges}, and \textit{$\theta$-edges}, respectively.


Given a path $\textbf{p}$ in diagram over $M(\textbf{M})$ or $G(\textbf{M})$, the (combinatorial) length of $\textbf{p}$ ({\frenchspacing i.e. the number of $a$-, $q$-, or $\theta$-edges comprising it) is denoted $\|\textbf{p}\|$.  The \textit{$a$-length} $|\textbf{p}|_a$ and \textit{$q$-length} $|\textbf{p}|_q$ are then defined in the much the same way as they were for admissible words, while the \textit{$\theta$-length} $|\textbf{p}|_\theta$ is defined analogously. Cells corresponding to $(\theta,q)$-relations, $(\theta,a)$-relations, or the hub relation are called \textit{$(\theta,q)$-cells}, \textit{$(\theta,a)$-cells}, or \textit{hubs}, respectively.

In the general setting of a reduced diagram $\Delta$ over an arbitrary presentation $\pazocal{P}=\gen{X\mid\pazocal{R}}$, let $\pazocal{Z}\subseteq X$.  An edge of $\Delta$ is called a $\pazocal{Z}$-edge if its label is in $\pazocal{Z}^{\pm1}$.  For $m\geq1$, a sequence of (distinct) cells $\pazocal{B}=(\Pi_1,\dots,\Pi_m)$ in $\Delta$ is called a \textit{$\pazocal{Z}$-band} of length $m$ if:

\begin{itemize}

\item every two consecutive cells $\Pi_i$ and $\Pi_{i+1}$ have a common boundary $\pazocal{Z}$-edge $\textbf{e}_i$, 

\item $\textbf{e}_{i-1}^{-1}$ and $\textbf{e}_i$ are the only two $\pazocal{Z}$-edges of $\partial\Pi_i$, and

\item $\text{Lab}(\textbf{e}_i)$ are either all positive or all negative.

\end{itemize}

A $\pazocal{Z}$-band is \textit{maximal} if it is not a subsequence of another such band. Extending the definition so that a $\pazocal{Z}$-edge is a $\pazocal{Z}$-band of length $0$, it follows that that every $\pazocal{Z}$-edge in $\Delta$ is contained in a maximal $\pazocal{Z}$-band.

\begin{figure}[H]
\centering
\begin{subfigure}[b]{0.48\textwidth}
\centering
\raisebox{0.5in}{\includegraphics[scale=1.25]{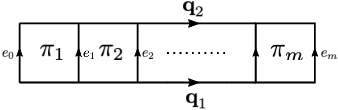}}
\caption{Non-annular $\pazocal{Z}$-band of length $m$}
\end{subfigure}\hfill
\begin{subfigure}[b]{0.48\textwidth}
\centering
\includegraphics[scale=1.4]{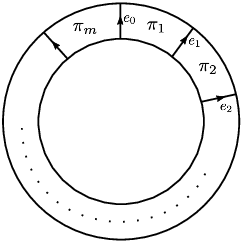}
\caption{Annular $\pazocal{Z}$-band of length $m$ \cite{WEmb}}
\end{subfigure}
\caption{ \ }
\label{fig-bands}
\end{figure}

In any $\pazocal{Z}$-band $\pazocal{B}$ of length $m\geq1$, there exists a closed path $\textbf{e}_0^{-1}\textbf{q}_1\textbf{e}_m\textbf{q}_2^{-1}$ such that $\textbf{q}_1$ and $\textbf{q}_2$ are simple (perhaps closed) paths consisting entirely of edges from the boundaries of the cells comprising $\pazocal{B}$ (see \Cref{fig-bands}). In this case, $\textbf{q}_1$ is called the \textit{bottom side} of $\pazocal{B}$, denoted $\textbf{bot}(\pazocal{B})$, while $\textbf{q}_2$ is called the \textit{top side} of $\pazocal{B}$ and denoted $\textbf{top}(\pazocal{B})$. 

If $\textbf{e}_0=\textbf{e}_m$ in a $\pazocal{Z}$-band $\pazocal{B}$ of length $m\geq1$, then $\pazocal{B}$ is called \textit{$\pazocal{Z}$-annulus}.  Note that in this case $\pazocal{B}$ forms an annular subdiagram of $\Delta$ (see \Cref{fig-bands}(b)); $\pazocal{B}$ is then called a \textit{contractible} $\pazocal{Z}$-annulus if either side forms a contractible loop in $\Delta$.

If $\pazocal{B}$ is a non-annular $\pazocal{Z}$-band of length $m\geq1$, then the closed path $\textbf{e}_0^{-1}\textbf{q}_1\textbf{e}_m\textbf{q}_2^{-1}$ bounds a circular subdiagram of $\Delta$ consisting of the cells comprising $\pazocal{B}$ (see \Cref{fig-bands}(a)).  In this case, we identify this subdiagram with $\pazocal{B}$ and call $\textbf{e}_0^{-1}\textbf{q}_1\textbf{e}_m\textbf{q}_2^{-1}$ is called the \textit{standard factorization} of the contour of $\pazocal{B}$.  If either $(\textbf{e}_0^{-1}\textbf{q}_1\textbf{e}_m)^{\pm1}$ or $(\textbf{e}_m\textbf{q}_2^{-1}\textbf{e}_0^{-1})^{\pm1}$ is a subpath of $\partial\Delta$, then $\pazocal{B}$ is called a \textit{rim $\pazocal{Z}$-band} in $\Delta$.



A $\pazocal{Z}_1$-band and a $\pazocal{Z}_2$-band \textit{cross} if they have a common cell and $\pazocal{Z}_1\cap\pazocal{Z}_2=\emptyset$.




%

In our setting of interest, {\frenchspacing i.e. diagrams over the presentations of groups associated to $\textbf{M}$}, we have $a$-, $q$-, and $\theta$-bands that arise from the corresponding edges.  In particular:

\begin{itemize}

\item An $a$-band is given by taking $\pazocal{Z}=\{a\}$ for some $a\in Y$.

\item A $q$-band is given by taking $\pazocal{Z}=Q_i$ for some $i$.  

\item A $\theta$-band is given by taking $\pazocal{Z}=\{\theta_i\}$ for some $\theta\in\Theta^+$.

\end{itemize}

Note that \Cref{simplify rules} implies an $a$-band consists entirely of $(\theta,a)$-cells, while the definition of the accept configuration implies a $q$-band consists entirely of $(\theta,q)$-cells.  Hence, all of these bands are formed over (the presentation of) the group $M(\textbf{M})$.

It is clear from the definition of the relations that no distinct maximal $a$-bands can intersect, nor can distinct maximal $q$-bands or distinct maximal $\theta$-bands.


If a maximal $a$-band contains a cell with an $a$-edge that is also on the contour of a $(\theta,q)$-cell, then the $a$-band is said to \textit{end} (or \textit{start}) on that $(\theta,q)$-cell and the corresponding $a$-edge is said to be the \textit{end} (or \textit{start}) of the band.  In the analogous way, a maximal $q$-band may end on a hub, while any maximal band ($q$-, $a$-, or $\theta$-) may end on (a component of) the boundary of the diagram. Note that a maximal band that ends in one part of the diagram must also end in another part.  In an annular diagram, a band that has one end on each boundary component is called \textit{radial}. 

The natural projection of the label of the top (or bottom) of a $q$-band onto $F(\Theta^+)$ is called the \textit{history} of the band. The history of an $a$-band can be defined similarly.  The natural projection (without reduction) of the top (or bottom) of a $\theta$-band onto the alphabet $\{Q_0,\dots,Q_s\}^{\pm1}$ is called the \textit{base} of the band.

Suppose the sequence of cells $(\pi_0,\pi_1,\dots,\pi_m)$ comprises a $\theta$-band $\pazocal{T}$ and $(\gamma_0,\gamma_1,\dots,\gamma_\ell)$ a $q$-band $\pazocal{Q}$ such that $\pi_0=\gamma_0$, $\pi_m=\gamma_\ell$, and no other cells are shared. Suppose further that $\partial\pi_0$ and $\partial\pi_m$ both contain $\theta$-edges on the outer countour of the annulus bounded by the two bands. Then the union of these two bands is called a \textit{$(\theta,q)$-annulus} (see \Cref{fig-annular-band}). 

A $(\theta,a)$-annulus is defined similarly.


\begin{figure}[H] 
\centering
\includegraphics[scale=1.45]{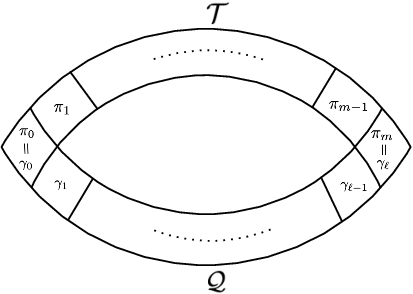} 
\caption{$(\theta,q)$-annulus with defining $\theta$-band $\pazocal{T}$ and $q$-band $\pazocal{Q}$} \label{fig-annular-band}
\end{figure}

The following statement is proved in a more general setting in \cite{O97}:

\begin{lemma}[Lemma 6.1 of \cite{O97}] \label{M(S) annuli}

A reduced circular diagram over $M(\textbf{M})$ contains no:

\begin{enumerate}[label=({\arabic*})]

\item $(\theta,q)$-annuli

\item $(\theta,a)$-annuli

\item $a$-annuli

\item $q$-annuli

\item $\theta$-annuli

\end{enumerate}

\end{lemma}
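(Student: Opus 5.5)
The plan is to prove all five items with a single device: take an innermost configuration and contradict reducedness, treating the items in the order (5), (3), (4), then (1), (2).

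\emph{Step 1: $\theta$-annuli.} Suppose a reduced circular diagram $\Delta$ over $M(\textbf{M})$ contains a $\theta$-annulus. Among all $\theta$-annuli we choose one, $\pazocal{T}$, whose inner boundary bounds a subdiagram $\Delta_0$ containing the fewest cells. Any $q$-edge on the inner side of $\pazocal{T}$ starts a maximal $q$-band in $\Delta_0$. Such a band cannot end on $\partial\Delta$, and over $M(\textbf{M})$ there are no hubs. It therefore must end on $\pazocal{T}$ again, producing two cells of $\pazocal{T}$ joined by a $q$-band inside $\Delta_0$.

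A $q$-band crosses each $\theta$-band at most once inside a minimal configuration. Consequently the $q$-band together with a subband of $\pazocal{T}$ forms a $(\theta,q)$-annulus, whose interior is strictly smaller. In the case where the inner side of $\pazocal{T}$ carries no $q$-edges, only $(\theta,a)$-cells are involved. The labels of the inner and outer contours of $\pazocal{T}$ then agree as words in $F(Y)$, since $\theta$ commutes with every letter of its domain. Hence $\Delta_0$ together with $\pazocal{T}$ can be replaced by $\Delta_0$ alone, or $\pazocal{T}$ contains a pair of mirror cells cancelling. Either way this contradicts reducedness or minimality.

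\emph{Step 2: $a$-annuli and $q$-annuli.} These are handled identically. An innermost $q$-annulus has only $\theta$-edges on its sides. Each such edge starts a $\theta$-band in the interior, and since $\theta$-annuli are excluded by Step 1, that band must return to the same $q$-annulus. Taking an innermost such return yields two adjacent cells of the $q$-band with the same $\theta$-letter of opposite sign, that is, a cancellable pair. For an $a$-annulus the same argument applies verbatim.

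\emph{Step 3: $(\theta,q)$- and $(\theta,a)$-annuli.} Take one with minimal interior. Any $q$-edge (respectively $a$-edge) inside it starts a band that cannot cross the defining $\theta$-band twice or itself form an annulus, by Steps 1 and 2. Therefore the defining $q$-band (respectively $a$-band) has, on its inner side, $\theta$-edges whose bands must exit through the defining $\theta$-band. Such an exit forms a smaller $(\theta,q)$-annulus or $(\theta,a)$-annulus, or else a $\theta$-band crossing a $\theta$-band, which is impossible since distinct maximal $\theta$-bands do not intersect. When no such edges exist, the two end cells $\pi_0$ and $\pi_m$ are adjacent along the $q$-band. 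They then share a $\theta$-letter with opposite orientation and cancel, again contradicting reducedness.

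\emph{Main obstacle.} The step I expect to be delicate is the claim in Step 1 that two cells of a $\theta$-band joined by a $q$-band (or an $a$-band) yield a strictly smaller configuration. This needs careful orientation bookkeeping, to ensure the cells really are mirror images when adjacent. I would resolve it by induction on the number of cells enclosed, proving all five statements simultaneously rather than sequentially.
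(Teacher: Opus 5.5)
The paper gives no proof of its own here; it quotes Lemma 6.1 of \cite{O97}. Your innermost-annulus strategy is the standard one, but as written the argument does not go through.

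\textbf{Gap in Step 1.} Replacing $\Delta_0\cup\pazocal{T}$ by $\Delta_0$ contradicts nothing. The lemma is about \emph{reduced} diagrams, not area-minimal ones, and ``or $\pazocal{T}$ contains a pair of mirror cells'' is exactly what has to be proved. The missing observation is this:
\begin{itemize}
\item Every defining relator of $M(\textbf{M})$ contains two $\theta$-letters, while the inner contour of $\pazocal{T}$ contains none.
\item So any cell of $\Delta_0$ would lie on a maximal $\theta$-band that cannot reach $\partial\Delta_0$. That band is a $\theta$-annulus enclosing fewer cells.
\item Hence for an innermost $\pazocal{T}$, $\Delta_0$ has no cells: it is a tree, and the label of the inner contour is freely trivial.
\end{itemize}
If $\pazocal{T}$ consists of $(\theta,a)$-cells, each contributes one $a$-edge to that contour. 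A leaf of the tree then gives two consecutive cells of $\pazocal{T}$ sharing both their common $\theta$-edge and an $a$-edge. These are mirror images, contradicting reducedness. If instead the contour contains a $q$-edge, that edge is traversed twice. So two cells of $\pazocal{T}$ share it, and with an arc of $\pazocal{T}$ they form a $(\theta,q)$-annulus. Step 1 therefore genuinely needs item (1).

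\textbf{Circularity.} Step 1 also uses ``a $q$-band crosses each $\theta$-band at most once'', which is item (1). Step 3 cites Steps 1--2. In Step 2, the innermost $\theta$-band returning to a $q$- or $a$-annulus may carry cells, so it produces a $(\theta,q)$- or $(\theta,a)$-annulus rather than a cancellable pair. Simultaneous induction on the number of enclosed cells does not by itself repair Step 1. The $(\theta,q)$-annulus obtained there encloses zero cells, just as $\pazocal{T}$ does, so there is no strict decrease.

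\textbf{Suggested repair.} Reverse your order, since (1) and (2) need nothing else.
\begin{itemize}
\item For (1), take a $(\theta,q)$-annulus minimizing the number of its cells plus the cells it encloses. A $\theta$-edge on the inner side of $\pazocal{Q}'$ starts a $\theta$-band that must return to $\pazocal{Q}'$, giving a smaller annulus. Hence $\pazocal{Q}'=(\pi_0,\pi_m)$.
\item These two cells share a $q$-edge, not a $\theta$-edge as you wrote. Their remaining $\theta$-edges lie on the outer contour, so $\pazocal{T}'$ leaves $\pi_0$ and re-enters $\pi_m$ on the same side of $\pazocal{Q}$. Thus the history of $\pazocal{Q}$ reads $\theta\theta^{-1}$ there, and $\pi_m$ is the mirror image of $\pi_0$ across the shared edge.
\item Item (2) is proved the same way with $(\theta,a)$-annuli.
\item Items (3) and (4) follow: every cell of a $q$- or $a$-band has $\theta$-edges on both sides, and a $\theta$-band entering the interior from one of them must return, forming a $(\theta,q)$- or $(\theta,a)$-annulus.
\item Item (5) then follows by the cell-free-interior argument above.
\end{itemize}
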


As a result, in a reduced circular diagram $\Delta$ over $M(\textbf{M})$, if a maximal $\theta$-band and a maximal $q$-band (respectively $a$-band) cross, then their intersection is exactly one $(\theta,q)$-cell (respectively $(\theta,a)$-cell). Further, every maximal $\theta$-band and maximal $q$-band ends on $\partial\Delta$ in two places.

Indeed, the next statement is proved in a more general setting in \cite{W}.

\begin{lemma}[Lemma 8.1 of \cite{W}] \label{G(S) annuli}

A reduced circular diagram over $G(\textbf{M})$ contains no:

\begin{enumerate}

\item $(\theta,q)$-annuli

\item $(\theta,a)$-annuli

\item $a$-annuli

\item $q$-annuli

\end{enumerate}

\end{lemma}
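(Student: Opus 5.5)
The plan is to reduce \Cref{G(S) annuli} to \Cref{M(S) annuli} by isolating the hubs, and then to rule out each type of annulus by contradiction. I treat each case the same way. Suppose $\Delta$ is a reduced circular diagram over $G(\textbf{M})$ containing an annulus of the given type, and choose one, $\pazocal{A}$, whose inner subdiagram $\Delta_0$ (the disk bounded by the outer contour of $\pazocal{A}$) has the minimal number of cells. If $\Delta_0$ contains no hub, then $\Delta_0$ is a reduced diagram over $M(\textbf{M})$ and \Cref{M(S) annuli} gives an immediate contradiction. So the whole argument rests on showing that $\Delta_0$ cannot contain a hub.

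Suppose $\Pi$ is a hub in $\Delta_0$. Its boundary label is $W_{ac}$, so every cyclic coordinate of it contributes a $t$-letter and $q$-letters of every part. The $q$-edges of $\partial\Pi$ start maximal $q$-bands. Each of these bands must end on another hub in $\Delta_0$ or cross $\pazocal{A}$ and leave, since by minimality $\Delta_0$ contains no $q$-annuli.

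I then split according to the annulus type.

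\textbf{$\theta$-bands and $q$-bands, cases (1) and (4).} If $\pazocal{A}$ is a $(\theta,q)$-annulus or a $q$-annulus, its defining $q$-band and $\theta$-band meet the bands leaving $\Delta_0$ in at most a bounded number of cells, since bands of the same type do not cross. So at most two $q$-bands can exit. All the remaining $q$-bands from $\Pi$ must pair up with other hubs inside $\Delta_0$.

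\textbf{$a$-bands, cases (2) and (3).} If $\pazocal{A}$ is a $(\theta,a)$- or $a$-annulus, its $a$-band cannot cross any $q$-band, since an $a$-band consists only of $(\theta,a)$-cells. So no $q$-band can exit at all in this case.

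Either way, I obtain a subdiagram of $\Delta_0$ containing hubs in which almost all $q$-bands from each hub connect to other hubs. I expect the main obstacle to be ruling this out. I would treat it by the standard graph argument: form the planar graph whose vertices are the hubs and whose edges are the $q$-bands connecting them. Planarity together with the large number $LN$ of $q$-edges per hub forces two hubs joined by two consecutive $q$-bands with no hub between them, that is, a small hub-free disk bounded by two $q$-bands and pieces of the hubs' boundaries.

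The $\theta$-bands crossing this disk start and end on the $q$-bands. Their histories show, via the computational lemmas on reduced computations between accepted configurations, that the two hubs are mirror images. Hence they cancel, which contradicts the reducedness of $\Delta$. This is the same cancellation argument as in Lemma 8.1 of \cite{W}, and I would import it essentially verbatim.

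Once hubs are excluded from $\Delta_0$, \Cref{M(S) annuli} finishes all four cases. Note that $\theta$-annuli are deliberately absent from the statement: a hub can legitimately sit inside a $\theta$-annulus, so no argument of this kind is expected to exclude them.
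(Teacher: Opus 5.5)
Your argument has a genuine gap, in two places. First, in cases (1) and (2) the annulus $\pazocal{A}$ contains a $\theta$-band $\pazocal{T}$, and $q$-bands cross $\theta$-bands freely. Every $(\theta,q)$-cell of $\pazocal{T}$ other than the two end cells lies on a $q$-band running from $\Delta_0$ to its exterior. The fact that bands of the same type do not cross only prevents $q$-bands from crossing the $q$-band of $\pazocal{A}$; it says nothing about $\pazocal{T}$. So arbitrarily many spokes of a hub in $\Delta_0$ can leave through $\pazocal{T}$, just as a $\theta$-annulus may surround a hub, which you remark yourself. The claims that at most two, resp.\ no, $q$-bands exit are therefore false, and nothing forces hubs to be joined to hubs.

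Second, take cases (3) and (4), where $\partial\Delta_0$ really has no $q$-edges and the planar-graph argument does give two hubs $\Pi_1,\Pi_2$ joined by consecutive $q$-bands around a hub-free trapezium $\Psi$. Even there you get no contradiction with reducedness. A reduced diagram only excludes \emph{adjacent} mirror cells, and if $\Psi$ has positive height the hubs are not adjacent. Replacing $\Pi_1\cup\Psi\cup\Pi_2$ as in \Cref{t-spokes between disks} lowers the number of hubs, which would contradict minimality. But the lemma concerns arbitrary reduced diagrams over $G(\textbf{M})$ for an arbitrary machine $\textbf{M}$, and reducedness alone does not rule such a configuration out.

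The missing observation is that hubs have no $\theta$-edges. So a $\theta$-band never ends on a hub, and the argument for $M(\textbf{M})$ goes through with hubs present; there is no need to expel them.

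For (1), take a $(\theta,q)$-annulus whose $q$-band $(\gamma_0,\dots,\gamma_\ell)$ is shortest. The only $\theta$-edges on its inner contour are the inner $\theta$-edges of $\gamma_1,\dots,\gamma_{\ell-1}$, since a side of $\pazocal{T}$ has none. So the $\theta$-band starting at one of them must end at the inner $\theta$-edge of another $\gamma_j$. That yields a $(\theta,q)$-annulus with a shorter $q$-band, and is outright impossible if $\ell=2$. Hence $\ell=1$. Then $\pi_0=\gamma_0$ and $\pi_m=\gamma_1$ share a $q$-edge and correspond to mutually inverse letters of the history of the $q$-band, so they form a cancellable pair.

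Case (2) is identical, with an $a$-band in place of the $q$-band. For (4) and (3), every cell of a $q$- or $a$-annulus has a $\theta$-edge on its inner side. The $\theta$-band starting there must return to the annulus, producing a $(\theta,q)$- or $(\theta,a)$-annulus. Whatever hubs lie inside play no role.
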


The next statement follows immediately from the ``sewing and detaching procedure" described in \cite{Lyndon-Schupp} (see {\frenchspacing pp. 150-151}) or the concept of ``$0$-cells" introduced in \cite{O}.

\begin{lemma}[Compare with Lemma 4.3 of \cite{GW}] \label{exciseTwoPaths}
Let $\gamma_1$ and $\gamma_2$ be two disjoint simple loops in a circular {\frenchspacing(resp. annular)} diagram $\Delta$ over some group presentation $\pazocal{P}$.  Suppose $\gamma_1$ and $\gamma_2$ have freely conjugate labels and bound an annulus $\Gamma$ that contains at least one (positive) cell.  Then there exists a circular {\frenchspacing(resp. annular)} diagram $\Delta'$ over $\pazocal{P}$ with the same boundary label(s) as $\Delta$ formed by removing any cells `between' $\gamma_1$ and $\gamma_2$.
    
\end{lemma}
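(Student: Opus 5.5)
The idea is to cut the diagram along the two loops, throw away the annulus $\Gamma$, and glue the remaining pieces back together along a single loop labelled by the common (free-conjugacy-class) label. We work with the convention of $0$-cells (or, equivalently, the sewing/detaching procedure of \cite{Lyndon-Schupp}), so that diagrams may contain $0$-refinements and we can freely insert or remove degenerate regions whose boundary labels are freely trivial.

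\textbf{Step 1: cut.} Since $\gamma_1$ and $\gamma_2$ are disjoint simple loops bounding the annulus $\Gamma$, cutting $\Delta$ along both loops splits it into three pieces:
\begin{itemize}
\item the annulus $\Gamma$;
\item an \emph{inner} piece $\Delta_1$, the disk bounded by the inner loop (say $\gamma_1$) in the circular case;
\item an \emph{outer} piece $\Delta_2$, containing $\partial\Delta$ and having $\gamma_2$ as an additional boundary component.
\end{itemize}
In the annular case the two loops either separate the two boundary components or both are contractible. The pieces are then, respectively, two annuli, or a disk plus an annulus with a hole. In every case $\Gamma$ is not adjacent to $\partial\Delta$ except through $\gamma_1$ and $\gamma_2$.

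\textbf{Step 2: glue.} Write $\text{Lab}(\gamma_1)\equiv u$ and $\text{Lab}(\gamma_2)=_F wuw^{-1}$ for reading from suitable basepoints. Replace $\Gamma$ by a thin annulus of $0$-cells, i.e. a diagram over the free group realizing the free conjugacy between $u$ and $\text{Lab}(\gamma_2)$. Such an annular diagram exists and contains no positive cells. This is a standard fact, following e.g. from folding a van Kampen diagram of $wuw^{-1}\text{Lab}(\gamma_2)^{-1}$ over the trivial presentation.

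Gluing $\Delta_1$ and $\Delta_2$ to this $0$-annulus along $\gamma_1$ and $\gamma_2$ gives a diagram $\Delta'$ of the same topological type as $\Delta$. Because the outer boundary components are untouched, $\Delta'$ has the same boundary label(s). Its positive cells are exactly those of $\Delta$ outside $\Gamma$. Eliminating the $0$-cells by the standard sewing procedure yields an honest diagram, possibly with further cancellation of cells, which only reduces the cell count.

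\textbf{Main obstacle.} The only delicate point is orientation and basepoints. We must check that the labels of $\gamma_1$ and $\gamma_2$, read counterclockwise as the contours of $\Delta_1$ and of the hole of $\Delta_2$, are freely conjugate in the right sense (not inverse to one another), so that the gluing is orientation-consistent. This follows because both are read as boundary components of the same annulus $\Gamma$ with compatible orientations, together with the hypothesis that their labels are freely conjugate.
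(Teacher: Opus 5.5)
Your proposal is correct and follows the paper's approach. The paper proves this lemma only by citing the sewing/detaching procedure of \cite{Lyndon-Schupp} or the $0$-cells of \cite{O}. Your argument (cut along both loops, discard $\Gamma$, reglue the pieces along an annulus of $0$-cells realizing the free conjugacy) is exactly what that citation leaves implicit, and you are right that the hypothesis must be read with both loops traversed in parallel directions around the annulus.
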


%
%

\medskip


\subsection{Trapezia} \label{sec-trapezia} \

These diagrams will be critical in our argument, and we follow \cite[Section 7.3]{WEmb} nearly exactly.

Let $\Delta$ be a reduced circular diagram over $M(\textbf{M})$ whose contour can be factored as $\textbf{p}_1^{-1}\textbf{q}_1\textbf{p}_2\textbf{q}_2^{-1}$, where $\textbf{p}_1$ and $\textbf{p}_2$ are sides of $q$-bands and $\textbf{q}_1$ and $\textbf{q}_2$ are maximal parts of the sides of $\theta$-bands whose labels start and end with $q$-letters. Then $\Delta$ is called a \textit{trapezium}.

In this case, $\textbf{q}_1$ and $\textbf{q}_2$ are called the \textit{bottom} and \textit{top} of the trapezium, respectively, denoted $\textbf{bot}(\Delta)$ and $\textbf{top}(\Delta)$. Further, $\textbf{p}_1$ and $\textbf{p}_2$ are called the \textit{left} and \textit{right} sides of the trapezium. 


The \textit{history} of the trapezium is the history of the rim $q$-band with side $\textbf{p}_2$ and the length of this history is the trapezium's \textit{height}. The base of $\text{Lab}(\textbf{q}_1)$ is called the \textit{base} of the trapezium.

Note that a non-annular $\theta$-band $\pazocal{T}$ of positive length whose first and last cells are $(\theta,q)$-cells can be viewed as a trapezium of height 1.   In this case, the bottom and top of the corresponding trapezium are called the \textit{trimmed} bottom and top of the band, denoted $\textbf{tbot}(\pazocal{T})$ and $\textbf{ttop}(\pazocal{T})$, respectively.

\begin{figure}[H]
\centering
\includegraphics[scale=1.8]{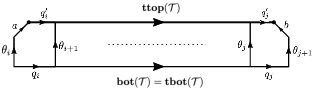}
\caption{$\theta$-band $\pazocal{T}$ with trimmed top}
\end{figure}

Conversely, \Cref{M(S) annuli} implies any trapezium $\Delta$ of height $h\geq1$ is the union of a sequence of $\theta$-bands $\pazocal{T}_1,\dots,\pazocal{T}_h$ connecting the left and right sides of the trapezium, with $\textbf{tbot}(\pazocal{T}_1)$ and $\textbf{ttop}(\pazocal{T}_h)$ making up the bottom and top of $\Delta$, respectively. The top of $\pazocal{T}_i$ is, moreover, essentially adjacent to the bottom of $\pazocal{T}_{i+1}$; formally stated, we mean that $\textbf{ttop}(\pazocal{T}_i)=\textbf{tbot}(\pazocal{T}_{i+1})$ for all $1\leq i\leq h-1$.  In this case, the bands $\pazocal{T}_1,\dots,\pazocal{T}_h$ are said to be \textit{enumerated from bottom to top}.

The following two statements are proved in more generality in \cite{WMal} and exemplify how the group $M(\textbf{M})$ simulates the work of the $S$-machine $\textbf{M}$:

\begin{lemma} \label{trapezia are computations}

Let $\Delta$ be a trapezium with history $H\equiv\theta_1\dots\theta_h$ for $h\geq1$ and maximal $\theta$-bands $\pazocal{T}_1,\dots,\pazocal{T}_h$ enumerated from bottom to top. Letting $W_{j-1}\equiv\lab(\textbf{tbot}(\pazocal{T}_j))$ for $1\leq j\leq h$ and letting $W_h\equiv\lab(\textbf{ttop}(\pazocal{T}_h))$, then there exists a reduced computation $W_0\to\dots\to W_h$ of $\textbf{M}$ with history $H$.

\end{lemma}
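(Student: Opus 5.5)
The proof goes by induction on the height $h$, reading the trapezium one $\theta$-band at a time. The key step is the case $h=1$: a single non-annular $\theta$-band $\pazocal{T}$ whose first and last cells are $(\theta,q)$-cells, with $\lab(\textbf{tbot}(\pazocal{T}))\equiv W_0$ and $\lab(\textbf{ttop}(\pazocal{T}))\equiv W_1$. The goal there is to show that $W_0$ is a $\theta$-admissible word and that $W_0\cdot\theta\equiv W_1$, where $\theta=\theta_1$ is the rule corresponding to the band.

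First, I will show that $W_0$ is admissible. By \Cref{M(S) annuli}, the maximal $q$-bands crossing $\pazocal{T}$ each meet it in exactly one $(\theta,q)$-cell. Between two consecutive $(\theta,q)$-cells, the cells of $\pazocal{T}$ are all $(\theta,a)$-cells, and each such cell commutes $\theta_i$ with a letter $y\in Y_i(\theta)$. So the bottom label between two consecutive $q$-letters is a word over $Y_i(\theta)$, and the top label is the same word; here I am ignoring the $a$-letters contributed by the adjacent $(\theta,q)$-cells. The index $i$ is determined by the subscript of $\theta$, which is constant along a stretch of $(\theta,a)$-cells and changes according to the $(\theta,q)$-relations $q_i\theta_{i+1}=\theta_iv_iq_i'u_{i+1}$. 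This subscript bookkeeping forces the sector conditions (1) and (2) in the definition of admissible words, including the case of an unreduced base $Q_iQ_i^{-1}$, in which the band returns with the same letter. It also shows that every letter of $W_0$ lies in the domain of $\theta$, so $W_0$ is $\theta$-admissible.

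Second, I will read off the top label. Each $(\theta,q)$-cell replaces $q_i^{\pm1}$ by $(v_iq_i'u_{i+1})^{\pm1}$, and the $(\theta,a)$-cells copy the tape letters unchanged. Hence the top label of $\pazocal{T}$ is exactly the word obtained by substituting into $W_0$ before the free reduction. The top and bottom paths of a band may contain cancelling pairs. Passing to the trimmed top, which by definition starts and ends with $q$-letters, and using that $\Delta$ is reduced, the label of $\textbf{ttop}(\pazocal{T})$ is the freely reduced word with its outer tape prefix and suffix removed. This is exactly $W_0\cdot\theta$ as defined for $S$-rules.

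Third, the inductive step is immediate from $\textbf{ttop}(\pazocal{T}_i)=\textbf{tbot}(\pazocal{T}_{i+1})$: concatenating the one-step computations gives a computation $W_0\to\dots\to W_h$ with history $H$. The computation is reduced because two consecutive bands with mutually inverse histories $\theta$, $\theta^{-1}$ would contain a pair of cancellable cells adjacent along a $q$-edge, contradicting that $\Delta$ is reduced.

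I expect the main obstacle to be the free-reduction issue in the second step. The top path of a band need not be reduced, while $W\cdot\theta$ is defined via complete free reduction, and I must justify that the trimmed label equals the reduced word. I would handle this in the standard way for this setting, by citing the version of the statement in \cite{WMal}, or \cite{O18} and \cite{OS20}, and using the convention that band sides are considered up to free cancellation of their labels.
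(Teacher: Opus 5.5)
Your argument is the standard one, and it matches what the paper relies on. The paper gives no proof of its own here and defers to \cite{WMal} (compare the analogous lemmas in \cite{O18} and \cite{OS20}). There the statement is obtained in the same three moves you use: read each maximal $\theta$-band as one application of its rule, concatenate along $\textbf{ttop}(\pazocal{T}_j)=\textbf{tbot}(\pazocal{T}_{j+1})$, and get a reduced history because two consecutive cells of the rim $q$-band carrying mutually inverse $\theta$-letters would be cancellable.

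The one point that fails as you justify it is the claim in your second step that, because $\Delta$ is reduced, $\lab(\textbf{ttop}(\pazocal{T}))$ is automatically freely reduced. Reducedness only excludes cancellable (mirror-image) pairs of cells. It does not prevent a $(\theta,q)$-cell whose top contributes $q_i'u_{i+1}$ from being followed by a $(\theta,a)$-cell with tape letter $u_{i+1}^{-1}$, and these two cells are not cancellable. For example, a single $\theta$-band whose bottom reads $q_iu_{i+1}^{-1}q_{i+1}$ is a reduced trapezium of height $1$. Its trimmed top contains $u_{i+1}u_{i+1}^{-1}$, so it is not an admissible word.

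The same thing happens on the bottom side of a band whose letter in $H$ is a negative rule. So your first-step claim that $\lab(\textbf{tbot}(\pazocal{T}))$ is admissible needs the same treatment. The repair is the convention you mention at the end: fold the sides of bands via $0$-refinement so their labels are freely reduced, or equivalently read $\lab$ as the reduced form. This is the standard convention in this literature. Present it as a necessary ingredient of the argument, not as a consequence of $\Delta$ being reduced.
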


\begin{lemma} \label{computations are trapezia}

For any non-empty reduced computation $W_0\to\dots\to W_t$ of $\textbf{M}$ with history $H$, there exists a trapezium $\Delta$ such that: 

\begin{enumerate} [label=(\alph*)]

\item $\lab(\textbf{bot}(\Delta))\equiv W_0$

\item $\lab(\textbf{top}(\Delta))\equiv W_t$

\item The history of $\Delta$ is  $H$

\item $\text{Area}(\Delta)\leq t\max(\|W_0\|,\dots,\|W_t\|)$

\end{enumerate}

\end{lemma}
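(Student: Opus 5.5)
The plan is to build $\Delta$ band by band, one $\theta$-band for each rule of the computation, and then stack these bands into a trapezium. Let $\pazocal{C}:W_0\to\dots\to W_t$ have history $H\equiv\theta_1\cdots\theta_t$. By \Cref{simplify rules}, each part of $\theta_j$ has the form $q_i\to v_iq_i'u_{i+1}$ with $\|u_{i+1}\|,\|v_i\|\leq1$. Since $W_{j-1}$ is $\theta_j$-admissible, every tape letter of $W_{j-1}$ lies in the domain of $\theta_j$ in its sector. The first step is to construct a single $\theta_j$-band $\pazocal{T}_j$ with trimmed bottom labelled $W_{j-1}$ and trimmed top labelled $W_j$. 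Write $W_{j-1}\equiv q_{i_0}^{\eps_0}w_1q_{i_1}^{\eps_1}\cdots w_mq_{i_m}^{\eps_m}$. For each state letter we attach the $(\theta,q)$-cell given by the relation $q_i\theta_{i+1}=\theta_iv_iq_i'u_{i+1}$ (or its mirror image when $\eps=-1$). For each tape letter $a$ of $w_l$ we attach the $(\theta,a)$-cell $\theta_ia=a\theta_i$ with the appropriate index $i$. These cells glue consecutively along $\theta$-edges, because the indexing in the relations is arranged so that the $\theta$-letter on the right of one cell matches the one on the left of the next (the subscripts are taken mod $s+1$, which also covers the cyclic case). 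The bottom of the resulting band reads $W_{j-1}$. Its top reads the word obtained by replacing each $q_i^{\pm1}$ by $(v_iq_i'u_{i+1})^{\pm1}$ while keeping the tape words, so it equals the unreduced form of $W_{j-1}\cdot\theta_j$.

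The second step handles the discrepancy between this unreduced top and $W_j$. By the definition of applying a rule, $W_j$ is obtained from the top by free reduction only; no prefix or suffix needs trimming, since the word starts and ends with $q$-letters. Each cancelling pair $aa^{-1}$ on the top comes from two adjacent cells whose $a$-edges are mutually inverse, or from a $(\theta,a)$-cell adjacent to a $(\theta,q)$-cell carrying the inverse letter. In either case we fold the two edges together. If the two cells are mirror images of one another, this produces a cancellable pair, which we remove; otherwise we simply fold the boundary. In this way the band becomes reduced and its trimmed top is labelled by $W_j$ exactly. This folding bookkeeping is the main obstacle: we must check that it preserves planarity and the band structure, that is, the cells remain a $\theta$-band with $q$-cells at the ends. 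I would handle it in the standard way of \cite{WMal}, by performing the cancellations on the level of the path label and noting that folding edges on one side of a band is a local operation.

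The third step glues $\pazocal{T}_{j+1}$ onto $\pazocal{T}_j$ along $\textbf{ttop}(\pazocal{T}_j)=\textbf{tbot}(\pazocal{T}_{j+1})$, both of which are labelled $W_j$. The union is a trapezium whose bottom is labelled $W_0$ and whose top is labelled $W_t$, which gives (a) and (b). Its rim $q$-band on the right side reads $\theta_1\cdots\theta_t$, which gives (c); reducedness follows because $H$ is reduced. For (d), band $\pazocal{T}_j$ has one cell per letter of $W_{j-1}$, so it has at most $\|W_{j-1}\|$ cells. Summing over $j$ gives
$$\text{Area}(\Delta)\leq\sum_{j=1}^{t}\|W_{j-1}\|\leq t\max(\|W_0\|,\dots,\|W_t\|).$$
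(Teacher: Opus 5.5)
Your proposal is correct. It is the standard argument the paper relies on: the paper gives no proof of its own and cites \cite{WMal}, which builds one $\theta$-band per rule, folds to absorb the free reduction, stacks the bands along the words $W_j$, and counts one cell per letter for (d).

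One detail needs adding. The relations $q_i\theta_{i+1}=\theta_iv_iq_i'u_{i+1}$ exist only for $\theta\in\Theta^+$, so your step 1 does not directly apply when $\theta_j\in\Theta^-$. In that case, build the band for $\theta_j^{-1}$ on $W_j$ and flip it. The unreduced side, and hence the folding, is then on the bottom, and the band has at most $\|W_j\|$ cells, so the bound in (d) is unaffected.
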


The next statement, introduced by the authors in \cite{GW}, is vital for the necessary arguments pertaining to annular diagrams, specifically for the case of `spirals'.

\begin{lemma}[Lemma 4.6 of \cite{GW}] \label{history as powers}
Let $\Delta$ be a trapezium with base $B$, history $H\equiv H_0^k$ for $k\geq1$, and maximal $\theta$-bands $\pazocal{T}_1,\dots,\pazocal{T}_h$ enumerated from bottom to top.  Then for $\ell=\|H_0\|$, either:

\begin{enumerate}

\item $\lab(\textbf{tbot}(\pazocal{T}_i))\equiv \lab(\textbf{tbot}(\pazocal{T}_{i+\ell}))$ for all $i$, or 

\item There exists a two-letter subword $UV$ of $B$ such that for $w_0,w_h$ the tape words of $\lab(\textbf{bot}(\Delta)),\lab(\textbf{top}(\Delta))$, respectively, in the $UV$-sector, $k\leq \|w_0\|+\|w_h\|+2\ell$.

\end{enumerate}

\end{lemma}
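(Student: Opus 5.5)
We pass from the trapezium to the computation it encodes and argue at the level of the $S$-machine. By \Cref{trapezia are computations}, letting $W_{j-1}\equiv\lab(\textbf{tbot}(\pazocal{T}_j))$ and $W_h\equiv\lab(\textbf{ttop}(\pazocal{T}_h))$, there is a reduced computation $\pazocal{C}:W_0\to\dots\to W_h$ with history $H\equiv H_0^k$, where $h=k\ell$. Everything then reduces to the computation restricted to two-letter subwords $UV$ of $B$. For each such $UV$, let $\pazocal{C}_{UV}$ be the restriction of $\pazocal{C}$ to the $UV$-sector. Since each rule multiplies the $UV$-sector on each side by at most one letter (\Cref{simplify rules}), the effect of one pass of $H_0$ on the tape word $w$ is $w\mapsto x\,w\,y$ for fixed words $x,y$ of length at most $\ell$ depending only on $H_0$ and the sector. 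This holds as long as each pass is defined, which it is because $\pazocal{C}$ exists.

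Next, split into cases according to whether every sector is fixed by a pass. If, for every two-letter subword $UV$ of $B$, the tape word at time $i$ equals the tape word at time $i+\ell$ for some (hence, arguing below, every) $i$, then $W_i\equiv W_{i+\ell}$. Indeed, the state letters agree automatically, since the history of $H_0$ returns the states to themselves (the $q$-letters after $H_0$ are determined by $H_0$ and are the same at the start of each pass, as $H_0^k$ is a legitimate history). This gives conclusion (1). For the ``every $i$'' claim: if $W_i\equiv W_{i+\ell}$ for $i=0$, then applying the same history $H_0$ repeatedly from identical words yields identical words, so periodicity propagates to all $i$. Hence it suffices to compare $W_0$ with $W_\ell$.

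Otherwise, some sector $UV$ has $u_0\not\equiv u_\ell$, where $u_j$ denotes its tape word at time $j$. Writing $u_{m\ell}$ as the reduced form of $x^m u_0 y^m$, we need to show the lengths must grow linearly in $m$, apart from a bounded cancellation phase. Treat $x$ and $y$ cyclically: write $x=a\bar x a^{-1}$ and $y=b^{-1}\bar y b$ with $\bar x,\bar y$ cyclically reduced. Then $x^m u_0 y^m = a\,\bar x^m (a^{-1}u_0 b^{-1})\,\bar y^m\, b$. If $\bar x$ or $\bar y$ is nontrivial, the powers can cancel against the middle word $a^{-1}u_0b^{-1}$ for at most about $(\|u_0\|+2\ell)/\min(\|\bar x\|,\|\bar y\|)$ passes; after that the length grows by at least one per pass, unless $\bar x^m$ and $\bar y^m$ cancel each other across the middle. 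That last case is exactly when the map $w\mapsto xwy$ fixes $u_0$ up to a conjugacy relation. This is the main obstacle: the case where $x$ and $y^{-1}$ are powers of a common root and $u_0$ lies in the right double coset, so that $u_{m\ell}$ may stay short or even become periodic. Here I would argue via the centralizer: $x u_0 y = u_0'$ with $u_0'$ again of the form $x^m u_0 y^m$. When $y=u_0^{-1}x^{-1}u_0$ the pass fixes $u_0$, which contradicts $u_0\not\equiv u_\ell$. Otherwise the word $u_0^{-1}x u_0 y\neq1$, and the lengths $\|x^m u_0y^m\|$ are eventually strictly increasing by a standard free-group estimate (compare the proof of \Cref{unreduced base}).

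Combining these, the number of passes is bounded: $k\le\|u_0\|+\|u_h\|+2\ell$. Reading this with $w_0=u_0$ and $w_h=u_h$ gives conclusion (2). The linear term absorbs both the cancellation phase (at most $\|u_0\|+2\ell$ passes) and the growth phase (at most $\|u_h\|$ passes, since the length increases with each pass and ends at $\|u_h\|$).
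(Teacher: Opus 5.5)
Your reduction is sound. The trapezium becomes a reduced computation via \Cref{trapezia are computations}. In each sector one pass of $H_0$ acts as $w\mapsto xwy$ with $\|x\|,\|y\|\le\ell$. The state letters repeat when $k\ge 2$, and for $k=1$ conclusion (1) is vacuous. Periodicity at one index propagates to all indices, so if (1) fails then some sector has $xu_0y\ne u_0$.

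However, the whole content of conclusion (2) lies in the next step, and that step is not proved. You need an explicit inequality such as $\|x^mu_0y^m\|\ge m-\|u_0\|-2\ell$ whenever $xu_0y\ne u_0$. You replace it with ``eventually strictly increasing by a standard free-group estimate''. That gives no bound on when the growth starts, and that onset is exactly what $k$ is being bounded by. The pointer to \Cref{unreduced base} does not help either: that lemma concerns rules that each multiply by a single letter on one side, with distinct rules giving distinct letters, whereas here $x,y$ are arbitrary words.

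Your description of the exceptional case is also wrong. The remnants of $\bar x^m$ and $\bar y^m$ can cancel each other across the middle for a long stretch without $u_0$ being fixed. For example, $x=\rho^2$, $y=\rho^{-1}$, $u_0=\rho^{-N}$ gives $x^mu_0y^m=\rho^{m-N}$. Its length keeps decreasing for another $N/2$ passes after the middle word has been consumed, even though $xu_0y\ne u_0$. This does not contradict (2), but it shows that your count of the cancellation phase (which only measures consumption of the middle word) and your claim that the length then grows at every pass are both incorrect as stated.

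The gap can be closed with a direct estimate at $m=k$, with no phase decomposition. Write $x=c_1\bar xc_1^{-1}$ and $y=c_2\bar yc_2^{-1}$ with $\bar x,\bar y$ cyclically reduced, so that $2\|c_1\|+\|\bar x\|\le\ell$ and $2\|c_2\|+\|\bar y\|\le\ell$. Put $z=c_1^{-1}u_0c_2$; the case $x=y=1$ is excluded.

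\begin{itemize}
\item If $z$ survives in $\bar x^mz\bar y^m$, the length is at least $m(\|\bar x\|+\|\bar y\|)-\|z\|$.
\item If $z$ is consumed, the product reduces to $AB$, where $A$ is a prefix of $\bar x^m$ and $B$ is a suffix of $\bar y^m$. If their mutual cancellation is shorter than $\|\bar x\|+\|\bar y\|$, you again get linear growth with a bounded error.
\item If that cancellation has length at least $\|\bar x\|+\|\bar y\|$, Fine--Wilf gives $\bar x=\rho^a$ and $\bar y=\rho_1^{-1}\rho^{-b}\rho_1$ for a primitive $\rho=\rho_1\rho_2$. Set $z'=z\rho_1^{-1}$. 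Either $z'=\rho^j$, in which case $u_m=c_1\rho^{(a-b)m+j}\rho_1c_2^{-1}$ and $a=b$ is exactly the excluded case $xu_0y=u_0$. Or $z'$ is not a power of $\rho$, in which case primitivity forces the residual cancellation to be shorter than $\|\rho\|$.
\end{itemize}

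In every case the error terms are absorbed by $2\ell-2\|c_1\|-2\|c_2\|\ge\|\bar x\|+\|\bar y\|$. This gives $m\le\|u_0\|+\|u_m\|+2\ell$ for all $m$, and taking $m=k$ yields conclusion (2).
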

%

\medskip


\subsection{Modified length function}\label{subsec-modified-length} \

To assist with the main arguments spanning the rest of this article, we now modify the length function on words over the generators of the groups associated to $S$-machines. This is done in the same way as in \cite{O18}, \cite{OS20}, \cite{WEmb}, and \cite{W}. 

The standard length of a word/path is henceforth called its \textit{combinatorial length} and this modified length simply its \textit{length}. 

A word consisting of one $\theta$-letter, no $q$-letters, and at most one $a$-letter is called a \textit{$(\theta,a)$-syllable}. Then, define the length of:

\begin{itemize}
	
	\item any $q$-letter as 1
	
	\item any $(\theta,a)$-syllable as $1$

	\item any $a$-letter as the parameter $\delta$ (see \Cref{sec-parameters})
	
\end{itemize}

Given a word $w$ over $\pazocal{X}$, define a \textit{decomposition} of $w$ to be a factorization of it into a product of letters and $(\theta,a)$-syllables.  The length of a decomposition is then the sum of the lengths of its factors.  The length of $w$, denoted $|w|$, is then taken to be the minimal length of one of its decompositions. 

Of course, the length of a path in a diagram over any presentation with generating set $\pazocal{X}$ is the length of its label.  Further, given $g\in G(\textbf{M})$, the \textit{length} $|g|$ is the minimal length of word that represents $g$.



The next statement gives some basic properties of the length function.

\begin{lemma}[Lemma 6.2 of \cite{OS20}] \label{lengths}

Let \textbf{s} be a path in a diagram $\Delta$ over $G(\textbf{M})$ consisting of $c$ $\theta$-edges and $d$ $a$-edges. Then:

\begin{enumerate}[label=({\alph*})]

\item $|\textbf{s}|\geq\max(c,c+(d-c)\delta)$

\item $|\textbf{s}|=c$ if $\textbf{s}$ is the top or a bottom of a $q$-band

\item For any product $\textbf{s}=\textbf{s}_1\textbf{s}_2$ of two paths in a diagram,
$$|\textbf{s}_1|+|\textbf{s}_2|-\delta\leq|\textbf{s}|\leq|\textbf{s}_1|+|\textbf{s}_2|$$

\item Let $\pazocal{T}$ be a $\theta$-band with base of length $l_b$. If $\textbf{top}(\pazocal{T})$ (or $\textbf{bot}(\pazocal{T})$) has $l_a$ $a$-edges, then the number of cells in $\pazocal{T}$ is between $l_a-l_b$ and $l_a+3l_b$.

\end{enumerate}

\end{lemma}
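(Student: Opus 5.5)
Since the definition of length is purely combinatorial, the plan is to argue directly from decompositions. For a path $\textbf{s}$ with $c$ $\theta$-edges and $d$ $a$-edges, any decomposition must contain each $\theta$-letter in its own factor: either a $(\theta,a)$-syllable (length $1$) or a lone letter of length $1$. At most $c$ of the $a$-letters can be absorbed into syllables, so every decomposition has length at least $c+\max(0,d-c)\delta$. This is at least $\max(c,c+(d-c)\delta)$, which proves (a). For (b), the top or bottom of a $q$-band has label a product $\theta_{i_1}^{\pm1}u_1\theta_{i_2}^{\pm1}u_2\cdots$, where, by \Cref{simplify rules} and the form of the $(\theta,q)$-relations, each $u_j$ contains at most one $a$-letter between consecutive $\theta$-letters. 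Grouping each $\theta$-letter with the adjacent $a$-letter gives a decomposition of length $c$, and (a) gives the lower bound $c$. Here one should check that the $a$-letters attached to a $q$-band side really can be paired injectively with $\theta$-edges, one on each side of a $(\theta,q)$-cell.

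For (c), the upper bound follows by concatenating optimal decompositions of $\textbf{s}_1$ and $\textbf{s}_2$. For the lower bound, take an optimal decomposition of $\textbf{s}$. At most one factor straddles the junction, namely a $(\theta,a)$-syllable split between the two halves. Splitting it produces a syllable of length $1$ on one side and a lone $a$-letter of length $\delta$ on the other, so $|\textbf{s}_1|+|\textbf{s}_2|\leq|\textbf{s}|+\delta$.

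For (d), the cells of $\pazocal{T}$ are $(\theta,q)$-cells, of which there are exactly $l_b$ (one per base letter), and $(\theta,a)$-cells. Each $(\theta,a)$-cell contributes exactly one $a$-edge to the top. Each $(\theta,q)$-cell contributes at most two $a$-edges to the top and may cancel $a$-edges, but its effect is bounded by a constant number of $a$-edges per base letter. The relation between the number of cells and $l_a$ is therefore $|\#\text{cells}-l_a|\leq O(l_b)$, and the precise constants $-l_b$ and $+3l_b$ should come from counting: a $(\theta,q)$-cell has at most $2$ $a$-edges on its boundary, which together with the cell itself accounts for at most $3$ per base letter. The main obstacle is this bookkeeping in (d), in particular accounting for $a$-edges on the top that belong to $(\theta,q)$-cells rather than $(\theta,a)$-cells, and for the possibility that the side of the band is not reduced. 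I would handle this by counting top edges cell by cell rather than via the reduced label.
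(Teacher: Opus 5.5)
Your proposal is correct and is the natural direct argument (the paper itself gives no proof, only citing \cite{OS20}): (a)--(c) follow from the definition of decompositions exactly as you describe, and (d) is a cell-by-cell count. Two small tidy-ups: in (b) it is not true that at most one $a$-letter lies between consecutive $\theta$-letters, since a side of a $q$-band can read $\ldots\theta a\,a'\theta'\ldots$ when two adjacent cells have opposite signs, so the pairing has to be made inside each $(\theta,q)$-cell, as your closing remark suggests; and in (d) the three facts you list already finish the job, because with $m$ the number of $(\theta,a)$-cells one has $\#\text{cells}=l_b+m$ and $l_a-2l_b\le m\le l_a$, which even gives the sharper range $[l_a-l_b,\,l_a+l_b]$.
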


\begin{lemma}[Lemma 4.8 of \cite{GW}] \label{minBoundaryLengths} 

Let $\Delta$ be a reduced diagram over $G(\textbf{M})$.  Suppose $\Delta$ contains a $q$-band with two ends on the component $\textbf{p}$ of $\partial\Delta$ such that there are no $(\theta,q)$-cells between $\textbf{p}$ and one side of the $q$-band.  Then there exists a diagram $\Delta'$ over $G(\textbf{M})$ with a corresponding boundary component $\textbf{p}'$ such that:

\begin{enumerate}[label=(\alph*)]

\item $\lab(\textbf{p}')$ and $\lab(\textbf{p})$ represent the same element of $G(\textbf{M})$

\item $|\textbf{p}|-|\textbf{p}'|\geq2$

\item For any other component of $\partial\Delta$, there exists a corresponding component of $\partial\Delta'$ with identical label.

\end{enumerate}

\end{lemma}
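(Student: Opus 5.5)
The plan is to remove the $q$-band together with the strip of cells lying between it and $\textbf{p}$, and to check that this shortens the boundary by at least $2$. Let $\pazocal{Q}$ be the given $q$-band. Its two ends are $q$-edges $\textbf{e}_1,\textbf{e}_2$ on $\textbf{p}$, so we may factor $\textbf{p}=\textbf{s}_1\textbf{e}_1\textbf{u}\textbf{e}_2^{-1}\textbf{s}_2$ (up to cyclic permutation and orientation). Here $\textbf{e}_1\textbf{u}\textbf{e}_2^{-1}$ together with one side $\textbf{v}$ of $\pazocal{Q}$ bounds a subdiagram $\Gamma$ that contains no $(\theta,q)$-cells. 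In an annular diagram we take $\textbf{u}$ to be the subpath of $\textbf{p}$ on the side with no $(\theta,q)$-cells.

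The first step is to show $\Gamma$ contains no $q$-edges at all. Any $q$-edge in $\Gamma$ would lie in a maximal $q$-band, and $q$-bands consist only of $(\theta,q)$-cells. The band would therefore have length $0$, so the edge lies on both $\textbf{v}$ and $\textbf{u}$. Such a $q$-edge on $\textbf{v}$ is impossible, since the side of a $q$-band carries only $\theta$- and $a$-letters. Hence $\textbf{u}$ has no $q$-edges, and $\Gamma$ is built only from $(\theta,a)$-cells, so in $M(\textbf{M})$ we have $\lab(\textbf{u})=\lab(\textbf{v})$.

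The second step is the replacement. Let $\textbf{v}'$ be the other side of $\pazocal{Q}$; the cells of $\pazocal{Q}$ give $\lab(\textbf{e}_1\textbf{v}\textbf{e}_2^{-1})=\lab(\textbf{v}')$ in $M(\textbf{M})$. Cut out $\Gamma\cup\pazocal{Q}$ and replace the boundary subpath $\textbf{e}_1\textbf{u}\textbf{e}_2^{-1}$ by $\textbf{v}'$, giving the new component $\textbf{p}'=\textbf{s}_1\textbf{v}'\textbf{s}_2$. Since the label changes by a relation of $M(\textbf{M})$, (a) holds. The rest of $\Delta$ is untouched, so the other boundary components keep their labels, giving (c). If $\textbf{v}'$ runs into an existing part of the diagram, we instead glue along it or use the $0$-cell / sewing formalism of \Cref{exciseTwoPaths}, which produces a diagram with the prescribed boundary labels.

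The third step is the length estimate (b), which is the main obstacle because of the modified length. By \Cref{lengths}(b), $|\textbf{v}'|=c$, where $c$ is the number of $\theta$-edges of the $q$-band side, which is the length of $\pazocal{Q}$. Every $\theta$-edge of $\textbf{v}$ is the start of a $\theta$-band in $\Gamma$. Since $\Gamma$ has no $q$-edges and no $\theta$-annuli (\Cref{M(S) annuli}), each such band must end on $\textbf{u}$. Hence $\textbf{u}$ has at least $c$ $\theta$-edges, and \Cref{lengths}(a) gives $|\textbf{u}|\ge c$. Using \Cref{lengths}(c) with the two $q$-letters of length $1$, we get $|\textbf{e}_1\textbf{u}\textbf{e}_2^{-1}|\ge c+2$ up to $\delta$ errors, and $|\textbf{p}|\ge|\textbf{s}_1|+|\textbf{s}_2|+c+2-O(\delta)$, while $|\textbf{p}'|\le|\textbf{s}_1|+|\textbf{s}_2|+c$.

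To remove the $\delta$ loss, work directly with an optimal decomposition of $\lab(\textbf{p})$. The $q$-letters $\textbf{e}_1,\textbf{e}_2$ are never absorbed into syllables, so they contribute exactly $2$. The decomposition of $\textbf{u}$ contributes at least its number of $\theta$-letters, namely at least $c$. Replacing that portion with $\textbf{v}'$, whose $a$-letters can be grouped into syllables with its $\theta$-letters, yields a decomposition of $\lab(\textbf{p}')$ of length at most $|\textbf{p}|-2$. This grouping is the delicate point to verify; it relies on $\textbf{v}'$ having at most one $a$-letter per cell by \Cref{simplify rules}.
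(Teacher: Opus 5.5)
Your proof is correct and is the standard argument behind this cited lemma (the paper itself gives no proof, only the reference to \cite{GW}). You remove $\pazocal{Q}$ together with the cells between it and $\textbf{p}$, join each of the $c$ $\theta$-edges of the inner side $\textbf{v}$ to a distinct $\theta$-edge of $\textbf{u}$ by a $\theta$-band, and gain exactly $2$ from the two end $q$-edges.

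Two steps are misstated, though neither affects the conclusion.

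First, your opening step is false as written. A $q$-band of length $0$ is just a $q$-edge of $\partial\Gamma$; since $\textbf{v}$ has no $q$-edges, it lies on $\textbf{u}$, and nothing places it on $\textbf{v}$. Over $G(\textbf{M})$ (or its disk presentation) the region $\Gamma$ may contain hubs or disks, and $\textbf{u}$ may traverse $q$-edges that bound no cell of $\Gamma$. So $\textbf{u}$ can contain $q$-letters, and in general $\lab(\textbf{u})=\lab(\textbf{v})$ holds only in $G(\textbf{M})$. This is harmless: (a) only asks for equality in $G(\textbf{M})$, extra $q$-letters on $\textbf{u}$ only increase $|\textbf{u}|$, and $\theta$-bands cannot end on hubs or disks because these carry no $\theta$-edges.

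Second, what stops a $\theta$-band leaving $\textbf{v}$ from returning to $\textbf{v}$ is the absence of $(\theta,q)$-annuli, not of $\theta$-annuli.

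Finally, the step you call delicate is immediate. No syllable contains a $q$-letter, so an optimal decomposition of $\lab(\textbf{p})$ (read from the start of $\textbf{s}_1$) splits at $\textbf{e}_1$ and $\textbf{e}_2$. This gives $|\textbf{p}|=|\textbf{s}_1|+|\textbf{u}|+|\textbf{s}_2|+2\geq|\textbf{s}_1|+|\textbf{s}_2|+c+2$ with no $\delta$-loss. On the other side, $|\textbf{p}'|\leq|\textbf{s}_1|+|\textbf{v}'|+|\textbf{s}_2|$ by subadditivity, and $|\textbf{v}'|=c$ by \Cref{lengths}(b).
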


%
%
%

\begin{lemma}[Compare with Lemma 4.7 of \cite{GW}] \label{minBoundaryLengths-theta}

Let $\Delta$ be a reduced diagram over $G(\textbf{M})$.  Suppose $\Delta$ contains a rim $\theta$-band $\pazocal{T}$ with two ends on the component $\textbf{p}$ of $\partial\Delta$.  If the base of $\pazocal{T}$ has length at most $K$, then there exists a diagram $\Delta'$ over $G(\textbf{M})$ with a corresponding boundary component $\textbf{p}'$ such that:

\begin{enumerate}[label=(\alph*)]

\item $\lab(\textbf{p}')$ and $\lab(\textbf{p})$ represent the same element of $G(\textbf{M})$

\item $|\textbf{p}|-|\textbf{p}'|\geq1$

\item For any other component of $\partial\Delta$, there exists a corresponding component of $\partial\Delta'$ with identical label.

\end{enumerate}

\end{lemma}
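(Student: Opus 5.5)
The plan is to excise the rim $\theta$-band $\pazocal{T}$ and replace it by a shorter path, just as in the proof of Lemma 4.7 of \cite{GW}. Write $\partial\Delta$ so that the component $\textbf{p}$ factors as $\textbf{p}\equiv\textbf{s}_1\textbf{e}_0^{-1}\textbf{q}_1\textbf{e}_m\textbf{s}_2$. Here $\textbf{e}_0,\textbf{e}_m$ are the two $\theta$-edges at the ends of $\pazocal{T}$ and $\textbf{q}_1$ is the side of $\pazocal{T}$ lying on $\textbf{p}$ (up to orientation). Let $\textbf{q}_2$ be the other side. Removing the cells of $\pazocal{T}$ produces a diagram $\Delta'$ in which the subpath $\textbf{e}_0^{-1}\textbf{q}_1\textbf{e}_m$ of $\textbf{p}$ is replaced by $\textbf{q}_2$. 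The boundary contour of $\pazocal{T}$ is a relation of the group, so the new boundary label represents the same element of $G(\textbf{M})$. All other boundary components are untouched. This gives (a) and (c). If $\pazocal{T}$ separates pieces in an annular setting, the removal is still a cut along a simple path, so we remain in the same diagram type.

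For (b), compare $|\textbf{e}_0^{-1}\textbf{q}_1\textbf{e}_m|$ with $|\textbf{q}_2|$. Both $\textbf{q}_1$ and $\textbf{q}_2$ carry no $\theta$-edges. Their $q$-edges correspond bijectively, since each $(\theta,q)$-cell has one $q$-edge on each side. There are at most $K$ of these, because the base has length at most $K$. By \Cref{simplify rules}, each $(\theta,q)$-cell contributes at most two extra $a$-letters to one side relative to the other, and $(\theta,a)$-cells contribute one $a$-edge to each side. Hence the $a$-lengths satisfy $\bigl||\textbf{q}_2|_a-|\textbf{q}_1|_a\bigr|\leq 2K$, and so $|\textbf{q}_2|\leq|\textbf{q}_1|+2K\delta$ in the modified length. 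On the other side, the path $\textbf{e}_0^{-1}\textbf{q}_1\textbf{e}_m$ contains two $\theta$-edges. The best decomposition can absorb at most two $a$-letters into $(\theta,a)$-syllables, so by \Cref{lengths}(a),(c) its length is at least $|\textbf{q}_1|+2-2\delta$.

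Combining these estimates with \Cref{lengths}(c) for the surrounding subpaths $\textbf{s}_1,\textbf{s}_2$, which costs at most an extra $2\delta$, gives
$$|\textbf{p}|-|\textbf{p}'|\geq 2-2\delta-2K\delta-2\delta\geq1.$$
The last inequality uses the parameter choice $\delta^{-1}>>K$.

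The main obstacle is the bookkeeping in the modified length: making sure the optimal decomposition of $\textbf{p}$ cannot exploit the $\theta$-edges $\textbf{e}_0,\textbf{e}_m$ to save more than $O(\delta)$. This follows from the subadditivity bounds of \Cref{lengths}(c). A second point to check is that the resulting diagram may be unreduced, but the statement only asks for some diagram $\Delta'$, so this does no harm.
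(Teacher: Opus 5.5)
Your proposal is correct and follows the paper's approach: the paper simply defers to the proof of Lemma 4.7 of \cite{GW}, which cuts off the rim $\theta$-band and weighs the two lost $\theta$-edges against at most $2K$ gained $a$-edges of length $\delta$. Your bookkeeping makes that explicit, and the estimate closes by the same parameter choice $\delta^{-1}>>K$.
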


\begin{proof}

This follows in just the same way as its analogue in \cite{GW}, using the parameter assignment $\delta^{-1}>>K$.


\end{proof}



\medskip


\subsection{Disks and weights} \label{sec-disks-weights} \

Next we add extra relations to the group $G(\textbf{M})$, all of which can be seen as generalizations of the hub relation.  The addition of these relations produces a presentation which is convenient for studying the associated diagrams, but crucially does not change the group.

Specifically, the relations that we add, termed \textit{disk relations}, are of the form $W=1$ for any configuration $W$ accepted by $\textbf{M}$.  The next essential statement shows that these relations may be deduced from the finite presentation defining $G(\textbf{M})$.

\begin{lemma}[Lemma 7.2 of \cite{OS20}] \label{disks are relations}

If the configuration $W$ of $\textbf{M}$ is accepted by the machine $\textbf{M}$ and $Y_{s+1}=\emptyset$, then the word $W$ is trivial over the group $G(\textbf{M})$.

\end{lemma}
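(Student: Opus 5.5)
The approach is to realize $W$ as the boundary label of a van Kampen diagram over the finite presentation of $G(\textbf{M})$. The diagram is built from one hub and a collection of trapezia coming from an accepting computation for $W$. Fix a reduced computation $W\equiv W_0\to\dots\to W_t\equiv W_{ac}$ accepting $W$, with history $H$. If $t=0$ then $W\equiv W_{ac}$ is the hub relator and there is nothing to prove, so assume $t\geq1$.

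\Cref{computations are trapezia} supplies a trapezium $\Delta_0$ with $\lab(\textbf{bot}(\Delta_0))\equiv W$, $\lab(\textbf{top}(\Delta_0))\equiv W_{ac}$, and history $H$. Its contour reads $\textbf{p}_1^{-1}\textbf{q}_1\textbf{p}_2\textbf{q}_2^{-1}$, where $\textbf{p}_1$ and $\textbf{p}_2$ are the sides of the two rim $q$-bands. The key point is that the first and last letters of a configuration come from the same part $Q_0$, since the base is circular in the cyclic machine and the $t$-letter part begins and ends the standard base. Consequently the left and right side $q$-bands of $\Delta_0$ both correspond to the same part, and they carry the same history $H$.

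The plan is to verify that $\lab(\textbf{p}_1)\equiv\lab(\textbf{p}_2)$ as words. Each such label is a product of $\theta$-letters $\theta_0$ together with whatever $a$-letters the rules attach on the outside of the $Q_0$-letter. Here the hypothesis $Y_{s+1}=\emptyset$ is used: the sector before the first state letter, equivalently the one after the last, has empty tape alphabet. So no $a$-letters appear on the exterior sides of these bands, and both labels equal the copy of $H$ in the letters $\theta_0$, with the same indexing because subscripts are taken mod $s+1$. Once the two sides have identical labels, we can glue $\textbf{p}_1$ to $\textbf{p}_2$ (folding the trapezium into an annulus). This produces an annular diagram over $M(\textbf{M})$ whose inner boundary reads $W_{ac}$ (up to cyclic permutation) and whose outer boundary reads $W$.

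Finally, paste a single hub cell into the inner hole, which is allowed because $W_{ac}=1$ is a defining relation of $G(\textbf{M})$. The result is a circular diagram over $G(\textbf{M})$ with boundary label a cyclic permutation of $W$, so $W=1$ in $G(\textbf{M})$. The main obstacle is the gluing step: one must check carefully that the two side labels coincide letter by letter, including the orientation conventions. If some rules were to insert $a$-letters adjacent to the $Q_0$ (or $t$) letter on the outer side, the sides would differ. The emptiness of $Y_{s+1}$, together with the convention in \Cref{simplify rules} that $\a_{i,\theta},\omega_{i,\theta}$ lie over the domain, is exactly what rules this out.
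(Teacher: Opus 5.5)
Your strategy is the paper's: the same argument appears in the proof of \Cref{disk weight}. There one builds the trapezium of an accepting computation, identifies its two sides because the wrap-around sector has empty tape alphabet, and pastes a hub into the hole. However, the ``key point'' you give for the gluing step is false. A configuration has the standard base $Q_0Q_1\cdots Q_s$, which does not begin and end with the same part. For $\textbf{M}_\textbf{S}$ it begins with $\{t(1)\}$ and ends with $(P_0'(L))^{-1}$; a repeated first letter at the end occurs only for circular bases, not for configurations. So the left rim band of $\Delta_0$ is a $Q_0$-band and the right rim band is a $Q_s$-band. If both rim bands really were $Q_0$-bands, the relation $q_0\theta_1=\theta_0v_{s+1}q_0'u_1$ shows that the outer side of the right one would consist of edges $\theta_1$ and $a$-edges from $Y_1$. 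Then $\lab(\textbf{p}_2)$ would differ from $\lab(\textbf{p}_1)$ regardless of $Y_{s+1}$, and you would have to identify the two bands rather than glue their outer sides.

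The correct verification is as follows. By $q_0\theta_1=\theta_0v_{s+1}q_0'u_1$, each cell contributes $\theta_0v_{s+1}$ to the outer side $\textbf{p}_1$ of the $Q_0$-band. By $q_s\theta_{s+1}=\theta_sv_sq_s'u_{s+1}$, each cell contributes $\theta_{s+1}u_{s+1}^{-1}$ to the outer side $\textbf{p}_2$ of the $Q_s$-band. Since $\theta_{s+1}=\theta_0$ (indices mod $s+1$) and $u_{s+1}=v_{s+1}=1$ (because $Y_{s+1}=\emptyset$), both labels are the copy of $H$ over the letters $\theta_0$. Your later remarks about ``the sector before the first state letter, equivalently the one after the last'' and about indices mod $s+1$ are exactly this computation. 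So the repair is to replace the ``same part $Q_0$'' framing with it. After that, gluing $\textbf{p}_1$ to $\textbf{p}_2$ (both read bottom to top) gives an annulus with boundary labels $W$ and $W_{ac}$, and the hub completes the argument as you describe.
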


\Cref{disks are relations} shows that the presentation obtained by adding the disk relations is indeed a presentation for $G(\textbf{M})$.  This presentation is called the \textit{disk presentation} of $G(\textbf{M})$, while the defining presentation is henceforth called the \textit{canonical presentation}.

However, as part of our goal is to study the areas of diagrams over finite presentations of the groups, it is important to note that the disk presentation is not necessarily (and almost always definitively not) a finite presentation.  As such, the typical measure of diagram area is not useful in the study of diagrams over the disk presentation.  

To fix this, we generalize the concept of area by defining the \textit{weight} of a diagram over the disk presentation of $G(\textbf{M})$.  For this, we first define a few auxiliary functions.

Recall that in the statement of \Cref{main-theorem}, we assume that the time function $\TM_\textbf{S}$ is asymptotically bounded above by a positive superadditive function $f:\N\to[0,\infty)$.  We now define the three auxiliary functions $g_{div},g_{max},g:\N\to[0,\infty)$ by:

\begin{itemize}

\item $g_{div}(0)=0$ and $g_{div}(n)=\frac{f(n)}{n}$ for all $n\geq1$

\item $g_{max}(n)=\max_{0\leq i\leq n} \lfloor g_{div}(i)\rfloor$ for all $n\in\N$

\item $g(n)=g_{max}(n)^2$ for all $n\in \N$.

\end{itemize}

The next statement sheds light on the assignments of these functions.

\begin{lemma} \label{mixture quotient}

The function $g:\N\to[0,\infty)$ is non-decreasing with $g(n)\geq1$ for all $n\geq1$ and $n^2g(n)\sim f(n)^2$.

\end{lemma}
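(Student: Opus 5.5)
The plan is to first settle monotonicity and positivity directly from the definitions, and then obtain the equivalence $n^2g(n)\sim f(n)^2$ from two pointwise inequalities. Since $g_{max}(n)$ is a maximum over the growing range $0\leq i\leq n$, it is non-decreasing in $n$. Being nonnegative, its square $g$ is non-decreasing as well. For positivity, superadditivity gives $f(n)\geq nf(1)$, and $f(1)\geq1$, so $g_{div}(n)=f(n)/n\geq1$ for all $n\geq1$. Hence $\lfloor g_{div}(n)\rfloor\geq1$, and therefore $g_{max}(n)\geq1$ and $g(n)\geq1$ for $n\geq1$.

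For the equivalence, I will prove separate lower and upper bounds.

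The lower bound $f(n)^2\preceq n^2g(n)$ comes straight from the definitions. For $n\geq1$ we have $g_{max}(n)\geq\lfloor f(n)/n\rfloor$. Since $f(n)/n\geq1$, this gives $\lfloor f(n)/n\rfloor\geq\frac{1}{2}f(n)/n$. Consequently $n^2g(n)\geq\frac{1}{4}f(n)^2$, which gives $f(n)^2\leq4n^2g(n)$.

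The upper bound $n^2g(n)\preceq f(n)^2$ is the key step, and it is where superadditivity is used. I will show that $f(i)/i\leq2f(n)/n$ whenever $1\leq i\leq n$. Write $n=qi+r$ with $q\geq1$ and $0\leq r<i$. Superadditivity together with $f\geq0$ gives $f(n)\geq qf(i)+f(r)\geq qf(i)$. Since $qi\geq n/2$, it follows that $f(n)/n\geq qf(i)/n\geq\frac{1}{2}f(i)/i$. (The index $i=0$ contributes $0$ and can be ignored.) Hence $g_{max}(n)\leq2f(n)/n$, and so $n^2g(n)\leq4f(n)^2$.

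These two bounds give $n^2g(n)\sim f(n)^2$ with constant $C=4$, with the case $n=0$ trivial. The only mild obstacle is the quasi-monotonicity of $f(i)/i$ in the upper bound. One cannot simply use monotonicity of $f(n)/n$: superadditivity only yields the inequality up to the factor $2$ coming from the division remainder, but that factor is absorbed into the equivalence constant.
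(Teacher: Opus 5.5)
Your proof is correct. The monotonicity, positivity and lower-bound steps match the paper's; the paper phrases the lower bound as $g_{div}(n)\le g_{max}(n)+1\le 2g_{max}(n)$, which is your observation that $\lfloor x\rfloor\ge x/2$ for $x\ge1$.

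The two arguments differ only in the upper bound. The paper takes the \emph{largest} index $m\le n$ with $g_{max}(n)=\lfloor g_{div}(m)\rfloor$. The doubling inequality $f(2m)\ge 2f(m)$ then forces $2m>n$, since otherwise $2m$ would also attain the maximum. It finishes with $g_{max}(n)^2\le f(m)^2/m^2\le 4f(n)^2/n^2$, using that $f$ is non-decreasing.

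You instead prove the uniform estimate $f(i)/i\le 2f(n)/n$ for every $1\le i\le n$, by division with remainder. This avoids the extremal index and the integrality of the floor, so it would work unchanged for a floorless supremum. As written it uses superadditivity with arbitrary multiples $qi+r$ together with $f(r)\ge0$. The paper's argument only ever uses $f(2m)\ge 2f(m)$ and monotonicity of $f$. Your estimate could also be derived from those two facts alone, by taking $2^ki\le n<2^{k+1}i$.

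Both routes give the same bounds, $\frac14 f(n)^2\le n^2g(n)\le 4f(n)^2$.
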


\begin{proof}

It is clear from the construction that $g$ is non-decreasing.

Fix $n\geq1$.  Since we assume $f(1)\geq1$, it follows from the superadditivity of $f$ that $f(n)\geq n$, and so $g_{div}(n)\geq1$.  This implies $g_{max}(n)\geq1$, and so $g(n)\geq1$.

Now let $m$ be the maximal index in $\{1,\dots,n\}$ such that $g_{max}(n)=\lfloor g_{div}(m)\rfloor$.  Note that since $f$ is superadditive, $g_{div}(2m)=\frac{f(2m)}{2m}\geq\frac{f(m)}{m}=g_{div}(m)$, and so $2m>n$.  Hence,
\begin{align*}
f(n)^2&=n^2g_{div}(n)^2\leq n^2(g_{max}(n)+1)^2\leq4n^2g_{max}(n)^2\leq4(2m)^2g_{div}(m)^2=16f(m)^2\leq16f(n)^2
\end{align*}
Hence, $\frac{1}{4}f(n)^2\leq n^2g(n)\leq 4f(n)^2$, and thus the statement follows.

\end{proof}

Per the hypotheses of \Cref{main-theorem} and the conclusion of \Cref{mixture quotient}, we choose the parameter $c_0$ to be large enough so that for all $n\in\N$:
\begin{itemize}

\item $f(n)^2\leq c_0n^2g(c_0n)+c_0n+c_0$

\item $n^2g(n)\leq c_0f(c_0n)^2+c_0n+c_0$


\item $\TM_\textbf{S}(n)\leq c_0f(c_0n)+c_0n+c_0$

\end{itemize}

However, note that the length function defined in \Cref{subsec-modified-length} does not take integer values, making it inconvenient to restrict to functions with domain $\N$.  To rectify this, we extend $g$ in any way so that it is a non-decreasing function on the non-negative reals with $g(x)\geq1$ for all $x\neq0$ (for example, define the value at $x$ to be $g(\lceil x \rceil)$).  Finally, define the function $\phi$ on the non-negative reals by $\phi(x)=x^2g(x)$.

\begin{lemma} \label{phi properties}

Fix non-negative reals $x\geq y$.

\begin{enumerate}

\item $\phi(x+y)\geq\phi(x)+\phi(y)$

\item $\phi(x)-\phi(x-y)\geq xyg(x)$

\end{enumerate}

\end{lemma}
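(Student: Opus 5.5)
Both properties come from the fact that $\phi(x)=x^2g(x)$ with $g$ non-decreasing and non-negative on $[0,\infty)$. We will not need superadditivity of $f$ here; monotonicity of $g$ is enough. The plan is to show (2) first and then obtain (1) from a similar one-line estimate.

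For (2), fix $x\geq y\geq0$ and set $z=x-y\geq0$. Since $g$ is non-decreasing, $g(z)\leq g(x)$. Because $g\geq0$, this gives
$$\phi(x)-\phi(x-y)=x^2g(x)-z^2g(z)\geq (x^2-z^2)g(x)=y(x+z)g(x).$$
As $z\geq0$, we have $x+z\geq x$, so $y(x+z)g(x)\geq xyg(x)$. This is exactly (2). If $x=0$, then $y=0$ and both sides vanish, so the degenerate case is covered.

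For (1), with $x\geq y\geq0$, we use $g(x+y)\geq g(x)$ and $g(x+y)\geq g(y)$. Then
$$\phi(x+y)=(x+y)^2g(x+y)\geq (x^2+y^2)g(x+y)\geq x^2g(x)+y^2g(y)=\phi(x)+\phi(y).$$
Here the cross term $2xyg(x+y)$ is dropped because it is non-negative. Alternatively, (1) follows from (2) by substituting $x+y$ for $x$: $\phi(x+y)-\phi(x)\geq (x+y)yg(x+y)\geq y^2g(y)$.

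There is no real obstacle. The only point to check is that the extension of $g$ to the non-negative reals, for instance $x\mapsto g(\lceil x\rceil)$, is still non-decreasing and non-negative, so that the comparisons $g(z)\leq g(x)\leq g(x+y)$ remain valid for non-integer arguments. This holds because $\lceil\cdot\rceil$ is monotone and $g$ is non-decreasing on $\N$ by \Cref{mixture quotient}.
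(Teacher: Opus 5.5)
Your proof is correct and essentially the paper's: part (1) is the same chain of inequalities, dropping the non-negative cross term and using $g(x+y)\geq g(x),g(y)$, and part (2) rests on the same two facts, $g(x-y)\leq g(x)$ and $(x-y)^2\leq x^2-xy$, just applied in the opposite order. Your alternative derivation of (1) from (2) is also valid, and your chain for (1) correctly ends at $\phi(x)+\phi(y)$, whereas the paper's displayed line has a typo there.
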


\begin{proof}

(1) $\phi(x+y)=(x+y)^2g(x+y)\geq(x^2+y^2)g(x+y)\geq x^2g(x)+y^2g(y)=\phi(x+y)$.

(2) Since $(x-y)^2=x^2-xy-y(x-y)\leq x^2-xy$, we have $$\phi(x-y)\leq (x^2-xy)g(x-y)\leq(x^2-xy)g(x)=\phi(x)-xyg(x)$$

\end{proof}

Now set the weight of a (positive) cell $\Pi$ in a diagram over the disk presentation of $G(\textbf{M})$ to be:

$
\begin{array}{ll}
      \bullet \ \text{wt}(\Pi)=C_1\phi(C_1|\partial\Pi|) & \ \text{if $\Pi$ is a disk} \\
      \bullet \ \text{wt}(\Pi)=1 & \ \text{otherwise} \\
   \end{array}
$

We then define the \textit{weight} of a diagram $\Delta$ over the disk presentation of $G(\textbf{M})$ to be the sum $\text{wt}(\Delta)$ of the weights of all of its (positive) cells.

\medskip


\subsection{Mixtures} \

We now recall an invariant of the relevant reduced circular diagrams, first introduced in \cite{OS12}, that is invaluable for the numerical estimates pertaining to the Dehn function that follow. This technical tool is used in various related previous literature, {\frenchspacing e.g. \cite{O18}, \cite{OS20}, \cite{WEmb}, and \cite{W}}.

Consider a circle $O$ in the Euclidean plane.  We place an auxiliary structure on $O$ by introducing a finite set of points partitioned into two sets.  With this, we call $O$ a \textit{necklace} and the corresponding points \textit{white beads} and \textit{black beads}.

	
	Given distinct white beads $o_1$ and $o_2$ on $O$, define $\ell(o_1,o_2)$ to be the number of black beads in the counterclockwise subarc $O$ from $o_1$ to $o_2$.  We may extend this to define a function on all pairs of white beads by setting $\ell(o,o)=0$ for any white bead $o$.  Note that, in general, $\ell(o_1,o_2)\neq\ell(o_2,o_1)$.
	
	For each $j\in\N$, define the set  $P_j=\{(o_1,o_2):\ell(o_1,o_2)\geq j\}$.  Finally, for $J$ the parameter specified in \Cref{sec-parameters}, define the \textit{$J$-mixture} of the necklace $O$ to be $\mu_J(O)=\sum\limits_{j=1}^J \#P_j$.
		
	
	
	\begin{lemma}[Lemma 6.1 of \cite{OS12}] \label{mixtures}
		
		Let $O$ be a necklace with $x$ white beads and $y$ black beads.
		
		\begin{enumerate}
			
			\item $\mu_J(O)\leq J(x^2-x)$
			
			\item If $O'$ is a necklace obtained from $O$ through the removal of one white bead, then for every $j$, $\# P_j-2x<\#P_j'\leq\#P_j$, and so $\mu_J(O)-2Jx<\mu_J(O')\leq\mu_J(O)$
			
			\item If $O'$ is a necklace obtained from $O$ through the removal of one black bead, then for every $j$, $\#P_j'\leq\#P_j$, and so $\mu_J(O')\leq\mu_J(O)$
			
			\item Suppose $v_1,v_2,v_3$ are three black beads on $O$ such that the counterclockwise arc from $v_1$ to $v_3$, $v_1 - v_3$, has at most $J$ black beads (excluding $v_1$ and $v_3$). Let $y_1$ and $y_2$ be the number of white beads on the counterclockwise arcs $v_1 - v_2$ and $v_2 - v_3$, respectively. If $O'$ is the necklace obtained from $O$ through the removal of $v_2$, then $\mu_J(O')\leq\mu_J(O)-y_1y_2$.
			
		\end{enumerate}
		
	\end{lemma}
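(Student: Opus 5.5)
We verify the four parts directly from the definitions, using only how $\ell(o_1,o_2)$ changes when a bead is removed. The one structural fact needed is this: for a fixed pair of white beads $(o_1,o_2)$, the value $\ell(o_1,o_2)$ depends only on which black beads lie on the counterclockwise arc from $o_1$ to $o_2$. Removing a white bead other than $o_1,o_2$ does not change this value. Removing a black bead lowers it by exactly $1$ if that bead lies on the arc, and leaves it unchanged otherwise.

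For (1), every pair $(o,o)$ has $\ell=0$, so for $j\geq1$ the set $P_j$ contains only ordered pairs of distinct white beads. Hence $\#P_j\leq x(x-1)$, and summing over $j=1,\dots,J$ gives $\mu_J(O)\leq J(x^2-x)$.

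For (2), let $o$ be the removed white bead. Pairs not involving $o$ keep their $\ell$-values, so they lie in $P_j'$ exactly when they lie in $P_j$. This already gives $\#P_j'\leq\#P_j$. The only pairs lost are those with $o$ as one coordinate. There are at most $2(x-1)<2x$ of these, which gives $\#P_j'>\#P_j-2x$. Summing over $j\leq J$ yields the bounds on $\mu_J$.

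For (3), the white beads are unchanged and each $\ell$-value can only decrease, so $P_j'\subseteq P_j$ for every $j$.

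Part (4) is the only step needing care. Consider pairs $(o_1,o_2)$ with $o_1$ on the arc $v_1-v_2$ and $o_2$ on the arc $v_2-v_3$; there are exactly $y_1y_2$ of them. For each such pair, the counterclockwise arc from $o_1$ to $o_2$ lies strictly inside the arc from $v_1$ to $v_3$ and contains $v_2$. So $m:=\ell(o_1,o_2)$ satisfies $1\leq m\leq J$, using the hypothesis that at most $J$ black beads lie strictly between $v_1$ and $v_3$. After removing $v_2$ the value becomes $m-1$. The pair therefore lies in $P_m$ but not in $P_m'$, and $m\leq J$ means this loss is counted in $\mu_J$. So each of these $y_1y_2$ pairs lowers $\mu_J$ by at least $1$. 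For every other pair and every $j$, membership can only be lost, by the argument of (3). Hence $\mu_J(O')\leq\mu_J(O)-y_1y_2$.

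The main subtlety is confirming that $\ell(o_1,o_2)\leq J$ for these pairs, so that the decrease falls within the range $j\leq J$ counted by the mixture. This is exactly where the hypothesis on the arc $v_1-v_3$ is used.
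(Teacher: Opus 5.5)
Your proof is correct and follows the same route as the original argument in \cite{OS12}, which this paper cites without reproducing. Parts (1)--(3) are read off from how $\ell(o_1,o_2)$ changes when a single bead is removed, and part (4) follows because each of the $y_1y_2$ pairs straddling $v_2$ has $1\leq \ell(o_1,o_2)\leq J$, using (as the statement intends) that $v_2$ lies on the counterclockwise arc from $v_1$ to $v_3$, and so drops out of exactly one $P_j$ with $j\leq J$.
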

	
	%
	%
	%
	%
	%
	
	Let $\textbf{M}$ be a cyclic recognizing $S$-machine and $\Delta$ be a reduced circular diagram over a presentation with generating set $\pazocal{X}$ (see \Cref{sec-groups}).  Let $O$ be a circle partitioned by subarcs corresponding to the edges of $\partial\Delta$.  Place the white beads (respectively black beads) at the midpoints of subarcs corresponding to $\theta$-edges (respectively $q$-edges). Then, define the \textit{mixture on $\Delta$}, $\mu(\Delta)$, to be the $J$-mixture of the corresponding necklace, i.e $\mu(\Delta)=\mu_J(O)$.

\bigskip


\section{Circular diskless diagrams} \label{sec-diskless-circular}

In this section we study circular diagrams over $M(\textbf{M}_\textbf{S})$ with the goal of bounding the `area' of such a diagram in terms of its perimeter (measured with the length function defined in \Cref{subsec-modified-length}).  However, this is not achieved with respect to the area of the diagram, but rather in terms of the `$G$-area', a concept developed in \cite{O18} and \cite{OS20}.

The argument outlined in this section proceeds in much the same way as Section 6 of \cite{OS20}.  As such, many of the proofs are either omitted with reference to the proof of their analogue or shortened to a brief explanation of any deviations in the estimates.

\medskip


\subsection{Combs} \label{sec-combs} \

In this section, we introduce a type of circular diagram which helps apply inductive arguments for our combinatorial goals. Our exposition follows \cite[Section 9.6]{WEmb}.

\begin{definition}[c.f. Section 9.6 of \cite{WEmb}]
	
	Let $\Gamma$ be a reduced circular diagram over $M(\textbf{M}_\textbf{S})$ containing a maximal $q$-band $\pazocal{Q}$ such that $\textbf{bot}(\pazocal{Q})$ is a subpath of $\partial\Gamma$ and every maximal $\theta$-band of $\Gamma$ ends at an edge of $\textbf{bot}(\pazocal{Q})$. Then $\Gamma$ is called a \textit{comb} and $\pazocal{Q}$ its \textit{handle}.  The \textit{height} of $\Gamma$ is the length of its handle, while the \textit{basic width} is the maximal base length of one of its maximal $\theta$-bands. 
Given a maximal $\theta$-band $\pazocal{T}$ in a comb $\Gamma$, let $B$ be the base read toward the handle.  Then $\pazocal{T}$ is called:

\begin{itemize}

\item \textit{simple} if $B$ is a product of distinct letters

\item \textit{tight} if $B$ has a revolving suffix and any proper prefix of $B$ is simple

\end{itemize}

If every maximal $\theta$-band of $\Gamma$ is either simple or tight with at least one being tight, then $\Gamma$ is called a \textit{tight comb}.

\end{definition}

\begin{lemma}[Lemma 6.7 of \cite{OS20}] \label{comb area}

Let $\Gamma$ be a comb with height $h$, basic width $b$, and $|\partial\Gamma|_a=\a$. Let $\pazocal{T}_1,\dots,\pazocal{T}_h$ be the maximal $\theta$-bands of $\Gamma$ enumerated from bottom to top. Factor $\partial\Gamma=\textbf{y}\textbf{x}\textbf{z}$, where $\textbf{z}$ is the bottom of the handle of $\Gamma$ and $\textbf{x}$ is the maximal subpath below $\pazocal{T}_1$. Then:

\begin{enumerate}[label=({\arabic*})]

\item $|\textbf{y}|_a\leq\frac{1}{2}\a+bh$

\item $\text{Area}(\Gamma)\leq c_0bh^2+2\a h$

\end{enumerate}

\end{lemma}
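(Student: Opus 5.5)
We argue as in Lemma 6.7 of \cite{OS20}, using only the band structure of the comb (\Cref{M(S) annuli}) and \Cref{lengths}(d). Enumerate the maximal $\theta$-bands $\pazocal{T}_1,\dots,\pazocal{T}_h$ from bottom to top. Each band starts on $\textbf{bot}(\pazocal{Q})$ and has base of length at most $b$. Since $\Gamma$ has no $\theta$-annuli or $(\theta,q)$-annuli, the bands are stacked. The top of $\pazocal{T}_i$ and the bottom of $\pazocal{T}_{i+1}$ differ only along subpaths of $\partial\Gamma$, namely the pieces of $\textbf{y}$ lying between consecutive bands ("teeth" of the comb).

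For (1), let $\a_i$ be the number of $a$-edges on $\textbf{top}(\pazocal{T}_i)$, and set $\a_0=|\textbf{x}|_a$. Every $a$-edge of the boundary lies either on $\textbf{x}$ or on some piece of $\textbf{y}$. The key counting step is a local inequality at each band: the $a$-edges on $\textbf{top}(\pazocal{T}_i)$ come from $a$-edges of $\textbf{bot}(\pazocal{T}_i)$ together with at most $2$ new $a$-letters per $(\theta,q)$-cell, by \Cref{simplify rules}. The bottom of $\pazocal{T}_i$ in turn consists of part of the top of $\pazocal{T}_{i-1}$ together with boundary edges. Each $a$-edge on the boundary is either consumed, meaning its $a$-band ends on a $(\theta,q)$-cell, or passes through. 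Tracking each maximal $a$-band, it either joins two boundary edges or ends on a $(\theta,q)$-cell; there are at most $2bh$ $(\theta,q)$-cells in total. Hence the $a$-edges of $\textbf{y}$ are matched, up to an error of $bh$, with those of $\textbf{x}$ together with those of $\textbf{y}$ itself, lying on the opposite side of the teeth. I expect the cleanest formulation to be the following: every $a$-band starting on $\textbf{y}$ either ends on $\textbf{x}\cup\textbf{z}$ or on a $(\theta,q)$-cell; $\textbf{z}$ has none, since it is the bottom of a $q$-band. This gives $|\textbf{y}|_a\le|\textbf{x}|_a+2bh$-type bounds. Combining with $|\textbf{x}|_a+|\textbf{y}|_a=\a$ yields $|\textbf{y}|_a\le \frac12\a+bh$.

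For (2), bound each band by \Cref{lengths}(d): $\pazocal{T}_i$ has at most $\a_i+3b$ cells, where $\a_i$ is the number of $a$-edges on its top. The main obstacle is bounding $\a_i$ uniformly. I would show $\a_i\le \a+c\,bh$: each $a$-edge on $\textbf{top}(\pazocal{T}_i)$ starts an $a$-band going upward which must end on $\partial\Gamma\setminus\textbf{z}$ or on a $(\theta,q)$-cell of a higher band. There are at most $\a$ of the former and at most $2b(h-i)$ of the latter. Summing over $i$ gives
$$\text{Area}(\Gamma)\le\sum_{i=1}^h(\a+2bh+3b)\le 2\a h+c_0bh^2,$$
after absorbing constants via $c_0>>1$. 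The coefficient $2$ on $\a h$ is generous, so slack in the boundary counting is affordable.
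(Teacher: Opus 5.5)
Your argument for (2) is fine. Counting the upward $a$-bands leaving $\textbf{top}(\pazocal{T}_i)$ by their distinct upper ends gives at most $\alpha+2b(h-i)$ of them. \Cref{lengths}(d) then gives $\text{Area}(\Gamma)\le\alpha h+5bh^2$. The paper gives no proof of its own here; it only cites \cite{OS20}.

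The key step in (1) is false. It is not true that every $a$-band starting on $\textbf{y}$ ends on $\textbf{x}\cup\textbf{z}$ or on a $(\theta,q)$-cell. Whenever $\pazocal{T}_{i+1}$ overhangs $\pazocal{T}_i$, the overhanging part of $\textbf{bot}(\pazocal{T}_{i+1})$ lies on $\textbf{y}$. The $a$-bands entering there run upward and exit through an exposed part of some $\textbf{top}(\pazocal{T}_j)$, which is again on $\textbf{y}$. These $\textbf{y}$–$\textbf{y}$ bands destroy your matching.

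In fact the inequality as printed cannot be proved. Take $\pazocal{T}_1$ to be a single $(\theta,q)$-cell, and $\pazocal{T}_2$ a $(\theta,q)$-cell followed by $M$ $(\theta,a)$-cells. Then $b=1$, $h=2$ and $|\textbf{x}|_a\le 2$, while $|\textbf{y}|_a\ge 2M$ and $\alpha\le 2M+O(1)$. So $|\textbf{y}|_a\le\frac12\alpha+bh$ fails for large $M$.

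The printed statement interchanges $\textbf{x}$ and $\textbf{y}$. The true inequality is $|\textbf{x}|_a\le\frac12\alpha+bh$. This is also the version actually used in the proof of \Cref{circular diskless}: there $d_3-d_3'\le\frac12\alpha_3+K_0\ell_3$ is applied with $d_3-d_3'$ equal to the $a$-length of the part of $\partial\Gamma_3$ below its first $\theta$-band.

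That version follows from your matching run in the opposite direction:
\begin{enumerate}
\item An $a$-band starting on $\textbf{x}$ travels upward through $\pazocal{T}_1,\pazocal{T}_2,\dots$.
\item It cannot return to $\textbf{x}$, because it would then cross some $\pazocal{T}_j$ twice and create a $(\theta,a)$-annulus, which \Cref{M(S) annuli} excludes.
\item So it ends either on an $a$-edge of $\textbf{y}$ or on one of the at most $2bh$ $a$-edges of $(\theta,q)$-cells.
\end{enumerate}
Note that $\textbf{z}$ can carry $a$-edges, contrary to your remark, so you only have $|\textbf{x}|_a+|\textbf{y}|_a\le\alpha$. This is harmless: an $a$-edge of $\textbf{z}$ belongs to a $(\theta,q)$-cell of the handle and so falls in the second class. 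Hence $|\textbf{x}|_a\le|\textbf{y}|_a+2bh\le\alpha-|\textbf{x}|_a+2bh$, which rearranges to $|\textbf{x}|_a\le\frac12\alpha+bh$.
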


\begin{definition}

Let $\Delta$ be a reduced circular diagram over the disk presentation of $G(\textbf{M}_\textbf{S})$.  Suppose the maximal $q$-band $\pazocal{Q}$ has two ends on $\partial\Delta$ and one of the two subdiagrams obtained from cutting along a side of $\pazocal{Q}$ is a comb $\Gamma$.  Then the subdiagram $\Gamma$ is called a \textit{subcomb} of $\Delta$.

\end{definition}

\begin{lemma}[Lemma 6.10 of \cite{OS20}] \label{tight subcomb existence}

Let $\Delta$ be a reduced circular diagram of non-zero area over the disk presentation of $G(\textbf{M}_\textbf{S})$ such that every maximal $\theta$-band of $\Delta$ has base of length at least $K$.  Suppose either:

\begin{enumerate}

\item $\Delta$ is a diagram over $M(\textbf{M}_\textbf{S})$, or

\item $\Delta$ has a subcomb with basic width at least $K_0$.

\end{enumerate}

Then $\Delta$ has a tight subcomb $\Gamma$.

\end{lemma}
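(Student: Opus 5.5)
The plan follows the argument of Lemma 6.10 of \cite{OS20}: first produce a subcomb in each case, then shrink it until it becomes tight.

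\textbf{Producing a subcomb.} In case (2) a subcomb is given by hypothesis. In case (1), $\Delta$ is a reduced diagram over $M(\textbf{M}_\textbf{S})$ of non-zero area, so by \Cref{M(S) annuli} every maximal $\theta$-band and every maximal $q$-band has two ends on $\partial\Delta$, and no annuli occur. Since every maximal $\theta$-band has base of length at least $K\geq1$, there are $(\theta,q)$-cells, hence maximal $q$-bands. Each maximal $q$-band cuts $\Delta$ into two pieces. I would choose a maximal $q$-band $\pazocal{Q}$ together with a side for which the piece $\Gamma$ between $\pazocal{Q}$ and $\partial\Delta$ contains the fewest $(\theta,q)$-cells, and argue by minimality that this piece is a comb.

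\begin{itemize}
\item Any maximal $q$-band inside $\Gamma$ would cut off a smaller such piece, so $\Gamma$ contains no $q$-band other than its handle, apart from the degenerate case of no $(\theta,q)$-cells.
\item Every maximal $\theta$-band of $\Gamma$ must then end on $\textbf{bot}(\pazocal{Q})$. Otherwise it would have both ends on $\partial\Delta$ and contain no $(\theta,q)$-cell, so its base would have length $0<K$, a contradiction.
\end{itemize}

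The degenerate case, where $\Gamma$ has no $(\theta,q)$-cells but $\pazocal{Q}$ has positive length, still gives a comb. If $\pazocal{Q}$ has length $0$, I would pick a $q$-band of positive length instead; one exists because there are $(\theta,q)$-cells.

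\textbf{Shrinking to a tight subcomb.} Among all subcombs of $\Delta$ (with basic width at least $K_0$ in case (2)), I would take one, $\Gamma$, that is minimal with respect to inclusion, or equivalently has the fewest cells. Read each maximal $\theta$-band $\pazocal{T}$ of $\Gamma$ toward the handle and consider its base $B$. If every band is simple, then every base has length at most the number of base letters, which is $\le N$. But each $\theta$-band of $\Gamma$ is a subband of a maximal $\theta$-band of $\Delta$, whose base has length $\geq K>>N$. So some band of $\Gamma$ is non-simple.

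Take the shortest prefix of its base that is not simple. It ends with a letter $x$ repeated earlier, which gives a circular suffix $x\cdots x$. Let $\pazocal{Q}'$ be the maximal $q$-band through the $(\theta,q)$-cell of the first occurrence of $x$ in this prefix. Then $\pazocal{Q}'$ lies inside $\Gamma$, has two ends on $\partial\Delta$, and bounds a smaller subdiagram $\Gamma'\subset\Gamma$. I would check that $\Gamma'$ is again a comb with handle $\pazocal{Q}'$: its $\theta$-bands must cross $\pazocal{Q}'$, since no annuli occur. Minimality then forces a contradiction unless every band of $\Gamma$ is simple or has base whose shortest non-simple prefix is all of $B$, that is, a base that is tight. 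Choosing the repeated letter closest to the handle makes the suffix revolving. Together with the previous paragraph this shows $\Gamma$ is tight.

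\textbf{Main obstacle.} The hard step is case (2), where the minimal subcomb must keep basic width at least $K_0$. The replacement $\Gamma'$ might have small basic width. I would handle this by choosing $\Gamma'$ so that it still contains the $\theta$-band realizing width $\geq K_0$ whenever that band is non-simple, and otherwise use $K_0>>N$ to see that a band of width $K_0$ cannot be simple. This keeps the induction within subcombs of large basic width.
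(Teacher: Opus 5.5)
Your shrinking step has a genuine gap.

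\textbf{The base-length claim is false.} A $\theta$-band of a subcomb $\Gamma$ with handle $\pazocal{Q}$ is only the part of a maximal $\theta$-band of $\Delta$ lying on the handle side of $\pazocal{Q}$. So its base is only a suffix, ending with the handle letter, of that maximal band's base. The hypothesis that maximal $\theta$-bands of $\Delta$ have bases of length at least $K$ gives no lower bound on it.

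In fact an inclusion-minimal subcomb contains no maximal $q$-band besides its handle. Any other one, $\pazocal{Q}''$, ends twice on $\partial\Delta$ and cuts off a strictly smaller subcomb with handle $\pazocal{Q}''$. Hence an inclusion-minimal subcomb has basic width $1$ and all its bands are simple. Your own case (1) construction produces exactly such a comb, so minimizing over all subcombs cannot work.

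\textbf{The cut point is also wrong.} If you cut along the $q$-band through the \emph{first} occurrence of the repeated letter $x$, the smaller comb sees only the prefix up to that occurrence. That prefix is simple, so nothing contradicts minimality.

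\textbf{The correct shrinking argument.} Among subcombs containing at least one non-simple $\theta$-band, take one with the fewest cells. Suppose a band with base $x_1\cdots x_m$ (read toward the handle, $x_m$ the handle letter) has shortest non-simple prefix $x_1\cdots x_b$ with $b<m$. Cut along the maximal $q$-band through its $b$-th cell. This band lies in $\Gamma$ and cannot end on $\textbf{bot}(\pazocal{Q})$, which has no $q$-edges, so it ends twice on $\partial\Delta$. The comb it bounds is a smaller subcomb that still contains the non-simple base $x_1\cdots x_b$. Therefore every non-simple band has all proper prefixes simple, i.e.\ is tight, and the comb is tight.

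With this class your ``main obstacle'' disappears. In case (2), basic width $\geq K_0>2LN$ is used only to produce one non-simple band at the start; note the bound for simple bases is $2LN$, not $N$. The conclusion does not require the tight subcomb to have large basic width.

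\textbf{Case (1) is still missing.} You must produce a subcomb containing a band with many $(\theta,q)$-cells, and this is where the length-$K$ hypothesis is really used.

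In a reduced diagram over $M(\textbf{M}_\textbf{S})$ the maximal $q$-bands are disjoint cutting bands, and each $\theta$-band crosses each of them at most once. So the $q$-bands act as the edges of a dual tree, and every maximal $\theta$-band traces a simple path in it.

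Consider pairs consisting of a maximal $q$-band $\pazocal{Q}$ and one of its sides such that some $\theta$-band crossing $\pazocal{Q}$ has at least $K/2$ $(\theta,q)$-cells on that side, counting its cell in $\pazocal{Q}$. Such pairs exist: cut any maximal $\theta$-band at its middle $q$-band. Choose such a pair with minimal side.

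Suppose some maximal $\theta$-band $\pazocal{T}'$ of $\Delta$ lay entirely on that side. Let $u$ be the vertex of its path nearest to $\pazocal{Q}$. Some subpath starting at $u$ has at least $K/2$ edges. That subpath lies entirely beyond its first edge $\pazocal{Q}_j'$, on the side of $\pazocal{Q}_j'$ not containing $\pazocal{Q}$, which gives a strictly smaller pair and a contradiction.

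Hence every $\theta$-band on the minimal side crosses $\pazocal{Q}$, so the side is a subcomb. It has basic width $\geq K/2\geq K_0$, which reduces case (1) to case (2).
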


\begin{definition}

Let $\Gamma$ be a comb with handle $\pazocal{Q}$ and let $\pazocal{Q}'$ be a maximal $q$-band of $\Gamma$ distinct from $\pazocal{Q}$.  Then there exists a subdiagram $\Gamma'$ of $\Gamma$ which is a comb with handle $\pazocal{Q}'$, obtained by cutting along a side of $\pazocal{Q}'$.  If any $\theta$-band connecting $\pazocal{Q}'$ to $\pazocal{Q}$ has no $(\theta,q)$-cells (besides where it crosses $\pazocal{Q}'$ and $\pazocal{Q}$), then $\Gamma'$ is called a \textit{derivative subcomb} of $\Gamma$.

\end{definition}

\medskip


\subsection{Big trapezia and $G$-area} \label{sec-big-trapezia-G-weight} \

We now alter how we count the area of a circular diagram over the disk presentation of $G(\textbf{M}_\textbf{S})$.  This alteration reflects the ability to `replace' certain subdiagrams with diagrams over the finite presentation of $G(\textbf{M}_\textbf{S})$ whose area is lower than the subdiagram's weight.

\begin{definition}
	
	A trapezium $\Gamma$ over $M(\textbf{M}_\textbf{S})$  with revolving base and history containing a controlled subword is called \textit{big}.  In this case, the \textit{$G$-area} of $\Gamma$, denoted $\text{Area}_G(\Gamma)$, is defined to be the minimum of half its area and the value $3h+C_2\phi(C_2M)$, where $h$ is the height of the trapezium and $M=\max(\|\textbf{bot}(\Gamma)\|,\|\textbf{top}(\Gamma)\|)$.
	
\end{definition}

Now, given a reduced diagram $\Delta$ over the disk presentation of $G(\textbf{M}_\textbf{S})$, consider a family of subdiagrams $\textbf{P}$ where:

\begin{itemize}

\item if $P\in\textbf{P}$, then $P$ is either a single cell or a big trapezium,

\item every cell of $\Delta$ belongs to an element of $\textbf{P}$, and

\item if there exist $P_1,P_2\in\textbf{P}$ with nonempty intersection, then both $P_1$ and $P_2$ are big trapezia and this intersection is a $q$-band.

\end{itemize}

In this case, $\textbf{P}$ is called a \textit{covering} of $\Delta$. The $G$-area of $\textbf{P}$, $\text{Area}_G(\textbf{P})$, is defined to be the sum of the $G$-area of its elements, where the $G$-area of a single cell is taken to be its weight.

Note that any reduced diagram over the disk presentation of $G(\textbf{M}_\textbf{S})$ has a covering, namely the one given by its cells. So, we may define the $G$-area of $\Delta$, $\text{Area}_G(\Delta)$, to be the minimum of the $G$-areas of its coverings.  

Further, any cell of a reduced diagram over $G(\textbf{M}_\textbf{S})$ belongs to at most two elements of a covering, in which case these elements are big trapezia.  So, since the $G$-area of a big trapezium does not exceed half of its weight, the $G$-area of any covering is at most the weight of the diagram.


The next statement is a crucial tool in achieving upper bounds on the $G$-area, and is proved in a more general setting in \cite{W}.

\begin{lemma}[Lemma 6.15 of \cite{OS20}] \label{G-area subdiagrams}

Let $\Delta$ be a reduced diagram over $G(\textbf{M}_\textbf{S})$ and suppose every cell $\pi$ of $\Delta$ belongs in one of the subdiagrams $\Delta_1,\dots,\Delta_m$, where any nonempty intersection $\Delta_i\cap\Delta_j$ is a $q$-band. Then $\text{Area}_G(\Delta)\leq\sum_{i=1}^m\text{Area}_G(\Delta_i)$.

\end{lemma}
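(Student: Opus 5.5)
The plan is to argue directly from the definitions of a covering and of $G$-area, without any diagram surgery. For each $i$, fix a covering $\textbf{P}_i$ of $\Delta_i$ realizing $\text{Area}_G(\Delta_i)$; this is possible since the $G$-area of a diagram is a minimum over its coverings. Set $\textbf{P}=\bigcup_i\textbf{P}_i$, viewing each element as a subdiagram of $\Delta$. The inequality then follows once we check that $\textbf{P}$ is a covering of $\Delta$, since $\text{Area}_G(\Delta)\leq\text{Area}_G(\textbf{P})\leq\sum_i\text{Area}_G(\textbf{P}_i)$. The last inequality may be strict, because an element common to two of the $\textbf{P}_i$ is counted only once in $\textbf{P}$.

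Two of the covering axioms are immediate. Each element of $\textbf{P}$ is a single cell or a big trapezium. Bigness (revolving base, history containing a controlled subword) is intrinsic to the trapezium, so it is unaffected by regarding it as a subdiagram of $\Delta$ rather than of $\Delta_i$. Every cell of $\Delta$ lies in some $\Delta_i$ by hypothesis, and hence in some element of $\textbf{P}_i$.

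The third axiom is where the work lies. Take $P\in\textbf{P}_i$ and $P'\in\textbf{P}_j$ with nonempty intersection, and suppose $P\neq P'$.
\begin{itemize}
\item If $i=j$, the axiom holds because $\textbf{P}_i$ is itself a covering.
\item If $i\neq j$, then $P\cap P'\subseteq\Delta_i\cap\Delta_j$, which is a $q$-band $\pazocal{Q}$ (or empty). Every cell of $\pazocal{Q}$ is a $(\theta,q)$-cell, so no element of a covering containing such a cell can be a disk or hub cell; this already excludes many configurations.
\end{itemize}

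The main obstacle is the case $i\neq j$ in which $P$ or $P'$ is a single cell lying in $\pazocal{Q}$. The axiom requires an overlap between two distinct elements to involve only big trapezia, and such a cell violates this. To repair it, I would modify the chosen coverings before taking the union so that overlaps occur only along whole $q$-bands:
\begin{itemize}
\item If a cell $\pi$ of $\pazocal{Q}$ appears as a singleton in $\textbf{P}_j$ and also lies in some element $P$ of $\textbf{P}_i$, delete $\{\pi\}$ from $\textbf{P}_j$. This only lowers $G$-area and keeps every cell covered.
\item If both elements are big trapezia, their intersection is contained in $\pazocal{Q}$. Since $\pazocal{Q}$ lies on the boundary of each $\Delta_i$ containing it, $\pazocal{Q}$ can only meet a big trapezium of $\textbf{P}_i$ along one of that trapezium's side $q$-bands. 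The intersection is therefore a $q$-band, as required.
\end{itemize}

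After these deletions the union is a genuine covering, and its $G$-area is at most $\sum_i\text{Area}_G(\Delta_i)$, which proves the statement. This follows the argument in \cite{W} and Lemma 6.15 of \cite{OS20}.
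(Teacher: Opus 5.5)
Your proposal is correct and is essentially the standard argument behind the cited Lemma 6.15 of \cite{OS20}; the paper itself gives no proof and only cites it. That argument takes optimal coverings of the $\Delta_i$, discards single-cell members in a shared $q$-band that some member of another covering already contains, and observes that the union is then a covering of $\Delta$. One claim is looser than needed: the hypothesis does not force $\pazocal{Q}$ to lie on the boundary of every $\Delta_i$ containing it, but you do not need this, because two big trapezia meeting inside $\pazocal{Q}$ each meet it in a sub-band, so they meet each other in a sub-band, which is all the covering axiom requires.
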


The purpose of this modification is to allow for a uniform estimate for the area of a trapezium with revolving base.  This uniform bound is explicitly given by the next statement.

\begin{lemma} \label{G-area revolving trapezia}

If $\Gamma$ is a trapezium with revolving base and height $h$, $\text{Area}_G(\Gamma)\leq c_3hM+C_2\phi(C_2M)$ for $M=\max(\|\textbf{bot}(\Gamma)\|,\|\textbf{top}(\Gamma)\|)$.

\end{lemma}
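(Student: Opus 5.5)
Let $\Gamma$ be a trapezium with revolving base $B$ and height $h$, bottom/top labels $W_0$ and $W_h$, and $M=\max(\|W_0\|,\|W_h\|)$. By \Cref{trapezia are computations}, $\Gamma$ corresponds to a reduced computation $\pazocal{C}:W_0\to\dots\to W_h$ of $\textbf{M}_\textbf{S}$ with base $B$. The plan is to split into two cases, according to whether the history $H$ of $\Gamma$ contains a controlled subword.

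\textbf{Case 1: $H$ contains no controlled subword.} Here we bound the area directly, and the $G$-area is at most the area. First, bound the space: every admissible word in $\pazocal{C}$ should have length $O(M)$.
\begin{itemize}
\item If $B$ is defective (hence faulty, being revolving), \Cref{faulty}, transferred to $\textbf{M}_\textbf{S}$ by discarding coordinates, gives $|W_i|_a\leq c_1\max(|W_0|_a,|W_h|_a)$.
\item If $B$ is reduced, it is a cyclic permutation of a subword of the standard base containing a full revolution. Since $H$ has no controlled subword, \Cref{reduced revolving controlled} forces $h\leq c_1 M$. Then \Cref{simplify rules} gives $|W_i|_a\leq |W_0|_a+2\|B\|h\leq c_2M$, since $\|B\|\leq N+1$ and $c_2\gg c_1\gg N$.
\end{itemize}
With the space bounded, \Cref{lemma computations are trapezia}-style counting via \Cref{lengths}(d) shows each $\theta$-band of $\Gamma$ has at most $c_2M+3(N+1)$ cells. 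Hence $\text{Area}(\Gamma)\leq h(c_2M+4N)\leq c_3hM$, using $M\geq1$ and $c_3\gg c_2$.

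\textbf{Case 2: $H$ contains a controlled subword.} Then $\Gamma$ is by definition a big trapezium. Its $G$-area is at most $3h+C_2\phi(C_2M)\leq c_3hM+C_2\phi(C_2M)$, with the covering $\{\Gamma\}$ witnessing this. So the $G$-area bound holds immediately.

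\textbf{Main obstacle.} The delicate point is Case 1 when $B$ is reduced and revolving but is not exactly a cyclic permutation of $\{t(i)\}B_{std}(i)\{t(i)\}$. For instance, it may mix coordinates, since the standard base of $\textbf{M}_\textbf{S}$ is $L$ concatenated copies. \Cref{reduced revolving controlled} must then be applied after restricting to a subword $B'$ of $B$ that is a copy of a revolving base of $\textbf{E}_{\textbf{S},k}$. The sectors joining coordinates have empty tape alphabets, so the projection to a single-coordinate computation preserves $a$-lengths. This gives $h\leq c_1\max(\|W_0'\|,\|W_h'\|)\leq c_1M$ unless $H$ contains a controlled subword, which is Case 2.

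A second point to check is that the additive terms are absorbed. The terms $c_1(N+1)$ and the base-length contributions must fit into $c_3hM$; this follows from $M\geq\|B\|\geq2$ and the parameter order. If for some reason Case 1 with reduced base allowed $h$ larger than $c_1M$ without a controlled subword, I would instead fall back on \Cref{long history controlled} on suitable subcomputations to extract a controlled subword, moving the trapezium into Case 2.
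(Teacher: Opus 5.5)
\textbf{There is a genuine gap in the space bound of Case 1.} Your case split itself is fine. Case 2 is immediate from the definition of a big trapezium. In Case 1 it suffices to show $\|W_i\|\le c M$ for a constant $c$ chosen before $c_3$, and then apply \Cref{computations are trapezia}(d). The problem is that ``revolving'' here is meant with respect to $\textbf{M}_\textbf{S}$, whose standard base consists of $L$ coordinates. A reduced revolving base has length $LN+1$, not $\le N+1$, and a faulty one can have length up to $2LN+1$. After forgetting coordinates, $B$ becomes a circular base of $\textbf{E}_{\textbf{S},k}$ that is in general \emph{not} revolving. The reduced one becomes a cyclic permutation of $(\{t\}B_{std})^L\{t\}$, and a faulty one can run through more than one copy of the $\textbf{E}_{\textbf{S},k}$ base before turning back. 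This has two consequences:
\begin{itemize}
\item \Cref{faulty} cannot be applied to the projected base. It only concerns faulty, i.e.\ revolving defective, bases of $\textbf{E}_{\textbf{S},k}$, and its proof uses $\|B\|\le 2N+1$.
\item In the reduced case, restricting to one coordinate does give $h\le c_1M$, as your last paragraph notes. But the global estimate $|W_i|_a\le |W_0|_a+2\|B\|h$ then only yields a constant of order $c_1LN$. This cannot be absorbed into $c_3$, because $c_3\ll L_0\ll L$ in the parameter order. Replacing $\|B\|$ by $M$ instead makes the area estimate of order $h^2M$. So the absorption you assert in your final paragraph does not go through.
\end{itemize}

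The missing idea is to bound the space piece by piece, each piece in terms of its own endpoints, as the paper does.
\begin{itemize}
\item Pass to the computation of $\textbf{E}_{\textbf{S},k}$ with circular base $\bar B$. Pick a revolving subword $B'$ of $\bar B$, of length at most $2N+1$, and let $W_0'\to\dots\to W_h'$ be the restriction.
\item If $B'$ is reduced, \Cref{reduced revolving controlled} gives $h\le c_1\max(\|W_0'\|,\|W_h'\|)$, since alternative (a) is excluded in Case 1. Then \Cref{simplify rules} gives $\|W_i'\|\le \|W_0'\|+2Nh\le c_2\max(\|W_0'\|,\|W_h'\|)$.
\item If $B'$ is faulty, \Cref{faulty} gives a bound of the same form.
\item Now delete $W_i'$ except for one end state letter. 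This gives a reduced computation with the same history and a strictly shorter circular base, and you iterate.
\item Distinct pieces share only the retained state letter, one per piece. So summing the per-piece bounds gives $\|W_i\|\le 2c_2(\|W_0\|+\|W_h\|)+1\le 4c_2M+1$, hence $\text{Area}(\Gamma)\le c_3hM$ with a constant independent of $L$.
\end{itemize}
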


\begin{proof}

If $\Gamma$ is big, then the bound is given by taking the covering consisting only of $\Gamma$ (and assuming $c_3\geq3$).  Hence, we may assume $\Gamma$ is not big.  

Let $\pazocal{C}:W_0\to\dots\to W_h$ be the reduced computation of $\textbf{M}_\textbf{S}$ given by \Cref{trapezia are computations}.  If the history of $\Gamma$ has a controlled subword, then the history of $\pazocal{C}$ also does.  But then \Cref{enhanced controlled} implies the base of $\Gamma$ is reduced, and so $\Gamma$ is big.  

Note that $\pazocal{C}$ can be identified with a reduced computation of the $k$-enhanced machine $\textbf{E}_{\textbf{S},k}$ with circular base $B$.  Let $\pazocal{C}':W_0'\to\dots\to W_h'$ be the restriction of this computation to a revolving base of the $k$-enhanced machine.

If the base of $\pazocal{C}'$ is reduced, then \Cref{reduced revolving controlled} implies $h\leq c_1\max(\|W_0'\|,\|W_h'\|)$.  Since the base of this computation has length $N+1$, the parameter choices $c_2>>c_1>>N$ and \Cref{simplify rules} imply $\|W_i'\|\leq c_2\max(\|W_0'\|,\|W_h'\|)$ for all $i$.

Otherwise, $\pazocal{C}'$ is a reduced computation of $\textbf{E}_{\textbf{S},k}$ with a faulty base, so that \Cref{faulty} implies $|W_i'|_a\leq c_1\max(|W_0'|_a,|W_h'|_a)$ for all $i$, so that again $\|W_i'\|\leq c_2\max(\|W_0'\|,\|W_h'\|)$ for all $i$.

Removing the subwords $W_i'$ except for the first (or last) state letter then produces admissible words with shorter circular base forming a corresponding reduced computation.  We then iterate this process until there is only a single state letter remaining.  As it takes at most $\|B\|\leq\max(\|W_0\|,\|W_t\|)$ iterations to complete, we find that $\|W_i\|\leq 4c_2\max(\|W_0\|,\|W_h\|)$ for all $i$.  \Cref{computations are trapezia}(d) thus implies $\text{Area}(\Gamma)\leq 4c_2hM$, so that the statement is given by the parameter choice $c_3>>c_2$.

\end{proof}

\medskip


\subsection{Upper bound on $G$-area} \label{sec-diskless-upper-bound} \

We now work toward obtaining an upper bound on the $G$-area of a reduced circular diagram over $M(\textbf{M}_\textbf{S})$.  Specifically, our goal through the rest of this section is to show that the $G$-area of a reduced circular diagram $\Delta$ over $M(\textbf{M}_\textbf{S})$ is bounded by the inequality $$\text{Area}_G(\Delta)\leq N_2\phi(N_2|\partial\Delta|)+N_1\mu(\Delta)g(N_1|\partial\Delta|)$$  To show this, we consider a `minimal counterexample' diagram $\Delta$, that is, a reduced circular diagram $\Delta$ over $M(\textbf{M}_\textbf{S})$ with $n=|\partial\Delta|$ minimal such that
$$\text{Area}_G(\Delta)> N_2\phi(N_2n)+N_1\mu(\Delta)g(N_1n)$$

For the sake of the combinatorial arguments that follow, it should be noted that necessarily $n>0$.  Hence, $g(Cn)\geq1$ for any positive constant $C$.

As previously mentioned, the argument proceeds in much the same way as in \cite{OS20}; however, the estimates deviate throughout, reflecting the different goal: The desired bound in the setting of \cite{OS20} can be interpreted as being given for the particular functions $\phi(n)=n^2$ and $g(n)=1$.

Note that the estimates made here are closer to those presented in Section 6 of \cite{O18}.  Moreover, this strategy is employed in a more general setting in \cite{W} and \cite{WEmb}.

\begin{lemma}[See Lemma 9.24 of \cite{WEmb}] \label{counterexample combs} \

\begin{enumerate}[label=({\arabic*})]

\item $\Delta$ has no two disjoint subcombs $\Gamma_1$ and $\Gamma_2$ of basic widths at most $K$ with handles $\pazocal{B}_1$ and $\pazocal{B}_2$ such that some ends of these handles are connected by a subpath $\textbf{x}$ of $\partial\Delta$ with $|\textbf{x}|_q\leq N$.

\item If $\Gamma$ is a subcomb of $\Delta$ with basic width $s\leq K$, $|\partial\Gamma|_q=2s$.

\end{enumerate}

\end{lemma}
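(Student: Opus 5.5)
This is a minimal-counterexample argument. In each part we assume the forbidden configuration exists and build from $\Delta$ a diagram of strictly smaller boundary length, or with controlled mixture, whose $G$-area bound, added to an explicit bound for the removed pieces, gives back the inequality for $\Delta$. This contradicts the choice of $\Delta$.

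For (1), suppose $\Gamma_1,\Gamma_2$ are disjoint subcombs with basic widths $b_1,b_2\leq K$, heights $h_1,h_2$, and handles $\pazocal{B}_1,\pazocal{B}_2$ joined by a subpath $\textbf{x}$ of $\partial\Delta$ with $|\textbf{x}|_q\leq N$. Say $h_1\geq h_2$.
\begin{enumerate}
\item Consider the $\theta$-bands of $\Gamma_1$ that leave $\pazocal{B}_1$ and cross $\textbf{x}$. Since $|\textbf{x}|_q\leq N$, the $\theta$-bands starting on the handle of the shorter comb $\Gamma_2$ must, after at most $N$ $q$-bands, either cross $\pazocal{B}_1$ or end on $\textbf{x}$. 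We count these bands.
\item Cut off $\Gamma_2$ together with the strip between the two handles that is bounded by these $\theta$-bands. Call the remaining diagram $\Delta'$.
\item Lemma~\ref{lengths}(b),(c) shows that the handle side of $\Delta'$ has length equal to its number of $\theta$-edges. Replacing $\textbf{x}$ and $\partial\Gamma_2$ by the side of $\pazocal{B}_1$ therefore gives $|\partial\Delta'|\leq n-h_2+O(N\delta)$ up to small corrections. Lemma~\ref{comb area} handles the $a$-edges with the $\frac12\alpha$ term.
\end{enumerate}
The removed part is a union of a comb and a trapezium-like strip, with area at most $c_0Kh_2^2+2\alpha h_2$ by Lemma~\ref{comb area}. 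Lemma~\ref{G-area subdiagrams} splits the $G$-area along $q$-bands.

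Removing white beads only decreases the mixture (Lemma~\ref{mixtures}(2),(3)). Lemma~\ref{phi properties}(2) then gives
\[
\phi(N_2n)-\phi(N_2(n-h_2/2))\geq \tfrac{1}{2}N_2^2nh_2\,g(N_2n),
\]
which dominates $c_0Kh_2^2+2\alpha h_2$ because $h_2,\alpha\leq n/\delta$ and $N_2\gg K,\delta^{-1}$. This contradicts minimality.

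For (2), the bound $|\partial\Gamma|_q\geq 2s$ is automatic, since a $\theta$-band of base length $s$ crosses $s$ $q$-bands, each of which ends on $\partial\Gamma$. For the reverse, suppose some maximal $q$-band of $\Gamma$ has both ends on the boundary but crosses no $\theta$-band reaching the handle. A maximal $q$-band of $\Gamma$ crossing no $\theta$-band cannot exist, since every $\theta$-band ends on the handle. Hence there are extra $q$-edges only if some $q$-band has both ends on the free boundary of $\Gamma$ without $(\theta,q)$-cells between it and the boundary, or if two distinct $q$-bands of width-$s$ $\theta$-bands are unpaired. In either case the pair of $q$-edges lies on $\partial\Delta$, and Lemma~\ref{minBoundaryLengths} shortens the boundary by at least $2$ while keeping the same element. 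This would reduce $n$ and contradict minimality. Here we also need the $G$-area bound on the excised part. Alternatively, inside $\Gamma$ one can find a derivative subcomb and apply part (1) with $\textbf{x}$ of $q$-length $0$.

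The main obstacle is the estimate in (1) that the deleted subcomb plus the connecting strip loses enough perimeter, about $h_2$, to pay for its $G$-area through the $\phi$-difference. Controlling the $a$-length of the strip uses Lemma~\ref{comb area}(1). Tracking the $\delta$-corrections at the junctions, and checking that $\mu$ does not increase, are routine.
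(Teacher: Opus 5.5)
The step that fails is your perimeter estimate $|\partial\Delta'|\le n-h_2+O(N\delta)$ in part (1).

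Cutting off a subcomb of height $h_2$ removes no $\theta$-edges from the boundary. Each of its $h_2$ maximal $\theta$-bands has one end on the exposed part of $\partial\Delta$ and the other on the bottom of the handle. So the new boundary piece (the side of the handle) carries the same $h_2$ $\theta$-edges, and by Lemma~\ref{lengths}(b) it has length exactly $h_2$. The gain is only the $q$-edges and the surplus $a$-edges, that is, $|\partial\Delta|-|\partial\Delta'|\ge\gamma=\max(2,\delta(|\partial\Gamma_2|_a-2h_2))$. There is also no ``strip between the handles'' to remove: the $\theta$-bands of a subcomb stay inside it, and $\Gamma_1,\Gamma_2$ are disjoint.

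A gain of $\gamma$ lets the $\phi$-difference absorb $2h_2|\partial\Gamma_2|_a$ when $|\partial\Gamma_2|_a\ge 4h_2$. It cannot absorb the quadratic term $c_0Kh_2^2$ of Lemma~\ref{comb area}; think of $g\equiv 1$, $|\partial\Gamma_2|_a\le 2h_2$, and $h_2$ comparable to $n$. As a sanity check, if your accounting worked it would exclude every single subcomb of basic width $\le K$, and the second comb and the hypothesis $|\textbf{x}|_q\le N$ would play no role.

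What is missing is that the quadratic term is paid for by the mixture. Noting that $\mu$ does not increase is far too weak; in fact no white beads are removed, since the $\theta$-edges are replaced one for one.

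Let $\Gamma_1$ be the shorter comb. Cutting it off deletes the black bead at the end of its handle next to $\textbf{x}$. On one side of that bead lie the $h_1$ white beads of $\Gamma_1$; on the other side, across $\textbf{x}$, lie the $h_2$ white beads of $\Gamma_2$. So Lemma~\ref{mixtures}(4) gives $\mu(\Delta)-\mu(\Delta')\ge h_1h_2\ge h_1^2$. Then $N_1h_1^2g(N_1n)$ dominates $c_0Kh_1^2$, and also dominates $2h_1|\partial\Gamma_1|_a$ when $|\partial\Gamma_1|_a\le 4h_1$.

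Applying Lemma~\ref{mixtures}(4) requires at most $J$ black beads between the far ends of the two handles. This uses $|\textbf{x}|_q\le N$ together with $|\partial\Gamma_i|_q=2s_i\le 2K$, which is part (2) for $\Gamma_1$ and $\Gamma_2$. That is why the paper proves (1) and (2) simultaneously, by induction on the area of the combs involved.

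Your argument for (2) via Lemma~\ref{minBoundaryLengths} has the same defect. The piece you cut off is a comb whose area can be quadratic in its height, while the boundary drops by only about $2$. Shortening the boundary alone does not contradict the choice of $\Delta$, which is a minimal counterexample to a $G$-area inequality, not a shortest word.

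The route that works is the one you mention only in passing. By the inductive hypothesis, (1) holds for smaller combs, so a comb of basic width $s\le K$ has at most one derivative subcomb: two would be disjoint subcombs whose handle ends are joined along the boundary by a path with few $q$-edges. The derivative subcombs therefore form a nested chain, so $\Gamma$ has exactly $s$ maximal $q$-bands, whence $|\partial\Gamma|_q=2s$.
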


\renewcommand\thesubfigure{\arabic{subfigure}}
\begin{figure}[H]
\centering
\begin{subfigure}[b]{0.48\textwidth}
\centering
\includegraphics[scale=1]{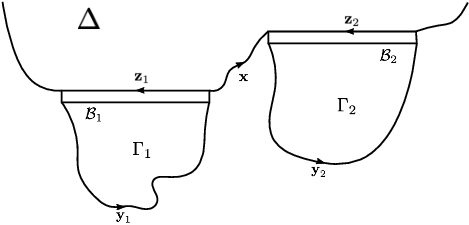}
\caption{ \ }
\end{subfigure}\hfill
\begin{subfigure}[b]{0.48\textwidth}
\centering
\includegraphics[scale=1.225]{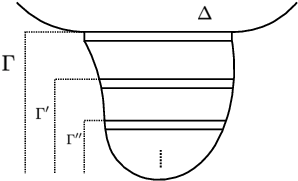}
\caption{ \ }
\end{subfigure}
\caption{Lemma \ref{counterexample combs}}
\end{figure}

\begin{proof}

As in the proof of the analogous statement in \cite{OS20}, we prove the statements simultaneously, inducting on the value $A$ defined to be $A=\text{Area}(\Gamma_1)+\text{Area}(\Gamma_2)$ for (1) and $A=\text{Area}(\Gamma)$ for (2).

Assume there exists a minimal counterexample of form (1).  By the inductive hypothesis, $\Gamma_1$ and $\Gamma_2$ both satisfy (2).  Let $h_i$ be the height of $\Gamma_i$ and assume without loss of generality that $h_1\leq h_2$.

Let $\Delta'$ be the subdiagram of $\Delta$ obtained by cutting along the bottom of the handle of $\Gamma_1$.  Then as in the proof of \cite{OS20}, \Cref{comb area}, \Cref{mixtures}, and the parameter assignments imply:

\begin{itemize}

\item $\mu(\Delta)-\mu(\Delta')\geq h_1h_2$

\item $\text{Area}(\Gamma_1)\leq Jh_1^2+2h_1|\partial\Gamma_1|_a$

\item $|\partial\Delta|-|\partial\Delta'|\geq\gamma=\max(2,\delta(|\partial\Gamma_1|_a-2h_1))$

\end{itemize}

As $\Delta$ is a minimal counterexample, we have $\text{Area}_G(\Delta')\leq N_2\phi(N_2(n-\gamma))+N_1\mu(\Delta')g(N_1(n-\gamma))$.  As $n\geq\gamma$, \Cref{phi properties}(2) then implies $\phi(N_2(n-\gamma))\leq\phi(N_2n)-N_2^2n\gamma g(N_2n)$.

So, since $g$ is non-decreasing, $\text{Area}_G(\Delta')\leq N_2\phi(N_2n)-N_2^3n\gamma g(N_2n)+N_1(\mu(\Delta)-h_1h_2)g(N_1n)$.  As such, $\text{Area}_G(\Delta')\leq \text{Area}_G(\Delta)-N_2^3n\gamma g(N_2n)-N_1h_1h_2g(N_1n)$.

By \Cref{G-area subdiagrams}, though, we have $\text{Area}_G(\Delta)\leq\text{Area}_G(\Delta')+\text{Area}_G(\Gamma_1)\leq\text{Area}_G(\Delta')+\text{Area}(\Gamma_1)$.  Hence, as $h_1\leq h_2$ we reach a contradiction if 
\begin{equation}\label{eqn-two-comb}
Jh_1^2+2h_1|\partial\Gamma_1|_a<N_2^3n\gamma g(N_2n)+N_1h_1^2g(N_1n)
\end{equation}

If $|\partial\Gamma_1|_a\leq4h_1$, then the left side of (\ref{eqn-two-comb}) is at most $(J+8)h_1^2$.  But $g(N_1n)\geq1$, so that the inequality follows from the parameter choice $N_1>>J$.

Otherwise, assuming $|\partial\Gamma_1|_a\geq4h_1$, then $n\geq\gamma\geq\frac{1}{2}\delta|\partial\Gamma_1|_a$.  As such, $g(N_2n)\geq1$ and the parameter choice $N_2>>\delta^{-1}$ yield $$N_2^3n\gamma g(N_2n)\geq \frac{N_2^3\delta^2}{4}|\partial\Gamma_1|_a^2\geq N_2^3\delta^2h_1|\partial\Gamma_1|_a\geq 2h_1|\partial\Gamma_1|_a$$
Hence, (\ref{eqn-two-comb}) again follows from noting $g(N_1n)\geq1$ and the parameter choice $N_1>>J$.

Thus, we may assume there exists a minimal counterexample of form (2).  But then as in the proof of the analogous statement in \cite{OS20}, there is at most one derivative subcomb $\Gamma'$ in $\Gamma$, then at most one derivative subcomb $\Gamma''$ in $\Gamma'$, etc.  The statement then follows immediately.

\end{proof}

\begin{lemma}[Compare with Lemma 6.17 of \cite{OS20}] \label{lem-bigsubcomb}

Suppose $\Gamma$ is a subcomb of $\Delta$ whose basic width is at most $K_0$ and whose handle $\pazocal{B}$ has length $\ell$. If $\Gamma'$ is a subcomb of $\Gamma$ with handle $\pazocal{B}'$ of length $\ell'$, then $\ell'>\ell/2$.

\end{lemma}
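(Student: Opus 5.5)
We argue by contradiction, mirroring the proof of Lemma 6.17 of \cite{OS20} but with the estimates of \Cref{counterexample combs}. Suppose $\ell'\leq\ell/2$. Let $\pazocal{B}'$ be the handle of $\Gamma'$. The $\theta$-bands of $\Gamma$ crossing $\pazocal{B}'$ are exactly $\ell'$ of the $\ell$ maximal $\theta$-bands of $\Gamma$, since every maximal $\theta$-band of $\Gamma$ ends on $\pazocal{B}$ and no two distinct maximal $\theta$-bands intersect. Hence at least $\ell-\ell'\geq\ell/2$ maximal $\theta$-bands of $\Gamma$ do not enter $\Gamma'$. Since these bands end on $\pazocal{B}$, the $\ell'$ bands crossing $\pazocal{B}'$ form a consecutive family, and the remaining ones split into a family below $\Gamma'$ and a family above it.

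The first step is to locate a second subcomb. By \Cref{counterexample combs}(2) applied to $\Gamma$ (basic width $s\leq K_0\leq K$), we have $|\partial\Gamma|_q=2s$. Consequently, between the ends of $\pazocal{B}'$ and the ends of other maximal $q$-bands of $\Gamma$ lying on $\partial\Delta$, only boundedly many $q$-edges occur. Consider the bands of $\Gamma$ that avoid $\Gamma'$. Cut $\Gamma$ along a side of $\pazocal{B}'$, remove $\Gamma'$, and look at the comb structure of what remains on the far side. Either there is a maximal $q$-band $\pazocal{B}''\neq\pazocal{B}$ of $\Gamma$, disjoint from $\Gamma'$, that bounds a subcomb $\Gamma''$ of $\Delta$ crossed by many of these $\ell/2$ bands, or all of these bands cross directly to $\pazocal{B}$ without $(\theta,q)$-cells outside. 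In the first case, $\Gamma'$ and $\Gamma''$ are disjoint subcombs of basic width at most $K_0\leq K$. Their handles have ends joined by a boundary subpath containing at most $2K_0\leq N$ $q$-edges, because $|\partial\Gamma|_q=2s$. This contradicts \Cref{counterexample combs}(1).

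The second case is the main obstacle: the bands avoiding $\Gamma'$ pass between $\pazocal{B}'$ and the boundary without creating a second subcomb. In that case I would argue directly with the mixture, as in the proof of \Cref{counterexample combs}(1). Cut off $\Gamma'$ to get $\Delta'$. The black beads (the ends of $\pazocal{B}'$) are then separated on the necklace by $\ell'$ white beads on one side and at least $\ell/2\geq\ell'$ white beads on the other, within $J$ black beads. By \Cref{mixtures}(4), $\mu(\Delta)-\mu(\Delta')\geq\ell'\cdot\ell/2\geq\ell'^2$. Combining this with \Cref{comb area}, $\text{Area}(\Gamma')\leq c_0K_0\ell'^2+2\ell'|\partial\Gamma'|_a$, and with the perimeter drop $\gamma=\max(2,\delta(|\partial\Gamma'|_a-2\ell'))$ from \Cref{minBoundaryLengths}, the same inequality as (\ref{eqn-two-comb}) yields a contradiction with the minimality of $\Delta$. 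This uses \Cref{phi properties}(2), \Cref{G-area subdiagrams}, and the parameter choices $N_1>>K_0,J$ and $N_2>>\delta^{-1}$. Hence $\ell'>\ell/2$.
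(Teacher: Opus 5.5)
\textbf{There is a genuine gap, and it sits in your first case.} \Cref{counterexample combs}(1) only forbids disjoint subcombs whose handles are joined by a boundary subpath $\textbf{x}$ with $|\textbf{x}|_q\leq N$. You obtain this bound from $|\partial\Gamma|_q=2s\leq 2K_0\leq N$. But the parameter hierarchy has $N<<K_0$ (the paper even uses $2LN+1\leq K_0$ for tight subcombs). So the arc of $\partial\Gamma$ between two subcombs of $\Gamma$ can contain far more than $N$ $q$-edges.

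Your dichotomy also fails to produce disjoint subcombs. A maximal $q$-band $\pazocal{B}''\neq\pazocal{B}$ crossed by the $\theta$-bands avoiding $\Gamma'$ may enclose $\pazocal{B}'$. In that case its subcomb contains $\Gamma'$.

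The second case has two loose ends of its own:
\begin{itemize}
\item \Cref{minBoundaryLengths} requires that no $(\theta,q)$-cells lie between $\pazocal{B}'$ and $\partial\Delta$. That means $\Gamma'$ must have basic width $1$, which you never arrange.
\item The $\ell-\ell'$ white beads of $\partial\Gamma$ outside $\Gamma'$ may be split between the two sides of $\Gamma'$. A single application of \Cref{mixtures}(4) therefore does not give a drop of $\ell'\cdot\ell/2$.
\end{itemize}

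\textbf{The missing idea is a minimality reduction that makes the first case unnecessary.} Among subcombs of $\Gamma$ with handle length at most $\ell/2$, choose $\Gamma'$ innermost: minimal $\ell'$, and containing no other such subcomb. Then $\Gamma'$ has no proper subcomb, so its basic width is $1$. With this choice:
\begin{itemize}
\item \Cref{comb area} gives $\text{Area}(\Gamma')\leq c_0(\ell')^2+2\ell'|\partial\Gamma'|_a$.
\item The perimeter drop $\gamma=\max(2,\delta(|\partial\Gamma'|_a-2\ell'))$ is legitimate.
\item Apply \Cref{mixtures}(4) at each end of $\pazocal{B}'$ in turn, taking the outer black beads to be the ends of $\pazocal{B}$. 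The relevant arcs lie in $\partial\Gamma$, which has at most $2K_0\leq J$ black beads. This yields $\mu(\Delta)-\mu(\Delta')\geq\ell'(\ell-\ell')\geq(\ell')^2$.
\end{itemize}

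Your second-case computation then finishes the proof for every configuration, with no case split and no appeal to \Cref{counterexample combs}(1). This is exactly the paper's argument.
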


\begin{proof}

Arguing toward contradiction, pick $\Gamma'$ so that $\ell'\leq\ell/2$ is minimal.  As such, $\Gamma'$ must have basic width 1.  Hence, letting $\Delta'$ be the subdiagram obtained from $\Delta$ by removing $\Gamma'$, then as in \cite{OS20} we have:

\begin{itemize}

\item $\text{Area}(\Gamma')\leq c_0(\ell')^2+2\ell'|\partial\Gamma'|_a$

\item $|\partial\Delta|-|\partial\Delta'|\geq\gamma=\max(2,\delta(|\partial\Gamma'|_a-2\ell'))$

\item $\mu(\Delta)-\mu(\Delta')\geq(\ell')^2$

\end{itemize}

\begin{figure}[H]
\centering
\includegraphics[scale=0.85]{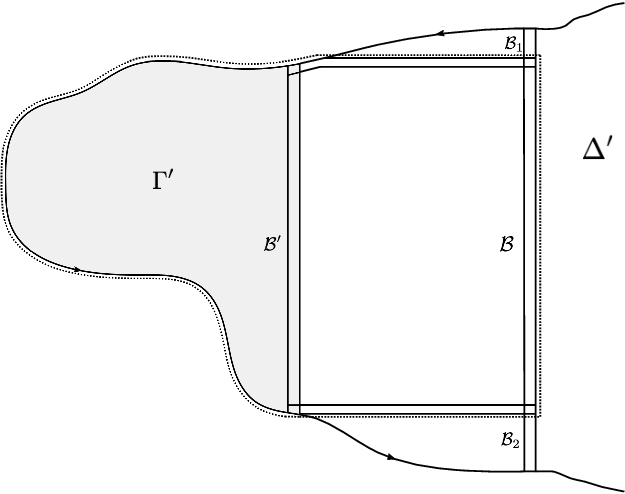}
\caption{Lemma \ref{lem-bigsubcomb}}
\label{fig-bigsubcomb}
\end{figure}

So, using the minimality of $\Delta$ as a counterexample,
$$\text{Area}_G(\Delta')\leq
N_2\phi(N_2(n-\gamma))+N_1(\mu(\Delta)-(\ell')^2)g(N_1n)$$
As in the proof of the last statement, \Cref{phi properties}(2) implies $\phi(N_2(n-\gamma))\leq\phi(N_2n)-N_2^2n\gamma g(N_2n)$, so that it suffices to show
\begin{equation}\label{eqn-bigsubcomb}
c_0(\ell')^2+2\ell'|\partial\Gamma'|_a<N_2^3n\gamma g(N_2n)+N_1(\ell')^2g(N_1n)
\end{equation}
If $|\partial\Gamma'|_a\leq4\ell'$, then the left side is at most $(c_0+8)(\ell')^2$, so that the inequality is given by the parameter assignment $N_1>>c_0$ and the observation $g(N_1n)\geq1$.

Otherwise, $n\geq\gamma\geq\frac{1}{2}\delta|\partial\Gamma'|_a$, so that the parameter assignment $N_2>>\delta^{-1}$ yields
$$N_2^3n\gamma g(N_2n)\geq\frac{N_2^3\delta^2}{4}|\partial\Gamma'|_a\geq N_2^3\delta^2\ell'|\partial\Gamma'|_a\geq2\ell'|\partial\Gamma'|_a$$
and thus (\ref{eqn-bigsubcomb}) is again given by $N_1>>c_0$.

\end{proof}

\begin{lemma}[Compare to Lemma 6.18 of \cite{OS20}] \label{short theta-bands}

If $\pazocal{T}$ is a rim $\theta$-band in $\Delta$, then the base of $\pazocal{T}$ has length $s>K$.

\end{lemma}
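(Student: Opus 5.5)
We argue by contradiction in the same way as Lemmas \ref{counterexample combs} and \ref{lem-bigsubcomb}. Suppose $\pazocal{T}$ is a rim $\theta$-band of $\Delta$ whose base has length $s\leq K$, so that one side of $\pazocal{T}$, apart from its end edges, lies on $\partial\Delta$. We remove $\pazocal{T}$ and get a subdiagram $\Delta'$, which is again a reduced circular diagram over $M(\textbf{M}_\textbf{S})$. Our aim is to compare three quantities for $\Delta$ and $\Delta'$: the boundary lengths, the mixtures, and the $G$-areas. The minimality of $\Delta$ as a counterexample then applies to $\Delta'$.

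\textbf{Step 1: boundary length.} This is where Lemma \ref{minBoundaryLengths-theta} and the parameter choice $\delta^{-1}>>K$ enter. The two $\theta$-edges at the ends of $\pazocal{T}$ lie on $\partial\Delta$ and are lost. The side of $\pazocal{T}$ on $\partial\Delta$ is replaced by the opposite side. By Lemma \ref{lengths}(d) the numbers of $a$-edges on the two sides differ by at most a constant multiple of $s\leq K$. Each $a$-letter costs only $\delta$, and $q$-letters are unchanged, so
$$\gamma:=|\partial\Delta|-|\partial\Delta'|\geq 2-c\delta K\geq 1 .$$
For this I will split off $(\theta,a)$-syllables along $\partial\Delta$, exactly as in the proof of Lemma \ref{minBoundaryLengths-theta}.

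\textbf{Step 2: mixture and area.} Removing $\pazocal{T}$ deletes two white beads from the necklace and leaves the black beads alone, since the $q$-edges of $\pazocal{T}$ remain on the new boundary. By Lemma \ref{mixtures}(2), $\mu(\Delta')\leq\mu(\Delta)$. Lemma \ref{lengths}(d) also bounds the number of cells of $\pazocal{T}$ by $|\partial\pazocal{T}|_a+3K$. We need this in terms of $n$. The side of $\pazocal{T}$ on $\partial\Delta$ gives $|\partial\Delta|_a\geq |\text{side}|_a$, so Lemma \ref{lengths}(a) yields $\text{Area}(\pazocal{T})\leq 2\delta^{-1}n+4K\leq C n$, with $C$ depending only on $\delta,K$.

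\textbf{Step 3: conclusion.} By minimality, $\text{Area}_G(\Delta')\leq N_2\phi(N_2(n-\gamma))+N_1\mu(\Delta)g(N_1n)$. Lemma \ref{phi properties}(2) gives
$$N_2\phi(N_2(n-\gamma))\leq N_2\phi(N_2n)-N_2^3n\gamma g(N_2n).$$
By Lemma \ref{G-area subdiagrams} applied to the cover $\{\Delta',\pazocal{T}\}$, we have $\text{Area}_G(\Delta)\leq \text{Area}_G(\Delta')+Cn$. Here $\pazocal{T}$ is counted cellwise, since it consists of $(\theta,q)$- and $(\theta,a)$-cells of weight $1$. We get a contradiction as soon as $Cn< N_2^3 n\gamma g(N_2 n)$. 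This holds because $\gamma\geq1$, $g\geq 1$ and $N_2>>\delta^{-1},K$.

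The main obstacle is Step 1. We must make sure the length reduction $\gamma\geq1$ survives the modified length function: a $(\theta,a)$-syllable can absorb an $a$-letter next to the $\theta$-edge, so the saving is subtle. We must also handle a degenerate band whose ends coincide, or whose top contains $q$-edges with intervening $a$-letters counted differently. If this proves problematic, I would instead invoke Lemma \ref{minBoundaryLengths-theta} directly, taking $\Delta'$ to be any diagram with boundary of length at most $n-1$. Since the two diagrams have the same boundary label, the cell counts are then compared through the area bound $Cn$ on the removed band.
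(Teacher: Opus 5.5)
Your proposal is correct and follows the paper's proof. The paper likewise cuts off $\pazocal{T}$, gets $|\partial\Delta'|\le n-1$ from \Cref{minBoundaryLengths-theta} (which is exactly your Step 1 computation with $\delta^{-1}>>K$), and uses that the mixture does not increase. It then combines minimality with \Cref{phi properties}(2) and \Cref{G-area subdiagrams}, bounds the number of cells of $\pazocal{T}$ by about $\delta^{-1}n$ via \Cref{lengths}, and concludes with $N_2>>\delta^{-1}$.

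Only your fallback remark is off. $\Delta'$ must be the subdiagram $\Delta\setminus\pazocal{T}$ itself, which is the diagram that \Cref{minBoundaryLengths-theta} produces here. For an arbitrary diagram with shorter boundary, \Cref{G-area subdiagrams} gives no bound on $\text{Area}_G(\Delta)$.
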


\begin{proof}

If $s\leq K$, then \Cref{minBoundaryLengths-theta} implies the diagram $\Delta'$ obtained from cutting off $\pazocal{T}$ satisfies $|\partial\Delta|-|\partial\Delta'|\geq1$.  As such, the minimality of $\Delta$ as a counterexample implies
$$\text{Area}_G(\Delta')\leq N_2\phi(N_2(n-1))+N_1\mu(\Delta)g(N_1(n-1))\leq\text{Area}_G(\Delta)-N_2^3ng(N_2n)$$
So, since $g(N_2n)\geq1$, it follows from \Cref{G-area subdiagrams} that it suffices to show that the length of $\pazocal{T}$ is less than $N_2^3n$.

But as in \cite{OS20}, \Cref{lengths} implies the length of $\pazocal{T}$ is at most $3s+\delta^{-1}(n-s)\leq\delta^{-1}n$, so that the desired inequality follows from the parameter choice $N_2>>\delta^{-1}$.

\end{proof}

We now reach the desired contradiction, obtaining the desired upper bound on $G$-areas.

\begin{lemma}[Compare with Lemma 6.19 of \cite{OS20}] \label{circular diskless}

The counterexample diagram $\Delta$ does not exist.

\end{lemma}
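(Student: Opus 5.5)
We argue by contradiction, following the proof of Lemma 6.19 of \cite{OS20} with the estimates replaced by ones involving $\phi$ and $g$. Since $\Delta$ is a diagram over $M(\textbf{M}_\textbf{S})$ of positive area, \Cref{short theta-bands} (applied to every rim $\theta$-band and propagated by the minimality of $\Delta$) lets us assume every maximal $\theta$-band has base of length at least $K$. Then \Cref{tight subcomb existence}(1) produces a tight subcomb $\Gamma$ of $\Delta$.

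Let $\pazocal{Q}$ be the handle of $\Gamma$, let $h$ be its height, and let $\pazocal{T}_1,\dots,\pazocal{T}_h$ be the maximal $\theta$-bands of $\Gamma$.

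\textbf{Case of small basic width.} Suppose the basic width of $\Gamma$ is at most $K_0$. Combining \Cref{counterexample combs} with \Cref{lem-bigsubcomb}, we locate a tight $\theta$-band and, among the bands with revolving suffix, a subdiagram bounded by two $q$-bands. This subdiagram is a trapezium $\Gamma_0$ with revolving base whose height $h_0$ is at least about $h/2$. We then cut $\Gamma_0$ (together with the part of $\Gamma$ between $\Gamma_0$ and the boundary) off $\Delta$, obtaining $\Delta'$.

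\textbf{Case of large basic width.} If the basic width exceeds $K_0$, we instead apply \Cref{tight subcomb existence}(2) and argue the same way.

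In either case the removed part consists of $\Gamma_0$ plus combs of basic width at most $K_0$. We bound its $G$-area as follows. By \Cref{G-area revolving trapezia}, $\text{Area}_G(\Gamma_0)\le c_3h_0M+C_2\phi(C_2M)$, where $M$ is the length of the top or bottom of $\Gamma_0$. The comb pieces are bounded by \Cref{comb area}, and \Cref{G-area subdiagrams} combines these bounds.

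We then gain from passing to $\Delta'$ in two ways. First, the mixture drops: $\mu(\Delta)-\mu(\Delta')\ge c\,h\,s$, where $s$ counts the $q$-edges of the removed boundary arc, using \Cref{mixtures}(4) since the base is revolving and so has length at most about $N$, which is less than $J$. Second, the perimeter drops: $|\partial\Delta|-|\partial\Delta'|\ge\gamma$ with $\gamma$ of order $h$, because the removed boundary contains the $h$ ends of the $\theta$-bands while the new side has length at most $h$ plus a small error, by \Cref{lengths}(b). By minimality of $\Delta$ and \Cref{phi properties}(2), this yields the saving
$$N_2\phi(N_2n)-N_2\phi(N_2(n-\gamma))\ge N_2^3n\gamma g(N_2n).$$

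\textbf{Main obstacle.} The hard step is absorbing the term $C_2\phi(C_2M)$ together with the linear term $c_3h_0M$. The term $c_3h_0M$ should be paid for by the mixture saving $N_1hs\,g(N_1n)$, using $M\lesssim s+\delta^{-1}|\partial\Gamma|_a$ and treating the $a$-edges through $\gamma$. For $C_2\phi(C_2M)$, note that $M\le c\,n$ since the top and bottom of $\Gamma_0$ are essentially bounded by boundary lengths. The parameter choice $N_2\gg C_2$ together with superadditivity (\Cref{phi properties}(1)) should give $C_2\phi(C_2M)$ at most $N_2\phi(N_2n)-N_2\phi(N_2(n-\gamma))$ when $\gamma\gtrsim M$. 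When $M\gg\gamma$, the top or bottom of $\Gamma_0$ carries many $a$-letters that appear on $\partial\Delta$; we then take $\gamma\ge\delta\cdot(\text{those }a\text{-letters})$ as in \Cref{counterexample combs}, so that $\gamma\gtrsim\delta M$, and $N_2\gg\delta^{-1},C_2$ closes the inequality. This yields $\text{Area}_G(\Delta)\le\text{Area}_G(\Delta')+\text{Area}_G(\text{removed})\le N_2\phi(N_2n)+N_1\mu(\Delta)g(N_1n)$, contradicting the choice of $\Delta$.
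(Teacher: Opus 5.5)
Your outer frame matches the paper: a tight subcomb from \Cref{short theta-bands} and \Cref{tight subcomb existence}, a revolving trapezium of height $\ell'>\ell/2$ from \Cref{lem-bigsubcomb}, \Cref{G-area revolving trapezia} for its $G$-area, and \Cref{phi properties}(2) for the perimeter saving. However, the surgery and both savings you extract from it are wrong, and that is where the proof actually lives.

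\textbf{The surgery and the two savings.} You delete $\Gamma_0$ together with everything of $\Gamma$ between it and $\partial\Delta$. In a tight comb this includes the comb $\Gamma_1$ lying beyond the first $q$-band $\pazocal{Q}'$ of the revolving suffix, so you are deleting essentially the whole subcomb.
\begin{itemize}
\item Perimeter. Each deleted $\theta$-band has exactly one end on $\partial\Delta$; its other end is on the handle. It therefore reappears as one $\theta$-edge on the new boundary side, whose length equals its number of $\theta$-edges by \Cref{lengths}(b). So the $h$ $\theta$-ends cancel. The perimeter drop comes only from $q$-edges and $\delta$-weighted surplus $a$-edges, which is the form of the paper's bound $n-n_0\ge 2+\delta(\dots)$, not something of order $h$.
\item Mixture. \Cref{mixtures}(4) gives a drop $y_1y_2$, a product of white-bead counts on the two sides of a removed black bead. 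Here that is $\ell'(\ell-\ell')$ (ends of $\Gamma_1$'s bands against ends of the side combs $\Gamma_3,\Gamma_4$), not $c\,h\,s$. It vanishes when $\ell'=\ell$.
\end{itemize}
With the correct savings, deleting $\Gamma_1$ costs up to $c_0K_0\ell'^2$ by \Cref{comb area}, and nothing pays for that. The missing idea is to delete only $\Gamma_2\cup\Gamma_3\cup\Gamma_4$. One then reglues $\Gamma_1$ to the handle by identifying $\pazocal{Q}'$ with the part $\pazocal{Q}_2$ of the handle crossed by $\Gamma_2$; this is where the revolving base is genuinely needed. Combined with the analogue of Lemma 6.20 of \cite{OS20}, namely $\text{Area}_G(\Delta_0)\ge\text{Area}_G(\Delta_1)+A_1-\ell'$, only $A_2+A_3+A_4+\ell'$ has to be paid for.

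\textbf{Absorbing $A_2$.} Your treatment of $A_2$ also fails. The $a$-edges on the top and bottom of $\Gamma_2$ need not lie on $\partial\Delta$: they can be shared with $\Gamma_3,\Gamma_4$ and end on $(\theta,q)$-cells there. \Cref{comb area}(1) only gives $d_i-d_i'\le\frac12\alpha_i+K_0\ell_i$. So your claim that when $M\gg\gamma$ the top or bottom carries many $a$-letters on $\partial\Delta$ is unjustified, and the $K_0\ell_i$ part cannot be converted into perimeter.

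The paper pays for that part, and for the quadratic parts of $A_3$ and $A_4$, with the mixture term $N_1\ell'(\ell-\ell')g(N_1n)$. This uses $\ell'>\ell/2$, so that $\ell'(\ell-\ell')\ge(\ell_3+\ell_4)^2$. It then splits into two cases.
\begin{itemize}
\item If $\alpha_3\le 2J(\ell-\ell')$, then $d\le 4J(\ell-\ell')+\delta^{-1}(n-n_0)$, and $\phi$ of this is absorbed by the mixture and perimeter savings together.
\item If $\alpha_3\ge 2J(\ell-\ell')$, then the $a$-edges of $\Gamma_3$ on $\partial\Delta$ give $n-n_0\ge\frac14\delta\alpha_3$, hence $d+1\le 8\delta^{-1}(n-n_0)$.
\end{itemize}
Separately, your large-basic-width case is vacuous: a tight comb has basic width at most $2LN+1\le K_0$.
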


\begin{proof}

\Cref{short theta-bands} implies the counterexample diagram $\Delta$ satisfies the hypotheses of \Cref{tight subcomb existence}, and so there exists a tight subcomb $\Gamma$.  Note that the basic width of $\Gamma$ is then at most $2LN+1\leq K_0$ by the parameter choices $K_0>>L>>N$.

As in \cite{OS20}, we decompose $\Gamma$ into four subdiagrams $\Gamma_1,\Gamma_2,\Gamma_3,\Gamma_4$ where:

\begin{itemize}

\item $\Gamma_2$ is a trapezium with revolving base,

\item $\Gamma_3$ and $\Gamma_4$ are combs,

\item $\Gamma_1$ is a comb whose handle, $\pazocal{Q}'$, is one of the boundary maximal $q$-bands of $\Gamma_2$, and

\item The handle $\pazocal{Q}_3$ of $\Gamma_3$, the handle $\pazocal{Q}_4$ of $\Gamma_4$, and the boundary maximal $q$-band $\pazocal{Q}_2$ of $\Gamma_2$ distinct from $\pazocal{Q}'$ comprise a decomposition of the handle $\pazocal{Q}$ of $\Gamma$ (see \Cref{fig-Mcounterexample}).

\end{itemize}

\begin{figure}[H]
\centering
\includegraphics[height=3.5in]{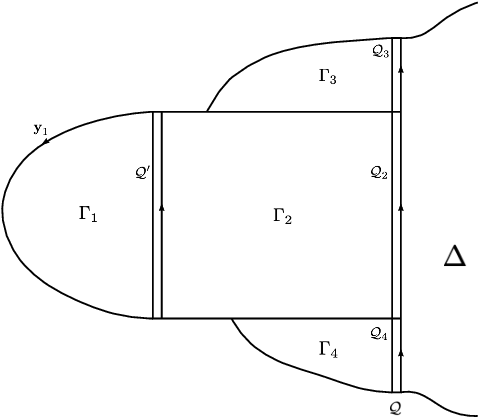}
\caption{Tight subcomb $\Gamma$}
\label{fig-Mcounterexample}
\end{figure}

It should be noted, however, that the subdiagrams $\Gamma_3$ and $\Gamma_4$ need not exist should the band $\pazocal{Q}_2$ have an end on $\partial\Delta$.

Let $\ell$ be the length of $\pazocal{Q}$, $\ell'$ be the length of $\pazocal{Q}'$, and $\ell_i$ be the length of $\pazocal{Q}_i$ for $i=2,3,4$.  We then have $\ell'=\ell_2$, $\ell=\ell'+\ell_3+\ell_4$, and $\ell'>\ell/2$ by \Cref{lem-bigsubcomb}.

Now let $A_i$ be the $G$-area of $\Gamma_i$ and $\a_i=|\partial\Gamma_i|_a$.  Then \Cref{comb area} implies $A_i\leq c_0K_0\ell_i^2+2\ell_i\a_i$ for $i=3,4$, while \Cref{G-area revolving trapezia} implies $A_2\leq c_3\ell'M+C_2\phi(C_2M)$ for $M=\max(\|\textbf{bot}(\Gamma_2)\|,\|\textbf{top}(\Gamma_2)\|)$.

Let $d_3=|\textbf{top}(\Gamma_2)|_a$, $d_4=|\textbf{bot}(\Gamma_2)|_a$, and $d=d_3+d_4$.  Further, let $d_3'$ and $d_4'$ are the number of such $a$-edges of $\textbf{top}(\Gamma_2)$ and $\textbf{bot}(\Gamma_2)$, respectively, which are also boundary edges of $\Delta$.  We then have $M\leq d+K_0$, so that the parameter choices $J>>K_0>>c_3$ yield $c_3M\leq J(d+1)$.  Similarly, $C_3>>C_2>>K_0$ yields $C_2M\leq C_3(d+1)$.  Hence, $A_2\leq J\ell'(d+1)+C_2\phi(C_3(d+1))$.

We then perform the following surgery: We cut $\Gamma$ off $\Delta$ along the top of the handle $\pazocal{Q}$ to produce the diagram $\Delta_1$, then paste $\Gamma_1$ to $\Delta_1$ by identifying $\pazocal{Q}'$ with $\pazocal{Q}_2$ (note this is possible since the base of $\Gamma_2$ is revolving).  The resulting diagram is denoted $\Delta_0$ (see \Cref{fig-diskless}).

\begin{figure}[H]
\centering
\includegraphics[height=3in]{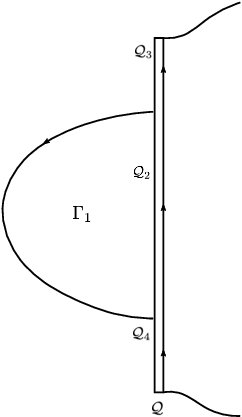}
\caption{The construction of $\Delta_0$}
\label{fig-diskless}
\end{figure}

An identical argument to Lemma 6.20 of \cite{OS20} implies $\text{Area}_G(\Delta_0)\geq \text{Area}_G(\Delta_1)+A_1-\ell'$.  As a result, \Cref{G-area subdiagrams} implies
$$\text{Area}_G(\Delta)\leq\text{Area}_G(\Delta_1)+A_1+A_2+A_3+A_4\leq \text{Area}_G(\Delta_0)+A_2+A_3+A_4+\ell'$$
As in the proof in \cite{OS20}, we have $|\partial\Delta|-|\partial\Delta_0|\geq2$.  Indeed, for $n_0=|\partial\Delta_0|$, it follows from \Cref{comb area}(1) that
\begin{equation}\label{eqn-diskless-perimeter}
n-n_0\geq2+\delta\max(0,d_3'-1,|\partial\Gamma_3|_a-(d_3-d_3')-2\ell_3)+\delta\max(0,d_4'-1,|\partial\Gamma_4|_a-(d_4-d_4')-2\ell_4)
\end{equation}
Hence, we may apply the inductive hypothesis to $\Delta_0$ to estimate $$\text{Area}_G(\Delta_0)\leq N_2\phi(N_2n_0)+N_1\mu(\Delta_0)g(N_1n_0)\leq N_2\phi(N_2n)-N_2^3n(n-n_0)g(N_2n)+N_1\mu(\Delta_0)g(N_1n)$$
But also $\mu(\Delta_0)\leq\mu(\Delta)-\ell'(\ell-\ell')$, so that 
$$\text{Area}_G(\Delta_0)\leq\text{Area}_G(\Delta)-N_2^3n(n-n_0)g(N_2n)-N_1\ell'(\ell-\ell')g(N_1n)$$

Hence, we've reached a contradiction if we show
\begin{equation} \label{eqn-diskless1}
N_1\ell'(\ell-\ell')g(N_1n)+N_2^3n(n-n_0)g(N_2n)>A_2+A_3+A_4+\ell'
\end{equation}

Since $\ell'>\ell/2$ implies $\ell'>\ell_3+\ell_4$, we have $\ell'(\ell-\ell')>(\ell_3+\ell_4)^2\geq\ell_3^2+\ell_4^2$.  So, using the parameter choices $N_1>>K_0>>c_0$, we have $N_1\ell'(\ell-\ell')\geq2c_0K_0(\ell_3^2+\ell_4^2)$.  As a result, $$N_1\ell'(\ell-\ell')g(N_1n)-A_3-A_4>\frac{1}{2}N_1\ell'(\ell-\ell')g(N_1n)-2\ell_3\a_3-2\ell_4\a_4$$
Hence, accounting for this in (\ref{eqn-diskless1}) it suffices to show
\begin{equation} \label{eqn-diskless2}
\frac{1}{2}N_1\ell'(\ell-\ell')g(N_1n)+N_2^3n(n-n_0)g(N_2n)\geq A_2+2\ell_3\a_3+2\ell_4\a_4+\ell'
\end{equation}
Further, since $n-n_0\geq2$, $g(N_2n)\geq1$, and $n\geq2\ell'$, the parameter choice $N_2>>J$ allows us to assume $\frac{1}{2}N_2^3n(n-n_0)g(N_2n)\geq J\ell'+\ell'$.  As such,
$$N_2^3n(n-n_0)g(N_2n)-A_2-\ell'\geq\frac{1}{2}N_2^3n(n-n_0)g(N_2n)-J\ell'd-C_2\phi(C_3(d+1))$$
In light of (\ref{eqn-diskless2}), assuming without loss of generality that $\a_3\geq\a_4$, it suffices to show
\begin{equation} \label{eqn-diskless3}
N_1\ell'(\ell-\ell')g(N_1n)+N_2^3n(n-n_0)g(N_2n)\geq 4(\ell-\ell')\a_3+2J\ell'd+2C_2\phi(C_3(d+1))
\end{equation}
We now proceed in two cases:

\textbf{1.} Suppose $\a_3\leq2J(\ell-\ell')$

Note that $d_i\leq\a_i+d_i'$ for $i=3,4$, so that $d\leq2\a_3+d_3'+d_4'\leq4J(\ell-\ell')+d_3'+d_4'$.  

By (\ref{eqn-diskless-perimeter}), $n-n_0\geq2+\delta(d_3'+d_4'-2)\geq\delta(d_3'+d_4')$, and so $d\leq4J(\ell-\ell')+\delta^{-1}(n-n_0)$.  So, $$2J\ell'd\leq8J^2\ell'(\ell-\ell')+2J\delta^{-1}\ell'(n-n_0)$$

Moreover, $4(\ell-\ell')\a_3\leq8J(\ell-\ell')^2\leq8J\ell'(\ell-\ell')$.  So, since $g(N_1n)\geq1$, the parameter choice $N_1>>J$ implies
$$N_1\ell'(\ell-\ell')g(N_1n)-4(\ell-\ell')\a_3-2J\ell'd\geq\frac{1}{2}N_1\ell'(\ell-\ell')g(N_1n)-2J\delta^{-1}\ell'(n-n_0)$$
As also $n\geq2\ell'$ and $g(N_2n)\geq1$, the parameter choices $N_2>>\delta^{-1}>>J$ imply $$N_2^3n(n-n_0)g(N_2n)-2J\delta^{-1}\ell'(n-n_0)\geq\frac{1}{2}N_2^3n(n-n_0)g(N_2n)$$
So noting that $d+1\leq4J(\ell-\ell')+2\delta^{-1}(n-n_0)$, then by (\ref{eqn-diskless3}), it suffices to show that
$$N_1\ell'(\ell-\ell')g(N_1n)+N_2^3n(n-n_0)g(N_2n)\geq4C_2\phi(4C_3J(\ell-\ell')+2C_3\delta^{-1}(n-n_0))$$
Now, as $n\geq2\ell$, the parameter choices $N_1>>C_3>>\delta^{-1}>>J$ imply $$4C_3J(\ell-\ell')+2C_3\delta^{-1}(n-n_0)\leq N_1n$$  
So, since $g(N_1n)\geq1$ and we choose $N_2>>N_1$, it suffices to show
\begin{equation} \label{eqn-diskless4}
N_1\ell'(\ell-\ell')+N_2^3n(n-n_0)\geq 16C_2C_3^2(2J(\ell-\ell')+\delta^{-1}(n-n_0))^2
\end{equation}
Since $\ell'>\ell-\ell'$, the parameter choices $N_1>>C_3>>C_2>>J$ imply 
$$N_1\ell'(\ell-\ell')\geq16C_2C_3^2(2J(\ell-\ell'))^2$$  
Meanwhile, $n\geq\ell'>\ell-\ell'$, so that for $N_2\geq2$ we have
$$N_2^3n(n-n_0)\geq N_2^2(n-n_0)(2n)\geq N_2^2(n-n_0)^2+N_2^2(n-n_0)(\ell-\ell')$$
Hence, (\ref{eqn-diskless4}) follows from the parameter choices $N_2>>C_3>>C_2>>\delta^{-1}>>J$.

\textbf{2.} Otherwise, suppose $\a_3\geq2J(\ell-\ell')$.

Then $\ell_3\leq\ell_3+\ell_4=\ell-\ell'\leq\frac{1}{2J}\a_3$, so that \Cref{comb area}(1) and the parameter choice $J>>K_0$ imply $d_3-d_3'\leq\frac{1}{2}\a_3+K_0\ell_3\leq\frac{2}{3}\a_3$.  An identical argument implies $d_4-d_4'\leq\frac{2}{3}\a_3$, and so $d\leq\frac{4}{3}\a_3+d_3'+d_4'$.  

As in the previous case, though, (\ref{eqn-diskless-perimeter}) implies $d_3'+d_4'\leq\delta^{-1}(n-n_0)$, and so $d\leq\frac{4}{3}\a_3+\delta^{-1}(n-n_0)$.

Moreover, a parameter choice for $J$ implies $\a_3-(d_3-d_3')-2\ell_3\geq\frac{1}{3}\a_3-\frac{1}{J}\a_3\geq\frac{1}{4}\a_3$ and so by (\ref{eqn-diskless-perimeter}) we have $n-n_0\geq\frac{1}{4}\delta\a_3$.  Hence, $d\leq 7\delta^{-1}(n-n_0)$, and so $d+1\leq8\delta^{-1}(n-n_0)$.

Now, this implies $\ell'd\leq7\delta^{-1}\ell'(n-n_0)$ and $(\ell-\ell')\a_3\leq4\delta^{-1}\ell'(n-n_0)$.  As such, the parameter assignments $N_2>>\delta^{-1}>>J$ imply $$4(\ell-\ell')\a_3+2J\ell'd\leq\frac{1}{2}N_2^3n(n-n_0)g(N_2n)$$
since $g(N_2n)\geq1$ and $n\geq2\ell'$.  Hence by (\ref{eqn-diskless3}) it suffices to show 
\begin{equation} \label{eqn-diskless5}
N_2^3n(n-n_0)g(N_2n)\geq4C_2\phi(8C_3\delta^{-1}(n-n_0))
\end{equation}
But the parameter choices $N_2>>C_3>>\delta^{-1}$ allow us to assume $N_2\geq8C_3\delta^{-1}$, so that $$\phi(8C_3\delta^{-1}(n-n_0))\leq\phi(N_2(n-n_0))\leq N_2^2(n-n_0)^2g(N_2(n-n_0))\leq N_2^2n(n-n_0)g(N_2n)$$
Thus, (\ref{eqn-diskless5}) follows by the parameter choice $N_2>>C_2$.

\end{proof}

\bigskip


\section{Annular diskless diagrams}

In this section, we show that two words admitting an annular diagram with no hubs must have a quadratic-length length conjugator in the group $G(\textbf{M}_\textbf{S})$. 





\begin{lemma} \label{annular diskless}

Let $u$ and $v$ be two words over $\pazocal{X}$ which represent conjugate elements of $M(\textbf{M}_\textbf{S})$.  Then either:

\begin{enumerate}

\item $u$ and $v$ both represent the identity in $G(\textbf{M}_\textbf{S})$.

\item There exists a word $w$ with $wuw^{-1}=v$ in $M(\textbf{M}_\textbf{S})$ and $|w|\leq N_1n^2+N_1n+N_1$ for $n=\max(|u|,|v|)$.

\end{enumerate}

\end{lemma}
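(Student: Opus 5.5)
We take a reduced annular (Schupp) diagram $\Delta$ over $M(\textbf{M}_\textbf{S})$ with boundary components $\textbf{p}_1,\textbf{p}_2$ labelled $u$ and $v$. We choose it minimal in number of cells; by Lemma \ref{minBoundaryLengths} and Lemma \ref{minBoundaryLengths-theta} we may also replace $u,v$ by shorter conjugates when convenient. The conjugator is read along a path joining $\textbf{p}_1$ to $\textbf{p}_2$. So it suffices to find such a path of length at most $N_1n^2+N_1n+N_1$, unless we land in alternative (1). We split into cases according to whether $\Delta$ contains radial $q$-bands or radial $\theta$-bands.

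\textbf{Radial $q$-band.} Suppose $\Delta$ has a radial $q$-band $\pazocal{Q}$. Lemma \ref{M(S) annuli} (applied to circular subdiagrams) rules out $q$-annuli and $(\theta,q)$-annuli. Hence every maximal $\theta$-band either crosses $\pazocal{Q}$ or has both ends on a single boundary component. We take the side of $\pazocal{Q}$ as the conjugator. By Lemma \ref{lengths}(b) its length equals the number of $\theta$-bands crossing $\pazocal{Q}$. Each such band ends on $\textbf{p}_1$ or $\textbf{p}_2$ (two ends total), so the number is at most $|\textbf{p}_1|_\theta+|\textbf{p}_2|_\theta\le 2n$. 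This gives a linear bound.

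\textbf{Radial $\theta$-bands, no radial $q$-band.} If there is no radial $q$-band but there are radial $\theta$-bands, we choose a radial $\theta$-band $\pazocal{T}$. Its side has length controlled by Lemma \ref{lengths}(a),(d): the base of $\pazocal{T}$ consists of $q$-edges of $q$-bands with both ends on the boundary, so its length is at most $n$. Its $a$-length is bounded in terms of $n$ by arguments as in Lemma \ref{comb area}. If this fails, we cut along $\pazocal{T}$ to obtain a circular diagram and apply comb/trapezium estimates to obtain an $O(n)$ or $O(n^2)$ bound on the side length.

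\textbf{No radial bands at all.} In this case every $\theta$- and $q$-band of $\Delta$ starts and ends on the same boundary component. Then the cells near $\textbf{p}_1$ and near $\textbf{p}_2$ form circular subdiagrams. Cutting these off leaves an annular region with no cells between two loops, to which Lemma \ref{exciseTwoPaths} applies. If cells remain in the middle, they form a disconnected union. Either $u$ and $v$ are conjugate by a short word read through an $a$-band region of linear width, or both boundary labels bound disks, in which case $u,v$ are trivial, giving alternative (1).

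\textbf{The spiral case: expected main obstacle.} The hardest case is a radial $q$-band whose history is long, i.e.\ the $\theta$-bands spiral around the annulus many times. Then $\Delta$ minus a radial cut contains a trapezium whose history is a power $H_0^m$. Here we would invoke Lemma \ref{history as powers}. In case (1) of that lemma, the labels repeat with period $\|H_0\|$, so Lemma \ref{exciseTwoPaths} removes full turns and contradicts minimality. In case (2), $m\le 2n+2\|H_0\|$ with $\|H_0\|\le 2n$, so the trapezium height is $O(n)$. The conjugator then runs along the trapezium side and across one turn. Its length is bounded using Lemma \ref{unreduced base quadratic} and Lemma \ref{universal complexity} to control the $a$-lengths of intermediate configurations, giving the quadratic term $N_1n^2$. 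The case of a reduced revolving base that produces a controlled history, via Lemma \ref{reduced revolving controlled}, is the delicate point. There $W_0$ is a cyclic permutation of a power of an accepted configuration, so the relevant boundary words are trivial in $G(\textbf{M}_\textbf{S})$, which is why alternative (1) is needed.
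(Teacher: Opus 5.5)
There is a genuine gap. Your case split never deals with non-contractible $\theta$-annuli, and that is exactly the case that carries the real content of the lemma, including alternative (1).

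\textbf{The missing case.} In your ``radial $q$-band'' case you assert that every maximal $\theta$-band crossing $\pazocal{Q}$ ends on $\textbf{p}_1$ or $\textbf{p}_2$, so the side of $\pazocal{Q}$ has length at most $2n$. But \Cref{M(S) annuli} only excludes \emph{contractible} annuli. In $\Delta$ a $\theta$-band can close up around the hole, crossing every radial $q$-band without ever touching $\partial\Delta$, and nothing on the boundary controls how many such $\theta$-annuli there are. The same objection applies to your ``no radial bands'' case, where $\theta$- and $a$-annuli can also wrap the hole.

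\textbf{How the paper handles it.} The $\theta$-annuli form an annular subdiagram $\Delta_2$. Cutting $\Delta_2$ along a radial $q$-band gives a trapezium with \emph{circular} base, i.e.\ a reduced computation of $\textbf{E}_{\textbf{S},k}$ with circular base $B$. Its length $t$ is the number of $\theta$-annuli, and this must be bounded using the machine.

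If $B$ has a reduced revolving subword, \Cref{reduced revolving controlled} gives one of two outcomes:
\begin{itemize}
\item $t\le c_1\max(\|W_0'\|,\|W_t'\|)$; or
\item $W_0^{\pm1}$ is a cyclic permutation of $V^\ell t$ with $V$ accepted.
\end{itemize}
In the second outcome a boundary label of $\Delta_2$ is a power of a cyclic permutation of a disk relator, hence trivial in $G(\textbf{M}_\textbf{S})$. The boundary labels of $\Delta_2$ are conjugate in $M(\textbf{M}_\textbf{S})$ to $u$ and $v$, so both are trivial in $G(\textbf{M}_\textbf{S})$. This is exactly where alternative (1) comes from.

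If $B$ is defective, \Cref{universal complexity} only bounds a \emph{minimal} computation. So you must first excise $\Delta_2$ and paste in the annular diagram obtained by gluing the side $q$-bands of the trapezium of such a minimal computation. Only then do you get $t\le 12m^2+c_1m+c_2$, where $m$ is the $a$-length of the restriction to a subword of length at most $N+1$. The conjugator bound then follows as in the first case of Lemma 5.3 of \cite{GW}.

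\textbf{The spiral case.} You attach \Cref{reduced revolving controlled} and \Cref{universal complexity} to the spiral case, where they are not the operative tools. There is also no closed $\theta$-loop there to produce alternative (1). Spiraling bands reach both boundary components, so they cannot coexist with $\theta$-annuli; the correct dichotomy is ``$\theta$-annuli present'' versus ``no $\theta$-annuli''. In the spiral case the paper uses only \Cref{history as powers}, following the second case of Lemma 5.3 of \cite{GW}. Your claim that the trapezium height is $O(n)$ does not follow. Even with your own bounds $m\le 2n+2\|H_0\|$ and $\|H_0\|\le 2n$, the height $m\|H_0\|$ is quadratic, and that product is where the $n^2$ term comes from.

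\textbf{The preliminary reduction.} You should replace $u,v$ by shortest words representing the \emph{same elements} of $M(\textbf{M}_\textbf{S})$, not by shorter conjugates, which would change the conjugator. This reduction rules out $q$-bands with both ends on one boundary component. Then either there are no $q$-bands or all $q$-bands are radial (Lemmas 5.1 and 5.2 of \cite{GW}). That replaces your vague treatment of the ``radial $\theta$-band'' and ``no radial bands'' cases.
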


\begin{proof}
 Without loss of generality, we assume $u$ and $v$ are the shortest words over $\pazocal{X}$ representing their respective elements of $M(\textbf{M}_\textbf{S})$.  By van Kampen's Lemma, there exists a reduced annular diagram $\Delta$ over $M(\textbf{M}_\textbf{S})$ with outer boundary label $u$ and inner boundary label $v$.

By \Cref{M(S) annuli} $\Delta$ has no contractible $q$- or $\theta$-annuli.  Lemma \ref{M(S) annuli}(1) and (2) in particular will be implicitly used in this proof many times without reference.

Suppose $\Delta$ has a rim $q$-band $\pazocal{Q}$.  Through $0$-refinement, we may then obtain a reduced annular diagram $\Delta'$ by cutting $\pazocal{Q}$ off of $\Delta$.  Only one of the two boundary components is affected by this procedure; suppose without loss of generality that it is the outer component.  Letting $u'$ be the word labelling the outer boundary component of $\Delta'$, van Kampen's Lemma implies $u'$ and $u$ are equal in $M(\textbf{M}_\textbf{S})$.  But also \Cref{minBoundaryLengths} implies $|u'|<|u|$, contradicting the choice of $u$.

By a similar argument (using \Cref{minBoundaryLengths-theta} in place of \Cref{minBoundaryLengths}), any maximal $\theta$-band of $\Delta$ which has two ends on the same boundary component must have base of length at least $K$.

Moreover, given the parameter choice $N_1>>N$, Lemmas 5.1 and 5.2 of \cite{GW} allow us to assume without loss of generality that $\Delta$ has at least one maximal $q$-band, and that all such maximal $q$-bands are radial.  Fix a radial $q$-band $\pazocal{Q}$ in $\Delta$.

If $\Delta$ has no $\theta$-annulus, then we may assume it contains maximal $\theta$-bands that `spiral', hitting all the radial $q$-bands more than once.  But then an identical argument to the second case of Lemma 5.3 of \cite{GW} implies the existence of a conjugator $w$ with $|w|\leq(\delta^{-1}+4)n^2+n$, so that the statement is given by the parameter choice $N_1>>\delta^{-1}$.

Hence, we assume now that there exists a $\theta$-annulus in $\Delta$.  As maximal $\theta$-bands cannot cross, the $\theta$-annuli form an annular subdiagram $\Delta_2$ of $\Delta$.  Letting $\pazocal{Q}_2$ be the maximal $q$-band of $\Delta_2$ corresponding to a subband of $\pazocal{Q}$, let $\Gamma_2$ be the circular diagram obtained from $\Delta_2$ by cutting along a side of $\pazocal{Q}_2$ and pasting a copy of $\pazocal{Q}_2$ to the side on which it is removed.  Observe then that $\Gamma_2$  is a trapezium with circular base.  Let $\pazocal{D}:U_0\to\dots\to U_t$ be the reduced computation of $\textbf{M}_\textbf{S}$ corresponding to $\Gamma_2$ by \Cref{trapezia are computations}.  By the parallel construction of the machine $\textbf{M}_\textbf{S}$, by `forgetting the coordinates' we may identify $\pazocal{D}$ with a reduced computation $\pazocal{C}:W_0\to\dots\to W_t$ of $\textbf{E}_{\textbf{S},k}$ with circular base $B$.  

Suppose $B$ has a reduced revolving subword $B'$, and let $\pazocal{C}':W_0'\to\dots\to W_t'$ be the restriction of $\pazocal{C}$ to this subword.  Then \Cref{reduced revolving controlled} implies either $t\leq c_1\max(\|W_0'\|,\|W_t'\|)$ or there is a cyclic permutation of $W_0^{\pm1}$ of the form $V^\ell t$ for some accepted configuration $V$ of $\textbf{E}_{\textbf{S},k}$ and $\ell\geq1$.  

In the latter case, since the base of $\pazocal{D}$ is also circular, `remembering the coordinates' we see that the corresponding cyclic permutation of $U_0^{\pm1}$ is of the form $W^m\{t(j)\}$ for some $m\geq1$ and $j\in\{1,\dots,L\}$, where $W$ is a cyclic permutation of an accepted configuration of $\textbf{M}_\textbf{S}$.  Note then that by the formation of $\Gamma_2$, the label of one boundary component of $\Delta_2$ is a cyclic permutation of $W^m$.  But this label is then trivial in $G(\textbf{M}_\textbf{S})$, so that van Kampen's Lemma implies $u$ and $v$ both represent the identity in $G(\textbf{M}_\textbf{S})$.

So, assuming $u$ and $v$ do not represent the identity in $G(\textbf{M}_\textbf{S})$, the existence of a reduced revolving subword $B'$ of $B$ implies $t\leq c_1\max(\|U_0'\|,\|U_t'\|)$, where $U_i'$ is the admissible subword of $U_i$ whose base corresponds to $B'$.  

If no such subword of $B$ exists, then the base of $\Gamma_2$ is necessarily defective.  In this case, there exists a reduced computation $\pazocal{E}$ between $\lab(\textbf{bot}(\Gamma_2))$ and $\lab(\textbf{top}(\Gamma_2))$ satisfying \Cref{universal complexity}.  Letting $\Lambda_2$ be the trapezium corresponding to $\pazocal{E}$ by \Cref{computations are trapezia}, then since the base is circular we may paste $\Lambda_2$ along its side $q$-bands to obtain an annular diagram $\Phi_2$ over $M(\textbf{M}_\textbf{S})$ with the same boundary labels as $\Delta_2$.  But then we may excise $\Delta_2$ from $\Delta$ and paste $\Phi_2$ in its place.  Hence, we may assume $\pazocal{D}$ itself satisfies \Cref{universal complexity}.

So, in any case there exists a subword $B''$ of a cyclic permutation of the base of $\Gamma_2$ such that 

\begin{itemize}

\item $\|B''\|\leq N+1$

\item Letting $\pazocal{D}'':U_0''\to\dots\to U_t''$ be the restriction of a cyclic permutation of $\pazocal{D}$ to $B''$, $t\leq 12m^2+c_1m+c_2$ for $m=\max(|U_0''|_a,|U_t''|_a)$.

\end{itemize}

The desired bound is then given by an identical argument to that of the first case of Lemma 5.3 of \cite{GW}.

\end{proof}

\bigskip


\section{Minimal diagrams} \label{sec-minimal-diagrams}

In this section we introduce a class of diagrams over the disk presentation of $G(\textbf{M}_\textbf{S})$ and observe immediate consequences regarding their makeup.  These diagrams, termed `minimal diagrams', were studied similarly in the circular case in \cite{O18} and \cite{OS20}, while generalizations were studied in \cite{WMal}, \cite{WEmb}, and \cite{W}.  Crucially, this class is `generic' in a particular sense, but also specific enough to facilitate the necessary combinatorial arguments.

\medskip

\subsection{$t$-letters and Minimal diagrams} \label{sec-t-minimal} \

Recall that in the construction of the machine $\textbf{M}_\textbf{S}$, the standard base consists of $L$ copies of the standard base of the $k$-enhanced machine $\textbf{E}_{\textbf{S},k}$, including $L$ singletons of the form $\{t(i)\}$.  These singleton parts of the hardware function simply as a placeholder to divide the components.

A state letter of the form $t(i)^{\pm1}$ is henceforth called a \textit{$t$-letter}, and any $(\theta,q)$-relation involving a $t$-letter is called a \textit{$(\theta,t)$-relation}.  Observe that for any positive $t$-letter $t(i)$ and any positive rule $\theta$, the corresponding $(\theta,t)$-relation is of the form $\theta_jt(i)=t(i)\theta_{j+1}$ for the appropriate index $j$.

These definitions adapt naturally to the setting of a reduced diagram over the disk presentation of $G(\textbf{M}_\textbf{S})$: A $q$-edge is a \textit{$t$-edge} if its label is a $t$-letter, a cell is a \textit{$(\theta,t)$-cell} if its boundary is labelled by a $(\theta,t)$-relation, and a $q$-band is called a \textit{$t$-band} if it corresponds to the singleton part $\{t(i)\}$ of the hardware.  Note that the sides of a $t$-band are both labelled by copies of the band's history.

Now, given a reduced diagram $\Delta$ over the disk presentation of $G(\textbf{M}_\textbf{S})$, we define the \textit{signature} of $\Delta$ to be the pair $s(\Delta)=(s_1(\Delta),s_2(\Delta))$ where:

\begin{itemize}

\item $s_1(\Delta)$ is the number of disks in $\Delta$

\item $s_2(\Delta)$ is the number of $(\theta,t)$-cells

\end{itemize}

We order signatures of reduced diagrams over the disk presentation of $G(\textbf{M}_\textbf{S})$ lexicographically. In particular, if $\Delta$ and $\Gamma$ are such diagrams, then $s(\Delta)\leq s(\Gamma)$ means $s_1(\Delta)\leq s_1(\Gamma)$ and, if $s_1(\Delta)=s_1(\Gamma)$, then $s_2(\Delta)\leq s_2(\Gamma)$.

\begin{definition}

If $\Delta$ is a reduced circular or annular diagram over $G(\textbf{M}_\textbf{S})$, then it is called \textit{minimal} if for any diagram $\Gamma$ of the same type with identical boundary labels, then $s(\Delta)\leq s(\Gamma)$.  

\end{definition}

The following statement functions as van Kampen's Lemma for minimal diagrams, and applies to the more general setting of graded presentations (see Section 13.1 of \cite{O}).

\begin{lemma} \label{vK-minimal} \

\begin{enumerate}

\item A word $W$ over $\pazocal{X}$ represents the identity in $G(\textbf{M}_\textbf{S})$ if and only if there exists a minimal circular diagram $\Delta$ over the disk presentation of $G(\textbf{M}_\textbf{S})$ with $\lab(\partial\Delta)\equiv W$.

\item Two words $U$ and $V$ over $\pazocal{X}$ represent conjugate elements in $G(\textbf{M}_\textbf{S})$ if and only if there exists a minimal annular diagram $\Gamma$ over the disk presentation of $G(\textbf{M}_\textbf{S})$ whose boundary components are labelled by $U$ and $V$.

\end{enumerate}

\end{lemma}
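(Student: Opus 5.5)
The plan is to reduce both parts to the classical van Kampen lemma (for circular and annular diagrams) over the canonical finite presentation of $G(\textbf{M}_\textbf{S})$, and then to pass to minimal diagrams by a well-ordering argument. The ``if'' directions are immediate. Every disk relation is a consequence of the canonical relations by \Cref{disks are relations}, since the sector following the last $t$-letter has empty tape alphabet, so $Y_{s+1}=\emptyset$. Hence a circular (resp. annular) diagram over the disk presentation with boundary label $W$ (resp. labels $U,V$) certifies that $W=1$ (resp. that $U$ and $V$ are conjugate) in $G(\textbf{M}_\textbf{S})$. For this one may replace each disk by a circular diagram over the canonical presentation, or simply observe that van Kampen's lemma holds for any presentation of the group, finite or not.

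For the ``only if'' directions, start from a reduced circular diagram over the canonical presentation with boundary label $W$, or a reduced Schupp annular diagram with boundary labels $U,V$. Such a diagram exists by van Kampen's lemma. The canonical presentation is a subpresentation of the disk presentation, since the hub is the disk relation for $W_{ac}$, so this is a reduced diagram over the disk presentation. The set of reduced diagrams of the given type with the given boundary labels is therefore non-empty. Each such diagram has a signature $s(\Delta)\in\N\times\N$, and the lexicographic order on $\N\times\N$ is a well-order. So there is a diagram $\Delta$ whose signature is minimal among all reduced diagrams of that type with those boundary labels, and by definition $\Delta$ is minimal.

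The one point needing care is ``reduced'' over the disk presentation. This presentation is infinite and contains relators such as $W$ and cyclic permutations or inverses of accepted configurations. The notion of reducedness must be the one used in the graded-presentation setting of Section 13.1 of \cite{O}, in which cancellable pairs of cells, including pairs of disks with mirror boundary labels, can be removed. I would check that the cancellation step never increases the signature: removing a cancellable pair and gluing along the freed boundary (through $0$-refinement) deletes cells and creates none. Consequently, starting from any diagram and reducing yields a reduced diagram of no larger signature with the same boundary labels. This confirms that the minimum can be taken over reduced diagrams without loss.

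I expect this reducedness bookkeeping to be the only real obstacle. It concerns mainly the annular case, where a cancellable pair might straddle the non-contractible loop. For that case I would invoke the Lyndon--Schupp sewing and detaching procedure, as in \Cref{exciseTwoPaths}, to see that cancellation preserves the annular type and the boundary labels of both components.
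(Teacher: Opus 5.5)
Your proposal is correct and is essentially the argument the paper intends: the paper gives no separate proof. It simply presents the lemma as van Kampen's lemma for minimal diagrams in the graded-presentation setting of Section 13.1 of \cite{O}, which amounts to your combination of the ordinary (Schupp) van Kampen lemma with the choice of a diagram of lexicographically least signature.

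One small correction to your reducedness check. In the graded sense, cancelling a pair of disks joined by a path may require filling the resulting hole with a diagram over $M(\textbf{M}_\textbf{S})$, so it need not merely delete cells. It does, however, lower $s_1$ by two, so the lexicographic signature still strictly decreases, which is all your argument needs.
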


\medskip


\subsection{$t$-spokes} \label{sec-spokes} \

In a reduced diagram over the disk presentation of $G(\textbf{M}_\textbf{S})$, a maximal $q$-band or $t$-band with one end on a disk $\Pi$ is called a \textit{spoke} or \textit{$t$-spoke} of $\Pi$, respectively.  Note that any $t$-spoke is necessarily a spoke.  Of course, two $t$-spokes of $\Pi$ are \textit{consecutive} if there exists a subpath of $\partial\Pi$ between their ends which consists of no other $t$-edges.

The next statement is crucial for the estimates of minimal diagrams.  An identical argument is presented in \cite{O18} and \cite{OS20}, while a generalization is presented in \cite{WMal}, \cite{WEmb}, and \cite{W}.

\begin{lemma} \label{t-spokes between disks}

Let $\Delta$ be a reduced diagram over the disk presentation of $G(\textbf{M}_\textbf{S})$. Suppose there exist two disks $\Pi_1$ and $\Pi_2$ in $\Delta$ so that $\pazocal{Q}_1$ and $\pazocal{Q}_2$ are consecutive $t$-spokes of both. If these $t$-spokes and disks bound a (circular) subdiagram $\Psi$ containing no disks, then there exists a reduced diagram $\Gamma$ with the same boundary labels such that $s_1(\Gamma)\leq s_1(\Delta)-2$.

\end{lemma}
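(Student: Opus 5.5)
The plan is to follow the standard argument of \cite{O18} and \cite{OS20}: replace the two disks $\Pi_1,\Pi_2$ together with the diskless subdiagram $\Psi$ between them by a single disk-free configuration, thereby removing both disks.

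First I will analyze $\Psi$. Its contour consists of a side of $\pazocal{Q}_1$, a subpath $\textbf{p}_1$ of $\partial\Pi_1$, a side of $\pazocal{Q}_2$, and a subpath $\textbf{p}_2$ of $\partial\Pi_2$. Since $\pazocal{Q}_1,\pazocal{Q}_2$ are consecutive $t$-spokes, the label of each $\textbf{p}_i$ is (up to inversion) a subword of an accepted configuration. It is the admissible subword $W_i(j)$ whose base is, up to the $t$-letters at its ends, one full copy $\{t(j)\}B_{std}(j)$ of the standard base of $\textbf{E}_{\textbf{S},k}$. Because $\Psi$ has no disks and $\Delta$ is reduced, \Cref{M(S) annuli} applies to $\Psi$. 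Hence every maximal $\theta$-band of $\Psi$ crossing $\pazocal{Q}_1$ must also cross $\pazocal{Q}_2$: no $\theta$-band can end on the disk boundaries, which contain no $\theta$-edges. Likewise each maximal $q$-band starting on $\textbf{p}_1$ must end on $\textbf{p}_2$, since $q$-bands cannot end on the sides of $\pazocal{Q}_1,\pazocal{Q}_2$ or form annuli. So $\Psi$ is a trapezium (after trimming) with bottom labelled $\lab(\textbf{p}_1)$, top labelled $\lab(\textbf{p}_2)^{\pm1}$, and history $H$ equal to the history of the $t$-bands.

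By \Cref{trapezia are computations}, $H$ is then the history of a reduced computation $W_1(j)\to\dots\to W_2(j)$ with base $\{t(j)\}B_{std}(j)$. By the parallel structure of $\textbf{M}_\textbf{S}$ (every rule acts identically on every coordinate), the same history $H$ applied to the full accepted configuration $W_1$ labelling $\partial\Pi_1$ yields a reduced computation $W_1\to\dots\to W_2'$ of $\textbf{M}_\textbf{S}$ in the standard base. Its restriction to coordinate $j$ agrees with $W_2(j)$. Since $W_2$ is accepted, its components are coordinate shifts of one another, as are those of $W_2'$ (being reached from an accepted word). It follows that $W_2'\equiv W_2$ after orienting appropriately.

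Next I will perform the surgery. \Cref{computations are trapezia} gives a trapezium $E$ with bottom $W_1$, top $W_2$, and history $H$, whose side $q$-bands are $t$-bands with history $H$. Gluing its sides together produces an annular diskless diagram whose boundary labels are $W_1$ and $W_2$, and which contains a copy of $\Psi$ between the $t$-bands. The cells $\Pi_1\cup\Psi\cup\Pi_2$ form a subdiagram whose outer boundary label is freely conjugate to the boundary of the complementary part of this annulus. More precisely, the boundary of $\Pi_1\cup\Psi\cup\Pi_2$ reads $(W_1\text{ minus }\textbf{p}_1)$, a side of $\pazocal{Q}_2$, $(W_2\text{ minus }\textbf{p}_2)$, and a side of $\pazocal{Q}_1$. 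I will replace $\Pi_1\cup\Psi\cup\Pi_2$ by the complementary trapezium of $E$, namely the part of $E$ outside coordinate $j$, obtained by cutting $E$ along the $t$-bands bordering coordinate $j$. This subdiagram has exactly the boundary label of $\Pi_1\cup\Psi\cup\Pi_2$ and contains no disks. After reducing the result (removing cancellable pairs does not create disks), we obtain $\Gamma$ with $s_1(\Gamma)=s_1(\Delta)-2$.

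The main obstacle is the orientation and consistency check. I must verify that $\Pi_1$ and $\Pi_2$ are mirror-oriented along $\Psi$, so that their labels compose correctly, and that $W_2'\equiv W_2$ holds on the complementary coordinates. I will handle this by using that an accepted configuration of $\textbf{M}_\textbf{S}$ is determined by any one component, because all components are coordinate shifts of each other; this reduces the check to coordinate $j$, where it holds by construction.
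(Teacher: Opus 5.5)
Your proposal is correct and follows the paper's route. You identify $\Psi$ as a trapezium, extend its computation via the parallel structure of $\textbf{M}_\textbf{S}$ to a computation between the full disk labels, cut the coordinate-$j$ copy of $\Psi$ out of the resulting trapezium, and replace $\Pi_1\cup\Psi\cup\Pi_2$ by the disk-free remainder. Two small fixes are needed: use the revolving base, i.e. $(\partial\Pi_1)^{\pm1}t(1)$, as the paper does, rather than the standard base, since only then are both side $q$-bands $t(1)$-bands that can be glued (including when $j=L$); and after cancellations you only get $s_1(\Gamma)\le s_1(\Delta)-2$, not equality, which is all the statement needs.
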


\begin{figure}[H]
\centering
\includegraphics{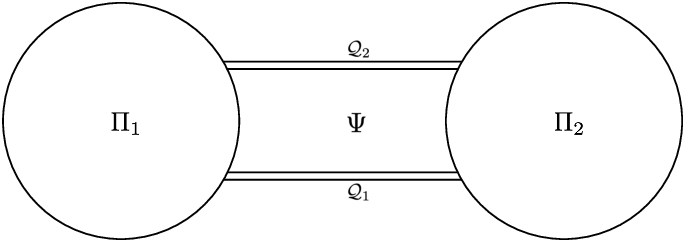}
\caption{\Cref{t-spokes between disks}}
\label{fig-t-spoke-removal}
\end{figure}

\begin{proof}

Since $\Psi$ contains no disks, by \Cref{M(S) annuli} it is a trapezium.  \Cref{trapezia are computations} then associates to $\Psi$ a reduced computation $\pazocal{C}$ with base $\{t(i)\}B_{std}(i)\{t(i+1)\}$ between admissible words $V_1$ and $V_2$ such that:

\begin{itemize}

\item $V_1$ is a subword of $(\partial\Pi_1)^{\pm1}$

\item $V_2$ is a subword of $(\partial\Pi_2)^{\mp1}$

\item The history of $\pazocal{C}$ is the same as that of $\Psi$.

\end{itemize}

By the parallel construction of $\textbf{M}_\textbf{S}$, we may extend $\pazocal{C}$ to a reduced computation $\pazocal{D}$ of a reduced revolving base between $(\partial\Pi_1)^{\pm1}t(1)$ and $(\partial\Pi_2)^{\mp1}t(1)$.  Letting $\Gamma_\pazocal{D}$ be the trapezium corresponding to $\pazocal{D}$ through \Cref{computations are trapezia}, then $\Gamma_\pazocal{D}$ contains a copy of $\Psi$ (or a mirror copy of it).  As the tape alphabets of sectors bounded by $t$-letters are empty, we may then cut this copy of $\Psi$ from $\Gamma_\pazocal{D}$ and glue up the remaining pieces to produce a trapezium $\Gamma'$.

Letting $\Phi$ be the (circular) subdiagram of $\Delta$ consisting of $\Psi$, $\Pi_1$, and $\Pi_2$, then by construction $\lab(\partial\Gamma')\equiv\lab(\partial\Phi)^{\pm1}$.  But then letting $\Gamma$ be the reduced diagram obtained from $\Delta$ by excising $\Phi$ and pasting in its place $\Gamma'$ or its mirror image (and then making any necessary cancellations) satisfies the statement.

\end{proof}

Now, let $\Delta$ be a reduced circular diagram over the disk presentation of $G(\textbf{M}_\textbf{S})$ such that $s_1(\Delta)$ is minimal among all such diagrams with the same boundary label.  We then form the underlying graph $\Gamma=\Gamma(\Delta)$ as follows:

\begin{enumerate}[label=({\arabic*})]

\item $V(\Gamma)=\{v_0,v_1,\dots,v_\ell\}$ where each $v_i$ is a disk in $\Delta$ save $v_0$, which is regarded as an exterior vertex.

\item For $i,j\geq1$, each $t$-spoke running between disks $v_i$ and $v_j$ is recorded as an $(v_i,v_j)\in E(\Gamma)$

\item For $i\geq1$, each $t$-spoke running between disk $v_i$ and $\partial\Delta$ is recorded as an edge $(v_0,v_i)\in E(\Gamma)$

\end{enumerate}

Note that $\Gamma$ has no $1$-gons nor (by \Cref{t-spokes between disks}) $2$-gons on interior vertices.  The degree of each interior vertex of $\Gamma$ is $L$, so for $L$ sufficiently large we have the following statement

\begin{figure}[H]
\centering
\includegraphics[scale=1]{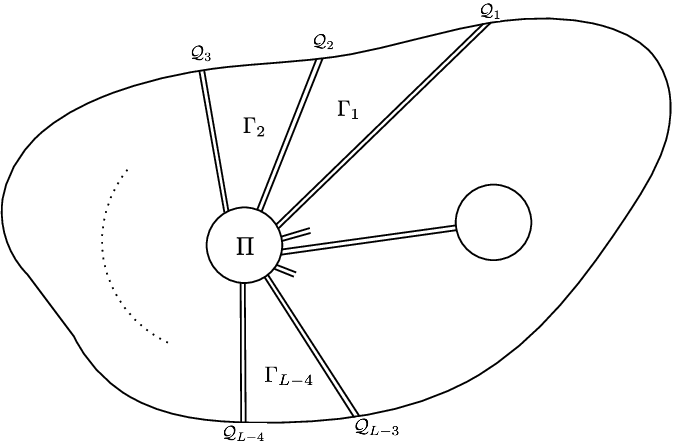}
\caption{Lemma \ref{circular-graph}}
\label{fig-graph}
\end{figure}

\begin{lemma}[Lemma 7.5 of \cite{OS20}, inter alia] \label{circular-graph}

Suppose $\Delta$ is a reduced circular diagram over the disk presentation of $G(\textbf{M}_\textbf{S})$ such that $s_1(\Delta)\geq1$ is minimal among all such diagrams with the same boundary label. Then $\Delta$ contains a disk $\Pi$ such that $L-3$ consecutive $t$-spokes $\pazocal{Q}_1,\dots,\pazocal{Q}_{L-3}$ of $\Pi$ end on $\partial\Delta$ and every subdiagram $\Gamma_i$ bounded by $\pazocal{Q}_i$, $\pazocal{Q}_{i+1}$, $\partial\Pi$, and $\partial\Delta$ ($i=1,\dots,L-4$) contains no disks (see \Cref{fig-graph}).

\end{lemma}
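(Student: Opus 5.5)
The plan is the standard graph-theoretic argument for such statements (as in \cite{O18}, \cite{OS20}). We work with the graph $\Gamma=\Gamma(\Delta)$ constructed above. It is planar: embed each disk as a point, and draw each $t$-spoke as an arc along the band, with $v_0$ placed outside. Every interior vertex has degree exactly $L$, since each disk has $L$ $t$-edges on its contour and each such edge starts a $t$-spoke. A $t$-spoke cannot have both ends on the same disk: that would bound a diskless subdiagram between two $t$-edges of one disk, which by \Cref{M(S) annuli} and the cyclicity of the base gives a contradiction, and in any case no loops arise. By \Cref{t-spokes between disks} and the minimality of $s_1(\Delta)$, two interior vertices are never joined by two edges bounding a diskless digon (a $2$-gon). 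So the subgraph $\Gamma_0$ on the interior vertices $v_1,\dots,v_\ell$, with parallel edges identified, has no loops and no bigons bounding disk-free regions.

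Next we apply Euler's formula to $\Gamma_0$. The interior vertices incident to $v_0$ are the ones with spokes to $\partial\Delta$. Suppose, for contradiction, that every disk has at most $L-4$ of its $t$-spokes ending on $\partial\Delta$ consecutively, in the required disk-free manner. Then every disk has either many interior neighbours or its boundary-edges are split. A counting argument for planar graphs without $1$- or $2$-gons (every face has at least three sides) then gives
\[\sum_{i\geq1}(\deg_{\Gamma_0}v_i-6)\leq -12,\]
after suitably accounting for the outer face. This yields an interior vertex of degree at most $5$ in $\Gamma_0$, counting multiple edges to $v_0$ appropriately, and in fact a vertex $v_i$ whose edges to $v_0$ number at least $L-5$ or so, once $L$ is large. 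The sharper form is what we actually need: there is a vertex $\Pi$ with at least $L-3$ consecutive edges to $v_0$. To get it, we would follow Lemma 7.5 of \cite{OS20}. Contract the outer structure and consider the planar graph in which consecutive parallel edges to $v_0$ from one disk are merged. The merged graph still has faces of degree at least $3$, except the merged ones, and Euler's formula gives a vertex with at most $3$ non-merged edges. Since its total degree is $L$, at least $L-3$ of its spokes are consecutive and go to $\partial\Delta$.

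Finally we check that the regions $\Gamma_i$ between consecutive such spokes $\pazocal{Q}_i,\pazocal{Q}_{i+1}$ contain no disks. A disk inside $\Gamma_i$ would have all of its $L$ spokes confined to $\Gamma_i$. Its spokes cannot cross $\pazocal{Q}_i$ or $\pazocal{Q}_{i+1}$, so they end on $\partial\Delta$ or on disks inside $\Gamma_i$. Hence the merged edge would not be a face boundary, contradicting the choice of a vertex with consecutive edges bounding faces. Alternatively, we pick $\Pi$ innermost, so that the argument applied to the subdiagram yields a contradiction with minimality. The main obstacle is the Euler-characteristic bookkeeping: making precise that the merged edges to $v_0$ bound genuine faces (so the face-degree-$\geq3$ count applies) and obtaining the constant $3$ rather than a weaker one. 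For this we would copy the counting of \cite{OS20} verbatim, taking $L$ large relative to the fixed constants.
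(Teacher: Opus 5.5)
Your outline (build $\Gamma(\Delta)$, exclude loops and disk-free bigons between two disks via \Cref{t-spokes between disks}, then do an Euler characteristic count) matches the paper's, which only sets up the graph and invokes the graph lemma behind Lemma 7.5 of \cite{OS20}. However, the count you actually describe does not give the constant $L-3$, and that count is the whole content of the lemma.

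\emph{The $\Gamma_0$ step.} A vertex of degree at most $5$ in the simplified graph $\Gamma_0$ says nothing about its spokes to $\partial\Delta$. A disk may send many $t$-spokes to the same neighbouring disk, provided the regions between them contain disks. So the ``in fact $L-5$'' step is a non sequitur.

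\emph{The merged-graph step.} In the merged graph $\bar\Gamma$, each maximal run of edges $vv_0$ that are consecutive at $v$ and separated by bigon faces is collapsed to one edge. Let $a(v)$ be the number of edges from $v$ to other disks and $m(v)$ the number of runs. Euler's formula, with all faces of degree at least $3$, gives
\[\sum_{v\neq v_0}\bigl(a(v)+2m(v)-6\bigr)+\sum_f 2(\deg f-3)\le -6.\]
This only yields a vertex with $a(v)+2m(v)\le 5$. That permits $m(v)=2$ and $a(v)\le 1$: almost all spokes go to $\partial\Delta$, but split into two runs, neither of which need have $L-3$ members. So ``at most three non-merged edges, hence $L-3$ consecutive spokes'' does not follow.

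\emph{The missing ingredient} is the extra curvature of faces that separate two runs at the same vertex. Let $e\neq e'$ be edges of $\bar\Gamma$ from $v$ to $v_0$ that are adjacent around $v$. The face between them contains the corner $v_0\,v\,v_0$. It is not a bigon, since runs are maximal, and there are no loops at $v_0$. Hence its degree $d$ is at least $4$, and its excess $2(d-3)\ge d/2$ can pay one unit to each of its at most $d/2$ corners of this form.

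A vertex with $m(v)\ge 2$ has at least $\max(m(v)-a(v),0)$ such corners, so after discharging its charge $a+2m-6$ becomes nonnegative. A vertex with $m(v)=0$ has $a(v)=L$ and charge $L-6>0$. Since the total is still at most $-6$, some vertex has $m(v)=1$ and $a(v)\le 3$. That is a single run of at least $L-3$ consecutive $t$-spokes ending on $\partial\Delta$, and the bigon faces between consecutive ones are exactly the disk-free subdiagrams $\Gamma_i$.

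A smaller point: loops are excluded simply because each $t(i)$ occurs exactly once in every disk label. The region cut off by such a spoke need not be diskless, so your first justification is not the right one.
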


On the other hand, let $\Delta$ be a reduced annular diagram over the disk presentation of $G(\textbf{M}_\textbf{S})$ such that $s_1(\Delta)$ is minimal among all such diagrams with the same boundary labels.  Then we form the underlying graph $\Gamma=\Gamma(\Delta)$ in much the same way, but without any exterior vertex (and so without `external' edges).  Again, this graph has no $1$-gons or $2$-gons.  Hence, as long as there is at least one vertex, an appeal to the Euler characteristic of an annulus (see Lemma 10.1 of \cite{O}) implies there exists a vertex with degree at most 18.  

\begin{lemma} \label{annular-graph}

Suppose $\Delta$ is a reduced annular diagram over the disk presentation of $G(\textbf{M}_\textbf{S})$ such that $s_1(\Delta)\geq1$ is minimal among all such diagrams with the same boundary labels.  Then $\Delta$ contains a disk $\Pi$ such that at least $L-18$ $t$-spokes of $\Pi$ end on the boundary of $\Delta$.

\end{lemma}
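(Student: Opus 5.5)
The plan is to argue on the graph $\Gamma=\Gamma(\Delta)$ built from the disks and $t$-spokes of $\Delta$, in the annular setting described just before the statement. Its vertices are the disks of $\Delta$. Its edges are the $t$-spokes joining two disks, and unlike the circular case there is no exterior vertex, so a $t$-spoke running from a disk to $\partial\Delta$ contributes no edge. For a disk $\Pi$, let $d(\Pi)$ be its degree in $\Gamma$. Every disk has exactly $L$ $t$-edges on its boundary, coming from the $L$ parts $\{t(i)\}$ of the standard base. So the number of $t$-spokes of $\Pi$ ending on $\partial\Delta$ equals $L$ minus the number of $t$-spokes of $\Pi$ ending on other disks, with a spoke returning to $\Pi$ itself counted twice. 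It therefore suffices to find a vertex of degree at most $18$.

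First I would check that $\Gamma$, embedded in the annulus, has no $1$-gons or $2$-gons bounding a disc region. A $2$-gon is two $t$-spokes joining the same two disks and bounding a circular subdiagram, chosen innermost so that it contains no disk. Then \Cref{t-spokes between disks} replaces $\Delta$ by a diagram with the same boundary labels and $s_1$ smaller by $2$, contradicting the minimality of $s_1(\Delta)$.

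For $1$-gons, the $t$-spoke starts and ends on the same disk $\Pi$ and encloses a circular subdiagram. Taking an innermost such region, I would aim to contradict this using \Cref{M(S) annuli} and \Cref{G(S) annuli} together with the parallel structure of $\textbf{M}_\textbf{S}$: the $t$-edges of $\partial\Pi$ inside the region must have spokes ending on disks inside it, giving a smaller $1$-gon or $2$-gon. I expect the case where a spoke loops around the hole of the annulus to need care. Such loops are essential curves rather than $1$- or $2$-gons, so they do not count as faces bounding discs.

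Next comes the Euler characteristic count, following Lemma 10.1 of \cite{O}. Suppose every vertex had degree at least $19$. Collapse multiple edges that are homotopic in the punctured plane; multiple edges that are not homotopic are separated by the hole, and there are at most two homotopy classes per pair in a controlled way. Then each face other than the (at most two) faces meeting the boundary components has at least $3$ sides. Combined with $V-E+F\ge 0$ for an annulus and $2E\ge 19V$, this gives a contradiction, since $E\le 3V+O(1)$ in the planar-type bound. So some disk $\Pi$ has $d(\Pi)\le 18$, and hence at least $L-18$ of its $t$-spokes end on $\partial\Delta$, as required.

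The main obstacle is the face-counting step in the annular setting. There one must rule out degenerate faces, namely bigons around the hole and faces containing a boundary component, and make sure they cannot absorb too many edges. The argument is to bound the number of such exceptional faces by a constant and to use minimality, via \Cref{t-spokes between disks}, to exclude all remaining bigons. Only this step needs the constant $18$.
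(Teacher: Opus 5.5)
Your plan is essentially the paper's argument. You build $\Gamma(\Delta)$ with no exterior vertex and rule out $1$-gons and $2$-gons, the latter by \Cref{t-spokes between disks} and the minimality of $s_1(\Delta)$. The Euler characteristic of the annulus then gives a disk of degree at most $18$. Your count also closes with an explicit constant: $V-E+F\ge 0$ together with $2E\ge 3(F-2)$ gives $E\le 3V+6$, which is incompatible with $2E\ge 19V$ once $V\ge 1$.

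The one loose end is loops. Your innermost-region sketch for $1$-gons is incomplete, since it does not handle a $1$-gon whose region contains no disk and no $t$-edge of $\Pi$. You also explicitly allow essential loops, yet your count needs every face not meeting $\partial\Delta$ to have at least three sides. That claim would fail for the annular face between two nested essential loops at different disks.

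Both problems disappear with a one-line observation. Disk words have the standard base, so each disk boundary carries exactly one $t(i)$-edge for each $i$. All $t$-edges of a $t$-band carry the same label $t(i)$. Hence no $t$-spoke can return to the disk it started from, whether contractibly or around the hole.

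Separately, drop the step of collapsing homotopic parallel edges. Collapsing would lower degrees, which would undermine the conclusion about actual degrees. The step is vacuous anyway, since two homotopic parallel edges bound a disc, i.e. a $2$-gon you have already excluded.
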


\medskip


\subsection{Transposition} \label{sec-transposition} \

The next operation we outline is a procedure to move a disk past a $\theta$-band.  As will be evidenced in the forthcoming arguments, this operation is useful in numerous ways.

Let $\Delta$ be a reduced diagram over the disk presentation of $G(\textbf{M}_\textbf{S})$.  Let $\Pi$ be a disk and $\pazocal{T}$ be a maximal $\theta$-band in $\Delta$ which crosses $\ell\geq2$ consecutive $t$-spokes $\pazocal{Q}_1,\dots,\pazocal{Q}_\ell$ of $\Pi$.  Let $\pazocal{T}'$ be the minimal subband of $\pazocal{T}$ which crosses these $t$-spokes and suppose there are no cells between $\textbf{bot}(\pazocal{T}')$ and $\Pi$.  So, $\Pi$ and $\pazocal{T}$ form a subdiagram $\Gamma$ of $\Delta$, while $\Pi$ and $\pazocal{T}'$ form a subdiagram $\Gamma'$.

Let $\theta$ be the rule of $\textbf{M}_\textbf{S}$ corresponding to $\pazocal{T}$ and $W$ be the configuration corresponding to $\partial\Pi$.  Since $\ell\geq2$, there exists $i\in\{1,\dots,L\}$ such that $W(i)$ is $\theta$-admissible.  So, the parallel nature of the machine implies $W$ is $\theta$-admissible, with $W\cdot\theta$ a new disk relator.  Construct a disk $\bar{\Pi}$ with $\lab(\partial\bar{\Pi})\equiv W\cdot\theta$.

\Cref{computations are trapezia} produces a $\theta$-band $\pazocal{T}'$ corresponding to the single-rule computation $(W\cdot\theta)\cdot\theta^{-1}\equiv W$.  Since the tape alphabet of any sector bounded by a $t$-letter is empty, we may then paste a subband $\pazocal{T}_0'$ of $\pazocal{T}'$ to the disk $\bar{\Pi}$ to produce a diagram $\bar{\Gamma}'$ with the same boundary label as $\Gamma'$.

Finally, note that the ends of $\pazocal{T}_0'$ have the same label as the ends of $\pazocal{T}'$.  Hence, we excise $\Gamma'$ from $\Delta$ and paste in its place $\bar{\Gamma}'$, making any necessary reductions to produce a reduced diagram $\bar{\Delta}$ with corresponding subdiagram $\bar{\Gamma}$.

This process is called the \textit{transposition of the disk $\Pi$ with the $\theta$-band $\pazocal{T}$}.

\renewcommand\thesubfigure{\alph{subfigure}}
\begin{figure}[H]
\centering
\begin{subfigure}[b]{0.48\textwidth}
\centering
\raisebox{0.2in}{\includegraphics[width=3in]{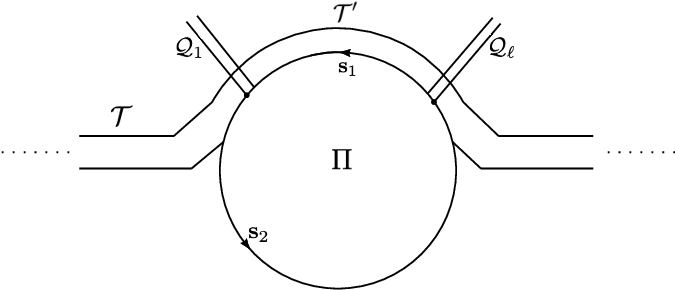}}
\caption{The subdiagram $\Gamma$}
\end{subfigure}\hfill
\begin{subfigure}[b]{0.48\textwidth}
\centering
\includegraphics[width=3in]{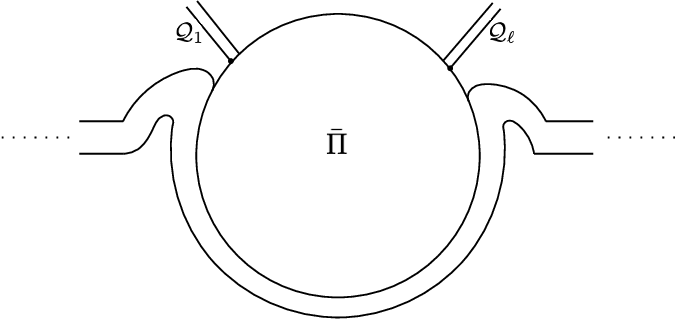}
\caption{The resulting subdiagram $\bar{\Gamma}$}
\end{subfigure}
\caption{The transposition of a $\theta$-band with a disk}
\end{figure}

The next statement follows immediately from this construction and \Cref{circular-graph}, and is proved in a more general setting in \cite{W}.

\begin{lemma} \label{minimal annuli}

A minimal circular diagram over the disk presentation of $G(\textbf{M}_\textbf{S})$ contains no $\theta$-annuli.

\end{lemma}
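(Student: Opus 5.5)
We argue by contradiction with minimality of the signature. Suppose $\Delta$ is a minimal circular diagram over the disk presentation of $G(\textbf{M}_\textbf{S})$ containing a $\theta$-annulus, and let $\pazocal{T}$ be such an annulus with the inner subdiagram $\Delta_0$ bounded by it chosen minimal by inclusion among subdiagrams bounded by $\theta$-annuli. Since $\Delta_0$ is itself a subdiagram of a minimal diagram, it is minimal, so it suffices to treat the case where $\Delta=\Delta_0\cup\pazocal{T}$, i.e. $\pazocal{T}$ is the outermost $\theta$-annulus and no other $\theta$-annulus lies inside.

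First I handle the case where $\Delta_0$ contains no disks. Then the annulus together with its interior is a reduced circular diagram over the canonical presentation with hubs only in the form of disks; if there are no disks it is a diagram over $M(\textbf{M}_\textbf{S})$, and \Cref{M(S) annuli}(5) forbids $\theta$-annuli outright. So $\Delta_0$ contains at least one disk.

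Next, apply \Cref{circular-graph} to $\Delta_0$ (after noting its $s_1$ is minimal for its boundary label, because any replacement could be glued into $\Delta$ lowering $s(\Delta)$). This yields a disk $\Pi$ with $L-3$ consecutive $t$-spokes ending on $\partial\Delta_0$, which lies on $\pazocal{T}$; the spokes therefore cross $\pazocal{T}$, with disk-free regions $\Gamma_i$ between consecutive spokes. Using that $q$-bands and $\theta$-bands cross at most once in disk-free regions (\Cref{M(S) annuli}, \Cref{G(S) annuli}) and that no $\theta$-annulus lies inside, every maximal $\theta$-band starting on a middle spoke between $\Pi$ and $\pazocal{T}$ must cross the consecutive spokes; choosing the $\theta$-band closest to $\Pi$ crossing at least two consecutive $t$-spokes with no cells between it and $\partial\Pi$, we perform the transposition of $\Pi$ with that band. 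This does not change $s_1$ and does not increase $s_2$ (in fact it removes the $(\theta,t)$-cells of the band segment between the spokes, strictly decreasing $s_2$ unless compensated), and it moves $\Pi$ outward. Iterating, $\Pi$ is eventually transposed with $\pazocal{T}$ itself.

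The key step, and the main obstacle, is the signature count: after transposing $\Pi$ across $\pazocal{T}$, the annulus $\pazocal{T}$ loses the $L-3$ $(\theta,t)$-cells where it crossed the spokes of $\Pi$ but the new band $\pazocal{T}_0'$ pasted around $\bar{\Pi}$ carries at most $3$ such cells (only the $t$-spokes not among the consecutive ones), so $s_2$ drops by at least $L-6>0$ while $s_1$ is unchanged, contradicting minimality. I expect care is needed to ensure intermediate transpositions also do not increase $s_2$ and that reduction cancellations preserve the count; I would handle this by tracking that each transposition across a band crossing $\ell\geq2$ consecutive spokes trades $\ell$ $(\theta,t)$-cells for at most $L-\ell$, and always choosing bands crossing more than $L/2$ spokes, which the disk-free regions $\Gamma_i$ guarantee for $L$ large.
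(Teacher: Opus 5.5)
Your route is the paper's: take an innermost $\theta$-annulus $\pazocal{T}$, apply \Cref{circular-graph} to the subdiagram $\Delta_0$ it bounds, and transpose the resulting disk $\Pi$ with $\pazocal{T}$. This leaves $s_1$ unchanged and lowers $s_2$, and your count of $L-3$ $(\theta,t)$-cells removed against $3$ added is correct.

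The weak point is the iteration through intermediate $\theta$-bands. As written it rests on unproved claims. One is that every intermediate band crosses more than $L/2$ of the consecutive spokes; without this, transposing across a band that crosses $\ell\le L/2$ spokes trades $\ell$ $(\theta,t)$-cells for $L-\ell$ and raises $s_2$. Nothing about the $\Gamma_i$ being disk-free forces this.

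The iteration is, however, never needed. The contour $\partial\Delta_0$ is a side of a $\theta$-band, and disk contours are configurations, so neither carries any $\theta$-edges. Hence any maximal $\theta$-band inside $\Delta_0$ would have to be a $\theta$-annulus, which your innermost choice excludes. So $\Delta_0$ consists of disks only, each $\Gamma_i$ contains no cells, and $\pazocal{T}$ already abuts $\partial\Pi$ along the $L-3$ consecutive $t$-edges.

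A single transposition of $\Pi$ with $\pazocal{T}$ then gives the contradiction with minimality. You should replace the iteration paragraph with this observation, since it is the missing justification that no cells lie between $\Pi$ and $\pazocal{T}$.
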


Indeed, this statement can be strengthened considerably:

\begin{lemma}[Lemma 7.7(2) in \cite{OS20}] \label{two theta-bands about disk}

Let $\Pi$ be a disk in a reduced diagram $\Delta$ over the disk presentation of $G(\textbf{M}_\textbf{S})$.  Suppose $\pazocal{T}$ and $\pazocal{T}'$ are disjoint $\theta$-bands in $\Delta$ such that each has a side that is shared with the boundary of $\Pi$.  Suppose further that these bands correspond to the same letter of $\Theta(\textbf{M}_\textbf{S})$.  If $\pazocal{T}$ and $\pazocal{T}'$ cross $\ell$ and $\ell'$ $t$-spokes of $\Pi$, respectively, then $\ell+\ell'\leq L/2$.


\end{lemma}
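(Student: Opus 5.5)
The plan is to turn each of the two $\theta$-bands into a computation that starts from the configuration $W$ written on $\partial\Pi$, and then to apply the results on controlled histories of \Cref{sec-enhanced}. Let $W$ be the accepted configuration with $\lab(\partial\Pi)\equiv W$, and let $\theta$ be the common rule of $\pazocal{T}$ and $\pazocal{T}'$. Since $\pazocal{T}$ shares a side with $\partial\Pi$ and crosses $\ell$ $t$-spokes of $\Pi$, the part of $\partial\Pi$ it runs along contains $\ell$ consecutive $t$-edges. This gives a subword of $W$ containing $\ell-1$ complete components $W(i)$ that are $\theta$-admissible. The same holds for $\pazocal{T}'$ with $\ell'$. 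The two bands are disjoint and both lie along $\partial\Pi$, so the arcs of $\partial\Pi$ they cover are disjoint. Arguing by contradiction, suppose $\ell+\ell'>L/2$.

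The key step is to use the orientation of the two bands. Traversing $\partial\Pi$ counterclockwise, one band has $\Pi$ on one side and the other band has $\Pi$ on the other side; equivalently, one of them reads $\theta$ and the other $\theta^{-1}$ relative to $W$. If both read the same way, I would argue as in the transposition construction. In that case the two bands together with $\Pi$ would form (after the surgery of the transposition construction) a configuration in which $\theta$ is applied to $W$ in two coordinates whose images are incompatible. That contradicts the uniqueness of $W\cdot\theta$ given by the parallel nature of $\textbf{M}_\textbf{S}$ unless the arcs overlap.

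The honest route, following \cite{OS20}, is a counting argument on the locked sectors. Every rule $\theta$ locks some sector of the standard base of $\textbf{E}_{\textbf{S},k}$ (for example, each rule locks the $\{t\}P_0$-sector). For the band with side on $\partial\Pi$, the $\theta$-cells must run along the boundary without gaps. Near the ends of the band, however, the band must leave $\partial\Pi$, and it can do so only in a sector that is not locked by $\theta$. I would then use the mirror symmetry of $B_{std}$: if $\pazocal{T}$ uses $\theta$ with $\Pi$ below it and $\pazocal{T}'$ uses $\theta$ with $\Pi$ above it, read along $\partial\Pi$ they correspond to $\theta$ and $\theta^{-1}$ applied to the same word $W$. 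In that case $W\cdot\theta$ and $W\cdot\theta^{-1}$ are both defined on more than half the components, and these components overlap cyclically.

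The main obstacle is the final contradiction, and I am least sure of it. When the two arcs together cover more than $L/2$ components, one gets a disk relator $W'$ such that one portion of $\partial\Pi$ equals the corresponding portion of $W\cdot\theta$ and a disjoint portion equals $W\cdot\theta^{-1}$. I would then combine both bands with $\Pi$, as in \Cref{t-spokes between disks}, and replace them by a single disk and a shorter band. This reduces $s_2$ (the number of $(\theta,t)$-cells), which contradicts minimality; more precisely, it yields a diagram with fewer $(\theta,t)$-cells. Bounding that count in terms of $\ell+\ell'$ versus $L-\ell-\ell'$ is where the hypothesis $\ell+\ell'>L/2$ enters, since the surgery removes $\ell+\ell'$ $(\theta,t)$-cells and adds at most $L-(\ell+\ell')$. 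If the lemma is meant for arbitrary reduced diagrams rather than minimal ones, I would first pass to a minimal one via \Cref{vK-minimal} and argue that this is harmless.
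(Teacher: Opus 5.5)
\paragraph{There is a genuine gap: the proposal misreads the configuration.}
Both bands lie outside the cell $\Pi$. ``Correspond to the same letter'' means that, read away from $\Pi$, both bands apply one and the same rule $\theta$ to the arcs of $W=\lab(\partial\Pi)$ along which they run. This is how the lemma is used in \Cref{first rules different}, \Cref{n-minimal transposition}(b) and \Cref{n-minimal spiral first rule}(c), via first letters of histories read from $\Pi$. So your claim that one band reads $\theta$ and the other $\theta^{-1}$ relative to $W$ is false.

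The case you dismiss by an ``incompatibility of images'' is exactly the case of the lemma, and there is no incompatibility in it: both outer sides are arcs of the single word $W\cdot\theta$. Such configurations certainly exist when $\ell+\ell'\leq L/2$, so no argument that ignores the size of $\ell+\ell'$ can exclude them. In the $\theta/\theta^{-1}$ situation you actually treat, your surgery does not give ``a single disk and a shorter band''. Starting from a disk labelled $W\cdot\theta$, you would need two $\theta^{-1}$-layers over the second arc, and the lemma claims nothing there. The locked-sector and mirror-symmetry remarks play no role.

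\paragraph{The missing step is the simultaneous transposition of $\Pi$ with both bands.}
This is the construction of \Cref{sec-transposition} performed on two arcs at once.
\begin{enumerate}
\item Since $\ell+\ell'>L/2\geq 2$, one band crosses two consecutive $t$-spokes. So some component $W(i)$ is $\theta$-admissible, and by the parallel construction $W$ is $\theta$-admissible.
\item Let $\bar\Pi$ be a disk labelled $W\cdot\theta$. Take the $\theta$-annulus around it realizing $(W\cdot\theta)\cdot\theta^{-1}\equiv W$; it has exactly $L$ $(\theta,t)$-cells.
\item Replace $\Pi$, together with the minimal subbands of $\pazocal{T}$ and $\pazocal{T}'$ crossing the $\ell$ and $\ell'$ spokes, by $\bar\Pi$ together with the two complementary subbands of this annulus. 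The ends match because sectors adjacent to $t$-letters have empty tape alphabet.
\item The boundary label and $s_1$ are unchanged. The number of $(\theta,t)$-cells changes from $\ell+\ell'$ to $L-\ell-\ell'<\ell+\ell'$, which contradicts the minimality of $s_2$.
\end{enumerate}
Your final count was the right one, but it only makes sense in this configuration.

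\paragraph{Minimality must be a hypothesis.}
The argument uses that $\Delta$ itself is minimal, as in \cite{OS20} and in every application in this paper. Your suggestion to first pass to a minimal diagram with the same boundary label via \Cref{vK-minimal} is not harmless. The new diagram need not contain $\Pi$, $\pazocal{T}$ or $\pazocal{T}'$ at all, so minimality has to be assumed rather than reduced to.
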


\medskip


\subsection{Quasi-trapezia} \label{sec-quasi-trapezia} \

As discussed in \Cref{sec-trapezia}, the crossings of $q$-bands with $\theta$-bands in a reduced circular diagram over $M(\textbf{M}_\textbf{S})$ form a trapezium whose structure is totally informed by the computational makeup of the machine.  In a reduced circular diagram over the disk presentation of $G(\textbf{M}_\textbf{S})$, though, the same cannot be deduced, as disks may be present between the $\theta$-bands.  However, the process of transposition described in the previous section helps understand this case, moving the disks about the bands to form a trapezium.

To make this precise, a \textit{quasi-trapezium} over a minimal diagram over the disk presentation of $G(\textbf{M}_\textbf{S})$ is a (circular) subdiagram whose contour may be factored $\textbf{p}_1^{-1}\textbf{q}_1\textbf{p}_2\textbf{q}_2^{-1}$ where the paths $\textbf{p}_i$ are the sides of maximal $q$-bands and the paths $\textbf{q}_i$ are maximal subpaths of the sides of maximal $\theta$-bands which start and end with $q$-letters.  Hence, a comparison to the definitions of \Cref{sec-trapezia} reveals that quasi-trapezia are simply trapezia which may contain some disks.

Much of the terminology of trapezia carries over to quasi-trapezia, {\frenchspacing e.g. history and standard factorization}.  The next statement crucially uses the transposition operation and functions to `remove' the disks from the quasi-trapezium to produce a trapezium.

\begin{lemma}[Lemma 7.9 of \cite{OS20}] \label{quasi-trapezia}

Let $\Gamma$ be a quasi-trapezium with standard factorization of its contour $\textbf{p}_1^{-1}\textbf{q}_1\textbf{p}_2\textbf{q}_2^{-1}$. Then there exists a minimal diagram $\Gamma'$ such that:

\begin{enumerate}[label=({\arabic*})]

\item $\partial\Gamma'=(\textbf{p}_1')^{-1}\textbf{q}_1'\textbf{p}_2'(\textbf{q}_2')^{-1}$, where $\lab(\textbf{p}_j')\equiv\lab(\textbf{p}_j)$ and $\lab(\textbf{q}_j')\equiv\lab(\textbf{q}_j)$ for $j=1,2$

\item the number of disks in $\Gamma'$ is the same as the number of disks in $\Gamma$

\item there exists a simple path $\textbf{s}_1$ (respectively $\textbf{s}_2$) connecting the vertices $(\textbf{p}_1')_-$ and $(\textbf{p}_2')_-$ (respectively $(\textbf{p}_1')_+$ and $(\textbf{p}_2')_+$) such that

\begin{enumerate}

\item $(\textbf{p}_1')^{-1}\textbf{s}_1\textbf{p}_2'\textbf{s}_2^{-1}$ is the standard factorization of the boundary of a trapezium $\Gamma_2$ and

\item any cell above $\textbf{s}_2$ or below $\textbf{s}_1$ is a disk

\end{enumerate}

\item there exists $m\in\N$ such that any maximal $\theta$-band of $\Gamma$ contains $m$ $(\theta,t)$-cells and any maximal $\theta$-band of $\Gamma{\color{red}'}$ contains $m$ $(\theta,t)$-cells.

\end{enumerate}

\end{lemma}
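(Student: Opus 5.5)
The plan is to argue by induction on the number of disks in $\Gamma$, using the transposition operation of \Cref{sec-transposition} to push disks out of the region swept by the $\theta$-bands. If $\Gamma$ contains no disks, it is already a trapezium: \Cref{M(S) annuli} applies to its diskless interior. We then take $\Gamma'=\Gamma$ and $\textbf{s}_i=\textbf{q}_i$. Otherwise, enumerate the maximal $\theta$-bands $\pazocal{T}_1,\dots,\pazocal{T}_h$ of $\Gamma$ from bottom to top; these are well defined because minimality and \Cref{minimal annuli} exclude $\theta$-annuli. The crucial structural observation is that every such $\theta$-band runs from $\textbf{p}_1$ to $\textbf{p}_2$, since maximal $q$-bands cannot form annuli and $\theta$-bands must end on the side $q$-bands. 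Consequently the disks of $\Gamma$ sit in the gaps between consecutive $\theta$-bands.

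Next we choose a disk $\Pi$ lying between some $\pazocal{T}_j$ and $\pazocal{T}_{j+1}$ (or below $\pazocal{T}_1$ after the moves) for which the subdiagram between $\textbf{bot}(\pazocal{T}_{j+1})$ and $\Pi$ contains no cells. The existence of such a $\Pi$ should follow from an extremal choice: take a disk whose $t$-spokes all cross $\pazocal{T}_{j+1}$, using the graph arguments (\Cref{circular-graph}) applied to the subdiagram between two $\theta$-bands, together with \Cref{t-spokes between disks} to rule out disks joined by pairs of consecutive $t$-spokes. We then check that $\pazocal{T}_{j+1}$ crosses $\ell\geq2$ consecutive $t$-spokes of $\Pi$. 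In fact, since the band connects $\textbf{p}_1$ and $\textbf{p}_2$ and spokes of $\Pi$ can't end on $\textbf{q}_i$ inside, the band crosses essentially all $L$ spokes, so transposition applies. Transposing $\Pi$ with $\pazocal{T}_{j+1}$ moves the disk above the band. The number of disks is unchanged, the boundary labels are unchanged, and the number of $(\theta,t)$-cells in each $\theta$-band is preserved because the new band segment pasted around $\bar\Pi$ has the same $t$-crossings as the segment removed. Repeating this lifts each disk above the top band, or alternatively pushes it below the bottom band. We then set $\textbf{s}_1,\textbf{s}_2$ to be the trimmed bottom of the lowest and trimmed top of the highest $\theta$-band, so that the region between them is diskless and hence a trapezium $\Gamma_2$.

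For (4), note that each maximal $\theta$-band connects $\textbf{p}_1$ to $\textbf{p}_2$, so its $(\theta,t)$-cells correspond to the $t$-edges of the base read along it. That base is determined up to the unchanged $\theta$-letter and the boundary: all $\theta$-bands in a (quasi-)trapezium with side $q$-bands fixed cross the same set of maximal $q$-bands, because every $q$-band crossing $\Gamma$ must go from $\textbf{q}_1$ to $\textbf{q}_2$ or end on disks. The main obstacle is exactly this point together with the choice of $\Pi$. Maximal $t$-bands may start on disks, so the base of $\pazocal{T}_j$ can genuinely vary with $j$. I would resolve this by showing that in a minimal diagram every $t$-spoke of a disk inside $\Gamma$ crosses all the $\theta$-bands between the disk and $\textbf{q}_1$ or $\textbf{q}_2$, so that disks contribute equally to every band, which gives the common count $m$. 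I would also verify, by a signature argument (a transposition that created a removable pair would lower $s(\Gamma)$), that the transposed diagram remains minimal. As a fallback I would follow the proof of Lemma 7.9 of \cite{OS20} verbatim, since the machine's $t$-sectors have empty tape alphabets exactly as there.
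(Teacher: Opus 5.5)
There is a genuine gap at the step that carries both the minimality of $\Gamma'$ and item (4). You claim that transposing $\Pi$ with $\pazocal{T}_{j+1}$ preserves the number of $(\theta,t)$-cells of that band, and this is false in general.

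In the construction of \Cref{sec-transposition}, the minimal subband $\pazocal{T}'$ crossing the $\ell$ consecutive $t$-spokes contains exactly $\ell$ $(\theta,t)$-cells. It is replaced by the complementary arc $\pazocal{T}_0'$ of the $\theta$-annulus around $\bar\Pi$, which crosses the \emph{other} $L-\ell$ $t$-spokes. So the band's count, and $s_2$, change by $L-2\ell$ before any reductions.

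Your supporting claim that $\pazocal{T}_{j+1}$ crosses ``essentially all $L$'' spokes is also wrong. $\Pi$ lies strictly between $\pazocal{T}_j$ and $\pazocal{T}_{j+1}$, both of which run from $\textbf{p}_1$ to $\textbf{p}_2$, so $\pazocal{T}_{j+1}$ meets only the spokes of $\Pi$ heading toward $\textbf{q}_2$. In fact, in a minimal diagram one always has $\ell\leq L/2$, since otherwise the transposition lowers $s_2$ with $s_1$ fixed. For the same reason, a disk ``whose $t$-spokes all cross $\pazocal{T}_{j+1}$'' cannot exist.

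Consequently, when $\ell<L/2$ you cannot conclude that the transposed diagram is minimal, nor that band counts are preserved. Your remark about removable pairs does not address this. Your plan for (4), ``disks contribute equally to every band'', is unfounded: a disk's upward spokes are counted only by bands above it and its downward spokes only by bands below it. Equal counts therefore need a balance you never establish, and (4) asserts this balance already for $\Gamma$ itself.

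The missing ingredient is to use minimality of $\Gamma$ with respect to the signature in both directions.

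\emph{Single disk.} Suppose $\Pi$ lies between $\pazocal{T}_j$ and $\pazocal{T}_{j+1}$ and meets them along blocks of $d$ and $u$ consecutive $t$-edges. Transposing down or up changes $s_2$ by $L-2d$ or $L-2u$, and both must be nonnegative. When all $t$-edges of $\Pi$ lie on these two bands, this forces $u=d=L/2$. The transposition is then signature-neutral, so no cancellation can occur and the result is again minimal, and it preserves the band's count.

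\emph{Whole layer.} Disks in a layer may share $t$-edges, so a cleaner way to obtain (4) is to compare consecutive bands directly. Let $m_j$ be the number of $(\theta,t)$-cells of $\pazocal{T}_j$.
\begin{itemize}
\item Moving all disks between $\pazocal{T}_j$ and $\pazocal{T}_{j+1}$ across $\pazocal{T}_{j+1}$ replaces $m_{j+1}$ by $m_j$.
\item Moving them across $\pazocal{T}_j$ replaces $m_j$ by $m_{j+1}$.
\end{itemize}
Minimality therefore forces $m_j=m_{j+1}$. You still have to check, again using minimality and \Cref{t-spokes between disks}, that the successive transpositions are defined, i.e.\ that each disk meets the relevant band in at least two consecutive $t$-edges with no cells between.

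With this in hand, every layer move preserves $s$. Your induction pushing the disks out through the top (or bottom) band then yields a minimal $\Gamma'$ satisfying (1)--(4).
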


\medskip


\subsection{Shafts and Designs} \label{sec-shafts} \

We now recall an auxiliary measure which is invaluable for the study of circular diagrams over the disk presentation of $G(\textbf{M}_\textbf{S})$ and the related combinatorial arguments that follow.

Let $\pazocal{D}$ be a topological disk in the Euclidean plane.  Fix a finite set of disjoint chords $\textbf{T}$ in $\pazocal{D}$ and a finite set $\textbf{Q}$ of disjoint simple paths in $\pazocal{D}$.  We assume that the elements of $\textbf{Q}$, called \textit{arcs}, belong to the open disk and that any arc and any chord cross at most once, in which case they cross transversely.

With these assumptions, the pair $(\textbf{T},\textbf{Q})$ is called a \textit{design} on the disk.

Given an arc $C\in\textbf{Q}$, the length $|C|$ is the number of chords it crosses. We then denote the length of the design to be $\ell(\textbf{Q})=\sum|C|$.

A \textit{subarc} $D$ of $C$ is defined in the obvious way, with the understanding that $D$ need not be an element of $\textbf{Q}$ in order to be a subarc.  The length of the subarc $D$ is naturally the number of chord crossings in this portion of $C$, so that $|D|\leq|C|$.

For two subarcs $D_1,D_2$ of arcs $C_1,C_2\in\textbf{Q}$, respectively, we say that $D_1$ is \textit{parallel} to $D_2$ if every chord crossing $D_1$ also crosses $D_2$.  In this case, we write $D_1 \| D_2$.  It's important to note that this relation is not symmetric.

Given a positive integer $m$ and any $\zeta<1/2$, a design $(\textbf{T},\textbf{Q})$ is said to \textit{satisfy property $P(\zeta,m)$} if for any collection of $m$ distinct arcs $C_1,\dots,C_m\in\textbf{Q}$, there are no subarcs $D_1,\dots,D_m$, respectively, such that $D_i\| D_{i+1}$ and $|D_i|>(1-\zeta)|C_i|$ for all $i$.

\begin{lemma}[Lemma 8.2 of \cite{O18}] \label{design}

There exists a positive constant $C_{\zeta,m}$ dependent only on $\zeta$ and $m$ such that for any design $(\textbf{T},\textbf{Q})$ satisfying property $P(\zeta,m)$, $\ell(\textbf{Q})\leq C_{\zeta,m}|\textbf{T}|$.

\end{lemma}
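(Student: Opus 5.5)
I will argue by induction on the number of chords $|\textbf{T}|$, for fixed $\zeta$ and $m$, with a constant $C=C_{\zeta,m}$ to be chosen at the end. The idea is a maximal-parallel-chain argument: a long arc either has most of its length ``shared'' with other arcs in a parallel fashion, or it has a large portion crossing chords that few other arcs can use. Property $P(\zeta,m)$ forbids chains of $m$ arcs in which each has a long subarc parallel to a long subarc of the next. So on average the chords crossed by a given arc must be ``charged'' to only boundedly many arcs.

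The first step is a reduction. Arcs of length $0$ contribute nothing, so I will discard them. Each chord splits $\pazocal{D}$ into two half-disks. Pick a chord $T_0$ that is \emph{extremal}, meaning one of its half-disks $\pazocal{D}_0$ contains no other chord. Every arc crossing $T_0$ has a subarc in $\pazocal{D}_0$ crossing no chords, so it enters and leaves $\pazocal{D}_0$ through $T_0$. Since an arc crosses a chord at most once, it crosses $T_0$ exactly once and has an endpoint in $\pazocal{D}_0$.

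The plan is to delete $T_0$ and show that the total length drops by at most $C$. That is, at most $C$ arcs can cross $T_0$, or the excess can be paid for by some amortization. This is not quite true for a single chord, since many short parallel arcs may cross $T_0$. So I will instead remove a whole ``fan'' of chords. Consider the set $\textbf{Q}_0$ of arcs crossing $T_0$. Two arcs in $\textbf{Q}_0$ are disjoint and both start inside $\pazocal{D}_0$, so the chords they cross, read from $T_0$ outward, are nested in a linear order. Hence for $C_1,C_2\in\textbf{Q}_0$, either the shorter one's subarc starting at $T_0$ is parallel to the longer one, or they diverge.

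The second step is a charging scheme. I will order $\textbf{Q}_0$ by length. Whenever arcs $C_1,\dots,C_m$ in $\textbf{Q}_0$ have initial subarcs, each of length more than $(1-\zeta)|C_i|$, that are mutually parallel in a chain, we get a contradiction with $P(\zeta,m)$. So within any chain of $m$ arcs sharing their initial chords, the lengths must grow geometrically, by a factor at least $(1-\zeta)^{-1}$ at least once per $m$ steps. I then assign each arc's crossings to chords. Each chord receives charge from at most $m$ arcs per ``length scale'', and the geometric growth makes the sum over scales a convergent geometric series. This gives $\ell(\textbf{Q})\le C_{\zeta,m}|\textbf{T}|$ with $C_{\zeta,m}\approx m\zeta^{-1}$ up to constants.

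The main obstacle is making the charging rigorous across different extremal chords, because arcs are not all anchored at a single chord. I would handle this as in Ol'shanskii's original argument for \Cref{design}: pass to the dual tree of the chord system, in which the regions are vertices and the chords are edges, and view each arc as a path in this tree. Disjoint arcs then correspond to a non-crossing family of paths. The parallel relation becomes containment of paths, and a maximal chain argument along the tree, from leaves upward, yields the bound. I expect the delicate point to be ensuring that the $(1-\zeta)$-long-subarc condition is triggered, rather than just short overlaps. I would handle it by counting each arc only on its middle portion of proportion $1-\zeta$.
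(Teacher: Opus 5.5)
The paper does not prove \Cref{design}; it is quoted from \cite{O18} and used as a black box, notably in the proof of \Cref{directed design}. Your sketch therefore has to stand on its own, and it does not. The charging step carries the whole content of the lemma, and it rests on two claims that are false.

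From $P(\zeta,m)$ you can only extract the following: if arcs through $T_0$ share an initial run of $s$ chords, fewer than $m$ of them have length $<s/(1-\zeta)$ (take every $D_i$ to be the shared run). It gives no geometric growth of lengths. It also gives no bound on how many arcs of comparable length cross a chord, even through their middle portions.

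For instance, let $T_0,\dots,T_j$ be consecutive chords starting at $\pazocal{D}_0$. Let the region beyond $T_j$ have $k$ further boundary chords, each starting a chain $U_{i,1},\dots,U_{i,j}$. Let the disjoint arcs $A_1,\dots,A_k$ run from $\pazocal{D}_0$ through $T_0,\dots,T_j,U_{i,1},\dots,U_{i,j}$. If $(1-\zeta)(2j+1)\geq j+1$, any subarc of $A_a$ with more than $(1-\zeta)|A_a|$ crossings meets some $U_{a,s}$, which no other arc crosses. Hence $P(\zeta,m)$ holds for all $m\geq 2$. Yet all $k$ arcs share their initial chords, have the same length $2j+1$, and have $T_j$ at their exact midpoint. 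The lemma holds here only because the load on the trunk is paid for by the $kj$ private branch chords, and your scheme never says how such a transfer is made in general.

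That transfer must use the disjointness of the arcs, and your argument never genuinely does. Two paths in the dual tree issuing from the same leaf share a prefix and then separate whether or not the arcs are disjoint. Without disjointness the lemma is false. In a star with $d$ branches of $L$ chords, join the tips of every pair of branches by one arc. Then $P(\zeta,2)$ holds, since a subarc with more than $(1-\zeta)2L>L$ crossings meets both branches of its arc, and each pair of branches carries only one arc. But $\ell(\textbf{Q})=d(d-1)L$ while $|\textbf{T}|=dL$.

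So the missing idea is an amortization that exploits the non-crossing order in which the arcs through a chord separate from one another, moving the load of heavily shared chords onto lightly crossed ones.

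Two further problems:
\begin{itemize}
\item The induction is never closed, and deleting a chord need not preserve $P(\zeta,m)$. Arcs crossing $\{T,U\}$ and $\{U,V\}$ form no chain for $m=2$, but after deleting $T$ they do. Any cutting argument must re-verify the property on the truncated arcs, as the paper does in the proof of \Cref{directed design} by passing to $P(\zeta/2,m)$.
\item Deferring the core step to ``Ol'shanskii's original argument'' for this very lemma is circular.
\end{itemize}
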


Now, consider a reduced circular diagram $\Delta$ over the disk presentation of $G(\textbf{M}_\textbf{S})$.  Fix a disk $\Pi$ in $\Delta$ and a $t$-spoke $\pazocal{Q}$ of $\Pi$ with history $H$ (read starting at $\Pi$).  Given a prefix $H'$ of $H$, suppose that for every factorization $H'\equiv H_1'H_2'H_3'$ with $\|H_1'\|+\|H_3'\|<\zeta\|H'\|$, $H_2'$ has a controlled subword.  If also $\lab(\partial\Pi)$ is $H'$-admissible, then the subband $\pazocal{Q}'$ with history $H'$ is called a \textit{$\zeta$-shaft} at $\Pi$.

For every disk $\Pi$ and $t$-spoke $\pazocal{Q}$ of $\Pi$, we fix a maximal (perhaps empty) subband $\pazocal{Q}_\Pi^\zeta$ of $\pazocal{Q}$ which is a $\zeta$-shaft at $\Pi$.  Then, we define $\sigma_\zeta(\Delta)$ to be the sum of the lengths of the bands $\pazocal{Q}_\Pi^\zeta$ for all $\Pi$ and $\pazocal{Q}$.

\begin{lemma}[Lemma 8.15 of \cite{O18}] \label{G design}

For any minimal circular diagram $\Delta$ and any $\zeta<1/2$, we have $\sigma_\zeta(\Delta)\leq C_{\zeta,2L+1}|\partial\Delta|$.

\end{lemma}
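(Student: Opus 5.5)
The plan is to reduce the statement to \Cref{design} by building a design on a disk whose chords are the maximal $\theta$-bands of $\Delta$ and whose arcs are the $\zeta$-shafts $\pazocal{Q}_\Pi^\zeta$, and then to verify property $P(\zeta,2L+1)$ for it.

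First, $\Delta$ is minimal, so by \Cref{minimal annuli} it contains no $\theta$-annuli. Hence every maximal $\theta$-band has two ends on $\partial\Delta$, and the number of such bands is at most $|\partial\Delta|_\theta\leq|\partial\Delta|$ by \Cref{lengths}(a). We contract each disk to a point and represent each maximal $\theta$-band by a chord joining its two ends on $\partial\Delta$. We represent each shaft $\pazocal{Q}_\Pi^\zeta$ by a simple path starting near $\Pi$ and running along the $t$-spoke. Distinct maximal $q$-bands cannot cross, so these arcs are disjoint. By the absence of $(\theta,q)$-annuli (\Cref{G(S) annuli}), a $t$-band and a $\theta$-band meet in at most one cell. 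Thus $(\textbf{T},\textbf{Q})$ is a design, and $\ell(\textbf{Q})=\sigma_\zeta(\Delta)$, since the length of a $q$-band equals the number of $\theta$-bands it crosses. Once property $P(\zeta,2L+1)$ is checked, \Cref{design} gives $\sigma_\zeta(\Delta)\leq C_{\zeta,2L+1}|\textbf{T}|\leq C_{\zeta,2L+1}|\partial\Delta|$.

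The main obstacle is verifying $P(\zeta,2L+1)$. Suppose there are $2L+1$ distinct shafts $C_1,\dots,C_{2L+1}$ with subarcs $D_i\|D_{i+1}$ and $|D_i|>(1-\zeta)|C_i|$. By pigeonhole, either two of these shafts emanate from the same disk, or at least three emanate from distinct disks. Since every chord crossing $D_1$ crosses all the $D_i$, the $\theta$-bands crossing $D_1$ form a family of parallel bands passing along all the shafts.

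In the first case, take two shafts from one disk $\Pi$. The $\theta$-bands crossing both of them, together with the $t$-spokes, bound a quasi-trapezium. Apply \Cref{quasi-trapezia} and transposition to move $\Pi$ across these bands. By the shaft condition (the histories with $\zeta$-trimmed ends contain controlled subwords), the chosen subarcs carry a common history containing a controlled subword. This lets us transpose $\Pi$ past the bands and produce two disjoint $\theta$-bands with the same label surrounding $\Pi$ that cross more than $L/2$ spokes. That contradicts \Cref{two theta-bands about disk}.

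In the other case, take parallel shafts at distinct disks $\Pi_1,\Pi_2$ whose consecutive $t$-spokes bound a region. Transpose both disks along the common controlled history until they are adjacent with no disks between their consecutive $t$-spokes. Then \Cref{t-spokes between disks} removes two disks, contradicting minimality of $s_1$. To make this rigorous, one must control disks lying between the shafts; I would handle this with \Cref{quasi-trapezia}(4) together with the structure of controlled computations (\Cref{enhanced controlled}), which makes the trapezium between spokes uniquely determined by its history, and \Cref{circular-graph}-type counting. This is essentially the argument of Lemma 8.15 of \cite{O18}, and I would adapt it with $\textbf{E}_{\textbf{S},k}$ in place of Ol'shanskii's machine.
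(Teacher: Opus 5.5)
Your design construction is essentially the paper's: chords come from the maximal $\theta$-bands, which all end on $\partial\Delta$ by \Cref{minimal annuli}, and arcs run along the shafts. It needs minor repairs. There is no need to contract disks, arcs reaching $\partial\Delta$ must be trimmed, and the two shafts on a $t$-band joining two disks must be separated.

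The gap is in verifying $P(\zeta,2L+1)$, where neither of your two contradictions is actually obtained.
In your first case, nothing in a family of parallel shafts at $\Pi$ produces what \Cref{two theta-bands about disk} requires: two disjoint $\theta$-bands with the same letter, each sharing a side with $\partial\Pi$. Moreover, transposing $\Pi$ with a band requires that band to cross at least two consecutive $t$-spokes of $\Pi$ with no cells in between. The bands crossing your shafts need not do so.
In your second case, you must bring two disks into the configuration of \Cref{t-spokes between disks}: two common consecutive $t$-spokes bounding a disk-free region. That is exactly the step you defer, and there is no reason transpositions along a shaft achieve it.
Tellingly, your argument never uses $\zeta<1/2$. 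Your pigeonhole dichotomy is also available with just two arcs, so the number $2L+1$ plays no role.

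The missing idea is that the contradiction comes from the second coordinate $s_2$ of the signature (the number of $(\theta,t)$-cells), not from $s_1$.
Each maximal $t$-band carries at most two arcs, so $2L+1$ arcs give at least $L+1$ distinct maximal $t$-bands. The $\theta$-bands crossing $D_1$ then bound a quasi-trapezium containing these $t$-bands. Its history $H''$ is that of the subband $\pazocal{Q}''$ of the shaft $\pazocal{Q}'$ at $\Pi$ corresponding to $D_1$, so it contains a controlled subword.
After \Cref{quasi-trapezia}, $\pazocal{Q}''$ lies in a big trapezium. Removing one rim $t$-band leaves a trapezium $\Gamma_0''$ with history $H''$.
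Write $H_1'H''$ for the corresponding prefix of the shaft history $H'$. The $H'$-admissibility of $\lab(\partial\Pi)$ and \Cref{enhanced controlled} force $\lab(\partial\Pi)\cdot H_1'$ to be, up to cyclic permutation, the inverse of $\lab(\textbf{bot}(\Gamma_0''))$.
Now insert the freely trivial diagram made of the trapezium realizing $H_1'$ on the disk word together with its mirror copy. Then merge $\Pi$, that trapezium and $\Gamma_0''$ into a single disk.
The number of disks is unchanged. The mirror copy has height $\|H_1'\|<\zeta|\pazocal{Q}'|$, while the removed $\Gamma_0''$ has height $\|H''\|>(1-\zeta)|\pazocal{Q}'|$, with at least as many $(\theta,t)$-cells per $\theta$-band. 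Since $\zeta<1/2$, fewer $(\theta,t)$-cells are added than deleted, so $s_2$ drops, contradicting the minimality of $\Delta$.
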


Given the importance of \Cref{G design} and our goal to generalize it in the next section, we provide a sketch of its proof below.

From $\Delta$ we construct a design by adding middle lines to the maximal $\theta$-bands and to maximal $\zeta$-shafts.  By \Cref{minimal annuli} we may construct the middle lines of the maximal $\theta$-bands so that they form chords which intersect the arcs given by the maximal $\zeta$-shafts transversely and at most once.  The only potential pitfalls in this construction are that the arcs may end on the boundary and that two arcs may overlap if a maximal $t$-band ends on two disks.  These are both easily remedied, though: We cut off the very end of any arc going to the boundary and `make room' within any $\zeta$-shaft so that two arcs can fit disjointly.

Suppose we have a counterexample to property $P(\zeta,2L+1)$.  This means there exists a quasi-trapezium in $\Delta$ with at least $L+1$ distinct maximal $t$-bands (recall that two arcs may correspond to the same maximal $t$-band), at least one of which is a subband $\pazocal{Q}''$ of a maximal $\zeta$-shaft $\pazocal{Q}'$ at a disk $\Pi$ with $|\pazocal{Q}''|>(1-\zeta)|\pazocal{Q}'|$.  As such, the history $H''$ of $\pazocal{Q}''$, and so of the quasi-trapezium, contains a controlled subword.  

We can then perform the transpositions as in \Cref{quasi-trapezia}, so that we may assume $\pazocal{Q}''$ is a maximal $t$-band of a big trapezium $\Gamma''$ in $\Delta$.  Let $\Gamma_0''$ be the subdiagram obtained from $\Gamma''$ by chopping off one of its rim $t$-bands.  Note that since the tape alphabet of any sector adjacent a $t$-letter is empty, $\Gamma_0''$ is itself a trapezium with history $H''$.

Let $H_1'H''$ be a prefix of the history $H'$ of $\pazocal{Q}'$.  By the definition of shafts, $\lab(\partial\Pi)$ is $H'$-admissible, and so in particular is $H_1'H''$-admissible.  Since $H''$ has a controlled subword, \Cref{enhanced controlled} then implies that, when read starting at the correct vertex, $\lab(\partial\Pi)\cdot H_1'$ is the inverse of $\lab(\textbf{bot}(\Gamma_0''))$.

Now, let E be trapezium given by \Cref{computations are trapezia} corresponding to the application $\lab(\partial\Pi)^{-1}\cdot H_1'$ and $\exists$ be its `mirror copy' corresponding to the inverse transition.  Pasting the top of $\exists$ to the bottom of E then produces a circular diagram $\Phi$ with freely trivial boundary label.  As such, we may introduce $\Phi$ into $\Delta$ without affecting the boundary label, pasting the top of E to the bottom of $\Gamma_0''$.  

Note then that $\Pi$, E, and $\Gamma_0''$ form a subdiagram whose boundary label is freely equal to the disk relator $\lab(\textbf{top}(\Gamma_0''))^{-1}\equiv\lab(\partial\Pi)\cdot H_1'H''$.   So, we may replace these subdiagrams with a single disk, producing a circular diagram with the same boundary label and number of disks as $\Delta$.

However, note that in this process we've added at most $\zeta|\pazocal{Q}'|$ $(\theta,t)$-cells through the introduction of $\exists$ and deleted at least $(1-\zeta)|\pazocal{Q}'|$ through the removal of $\Gamma_0''$.  Hence, as $\zeta<1/2$, this contradicts the minimality of $\Delta$.

\medskip


\subsection{Spears and Directed Designs} \label{sec-spears} \

We now introduce a generalization of the concept of shafts and designs that will aid in the arguments that follow.

Before we introduce the definitions, first some motivation: To adapt the techniques of previous literature efficiently, it would be convenient to have an analogous measure for minimal annular diagrams.  However, if the design is defined in a similar way, no analogue of the bound given in \Cref{G design} can be found for $\sigma_\zeta(\Delta)$.  Indeed, while \Cref{G(S) annuli} tells us the sides of a $\theta$-band and $q$-band cannot bound a contractible path, it does not preclude the possibility of a $\theta$-band `spiraling' and crossing the same $q$-band several times, or even to wrap around on itself and form a $\theta$-annulus. 

To begin to resolve this, we want to restrict our attention to the $t$-spokes which only cross maximal $\theta$-bands that have ends on the diagram's boundary, and cross these bands at most a fixed number of times.  Even then, though, we end up with an auxiliary structure on an annulus rather than a disk, meaning we cannot apply \Cref{design}.  For this, we must impose a stronger condition than $P(\zeta,m)$, but one that is still applicable in the relevant settings.

To this end, let $X$ be an annulus in the Euclidean plane.  As in the previous setup, let $\textbf{T}$ be a finite set of disjoint line segments in $X$ whose endpoints are on boundary components of $X$.  For consistency, we still call the elements of $\textbf{T}$ \textit{chords}, but also call an element of $\textbf{T}$ a \textit{radial chord} if its endpoints are on distinct boundary components.


However, in this case we replace the arcs with a (finite) set $\textbf{R}$ of \textit{directed arcs}.  These are defined in much the same way as arcs are defined for designs, but with an assigned orientation.  In particular, $\textbf{R}$ is a finite set of disjoint oriented simple paths contained in the interior of $X$ each of which crosses any chord transversely and at most twice.  Moreover, if the directed arc $C\in\textbf{R}$ crosses the chord $T'\in\textbf{T}$ twice, then we assume that:

\begin{enumerate}


\item There exists a non-contractible loop in $X$ made up of a subsegment $D$ of $C$ and a subsegment of $T'$,

\item $T'$ is not a radial chord, and

\item There exists a radial chord $T\in\textbf{T}$ which crosses $D$.  (see \Cref{fig-directed-design})

\end{enumerate}


In this case, $(\textbf{T},\textbf{R})$ is called a \textit{directed design} on $X$.

\begin{figure}
\centering
\includegraphics[scale=1]{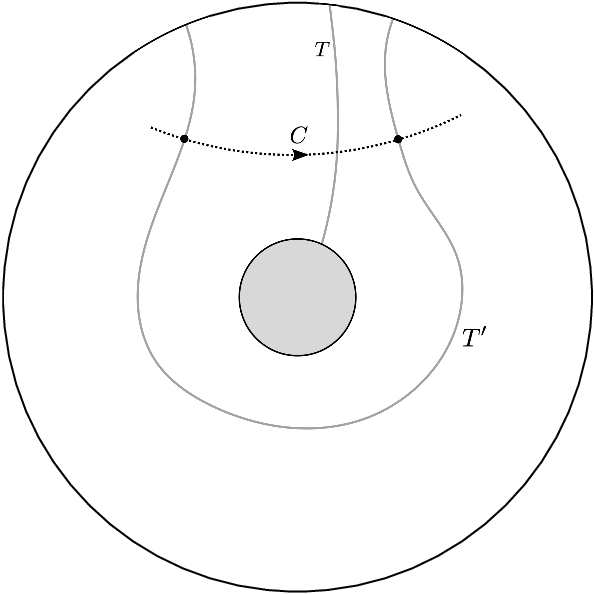}
\caption{A directed arc $C$ which crosses a chord $T'$ twice, where $D$ is the subarc of $C$ bounded by the two auxiliary nodes.}
\label{fig-directed-design}
\end{figure}

Analogous to the treatment of arcs in designs, we define the length of a directed arc of a directed design to be the number of chords it crosses.  Similarly, we take the length of the directed design to be $\ell(\textbf{R})=\sum_{C\in\textbf{R}}|C|$.  

Subarcs are taken to be unoriented subsegments of directed arcs analogous to subarcs of arcs of an undirected design. 
While subarcs are undirected, though, an \textit{initial subarc} is one that would be an initial segment should it have inherited the same orientation.  A \textit{terminal subarc} is defined symmetrically.

We now strengthen the notion of `parallel subarcs' to properly address this new setting.

\begin{definition}

Given subarcs $D_1,D_2$ of directed arcs $C_1,C_2\in\textbf{R}$, respectively, $D_1$ is said to be \textit{aligned to} $D_2$, denoted $D_1 \|\| D_2$, if:

\begin{itemize}

\item Every chord that crosses $D_1$ does so exactly once, and also crosses $D_2$ exactly once.  

\item Given distinct chords $T_1,T_2\in\textbf{T}$ which cross $D_1$, let $S_1,S_2$ be the minimal subsegments that cross both $D_1$ and $D_2$.  Then $S_1,S_2$ and the corresponding subsegments of $D_1,D_2$ bound a disk in $X$.


\end{itemize}

\end{definition}

Again, it is important to note that this relation is not symmetric, as a chord that crosses $D_2$ need not cross $D_1$.

With this, the condition $P(\zeta,m)$ can be adapted for a directed design by replacing `parallel' with `aligned'.  We introduce the following stronger condition, however, that (1) can be shown to possess an analogue of the bound in \Cref{design}, and (2) still applies in the relevant arguments of the next section.

\begin{definition}

Given a positive integer $m$ and real number $\zeta<1/2$, the directed design $(\textbf{T},\textbf{R})$ is said to \textit{satisfy condition $P'(\zeta,m)$} if for any collection of $m$ distinct directed arcs $C_1,\dots,C_m\in\textbf{R}$, given initial subarcs $C_i'$ of $C_i$ with $|C_i'|\geq\frac{1}{2}\zeta|C_i|$ for all $i$, there are no subsegments $D_1,\dots,D_m$ of $C_1',\dots,C_m'$, respectively, such that $D_i\|\| D_{i+1}$ and $|D_i|>(1-\zeta)|C_i'|$ for all $i$.


\end{definition}

\begin{lemma} \label{directed design}

There exists a constant $K_{\zeta,m}$ dependent only on $\zeta$ and $m$ such that for any directed design $(\textbf{T},\textbf{R})$ satisfying property $P'(\zeta,m)$, $\ell(\textbf{R})\leq K_{\zeta,m}|\textbf{T}|$.

\end{lemma}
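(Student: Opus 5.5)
The plan is to reduce \Cref{directed design} to \Cref{design} by cutting the annulus $X$ into a disk and trimming each directed arc to a suitable initial subarc.

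\emph{Step 1: cutting.} If $\textbf{T}$ has no radial chord, then no directed arc can cross a chord twice, by condition (3) in the definition of directed designs. In that case I would pick an auxiliary radial segment disjoint from all chords and cut along it; it is harmless as long as each arc crosses it a bounded number of times. Otherwise, fix a radial chord $T_0\in\textbf{T}$ and cut $X$ along $T_0$ to obtain a disk $\pazocal{D}$. Each directed arc $C$ is split into pieces by its crossings with $T_0$, and there are at most three pieces since $C$ crosses $T_0$ at most twice. The chords of $\textbf{T}\setminus\{T_0\}$ become disjoint chords of $\pazocal{D}$, though a non-radial chord $T'$ crossed twice by some arc may itself be cut by $T_0$ into two chords; this at most doubles $|\textbf{T}|$.

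\emph{Step 2: trimming the arcs.} For each $C\in\textbf{R}$, take its initial subarc $C'$ of length about $\frac12\zeta|C|$, and further shrink it to a maximal piece lying in one component of $C\setminus T_0$ on which every chord is crossed at most once. I expect such a piece to retain length at least a fixed fraction $c(\zeta)|C|$. This needs an argument, because a double crossing of $T'$ forces the subarc $D$ between the crossings to cross the radial chord $T_0$. I would therefore take, among the at most three pieces of $C'$ cut by $T_0$, the longest one, which has length at least roughly $\frac13|C'|-1$. Within a single piece no chord is crossed twice, since $D$ would have to cross $T_0$ (or some radial chord; to make this work I would choose $T_0$ so that condition (3) holds using $T_0$ itself, or else cut along all radial chords and argue the same way). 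Call the resulting undirected arcs $\textbf{Q}$. Arcs of length below a constant contribute at most a constant times the number of arcs, but this count must still be bounded by $|\textbf{T}|$. Each arc with $|C|\ge1$ crosses some chord, and each chord meets boundedly many arcs only if we are careful, so I would absorb short arcs by using length at least $1$ and the factor $K_{\zeta,m}$.

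\emph{Step 3: verifying $P(\zeta',m)$ and concluding.} The main obstacle is showing that the design $(\textbf{T}',\textbf{Q})$ on $\pazocal{D}$ satisfies $P(\zeta',m)$ for some $\zeta'$ depending on $\zeta$. The key observation is that, inside the disk $\pazocal{D}$, two parallel subarcs crossing common chords each exactly once automatically have the disk-bounding property, so parallel subarcs in $\pazocal{D}$ are aligned in $X$. A chain $D_1\|D_2\|\dots\|D_m$ with $|D_i|>(1-\zeta')|Q_i|$ would then give aligned subsegments of initial subarcs $C_i'$, with $|C_i'|\ge\frac12\zeta|C_i|$ and $|D_i|>(1-\zeta)|C_i'|$, provided $\zeta'$ is chosen small compared to $\zeta$ and the trimming is arranged so that $Q_i$ is most of some initial subarc. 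To make this exact I would define $C_i'$ as the initial subarc ending at the end of $Q_i$. This contradicts $P'(\zeta,m)$. \Cref{design} then gives $\ell(\textbf{Q})\le C_{\zeta',m}|\textbf{T}'|\le 2C_{\zeta',m}|\textbf{T}|$, and since $|C|\le c(\zeta)^{-1}|Q_C|+O(1)$ we get $\ell(\textbf{R})\le K_{\zeta,m}|\textbf{T}|$.

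The step I expect to fail most easily is the requirement that $Q_i$ be most of an initial subarc. If the longest piece is not initial, I would instead use the first piece of $C'$ before its first crossing of $T_0$, and handle arcs whose first piece is short by charging them to the crossing of $T_0$ near their start. At most $|\textbf{T}|$-many such arcs can be charged this way, since the arcs are disjoint and $T_0$ has bounded crossings per arc.
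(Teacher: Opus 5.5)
Your overall plan of cutting the annulus into a disk and applying \Cref{design} is the paper's, but there is a genuine gap in how you trim arcs.

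\textbf{The radial-chord case.} Your Step 2 trimming breaks Step 3. Once you cut $C$ down to an initial subarc $C'$ with $|C'|\approx\frac12\zeta|C|$ and keep the longest piece of $C'$ cut by $T_0$, the kept arc $Q$ may be the piece after $T_0$, and it may be only slightly more than half of $C'$. The initial subarc ending where $Q$ ends is then $C'$ itself. In that situation $|D_i|>(1-\zeta')|Q_i|$ does not give $|D_i|>(1-\zeta)|C_i'|$ when $\zeta<1/2$, so a chain violating $P(\zeta',m)$ need not violate $P'(\zeta,m)$.

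Your fallback does not repair this. Arbitrarily many arcs can cross the single chord $T_0$, so they cannot be charged to it. Even a bound on their number would not bound their lengths. The same objection applies to the additive losses in $\frac13|C'|-1$ and $|C|\le c(\zeta)^{-1}|Q_C|+O(1)$: absorbing them would require the number of arcs to be $O(|\textbf{T}|)$, which you have not shown.

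The missing observation is that $P'$ permits $C_i'=C_i$, so you should not truncate at all. By condition (2), an arc crosses a radial chord $T_0$ at most once, so each arc has at most two pieces, not three. Also, $T_0$ cuts no other chord, since chords are disjoint. The right rule is:
\begin{itemize}
\item keep the whole initial piece if it has at least $\frac12\zeta|C|$ crossings;
\item otherwise keep the whole terminal piece from $T_0$ on, which then has more than $(1-\frac12\zeta)|C|$ crossings.
\end{itemize}
Every arc then retains at least $\frac12\zeta|C|$ crossings with no additive error. Moreover $|D_i|>(1-\frac12\zeta)|Q_i|$ yields $|D_i|>(1-\zeta)|C_i'|$, taking $C_i'=Q_i$ in the first case and $C_i'=C_i$ in the second, since $(1-\frac12\zeta)^2>1-\zeta$. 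So the cut design satisfies $P(\zeta/2,m)$ and \Cref{design} applies. No special choice of $T_0$ is needed: a radial chord meets every non-contractible loop and is disjoint from $T'$, so it crosses $D$.

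\textbf{The case with no radial chord.} This case is not handled either. Nothing bounds how often a simple arc crosses your auxiliary radial cut, and an arc cut into many pieces loses the initial/terminal structure you rely on.

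The paper instead cuts along the maximal non-radial chords $T_1,\dots,T_n$, each of which bounds a disk $\pazocal{D}_j$. Since no arc crosses a chord twice, an arc that crosses $T_j$ into $\pazocal{D}_j$ can never leave it. Hence all crossings of an arc lie in an initial portion inside one such disk and a terminal portion inside at most one other. The same initial-or-terminal rule then gives designs on the $\pazocal{D}_j$ satisfying $P(\zeta/2,m)$, and their bounds sum to the claim.

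Your cut could be rescued by routing it through the complement of these disks, where arcs cross no chords. Then all crossings of an arc would lie in its first and last pieces. That is precisely the argument you would need to supply.
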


\begin{proof}

Note that if $|\textbf{T}|=0$, then the statement is given by any choice of constant since then $\ell(\textbf{R})=0$.  As such, we assume $|\textbf{T}|\geq1$.

We now proceed in two cases:

\textbf{1.} Suppose there exists a radial chord $T\in\textbf{T}$.

Then we construct a design on a disk as follows:

\begin{itemize}

\item Cut along $T$ to turn the annulus into a disk, removing the directed arcs that cross $T$ and forgetting the orientations of the others.

\item Add two new chords on each of the two new portions of the boundary arising from $T$ disjoint from all other chords and arcs already present (this is possible since we've removed $T$ and all directed arcs that crossed it).

\item If $C_1\in\textbf{R}$ crosses $T$ and has an initial subarc of length at least $\frac{1}{2}\zeta|C_1|$ which does not cross $T$
, then add an arc $C_1'$ along a minimal initial subarc of $C_1$ that crosses $T$, where the crossing with $T$ is given by a crossing with the corresponding new chord.

\item If $C_2\in\textbf{R}$ crosses $T$ but doesn't satisfy the previous condition, then add an arc $C_2'$ corresponding to a minimal terminal subarc of $C_2$ that crosses $T$, where the crossing with $T$ is given by a crossing with the corresponding new chord.

\end{itemize}

See \Cref{fig-radial-cut} for a sketch of this construction.

\renewcommand\thesubfigure{\alph{subfigure}}
\begin{figure}
\centering
\begin{subfigure}[b]{0.48\textwidth}
\centering
\includegraphics[width=3in]{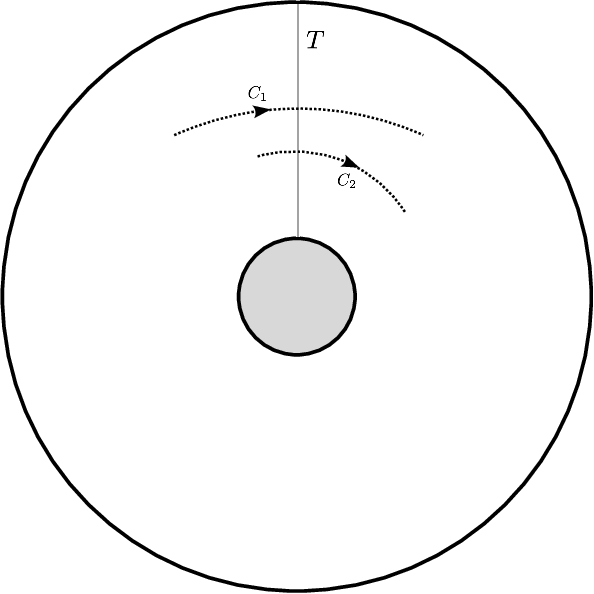}
\caption{The radial chord $T$ in $X$ crossing two directed arcs $C_1$ and $C_2$.}
\end{subfigure}\hfill
\begin{subfigure}[b]{0.48\textwidth}
\centering
\includegraphics[width=3in]{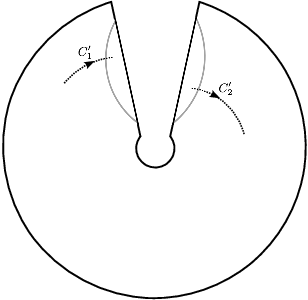}
\caption{The disk formed from cutting along $T$, with arcs $C_1'$ and $C_2'$ crossing the new chords.}
\end{subfigure}
\caption{The formation of the design $(\textbf{T}',\textbf{Q})$ using a radial chord $T$.}
\label{fig-radial-cut}
\end{figure}

First, we argue that the resulting pair $(\textbf{T}',\textbf{Q})$ is in fact a design.  For this, we must in particular verify that the corresponding chords and arcs cross at most once.  

Since no directed arc crosses a radial chord twice, this condition is immediately true for the crossings involving the two new chords arising from the radial chord $T$.  Indeed, no arc crosses both of these two new chords.

Any other chord $T'\in\textbf{T}'$ corresponds to a chord of $\textbf{T}$.  If in the initial directed design it crosses a directed arc $C\in\textbf{R}$ twice, then since $T$ is radial it must cross the non-contractible loop formed by the subsegments of $C$ and $T'$.  But chords in a directed design are disjoint, meaning $T$ crosses $C$ along the subsegment between its two crossings with $T'$.  The arc $Q\in\textbf{Q}$ corresponding to $C$ is formed only from one side of its crossing with $T$, and so can only cross $T'$ at most once.

All other pairs clearly cross at most once, and thus $(\textbf{T}',\textbf{Q})$ is indeed a design on a disk.

Note that $\textbf{T}'$ consists of one more chord than $\textbf{T}$, while $\textbf{Q}$ has the same number of arcs as $\textbf{R}$ has directed arcs.  Moreover, for each $C\in\textbf{R}$, the arc $Q\in\textbf{Q}$ arising from $C$ satisfies $|Q|\geq\frac{1}{2}\zeta|C|$, so that $\ell(\textbf{R})\leq2\zeta^{-1}\ell(\textbf{Q})$.

Fix $m$ distinct arcs $Q_1,\dots,Q_m\in\textbf{Q}$ along with subarcs $D_1,\dots,D_m$, respectively, such that $|D_i|>(1-\frac{1}{2}\zeta)|Q_i|$, and suppose $D_i \| D_{i+1}$ for all $i$.  

Letting $C_i\in\textbf{R}$ be the directed arc corresponding to $Q_i$, by construction each $D_i$ is identified with a subarc $E_i$ of $C_i$.  Of course, any crossing of $E_i$ with a chord other than $T$ corresponds to a crossing of $D_i$ with the same chord of $\textbf{T}'$.  Similarly, any crossing of $E_i$ with $T$ corresponds to a crossing of $D_i$ with one of the new chords.  In particular, since $D_i \| D_{i+1}$, every chord of $\textbf{T}$ that crosses $E_i$ also crosses $E_{i+1}$, and crosses each exactly once.  Moreover, for any two distinct chords which cross both $E_i$ and $E_{i+1}$, the minimal subsegments which cross both can be viewed as subsegments of the corresponding chords of $\textbf{T}'$ that also cross both $D_i$ and $D_{i+1}$ in the same way.  So, since $(\textbf{T}',\textbf{Q})$ is formed on a disk, it follows that these subsegments and the corresponding subarcs of $E_i,E_{i+1}$ bound a disk in the annulus.  Hence, $E_i \| \| E_{i+1}$.

If $C_i$ has an initial subarc of length at least $\frac{1}{2}\zeta|C_i|$ which doesn't cross $T$, then $Q_i$ may be identified with an initial subarc $C_i'$ of $C_i$ with $|C_i'|\geq\frac{1}{2}\zeta|C_i|$.  In this case, $E_i$ is a subarc of $C_i'$ with $|E_i|>(1-\frac{1}{2}\zeta)|C_i'|\geq(1-\zeta)|C_i'|$.

Otherwise, $Q_i$ may be identified with a terminal subarc of $C_i$ with $|Q_i|\geq(1-\frac{1}{2}\zeta)|C_i|$.  In this case, we take $C_i'=C_i$, so that $|E_i|=|D_i|>(1-\frac{1}{2}\zeta)|Q_i|\geq(1-\frac{1}{2}\zeta)^2|C_i'|>(1-\zeta)|C_i'|$.  

In either case, $C_i'$ is an initial subarc of $C_i$ with $|C_i'|\geq\frac{1}{2}\zeta|C_i|$, while $E_i$ is a subarc of $C_i'$ with $|E_i|>(1-\zeta)|C_i'|$.  But since $E_i \| \| E_{i+1}$, this provides a counterexample to condition $P'(\zeta,m)$.  

This contradiction implies $(\textbf{T}',\textbf{Q})$ satisfies condition $P(\zeta/2,m)$, so that \Cref{G design} implies $$\ell(\textbf{R})\leq2\zeta^{-1}\ell(\textbf{Q})\leq2\zeta^{-1}C_{\zeta/2,m}|\textbf{T}'|\leq4\zeta^{-1}C_{\zeta/2,m}|\textbf{T}|$$

\textbf{2.} Hence, we may assume $\textbf{T}$ has no radial chord.

By condition (3) in the definition of a directed design, it thus follows that no chord and directed arc cross twice.

For any $T\in\textbf{T}$, cutting along $T$ separates the annulus into two components, one of which is a disk and the other of which is another annulus.  Let $T_1,\dots,T_n\in\textbf{T}$ be such that this cutting results in a maximal disk, {\frenchspacing i.e. for all $j$ there is no $T\in\textbf{T}$ such that the disk component arising from cutting along $T$ contains $T_j$}.

For each $j\in\{1,\dots,n\}$, we construct a design $(\textbf{T}_j,\textbf{Q}_j)$ on a disk as follows:

\begin{itemize}

\item Cut along $T_j$ and consider the component $\pazocal{D}_j$ homeomorphic to a disk, removing the directed arcs that cross $T_j$ and forgetting the orientations of the others.

\item Add one new chord on the portion of the boundary arising from $T_j$ disjoint from all other chords and arcs already present (again, this is possible since we've removed all directed arcs that crossed $T_j$).

\item If $C_1\in\textbf{R}$ crosses $T_j$ and has an initial subarc $C_1'$ of length at least $\frac{1}{2}\zeta|C_1|$ contained in $\pazocal{D}_j$, then add $C_1'$ along with a crossing with the new chord (corresponding to the crossing of $C_1$ and $T_j$).

\item If $C_2\in\textbf{R}$ crosses $T_j$ and has a terminal subarc $C_2'$ of length at least $(1-\frac{1}{2}\zeta)|C|$ contained in $\pazocal{D}_j$, then add $C_2'$ along with a crossing with the new chord (corresponding to the crossing of $C_2$ and $T_j$).

\end{itemize}

See \Cref{fig-non-radial-cut} for a sketch of this construction.  Observe that $C_3$ and $C_4$ in this figure are examples of directed arcs that have a subarc in $\pazocal{D}_j$, but which have no corresponding arc in $\textbf{Q}_j$.

\renewcommand\thesubfigure{\alph{subfigure}}
\begin{figure}
\centering
\begin{subfigure}[b]{0.48\textwidth}
\centering
\includegraphics[width=3in]{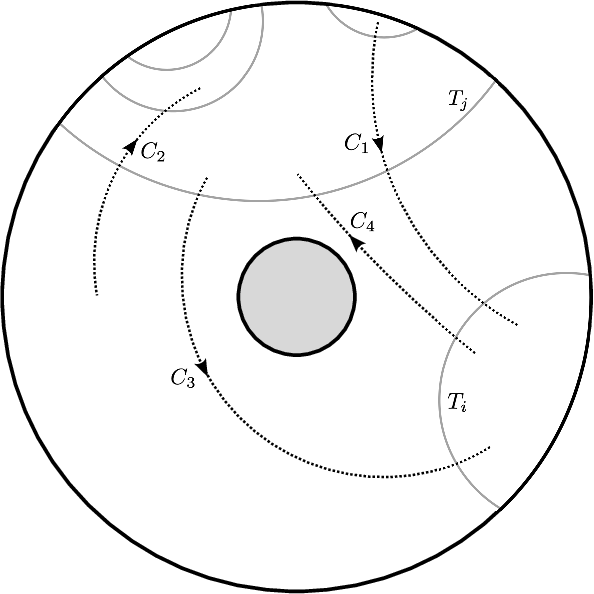}
\caption{The maximal chord $T_j$ in $X$ crossing directed arcs $C_1,\dots,C_4$.}
\end{subfigure}\hfill
\begin{subfigure}[b]{0.48\textwidth}
\centering
\raisebox{1.65in}{\includegraphics[width=3in]{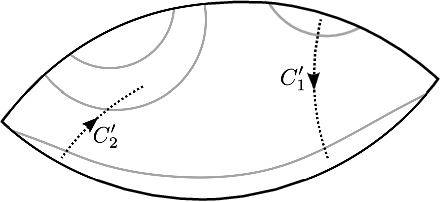}}
\caption{The disk formed from cutting along $T_j$, with arcs $C_1'$ and $C_2'$ crossing the new chord.}
\end{subfigure}
\caption{The formation of the design $(\textbf{T}_j,\textbf{Q}_j)$ using the maximal chord $T_j$.}
\label{fig-non-radial-cut}
\end{figure}

As in the previous case, fix $Q_1,\dots,Q_m\in\textbf{Q}_j$ with corresponding subarcs $D_1,\dots,D_m$ such that $|D_i|>(1-\frac{1}{2}\zeta)|Q_i|$, and suppose $D_i \| D_{i+1}$ for all $i$.  Letting $C_1,\dots,C_m\in\textbf{R}$ be the directed arcs from which $Q_1,\dots,Q_m$ arise, then again either:

\begin{itemize}

\item $Q_i$ corresponds to an initial subarc $C_i'$ of $C_i$ with $|C_i'|\geq\frac{1}{2}\zeta|C_i|$, while $D_i$ corresponds to a subarc $E_i$ of $C_i'$ with $|E_i|>(1-\frac{1}{2}\zeta)|C_i'|$.

\item $Q_i$ corresponds to a terminal subarc of $C_i$ with $|Q_i|>(1-\frac{1}{2}\zeta)|C_i|$.  In this case, taking $C_i'=C_i$, $E_i$ is a subarc of $C_i'$ with $|E_i|>(1-\frac{1}{2}\zeta)^2|C_i'|>(1-\zeta)|C_i'|$.

\end{itemize} 

In either case, $C_i'$ is an initial subarc of $C_i$ with $|C_i'|\geq\frac{1}{2}\zeta|C_i|$, while $E_i$ is a subarc of $C_i'$ with $|E_i|>(1-\zeta)|C_i'|$.  But since no chord and directed arc cross twice and all of this is contained in the subspace $\pazocal{D}_j$ homeomorphic to a disk, $D_i \| D_{i+1}$ immediately implies $E_i \| \| E_{i+1}$, providing a contradiction to condition $P'(\zeta,m)$.

Hence, $(\textbf{T}_j,\textbf{Q}_j)$ satisfies condition $P(\zeta/2,m)$, and so \Cref{G design} implies $\ell(\textbf{Q}_j)\leq C_{\zeta/2,m}|\textbf{T}_j|$.

Now, fix $C\in\textbf{R}$ such that $C$ has a subarc contained in $\pazocal{D}_j$.  If there is an arc $Q\in\textbf{Q}_j$ arising from $C$, then by construction we have $|Q|\geq2\zeta^{-1}|C|$.  Otherwise, either:

\begin{enumerate}

\item $C$ begins in $\pazocal{D}_j$ and its maximal initial subarc contained in $\pazocal{D}_j$ has length less than $\frac{1}{2}\zeta|C|$.

\item $C$ begins outside of $\pazocal{D}_j$ and its maximal terminal subarc contained in $\pazocal{D}_j$ has length less than $(1-\frac{1}{2}\zeta)|C|$.

\end{enumerate}

Since these directed arcs do not cross the same chord twice, though, then by maximality $C$ intersects at most one other disk $\pazocal{D}_k$.  In either of the two outstanding cases, $C$ must intersect such a disk, in which case $C$ crosses $T_k$ and has:

\begin{enumerate}

\item  a terminal subarc of length at least $(1-\frac{1}{2}\zeta)|C|$ contained in $\pazocal{D}_k$.

\item an initial subarc of length at least $\frac{1}{2}\zeta|C|$ contained in $\pazocal{D}_k$.

\end{enumerate}

Hence, in either case there is an arc in $\textbf{Q}_k$ arising from $C$.  So, $$\ell(\textbf{R})\leq2\zeta^{-1}\sum\ell(\textbf{Q}_j)\leq2\zeta^{-1}\sum C_{\zeta/2,m}|\textbf{T}_j|=2\zeta^{-1}C_{\zeta/2,m}|\textbf{T}|$$
and thus the statement is given for $K_{\zeta,m}\geq4\zeta^{-1}C_{\zeta/2,m}$.

\end{proof}

We now apply this to the diagrams of interest, generalizing the concept of shaft.

\begin{definition}

Let $\Delta$ be a reduced annular diagram over $G(\textbf{M}_\textbf{S})$.  Fix a disk $\Pi$ in $\Delta$ and a $t$-spoke $\pazocal{Q}$ of $\Pi$ with history $H$ (read starting at $\Pi$).  Given a prefix $H'$ of $H$, suppose:

\begin{enumerate}[label=(\roman*)]

\item $\lab(\partial\Pi)$ is $H'$-admissible.

\item Each cell of the subband $\pazocal{Q}'$ of $\pazocal{Q}$ with history $H'$ belongs to a non-annular maximal $\theta$-band of $\Delta$.

\item Any maximal $\theta$-band of $\Delta$ crosses $\pazocal{Q}'$ at most twice.  In the case that there are exactly two crossings, these crossings correspond to inverse letters in $H'$.

\item Given a prefix $H''$ of $H'$ with $\|H''\|\geq\frac{1}{2}\zeta\|H'\|$, for any factorization $H''\equiv H_1''H_2''H_3''$ with $\|H_1''\|+\|H_3''\|<\zeta\|H''\|$, $H_2''$ contains a controlled subword.

\end{enumerate}

Then $\pazocal{Q}'$ is called a \textit{$\zeta$-spear} of $\Pi$.

For every disk $\Pi$ and $t$-spoke $\pazocal{Q}$ of $\Pi$, fix a maximal subband $\pazocal{Q}_\Pi^\zeta$ of $\pazocal{Q}$ (perhaps empty) which is a $\zeta$-spear of $\Pi$.  Then, define $\rho_\zeta(\Delta)$ to be the sum of the lengths of the bands $\pazocal{Q}_\Pi^\zeta$ for all $\Pi$ and $\pazocal{Q}$.

\end{definition}

While we have defined everything for any arbitrary parameter $\zeta<1/2$, we now restrict our attention to the case $\zeta=\lambda$, where $\lambda$ is as given by the parameter assignment of $\lambda^{-1}$ in \Cref{sec-parameters}.  We thus reach the following analogue of \Cref{G design} for any minimal annular diagram $\Delta$, taking $|\partial\Delta|$ to be the sum of the lengths of the boundary components of $\Delta$.

\begin{lemma} \label{G directed design}

For any minimal annular diagram $\Delta$, $\rho_\lambda(\Delta)\leq K|\partial\Delta|$.

\end{lemma}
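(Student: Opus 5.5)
We mirror the sketch of \Cref{G design}: build a directed design on the annulus underlying $\Delta$, prove that it satisfies $P'(\lambda,2L+1)$ by a minimality argument, and then apply \Cref{directed design}. The constant then follows from the parameter choice $K>>L>>\lambda^{-1}$, taking $K\geq 2K_{\lambda,2L+1}$.

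\textbf{Construction.} Let $X$ be the annulus carrying $\Delta$.
\begin{itemize}
\item The chords $\textbf{T}$ are the middle lines of the non-annular maximal $\theta$-bands of $\Delta$. These bands have both ends on $\partial\Delta$, and their middle lines are disjoint because maximal $\theta$-bands cannot cross.
\item The directed arcs $\textbf{R}$ are the middle lines of the maximal spears $\pazocal{Q}^\lambda_\Pi$, oriented away from $\Pi$.
\item As in the sketch, we trim the end of any arc reaching $\partial\Delta$. Where a $t$-band joins two disks, we make room so that its two arcs are disjoint.
\end{itemize}
Condition (ii) of the spear definition guarantees that every crossing cell of a spear lies on a chord. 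By \Cref{G(S) annuli}, a $\theta$-band and a $q$-band cannot bound a disk subdiagram. Condition (iii) then gives at most two crossings, and these occur at inverse letters. If there are two crossings, the subband of the $\theta$-band and the subarc of the spear between them form a loop, which cannot be contractible. Hence the loop is non-contractible and encircles the hole. The $\theta$-band in question has both ends on a single boundary component, so it is not radial. We must also show that some radial chord crosses the enclosed segment $D$ of the spear. I expect this to follow by choosing a radial $t$-spoke or $\theta$-band via \Cref{annular-graph}, or by using that a spear which wraps around must meet every $\theta$-band crossing the hole. This step requires care.

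Since $|\textbf{T}|\leq|\partial\Delta|$ and each spear has at most two arcs, \Cref{directed design} gives $\rho_\lambda(\Delta)\leq 2K_{\lambda,2L+1}|\textbf{T}|\leq K|\partial\Delta|$.

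\textbf{Verifying $P'(\lambda,2L+1)$.} This is the main obstacle. Suppose it fails. Then there are $2L+1$ distinct arcs $C_i$, initial subarcs $C_i'$ with $|C_i'|\geq\frac12\lambda|C_i|$, and subsegments $D_i\|\|D_{i+1}$ with $|D_i|>(1-\lambda)|C_i'|$.
\begin{enumerate}
\item The alignment condition says that consecutive aligned subarcs, together with the chord segments joining them, bound disks. Gluing these disks yields a circular subdiagram that is a quasi-trapezium. Its side $t$-bands are at least $L+1$ distinct maximal $t$-bands, and its history is the history $H_2''$ of some $D_i$.
\item Condition (iv) applies to the prefix $H''$ corresponding to $C_i'$. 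Since the complement of $D_i$ in $C_i'$ is shorter than $\lambda|C_i'|$, the history $H_2''$ contains a controlled subword.
\item \Cref{quasi-trapezia} lets us transpose the disks out of the quasi-trapezium. Because this subdiagram is genuinely circular, it embeds in a disk and the operation stays inside $\Delta$. We then obtain a big trapezium $\Gamma''$ with a rim $t$-band.
\item Now repeat the surgery in the sketch of \Cref{G design}, using \Cref{enhanced controlled} together with the $H'$-admissibility from (i). This absorbs $\Gamma_0''$ into $\Pi$, at the cost of inserting a mirror-pair diagram with at most $\lambda|C_i'|$ $(\theta,t)$-cells, while deleting at least $(1-\lambda)|C_i'|$ of them.
\end{enumerate}
The number of disks does not increase and $s_2$ strictly drops. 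This contradicts the minimality of $\Delta$ among annular diagrams with the same boundary labels.
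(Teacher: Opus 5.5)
Your plan matches the paper's. You use the same directed design and the same verification of $P'(\lambda,2L+1)$: a quasi-trapezium with at least $L+1$ distinct $t$-bands whose history contains a controlled subword, followed by the surgery from the sketch of \Cref{G design}. You then finish with \Cref{directed design}. Those parts are fine, as is the final count (your extra factor $2$ is harmless).

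The genuine gap is condition (3) of a directed design, which you leave open. It is not a technicality. In Case 2 of the proof of \Cref{directed design} (no radial chord), condition (3) is exactly what guarantees that no arc crosses a chord twice. Both the construction of the designs $(\textbf{T}_j,\textbf{Q}_j)$ and the claim that an arc meets at most one other disk $\pazocal{D}_k$ depend on it. Neither of your suggestions closes it:
\begin{itemize}
\item \Cref{annular-graph} produces $t$-spokes. These are $q$-bands, not chords, so they say nothing about radial $\theta$-bands.
\item Your second idea correctly shows that any radial chord must cross $D$: it must cross the non-contractible loop formed by $D$ and the segment of the doubly-crossing band, and chords are disjoint. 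But this only reduces (3) to the existence of some radial $\theta$-band, which is the actual content.
\end{itemize}

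The missing argument is a nesting induction. Let $\pi_1,\pi_2$ be the two crossing cells, and let $\pazocal{Q}'$ be the subband of the spear strictly between them, so that $\pazocal{Q}'$ corresponds to $D$. There are three cases.
\begin{itemize}
\item If no maximal $\theta$-band crosses $\pazocal{Q}'$, then $\pi_1,\pi_2$ are adjacent $(\theta,t)$-cells with mutually inverse letters. They form a cancellable pair, contradicting reducedness.
\item If some maximal $\theta$-band crosses $\pazocal{Q}'$ exactly once, then it crosses the loop exactly once, since it cannot cross the doubly-crossing band. So its two ends lie in different components of the complement of the loop. It is non-annular by (ii) and cannot end on a disk, so it ends on both boundary components. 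Hence it is radial, and it crosses $D$.
\item Otherwise some $\theta$-band crosses $\pazocal{Q}'$ twice. By (iii) and \Cref{G(S) annuli}, it again forms a non-contractible loop with a strictly shorter subband $\pazocal{Q}''\subset\pazocal{Q}'$, and you recurse.
\end{itemize}
The same ``cannot re-cross the loop'' observation also justifies your unproved assertion that the doubly-crossing band has both ends on one boundary component.
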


\begin{proof}

We create the directed design $(\textbf{T},\textbf{R})$ as an auxiliary structure to $\Delta$ by:

\begin{itemize}

\item Adding a chord through the middle of any non-annular maximal $\theta$-band.

\item For any maximal $\lambda$-spear $\pazocal{Q}_\Pi^\lambda$, adding a directed arc along a middle line directed away from $\Pi$ (again, making room accordingly in case the same maximal $t$-band contains two overlapping $\lambda$-spears at distinct disks).

\end{itemize}

We first verify that this in fact defines a directed design.  As the definition of $\lambda$-spear indicates that a chord and directed arc can cross at most twice, what is left to verify is the condition imposed on such a double crossing.


Assume there exists a non-annular maximal $\theta$-band $\pazocal{T}$ which crosses a maximal $\lambda$-spear $\pazocal{Q}$ twice.  \Cref{G(S) annuli} implies these bands cannot form a $(\theta,q)$-annulus.  So, since the two crossings correspond to inverse letters of the history of $\pazocal{Q}$, their crossings must circle the inner hole.  In particular, letting $\pazocal{Q}'$ and $\pazocal{T}'$ be the subbands of $\pazocal{Q}$ and $\pazocal{T}$, respectively, between the two crossings, $\pazocal{Q}'$ and $\pazocal{T}'$ bound an annular subdiagram of $\Delta$.  As such, the subsegments of the directed arc and chord corresponding to these subbands form a non-contractible loop, implying condition (1).

Since $\pazocal{T}$ cannot cross itself or $\pazocal{Q}'$ again, it must not be a radial $\theta$-band.  Of course, this means the corresponding chord is also not radial, and so condition (2) is satisfied.

Now, suppose every maximal $\theta$-band that crosses $\pazocal{Q}'$ crosses it just once.  Since it also cannot cross $\pazocal{T}$, such a $\theta$-band must be radial, and hence correspond to the crossing of a radial chord with the relevant subsegment of the directed arc corresponding to $\pazocal{Q}$.  Hence, condition (3) is satisfied as long as $\pazocal{Q}'$ is crossed by some $\theta$-band; but if it is not, then the two cells which correspond to the crossing with $\pazocal{T}$ are adjacent, and so cancellable as they have inverse histories.

So, we may assume there exists a maximal $\theta$-band that crosses $\pazocal{Q}'$ twice.  Then the corresponding $\theta$-band and $\pazocal{Q}$ can be studied in the same way, with a subband $\pazocal{Q}''$ of $\pazocal{Q}'$ playing the analogous role.  If all $\theta$-bands that cross $\pazocal{Q}''$ cross it just once, then the above argument implies the existence of a radial $\theta$-band that crosses it, and so crosses $\pazocal{Q}'$; otherwise, we pass to a shorter subband $\pazocal{Q}'''$, so that an inductive argument applies.  

Thus, condition (3) is satisfied, and so $(\textbf{T},\textbf{R})$ is a directed design on $\Delta$.

Now, fix distinct directed arcs $C_1,\dots,C_m\in\textbf{R}$ with initial subarcs $C_1',\dots,C_m'$, respectively, such that $|C_i'|\geq\frac{1}{2}\lambda|C_i|$ for all $i$.  Suppose there exist subarcs $D_i$ of $C_i'$ with $|D_i|>(1-\lambda)|C_i'|$ and $D_i \| \| D_{i+1}$ for all $i$.

For each $i$, let $H_i$ be the history of the $t$-spoke $\pazocal{Q}_i$ corresponding to $C_i$, $H_i'$ be the prefix which is the history of the subband $\pazocal{Q}_i'$ corresponding to $C_i'$, and $H_i''$ be the subword of $H_i'$ which is the history of the $t$-band $\pazocal{Q}_i''$ corresponding to $D_i$.  

By the definition of $\lambda$-spear, each $H_i''$ contains a controlled subword.  Further, the definition of the aligned relation implies every maximal $\theta$-band that crosses $\pazocal{Q}_1''$ does so once, and also crosses $\pazocal{Q}_i''$ exactly once.  Assuming $\pazocal{Q}_1''$ and $\pazocal{Q}_i''$ are distinct bands, the second condition defining the aligned relation further implies that the minimal subbands of any pair of such maximal $\theta$-bands which crosses both $\pazocal{Q}_1''$ and $\pazocal{Q}_i''$ bound a quasi-trapezium $\Gamma$ in $\Delta$ with history $H_1''$.

If $\Gamma$ has at least $L+1$ distinct maximal $t$-bands, then we find a contradiction to the minimality of $\Delta$ in just the same way as in the proof of \Cref{G design} (see the sketch of the proof provided after the statement).  So, since at most two of $\pazocal{Q}_i''$ correspond to the same $t$-band, it follows that $m\leq 2L$.  In particular, this means $(\textbf{T},\textbf{R})$ satisfies condition $P'(\lambda,2L+1)$.

The statement thus follows by applying \Cref{directed design} and the parameter choices $K>>L>>\lambda^{-1}$.

\end{proof}

\begin{remark}

It is clear that the condition of being $\lambda$-spear is substantially stronger than that of being a $\lambda$-shaft, and the conditions for a structure to be a directed design are more stringent than for one to be a design.  Hence, while $\sigma_\lambda$ proved a useful invariant in studying circular diagrams in previous literature, the prospect of $\rho_\lambda$ being put to similar use may not seem promising.  

However, note that \Cref{long history controlled} provides a powerful condition which can be used to verify that a particular spoke satisfies condition (iv) in the definition of $\lambda$-spear, and indeed in previous literature the analogue of this condition is one of the main tools used to verify a particular spoke is a $\lambda$-shaft.  Moreover, in the particular setting where this concept will prove useful, {\frenchspacing i.e. the `big scopes' in the next section}, the shafts considered automatically satisfy the rest of the properties of being a $\lambda$-spear.   Thus, we will be able to use analogues of the arguments of previous literature with this new invariant, helping us to study the structure of a generic class of annular diagrams.

\end{remark}

\bigskip


\section{Scopes} \label{sec-scopes}

In this section, we study a type of the subdiagram that arises from Lemmas \ref{circular-graph} and \ref{annular-graph}.  This notion was introduced in \cite{WMal} and is useful in simultaneously treating the relevant arguments for circular and annular diagrams.  In particular, in this section we both:

\begin{enumerate} 

\item Achieve an upper bound on the $G$-area of a certain generic class of circular diagrams over the disk presentation of $G(\textbf{M}_\textbf{S})$, which we will subsequently prove is sufficient for an upper bound on the Dehn function of the group; and

\item Rule out the presence of such a subdiagram in a generic class of annular diagrams over the disk presentation of $G(\textbf{M}_\textbf{S})$, a critical first step toward understanding the conjugator length function of the group.

\end{enumerate}

\subsection{Definition of scope} \

Let $\Pi$ be a disk in a reduced circular or annular diagram $\Delta$ over the disk presentation of $G(\textbf{M}_\textbf{S})$.  Suppose $\Pi$ has $\ell\geq2$ consecutive $t$-spokes $\pazocal{Q}_1,\dots,\pazocal{Q}_\ell$ such that:

\begin{itemize}

\item The $t$-spokes $\pazocal{Q}_1,\dots,\pazocal{Q}_\ell$ all end on the same component $\textbf{C}$ of $\partial\Delta$

\item A subpath of $\partial\Pi$, a subpath of $\textbf{C}$, and the $t$-spokes $\pazocal{Q}_1,\dots,\pazocal{Q}_\ell$ bound a (circular) subdiagram $\Psi$ of $\Delta$ which contains no disks.

\end{itemize}

Then $\Psi$ is called a \textit{scope of $\Pi$ on $\mathbf{C}$} in $\Delta$ with \textit{width} $\ell$.

\begin{remark} \label{rmk-scopes}

Let $\Delta$ be a reduced circular or annular diagram over the disk presentation of $G(\textbf{M}_\textbf{S})$ such that $s_1(\Delta)\geq1$ is minimal among all such diagrams with the same boundary label(s).

If $\Delta$ is circular, then \Cref{circular-graph} implies the existence of a scope on (the only component of) $\partial\Delta$ with width $\geq L-3$.  As such, \Cref{fig-graph} is an apt picture of a scope (though the width of a scope is not generally required to be as large as the pictured $L-3$).

On the other hand, if $\Delta$ is annular, then \Cref{annular-graph} implies the existence of a disk $\Pi$ with almost all of its spokes going to the boundary, but does not immediately imply the existence of a scope on a boundary component with width $>L/2$.

\end{remark}


\medskip

\subsection{Weakly minimal diagrams} \

As alluded to at the start of this section, our goal in the rest of this section is to simultaneously address the existence of a scope of large width in both a particular class of circular and a particular class of annular diagrams over the disk presentation of $G(\textbf{M}_\textbf{S})$. 

For circular diagrams, our goal is to bound the $G$-area.  However, as in \cite{O18}, \cite{OS20}, \cite{WEmb}, and \cite{W}, this bound is not achieved for minimal diagrams, but for a slightly wider class of diagrams that allows for the necessary surgeries.

Let $\Delta$ be a reduced circular diagram over the disk presentation of $G(\textbf{M}_\textbf{S})$ containing at least one disk.  A maximal $q$-band $\pazocal{Q}$ in $\Delta$ is said to be \textit{cutting} if it has two ends on $\partial\Delta$.  This naming is indicative of the fact that the $q$-band cuts $\Delta$ into two connected components, each of which can be viewed (using $0$-refinement) as a circular subdiagram.  If one such subdiagram contains some positive cell and no disks, then it is called a \textit{crown} of $\Delta$.  Removing such a crown, we obtain a reduced diagram containing just as many disks as $\Delta$.  Iterating this process until no crown exists, we obtain the \textit{stem} $\Delta^*$.  It is important to note that the stem is well-defined, {\frenchspacing i.e. it does not} depend on the order in which we remove the different crowns.

\begin{definition}

A reduced circular diagram $\Delta$ over the disk presentation of $G(\textbf{M}_\textbf{S})$ containing at least one disk is said to be \textit{weakly minimal} if its stem $\Delta^*$ is minimal.

\end{definition}

Of course, as any subdiagram of a minimal diagram is necessarily minimal, it is immediate that minimal diagrams are weakly minimal.  The next statement collects some less obvious consequences of the definition.

\begin{lemma}[Lemma 7.17 of \cite{OS20}] \label{weakly-minimal} \

\begin{enumerate}[label=({\alph*})]

\item If $\Delta_1$ is a subdiagram of a weakly minimal diagram $\Delta$ and contains a disk, then $\Delta_1$ is weakly minimal, $\Delta_1^*\subset\Delta^*$, and $\sigma_\lambda(\Delta_1^*)\leq\sigma_\lambda(\Delta^*)$.

\item For every weakly minimal diagram $\Delta$, $\sigma_\lambda(\Delta^*)\leq K|\partial\Delta|$.

\item A weakly minimal diagram $\Delta$ contains no $\theta$-annuli.

\item Let $\pazocal{C}$ be a cutting $q$-band of a reduced diagram $\Delta$ over the disk presentation of $G(\textbf{M}_\textbf{S})$ and let $\Delta_1$, $\Delta_2$ be the components of $\Delta\setminus\pazocal{C}$. Suppose $\Delta_1\cup\pazocal{C}$ is a reduced diagram over $M(\textbf{M}_\textbf{S})$ and $\Delta_2\cup\pazocal{C}$ is weakly minimal. Then $\Delta$ is weakly minimal.

\end{enumerate}

\end{lemma}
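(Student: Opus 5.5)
The plan is to prove the four parts in the order (c), (a), (b), (d). Throughout, the two basic facts used are: the stem is well defined, and any subdiagram of a minimal diagram is minimal. The underlying structural observation is that removing a crown cuts $\Delta$ along a side of a cutting $q$-band $\pazocal{C}$, and the removed piece, together with $\pazocal{C}$, is a reduced diagram over $M(\textbf{M}_\textbf{S})$. So \Cref{M(S) annuli} applies to every crown, and every maximal $\theta$-band of $\Delta^*$ that ends on the side of such a $\pazocal{C}$ continues through the crown and ends on $\partial\Delta$.

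For (c), suppose $\Delta$ contains a $\theta$-annulus $\pazocal{A}$. Since $\theta$-bands do not cross one another, $\pazocal{A}$ either lies in $\Delta^*$ or meets the cutting $q$-band $\pazocal{C}$ bounding some crown. The first case contradicts \Cref{minimal annuli}, because $\Delta^*$ is minimal. In the second case, $\pazocal{A}$ crosses $\pazocal{C}$ in at least two cells. Take the part of $\pazocal{A}$ on the crown side between two consecutive such crossings, together with the piece of $\pazocal{C}$ between them. These form a $(\theta,q)$-annulus (or, if $\pazocal{A}$ lies entirely inside the crown, a $\theta$-annulus) in a reduced diagram over $M(\textbf{M}_\textbf{S})$. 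Either contradicts \Cref{M(S) annuli}. This argument should be iterated over the crowns removed in the order they are used to build the stem.

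For (a), let $\Delta_1\subseteq\Delta$ contain a disk. For each cutting $q$-band of $\Delta$ bounding a crown, its intersection with $\Delta_1$ is a union of $q$-bands that are cutting in $\Delta_1$. The pieces of the crown lying in $\Delta_1$ contain no disks, so they are crowns of $\Delta_1$, or are contained in such crowns. Removing them first and invoking well-definedness of the stem gives $\Delta_1^*\subseteq\Delta^*$. Then $\Delta_1^*$ is minimal as a subdiagram of the minimal diagram $\Delta^*$, so $\Delta_1$ is weakly minimal. For the $\sigma_\lambda$ inequality, note that every disk of $\Delta_1^*$ is a disk of $\Delta^*$, and each of its $t$-spokes in $\Delta_1^*$ is a subband of the corresponding spoke in $\Delta^*$. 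The shaft condition concerns only prefixes of the history read from the disk, so every $\lambda$-shaft in $\Delta_1^*$ is a $\lambda$-shaft in $\Delta^*$. Choosing the maximal shafts compatibly then gives $\sigma_\lambda(\Delta_1^*)\le\sigma_\lambda(\Delta^*)$.

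For (b), I would not compare $|\partial\Delta^*|$ with $|\partial\Delta|$ directly, since removing a crown might not decrease length. Instead I would rerun the design argument sketched after \Cref{G design} on the minimal diagram $\Delta^*$, which gives $\sigma_\lambda(\Delta^*)\le C_{\lambda,2L+1}|\textbf{T}|$, with chords the maximal $\theta$-bands of $\Delta^*$. By the observation in the first paragraph and (c), each such band extends to a non-annular maximal $\theta$-band of $\Delta$ whose two ends lie on $\partial\Delta$, and distinct chords give distinct bands. Hence $|\textbf{T}|\le|\partial\Delta|_\theta\le|\partial\Delta|$, and the bound follows from $K\gg L\gg\lambda^{-1}$.

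For (d), I expect the main (bookkeeping) obstacle to be checking that the decomposition along $\pazocal{C}$ interacts correctly with crowns of $\Delta_2\cup\pazocal{C}$. The plan is as follows. Since $\Delta_1\cup\pazocal{C}$ has no disks, $\Delta_1$ is a crown of $\Delta$ if it has a positive cell; otherwise it is empty and there is nothing to remove. Removing it leaves $\Delta_2\cup\pazocal{C}$. By well-definedness, $\Delta^*=(\Delta_2\cup\pazocal{C})^*$, which is minimal by hypothesis. The one point needing care is that $\pazocal{C}$ itself may end up inside a crown of $\Delta_2\cup\pazocal{C}$. In that case it is simply removed along with that crown, which is again consistent with well-definedness of the stem.
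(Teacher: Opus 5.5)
Your arguments for (b), (c) and (d) are sound. In (c), reducing to \Cref{minimal annuli} for the minimal stem, and to \Cref{M(S) annuli} inside each removed disk-free region together with its cutting band, is exactly what is needed. In (b), counting the chords of the design on $\Delta^*$ by $\theta$-edges of $\partial\Delta$ is legitimate, because a $\theta$-band leaving $\Delta^*$ through a cutting $q$-band cannot return; this avoids comparing $|\partial\Delta^*|$ with $|\partial\Delta|$. Part (d) is immediate from the well-definedness of the stem.

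The gap is in (a), in the sentence claiming that the pieces of a crown lying in $\Delta_1$ are crowns of $\Delta_1$ or are contained in such. Being disk-free is not enough: a crown must be the disk-free side of a \emph{single} cutting $q$-band.

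Let $R$ be the disk-free side of a cutting $q$-band $\pazocal{C}$ of $\Delta$. A subdiagram $\Delta_1$ may leave the other side of $\pazocal{C}$ through one piece $\pazocal{C}_1$ of $\pazocal{C}\cap\Delta_1$, run through $R$, and come back through a different piece $\pazocal{C}_2$. Suppose $\Delta_1$ has a disk beyond $\pazocal{C}_1$ and another beyond $\pazocal{C}_2$. Then both sides of $\pazocal{C}_1$ in $\Delta_1$ contain disks, so $\pazocal{C}_1$ does not bound a crown. Now take a cell $\pi\in R\cap\Delta_1$ adjacent to $\pazocal{C}_1$. Any other cutting $q$-band $D$, of $\Delta_1$ or of a diagram obtained from it by removing crowns, lies on one side of $\pazocal{C}_1$. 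The component of the complement of $D$ that contains $\pi$ therefore contains $\pazocal{C}_1$ and the whole other side of $\pazocal{C}_1$, hence a disk. So $\pi$ is never removed, and $\pi\in\Delta_1^*\setminus\Delta^*$.

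For such $\Delta_1$ the inclusion $\Delta_1^*\subseteq\Delta^*$ is actually false. Your derivation of the minimality of $\Delta_1^*$ (as a subdiagram of $\Delta^*$) and of $\sigma_\lambda(\Delta_1^*)\le\sigma_\lambda(\Delta^*)$ fails with it. Well-definedness of the stem does not rescue it, since that only concerns the order in which genuine crowns are removed.

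What is needed is a hypothesis excluding such re-entries, for example that each component of $R\cap\Delta_1$ is adjacent to only one piece of $\pazocal{C}\cap\Delta_1$. Under that hypothesis your argument goes through verbatim. The hypothesis holds wherever (a) is used in the paper:
\begin{itemize}
\item Removing a rim $\theta$-band only deletes end cells of cutting $q$-bands.
\item $\Psi_{ij}'$ is separated from $\bar{\Delta}_{ij}$ by $\bar{\textbf{p}}_{ij}$, which consists of sides of $t$-spokes and a subpath of $\partial\Pi$. No $q$-band can cross it, so every crown of $\Delta$ either lies in $\Psi_{ij}'$ (and is a crown there) or misses it.
\end{itemize}
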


The next statement also follows quickly from the definition.

\begin{lemma} \label{weakly-minimal-1-signature}

Let $\Delta$ and $\Gamma$ be reduced circular diagrams over $G(\textbf{M}_\textbf{S})$ with $\lab(\partial\Delta)\equiv\lab(\partial\Gamma)$.  If $\Delta$ is weakly minimal, then $s_1(\Delta)\leq s_1(\Gamma)$.

\end{lemma}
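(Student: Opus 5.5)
The plan is to compare stems and relate the disk counts of $\Delta$, its stem $\Delta^*$, and any other diagram with the same boundary label. First I record two facts about crowns. Removing a crown from a reduced circular diagram does not change the number of disks, because a crown contains no disks by definition. Iterating, we get $s_1(\Delta)=s_1(\Delta^*)$. Second, since the crowns are cut off along cutting $q$-bands, the boundary label of $\Delta^*$ is obtained from $\lab(\partial\Delta)$ by replacing each crown's boundary subpath by the side of the corresponding $q$-band.

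The main step is to transport $\Gamma$ into a competitor for $\Delta^*$. Each crown $\Gamma_j$ removed in forming $\Delta^*$ is a diskless circular diagram, hence a diagram over $M(\textbf{M}_\textbf{S})$ (only disks and hubs involve non-$M$ relations, and a hub is itself a disk in the disk presentation). Its boundary is $\textbf{y}_j\textbf{z}_j^{-1}$, where $\textbf{y}_j$ is a subpath of $\partial\Delta$ and $\textbf{z}_j$ is a side of a cutting $q$-band. I would glue the mirror copies $\bar{\Gamma}_j$ (with boundary labels $\lab(\textbf{z}_j)\lab(\textbf{y}_j)^{-1}$) onto $\Gamma$ along the subpaths of $\partial\Gamma$ corresponding to the $\textbf{y}_j$. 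This is possible since $\lab(\partial\Gamma)\equiv\lab(\partial\Delta)$, so the same subwords occur in the same positions. The result is a circular diagram $\Gamma'$ over the disk presentation with $\lab(\partial\Gamma')\equiv\lab(\partial\Delta^*)$ and with no more disks than $\Gamma$, since the $\bar{\Gamma}_j$ are diskless. Reducing $\Gamma'$ by cancelling pairs of mirror cells only removes cells, so we obtain a reduced diagram $\Gamma''$ with $s_1(\Gamma'')\leq s_1(\Gamma)$.

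The step I expect to need the most care is doing the gluings simultaneously: the $\textbf{y}_j$ must be disjoint subpaths of $\partial\Delta$ and must not overlap. This holds if I take the crowns to be the maximal components cut off in forming the stem. Because the stem is well-defined, the removed region is a union of subdiagrams attached along disjoint boundary arcs. Each such subdiagram is cut off by a single outermost cutting $q$-band, and it is diskless, being a union of crowns. I would therefore glue one mirror diagram per maximal arc, which avoids any overlaps.

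Finally, minimality of $\Delta^*$ gives $s(\Delta^*)\leq s(\Gamma'')$ lexicographically, and in particular $s_1(\Delta^*)\leq s_1(\Gamma'')$. Combining the bounds yields $s_1(\Delta)=s_1(\Delta^*)\leq s_1(\Gamma'')\leq s_1(\Gamma)$, which is the statement.
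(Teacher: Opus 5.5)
Your proposal is correct and follows essentially the same route as the paper: glue mirror copies of the (diskless) crowns of $\Delta$ onto $\partial\Gamma$ to get a diagram with the boundary label of $\Delta^*$ and no more disks than $\Gamma$. Then invoke minimality of the stem together with $s_1(\Delta)=s_1(\Delta^*)$. The paper phrases this as a contradiction and skips the reduction step and the non-overlapping of the gluing arcs, both of which you handle explicitly.
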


\begin{proof}

Assume to the contrary that $s_1(\Gamma)<s_1(\Delta)$.  We then attach mirror copies of the crowns of $\Delta$ to the appropriate portions of $\partial\Gamma$, producing a diagram $\Gamma'$ with the same boundary label as $\Delta^*$.  But no crown contains a disk, so that $s_1(\Gamma')=s_1(\Gamma)<s_1(\Delta)=s_1(\Delta^*)$, contradicting the minimality of the stem.

\end{proof}

\subsection{Counterexample diagrams} \label{sec-counterexamples} \

The arguments that span the rest of this section pertain to diagrams of different `type', and so require completely separate setups.  However, there is somewhat of a common framework: Our goal is to verify some property for all diagrams of the given type; to do so, we define a parameter $n(\Gamma)$ for all diagrams $\Gamma$ of the given type (the definition of which varies by case) and fix a potential `minimal counterexample' diagram $\Delta$ of this type which both violates the desired property and has minimal $n(\Delta)$ for all such counterexamples (note that this latter assumption is not necessary in the annular case, but is convenient for uniformity).  The arguments spanning the rest of the section are then all in service of showing that $\Delta$ cannot exist, thus proving that all diagrams satisfy the given property.

We now detail the exact setup for each case.

\subsubsection{Circular counterexample diagram} \

For circular diagrams, our general aim is to find an upper bound on the $G$-area of weakly minimal diagrams in terms of their perimeters.  To achieve this, given a weakly minimal diagram $\Gamma$ define $$n(\Gamma)=|\partial\Gamma|+\sigma_\lambda(\Gamma^*)$$ 
Our goal is to then show that $\text{Area}_G(\Gamma)\leq N_4\phi(N_4n(\Gamma))+N_3\mu(\Gamma)g(N_3n(\Gamma))$ for any weakly minimal diagram $\Gamma$.  Hence, as in \Cref{sec-diskless-upper-bound} the study of a `minimal counterexample' amounts to fixing a weakly minimal diagram $\Delta$ such that
$$\text{Area}_G(\Delta)>N_4\phi(N_4n)+N_3\mu(\Delta)g(N_3n)$$
for $n=n(\Delta)$, while the desired inequality holds for any weakly minimal diagram $\Gamma$ with $n(\Gamma)<n$.

It is crucial to note that \Cref{weakly-minimal-1-signature} and \Cref{rmk-scopes} combine to imply $\Delta$ contains a scope $\Psi$ of width $\ell\geq L-3$.

\subsubsection{Annular counterexample diagram} \

For annular diagrams, our general aim is to rule out the existence of scopes of very large width in a generic class of minimal annular diagrams.

To begin, for a minimal annular diagram $\Gamma$ define the parameter
$$n(\Gamma)=|\partial\Gamma|+\rho_\lambda(\Gamma)$$
where, naturally, $|\partial\Gamma|$ is the sum of the lengths of its two boundary components and $\rho_\lambda(\Gamma)=0$ if $s_1(\Gamma)=0$.  The minimal annular diagram $\Gamma$ is then called \textit{$n$-minimal} if the value of $n(\Gamma)$ is minimal among all minimal annular diagrams realizing the same conjugacy relation in $G(\textbf{M}_\textbf{S})$.

It is important to note that for any pair of conjugate elements $g$ and $h$ of $G(\textbf{M}_\textbf{S})$ and any pair of words $u,v$ over $\pazocal{X}$ which represent $g,h$, respectively, van Kampen's lemma provides a reduced annular diagram over the disk presentation of $G(\textbf{M}_\textbf{S})$ with boundary labels $u$ and $v$.  Hence, there exists a minimal annular diagram for every pair $u,v$, and so there exists an $n$-minimal diagram for every pair $g,h$.

However, note that the condition of $n$-minimality depends on the pair of elements of $G(\textbf{M}_\textbf{S})$ and not the particular words over $\pazocal{X}$.  In particular, a minimal annular diagram $\Gamma$ need not be $n$-minimal even if $n(\Gamma)$ is minimal among all such diagrams with the same boundary labels, as there may be a minimal annular diagram $\Gamma'$ with $n(\Gamma')<n(\Gamma)$ whose boundary labels are different words that represent the same elements of $G(\textbf{M}_\textbf{S})$.

Our goal is then to show that any scope in an $n$-minimal diagram has width $<L-k$.  Thus, in this context the study of a `minimal counterexample' amounts to fixing an $n$-minimal (annular) diagram $\Delta$ containing a scope $\Psi$ of width $\ell\geq L-k$ on one of its components.

\medskip


\subsection{Combs and $\theta$-bands in counterexample diagrams} \

Through the rest of this section, we assume $\Delta$ is a counterexample diagram and deduce facts about its makeup until finally determining that it cannot exist in the first place.  Whether $\Delta$ is a circular diagram (and so a weakly minimal diagram) or an annular diagram (and so an $n$-minimal diagram), we may assume $\Delta$ contains a scope $\Psi$ of width $\ell=L-k$.

Many of the arguments of these sections follow the same general outline of Section 7 of \cite{OS20}.  Of course, there are several major deviations:

\begin{itemize}

\item There is no analogue of the argument for annular diagrams in \cite{OS20}.

\item The desired bounds in this setting are given by more general functions than those in \cite{OS20} (we can interpret the arguments of \cite{OS20} as applying to the specific choices $\phi(x)=x^2$ and $g(x)=1$).  In this way, the arguments of this section are closer to those of Section 11 of \cite{WEmb} or Section 9 of \cite{O18}.

\item The analogous argument in \cite{OS20} relies on the existence of a scope of width $L-3$, whereas for uniformity in the two argument paths in this setting we assume the weaker condition that $\Psi$ has width $\ell=L-k$.

\end{itemize}

Thus, the proofs generally proceed in two steps: 

\begin{enumerate}[label=(\arabic*)]

\item An argument for why the statement holds if $\Delta$ is an annular diagram.

\item How to adapt the numerical arguments of \cite{OS20} to show that the statement holds if $\Delta$ is a circular diagram.

\end{enumerate}

We begin with a basic example that demonstrates this method, which uses \Cref{minBoundaryLengths-theta} to deduce an important property of the rim $\theta$-bands in $\Delta$.

\begin{lemma}[Lemma 7.19 of \cite{OS20}] \label{counterexample-long-theta}

The base of any rim $\theta$-band has length greater than $K$.

\end{lemma}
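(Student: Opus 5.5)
We argue by contradiction, treating the annular and circular counterexample diagrams separately, as the paper does for every statement in this section. Suppose $\Delta$ has a rim $\theta$-band $\pazocal{T}$ whose base has length $s\leq K$, and let $\textbf{p}$ be the boundary component containing both of its ends. In either case we apply \Cref{minBoundaryLengths-theta} to obtain a diagram $\Delta'$ with a boundary component $\textbf{p}'$ satisfying $|\textbf{p}|-|\textbf{p}'|\geq1$, with the same element represented and the other boundary labels unchanged. The rest of the argument checks that $\Delta'$ contradicts the choice of $\Delta$.

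\textbf{Annular case.} Here $\Delta$ is $n$-minimal and has a scope $\Psi$ of width $\ell\geq L-k$. First pass from $\Delta'$ to a minimal annular diagram $\Delta''$ with the same boundary labels. The labels realize the same conjugacy relation in $G(\textbf{M}_\textbf{S})$, so $n$-minimality gives $n(\Delta)\leq n(\Delta'')$. We then need $n(\Delta'')<n(\Delta)$. Since $|\partial\Delta''|\leq|\partial\Delta|-1$, it suffices to show $\rho_\lambda(\Delta'')\leq\rho_\lambda(\Delta)$.

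The natural choice of $\Delta''$ is the diagram obtained by simply deleting $\pazocal{T}$. This is possible because $\pazocal{T}$ is a rim band with no disks between it and $\textbf{p}$. Deleting it does not increase $s_1$ or $s_2$. Any $\lambda$-spear of $\Delta''$ is then a subband of a $t$-spoke of $\Delta$ crossing the same non-annular $\theta$-bands, possibly truncated. This gives monotonicity of $\rho_\lambda$.

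I expect the main obstacle to be ensuring that the minimal replacement does not raise $\rho_\lambda$. If deleting $\pazocal{T}$ leaves a diagram that is not minimal, I would instead argue that $\Delta'$ already has signature at most $s(\Delta)$. Condition (iv) of spears depends only on prefixes of histories, and removing the last cell of a spoke only shortens spears. So $\rho_\lambda$ cannot increase.

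\textbf{Circular case.} This follows \cite{OS20} and \Cref{short theta-bands}. By \Cref{weakly-minimal}(a), the diagram $\Delta'$ obtained by cutting off $\pazocal{T}$ is weakly minimal with $\Delta'^*\subset\Delta^*$ and $\sigma_\lambda(\Delta'^*)\leq\sigma_\lambda(\Delta^*)$. Hence $n(\Delta')\leq n-1$, and the inductive hypothesis together with \Cref{phi properties}(2) gives
$$\text{Area}_G(\Delta')\leq N_4\phi(N_4n)-N_4^3ng(N_4n)+N_3\mu(\Delta)g(N_3n).$$
Here we also use that $\mu$ does not increase under removal of white beads, by \Cref{mixtures}(2). \Cref{G-area subdiagrams} gives $\text{Area}_G(\Delta)\leq\text{Area}_G(\Delta')+|\pazocal{T}|$. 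Finally, \Cref{lengths}(d) bounds $|\pazocal{T}|\leq 3s+\delta^{-1}n\leq 2\delta^{-1}n$. Since $g\geq1$, the choice $N_4>>\delta^{-1}$ yields $\text{Area}_G(\Delta)\leq N_4\phi(N_4n)+N_3\mu(\Delta)g(N_3n)$, contradicting the choice of $\Delta$.
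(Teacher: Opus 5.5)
Your circular case is the paper's argument: \Cref{weakly-minimal}(a), then the inductive hypothesis with \Cref{phi properties}(2), then \Cref{G-area subdiagrams} and \Cref{lengths}. In the annular case, your check that a $\lambda$-spear of $\Pi$ after deleting $\pazocal{T}$ is already a $\lambda$-spear in $\Delta$ is also what the paper does. The gap is exactly at the point you flag. $n$-minimality of $\Delta$ only compares $\Delta$ with \emph{minimal} annular diagrams, so you must show that the diagram $\Delta'$ obtained by deleting $\pazocal{T}$ is itself minimal. You never do this.

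The observation that deleting $\pazocal{T}$ does not increase $s_1$ or $s_2$ is irrelevant, and your fallback that $s(\Delta')\leq s(\Delta)$ does not help either. Minimality of $\Delta'$ means $s(\Delta')\leq s(\Gamma)$ for every annular $\Gamma$ with the boundary labels of $\Delta'$, and these labels differ from those of $\Delta$. Your first suggestion, replacing $\Delta'$ by an arbitrary minimal $\Delta''$ with the same boundary labels, loses all control of $\rho_\lambda(\Delta'')$. Then $n(\Delta'')<n(\Delta)$ is unjustified.

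The missing step is a one-line argument, which the paper uses. Suppose some $\Gamma$ with the boundary labels of $\Delta'$ had $s(\Gamma)<s(\Delta')$. Glue a copy of $\pazocal{T}$ back onto the corresponding boundary subpath of $\Gamma$ and reduce. Since $\pazocal{T}$ contains no disks, the result has the boundary labels of $\Delta$ and signature at most $s(\Gamma)+(0,m)<s(\Delta')+(0,m)=s(\Delta)$, where $m$ is the number of $(\theta,t)$-cells of $\pazocal{T}$. This contradicts the minimality of $\Delta$. Once $\Delta'$ is known to be minimal, your spear comparison gives $\rho_\lambda(\Delta')\leq\rho_\lambda(\Delta)$, and \Cref{minBoundaryLengths-theta} gives $|\partial\Delta'|<|\partial\Delta|$. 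So $n(\Delta')<n(\Delta)$, contradicting $n$-minimality as intended.
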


\begin{proof}

Suppose $\Delta$ has a rim $\theta$-band $\pazocal{T}$ whose base has length $\leq K$.  

(1) Using $0$-refinement, we may remove $\pazocal{T}$ from $\Delta$ to obtain a reduced annular diagram $\Delta'$ over the disk presentation of $G(\textbf{M}_\textbf{S})$.  Note that since $\Delta$ is minimal, $\Delta'$ must also be, as otherwise we can add a copy of $\pazocal{T}$ to contradict the minimality of $\Delta$.  \Cref{minBoundaryLengths-theta} then implies that the boundary labels of $\Delta'$ represent the same elements of $G(\textbf{M}_\textbf{S})$ as those of $\Delta$ and that $|\partial\Delta'|<|\partial\Delta|$.  

Now, for any disk $\Pi$ in $\Delta$, removing $\pazocal{T}$ removes a non-annular maximal $\theta$-band which crosses any $t$-spoke of $\Pi$ at most once.  So, since $\Pi$ is also contained in $\Delta'$, a $\lambda$-spear of $\Pi$ in $\Delta'$ is also a $\lambda$-spear of $\Pi$ in $\Delta$.  Hence, $\rho_\lambda(\Delta')\leq\rho_\lambda(\Delta)$.


But then $n(\Delta')<n(\Delta)$, contradicting the $n$-minimality of $\Delta$.

(2) The proof follows in much the same way as that of \Cref{short theta-bands}: 

Lemmas \ref{minBoundaryLengths-theta} and \ref{weakly-minimal}(a) imply the reduced diagram $\Delta'$ obtained from $\Delta$ by removing $\pazocal{T}$ is weakly minimal with $n(\Delta')\leq n(\Delta)-1$.  Applying the inductive hypothesis, we then arrive at the contradiction $\text{wt}_G(\Delta)\leq N_4\phi(N_4n)+N_3\mu(\Delta)g(N_3n)$ by using \Cref{G-area subdiagrams}, \Cref{lengths}, and the parameter assignment $N_4>>\delta^{-1}$.

\end{proof}

A more advanced example is next, based on a reworking of the proofs of \Cref{sec-diskless-upper-bound}.

\begin{lemma}[Compare with Lemma 7.21 of \cite{OS20}] \label{minimal-subcombs}

Any subcomb of $\Delta$ contained in the scope $\Psi$ has basic width at most $K_0$.

\end{lemma}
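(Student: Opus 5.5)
Suppose, toward a contradiction, that $\Psi$ contains a subcomb $\Gamma$ of basic width greater than $K_0$. Then \Cref{tight subcomb existence}(2) applies to $\Gamma$, viewed as a reduced circular diagram. Its hypothesis that every maximal $\theta$-band has base of length at least $K$ comes from \Cref{counterexample-long-theta}, since every maximal $\theta$-band of $\Psi$ ends on the boundary component $\textbf{C}$ or on the spokes. So we obtain a tight subcomb $\Gamma_0\subseteq\Gamma$. As $\Psi$ contains no disks, $\Gamma_0$ is a diagram over $M(\textbf{M}_\textbf{S})$. I will then reproduce the surgery of \Cref{circular diskless}: decompose $\Gamma_0$ into $\Gamma_1,\Gamma_2,\Gamma_3,\Gamma_4$, where $\Gamma_2$ is a trapezium with revolving base, and glue $\Gamma_1$ onto $\Delta\setminus\Gamma_0$ along $\pazocal{Q}_2$. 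The result is a diagram $\Delta_0$ with the same disks and the same boundary element, and by \eqref{eqn-diskless-perimeter} we have $|\partial\Delta|-|\partial\Delta_0|\geq 2$. Before doing this I need the analogue of \Cref{lem-bigsubcomb} for $\Gamma_0$ (handle length $\ell'>\ell/2$). I would prove it first, in the same two-case format, by cutting off a minimal short derivative subcomb.

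In the annular case (1), the contradiction is with $n$-minimality, so no estimates are needed. Cutting off $\Gamma_0$ and regluing only changes cells inside the scope, so $\Delta_0$ is still a minimal annular diagram: its disks are unchanged, and the surgery does not increase the number of $(\theta,t)$-cells. I would check the $(\theta,t)$ count through the revolving-base identification, since $\Gamma_2$ has the same number of $t$-bands on each side. Its boundary labels represent the same conjugacy classes. It remains to check $\rho_\lambda(\Delta_0)\leq\rho_\lambda(\Delta)$. Any $t$-spoke of a disk that passes through the surgery region is shortened, and its $\theta$-bands form a subset of the old ones. So conditions (i)--(iii) in the definition of $\lambda$-spear are inherited, and a maximal $\lambda$-spear in $\Delta_0$ corresponds to a subband of a $\lambda$-spear in $\Delta$; condition (iv) needs care, because it is stated for prefixes. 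Then $n(\Delta_0)<n(\Delta)$, a contradiction. I expect this inheritance of condition (iv) to be the delicate point. If it fails, I would instead argue with $\sigma$-style control: only prefixes of the history survive the surgery, and the prefix condition in (iv) is exactly what makes truncation harmless.

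In the circular case (2), I will repeat the numerical argument of \Cref{circular diskless} with $N_3,N_4$ in place of $N_1,N_2$ and with $n=|\partial\Delta|+\sigma_\lambda(\Delta^*)$. By \Cref{weakly-minimal}(a),(d), $\Delta_0$ is weakly minimal with $\sigma_\lambda(\Delta_0^*)\leq\sigma_\lambda(\Delta^*)$, so $n-n(\Delta_0)\geq|\partial\Delta|-|\partial\Delta_0|$. The mixture drops by at least $\ell'(\ell-\ell')$. The $G$-areas $A_2,A_3,A_4$ are bounded by \Cref{G-area revolving trapezia} and \Cref{comb area}. Inequalities \eqref{eqn-diskless1}--\eqref{eqn-diskless5} then go through verbatim in the two cases $\a_3\leq 2J(\ell-\ell')$ and $\a_3>2J(\ell-\ell')$, using \Cref{phi properties} and $N_4>>N_3>>C_3$. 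The main obstacle overall is the annular spear-monotonicity check; the circular case is routine bookkeeping.
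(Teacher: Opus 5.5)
Your circular half is essentially the paper's argument: rerun \Cref{counterexample combs}, \Cref{lem-bigsubcomb} and \Cref{circular diskless} with $N_3,N_4$, and use \Cref{weakly-minimal}(d) for the glued diagram. One correction there: \Cref{tight subcomb existence} should be applied to $\Delta$ itself, where \Cref{counterexample-long-theta} supplies the hypothesis on rim $\theta$-bands. Applied to the comb $\Gamma$ alone, that hypothesis is unjustified, since a rim band of $\Gamma$ continues through the handle in $\Delta$ and need not be a rim band of $\Delta$.

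The genuine gap is in the annular half. There $n$-minimality only compares $\Delta$ with \emph{minimal} annular diagrams, so you need $\Delta_0$ to be minimal. Your justification (same disks, no more $(\theta,t)$-cells than $\Delta$) compares $\Delta_0$ with $\Delta$, which has different boundary labels. Minimality of $\Delta_0$ is a statement about all diagrams carrying the labels of $\Delta_0$. Suppose some $\Gamma'$ with those labels had the same disks and fewer $(\theta,t)$-cells. To turn it back into a diagram with the labels of $\Delta$, you must glue on a diskless diagram realizing the change of boundary path, built from the removed pieces together with a mirror copy of $\Gamma_1$. The $(\theta,t)$-cells of that mirror copy (which in general contains $t$-bands) need not be outweighed by the saving, so no contradiction with the minimality of $\Delta$ follows. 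This is exactly the warning in \Cref{rmk-annular-combs}: the tight-comb surgery combines two pieces and need not produce a minimal diagram. That is why the circular case works with weakly minimal diagrams, and the paper has no annular counterpart of that notion. Passing instead to a minimal diagram with the labels of $\Delta_0$ loses all control of $\rho_\lambda$.

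The missing idea is that the annular case needs no regluing at all. Cut $\Delta$ along the bottom of the handle of the subcomb and discard the comb; if you want the hypothesis of \Cref{minBoundaryLengths} verbatim, take an innermost subcomb. The result $\Delta'$ is a subdiagram of the minimal diagram $\Delta$, hence minimal. By \Cref{minBoundaryLengths} it realizes the same conjugacy relation with $|\partial\Delta'|<|\partial\Delta|$. No spoke enters a subcomb, because all its $q$-bands have both ends on $\partial\Delta$, so every disk keeps its spokes and their histories. The only change to $\theta$-bands is the loss of tails that entered the comb through the handle, and those bands still end on the boundary. Hence annularity and crossing patterns with spokes are unchanged, $\rho_\lambda(\Delta')\le\rho_\lambda(\Delta)$, and $n(\Delta')<n(\Delta)$ contradicts $n$-minimality. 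This rules out every subcomb in $\Psi$, not just wide ones. In particular, the preliminary step you planned (cutting off a short derivative subcomb) already finishes the annular case. Your worry about condition (iv) of $\lambda$-spears is moot, since spoke histories are never truncated.
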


\begin{proof}

(1) Cutting along the bottom of the handle of any subcomb (and using $0$-refinement) produces a reduced annular diagram $\Delta'$ over the disk presentation of $G(\textbf{M}_\textbf{S})$.  Again, $\Delta'$ must be minimal.  Further, by \Cref{minBoundaryLengths} we again have that the boundary labels of $\Delta'$ represent the same elements of $G(\textbf{M}_\textbf{S})$ as those of $\Delta$, while $|\partial\Delta'|<|\partial\Delta|$.  

As in the proof of \Cref{counterexample-long-theta}, any disk $\Pi$ in $\Delta$ is also contained in $\Delta'$.  This time, though, the $t$-spokes of $\Pi$ in $\Delta'$ have the same length as those of $\Pi$ in $\Delta$.  Moreover, given a maximal $\theta$-band $\pazocal{T}$ of $\Delta$ which crosses a $t$-spoke $\pazocal{Q}$ of $\Pi$, the corresponding maximal $\theta$-band $\pazocal{T}'$ of $\Delta'$ is either unaffected by the surgery or obtained by removing any subbands which enter the subcomb through its handle.  As such, $\pazocal{T}$ is annular if and only if $\pazocal{T}'$ is, and these bands cross $\pazocal{Q}$ the same number of times and in the same way.  Hence, a $\lambda$-spear of $\Pi$ in $\Delta'$ is also a $\lambda$-spear of $\Pi$ in $\Delta$, so that $\rho_\lambda(\Delta')\leq\rho_\lambda(\Delta)$.

But as in the proof of \Cref{counterexample-long-theta} this contradicts the $n$-minimality of $\Delta$.

(2) For this case, we repeat the proofs of \Cref{sec-diskless-upper-bound}: 

\Cref{tight subcomb existence} implies the existence of a tight subcomb of $\Delta$; analogues of \Cref{counterexample combs} and \Cref{lem-bigsubcomb} can then be proved for this subcomb, replacing the parameters $N_1,N_2$ with $N_3,N_4$, respectively; then, applying \Cref{counterexample-long-theta} in place of \Cref{short theta-bands}, we prove the analogue of \Cref{circular diskless} by using \Cref{weakly-minimal}(d).

\end{proof}

\begin{remark} \label{rmk-annular-combs}

\Cref{minimal-subcombs}(2) illustrates the importance of considering weakly minimal diagrams: 

The surgery necessary for the proof of \Cref{circular diskless} combines two subdiagrams of the counterexample diagram.  If we were to consider minimal diagrams in this section, then the minimality of our counterexample diagram would imply the two subdiagrams are also minimal, but there is no reason to assume that their combination should result in a minimal diagram.  

Instead, \Cref{weakly-minimal}(d) fills this role, implying the combination of weakly minimal diagrams for this purpose results in another weakly minimal diagram.

Further, the proof of \Cref{minimal-subcombs}(1) can obviously be applied to a stronger statement: In the annular case, the existence of any subcomb of $\Delta$ being contained in the scope $\Psi$ can be ruled out.

\end{remark}

\medskip


\subsection{The makeup of the scope} \

Let $\Pi$ be the disk and $\pazocal{Q}_1,\dots,\pazocal{Q}_\ell$ be the consecutive $t$-spokes of $\Pi$ defining the scope $\Psi$. For any $1\leq i<j\leq\ell$, define the scope $\Psi_{ij}$ to be the subdiagram of $\Psi$ bounded by $\pazocal{Q}_i$ and $\pazocal{Q}_j$.

For fixed $i,j$, let $\textbf{p}_{ij}$ be the subpath of $\partial\Delta$ shared with $\partial\Psi_{ij}$.  Letting $\textbf{u}_{ij}$ be the maximal subpath of $\partial\Pi$ which is not shared with $\partial\Psi_{ij}$, define the path $\bar{\textbf{p}}_{ij}$ homotopic to $\textbf{p}_{ij}$ consisting of (the inverse of) a side of $\pazocal{Q}_i$, a side of $\pazocal{Q}_j$, and $\textbf{u}_{ij}^{-1}$.  Note then that cutting along $\bar{\textbf{p}}_{ij}$ separates $\Delta$ into two connected components: A circular subdiagram $\bar{\Delta}_{ij}$ consisting of $\Psi_{ij}$ and $\Pi$ and another, $\Psi_{ij}'$, which is of the same `type' as $\Delta$ (see \Cref{fig-clove-paths}).  Note that in either case, $\Psi_{ij}'$ is necessarily minimal.

In the case where $i=1$ and $j=\ell$, it is convenient to drop indices, so that $\textbf{p}$ is the subpath of $\partial\Psi$ shared with $\partial\Delta$ and is homotopic to the path $\bar{\textbf{p}}$, which has a maximal subpath $\textbf{u}^{-1}$ shared with $(\partial\Pi)^{-1}$ and separates $\Delta$ into two connected components: A circular subdiagram $\bar{\Delta}$ and another, $\Psi'$, of the same `type' as $\Delta$.

\begin{figure}
\centering
\includegraphics[scale=0.75]{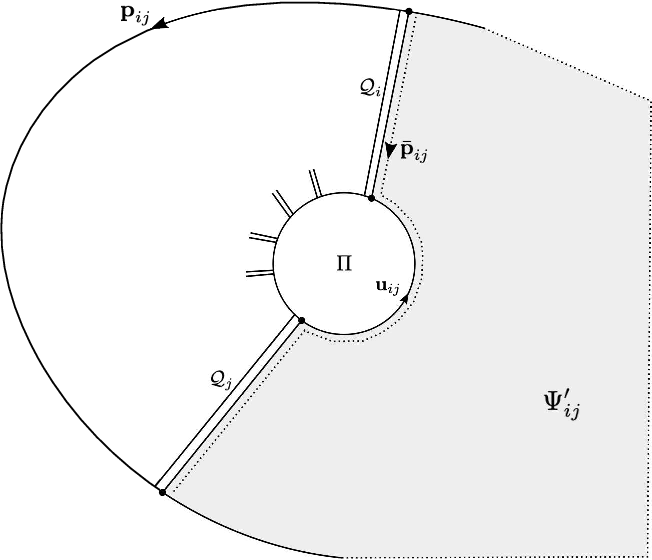}
\caption{Subdiagrams and paths in $\Delta$}
\label{fig-clove-paths}
\end{figure}

The next statement then follows in exactly the same way as its analogue in \cite{OS20}, with the main ingredient of the proof given by \Cref{two theta-bands about disk}.

\begin{lemma}[Lemma 7.22 of \cite{OS20}] \label{scope-theta-bands} \

\begin{enumerate}[label=({\arabic*})]

\item Every maximal $\theta$-band of $\Psi$ crosses either $\pazocal{Q}_1$ or $\pazocal{Q}_\ell$.

\item There exists an $r$ satisfying $L/2-k=\ell-L/2\leq r\leq L/2$ such that the maximal $\theta$-bands of $\Psi$ that cross $\pazocal{Q}_\ell$ do not cross $\pazocal{Q}_r$ and those that cross $\pazocal{Q}_1$ do not cross $\pazocal{Q}_{r+1}$.

\end{enumerate}

\end{lemma}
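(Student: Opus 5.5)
The plan is to follow the argument of Lemma 7.22 of \cite{OS20}, adapted to the width $\ell=L-k$. The two tools are the absence of $\theta$-annuli and of $(\theta,q)$-annuli inside the disk-free subdiagram $\Psi$, together with \Cref{two theta-bands about disk}.

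For (1), take a maximal $\theta$-band $\pazocal{T}$ of $\Psi$ and follow it in both directions.
\begin{itemize}
\item Since $\Psi$ is a circular diagram with no disks, it is a reduced diagram over $M(\textbf{M}_\textbf{S})$. So \Cref{M(S) annuli} forbids $\theta$-annuli, and $\pazocal{T}$ has two ends on $\partial\Psi$.
\item The boundary of $\Psi$ consists of four pieces: a side of $\pazocal{Q}_1$, the subpath $\textbf{p}$ of $\textbf{C}$, a side of $\pazocal{Q}_\ell$, and the subpath $\textbf{u}$ of $\partial\Pi$. Since $\pazocal{Q}_1,\pazocal{Q}_\ell$ are $q$-bands, an end of $\pazocal{T}$ on $\partial\Psi$ that is not a crossing with $\pazocal{Q}_1$ or $\pazocal{Q}_\ell$ lies on $\textbf{p}$ or on $\textbf{u}$.
\item The path $\textbf{u}$ is a subpath of the disk's boundary, and a disk relator is a configuration with no $\theta$-letters. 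Hence $\textbf{u}$ carries no $\theta$-edges, and $\pazocal{T}$ cannot end there.
\item Suppose $\pazocal{T}$ crosses neither $\pazocal{Q}_1$ nor $\pazocal{Q}_\ell$. Then both its ends lie on $\textbf{p}$. Because $\pazocal{Q}_2,\dots,\pazocal{Q}_{\ell-1}$ run from $\Pi$ to $\textbf{p}$, the band cannot cross any $\pazocal{Q}_i$ twice: together with the subband of $\pazocal{Q}_i$ between the crossings it would bound a $(\theta,q)$-annulus, contradicting \Cref{M(S) annuli}. A crossing with some $\pazocal{Q}_i$ for $2\le i\le\ell-1$ would force $\pazocal{T}$ to cross $\pazocal{Q}_i$ again in order to return to $\textbf{p}$, so $\pazocal{T}$ crosses no $t$-spoke at all.
\item Then $\pazocal{T}$ lies within a single subdiagram $\Psi_{i,i+1}$ and has both ends on $\textbf{p}$. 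I would rule this out using \Cref{counterexample-long-theta} (rim $\theta$-bands have long bases) together with the base restriction on a band lying between two consecutive $t$-spokes, in the spirit of the comb arguments. Alternatively, I would simply check that \cite{OS20} treats this case the same way.
\end{itemize}

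For (2), order the $t$-spokes. Maximal $\theta$-bands are disjoint and each $\pazocal{Q}_i$ is crossed at most once by each band. A band crossing $\pazocal{Q}_1$ therefore crosses an initial segment $\pazocal{Q}_1,\dots,\pazocal{Q}_{a}$, and a band crossing $\pazocal{Q}_\ell$ crosses a final segment $\pazocal{Q}_{b},\dots,\pazocal{Q}_\ell$.
\begin{itemize}
\item Planarity inside $\Psi$ shows that bands from the $\pazocal{Q}_1$ side and bands from the $\pazocal{Q}_\ell$ side cannot interleave. So there is an $r$ with every $\pazocal{Q}_1$-band stopping before $\pazocal{Q}_{r+1}$ and every $\pazocal{Q}_\ell$-band starting after $\pazocal{Q}_r$.
\item To bound $r$, take the $\theta$-band closest to $\Pi$ on each side. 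Transposing disks via \Cref{quasi-trapezia}, these become bands with a side on $\partial\Pi$.
\item Consider the first band crossing $\pazocal{Q}_1,\dots,\pazocal{Q}_r$ and the first band crossing $\pazocal{Q}_{r+1},\dots,\pazocal{Q}_\ell$. Both are adjacent to $\partial\Pi$ and must correspond to the same letter of $\Theta(\textbf{M}_\textbf{S})$ with appropriate signs, since $\partial\Pi$ is a single configuration and each rule acts in parallel. By \Cref{two theta-bands about disk}, the widths of the two bands satisfy $r'+\ell''\le L/2$.
\item Choosing $r$ as the reach of the $\pazocal{Q}_1$-bands (or symmetrically) gives $r\le L/2$ and $\ell-r\le L/2$. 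This is exactly $L/2-k\le r\le L/2$, using $\ell-L/2=L/2-k$.
\end{itemize}

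The main obstacle is applying \Cref{two theta-bands about disk} correctly. It must be justified that the two extremal bands really have sides on $\partial\Pi$, which may require an iterated transposition argument. It must also be justified that they correspond to the same letter. For the latter I would use the fact that consecutive $\theta$-edges read along $\textbf{u}$ would otherwise be distinct rules, each applicable to the same disk label, forcing a cancellable pair. This mirrors \cite{OS20} and I expect it to go through verbatim.
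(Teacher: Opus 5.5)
\textbf{Part (2): the ``same letter'' step is false.} You claim that the band nearest $\Pi$ crossing $\pazocal{Q}_1$ and the band nearest $\Pi$ crossing $\pazocal{Q}_\ell$ correspond to the same letter of $\Theta(\textbf{M}_\textbf{S})$ because $\partial\Pi$ is a single configuration. Nothing forces this, since a disk label is admissible for many rules. The paper uses \Cref{two theta-bands about disk} in exactly the opposite direction: it concludes that such extremal bands carry \emph{different} letters once together they cross more than $L/2$ spokes (see \Cref{first rules different}, \Cref{n-minimal transposition}(b), \Cref{n-minimal spiral first rule}(c)).

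The joint bound $r'+\ell''\le L/2$ you derive would force some spoke of the scope to be uncrossed, which nothing guarantees: if every spoke is crossed then $r'+\ell''=\ell=L-k>L/2$. Your fallback via consecutive $\theta$-edges on $\textbf{u}$ is void, since $\textbf{u}$ carries no $\theta$-edges.

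What is needed are two \emph{separate} bounds.
Let $\pazocal{T}$ be the band through the cell of $\pazocal{Q}_1$ adjacent to $\Pi$, and say it crosses $\pazocal{Q}_1,\dots,\pazocal{Q}_a$. No transposition is involved, since $\Psi$ has no disks. Any cell between $\pazocal{T}$ and $\Pi$ would lie on a $\theta$-band with nowhere to end, so $\pazocal{T}$ has a side on $\partial\Pi$. Apply \Cref{two theta-bands about disk} to two disjoint subbands of $\pazocal{T}$; these trivially carry the same letter, so $a\le L/2$. Symmetrically, the band nearest $\Pi$ crossing $\pazocal{Q}_\ell$ crosses $\pazocal{Q}_b,\dots,\pazocal{Q}_\ell$ with $\ell-b+1\le L/2$.

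Bands farther from $\Pi$ reach fewer spokes. Hence no band crosses both $\pazocal{Q}_1$ and $\pazocal{Q}_\ell$, for otherwise $\pazocal{T}$ itself would cross all $\ell>L/2$ spokes. It follows that $a<b$, and $r=\max(a,\ell-L/2)$ satisfies the statement. Your choice $r=a$ does not give $\ell-r\le L/2$ when $a$ is small; the lower bound on $r$ has to come from the $\pazocal{Q}_\ell$-side.

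\textbf{Part (1): a wrong step and an unproved base bound.} A $\theta$-band with both ends on $\textbf{p}$ can cross an interior spoke $\pazocal{Q}_i$ exactly once, because $\textbf{p}$ lies on both sides of $\pazocal{Q}_i$. So you cannot confine it to one $\Psi_{i,i+1}$, and the base restriction you planned to invoke is not established. In the circular case it would not suffice anyway, since $q$-bands with both ends on $\textbf{p}$ may occur.

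The correct route is as follows. Take an innermost $\theta$-band of $\Psi$ with both ends on $\textbf{p}$. Every cell of the diskless $\Psi$ lies on a $\theta$-band, so this band is a rim band, and its base has length $>K$ by \Cref{counterexample-long-theta}.
\begin{itemize}
\item In the annular case no subcomb lies in $\Psi$. So every maximal $q$-band of $\Psi$ joins $\textbf{p}$ to $\partial\Pi$ and is crossed at most once, and every base in $\Psi$ has length at most $LN\le K$. This is a contradiction, and it is exactly the argument written out in \Cref{n-minimal theta-bands}.
\item In the circular case, the $q$-bands with both ends on $\textbf{p}$ crossed by this rim band must be controlled through the subcomb bound of \Cref{minimal-subcombs}, as in \cite{OS20}.
\end{itemize}
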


Let $H_i$ be the history of $\pazocal{Q}_i$ read starting at $\Pi$ and $h_i=\|H_i\|$.  Then \Cref{scope-theta-bands} implies $H_{i+1}$ is a prefix of $H_i$ for all $1\leq i\leq r-1$, while $H_i$ is a prefix of $H_{i+1}$ for $r+1\leq i\leq \ell-1$.  Hence,
$$h_1\geq h_2\geq\dots \geq h_r; \ \ h_{r+1}\leq\dots\leq h_\ell$$

Further, note that \Cref{scope-theta-bands} immediately implies $|\textbf{p}|_\theta$ is the sum of the lengths of $\pazocal{Q}_1$ and $\pazocal{Q}_\ell$.  

\begin{lemma}[Lemma 7.23 of \cite{OS20}] \label{scope-short-i,i+1}

For $1\leq i\leq \ell-1$, $|\textbf{p}_{i,i+1}|_q<3K_0$.

\end{lemma}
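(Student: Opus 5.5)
The approach is to control the $q$-edges of $\textbf{p}_{i,i+1}$ through the maximal $q$-bands of the subscope $\Psi_{i,i+1}$, following the proof of Lemma 7.23 of \cite{OS20}. The subdiagram $\Psi_{i,i+1}$ is bounded by $\pazocal{Q}_i$, $\pazocal{Q}_{i+1}$, the subpath of $\partial\Pi$ between their ends, and $\textbf{p}_{i,i+1}$. It contains no disks, so it is a reduced circular diagram over $M(\textbf{M}_\textbf{S})$, and \Cref{M(S) annuli} applies. Each $q$-edge of $\textbf{p}_{i,i+1}$ starts a maximal $q$-band of $\Psi_{i,i+1}$, and this band ends in one of two places:
\begin{itemize}
\item on the subpath of $\partial\Pi$ between $\pazocal{Q}_i$ and $\pazocal{Q}_{i+1}$, which carries at most $N+1$ $q$-edges (one copy of $B_{std}$ between consecutive $t$-letters);
\item on $\textbf{p}_{i,i+1}$ itself.
\end{itemize}
It therefore suffices to bound the number of maximal $q$-bands with both ends on $\textbf{p}_{i,i+1}$ by roughly $K_0$.

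Take the $q$-bands with both ends on $\textbf{p}_{i,i+1}$ and choose one, $\pazocal{C}$, that is outermost. It is a cutting band of $\Delta$, and the region it cuts off is disk-free, so it is a subcomb (or contains one). By \Cref{minimal-subcombs}, any subcomb contained in $\Psi$ has basic width at most $K_0$.

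In the annular case the situation is simpler. By \Cref{rmk-annular-combs}, $\Psi$ contains no subcomb at all, so every such region must be $\theta$-free. A $q$-band with no $\theta$-cells is a single $q$-edge occurring as a cancellable pair, which is excluded by reducedness. Hence no $q$-band of $\Psi_{i,i+1}$ returns to $\textbf{p}_{i,i+1}$, giving $|\textbf{p}_{i,i+1}|_q\le N+1<3K_0$.

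In the circular case, each outermost cutting band together with the region it encloses is a subcomb of width at most $K_0$. \Cref{counterexample combs}(2), whose analogue holds in the counterexample setting as explained in the proof of \Cref{minimal-subcombs}(2), gives $|\partial\Gamma|_q=2s\le 2K_0$ for such a subcomb. Several disjoint subcombs separated by short stretches of boundary are ruled out by \Cref{counterexample combs}(1). That lemma forbids two disjoint subcombs of width at most $K$ whose handles are joined by a boundary path with at most $N$ $q$-edges. In $\textbf{p}_{i,i+1}$, any two subcombs are separated only by edges of bands going to $\Pi$, and there are at most $N+1$ of those. This forces at most one subcomb, or two subcombs separated by $\Pi$-bands, in each small stretch.

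The main obstacle is this separation bookkeeping: we must show that between two subcombs there are at most $N$ $q$-edges so that \Cref{counterexample combs}(1) applies. If the bands to $\Pi$ number $N+1$, we first split the stretch and argue by pigeonhole on the $K$ versus $N$ parameter gap. The totals then combine to
\[
|\textbf{p}_{i,i+1}|_q\le (N+1)+2K_0<3K_0,
\]
using $K_0\gg N$.
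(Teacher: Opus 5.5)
Your architecture is the right one, and it is the one the paper uses: $q$-bands starting on $\textbf{p}_{i,i+1}$ either reach $\partial\Pi$, contributing at most $N+1$ $q$-edges, or return to $\textbf{p}_{i,i+1}$ and bound subcombs, which are controlled by \Cref{minimal-subcombs} and \Cref{counterexample combs}. However, the step that makes the returning bands usable is not justified.

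\textbf{Why the cut-off region is a comb.} You say the region cut off by an outermost returning band $\pazocal{C}$ ``is disk-free, so it is a subcomb (or contains one)''. Disk-freeness does not give this. A comb requires every maximal $\theta$-band of the region to end on the handle, and a priori a $\theta$-band inside the region could have both ends on $\textbf{p}_{i,i+1}$.

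The fallback ``contains one'' does not rescue the count. \Cref{counterexample combs}(2) would then bound only the $q$-edges of that smaller subcomb, not the $q$-edges of the whole region cut off by $\pazocal{C}$.

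The missing ingredient is \Cref{scope-theta-bands}(1). Every maximal $\theta$-band of $\Psi$ crosses $\pazocal{Q}_1$ or $\pazocal{Q}_\ell$. So any $\theta$-band meeting the region must reach $\pazocal{Q}_i$ or $\pazocal{Q}_{i+1}$, which lie on the other side of $\pazocal{C}$; it cannot end on $\partial\Pi$, which has no $\theta$-edges. Hence it crosses $\pazocal{C}$, and only once by \Cref{M(S) annuli} applied to the disk-free diagram $\Psi_{i,i+1}$. Therefore the region is a genuine subcomb.

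Your annular case has the same hole. The inference ``$\Psi$ contains no subcomb, so every such region is $\theta$-free'' needs this comb structure. Alternatively, take an innermost returning band and apply \Cref{minBoundaryLengths} directly to contradict $n$-minimality. Note also that a returning band with no cells is excluded by $n$-minimality, not by reducedness of the diagram.

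\textbf{The counting.} Your final bound $(N+1)+2K_0$ requires at most one outermost subcomb on all of $\textbf{p}_{i,i+1}$. Your argument only gives ``one, or two separated by $\Pi$-bands, in each small stretch'', followed by an unspecified pigeonhole. Two subcombs would already allow $(N+1)+4K_0>3K_0$ $q$-edges, so the bound does not follow.

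The obstacle you flag does not actually arise. Two of the $N+1$ $q$-edges on the $\Pi$ side are the $t$-edges of $\pazocal{Q}_i$ and $\pazocal{Q}_{i+1}$, and these are the first and last edges of $\textbf{p}_{i,i+1}$. So at most $N-1$ bands going to $\Pi$ end strictly inside $\textbf{p}_{i,i+1}$.

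Now take two consecutive outermost subcombs; they have basic width at most $K_0\le K$ by \Cref{minimal-subcombs}. The subpath of $\partial\Delta$ joining their handles carries only ends of bands going to $\Pi$, hence at most $N-1\le N$ $q$-edges. \Cref{counterexample combs}(1) forbids this configuration.

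So there is at most one subcomb, and by \Cref{counterexample combs}(2) it contributes at most $2K_0$ $q$-edges. This gives $|\textbf{p}_{i,i+1}|_q\le N+1+2K_0<3K_0$.
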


\begin{proof}

(1) As noted in \Cref{rmk-annular-combs}, the proof of \Cref{minimal-subcombs} may be adapted in the annular case to show that no subcomb of $\Delta$ is contained in $\Psi$.  As such, every maximal $q$-band of $\Psi$ connects $\textbf{p}$ to $\partial\Pi$.  It then follows immediately from the definition of the disk relations that $|\textbf{p}_{i,i+1}|_q=N+1$, so that the statement follows from the parameter choice $K_0>>N$.

(2) The proof for the circular case follows in just the same way as its analogue in \cite{OS20}, using an adaptation of \Cref{counterexample combs} as discussed in the proof of \Cref{minimal-subcombs}.

\end{proof}

Let $W$ be the disk word corresponding to $\Pi$, {\frenchspacing i.e. so that $\lab(\partial\Pi)\equiv W^{\pm1}$}.  Note that the makeup of the disk words implies $W(i)$ and $W(j)$ are coordinate shifts of one another for all $1\leq i,j\leq L$, and are indeed copies of some accepted configuration $V$ of the $k$-enhanced machine.

The next two statements then follow in just the same way as their analogues in \cite{OS20}.

\begin{lemma}[Lemma 7.24 of \cite{OS20}] \label{scope-path-inequalities} 

Suppose $i\leq r$ and $j\geq r+1$.

\begin{enumerate}[label=({\arabic*})]

\item $|\textbf{p}_{ij}|\geq|\textbf{p}_{ij}|_\theta+|\textbf{p}_{ij}|_q\geq h_i+h_j+N(j-i)+1$

\item $|\bar{\textbf{p}}_{ij}|\leq h_i+h_j+N(L-j+i)+(L-j+i)\delta|V|_a-1$

\end{enumerate}

\end{lemma}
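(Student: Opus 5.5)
The plan is to read off both inequalities from the structure of the scope $\Psi_{ij}$ given by \Cref{scope-theta-bands}, using the length estimates of \Cref{lengths} for paths made of $\theta$-edges, $q$-edges and $a$-edges. Throughout, $i\leq r<r+1\leq j$, so every maximal $\theta$-band of $\Psi_{ij}$ crossing $\pazocal{Q}_i$ or $\pazocal{Q}_j$ has an end on $\textbf{p}_{ij}$.

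For (1), I would first bound $|\textbf{p}_{ij}|$ below by $|\textbf{p}_{ij}|_\theta+|\textbf{p}_{ij}|_q$. This follows from the definition of the modified length: in any decomposition every $\theta$-letter lies in its own syllable of length $1$, and every $q$-letter costs $1$. Next I would count $\theta$-edges. By \Cref{scope-theta-bands}(2), the $h_i$ maximal $\theta$-bands crossing $\pazocal{Q}_i$ do not cross $\pazocal{Q}_j$, and conversely for the $h_j$ bands crossing $\pazocal{Q}_j$. Each of these bands starts on $\partial\Pi$ or crosses a spoke, and it ends on $\textbf{p}_{ij}$; I would check that it cannot end on $\partial\Pi$ by invoking \Cref{two theta-bands about disk} together with the absence of $\theta$-annuli. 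Hence these $h_i+h_j$ bands contribute distinct $\theta$-edges to $\textbf{p}_{ij}$.

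For the $q$-edges, the maximal $q$-bands starting on the segment of $\partial\Pi$ between the ends of $\pazocal{Q}_i$ and $\pazocal{Q}_j$ cannot end on $\partial\Pi$. They also cannot cross the $t$-spokes, so each of them, together with the $t$-spokes $\pazocal{Q}_i,\dots,\pazocal{Q}_j$, ends on $\textbf{p}_{ij}$. That segment of $\partial\Pi$ carries $(j-i)N+1$ $q$-edges, since each component of the disk word has base of length $N$ plus a $t$-letter. This yields $|\textbf{p}_{ij}|_q\geq N(j-i)+1$.

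For (2), I would bound the three pieces of $\bar{\textbf{p}}_{ij}$ separately using \Cref{lengths}(c). The sides of $\pazocal{Q}_i$ and $\pazocal{Q}_j$ have lengths $h_i$ and $h_j$ by \Cref{lengths}(b). The path $\textbf{u}_{ij}$ spans the remaining $L-j+i$ components of the disk word; each is a copy of $V$, so it carries at most $N(L-j+i)$ $q$-letters (adjusting the endpoint counts, which gives the $-1$) and $(L-j+i)|V|_a$ $a$-letters, each of length $\delta$. Summing gives the claim.

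The main obstacle is the careful $\pm1$ bookkeeping of $q$-edges at the spoke endpoints. The substantive step is verifying that no $\theta$-band of $\Psi_{ij}$ returns to $\partial\Pi$ and that the bands counted on the two sides are distinct; I would handle this first via \Cref{scope-theta-bands} and \Cref{G(S) annuli}.
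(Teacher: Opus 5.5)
Your proposal is correct and is essentially the standard argument that the paper defers to (Lemma 7.24 of \cite{OS20}): you force the $h_i+h_j$ distinct $\theta$-bands and the $N(j-i)+1$ spokes onto $\textbf{p}_{ij}$ via \Cref{scope-theta-bands}, and you bound the three pieces of $\bar{\textbf{p}}_{ij}$ with \Cref{lengths}. Two of your justifications should be replaced, though the claims they support are true. No $\theta$-band can end on $\partial\Pi$ simply because a disk relator contains no $\theta$-letters, so \Cref{two theta-bands about disk} plays no role there; and no spoke starting between $\pazocal{Q}_i$ and $\pazocal{Q}_j$ can return to $\partial\Pi$ because every part of the standard base of $\textbf{M}_\textbf{S}$ occurs exactly once in the disk word.
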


\begin{lemma}[Lemma 7.25 of \cite{OS20}] \label{scope-mixture}

If $\Delta$ is circular and $1\leq i<j\leq \ell-1$ with $j-i\geq L/2$, then $$\mu(\Delta)-\mu(\Psi_{ij}')>-2J|\partial\Delta|(h_i+h_j)\geq-2J|\partial\Delta||\textbf{p}_{ij}|$$

\end{lemma}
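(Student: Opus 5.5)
The plan is to compare the necklaces of $\Delta$ and $\Psi_{ij}'$ directly, using \Cref{mixtures}. Since $\Delta$ is circular, $\Psi_{ij}'$ is a circular diagram whose boundary is obtained from $\partial\Delta$ by replacing the subpath $\textbf{p}_{ij}$ with $\bar{\textbf{p}}_{ij}$. I will pass from the necklace $O$ of $\Delta$ to the necklace $O'$ of $\Psi_{ij}'$ in two stages. First, delete all beads on the arc corresponding to $\textbf{p}_{ij}$. Second, insert the beads corresponding to $\bar{\textbf{p}}_{ij}$. The lower bound on $\mu(\Delta)-\mu(\Psi'_{ij})$ then comes from controlling how much the second stage can increase the mixture.

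\textbf{First stage.} By \Cref{mixtures}(2),(3), deleting a white or black bead never increases $\mu_J$. So if $O_0$ denotes the necklace obtained from $O$ by deleting the beads on $\textbf{p}_{ij}$, then $\mu(O_0)\leq\mu(\Delta)$.

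\textbf{Second stage, white beads.} I will count the white beads of $\bar{\textbf{p}}_{ij}$. These are the $\theta$-edges on the sides of $\pazocal{Q}_i$ and $\pazocal{Q}_j$, which number $h_i+h_j$. The subpath $\textbf{u}_{ij}^{-1}$ lies on $\partial\Pi$, and the disk label contains no $\theta$-letters, so it contributes none. Inserting one white bead into a necklace with $x$ white beads adds at most $x$ new pairs to each $P_q$, and hence raises $\mu_J$ by less than $2Jx$; this is \Cref{mixtures}(2) read in reverse. Throughout the process $x\leq|\partial\Delta|_\theta+h_i+h_j$. The point to check is that $x$ stays at most about $|\partial\Delta|$: we have $h_i+h_j\leq|\textbf{p}_{ij}|_\theta$ by \Cref{scope-path-inequalities}(1), and the beads of $\textbf{p}_{ij}$ were already removed. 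Hence at every insertion $x\leq|\partial\Delta|_\theta\leq|\partial\Delta|$. The total increase from the white insertions is therefore strictly less than $2J|\partial\Delta|(h_i+h_j)$.

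\textbf{Second stage, black beads.} The black beads of $\bar{\textbf{p}}_{ij}$ are the $q$-edges of $\textbf{u}_{ij}$. I will insert them before the white beads; black insertions never decrease $\mu_J$. The main obstacle is bounding the resulting growth of $\mu_J$. My plan is to avoid bounding it at all: I will argue that adding black beads along an arc that contains no white bead, between two white-free stretches, does not change any $\ell(o_1,o_2)$ for white beads already on the necklace. That fails, however, since such beads do sit on arcs between existing white beads. The better route is to reverse the comparison direction. I will observe instead that $\textbf{p}_{ij}$ already contains at least $N(j-i)+1$ black beads, by \Cref{scope-path-inequalities}(1). Meanwhile $\textbf{u}_{ij}$ has about $N(L-j+i)\leq N(j-i)$ $q$-edges, because $j-i\geq L/2$. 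So we can pair the new black beads with removed old black beads at the same position, and only delete the surplus of old ones. Replacing a black bead by a black bead at the same location leaves every $\ell(o_1,o_2)$ unchanged, and deleting the surplus only lowers $\mu_J$ by \Cref{mixtures}(3). This is where the hypothesis $j-i\geq L/2$ is used.

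\textbf{Conclusion.} Combining the two stages gives $\mu(\Psi_{ij}')-\mu(\Delta)<2J|\partial\Delta|(h_i+h_j)$. The final inequality is $2J|\partial\Delta|(h_i+h_j)\leq 2J|\partial\Delta||\textbf{p}_{ij}|$, which follows from $h_i+h_j\leq|\textbf{p}_{ij}|_\theta\leq|\textbf{p}_{ij}|$ using \Cref{scope-path-inequalities}(1) and \Cref{lengths}(a).
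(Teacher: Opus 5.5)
Your argument is correct and is essentially the proof of Lemma 7.25 of \cite{OS20} that the paper defers to. You remove the white beads of $\textbf{p}_{ij}$ together with the surplus black beads; a surplus exists because $j-i\geq L/2$ gives $|\textbf{u}_{ij}|_q\leq N(L-j+i)\leq N(j-i)<|\textbf{p}_{ij}|_q$. You then place the $h_i+h_j$ white beads from the sides of $\pazocal{Q}_i$ and $\pazocal{Q}_j$ on either side of the surviving block of black beads, and each insertion raises $\mu$ by less than $2J|\partial\Delta|$ because the white count never exceeds $|\partial\Delta|_\theta$.

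One precision point: \Cref{scope-path-inequalities}(1) as stated only bounds the sum $|\textbf{p}_{ij}|_\theta+|\textbf{p}_{ij}|_q$. The two separate facts you use, $|\textbf{p}_{ij}|_\theta\geq h_i+h_j$ and $|\textbf{p}_{ij}|_q\geq N(j-i)+1$, should instead be drawn from its proof. The first comes from \Cref{scope-theta-bands}, using that $j-i\geq L/2$ forces $i\leq r<j$. The second comes from the fact that the $N(j-i)+1$ spokes of $\Pi$ from $\pazocal{Q}_i$ to $\pazocal{Q}_j$ all end on $\textbf{p}_{ij}$.
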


The arguments that follow require slightly different setups based on case.  For uniformity, we introduce the notation $\tau_\lambda$ given by:

\begin{enumerate}

\item In the case that $\Delta$ is annular, then:

\begin{itemize}

\item For any annular diagram $\Gamma\subset\Delta$, we set $\tau_\lambda(\Gamma)=\rho_\lambda(\Gamma)$.

\item For any circular diagram $\Gamma\subset\Delta$, we set $\tau_\lambda(\Gamma)$ to be the sum of the lengths of the maximal $\lambda$-spears in $\Delta$ which are contained in $\Gamma$.

\end{itemize}

\item In the case that $\Delta$ is circular, then for any subdiagram $\Gamma\subset\Delta$, we set: 

\begin{itemize}

\item $\tau_\lambda(\Gamma)=\sigma_\lambda(\Gamma^*)$ if $\Gamma$ has a disk.

\item $\tau_\lambda(\Gamma)=0$ if $\Gamma$ has no disks.

\end{itemize}

\end{enumerate}


%
%
%
%
%
%

\begin{lemma}[Compare with Lemma 7.26 of \cite{OS20}] \label{p_ij upper bound}

If $1\leq i<j\leq \ell-1$ with $j-i\geq L/2$, then $$|\textbf{p}_{ij}|+\tau_\lambda(\bar{\Delta}_{ij})\leq|\textbf{p}_{ij}|+\tau_\lambda(\Delta)-\tau_\lambda(\Psi_{ij}')<(1+\eps)|\bar{\textbf{p}}_{ij}|$$ for $\eps=1/\sqrt{N_4}$.

\end{lemma}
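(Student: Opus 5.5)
The argument is a surgery-plus-minimality contradiction. It is run separately for the two types of counterexample $\Delta$, in the same two-step format as the preceding lemmas. The first inequality, $\tau_\lambda(\bar{\Delta}_{ij})\leq\tau_\lambda(\Delta)-\tau_\lambda(\Psi_{ij}')$, is bookkeeping and comes first. In the annular case, every maximal $\lambda$-spear of $\Delta$ lies in $\bar{\Delta}_{ij}$ or in $\Psi_{ij}'$, apart from pieces cut along the $t$-spokes $\pazocal{Q}_i,\pazocal{Q}_j$. A $\lambda$-spear of a disk in $\Psi_{ij}'$ is still a $\lambda$-spear when viewed in $\Delta$, since cutting only shortens or removes non-annular $\theta$-bands and cannot create annular ones or new double crossings. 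Hence $\rho_\lambda(\Psi_{ij}')$ plus the spears in $\bar\Delta_{ij}$ is at most $\rho_\lambda(\Delta)$. In the circular case this is \Cref{weakly-minimal}(a), applied to the stems together with the additivity of $\sigma_\lambda$ over the disks of $\Delta^*$.

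For the main (strict) inequality, argue by contradiction and assume $|\textbf{p}_{ij}|+\tau_\lambda(\Delta)-\tau_\lambda(\Psi_{ij}')\geq(1+\eps)|\bar{\textbf{p}}_{ij}|$. Replace $\bar{\Delta}_{ij}$ by a diagram with boundary $\bar{\textbf{p}}_{ij}$ and no extra disks. Concretely, cut $\Delta$ along $\textbf{p}_{ij}$ and $\bar{\textbf{p}}_{ij}$, discard $\bar{\Delta}_{ij}$, and glue a copy of the remaining piece $\Psi_{ij}'$ so that its boundary reads $\bar{\textbf{p}}_{ij}$ in place of $\textbf{p}_{ij}$. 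The resulting diagram $\Delta'$ realizes the same conjugacy relation (annular case) or has boundary word equal in the group (circular case). It is $\Psi_{ij}'$ itself, of the same type, with $|\partial\Delta'|=|\partial\Delta|-|\textbf{p}_{ij}|+|\bar{\textbf{p}}_{ij}|$ up to $\delta$-corrections from \Cref{lengths}(c), and with $\tau$-value $\tau_\lambda(\Psi_{ij}')$.

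In the annular case, $\Psi_{ij}'$ is minimal, and \Cref{scope-path-inequalities}(1),(2) with $j-i\geq L/2$ give $|\bar{\textbf{p}}_{ij}|<|\textbf{p}_{ij}|$ once $L>>N$. The quantity $|\textbf{p}_{ij}|+\tau_\lambda(\Delta)-\tau_\lambda(\Psi_{ij}')-|\bar{\textbf{p}}_{ij}|$ is then positive, so $n(\Psi_{ij}')<n(\Delta)$, contradicting $n$-minimality. This case does not even need the factor $\eps$.

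In the circular case, minimality of $\Delta$ as a counterexample applies to the weakly minimal diagram $\Psi_{ij}'$, since $n(\Psi_{ij}')<n(\Delta)$ by the assumed inequality. This bounds $\text{Area}_G(\Psi_{ij}')$. Next, bound $\text{Area}_G(\bar{\Delta}_{ij})$: the disk $\Pi$ has weight $C_1\phi(C_1|\partial\Pi|)$, and $\Psi_{ij}$ is handled by \Cref{circular diskless} together with \Cref{G-area revolving trapezia}, giving a bound of order $\phi(C|\textbf{p}_{ij}|)$ plus a mixture term. The mixture loss is controlled by \Cref{scope-mixture}, namely $\mu(\Delta)-\mu(\Psi_{ij}')>-2J|\partial\Delta||\textbf{p}_{ij}|$. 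Now set $x=n(\Delta)-n(\Psi_{ij}')$. By the contradiction hypothesis $x\geq\eps|\bar{\textbf{p}}_{ij}|$, and by \Cref{scope-path-inequalities}, $x$ is at least a fixed fraction of $|\textbf{p}_{ij}|$. \Cref{phi properties}(2) then yields a gain of $N_4^3\,n\,x\,g(N_4 n)$, which must dominate $C_1\phi(C_1|\partial\Pi|)$, the area of $\Psi_{ij}$, and $2JN_3|\partial\Delta||\textbf{p}_{ij}|g(N_3n)$. Since $|\partial\Pi|\leq C|\bar{\textbf{p}}_{ij}|$ (there are $L-j+i\geq$ const copies of $V$ along it) and $\eps=N_4^{-1/2}$, we have $N_4^3\eps\geq N_4^{2}$, which swamps $C_1^3$, $N_3$ and $J$. \Cref{G-area subdiagrams} assembles these to give the contradiction $\text{Area}_G(\Delta)\leq N_4\phi(N_4n)+N_3\mu(\Delta)g(N_3n)$.

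The main obstacle is the numerics of the circular case. The disk weight is quadratic-times-$g$ in $|\partial\Pi|$, while the gain is only $\eps n|\bar{\textbf{p}}_{ij}|g$. This works only because $|\partial\Pi|\lesssim|\bar{\textbf{p}}_{ij}|\leq n$ and $g$ is monotone, so the exact dependence of the choice $\eps=1/\sqrt{N_4}$ on $N_3,C_1$ must be tracked carefully, as in Lemma 7.26 of \cite{OS20}.
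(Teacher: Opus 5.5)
Your plan matches the paper's: the same $\tau$-bookkeeping, excision of $\bar{\Delta}_{ij}$ leaving $\Psi'_{ij}$, $n$-minimality in the annular case, and in the circular case the inductive bound on $\text{Area}_G(\Psi'_{ij})$ combined with \Cref{phi properties}(2), \Cref{scope-mixture}, $\text{wt}(\Pi)$ and \Cref{circular diskless} for $\Psi_{ij}$. However, the step you use in the annular case is false.

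\Cref{scope-path-inequalities} does not give $|\bar{\textbf{p}}_{ij}|<|\textbf{p}_{ij}|$. Part (1) counts only $\theta$- and $q$-edges of $\textbf{p}_{ij}$, while part (2) carries the term $(L-j+i)\delta|V|_a$ with $L-j+i\geq k+2$. Subtracting gives only $|\textbf{p}_{ij}|-|\bar{\textbf{p}}_{ij}|\geq 2+N(2(j-i)-L)-(L-j+i)\delta|V|_a$. This lower bound is negative once $|V|_a$ is large, and no choice $L>>N$ fixes it. If your claim held, it would by itself exclude scopes of width $\geq L-k$ in $n$-minimal diagrams, making Lemmas \ref{number of trapezia}--\ref{scope-contradiction} pointless in the annular case.

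The repair is immediate and is what the paper does. Under your contradiction hypothesis, $d=|\textbf{p}_{ij}|+\tau_\lambda(\Delta)-\tau_\lambda(\Psi'_{ij})-|\bar{\textbf{p}}_{ij}|\geq\eps|\bar{\textbf{p}}_{ij}|>0$. Let $\textbf{s}$ be the complement of $\textbf{p}_{ij}$ in its boundary component. Then $|\partial\Delta|=|\textbf{p}_{ij}|+|\textbf{s}|$ and $|\partial\Psi'_{ij}|\leq|\bar{\textbf{p}}_{ij}|+|\textbf{s}|$ give $n(\Psi'_{ij})\leq n(\Delta)-d<n(\Delta)$. So the annular case does use the hypothesis, though only the positivity of $d$.

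Three other points need tightening.

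First, the annular $\tau$-inequality. Your assertion that cutting ``only shortens non-annular bands'' is exactly what needs proof, and the concerns you list (creating annular bands or new double crossings) are the irrelevant direction. The real danger is that cutting along $\bar{\textbf{p}}_{ij}$ turns an annular $\theta$-band into a non-annular one, or splits a band so it crosses a spoke fewer times. Then a subband of a spoke in $\Psi'_{ij}$ could satisfy conditions (ii)--(iii) of a $\lambda$-spear without doing so in $\Delta$. This is excluded by \Cref{scope-theta-bands}: since $i\leq r<j$, every maximal $\theta$-band of $\Psi_{ij}$ crosses exactly one of $\pazocal{Q}_i,\pazocal{Q}_j$ and ends on $\textbf{p}_{ij}\subset\partial\Delta$. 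So the cut only removes boundary-ending tails of bands that are already non-annular.

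Second, the circular numerics. The lower bound on the gain is not a fixed fraction of $|\textbf{p}_{ij}|$ coming from \Cref{scope-path-inequalities}. It is the algebraic consequence $d\geq\frac{\eps}{1+\eps}y\geq\frac{\eps}{2}y$ of the hypothesis, where $y=|\textbf{p}_{ij}|+\tau_\lambda(\Delta)-\tau_\lambda(\Psi'_{ij})\geq\max(|\textbf{p}_{ij}|,|\bar{\textbf{p}}_{ij}|)$ and $n\geq y$. The costs must be measured by $y$ rather than $|\textbf{p}_{ij}|$, because $|\partial\Pi|$ is not controlled by $|\textbf{p}_{ij}|$. Use $|\partial\Pi|\leq L|\bar{\textbf{p}}_{ij}|\leq Ly$ and $|\partial\Psi_{ij}|\leq 2|\textbf{p}_{ij}|+|\partial\Pi|\leq(L+2)y$; this is where \Cref{scope-path-inequalities}(1) actually enters, via $h_i+h_j\leq|\textbf{p}_{ij}|$. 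Then $N_4^3nd\geq\frac12N_4^{5/2}\,ny\geq\frac12N_4^{5/2}y^2$ absorbs $\text{wt}(\Pi)$, $\text{Area}_G(\Psi_{ij})$ and the mixture loss $2N_3J|\partial\Delta||\textbf{p}_{ij}|g(N_3n)$, as you anticipate.

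Third, if $\Psi'_{ij}$ is diskless you must invoke \Cref{circular diskless} instead of the inductive hypothesis. Also, \Cref{G-area revolving trapezia} is not needed.
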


\begin{proof}

If $\Delta$ is circular then since $\bar{\Delta}_{ij}$ and $\Psi_{ij}'$ have no common $t$-spokes, \Cref{weakly-minimal}(a) implies $\tau_\lambda(\Delta)\geq\tau_\lambda(\bar{\Delta}_{ij})+\tau_\lambda(\Psi_{ij}')$.

If $\Delta$ is annular, on the other hand, it is necessary to verify that $\Psi_{ij}'$ does not have any $\lambda$-spears which are not already $\lambda$-spears in $\Delta$.  This is generally possible when passing to an annular subdiagram of $\Delta$ if, for example, we cut a maximal $\theta$-band so that it is no longer annular or separated into two different bands.  By \Cref{scope-theta-bands}, though, every maximal $\theta$-band of $\Psi_{ij}$ crosses exactly one of $\pazocal{Q}_i$ and $\pazocal{Q}_j$, and so has an end on $\textbf{p}_{ij}$.  As such, any maximal $\theta$-band of $\Delta$ that crosses one of these spokes does so at most twice and continues to $\textbf{p}_{ij}$, so that we are simply removing a `tail' of this $\theta$-band and having it still hit the boundary.  Thus, any $\lambda$-spear of $\Psi_{ij}'$ is already a $\lambda$-spear of $\Delta$, and so again since $\bar{\Delta}_{ij}$ and $\Psi_{ij}'$ have no common $t$-spokes we have $\tau_\lambda(\Delta)\geq\tau_\lambda(\bar{\Delta}_{ij})+\tau_\lambda(\Psi_{ij}')$.


Hence, letting $y=|\textbf{p}_{ij}|+\tau_\lambda(\Delta)-\tau_\lambda(\Psi_{ij}')$ and $d=y-|\bar{\textbf{p}}_{ij}|$, it suffices to assume $d\geq\eps|\bar{\textbf{p}}_{ij}|$ and argue toward a contradiction.  As in the proof in \cite{OS20}, this implies $d\geq\frac{\eps y}{2}$.

Let $\textbf{C}$ be the component of $\partial\Delta$ containing $\textbf{p}_{ij}$ and $\textbf{s}$ be the complement of $\textbf{p}_{ij}$ in $\textbf{C}$.  Then \Cref{lengths} implies $|\partial\Delta|=|\textbf{p}_{ij}|+|\textbf{s}|$ and $|\partial\Psi_{ij}'|\leq|\bar{\textbf{p}}_{ij}|+|\textbf{s}|$.  Hence,
$$(|\partial\Delta|+\tau_\lambda(\Delta))-(|\partial\Psi_{ij}'|+\tau_\lambda(\Psi_{ij}'))\geq|\textbf{p}_{ij}|-|\bar{\textbf{p}}_{ij}|+\tau_\lambda(\bar{\Delta}_{ij})\geq d>0$$
But then if $\Delta$ is annular we have $n(\Psi_{ij}')\leq n(\Delta)-d<n(\Delta)$ and $\Psi_{ij}'$ is obtained from $\Delta$ by removing the subdiagram $\bar{\Delta}_{ij}$, contradicting the $n$-minimality of $\Delta$.  Thus, we may assume henceforth that $\Delta$ is weakly minimal.

If $\Psi_{ij}'$ contains a disk, then as above $n(\Psi_{ij}')\leq n(\Delta)-d$, and so we may apply the inductive hypothesis to obtain an upper bound on its $G$-area; otherwise, \Cref{circular diskless} provides an analogous upper bound with even lower parameter constants.  In either case, for $n=n(\Delta)$ we have
$$\text{Area}_G(\Psi_{ij}')\leq N_4\phi(N_4(n-d))+N_3\mu(\Psi_{ij}')g(N_3(n-d))$$
Noting that $d\leq n$, from \Cref{phi properties}(2) we then have $\phi(N_4(n-d))\leq \phi(N_4n)-N_4^2ndg(N_4n)$.  So, since \Cref{scope-mixture} implies $\mu(\Psi_{ij}')\leq\mu(\Delta)+2J|\partial\Delta||\textbf{p}_{ij}|$, we have
$$\text{Area}_G(\Psi_{ij}')\leq N_4\phi(N_4n)+N_3\mu(\Delta)g(N_3n)-N_4^3ndg(N_4n)+2N_3J|\partial\Delta||\textbf{p}_{ij}|g(N_3n)$$
Noting that $d>0$ implies $y\geq|\bar{\textbf{p}}_{ij}|$, we have $|\partial\Pi|\leq L|\bar{\textbf{p}}_{ij}|\leq Ly$.  So, by the parameter choices $C_2>>C_1>>L$ and the assignment of weights we have $\text{wt}(\Pi)\leq C_1\phi(C_1Ly)\leq C_1\phi(C_2y)$.

Moreover, since $j-i\geq L/2$ implies $i\leq r$ and $j\geq r+1$, \Cref{scope-path-inequalities} implies $|\bar{\textbf{p}}_{ij}|\leq|\textbf{p}_{ij}|+|\partial\Pi|$.  So, $|\partial\Psi_{ij}|=|\textbf{p}_{ij}|+|\bar{\textbf{p}}_{ij}|\leq2|\textbf{p}_{ij}|+|\partial\Pi|\leq(L+2)y$, and hence \Cref{circular diskless} implies
$$\text{Area}_G(\Psi_{ij})\leq N_2\phi(N_2(L+2)y)+N_1\mu(\Psi_{ij})g(N_1(L+2)y)\leq N_2\phi(N_3y)+N_1\mu(\Psi_{ij})g(N_2y)$$
Hence, by \Cref{G-area subdiagrams} it suffices to show that:
\begin{equation} \label{eqn-p_ij-1}
N_4^3ndg(N_4n)\geq 2N_3J|\partial\Delta||\textbf{p}_{ij}|g(N_3n)+C_1\phi(C_2y)+N_2\phi(N_3y)+N_1\mu(\Psi_{ij})g(N_2y)
\end{equation}
Note that $n=|\partial\Delta|+\tau_\lambda(\Delta)\geq|\textbf{p}_{ij}|+\tau_\lambda(\Delta)$, so that $n\geq\max(|\partial\Delta|,y)$.  So, since $N_4d\geq N_4\frac{\eps y}{2}\geq y$, we have 
$$N_4ndg(N_4n)\geq nyg(N_4n)\geq |\partial\Delta||\textbf{p}_{ij}|g(N_4n)$$
The parameter assignments $N_4>>N_3>>J$ then allow us to assume $$N_4^3ndg(N_4n)\geq 4N_3J|\partial\Delta||\textbf{p}_{ij}|g(N_3n)$$
Hence, in place of (\ref{eqn-p_ij-1}) it suffices to show
\begin{equation}\label{eqn-p_ij-2}
N_4^3ndg(N_4n)\geq2C_1\phi(C_2y)+2N_2\phi(N_3y)+2N_1\mu(\Psi_{ij})g(N_2y)
\end{equation}
Further, the parameter choices $N_3>>N_2>>C_2>>C_1$ imply $$2C_1\phi(C_2y)+2N_2\phi(N_3y)\leq N_3\phi(N_3y)=N_3^3y^2g(N_3y)$$
while the parameter choice $N_4>>N_3$ implies $N_4^3ndg(N_4n)\geq2N_3y^2g(N_3y)$.  Hence, in place of (\ref{eqn-p_ij-2}) it suffices to show
\begin{equation}\label{eqn-p_ij-3}
N_4^3ndg(N_4n)\geq4N_1\mu(\Psi_{ij})g(N_2y)
\end{equation}
Now, \Cref{mixtures} implies $\mu(\Psi_{ij})\leq J|\partial\Psi_{ij}|_\theta^2\leq J|\partial\Psi_{ij}|^2\leq J(L+2)^2y^2$.  So, since $N_4nd\geq y^2$, (\ref{eqn-p_ij-3}) follows if $N_4^2g(N_4n)\geq4N_1J(L+2)^2g(N_2y)$.  But this is given by the parameter choices $N_4>>N_2>>N_1>>J>>L$.

\end{proof}

For $1\leq i\leq\ell-1$, let $\textbf{q}_{i,i+1}$ be a shortest path (with respect to length) in $\Psi_{i,i+1}$ homotopic to $\textbf{p}_{i,i+1}$ and having the same first and last edges.  

\begin{lemma} \label{scope annular geodesic}

If $\Delta$ is annular, then we may take $\textbf{q}_{i,i+1}=\textbf{p}_{i,i+1}$.

\end{lemma}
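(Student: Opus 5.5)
The plan is to argue by contradiction with the $n$-minimality of $\Delta$. Suppose $|\textbf{q}_{i,i+1}|<|\textbf{p}_{i,i+1}|$ for some $i$. Since $\textbf{q}_{i,i+1}$ lies in $\Psi_{i,i+1}$, is homotopic to $\textbf{p}_{i,i+1}$, and has the same first and last edges, it cuts off a circular subdiagram $\Psi''\subseteq\Psi_{i,i+1}$ with contour $\textbf{p}_{i,i+1}\textbf{q}_{i,i+1}^{-1}$ (up to the shared end edges). Excising $\Psi''$ (using $0$-refinement where $\textbf{q}_{i,i+1}$ touches $\textbf{p}_{i,i+1}$) gives an annular diagram $\Delta'$. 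Its corresponding boundary component is labelled by a word in which the subword $\lab(\textbf{p}_{i,i+1})$ has been replaced by $\lab(\textbf{q}_{i,i+1})$. These words are equal in $G(\textbf{M}_\textbf{S})$, in fact already in $M(\textbf{M}_\textbf{S})$, because $\Psi''$ contains no disks. So $\Delta'$ realizes the same conjugacy relation, and its other boundary component is untouched.

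The key steps, in order, are as follows.
\begin{enumerate}
\item Pass from $\Delta'$ to a minimal annular diagram $\Delta''$ with the same boundary labels.
\item Estimate $|\partial\Delta''|$: by \Cref{lengths}(c) applied to the shared first and last edges, $|\partial\Delta''|\leq|\partial\Delta|-(|\textbf{p}_{i,i+1}|-|\textbf{q}_{i,i+1}|)<|\partial\Delta|$.
\item Show $\rho_\lambda(\Delta'')\leq\rho_\lambda(\Delta)$, which yields $n(\Delta'')<n(\Delta)$ and contradicts $n$-minimality.
\end{enumerate}

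For step 3, I would first observe that $\Delta'$ is itself minimal, so we may take $\Delta''=\Delta'$. The reason: $\Psi''$ has no disks, so $s_1(\Delta')=s_1(\Delta)$. For $s_2$, $\Psi''$ contains no $(\theta,t)$-cells, since every $t$-band of $\Psi_{i,i+1}$ is one of the spokes $\pazocal{Q}_i,\pazocal{Q}_{i+1}$ forming its sides. If $\Delta'$ were not minimal, re-attaching $\Psi''$ to a diagram of smaller signature would contradict the minimality of $\Delta$.

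Then $\Delta'$ has the same disks and the same $t$-spokes as $\Delta$. A maximal $\theta$-band of $\Delta'$ is obtained from one of $\Delta$ by deleting its part inside $\Psi''$. By \Cref{scope-theta-bands}, every maximal $\theta$-band entering $\Psi_{i,i+1}$ crosses exactly one of $\pazocal{Q}_i,\pazocal{Q}_{i+1}$ and ends on $\textbf{p}_{i,i+1}$. Consequently, truncating it within $\Psi''$ only shortens a tail that still ends on the boundary. This operation does not create new non-annular bands, does not add crossings with any $t$-spoke, and preserves the inverse-letter pattern of double crossings. So conditions (i)--(iv) in the definition of $\lambda$-spear transfer from $\Delta'$ to $\Delta$, and hence $\rho_\lambda(\Delta')\leq\rho_\lambda(\Delta)$, exactly as in \Cref{minimal-subcombs}(1) and \Cref{p_ij upper bound}.

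The main obstacle is making the excision rigorous when $\textbf{q}_{i,i+1}$ does not meet $\textbf{p}_{i,i+1}$ only at its ends. Then the region between the two paths is a union of disks and arcs rather than a single disk, and $\textbf{q}_{i,i+1}$ may run along the spokes $\pazocal{Q}_i$, $\pazocal{Q}_{i+1}$. I would handle this by excising each complementary circular piece separately via \Cref{exciseTwoPaths} and $0$-refinement. The spoke portions need no special care, because the spoke sides remain in $\Delta'$ unchanged. The constraint that the first and last edges agree is what guarantees the end $t$-edges, and hence the spokes, are preserved. This is what keeps the scope structure, and thus the spear comparison above, intact.
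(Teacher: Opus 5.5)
Your proposal is the paper's proof: assume a strictly shorter $\textbf{q}_{i,i+1}$, cut off the diskless region between it and $\textbf{p}_{i,i+1}$, and obtain a minimal annular diagram $\Delta'$ realizing the same conjugacy relation. This $\Delta'$ has $|\partial\Delta'|<|\partial\Delta|$ and $\rho_\lambda(\Delta')\leq\rho_\lambda(\Delta)$, since only tails of $\theta$-bands running to $\textbf{p}$ are removed, and this contradicts $n$-minimality. One small correction: for $i\neq r$, \Cref{scope-theta-bands} does not say that each band entering $\Psi_{i,i+1}$ crosses exactly one of $\pazocal{Q}_i,\pazocal{Q}_{i+1}$ (the bands of the trapezium $\Gamma_i$ cross both), so the fact your tail argument should rest on, as in the paper, is part (1) of that lemma: every $\theta$-band of $\Psi$ crosses $\pazocal{Q}_1$ or $\pazocal{Q}_\ell$ and then proceeds to $\textbf{p}$.
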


\begin{proof}

Suppose $\textbf{q}_{i,i+1}$ is chosen to have smaller length than $\textbf{p}_{i,i+1}$.  Cutting along $\textbf{q}_{i,i+1}$ then produces a minimal annular diagram $\Delta'$ whose boundary labels represent the same elements of $G(\textbf{M}_\textbf{S})$ as those of $\Delta$, but with $|\partial\Delta'|<|\partial\Delta|$.  

Further, as in the proof of \Cref{p_ij upper bound} we see that passing from $\Delta$ to $\Delta'$ edits the maximal $\theta$-bands by simply removing the `tails' of some that cross one of $\pazocal{Q}_1$ or $\pazocal{Q}_\ell$ and proceeds to $\textbf{p}$.  As such, $\rho_\lambda(\Delta')\leq\rho_\lambda(\Delta)$.

But then $n(\Delta')<n(\Delta)$, contradicting the $n$-minimality of $\Delta$.

\end{proof}

We extend the above terminology to define $\textbf{q}_{ij}$ to be the path obtained from concatenating $\textbf{q}_{i,i+1},\dots,\textbf{q}_{j-1,j}$ along their shared $q$-edges.  We then define $\Psi_{ij}^0$ to be the subdiagram of $\Psi_{ij}$ obtained by replacing $\textbf{p}_{ij}$ by $\textbf{q}_{ij}$ and removing any cells between these paths.  

Note that in the case where $\Delta$ is annular, \Cref{scope annular geodesic} immediately implies $\textbf{q}_{ij}=\textbf{p}_{ij}$, so that $\Psi_{ij}^0=\Psi_{ij}$.

As in previous settings, we drop indices in the case where $i=1$ and $j=\ell$, yielding the path $\textbf{q}$ and the subdiagram $\Psi^0$ of $\Psi$.

The next statement is then an analogue of \Cref{scope-path-inequalities} for the paths $\textbf{q}_{ij}$, providing lower bounds on theiir lengths as in (1).

\begin{lemma}[Lemma 7.27 of \cite{OS20}] \label{q-path-inequalities} 

If $i\leq r$ and $j\geq r+1$, then $|\textbf{q}_{ij}|\geq h_i+h_j+N(j-i)+1$.

\end{lemma}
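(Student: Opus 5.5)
The plan is to bound $|\textbf{q}_{ij}|$ from below by counting edges that every path homotopic to $\textbf{p}_{ij}$ in $\Psi_{ij}$ must cross. The inequality has three parts: a $\theta$-part $h_i+h_j$, a $q$-part $N(j-i)+1$, and the fact that the length function charges $1$ per $q$-letter and at least $1$ per $\theta$-letter. By \Cref{lengths}(a), $|\textbf{q}_{ij}|\geq|\textbf{q}_{ij}|_\theta+|\textbf{q}_{ij}|_q$. It therefore suffices to show
\[
|\textbf{q}_{ij}|_\theta\geq h_i+h_j \quad\text{and}\quad |\textbf{q}_{ij}|_q\geq N(j-i)+1 .
\]

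For the $\theta$-count I would use \Cref{scope-theta-bands}. Every maximal $\theta$-band of $\Psi$ crosses $\pazocal{Q}_1$ or $\pazocal{Q}_\ell$. Since $i\leq r<r+1\leq j$, every cell of $\pazocal{Q}_i$ lies on a $\theta$-band that also crosses $\pazocal{Q}_1$, continues outward, and ends on $\textbf{p}_{ij}$ without crossing $\pazocal{Q}_j$. Symmetrically, every cell of $\pazocal{Q}_j$ lies on a $\theta$-band ending on $\textbf{p}_{ij}$ without crossing $\pazocal{Q}_i$. These $h_i+h_j$ bands are distinct, because no band crosses both $\pazocal{Q}_i$ and $\pazocal{Q}_j$. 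Each of them separates the disk $\Psi_{ij}$: one end is on $\pazocal{Q}_i$ (or $\pazocal{Q}_j$) and the other on $\textbf{p}_{ij}$. The path $\textbf{q}_{ij}$ has the same endpoints and first and last edges as $\textbf{p}_{ij}$, and together with $\textbf{p}_{ij}$ it bounds a region inside $\Psi_{ij}$. A $\theta$-band with an end on $\textbf{p}_{ij}$ and its other end on $\pazocal{Q}_i$ or $\pazocal{Q}_j$, which lies outside that region, must therefore cross $\textbf{q}_{ij}$ in a $\theta$-edge. The bands are disjoint because $\theta$-bands do not cross (\Cref{weakly-minimal}(c) and \Cref{G(S) annuli}), so this gives at least $h_i+h_j$ $\theta$-edges on $\textbf{q}_{ij}$.

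For the $q$-count I would argue the same way with maximal $q$-bands starting on $\partial\Pi$ between the ends of $\pazocal{Q}_i$ and $\pazocal{Q}_j$. The arc of $\partial\Pi$ between consecutive $t$-spokes carries $N$ $q$-edges besides the $t$-edge, so the arc from $\pazocal{Q}_i$ to $\pazocal{Q}_j$ carries at least $N(j-i)+1$ $q$-edges, counting $t$-edges and the first and last ones. Each corresponding $q$-band either ends on $\textbf{p}_{ij}$ or ends on $\partial\Pi$ again. In the second case it bounds a subcomb, or a region giving a counterexample via \Cref{counterexample combs}. I would rule this out using \Cref{scope-short-i,i+1} and its underlying comb lemmas, which force the $q$-bands of $\Psi_{i,i+1}$ to go from $\partial\Pi$ to $\textbf{p}_{i,i+1}$. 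Note that in the circular case those lemmas guarantee this only for bands arising from the disk side. Each such band then separates $\Psi_{ij}$ and must meet $\textbf{q}_{ij}$, since $q$-bands are not crossed by other $q$-bands.

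The main obstacle is the separation argument. It requires that $\textbf{q}_{ij}$, which is only homotopic to $\textbf{p}_{ij}$ within $\Psi_{ij}$, cannot avoid these bands by passing through $\Pi$; this is prevented because $\textbf{q}_{ij}$ is chosen inside each $\Psi_{i,i+1}$. It also requires the count of edges on the portion of $\partial\Pi$, which is exactly the counting already done in \Cref{scope-path-inequalities}(1). I would therefore mirror that proof with $\textbf{q}_{ij}$ replacing $\textbf{p}_{ij}$, citing \cite{OS20}.
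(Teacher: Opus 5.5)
Your proof is correct, but it is organized differently from the paper's. The paper splits into cases. If $\Delta$ is annular, then $\textbf{q}_{ij}=\textbf{p}_{ij}$ by \Cref{scope annular geodesic}, which uses $n$-minimality, so the inequality is literally \Cref{scope-path-inequalities}(1). If $\Delta$ is circular, the paper simply defers to the argument of \cite{OS20}.

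You instead give one crossing argument valid in both cases. The $h_i+h_j$ maximal $\theta$-bands of $\Psi$ through $\pazocal{Q}_i$ and $\pazocal{Q}_j$ are distinct and end on $\textbf{p}_{ij}$ by \Cref{scope-theta-bands}. Together with the $q$-bands issuing from the arc of $\partial\Pi$ strictly between these spokes, they all separate the two endpoints of $\textbf{q}_{ij}$ in the disk $\Psi_{ij}$. Since $\textbf{q}_{ij}$ never leaves $\Psi_{ij}$, it must contain an edge of each band. This makes the lemma independent of \Cref{scope annular geodesic} and of the minimality of $\textbf{q}_{ij}$. The price is a longer argument in the annular case, where the paper's route is a one-liner.

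A few justifications should be tightened, though none is a real gap.
\begin{itemize}
\item A $q$-band starting on $\partial\Pi$ cannot return to $\partial\Pi$, but for a simpler reason than the one you give. Such a band does not bound a subcomb, and \Cref{scope-short-i,i+1} is not what excludes it. Since $\partial\Pi$ has no $\theta$-edges and $\Psi$ contains no disks, any $\theta$-band crossing such a $q$-band would have to cross it a second time. That produces a $(\theta,q)$-annulus, contrary to \Cref{M(S) annuli}.
\item Such a $q$-band also cannot reach $\partial\Delta$ outside $\textbf{p}_{ij}$, because $q$-bands do not cross.
\item Distinct maximal $\theta$-bands are disjoint simply by the form of the relations, not by the annuli lemmas you cite.
\item The bound $|\textbf{q}_{ij}|\geq|\textbf{q}_{ij}|_\theta+|\textbf{q}_{ij}|_q$ comes straight from the definition of length rather than from \Cref{lengths}(a). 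Each $q$-letter costs $1$, and each $(\theta,a)$-syllable contains exactly one $\theta$-letter.
\item Consecutive $t$-edges of $\partial\Pi$ are separated by $N-1$ other $q$-edges, not $N$. This is what yields your (correct) count $N(j-i)+1$.
\end{itemize}
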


\begin{proof}

(1) If $\Delta$ is annular, then this inequality is simply given by Lemmas \ref{scope annular geodesic} and \ref{scope-path-inequalities}(1).

(2) If $\Delta$ is circular, then the statement follows in just the same way as the proof of the analogous statement in \cite{OS20}.

\end{proof}

\begin{lemma}[Label 7.28 of \cite{OS20}] \label{q-combs-and-thetas} \

\begin{enumerate}[label=({\alph*})]

\item Every maximal $q$-band of $\Psi^0$ corresponds to a spoke of $\Pi$.

\item No two $\theta$-edges of $\textbf{q}_{i,i+1}$ are part of the same $\theta$-band of $\Psi_{i,i+1}$.

\end{enumerate}

\end{lemma}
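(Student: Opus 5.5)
This is Lemma 7.28 of \cite{OS20}, and I would follow its argument, treating the circular and annular counterexample cases separately as in the rest of this section.

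For (a), suppose a maximal $q$-band $\pazocal{C}$ of $\Psi^0$ does not end on $\partial\Pi$. Then both its ends lie on $\textbf{q}$: every maximal $q$-band of $\Psi^0$ has its ends on $\partial\Pi$, on $\textbf{q}$, or on the side $t$-spokes, and $q$-bands cannot cross. Since $\Psi^0$ contains no disks, \Cref{M(S) annuli} shows that $\pazocal{C}$ cuts off a subdiagram over $M(\textbf{M}_\textbf{S})$. Taking an innermost such band, this subdiagram is a comb whose handle is $\pazocal{C}$, since every $\theta$-band inside it must end on $\textbf{q}$ or on $\pazocal{C}$.

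\emph{Annular case.} Here $\Psi^0=\Psi$ by \Cref{scope annular geodesic}, so this comb is a subcomb of $\Delta$ contained in $\Psi$. \Cref{rmk-annular-combs} rules this out.

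\emph{Circular case.} Here I use that $\textbf{q}_{i,i+1}$ is a shortest path in its homotopy class. If a $q$-band has both ends on $\textbf{q}_{i,i+1}$, replace the portion of $\textbf{q}_{i,i+1}$ between its ends by the far side of the band. This is \Cref{minBoundaryLengths} applied inside $\Psi_{i,i+1}$, and it strictly decreases the length, a contradiction. One must also check that the two ends cannot lie on different pieces $\textbf{q}_{i,i+1}$ and $\textbf{q}_{j,j+1}$. This holds because $\textbf{q}_{i,i+1}$ and $\textbf{p}_{i,i+1}$ share their first and last edges, which are $t$-edges, so each $\textbf{q}_{i,i+1}$ lies in $\Psi_{i,i+1}$, bounded by $t$-spokes that the band cannot cross. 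Once no $q$-band returns to $\textbf{q}$, every maximal $q$-band of $\Psi^0$ connects $\textbf{q}$ to $\partial\Pi$, i.e. it is a spoke of $\Pi$.

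For (b), suppose two $\theta$-edges of $\textbf{q}_{i,i+1}$ belong to one maximal $\theta$-band $\pazocal{T}$ of $\Psi_{i,i+1}$. Take such a pair with the two edges closest along $\textbf{q}_{i,i+1}$; passing to an innermost band, $\pazocal{T}$ together with the subpath of $\textbf{q}_{i,i+1}$ between its ends bounds a subdiagram $\Gamma$.
\begin{itemize}
\item If $\pazocal{T}$ crosses no $q$-band, then $\Gamma$ contains no $(\theta,q)$-cells and $\pazocal{T}$ consists of $(\theta,a)$-cells. Replacing the bounded subpath by the far side of $\pazocal{T}$ removes two $\theta$-edges. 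By \Cref{lengths}(c) and the modified length (each $(\theta,a)$-syllable has length $1$, each $a$-letter length $\delta$), the length drops by at least $1$, up to a loss of order $\delta$. This contradicts minimality of $\textbf{q}_{i,i+1}$.
\item If $\pazocal{T}$ crosses a $q$-band, that $q$-band, by (a), joins $\textbf{q}$ to $\partial\Pi$. Its end on $\partial\Pi$ lies outside the region enclosed by $\pazocal{T}$, while its end on $\textbf{q}$ lies inside. It therefore crosses $\pazocal{T}$ once and must cross it again to reach $\partial\Pi$, producing a $(\theta,q)$-annulus and contradicting \Cref{M(S) annuli}.
\end{itemize}
In the annular case, the path-shortening step is replaced by the $n$-minimality argument already used in \Cref{scope annular geodesic}.

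The main obstacle is making the shortening argument in (b) precise with the modified length. The subpath of $\textbf{q}_{i,i+1}$ enclosed by $\pazocal{T}$ carries $a$-letters that may not pair with $\theta$-letters into syllables. I would follow \Cref{minBoundaryLengths-theta}: the band has base of length $0$, hence at most $K$, so the length decreases by at least $1$ under $\delta^{-1}\gg K$.
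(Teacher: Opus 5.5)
Your argument for (a) is fine and matches the paper. In the annular case you use the absence of subcombs in $\Psi=\Psi^0$. In the circular case you use the shortest-path choice of $\textbf{q}_{i,i+1}$ with an innermost $q$-band, as in \cite{OS20}. The gap is in Case 2 of (b).

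Let $\textbf{s}$ be the subpath of $\textbf{q}_{i,i+1}$ between the two $\theta$-edges, and $\Gamma$ the region bounded by $\textbf{s}$ and $\pazocal{T}$. A $q$-band crossing $\pazocal{T}$ has its end on $\textbf{q}$ lying on $\textbf{s}$, inside $\Gamma$, and its end on $\partial\Pi$ outside $\Gamma$. It therefore crosses $\partial\Gamma$ exactly once, through $\pazocal{T}$. Nothing forces a second crossing, so no $(\theta,q)$-annulus arises and \Cref{M(S) annuli} gives no contradiction. In fact, by (a) every $q$-edge on $\textbf{s}$ produces exactly such a single crossing. An arch of $\pazocal{T}$ over $\textbf{s}$, crossed by the spokes that end on $\textbf{s}$, is exactly the configuration that must be excluded. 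As written, your proof only handles a band of base length $0$, which is also what your last paragraph assumes.

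The fix is to run the shortening argument whatever the base is, after bounding the base.
\begin{itemize}
\item With the closest pair, no $\theta$-edge lies on $\textbf{s}$: its $\theta$-band would have to return to $\textbf{s}$, contradicting the choice of pair, or cross $\pazocal{T}$. Since there are no $\theta$-annuli, $\Gamma$ then contains no cells, and the subband of $\pazocal{T}$ between the two edges lies directly on $\textbf{q}_{i,i+1}$.
\item Its base length is at most $|\textbf{s}|_q$. By (a) this is at most the $N+1$ spokes of $\Pi$ in $\Psi^0_{i,i+1}$, hence at most $K$.
\item In the circular case, the argument of \Cref{minBoundaryLengths-theta} with $\delta^{-1}\gg K$ replaces $\textbf{s}$ and the two end edges by the other side of this subband. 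This gives a homotopic path in $\Psi_{i,i+1}$ with the same first and last edges, shorter by at least $1$, contradicting the choice of $\textbf{q}_{i,i+1}$. This covers both of your cases at once.
\item In the annular case, $\textbf{q}_{i,i+1}=\textbf{p}_{i,i+1}\subset\partial\Delta$, so this is a rim $\theta$-band of base at most $K$, contradicting \Cref{counterexample-long-theta}. Alternatively, \Cref{scope-theta-bands}(1) already shows that no maximal $\theta$-band of $\Psi$ has two ends on $\textbf{p}$.
\end{itemize}
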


\begin{proof}

(1) As noted in \Cref{rmk-annular-combs}, if $\Delta$ is annular then no subcomb of $\Delta$ can be contained in the scope $\Psi=\Psi^0$.  Both statements then follow quickly.

(2) Similar to the proof of the previous statement, the case where $\Delta$ is circular follows in just the same way as the proof of the analogous statement in \cite{OS20}.

\end{proof}


\subsection{Trapezia and combs in scopes} \

For $1\leq i\leq r-1$, every maximal $\theta$-band of $\Psi_{i,i+1}$ which crosses $\pazocal{Q}_{i+1}$ also crosses $\pazocal{Q}_i$.  By \Cref{q-combs-and-thetas}(b), these $\theta$-bands comprise a trapezium $\Gamma_i$ contained in $\Psi_{i,i+1}^0$ with history $H_{i+1}$.  By construction, the complement of $\Gamma_i$ in $\Psi_{i,i+1}$ {\frenchspacing (resp. $\Psi_{i,i+1}^0$)} is a comb $E_i$ {\frenchspacing (resp. $E_i^0$)} of height $h_i-h_{i+1}$ whose handle is the subband of $\pazocal{Q}_i$ which is obtained by removing the first $h_{i+1}$ cells.

For $r+1\leq i\leq \ell-1$, there similarly exists a trapezium $\Gamma_i$ contained in $\Psi_{i,i+1}^0$ with history $H_i$, so that we can find combs $E_i$ and $E_i^0$ of height $h_{i+1}-h_i$.

In any case, define the bottom and top of the trapezium $\Gamma_i$ to be $\textbf{y}_i$ and $\textbf{z}_i$, respectively.  Then $\textbf{y}_i^{-1}$ is a subpath of $\partial\Pi$ and has label $W(j_i)\{t(j_i+1)\}$ for some $j_i\in\{1,\dots,L\}$ dependent on $i$ (with $L+1$ taken to be $1$).  Note that then the boundary of the comb $E_i$ {\frenchspacing (resp. $E_i^0$)} is comprised of the bottom of the handle, the path $\textbf{z}_i$, and the path $\textbf{p}_{i,i+1}$ {\frenchspacing (resp. $\textbf{q}_{i,i+1}$)}.

The next statement follows in just the same way as its analogue in \cite{OS20}.

\begin{lemma}[Compare with Lemma 7.29 of \cite{OS20}] \label{nabla combs}

For $2\leq i\leq r-1$, any subdiagram $\nabla$ of $\Psi_{i,i+1}^0$ has a copy in $\Gamma_{i-1}$.  In particular, there exists a copy $\Gamma_i'$ of $\Gamma_i$ contained in $\Gamma_{i-1}$ which is a trapezium with bottom $\textbf{y}_{i-1}$ and top $\textbf{z}_i'$ such that $\lab(\textbf{z}_i')$ is a coordinate shift of $\lab(\textbf{z}_i)$.

\end{lemma}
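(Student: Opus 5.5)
The plan is to read the copy off the geometry of the scope: the $\theta$-bands of $\Psi_{i,i+1}^0$ also pass through $\Psi_{i-1,i}^0$, so everything in $\Psi_{i,i+1}^0$ is built from the same rules and the same boundary of $\Pi$ as part of $\Gamma_{i-1}$. Fix $2\leq i\leq r-1$. By \Cref{scope-theta-bands}, every maximal $\theta$-band of $\Psi$ that crosses $\pazocal{Q}_{i+1}$ (with $i+1\leq r$) also crosses $\pazocal{Q}_i$ and then $\pazocal{Q}_{i-1}$. Consequently every $\theta$-band meeting $\Psi_{i,i+1}^0$ continues, by \Cref{q-combs-and-thetas}(b), into $\Psi_{i-1,i}^0$. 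Since $h_i\leq h_{i-1}$, it in fact enters the trapezium $\Gamma_{i-1}$, whose history is $H_i$. The histories of these bands, read from $\Pi$, form a prefix of $H_i$.

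The copy will be a coordinate shift. The first step is to note that the bottom $\textbf{y}_{i-1}$ of $\Gamma_{i-1}$ and the bottom $\textbf{y}_i$ of $\Gamma_i$ are labelled by $W(j_{i-1})\{t(j_{i-1}+1)\}$ and $W(j_i)\{t(j_i+1)\}$, which are coordinate shifts of one another because $\Pi$ is a disk. Next, use \Cref{q-combs-and-thetas}(a): every maximal $q$-band of $\Psi^0$ is a spoke of $\Pi$. Hence $\Psi_{i,i+1}^0$ is the union of $\Gamma_i$ and the comb $E_i^0$, whose $q$-bands are spokes of $\Pi$ lying between $\pazocal{Q}_i$ and $\pazocal{Q}_{i+1}$, and all of its $\theta$-bands start at $\pazocal{Q}_i$ and cross these spokes.

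Now apply \Cref{trapezia are computations}. The trapezium $\Gamma_{i-1}$ realizes the computation with history $H_i$ starting from the copy of $W(j_{i-1})$ on the base $\{t\}B_{std}\{t\}$. The trapezium $\Gamma_i$ realizes the computation with history $H_{i+1}$ from $W(j_i)$. The rules act on all coordinates in parallel and $H_{i+1}$ is a prefix of $H_i$, so the first $h_{i+1}$ bands of $\Gamma_{i-1}$ form a trapezium $\Gamma_i'$ with bottom $\textbf{y}_{i-1}$. Its top label $\textbf{z}_i'$ is the coordinate shift of $\lab(\textbf{z}_i)$, and $\Gamma_i'$ is cell-by-cell a copy of $\Gamma_i$.

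For a general $\nabla\subset\Psi_{i,i+1}^0$, I would argue cell by cell. Each $(\theta,a)$- or $(\theta,q)$-cell of $\nabla$ lies at the crossing of a $\theta$-band with a $q$- or $a$-band, and that crossing is determined by the rule and by the tape or state letter at the corresponding position of the configuration along the spoke. I would place the cell at the corresponding position in $\Gamma_{i-1}$, using the same shift as above.

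The main obstacle is the comb part $E_i^0$. Its $\theta$-bands end on $\textbf{q}_{i,i+1}$ rather than crossing $\pazocal{Q}_{i+1}$, so they have no direct partner in $\Gamma_i$. To handle this I would use that these bands have histories continuing $H_{i+1}$ inside the prefix $H_i$. Their words are prefixes of the configurations obtained from $W$ by applying a prefix of $H_i$, and the same configurations appear in the coordinate-$j_{i-1}$ sector of $\Gamma_{i-1}$, which has height $h_i\geq$ the heights occurring in $\Psi_{i,i+1}^0$. I expect to need \Cref{enhanced controlled}-style uniqueness, or simply the determinism of applying a fixed rule to a fixed admissible word, to conclude that the labels agree and that the copy embeds.
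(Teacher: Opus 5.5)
Your strategy is the standard one behind the cited proof in \cite{OS20}. Both $\Gamma_{i-1}$ and $\Psi^0_{i,i+1}$ are obtained by applying the letters of $H_i$, in order, to coordinate shifts of the same disk word, and a $\theta$-band is determined by its rule and its bottom label. Your construction of $\Gamma_i'$ from the first $h_{i+1}$ bands of $\Gamma_{i-1}$ is fine.

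The gap is in the comb $E_i^0$, which is where the claim about an arbitrary $\nabla$ actually has content. You assert that the bands there have words that ``are prefixes of the configurations obtained from $W$ by applying a prefix of $H_i$'', but you give no argument for it. Your ``corresponding position along the spoke'' also does not say where to place the $(\theta,a)$-cells of a band that lie beyond the last spoke it crosses. Your suggestion that \Cref{enhanced controlled} might be needed is off track: no controlled histories are involved here.

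What closes the gap is an induction over the $\theta$-bands. Enumerate the maximal $\theta$-bands $\pazocal{T}_1,\dots,\pazocal{T}_{h_i}$ of $\Psi^0_{i,i+1}$ by where they cross $\pazocal{Q}_i$, starting from $\Pi$. Then $\pazocal{T}_k$ and the $k$-th $\theta$-band of $\Gamma_{i-1}$ both correspond to the $k$-th letter of $H_i$. Each $\pazocal{T}_k$ has the following properties.
\begin{enumerate}
\item It starts on $\pazocal{Q}_i$, because it crosses $\pazocal{Q}_1$ by \Cref{scope-theta-bands}.
\item It cannot end on $\Pi$, since $\partial\Pi$ has no $\theta$-edges, and it cannot have both ends on $\textbf{q}_{i,i+1}$, by \Cref{q-combs-and-thetas}(b).
\item Since every $q$-band starts on $\Pi$ (\Cref{q-combs-and-thetas}(a)), it crosses an initial run of the spokes and then either ends on $\textbf{q}_{i,i+1}$ or crosses $\pazocal{Q}_{i+1}$.
\end{enumerate}
Because $\Psi^0_{i,i+1}$ contains no disks, every cell lies on some $\pazocal{T}_k$. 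So there are no cells between $\Pi$ and $\pazocal{T}_1$, and none between $\pazocal{T}_k$ and $\pazocal{T}_{k+1}$. Reading from $\pazocal{Q}_i$, it follows that $\lab(\textbf{bot}(\pazocal{T}_1))$ is an initial segment of $\lab(\textbf{y}_i)$, which is a coordinate shift of $\lab(\textbf{y}_{i-1})$. Likewise $\lab(\textbf{bot}(\pazocal{T}_{k+1}))$ is an initial segment of $\lab(\textbf{top}(\pazocal{T}_k))$.

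Now induct on $k$. The band $\pazocal{T}_k$ is a coordinate-shifted copy of the initial subband of the $k$-th band of $\Gamma_{i-1}$ that covers the same number of bottom letters. Its top is then the corresponding initial segment of that band's top, which feeds the next step. These bandwise copies glue along the shared top/bottom edges to a copy of $\Psi^0_{i,i+1}$, and hence of any $\nabla$, inside $\Gamma_{i-1}$. Taking $k\leq h_{i+1}$ recovers $\Gamma_i'$.
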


\begin{figure}
\centering
\includegraphics[scale=1.25]{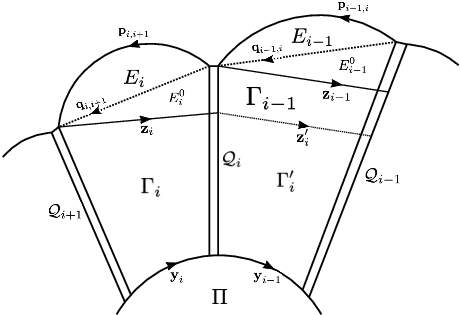}
\caption{Trapezia and combs in $\Psi_{i-1,i}$ and $\Psi_{i,i+1}$.}
\end{figure}

The next statement then follows immediately from \Cref{simplify rules}; the reason for the disparity in the estimate is the choice for convenience to allow a rule to insert two letters in a sector.

\begin{lemma}[Compare with Lemma 7.30 of \cite{OS20}] \label{a-bands in scopes}

At most $2N$ $a$-bands starting on the path $\textbf{y}_i$ (or $\textbf{z}_i$) can end on $(\theta,q)$-cells of the same $\theta$-band.

\end{lemma}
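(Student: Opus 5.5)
The plan is to read off the relations of $M(\textbf{M}_\textbf{S})$, using \Cref{simplify rules} and the absence of $(\theta,a)$-annuli (\Cref{M(S) annuli} and \Cref{G(S) annuli}).

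Fix a maximal $\theta$-band $\pazocal{T}$ of $\Psi_{i,i+1}$ (or of the comb $E_i$). Consider the $a$-bands that start on $\textbf{y}_i$ (respectively $\textbf{z}_i$) and end on $(\theta,q)$-cells of $\pazocal{T}$. Each such $a$-band ends at an $a$-edge on the boundary of some $(\theta,q)$-cell $\pi$ of $\pazocal{T}$. By \Cref{simplify rules}, every relation $q_i\theta_{i+1}=\theta_iv_iq_i'u_{i+1}$ has at most two $a$-letters, namely $v_i$ and $u_{i+1}$. So each $(\theta,q)$-cell of $\pazocal{T}$ accounts for at most two such $a$-band ends.

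Next, bound the number of $(\theta,q)$-cells of $\pazocal{T}$ that can serve as endpoints. Since $\textbf{y}_i^{-1}$ is labelled $W(j_i)\{t(j_i+1)\}$, the base of $\textbf{y}_i$ (and of $\textbf{z}_i$, which is the top of the same trapezium) has length $N+1$. Its $a$-edges are therefore distributed among at most $N$ sectors, the sector adjacent to the $t$-letter having empty tape alphabet.

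An $a$-band from $\textbf{y}_i$ cannot cross a $q$-band, because no relation contains both an $a$-edge and a $q$-edge on the band's sides other than at its end. It therefore stays in the region between two consecutive maximal $q$-bands of the scope emanating from the corresponding sector of $\textbf{y}_i$. By \Cref{q-combs-and-thetas}(a), these $q$-bands are spokes of $\Pi$ in $\Psi^0$. Any $(\theta,q)$-cell of $\pazocal{T}$ where such a band ends lies on one of the $q$-bands bounding a sector that $\textbf{y}_i$ meets. Since $\pazocal{T}$ crosses each maximal $q$-band at most once (no $(\theta,q)$-annuli), the relevant cells are the at most $N$ cells where $\pazocal{T}$ crosses the $q$-bands bounding those sectors, with each sector contributing at most one relevant cell.

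Each sector-region can also contain at most two $a$-ends on $\pazocal{T}$: one from the right side of the left $q$-cell and one from the left side of the right $q$-cell. So the total is at most $2N$.

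The main obstacle is making the counting per sector precise rather than per cell. A naive count gives $2(N+1)$ ends, from two per cell over the $N+1$ $q$-bands. To tighten this to $2N$, I would use that the two sectors adjacent to the $t$-edges of $\textbf{y}_i$ have empty tape alphabet. The outer $a$-letters of the two extreme $(\theta,t)$-cells are then absent, which removes two from the count.
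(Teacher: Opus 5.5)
Your argument is correct and is essentially the paper's: the paper just notes that, by \Cref{simplify rules}, each $(\theta,q)$-relation contributes at most one $a$-letter to each of the two sectors it borders. So a single $\theta$-band deposits at most two $a$-ends into each of the $N$ sectors of the base of $\textbf{y}_i$ (or $\textbf{z}_i$), which is exactly your per-sector count. Your closing ``obstacle'' is not really one, since the per-sector count already gives $2N$; moreover the $(\theta,t)$-relations $\theta_jt(i)=t(i)\theta_{j+1}$ carry no $a$-letters at all, so a per-cell count gives $2(N-1)$. Also, your phrase ``at most $N$ cells \dots\ each sector contributing at most one relevant cell'' should say $N+1$ cells bounding $N$ sectors.
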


Based on the parameter assignments $L>>L_0>>k$, we may assume by \Cref{scope-theta-bands} that $L_0\leq r-1$ and $\ell-L_0\geq r+2$.  With this, we assume without loss of generality that $h\equiv h_{L_0+1}\geq h_{\ell-L_0}$ (otherwise, we pass to the `mirror' diagram so that the assumption holds).

\begin{lemma}[Compare with Lemma 7.31 of \cite{OS20}] \label{number of trapezia} 

Let $I$ be the subset of the set of indices $i\in[L_0+1,r-1]\cup[r+1,\ell-L_0-1]$ such that $|\textbf{z}_i|_a\geq|V|_a/c_2N$. If $h\leq L_0^2|V|_a$, then $\# I\leq L/5$.

\end{lemma}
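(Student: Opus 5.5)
This is the analogue of Lemma 7.31 of \cite{OS20}, and the plan is to follow that argument. The new ingredient is that computations of length comparable to $|V|_a$ which start at a disk word must spend almost all their time in the $\textbf{LR}_k$-phase; there they do not change the $a$-length of any component, nor do they create long tape words in the historical $Q_{i,\ell}Q_{i,r}$-sectors in an unbalanced way.

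Fix $i\in I$ and consider the trapezium $\Gamma_i$. Its bottom $\textbf{y}_i$ is labelled by a coordinate shift of one component $W(j_i)$ of the accepted disk word, which is essentially a copy of the accepted configuration $V$ of $\textbf{E}_{\textbf{S},k}$ (up to a $t$-letter). Its history $H_{i+1}$ (or $H_i$) has length at most $h_{L_0+1}=h\le L_0^2|V|_a$ by the monotonicity of the $h_j$ and the choice $h\ge h_{\ell-L_0}$. By \Cref{trapezia are computations}, $\Gamma_i$ gives a reduced computation $\pazocal{C}_i$ of $\textbf{E}_{\textbf{S},k}$ in the standard base starting at a copy of $V$. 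I then split into two cases according to whether this history is short or long relative to $\max(\|V\|,\|\lab(\textbf{z}_i)\|)$.

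\textbf{Short history.} If $h_{i+1}\le c_1\max(\|V\|,\|\textbf{z}_i\|)$, I use the step-history analysis (Lemmas \ref{E standard no (1)}, \ref{E standard no (5)}, \ref{E time (12)}) to control how the $a$-length can move from $|V|_a$ to $|\textbf{z}_i|_a$. The aim is the following counting statement: an index $i$ with $|\textbf{z}_i|_a\ge |V|_a/c_2N$ forces many $a$-bands starting on $\textbf{z}_i$ to end on $\theta$-bands of the comb $E_i$. \Cref{a-bands in scopes} then converts this into a lower bound of order $|V|_a/c_2N^2$ on the height $h_i-h_{i+1}$ of the comb $E_i$.

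\textbf{Long history.} If instead the history is long, \Cref{long history mostly (4)} and \Cref{long history controlled} show that $\pazocal{C}_i$ is mostly $(34)(4)(45)$ and that $V$ is accepted. Then \Cref{LR_k analogue} gives $|\textbf{z}_i|_a=|V|_a$ in the reduced parts. To rule out too many such indices I use the nabla-copy structure of \Cref{nabla combs}: consecutive tops $\textbf{z}_i$ are coordinate shifts inside $\Gamma_{i-1}$.

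In either case each $i\in I$ contributes roughly $|V|_a/(c_2N^2)$ to the total height drop. Summing over the two monotone runs, $\sum_i (h_i-h_{i+1})\le h_{L_0+1}+h_{\ell-L_0}\le 2L_0^2|V|_a$, so $\#I\cdot |V|_a/(c_2N^2)\lesssim L_0^2|V|_a$. This gives $\#I\le C L_0^2c_2N^2$, and the parameter choice $L>>L_0>>c_2>>N$ yields $\#I\le L/5$.

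The main obstacle is the per-index lower bound on the comb heights: showing that $|\textbf{z}_i|_a\ge|V|_a/c_2N$ really forces many $a$-edges of $\textbf{z}_i$ to be absorbed by $\theta$-bands of $E_i$, rather than passing directly to $\textbf{p}_{i,i+1}$. For this I would first try the geodesic property of $\textbf{q}_{i,i+1}$ together with \Cref{q-path-inequalities}. The point is that $a$-edges reaching the boundary unabsorbed would make $|\textbf{q}_{ij}|$ exceed the upper bound on $|\bar{\textbf{p}}_{ij}|$ from \Cref{p_ij upper bound} once $\delta|V|_a$ terms accumulate over more than $L/5$ indices. So if the comb-height argument fails, the fallback is to derive the contradiction directly from \Cref{p_ij upper bound} applied with $j-i\ge L/2$.
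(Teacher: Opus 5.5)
Your main step does not hold. You need each $i\in I$ to force a comb height $h_i-h_{i+1}$ of order $|V|_a/(c_2N^2)$, and nothing in your short/long-history case split can supply that.

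Take $h_i=h_{i+1}$, so $E_i$ contains no $\theta$-bands and every $a$-edge of $\textbf{z}_i$ lies directly on $\textbf{p}_{i,i+1}$, together with $|\textbf{z}_i|_a\ge|V|_a/c_2N$. This is locally consistent, for instance whenever $\pazocal{C}_i$ changes the $a$-length little. In your long-history case you yourself expect $|\textbf{z}_i|_a$ close to $|V|_a$, which only puts $i$ into $I$ and yields no height drop.

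The underlying problem is that the computation $\pazocal{C}_i$ describes $\Gamma_i$, not the comb $E_i$ lying between $\textbf{z}_i$ and the boundary. The only tool about $E_i$, \Cref{a-bands in scopes}, bounds absorption from above, which is the opposite direction from what you need. Likewise, the geodesic choice of $\textbf{q}_{i,i+1}$ and \Cref{q-path-inequalities} (which in any case needs $i\le r<j$) are lower bounds on lengths; they never force $a$-bands onto $\theta$-bands. A single such index lengthens the boundary by only about $\delta|V|_a/(c_2N)$, which is invisible next to the term $(L-j+i)\delta|V|_a$ in \Cref{scope-path-inequalities}(2). Such indices can only be ruled out collectively, so your final count $\#I\le CL_0^2c_2N^2$ has nothing to rest on.

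The argument has to be global, which is your fallback and essentially the paper's proof. Suppose $\#I>L/5$ and take $(i_0,j_0)=(L_0+1,\ell-L_0)$. Then $i_0\le r<j_0\le\ell-1$, $j_0-i_0\ge L/2$, and every $E_i^0$ with $i\in I$ lies in $\Psi^0_{i_0j_0}$.

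By \Cref{a-bands in scopes}, at most $2N|h_i-h_{i+1}|$ of the $a$-bands starting on $\textbf{z}_i$ end on $(\theta,q)$-cells of $E_i^0$. Telescoping over the two monotone runs bounds the total absorbed by $4Nh\le4NL_0^2|V|_a$. Here your height-sum observation is used as an upper bound on absorption, not as a budget for forced drops. All remaining bands end on $\textbf{q}_{i_0j_0}$, so $|\textbf{q}_{i_0j_0}|_a\ge\#I\,|V|_a/(c_2N)-4NL_0^2|V|_a$.

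Since $h_{i_0}+h_{j_0}\le2h\le2L_0^2|V|_a$, \Cref{lengths} and the $q$-count give $|\textbf{p}_{i_0j_0}|\ge|\textbf{q}_{i_0j_0}|\ge h_{i_0}+h_{j_0}+N(j_0-i_0)+\delta\bigl(\#I\,|V|_a/(c_2N)-(4N+2)L_0^2|V|_a\bigr)$. On the other side, \Cref{scope-path-inequalities}(2) gives $|\bar{\textbf{p}}_{i_0j_0}|\le h_{i_0}+h_{j_0}+N(k+2L_0+1)+(k+2L_0+1)\delta|V|_a$.

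With $L>>L_0>>c_2>>N>>k$ the difference is at least $\delta L|V|_a/(6c_2N)$, while $|\bar{\textbf{p}}_{i_0j_0}|\le3NL_0^2|V|_a$. Since $N_4>>\delta^{-1}$, the ratio then exceeds $1+\eps$, contradicting \Cref{p_ij upper bound}. If $|V|_a=0$ then $h=0$ and the $q$-terms alone give the contradiction.

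The hypothesis $h\le L_0^2|V|_a$ is used twice. It bounds the total absorption, and it makes the $\delta$-excess a fixed fraction of $|\bar{\textbf{p}}_{i_0j_0}|$.
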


\begin{proof}

The proof proceeds in exactly the same manner as for its analogue in \cite{OS20}, with only minor alterations amounting to different uses of parameters.  In summary, the assumptions lead to a lower bound on $|\textbf{p}_{L_0+1,\ell-L_0-1}|$ and an upper bound on $|\bar{\textbf{p}}_{L_0+1,\ell-L_0-1}|$ which imply $\frac{|\textbf{p}_{L_0+1,\ell-L_0-1}|}{|\bar{\textbf{p}}_{L_0+1,\ell-L_0-1}|}>1+\eps$, contradicting \Cref{p_ij upper bound} since, by \Cref{scope-theta-bands} $L_0\leq r-1$ and $\ell-L_0-1\geq r+1$.

\end{proof}

\begin{lemma}[Lemma 7.32 of \cite{OS20}] \label{first rules different}

If $h\leq L_0^2|V|_a$, then the first letter of $H_1$ differs from the first letter of $H_\ell$.

\end{lemma}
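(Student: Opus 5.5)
We argue by contradiction: suppose $h\leq L_0^2|V|_a$ and the first letters of $H_1$ and $H_\ell$ coincide, say both equal $\theta$. The maximal $\theta$-bands $\pazocal{T}_1$ and $\pazocal{T}_\ell$ crossing the first cells of $\pazocal{Q}_1$ and $\pazocal{Q}_\ell$ then correspond to the same letter of $\Theta(\textbf{M}_\textbf{S})$. Both have a side on $\partial\Pi$: they run along $\partial\Pi$ through the first cells of the spokes $\pazocal{Q}_1,\dots,\pazocal{Q}_r$ and $\pazocal{Q}_{r+1},\dots,\pazocal{Q}_\ell$, respectively, by \Cref{scope-theta-bands}. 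The plan is to show that these two bands together cross too many $t$-spokes of $\Pi$, contradicting \Cref{two theta-bands about disk}.

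The first step is to control how many spokes each band crosses. By \Cref{scope-theta-bands}(2), $\pazocal{T}_1$ crosses $\pazocal{Q}_1,\dots,\pazocal{Q}_{i_1}$ for some $i_1\leq r$, and $\pazocal{T}_\ell$ crosses $\pazocal{Q}_{i_2},\dots,\pazocal{Q}_\ell$ for some $i_2\geq r+1$. The band $\pazocal{T}_1$ stops crossing at index $i$ precisely when $h_{i+1}=0$, that is, when $H_{i+1}$ is empty. So it suffices to show that $h_i>0$ for all $i\leq L_0+1$, and similarly $h_i>0$ for all $i\geq \ell-L_0$, as long as the crossing counts are large enough.

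This is where the hypothesis $h\leq L_0^2|V|_a$ and \Cref{number of trapezia} enter. If the first letters agree, we can carry out a transposition-type surgery. The subbands of $\pazocal{T}_1$ and $\pazocal{T}_\ell$ adjacent to $\Pi$ both satisfy $\lab(\partial\Pi)$ being $\theta$-admissible. Together they cross all of $\pazocal{Q}_1,\dots,\pazocal{Q}_\ell$ except possibly those $\pazocal{Q}_i$ with $h_i=0$. Now suppose some $h_i=0$ occurs for $i$ in the middle range $[L_0+1,\ell-L_0]$. Then the corresponding trapezia $\Gamma_i$ are empty, and the paths $\textbf{z}_i$ coincide with $\textbf{y}_i$, so they carry $a$-length $|V|_a\geq |V|_a/c_2N$. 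Because $h_i$ is monotone on each side of $r$, this happens for a whole interval of indices. If that interval has more than $L/5$ indices, it contradicts \Cref{number of trapezia}. Hence all but at most $L/5+2L_0$ of the $t$-spokes $\pazocal{Q}_1,\dots,\pazocal{Q}_\ell$ are crossed by $\pazocal{T}_1$ or $\pazocal{T}_\ell$. Since $\ell=L-k$ and $L>>L_0>>k$, this gives $\ell_1+\ell_\ell\geq L-k-L/5-2L_0-2>L/2$.

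The bands $\pazocal{T}_1$ and $\pazocal{T}_\ell$ are distinct, since $\pazocal{T}_1$ does not cross $\pazocal{Q}_{r+1}$ while $\pazocal{T}_\ell$ does. They are disjoint because maximal $\theta$-bands do not cross. Applying \Cref{two theta-bands about disk} then yields the contradiction.

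The main obstacle is justifying the middle step carefully, namely converting "many $h_i=0$" into many indices with $|\textbf{z}_i|_a$ large. If $|\textbf{z}_i|_a$ could be small despite $h_i=0$, I would instead invoke \Cref{p_ij upper bound} directly on $\textbf{p}_{ij}$ spanning the zero-history region. There $|\textbf{p}_{ij}|$ contains about $(j-i)\delta|V|_a$ of $a$-length, while $|\bar{\textbf{p}}_{ij}|$ by \Cref{scope-path-inequalities}(2) is comparatively short. This would give a ratio exceeding $1+\eps$.
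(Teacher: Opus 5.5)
Your argument is correct and is essentially the paper's proof: an empty history $H_{i+1}$ (for $i\in[L_0+1,r-1]$) or $H_i$ (for $i\in[r+1,\ell-L_0-1]$) makes $\Gamma_i$ empty, so $\textbf{z}_i=\textbf{y}_i$ and $i\in I$. Then \Cref{number of trapezia} forces the two bands together to cross at least $\ell-2L_0-2-L/5>L/2$ spokes, contradicting \Cref{two theta-bands about disk}. The transposition surgery and the fallback via \Cref{p_ij upper bound} in your last paragraph are unnecessary, since $|\textbf{z}_i|_a=|V|_a\geq|V|_a/c_2N$ holds automatically when the history is empty.
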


\begin{proof}

Let $\pazocal{T}$ be the maximal $\theta$-band of $\Delta$ corresponding to the first letter of $H_1$ and $\pazocal{S}$ be that corresponding to the first letter of $H_\ell$.

For any $1\leq i\leq r$, \Cref{scope-theta-bands} implies that either $H_i$ is empty or $\pazocal{T}$ crosses $\pazocal{Q}_i$.  Similarly, for $r+1\leq i\leq\ell$, either $H_i$ is empty or $\pazocal{S}$ crosses $\pazocal{Q}_i$.

Note that for $i\in[L_0+1,r-1]$, $H_{i+1}$ being empty implies $i\in I$; further, for $i\in[r+1,\ell-L_0-1]$, $H_i$ being empty implies $i\in I$.  So, by \Cref{number of trapezia} the sum of the number of $t$-spokes of $\Pi$ that $\pazocal{T}$ crosses and the number that $\pazocal{S}$ crosses is at least $\ell-2L_0-1-L/5=4L/5-2L_0-k-1$.  

But the parameter choices $L>>L_0>>k$ imply this is at least $L/2$, so that the statement follows from \Cref{two theta-bands about disk}.

\end{proof}

\begin{lemma}[Lemma 7.33 of \cite{OS20}] \label{V lower bound}

If $h\leq L_0^2|V|_a$, then $\displaystyle|V|_a>\frac{LN}{4L_0}$.

\end{lemma}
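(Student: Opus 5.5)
Assume $h\leq L_0^2|V|_a$ and, aiming for a contradiction, $|V|_a\leq\frac{LN}{4L_0}$. The idea is that a short disk word leaves the history of $\pazocal{Q}_{L_0+1}$ too little room. The $t$-spokes $\pazocal{Q}_1,\dots,\pazocal{Q}_{L_0+1}$ must then carry histories too long to be absorbed by the boundary path $\textbf{p}_{1,L_0+1}$ without violating \Cref{p_ij upper bound}. I follow the route of Lemma 7.33 of \cite{OS20}, adjusting only the parameter bookkeeping.

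First I use \Cref{first rules different}: the first letters of $H_1$ and $H_\ell$ are different rules $\theta\neq\theta'$. Both are applicable to the configuration $W$ written on $\partial\Pi$. Since every component of $W$ is a coordinate shift of the same accepted configuration $V$ of $\textbf{E}_{\textbf{S},k}$, $V$ is both $\theta$- and $\theta'$-admissible. I then do a case analysis on the submachine/state letters of $V$. Two distinct rules applicable to the same configuration either force some sector to be nonempty, or distinguish the step (for instance a transition rule versus a rule of $\textbf{E}_{\textbf{S},k}(j)$, or $\tau(y)$ versus $\zeta$ in a primitive machine). I want to extract from this that $V$ is not the start/end configuration of a step. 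Concretely, the aim is that every historical $Q_{i,\ell}Q_{i,r}$-sector, or its neighbouring primitive sector, contains at least one $a$-letter. The base of $V$ has about $N/4$ such blocks, which gives $|V|_a\geq cN$ for an absolute constant $c$.

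This alone does not reach the factor $L/L_0$, so I next combine it with the lengths of the spokes. I consider the pair of indices $i=L_0+1\leq r$ and $j=\ell-L_0\geq r+1$. \Cref{scope-path-inequalities}(2) bounds
\[
|\bar{\textbf{p}}_{ij}|\leq h_i+h_j+(2L_0+k)(N+\delta|V|_a).
\]
Under the assumed bound on $|V|_a$, the lower bound of \Cref{scope-path-inequalities}(1) gives $|\textbf{p}_{ij}|\geq h_i+h_j+N(j-i)+1$ with $j-i\geq L-2L_0-k$. Since $h_i+h_j\leq 2h\leq 2L_0^2|V|_a\leq L_0LN/2$, the ratio $|\textbf{p}_{ij}|/|\bar{\textbf{p}}_{ij}|$ is at least roughly $\bigl(L_0LN/2+LN\bigr)/\bigl(L_0LN/2+3L_0N\bigr)$. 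This exceeds $1+\eps$ only if the $h$-terms are small, so I also need the lower bound $h\gtrsim|V|_a$ to control the $h$-terms.

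The main obstacle is that combination step, and I would first try it as follows. Using \Cref{number of trapezia}, for all but $L/5$ indices $i$ in the middle range we have $|\textbf{z}_i|_a<|V|_a/c_2N$. Meanwhile $|\textbf{y}_i|_a\approx|V|_a/L$ per component. A trapezium $\Gamma_i$ with standard base and history a prefix of $H_{L_0+1}$ therefore substantially shrinks the tape. By \Cref{simplify rules} and the bounds of \Cref{E standard one-step}–\Cref{E time (12)}, this forces each such history length to be at least proportional to $|W(j_i)|_a$ unless $|V|_a$ is tiny compared with $N$; the base-length contribution is what produces the $N$ term. Summing over the roughly $L/2$ good indices between $L_0$ and $r$ in $|\textbf{p}_{L_0+1,r}|$ then yields an excess of order $LN$ over $|\bar{\textbf{p}}|$, which is of order $L_0(N+\delta|V|_a)$ plus the $h$-terms. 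This contradicts \Cref{p_ij upper bound} once $|V|_a\leq LN/4L_0$ and $L>>L_0$.
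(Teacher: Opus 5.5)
\textbf{The gap.} The computation in your third paragraph is already a complete proof, and it is essentially the paper's argument: a direct estimate showing $|\textbf{p}_{ij}|/|\bar{\textbf{p}}_{ij}|>1+\eps$ for a pair of indices straddling $r$, contradicting \Cref{p_ij upper bound}. The paper phrases it with $\textbf{p}_{L_0+1,\ell}$. Your mistake is that you misjudge the outcome of that computation and then try to finish with steps that fail.

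The ratio you wrote, roughly $(L_0LN/2+LN)/(L_0LN/2+3L_0N)$, is about $1+2/L_0$. This exceeds $1+\eps$, because $\eps=1/\sqrt{N_4}$ and $N_4>>L_0$. Explicitly, take $i=L_0+1$ and $j=\ell-L_0$, so that $i\le r<j\le\ell-1$ and $j-i=L-k-2L_0-1\ge L/2$.
\begin{itemize}
\item The normalization $h=h_{L_0+1}\ge h_{\ell-L_0}$, the hypothesis, and the assumption $|V|_a\le LN/4L_0$ give $h_i+h_j\le 2h\le L_0LN/2$ and $(k+2L_0+1)\delta|V|_a\le\delta LN$.
\item \Cref{scope-path-inequalities} then gives $|\bar{\textbf{p}}_{ij}|\le L_0LN/2+3L_0N+\delta LN\le L_0LN$.
\item It also gives $|\textbf{p}_{ij}|-|\bar{\textbf{p}}_{ij}|\ge N(L-2k-4L_0-2)-\delta LN\ge LN/2$.
\item Hence $|\textbf{p}_{ij}|\ge(1+\frac{1}{2L_0})|\bar{\textbf{p}}_{ij}|$, which contradicts \Cref{p_ij upper bound} since $\tau_\lambda\ge0$ and $\eps<1/2L_0$.
\end{itemize}

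Your remark that you also need a lower bound $h\gtrsim|V|_a$ is backwards. For $A>B$ the map $x\mapsto(x+A)/(x+B)$ is decreasing, so only an upper bound on $x=h_i+h_j$ matters, and you already have one. This is also why $j=\ell-L_0$ is the convenient choice: the normalization before \Cref{number of trapezia} gives $h_{\ell-L_0}\le h$.

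\textbf{The other steps should be discarded; they are wrong, not merely unnecessary.}
\begin{itemize}
\item First step: two distinct rules being applicable to $V$ forces nothing about its tape. The accept configuration, with $|V|_a=0$, admits $\sigma(45)^{-1}$ and every rule of $\textbf{E}_{\textbf{S},k}(5)$. So the claimed bound $|V|_a\ge cN$ does not follow.
\item Combination step, first error: $|\textbf{y}_i|_a=|V|_a$, not $|V|_a/L$. Indeed $\lab(\textbf{y}_i^{-1})\equiv W(j_i)t(j_i+1)$, and every component of $W$ is a copy of $V$.
\item Combination step, second error: the pair $(L_0+1,r)$ lies entirely on one side of $r$, and $r-L_0-1<L/2$. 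So neither \Cref{scope-path-inequalities} nor \Cref{p_ij upper bound} can be applied to $\textbf{p}_{L_0+1,r}$.
\end{itemize}
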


\begin{proof}

As in the proof of \Cref{number of trapezia}, the statement proceeds exactly in the same way as its analogue in \cite{OS20}, with an argument that $\frac{|\textbf{p}_{L_0+1,\ell}|}{|\bar{\textbf{p}}_{L_0+1,\ell}|}>1+\eps$ to contradict \Cref{p_ij upper bound}.

\end{proof}

\begin{lemma}[Lemma 7.34 of \cite{OS20}] \label{h lower bound}

We have $h>L_0^2|V|_a$.

\end{lemma}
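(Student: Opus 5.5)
We argue by contradiction: assume $h\leq L_0^2|V|_a$, so that Lemmas \ref{number of trapezia}, \ref{first rules different} and \ref{V lower bound} are all in force. As in the analogous lemmas, the aim is a lower bound on $|\textbf{p}_{L_0+1,\ell-L_0}|$ (or on $|\textbf{q}_{L_0+1,\ell-L_0}|$) together with an upper bound on $|\bar{\textbf{p}}_{L_0+1,\ell-L_0}|$ whose ratio exceeds $1+\eps$. This contradicts \Cref{p_ij upper bound}, which applies since $L_0+1\leq r$, $\ell-L_0\geq r+1$, and $(\ell-L_0)-(L_0+1)\geq L/2$ by $L>>L_0>>k$.

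\emph{Upper bound.} By \Cref{scope-path-inequalities}(2) with $i=L_0+1$, $j=\ell-L_0$, we have
$$|\bar{\textbf{p}}|\leq h_{L_0+1}+h_{\ell-L_0}+(2L_0+k)(N+\delta|V|_a)\leq 2h+(2L_0+k)(N+\delta|V|_a).$$
Using \Cref{V lower bound}, $N<4L_0|V|_a/L$, so the whole right side is $O(L_0^2|V|_a)$, with the term $2L_0^3\delta|V|_a$ made small by the choice of $\delta$.

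\emph{Lower bound.} Apply \Cref{number of trapezia}. At least $\ell-2L_0-1-L/5\geq L/2$ indices $i$ in the middle range satisfy $|\textbf{z}_i|_a<|V|_a/c_2N$. For these, the top of $\Gamma_i$ has lost almost all $a$-letters relative to the bottom $\textbf{y}_i$, whose $a$-length is about $|V|_a/L$ per component, or $|W(j_i)|_a$. By \Cref{a-bands in scopes}, at most $2N$ $a$-bands starting on $\textbf{y}_i$ end on each $\theta$-band. Hence most of the $\approx|V|_a$ $a$-bands of $\textbf{y}_i$ must either end on $\textbf{q}_{i,i+1}$ or force $\geq |V|_a/(2N)$ $\theta$-bands in $\Gamma_i$, i.e. $h_i\gtrsim |V|_a/2N$. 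In the first alternative, \Cref{lengths} gives a contribution of at least $\delta(1-o(1))|V|_a$ to $|\textbf{q}_{i,i+1}|$, beyond the $\theta$- and $q$-contributions in \Cref{q-path-inequalities}.

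Summing over the $\geq L/2$ such indices yields
$$|\textbf{q}_{L_0+1,\ell-L_0}|\geq h_{L_0+1}+h_{\ell-L_0}+N(L-2L_0-k)+\tfrac{L}{2}\cdot\tfrac{\delta}{2}|V|_a,$$
or else $h$ is large. The term $\frac{L}{4}\delta|V|_a$ dominates $(2L_0+k)\delta|V|_a$ and $2h$ relative to the upper bound once $L>>L_0$. One must also use that $h_{L_0+1}$ and $h_{\ell-L_0}$ appear on both sides, so the comparison is really of the excess terms.

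\emph{Main obstacle.} The hardest step is the alternative where the $a$-letters are erased by $\theta$-bands rather than reaching $\textbf{q}$. Here the controlled-history machinery is needed: a long history $h_i\geq|V|_a/2N$ on a spoke with $h\leq L_0^2|V|_a$ must, via \Cref{long history controlled} and \Cref{enhanced controlled}, preserve $a$-length in each component. This contradicts $|\textbf{z}_i|_a<|V|_a/c_2N$ unless the computation is short, and then \Cref{simplify rules} bounds the erasure by $2Nh_i$. I would try first to reduce this to the computation of $\textbf{E}_{\textbf{S},k}$ given by $\Gamma_i$, with bottom a copy of $V$ (accepted), and apply \Cref{E1 generalized time}-type space bounds. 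This yields $h_i\geq|V|_a/(c_1N)$ for each such $i$. Summing these over $L/2$ indices then also makes $|\textbf{q}|$ exceed $(1+\eps)|\bar{\textbf{p}}|$, completing the contradiction.
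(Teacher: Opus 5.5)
Your proof has a genuine gap. The perimeter comparison you set up cannot produce the contradiction, and the ingredient that actually does is missing.

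\textbf{Why the lower bound on $|\textbf{q}|$ fails.}

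First, $|\textbf{z}_i|_a<|V|_a/c_2N$ while $|\textbf{y}_i|_a=|V|_a$. Each component $W(j)$ is a copy of $V$, so it does not have size $|V|_a/L$. Hence almost none of the $a$-bands starting on $\textbf{y}_i$ leave the trapezium $\Gamma_i$. Your ``first alternative'' therefore essentially never occurs and contributes no $a$-edges to $\textbf{q}_{i,i+1}$. The second alternative only yields that the history of $\Gamma_i$ has length at least $|V|_a/4N$, which is immediate from \Cref{simplify rules}.

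Second, large $h_i$ for many indices do not add up. By \Cref{scope-theta-bands}, for $i<r$ every $\theta$-band crossing $\pazocal{Q}_{i+1}$ also crosses $\pazocal{Q}_i$, and symmetrically past $r$. So $|\textbf{q}_{ij}|_\theta$ is only $h_i+h_j$, exactly the terms that also appear in the bound for $|\bar{\textbf{p}}_{ij}|$.

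Third, you claim that a controlled subword forces the $a$-length to be preserved, contradicting small $|\textbf{z}_i|_a$. This is false. \Cref{enhanced controlled} constrains only a subcomputation whose entire history is controlled. A computation starting at the accepted word $V$ may legitimately shrink toward the accept configuration.

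Consequently, when $|V|_a$ is large, the term $(2L_0+k)\delta|V|_a$ in $|\bar{\textbf{p}}|$ can exceed the $q$-excess (roughly $NL$) of $\textbf{p}$. No contradiction comes from boundary lengths alone.

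\textbf{The missing idea.} You need the $\tau_\lambda$ term on the left side of \Cref{p_ij upper bound}; this is precisely why $n(\Delta)$ contains $\sigma_\lambda$ or $\rho_\lambda$.

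Choose $\alpha\in[L_0+1,r-1]$ and $\beta\in[r+1,\ell-L_0-1]$ with $|\textbf{z}_\alpha|_a,|\textbf{z}_\beta|_a\leq|V|_a/c_2N$ and $\beta-\alpha\geq 3L/5$. The bottoms of $\Gamma_\alpha$ and $\Gamma_\beta$ are coordinate shifts of one another. So \Cref{first rules different} lets you concatenate the inverse of the computation of $\Gamma_\alpha$ with a coordinate shift of that of $\Gamma_\beta$.

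This gives a reduced computation $U_0\to\dots\to U_t$ with history $H_{\alpha+1}^{-1}H_\beta$. Its endpoints have $a$-length at most $|V|_a/c_2N$, and $t\geq|V|_a/2N$. Together with \Cref{V lower bound} and $L_0>>c_2>>c_1>>N$, this yields $t>c_1\max(\|U_0\|,\|U_t\|)$.

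\Cref{long history controlled} then shows that the longer spoke, say $\pazocal{Q}_{\alpha+1}$, satisfies condition (iv) of a $\lambda$-spear and is a $\lambda$-shaft. The remaining conditions come from $\Gamma_\alpha$ and the structure of the scope.

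Its length $h_{\alpha+1}\geq\frac{1}{2}(h_{\alpha+1}+h_\beta)$ is therefore counted in $\tau_\lambda(\bar{\Delta}_{\alpha+1,\beta})$. Since $h_{\alpha+1}\geq|V|_a/4N$ dominates $L\delta|V|_a$, you get $|\textbf{p}_{\alpha+1,\beta}|+\tau_\lambda(\bar{\Delta}_{\alpha+1,\beta})\geq 1.1|\bar{\textbf{p}}_{\alpha+1,\beta}|$. This contradicts \Cref{p_ij upper bound}.
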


\begin{proof}

Assume to the contrary that $h\leq L_0^2|V|_a$.

By \Cref{number of trapezia}, we have at least $(\ell-2L_0-2)-L/5$ indices $i\in[L_0+1,r-1]\cup[r+1,\ell-L_0-1]$ such that $|\textbf{z}_i|_a<|V|_a/c_2N$.  By the parameter choices $L>>L_0>>k$, the number of indices is greater than $2L/3$.  As such, similar parameter estimates yield $\a\in\{L_0+1,\dots,r-1\}$ and $\b\in\{r+1,\dots,\ell-L_0-1\}$ such that $|\textbf{z}_\a|_a,|\textbf{z}_\b|_a\leq|V|_a/c_2N$ and $\b-\a\geq3L/5$.

Let $\pazocal{C}_\a$ and $\pazocal{C}_\b$ be the reduced computations corresponding to the trapezia $\Gamma_\a$ and $\Gamma_\b$, respectively, which are given by \Cref{trapezia are computations}.  As $\lab(\textbf{y}_\a^{-1})\equiv W(i_\a)t(i_\a)$ and $\lab(\textbf{y}_\b^{-1})\equiv W(i_\b)t(i_\b)$ for some $i_\a,i_\b\in\{1,\dots,L\}$, the initial admissible subwords of $\pazocal{C}_\a$ and $\pazocal{C}_\b$ are coordinate shifts of one another (up to adding an extra $t$-letter).  Hence, \Cref{first rules different} implies we may take the coordinate shift of the inverse computation of $\pazocal{C}_\a$ to obtain a reduced computation $\pazocal{C}:U_0\to\dots\to U_t$ with history $H_{\a+1}^{-1}H_\b$ such that $U_0$ is a coordinate shift of $\lab(\textbf{z}_\a)$ and $U_t\equiv\lab(\textbf{z}_\b)$.

Note that for $j=0,t$, we have $|V|_a-|U_j|_a>(1-1/c_2N)|V|_a\geq|V|_a/2$.  So since the application of any rule changes the $a$-length of an admissible word with the relevant base by at most $2N$, we have $h_{\a+1},h_\b\geq|V|_a/4N$.  In particular, \Cref{V lower bound} and the parameter assignments $L>>L_0$ imply $t\geq|V|_a/2N>L/8L_0>2L_0$, and so $t>L_0+|V|_a/4N$.

Meanwhile, $\max(|U_0|_a,|U_t|_a)\leq|V|_a/c_2N$, and so 
$$t>L_0+\frac{c_2}{4}\max(|U_0|_a,|U_t|_a)>c_1\max(\|U_0\|,\|U_t\|)$$ 
by the parameter choice $L_0>>c_2>>c_1>>N$.  Hence, $\pazocal{C}$ satisfies the hypotheses of \Cref{long history controlled}.

In particular, assuming without loss of generality that $h_{\a+1}\geq h_\b$, then any subband of $\pazocal{Q}_{\a+1}$ of length at least $\frac{1}{2}\lambda h_{\a+1}\geq\frac{1}{4}\lambda t$ satisfies the factorization condition of \Cref{long history controlled}.  Moreover, since $\Gamma_\a$ has history $H_{\a+1}$, the parallel construction of $\textbf{M}_\textbf{S}$ implies $\lab(\partial\Pi)$ is $H_{\a+1}$-admissible.  So, by the structure of the scope $\Psi$, it follows that it is both a $\lambda$-spear and a $\lambda$-shaft.


The rest of the proof proceeds just as in the analogue in \cite{OS20}: That $\pazocal{Q}_{\a+1}$ is a $\lambda$-spear implies $|\textbf{p}_{\a+1,\b}|+\tau_\lambda(\bar{\Delta}_{\a+1,\b})\geq1.1|\bar{\textbf{p}}_{\a+1,\b}|$, so that taking $N_4\geq100$ yields a contradiction to \Cref{p_ij upper bound}.

\end{proof}

\begin{lemma}[Lemma 7.35 of \cite{OS20}] \label{h_i lower bound}

For $i\in\{1,\dots,L_0\}$, $h_i>\delta^{-1}$.

\end{lemma}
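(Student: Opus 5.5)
Given \Cref{h lower bound}, we know $h=h_{L_0+1}>L_0^2|V|_a$. By the monotonicity coming from \Cref{scope-theta-bands}, $h_1\geq h_2\geq\dots\geq h_{L_0+1}$, since $L_0+1\leq r$. Hence $h_i\geq h>L_0^2|V|_a$ for every $i\in\{1,\dots,L_0\}$. The plan is therefore to reduce the claim to a lower bound on $|V|_a$, or otherwise to handle the case where $|V|_a$ is small.

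\textbf{Case $|V|_a$ large.} If $|V|_a\geq1$ (or more generally $|V|_a\geq\delta^{-1}/L_0^2$), then $h_i>L_0^2|V|_a$ already gives $h_i>\delta^{-1}$, after possibly adjusting parameters. The parameter order as listed is $L_0<<\delta^{-1}$, so $L_0^2|V|_a>\delta^{-1}$ is not automatic. Instead I would argue directly, as in Lemma 7.35 of \cite{OS20}, by contradiction. Suppose $h_i\leq\delta^{-1}$ for some $i\leq L_0$. Then $h_{L_0+1}\leq h_i\leq\delta^{-1}$, and so $|V|_a<\delta^{-1}/L_0^2$, which makes the disk $\Pi$ short: $|\partial\Pi|\leq L(N+1)+L|V|_a\delta$, roughly $L(N+1)+L/L_0^2$.

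\textbf{Deriving the contradiction.} The key comparison is between $\textbf{p}_{ij}$ and $\bar{\textbf{p}}_{ij}$ for $i\leq L_0$ and $j=\ell-L_0$ (or $j=\ell$), chosen so that $j-i\geq L/2$. By \Cref{q-path-inequalities}, or \Cref{scope-path-inequalities}(1) in the annular case,
\[
|\textbf{p}_{ij}|\geq h_i+h_j+N(j-i)+1,
\]
while by \Cref{scope-path-inequalities}(2),
\[
|\bar{\textbf{p}}_{ij}|\leq h_i+h_j+N(L-j+i)+(L-j+i)\delta|V|_a-1.
\]
With $j-i\approx L-2L_0$ and $L-j+i\approx 2L_0+k$, the $q$-contribution to $\textbf{p}_{ij}$ is about $NL$, whereas that of $\bar{\textbf{p}}_{ij}$ is about $N(2L_0+k)$ plus $L_0\delta|V|_a$, which is small. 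The histories $h_i,h_j$ are bounded: $h_i\leq\delta^{-1}$ by assumption, and $h_j$ is controlled by the mirror assumption $h_{\ell-L_0}\leq h$. I still need $h$ bounded. If $h$ is large, I would instead take $j$ among the indices with $h_j\leq h_i$; such indices exist since $h_r$ is minimal on the left side, and similarly on the right. Then both sides share the term $h_i+h_j\leq2\delta^{-1}$, and the ratio becomes
\[
\frac{|\textbf{p}_{ij}|}{|\bar{\textbf{p}}_{ij}|}\geq\frac{N(L-2L_0-k)}{2\delta^{-1}+N(2L_0+k)+1}.
\]
This exceeds $1+\eps$ once $L>>L_0$. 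The issue is that $\delta^{-1}>>L$, so the additive $2\delta^{-1}$ in the denominator is the main obstacle.

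To overcome this, I would exploit $\theta$-lengths measured with the modified length. Since $h_i\leq\delta^{-1}$ is small relative to other parameters, I would follow \cite{OS20} and use \Cref{lengths}: the $\theta$-edges along $\textbf{p}_{ij}$ count fully, while $\bar{\textbf{p}}_{ij}$ reuses the same $h_i+h_j$ (they appear on both). So the difference $|\textbf{p}_{ij}|-|\bar{\textbf{p}}_{ij}|\geq N(L-2(2L_0+k))-\delta L|V|_a$ is an absolute gain, of size about $NL/2$. The relative bound of \Cref{p_ij upper bound} then fails provided $\eps|\bar{\textbf{p}}_{ij}|<NL/2$, i.e. provided $|\bar{\textbf{p}}_{ij}|<\sqrt{N_4}NL/2$. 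Since $|\bar{\textbf{p}}_{ij}|\leq2\delta^{-1}+N(2L_0+k)+L$ and $N_4>>\delta^{-1}$, this holds, giving the contradiction. I expect the delicate point to be the choice of $j$ with $h_j\leq h_i$ while keeping $j-i\geq L/2$. For this I would use $r\geq L/2-k$, taking $j=r+1$ when $h_{r+1}$ is small, or else using \Cref{scope-theta-bands} to pair $i$ with the corresponding right-side index.
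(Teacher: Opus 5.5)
Your proof is correct and is essentially the paper's argument. You assume $h_i\le\delta^{-1}$, use \Cref{h lower bound} to make $\delta|V|_a$ negligible, compare $|\textbf{p}_{i,\ell-L_0}|$ with $|\bar{\textbf{p}}_{i,\ell-L_0}|$ via \Cref{scope-path-inequalities}, and contradict \Cref{p_ij upper bound} using $N_4>>\delta^{-1}$. The paper phrases the gain as a ratio exceeding $1+\delta$; you phrase it as an additive gap of order $NL$ exceeding $\eps|\bar{\textbf{p}}_{i,\ell-L_0}|$.

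Your worry about bounding $h$ is unfounded: you already have $h_{\ell-L_0}\le h=h_{L_0+1}\le h_i\le\delta^{-1}$, so $j=\ell-L_0$ works directly. Drop the fallback choice $j=r+1$, since with $i\le L_0$ and $r\ge L/2-k$ it would not in general satisfy $j-i\ge L/2$.
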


\begin{proof}

The proof proceeds in much the same way as the proof of its analogue in \cite{OS20}: Assuming toward contradiction, \Cref{h lower bound} implies $\delta^{-1}\geq h_i\geq|V|_a$, so that \Cref{scope-path-inequalities} quickly yields $\frac{|\textbf{p}_{i,\ell-L_0}|}{|\bar{\textbf{p}}_{i,\ell-L_0}|}>1+\delta$.  But the parameter assignment $N_4>>\delta^{-1}$ then implies this is a contradiction to \Cref{p_ij upper bound}.

\end{proof}

\begin{lemma}[Compare with Lemma 7.36 of \cite{OS20}] \label{no lambda-spear}

For $i\in\{1,\dots,L_0\}$, $\pazocal{Q}_i$ does not contain a $\lambda$-spear or a $\lambda$-shaft of length $\delta h$.

\end{lemma}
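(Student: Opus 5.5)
We argue by contradiction, following Lemma 7.36 of \cite{OS20}. Fix $i\in\{1,\dots,L_0\}$ and suppose $\pazocal{Q}_i$ contains a $\lambda$-spear (when $\Delta$ is annular) or a $\lambda$-shaft (when $\Delta$ is circular) of length at least $\delta h$. Since $h=h_{L_0+1}\leq h_i$ and the shafts/spears counted in $\tau_\lambda$ are maximal, the contribution of $\pazocal{Q}_i$ to $\tau_\lambda(\bar{\Delta}_{i,j})$ is at least $\delta h$, for any $j\geq r+1$. We take $j=\ell-L_0$, so that $j-i\geq L/2$ by the parameter choices $L>>L_0>>k$ and \Cref{scope-theta-bands}. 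Then \Cref{p_ij upper bound} applies to the pair $(i,j)$. Our aim is to show $|\textbf{p}_{ij}|+\tau_\lambda(\bar{\Delta}_{ij})\geq(1+\eps)|\bar{\textbf{p}}_{ij}|$, which contradicts it.

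For the lower bound we use \Cref{scope-path-inequalities}(1), or \Cref{q-path-inequalities} in the circular case where we work with $\textbf{q}$. This gives $|\textbf{p}_{ij}|\geq h_i+h_j+N(j-i)+1$, so $|\textbf{p}_{ij}|+\tau_\lambda(\bar{\Delta}_{ij})\geq h_i+h_j+\delta h+N(j-i)$. For the upper bound, \Cref{scope-path-inequalities}(2) gives $|\bar{\textbf{p}}_{ij}|\leq h_i+h_j+N(L-j+i)+(L-j+i)\delta|V|_a$. Here $L-j+i\leq 2L_0+k$.

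We then compare the surplus against the error terms. By \Cref{h lower bound}, $|V|_a<h/L_0^2$, so
$$(L-j+i)\delta|V|_a\leq(2L_0+k)\delta h/L_0^2\leq 3\delta h/L_0.$$
This is a small fraction of $\delta h$. The $N$-terms satisfy $N(j-i)\geq N(L-j+i)$, so they only help. It remains to ensure the surplus $\delta h-3\delta h/L_0$ exceeds $\eps|\bar{\textbf{p}}_{ij}|$. Since $h_j\leq h$ by the normalization $h\geq h_{\ell-L_0}$, and using $h_i\leq$ the length of $\pazocal{Q}_i$, we have $|\bar{\textbf{p}}_{ij}|\leq h_i+2h+$ lower order terms.

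The main obstacle is controlling $h_i$, which may be much larger than $h$. First we try to bound $h_i$ through the spear/shaft structure. The history of $\pazocal{Q}_i$ contains a long controlled part. By \Cref{long history controlled} applied to the computation $H_{i}^{-1}H_{j}$, in the manner of the proof of \Cref{h lower bound}, the whole maximal spear/shaft has length comparable to $h_i$ up to the factor $\lambda$. So we may replace $\delta h$ by a surplus of order $\lambda h_i$ whenever $h_i>h$. In the remaining case $h_i\leq 2h$, which is the case directly handled in \cite{OS20}, we get $|\bar{\textbf{p}}_{ij}|\leq 4h+O(L_0 N)$, and \Cref{h_i lower bound} absorbs the additive constants. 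Then $\eps=N_4^{-1/2}<<\delta$ gives the contradiction. If the comparison with $h_i$ fails, we fall back to choosing $j$ with $h_j$ minimal and using only the segment of $\pazocal{Q}_i$ beyond $h_{i+1}$, together with the comb $E_i$ estimate from \Cref{comb area}.
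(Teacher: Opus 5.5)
\textbf{There is a genuine gap: with the pair $(i,\ell-L_0)$, $i\le L_0$, the surplus $\delta h$ cannot beat $\eps|\bar{\textbf{p}}_{i,\ell-L_0}|$, because that path contains $h_i$.} Reducing to \Cref{p_ij upper bound} is the right strategy. But with your pair, \Cref{scope-path-inequalities}(2) puts $h_i$ into $|\bar{\textbf{p}}_{i,\ell-L_0}|$. Your surplus is only $\delta h$, since the $N$-terms are $O(NL)$ and do not grow with $h_i$. So you need roughly $\eps h_i\le\delta h$.

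Nothing available bounds $h_i$ ($i\le L_0$) by a multiple of $h=h_{L_0+1}$ depending only on parameters below $N_4$. The combs $E_i,\dots,E_{L_0}$ can be arbitrarily tall, and \Cref{h_i shrinkage} later shows the $h_i$ actually grow at least geometrically as $i$ decreases.

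Your repairs do not close this:
\begin{itemize}
\item The claim that the maximal spear has length of order $\lambda h_i$, via \Cref{long history controlled} applied to $H_i^{-1}H_j$, is unsupported. Gluing the two computations needs distinct first rules, and \Cref{first rules different} gives that only under $h\le L_0^2|V|_a$, which \Cref{h lower bound} shows is false.
\item That argument also needs $t>c_1\max(\|U_0\|,\|U_t\|)$, i.e.\ short tops $\textbf{z}$. This is exactly what fails here: the next lemma, \Cref{z_i lower bound}, uses the present statement to prove the tops are long.
\item A spear of length $\ge\delta h$ tells you nothing about a proportion of $h_i$.
\item The fallback via $E_i$ and \Cref{comb area} is not an argument, since for any pair $(i,j)$ the term $h_i$ remains in $|\bar{\textbf{p}}_{ij}|$.
\end{itemize}

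\textbf{The missing idea is to use the pair $(L_0+1,\ell-L_0)$ and the middle expression $|\textbf{p}_{ij}|+\tau_\lambda(\Delta)-\tau_\lambda(\Psi_{ij}')$ of \Cref{p_ij upper bound}, instead of $\tau_\lambda(\bar{\Delta}_{ij})$.} This is what the paper does.
\begin{itemize}
\item For $i\le L_0$, the spoke $\pazocal{Q}_i$ is not in $\bar{\Delta}_{L_0+1,\ell-L_0}$.
\item Cutting off $\bar{\Delta}_{L_0+1,\ell-L_0}$ removes $\Pi$, so in $\Psi_{L_0+1,\ell-L_0}'$ the band $\pazocal{Q}_i$ is no longer a spoke of any disk.
\item As in the proof of \Cref{p_ij upper bound}, every $\lambda$-spear or $\lambda$-shaft of $\Psi_{L_0+1,\ell-L_0}'$ is already one of $\Delta$.
\item Hence $\tau_\lambda(\Delta)-\tau_\lambda(\Psi_{L_0+1,\ell-L_0}')\ge\delta h$.
\end{itemize}

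Now \Cref{scope-path-inequalities} involves only $h_{L_0+1}+h_{\ell-L_0}\le 2h$. Together with \Cref{h lower bound} this gives
\[ |\bar{\textbf{p}}_{L_0+1,\ell-L_0}|\le 2h+3\delta h/L_0+N(k+2L_0+1), \]
while
\[ |\textbf{p}_{L_0+1,\ell-L_0}|+\delta h-|\bar{\textbf{p}}_{L_0+1,\ell-L_0}|\ge\delta h(1-3/L_0)+N. \]
Since $N_4>>\delta^{-1}$, $\eps=N_4^{-1/2}$ is tiny compared to $\delta$, and this contradicts \Cref{p_ij upper bound}. Your computation in the case $h_i\le 2h$ is essentially this one; the paper's choice of pair simply puts you in that case unconditionally.
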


\begin{proof}

Note that in passing from $\Delta$ to $\Psi_{L_0+1,\ell-L_0}'$, the disk $\Pi$ is removed, and so $\pazocal{Q}_i$ is no longer a $t$-spoke of any disk.  Hence, arguing as in the proof of \Cref{p_ij upper bound} that any $\lambda$-shaft or $\lambda$-spear of $\Psi_{L_0+1,\ell-L_0}'$ corresponds to one in $\Delta$, in either case we have $\tau_\lambda(\Delta)-\tau_\lambda(\Psi_{L_0+1,\ell-L_0}')\geq\delta h$.  Repeating the arguments of \cite{OS20}, \Cref{scope-path-inequalities} implies $|\textbf{p}_{L_0+1,\ell-L_0}|+\delta h\geq(1+\eps)|\bar{\textbf{p}}_{L_0+1,\ell-L_0}|$, thus again yielding a contradiction to \Cref{p_ij upper bound}.

\end{proof}

\begin{lemma}[Compare with Lemma 7.37 of \cite{OS20}] \label{z_i lower bound}

For $i\in\{1,\dots,L_0-1\}$, $|\textbf{z}_i|_a>h_{i+1}/c_2$.

\end{lemma}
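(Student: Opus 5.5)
We argue by contradiction: fix $i\in\{1,\dots,L_0-1\}$ with $|\textbf{z}_i|_a\leq h_{i+1}/c_2$. The goal is to exhibit a long $\lambda$-spear (respectively $\lambda$-shaft) on $\pazocal{Q}_{i+1}$, contradicting \Cref{no lambda-spear}, exactly as in the analogue in \cite{OS20}.

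\textbf{Step 1: a long computation with short endpoints.} Consider the trapezium $\Gamma_i\subset\Psi^0_{i,i+1}$, which has history $H_{i+1}$ and bottom label $\lab(\textbf{y}_i)$, a copy of $(W(j_i)t(j_i+1))^{-1}$. By \Cref{trapezia are computations} it yields a reduced computation $\pazocal{C}:U_0\to\dots\to U_{h_{i+1}}$. This computation can be identified, by forgetting coordinates, with a computation of $\textbf{E}_{\textbf{S},k}$ in the standard base. Here $|U_0|_a=|V|_a$ and $|U_{h_{i+1}}|_a=|\textbf{z}_i|_a\leq h_{i+1}/c_2$.

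\textbf{Step 2: bound $|V|_a$ against $h_{i+1}$.} We need $|V|_a$ to be small compared with $h_{i+1}$. Since $i+1\leq L_0$, we have $h_{i+1}\geq h_{L_0+1}=h>L_0^2|V|_a$ by \Cref{h lower bound} and the monotonicity $h_1\geq\dots\geq h_r$. Hence $|V|_a<h_{i+1}/L_0^2$. Combining this with $|\textbf{z}_i|_a\leq h_{i+1}/c_2$, with $h_{i+1}>\delta^{-1}$ from \Cref{h_i lower bound} (which absorbs the $q$-letters and additive constants), and with $L_0\gg c_2\gg c_1\gg N$, we obtain
$$h_{i+1}>c_1\max(\|U_0\|,\|U_{h_{i+1}}\|).$$

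\textbf{Step 3: long history contains controlled subwords.} Apply \Cref{long history controlled} with $\pazocal{D}=\pazocal{C}$ itself and with any prefix subcomputation of length at least $\frac14\lambda h_{i+1}$. This shows that every prefix $H''$ of $H_{i+1}$ with $\|H''\|\geq\frac12\lambda h_{i+1}$ satisfies the factorization condition: removing at most $\lambda\|H''\|$ letters from its ends still leaves a controlled subword. In particular, condition (iv) of a $\lambda$-spear and the shaft condition both hold for the subband with history $H_{i+1}$. By the parallel construction of $\textbf{M}_\textbf{S}$, $\lab(\partial\Pi)$ is $H_{i+1}$-admissible, since $\Gamma_i$ realizes this history from $W(j_i)$.

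\textbf{Step 4: the remaining spear conditions.} In the annular case we also need conditions (ii) and (iii). By \Cref{scope-theta-bands}, every $\theta$-band crossing $\pazocal{Q}_{i+1}$ (with $i+1\leq r$) also crosses $\pazocal{Q}_1$ and exits to $\textbf{p}$. Hence each such band is non-annular and crosses $\pazocal{Q}_{i+1}$ only once, as in the proof of \Cref{h lower bound}.

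\textbf{Step 5: the contradiction.} The subband of $\pazocal{Q}_{i+1}$ with history $H_{i+1}$ is therefore a $\lambda$-spear (respectively $\lambda$-shaft) of length $h_{i+1}\geq h>\delta h$. This contradicts \Cref{no lambda-spear} applied at the index $i+1\leq L_0$.

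The main obstacle is Step 2. The additive constants and the $q$-letter contributions to $\|U_j\|$ must be controlled by $h_{i+1}$, and the inequality $|V|_a<h_{i+1}/L_0^2$ must be chained correctly through the monotonicity of the $h_j$. If the direct estimate fails, the fallback is to pass to a long subcomputation via \Cref{multiply two letters}-type space bounds, as was done in \cite{OS20}.
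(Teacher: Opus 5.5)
Your proposal is correct and follows the paper's proof: assuming $|\textbf{z}_i|_a\le h_{i+1}/c_2$, \Cref{h lower bound} (via $h_{i+1}\ge h$) and \Cref{h_i lower bound} give $\|\textbf{y}_i\|,\|\textbf{z}_i\|<h_{i+1}/c_1$, so \Cref{long history controlled} applies to the computation of $\Gamma_i$ and makes $\pazocal{Q}_{i+1}$ a $\lambda$-spear (and $\lambda$-shaft) of length $h_{i+1}\ge h$, contradicting \Cref{no lambda-spear}. One small wording fix in Step 4: what you cite does not show ``only once.'' A maximal $\theta$-band of $\Delta$ could a priori meet $\pazocal{Q}_{i+1}$ in two different $\theta$-bands of $\Psi$, both ending on $\textbf{p}$, so the argument gives ``at most twice, with inverse letters,'' which is exactly what condition (iii) permits.
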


\begin{proof}

\Cref{h_i lower bound} and the parameter choices $\delta^{-1}>>c_2>>N$ imply $h_{i+1}/c_2>\delta^{-1}/c_2>N+1$.  So, assuming toward contradiction that $|\textbf{z}_i|_a\leq h_{i+1}/c_2$, we have $$\|\textbf{z}_i\|=|\textbf{z}_i|_a+N+1\leq2h_{i+1}/c_2<h_{i+1}/c_1$$
by the parameter choice $c_2>>c_1$.

Similarly, \Cref{h lower bound} implies $\|\textbf{y}_i\|=|V|_a+N+1<h/L_0^2+h_{i+1}/c_2\leq h_{i+1}/c_1$ as $L_0>>c_2>>c_1$.  Hence, the reduced computation corresponding to $\Gamma_i$ through \Cref{trapezia are computations} satisfies the hypotheses of \Cref{long history controlled}.

But then as in the proof of \Cref{h lower bound}, $\pazocal{Q}_{i+1}$ is both a $\lambda$-spear and a $\lambda$-shaft of $\Pi$ with length $h_{i+1}\geq h$, contradicting \Cref{no lambda-spear}.

\end{proof}

\begin{lemma}[Compare with Lemma 7.38 of \cite{OS20}] \label{h_i shrinkage}

For $i\in\{1,\dots,L_0-1\}$, $h_{i+1}<(1-\frac{1}{20c_2N})h_i$.

\end{lemma}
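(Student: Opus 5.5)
We follow the structure of Lemma 7.38 of \cite{OS20}: bound $h_i-h_{i+1}$ from below, which is the height of the comb $E_i^0$. Fix $i\in\{1,\dots,L_0-1\}$. The boundary of $E_i^0$ consists of three pieces: the bottom of its handle (a subband of $\pazocal{Q}_i$ of height $h_i-h_{i+1}$), the top $\textbf{z}_i$ of the trapezium $\Gamma_i$, and the path $\textbf{q}_{i,i+1}$.

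The starting point is \Cref{z_i lower bound}, which gives $|\textbf{z}_i|_a>h_{i+1}/c_2$. Each $a$-band starting on $\textbf{z}_i$ must end either on $\textbf{q}_{i,i+1}$ or on a $(\theta,q)$-cell of $E_i^0$.

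\textbf{Case 1: many $a$-bands end on $(\theta,q)$-cells.} Every $(\theta,q)$-cell of $E_i^0$ lies on a maximal $\theta$-band that ends on the handle, and there are $h_i-h_{i+1}$ such bands. By \Cref{a-bands in scopes}, at most $2N$ of the $a$-bands from $\textbf{z}_i$ end on the $(\theta,q)$-cells of a single $\theta$-band. So if at least half of the $a$-bands from $\textbf{z}_i$ end in this way, then
\[
h_i-h_{i+1}\geq \frac{|\textbf{z}_i|_a}{4N}>\frac{h_{i+1}}{4c_2N}.
\]

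\textbf{Case 2: at least half end on $\textbf{q}_{i,i+1}$.} Then $|\textbf{q}_{i,i+1}|_a>h_{i+1}/2c_2$. We plan to rule this out by contradicting the minimality of $\textbf{q}_{i,i+1}$, or \Cref{scope annular geodesic} in the annular case. The comparison path goes around $E_i^0$ along the handle side: it uses the side of $\pazocal{Q}_i$ and the rim of the comb. By \Cref{lengths}(b) its length is about $h_i-h_{i+1}$ plus $\delta$ times the remaining $a$-edges. Meanwhile \Cref{lengths}(a) gives $|\textbf{q}_{i,i+1}|\geq c+\delta(d-c)$. Comparing the $\theta$-edge counts with the $a$-edge counts then gives $h_i-h_{i+1}\geq\delta\cdot h_{i+1}/(4c_2)$ or better.

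This crude version of Case 2 is too weak, since it only produces a factor $\delta$. The intended route, following \cite{OS20}, is to use \Cref{comb area}(1), which gives $|\textbf{y}|_a\leq\frac12\alpha+bh$ with basic width $b\leq K_0$. This shows that $a$-letters on $\textbf{q}_{i,i+1}$ force a comparable handle height, provided \Cref{q-combs-and-thetas} controls the comb.

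Combining the two cases gives $h_i-h_{i+1}\geq h_{i+1}/(20c_2N)$, hence $h_{i+1}\leq(1+\frac{1}{20c_2N})^{-1}h_i$. The stated form with $(1-\frac{1}{20c_2N})$ should follow by adjusting constants, with the loss absorbed by choosing $\frac{1}{4c_2N}$ rather than $\frac{1}{20c_2N}$ in the cases above.

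The main obstacle is Case 2. It requires a careful length comparison in the comb $E_i^0$ that uses $\delta$-weighted $a$-edges and the geodesic property of $\textbf{q}_{i,i+1}$. If that comparison fails, the fallback is to replicate the \cite{OS20} estimate verbatim with the constants $c_2N$ shown above.
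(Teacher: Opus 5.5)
Your Case~1 count is sound. Read contrapositively, it is the paper's first step: if the comb height $h_i-h_{i+1}$ were at most $h_i/20c_2N$, then by \Cref{a-bands in scopes} at most $h_i/10c_2$ of the $a$-bands leaving $\textbf{z}_i$ could end on $(\theta,q)$-cells. By \Cref{z_i lower bound}, at least $2h_i/5$ of them must then run to $\partial\Delta$. Converting $h_i-h_{i+1}>h_{i+1}/4c_2N$ into the stated inequality is also harmless.

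The gap is Case~2, where you look for the contradiction in the geodesic property of $\textbf{q}_{i,i+1}$ (or \Cref{scope annular geodesic}) and in \Cref{comb area}(1). Neither can supply it. Take the extreme configuration $h_{i+1}=h_i$: $E_i$ has no cells, $\textbf{p}_{i,i+1}=\textbf{z}_i$ carries more than $h_{i+1}/c_2$ $a$-edges, and no local length comparison is violated. The comparison path you propose (side of the handle followed by $\textbf{z}_i$) contains all the same $a$-edges again, so it is not shorter. \Cref{comb area}(1) only gives the upper bound $|\textbf{q}_{i,i+1}|_a\leq|\textbf{z}_i|_a+2K_0(h_i-h_{i+1})$, not a lower bound on the height. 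So even your ``crude'' bound $h_i-h_{i+1}\geq\delta h_{i+1}/4c_2$ does not follow, and the fallback of replicating \cite{OS20} is precisely the missing argument.

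What is missing is the global minimality of the counterexample, in the form of \Cref{p_ij upper bound}. Argue by contradiction.
\begin{itemize}
\item If the statement fails, then $|\textbf{p}_{i,i+1}|_\theta=h_i-h_{i+1}\leq h_i/10c_2$, while by the count above $\textbf{p}_{i,i+1}$ has at least $2h_i/5$ $a$-edges.
\item By \Cref{lengths}(a),(c), these $a$-edges add an amount of order $\delta h_i$ on top of the lower bound $h_i+h_{\ell-L_0}+N(\ell-L_0-i)+1$ for $|\textbf{p}_{i,\ell-L_0}|$ from \Cref{scope-path-inequalities}(1).
\item On the other side, \Cref{scope-path-inequalities}(2) with $L-\ell=k$ bounds $|\bar{\textbf{p}}_{i,\ell-L_0}|$ by $h_i+h_{\ell-L_0}+N(k+L_0+i)+(k+L_0+i)\delta|V|_a$.
\item Here $N(k+L_0+i)\leq N(\ell-L_0-i)$ and $h_{\ell-L_0}\leq h\leq h_i$. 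Also $|V|_a<h/L_0^2$ by \Cref{h lower bound}, so the $\delta|V|_a$ term is only $O(\delta h_i/L_0)$.
\item Together with $h_i>\delta^{-1}$, this gives $|\textbf{p}_{i,\ell-L_0}|\geq(1+\delta/20c_2)|\bar{\textbf{p}}_{i,\ell-L_0}|$.
\item Since $(\ell-L_0)-i\geq L/2$, and $N_4>>\delta^{-1}>>c_2$ makes $\eps<\delta/20c_2$, this contradicts \Cref{p_ij upper bound}.
\end{itemize}
Intuitively, a stretch of boundary rich in $a$-letters makes it profitable to cut off $\Pi$ together with $\Psi_{i,\ell-L_0}$, which the minimal counterexample forbids. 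Your Case~2 has to be closed by comparing $\textbf{p}$ with the path $\bar{\textbf{p}}$ around the disk, not with a path around the comb.
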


\begin{proof}

Assuming the statement is false, the height of the comb $E_i$ is at most $\frac{1}{20c_2N}h_i$.  \Cref{a-bands in scopes} then implies at most $\frac{1}{10c_2}h_i$ maximal $a$-bands in $E_i$ have ends on both $\textbf{z}_i$ and on a $(\theta,q)$-cell.  So, by \Cref{z_i lower bound} and parameter assignments, the number of maximal $a$-bands in $E_i$ which have an end on both $\textbf{z}_i$ and on $\partial\Delta$ is at least
$$|\textbf{z}_i|_a-h_i/10c_2>h_{i+1}/c_2-h_i/10c_2\geq\left(1-\frac{1}{20c_2N}\right)h_i/c_2-h_i/10c_2\geq2h_i/5$$
The rest of the proof proceeds in much the same way as in the analogue in \cite{OS20}: \Cref{scope-theta-bands} implies $|\textbf{p}_{i,i+1}|_\theta\leq\frac{1}{10c_2}h_i$, so that Lemmas \ref{lengths} and \ref{scope-path-inequalities} imply inequalities that yield $\frac{|\textbf{p}_{i,\ell-L_0}|}{|\bar{\textbf{p}}_{i,\ell-L_0}|}\geq1+\delta/20c_2$; the parameter choices $N_4>>\delta^{-1}>>c_2$ then provide a contradiction to \Cref{p_ij upper bound}.

\end{proof}

\begin{lemma}[Compare with Lemma 7.39 of \cite{OS20}] \label{z_i bounded by h_i}

For $i\in\{2,\dots,L_0-1\}$, $|\textbf{z}_i|_a<4Nh_i$.

\end{lemma}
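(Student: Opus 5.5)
We argue by contradiction, in the same spirit as the proof of \Cref{h_i shrinkage}. Suppose that for some $i\in\{2,\dots,L_0-1\}$ we have $|\textbf{z}_i|_a\geq4Nh_i$. We will compare $\textbf{z}_i$ with $\textbf{z}_{i-1}$, which is legitimate because by \Cref{nabla combs} the trapezium $\Gamma_i$ has a copy $\Gamma_i'$ inside $\Gamma_{i-1}$, with top $\textbf{z}_i'$ a coordinate shift of $\textbf{z}_i$. Since $\Gamma_{i-1}$ has history $H_i$ and $\Gamma_i'$ has history $H_{i+1}$, the part of $\Gamma_{i-1}$ above $\Gamma_i'$ is a trapezium $\Gamma''$ with history the suffix of $H_i$ of length $h_i-h_{i+1}$. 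It connects $\textbf{z}_i'$ to $\textbf{z}_{i-1}$.

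The first step is to bound how far the $a$-length can drop across $\Gamma''$. By \Cref{simplify rules}, each rule changes the $a$-length of a word with base of length $N+1$ by at most $2N$. Hence
$$|\textbf{z}_{i-1}|_a\geq|\textbf{z}_i|_a-2N(h_i-h_{i+1})\geq 4Nh_i-2Nh_i=2Nh_i.$$
\Cref{h_i shrinkage} gives $h_i-h_{i+1}>h_i/(20c_2N)$, but only the trivial bound $h_i-h_{i+1}\leq h_i$ is needed here. The conclusion is that $\textbf{z}_{i-1}$ carries at least $2Nh_i$ $a$-edges.

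The second step counts the maximal $a$-bands of the comb $E_{i-1}$ that start on $\textbf{z}_{i-1}$. The height of $E_{i-1}$ is $h_{i-1}-h_i$. By \Cref{a-bands in scopes}, at most $2N(h_{i-1}-h_i)$ of these bands end on $(\theta,q)$-cells. All the others reach $\textbf{p}_{i-1,i}$. We then want to show that $2Nh_i$ substantially exceeds $2N(h_{i-1}-h_i)$. Using \Cref{h_i shrinkage} in the other direction, we need $h_{i-1}-h_i$ not much larger than $h_i$. If instead $h_i<h_{i-1}/2$, we argue separately: then $\textbf{z}_{i-1}$ has $a$-length comparable to the whole comb, and \Cref{comb area}(1) bounds $|\textbf{z}_{i-1}|_a$ by $\tfrac12|\partial E_{i-1}|_a+K_0(h_{i-1}-h_i)$. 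Either way, we expect to obtain $\Omega(h_i)$, and in the bad case $\Omega(|\textbf{z}_i|_a)$, many $a$-edges on $\textbf{p}_{i-1,i}$. Meanwhile $|\textbf{p}_{i-1,i}|_\theta=h_{i-1}-h_i$ by \Cref{scope-theta-bands}.

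The final step converts this surplus of $a$-edges, via \Cref{lengths}, into extra length $\delta\cdot\Omega(h_i)$ of $\textbf{p}_{i-1,\ell-L_0}$ beyond the lower bound of \Cref{scope-path-inequalities}(1). The upper bound of \Cref{scope-path-inequalities}(2) for $\bar{\textbf{p}}_{i-1,\ell-L_0}$ involves $h_{i-1}+h_{\ell-L_0}+O(L\delta|V|_a)$. By \Cref{h lower bound}, $|V|_a<h/L_0^2\leq h_i/L_0^2$, and $h_{\ell-L_0}\leq h\leq h_i$, so this upper bound is $O(h_{i-1})$. This yields $|\textbf{p}_{i-1,\ell-L_0}|/|\bar{\textbf{p}}_{i-1,\ell-L_0}|\geq1+c\delta$, which contradicts \Cref{p_ij upper bound} once $N_4\gg\delta^{-1}$.

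The main obstacle is the second step: controlling the bands lost to $(\theta,q)$-cells when $h_{i-1}-h_i$ is large relative to $h_i$. There the constant $4N$ must be played against the $2N$ of \Cref{a-bands in scopes}, possibly by iterating down the scope as in \cite{OS20}.
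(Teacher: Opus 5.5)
There is a genuine gap, and it is the one you flag at the end. It is created by your detour from $\textbf{z}_i$ to $\textbf{z}_{i-1}$ and the comb $E_{i-1}$. That comb has height $h_{i-1}-h_i$, and nothing available bounds this by a multiple of $h_i$. \Cref{h_i shrinkage} gives $h_{i-1}-h_i>h_{i-1}/(20c_2N)$, which is a lower bound and cannot be used ``in the other direction''. When $h_i\ll h_{i-1}$, the up to $2N(h_{i-1}-h_i)$ bands absorbed by $(\theta,q)$-cells can exhaust the $2Nh_i$ $a$-edges you guaranteed on $\textbf{z}_{i-1}$. The \Cref{comb area}(1) alternative fails for the same reason, since its error term $K_0(h_{i-1}-h_i)$ swamps $2Nh_i$. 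Even granting $\Omega(h_i)$ surplus $a$-edges on $\textbf{p}_{i-1,i}$, your last step only yields a ratio $1+c\delta h_i/h_{i-1}$, because $|\bar{\textbf{p}}_{i-1,\ell-L_0}|$ is of order $h_{i-1}$. That ratio is not bounded away from $1$, so no contradiction with \Cref{p_ij upper bound} follows.

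The fix is to stay at index $i$. The path $\textbf{z}_i$ already lies on the boundary of the comb $E_i$, whose height is $h_i-h_{i+1}\leq h_i$, and by \Cref{scope-theta-bands} $|\textbf{p}_{i,i+1}|_\theta\leq h_i$. \Cref{a-bands in scopes} then shows that at most $2Nh_i$ of the maximal $a$-bands starting on $\textbf{z}_i$ end on $(\theta,q)$-cells. So $|\textbf{z}_i|_a\geq4Nh_i$ forces at least $2Nh_i$ of them to end on $\textbf{p}_{i,i+1}$. The constant $4N$ is calibrated precisely against the height of $E_i$, and no iteration down the scope is needed. Via \Cref{lengths}, this gives $|\textbf{p}_{i,\ell-L_0}|$ a surplus of order $\delta Nh_i$ over the bound of \Cref{scope-path-inequalities}(1). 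Meanwhile $|\bar{\textbf{p}}_{i,\ell-L_0}|=O(h_i)$, because $h_{\ell-L_0}\leq h\leq h_i$ and $|V|_a<h/L_0^2$. The ratio then exceeds $1+\eps$, contradicting \Cref{p_ij upper bound}. This is the paper's argument, and your final step goes through essentially as you wrote it once the indices are $i$ rather than $i-1$.
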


\begin{proof}

By \Cref{a-bands in scopes} and \Cref{scope-theta-bands}, at most $2Nh_i$ maximal $a$-bands of $E_i$ have one end on $\textbf{z}_i$ and another on a $(\theta,q)$-cell.  So, assuming $|\textbf{z}_i|_a\geq4Nh_i$, at least $2Nh_i$ maximal $a$-bands of $E_i$ have one end on $\textbf{z}_i$ and another on $\partial\Delta$.

The rest of the proof proceeds in much the same way as for the analogue of \cite{OS20} (and much the same way as for \Cref{h_i shrinkage}): $|\textbf{p}_{i,i+1}|_\theta\leq h_i$, so that Lemmas \ref{lengths} and \ref{scope-path-inequalities} yield a contradiction to \Cref{p_ij upper bound}.

\end{proof}

\begin{lemma}[Compare with Lemma 7.40 of \cite{OS20}] \label{one-step scope}

Fix $i\in\{2,\dots,L_0-2\}$ and let $H_i=H_{i+1}H'=H_{i+2}H''H'$. Further, let $\pazocal{C}$ be the reduced computation corresponding to the trapezium $\Gamma_{i-1}$ by \Cref{trapezia are computations} and $\pazocal{D}$ be the subcomputation of $\pazocal{C}$ with history $H''H'$.  If $\pazocal{D}$ has step history of length $1$, then there is no two-letter subword $Q'Q$ of the base of $\Gamma_{i-1}$ such that every rule of $\pazocal{D}$ inserts one letter to the left of $Q$.

\end{lemma}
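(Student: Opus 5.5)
We argue by contradiction, following the proof of Lemma 7.40 of \cite{OS20}. Suppose $\pazocal{D}$ has step history of length one and there is a two-letter subword $Q'Q$ of the base of $\Gamma_{i-1}$ such that every rule of $\pazocal{D}$ inserts one letter to the left of $Q$. Restricting $\pazocal{D}$ to the $Q'Q$-sector puts us in the setting of \Cref{multiply one letter}. So the history $H''H'$ is determined, up to the natural copy, by the reduced form of the tape words at the two ends of this restricted computation. Moreover, by \Cref{multiply one letter}(a),(b), once the $a$-length of this sector starts to grow it grows strictly, and it never exceeds the maximum of its endpoint values.

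First, I will locate the endpoints of this restricted computation inside the scope. The start of $\pazocal{D}$ corresponds to $\lab(\textbf{z}_{i+1}')$, a coordinate shift of $\lab(\textbf{z}_{i+1})$ (via \Cref{nabla combs} applied twice, inside $\Gamma_{i-1}$), and its end corresponds to $\lab(\textbf{z}_{i-1})$. Both $a$-lengths are controlled: the upper bounds come from \Cref{z_i bounded by h_i}, namely $|\textbf{z}_{i+1}|_a<4Nh_{i+1}$, together with $|\textbf{z}_{i-1}|_a$ controlled via $h_{i-1}$, and the lower bound comes from \Cref{z_i lower bound}. By \Cref{h_i shrinkage}, $\|H''H'\|=h_i-h_{i+2}\geq \frac{1}{20c_2N}h_i$, so $\pazocal{D}$ is long. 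Since every rule inserts a letter next to $Q$ and the reduced history is the copy of $u_tu_0^{-1}$, the tape word in this sector at the end of $\pazocal{D}$ must have length at least $\|H''H'\|-|u_0|$. Here I will use that the $Q'Q$-sector word at the start of $\pazocal{D}$ is a subword of a coordinate shift of $\textbf{z}_{i+1}$, so in particular $|u_0|\leq|\textbf{z}_{i+1}|_a$.

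Next, I will convert this into a geometric contradiction. The last $\|H'\|$ rules of $\pazocal{D}$ correspond to the comb $E_i$, and the remaining ones correspond to $E_{i+1}$ shifted into $\Gamma_{i-1}$. The $a$-bands starting on the $Q'Q$-sector of $\textbf{z}_i$, which are created by the rules of $H'$ inserting at $Q$, cannot end on $(\theta,q)$-cells of $E_i$ except for a bounded number per $\theta$-band (\Cref{a-bands in scopes}). Hence many of them must end on $\textbf{p}_{i,i+1}$. This lets me bound $|\textbf{p}_{i,i+1}|_a$ below by roughly $\|H'\|$ minus $O(N h_{i+1}/c_2)$, using \Cref{z_i lower bound} and \Cref{h_i shrinkage}. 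With $|\textbf{p}_{i,i+1}|_\theta\le h_i-h_{i+1}$, \Cref{lengths} and \Cref{scope-path-inequalities} then give $|\textbf{p}_{i,\ell-L_0}|\geq(1+\delta/c_2')|\bar{\textbf{p}}_{i,\ell-L_0}|$ for a constant $c_2'$ depending on $c_2,N$, which contradicts \Cref{p_ij upper bound} because $N_4\gg\delta^{-1}\gg c_2$. I will run the annular and circular cases uniformly here, since \Cref{p_ij upper bound} holds for both.

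The main obstacle is the bookkeeping of the first step. I have to make precise that the single-step assumption forces the inserted letters to accumulate monotonically in the $Q'Q$-sector, using \Cref{multiply one letter}(c), rather than cancelling against the letters already present in $\textbf{z}_{i+1}$. That is what produces a linear-in-$h_i$ supply of $a$-bands reaching $\partial\Delta$. I would first try the case split on whether the sector length is decreasing or increasing at the start of $\pazocal{D}$, since by \Cref{multiply one letter}(c) the decreasing phase has length at most $|u_0|<4Nh_{i+1}$. The parameter margin from \Cref{h_i shrinkage} is only $\frac1{20c_2N}$, so I expect that deriving the required bound from the statements recalled above alone will need care.
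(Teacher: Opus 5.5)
You have a genuine gap, in two places.

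\textbf{First gap: your second step produces no surplus.} You never get more $a$-edges than $\theta$-edges on $\textbf{p}_{i,i+1}$. A bound $|\textbf{p}_{i,i+1}|_a\ge\|H'\|-O(Nh_{i+1}/c_2)$ set against $|\textbf{p}_{i,i+1}|_\theta\le h_i-h_{i+1}=\|H'\|$ gives, through \Cref{lengths}(a), nothing beyond the $\theta$-count.

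You are in effect reusing the argument of \Cref{h_i shrinkage}. That argument works only because there the comb $E_i$ is short. Here $E_i$ has height $h_i-h_{i+1}>h_i/20c_2N$, so \Cref{a-bands in scopes} allows up to $2N(h_i-h_{i+1})$ absorbed $a$-bands, which can swallow everything on $\textbf{z}_i$. Also, the $a$-edges of $\textbf{z}_i$ are produced by $H''$ below $\textbf{z}_i$, not by $H'$.

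The missing idea is exact accounting inside a comb $\nabla\subset E_i^0$. Let $\pazocal{Q}',\pazocal{Q}$ be the $q$-bands of the $Q'Q$-sector and $\textbf{x}$ the subpath of $\textbf{z}_i$ between them. Then $\nabla$ is the comb bounded by $\pazocal{Q}'$, $\pazocal{Q}$, $\textbf{x}$ and $\textbf{q}_{i,i+1}$.

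Read the hypothesis as each rule of $\pazocal{D}$ lengthening the $Q'Q$-sector. This is what \Cref{standard one-step} supplies in \Cref{scope-contradiction}. Under your literal reading, a decreasing phase of length up to $|u_0|<4Nh_{i+1}$ may exceed $\|H''\|$, and the argument really does fail.

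With that reading, the argument runs as follows.
\begin{itemize}
\item The $H''$-part gives $|\textbf{x}|_a\ge\|H''\|=h_{i+1}-h_{i+2}>h_{i+1}/20c_2N$ by \Cref{h_i shrinkage}.
\item Via the copy of $\nabla$ in $\Gamma_{i-1}$ (\Cref{nabla combs}), every cell of $\pazocal{Q}$ and every $a$-edge of $\textbf{x}$ starts an $a$-band ending on $\pazocal{Q}'$ or on $\textbf{q}_{i,i+1}$.
\item $\pazocal{Q}'$ absorbs at most one such band per cell.
\item By \Cref{scope-theta-bands}, the $\theta$-edges of $\textbf{q}_{i,i+1}$ on $\partial\nabla$ number exactly $|\pazocal{Q}|-|\pazocal{Q}'|$.
\end{itemize}
The surplus is therefore $|\textbf{x}|_a$, and this yields $|\textbf{p}_{i,\ell-L_0}|-|\bar{\textbf{p}}_{i,\ell-L_0}|\ge\delta h_{i+1}/40c_2N$.

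\textbf{Second gap: the uniform finish through \Cref{p_ij upper bound} fails.} Even with this surplus, you cannot close both cases through the statement of \Cref{p_ij upper bound}. The gain is proportional to $h_{i+1}$, while $|\bar{\textbf{p}}_{i,\ell-L_0}|$ is of order $h_i$: it contains a side of $\pazocal{Q}_i$ and is at most $2.1h_i$. Nothing bounds $h_{i+1}/h_i$ from below, since \Cref{h_i shrinkage} is only an upper bound. So no ratio $1+\delta/c_2'$ with a fixed constant is available to beat $\eps=1/\sqrt{N_4}$.

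In the annular case this is harmless. A strict perimeter drop together with $\tau_\lambda(\Psi_{i,\ell-L_0}')\le\tau_\lambda(\Delta)$ (the first inequality of \Cref{p_ij upper bound}) gives $n(\Psi_{i,\ell-L_0}')<n(\Delta)$, contradicting $n$-minimality.

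In the circular case the paper has to carry out a full $G$-area comparison, which your plan lacks.
\begin{itemize}
\item Apply the inductive hypothesis (or \Cref{circular diskless}) to $\Psi_{i,\ell-L_0}'$, with $n$ lowered by $\delta h_{i+1}/40c_2N$.
\item Use the mixture estimate $\mu(\Delta)-\mu(\Psi_{i,\ell-L_0}')\ge-4Jh_{i+1}n+\frac{1}{20c_2N}h_{i-1}h_i$.
\item Bound $\text{Area}_G(\Psi_{i,\ell-L_0})+\text{wt}(\Pi)\le2N_2\phi(5N_2h_i)$, using $|\textbf{p}_{i,\ell-L_0}|\le2.2h_i$ and $|\partial\Pi|\le h_i/2$.
\end{itemize}
The perimeter gain then pays for the mixture loss $4N_3Jh_{i+1}ng(N_3n)$. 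The mixture gain $\frac{N_3}{20c_2N}h_{i-1}h_ig(N_3n)$ pays for the $\phi(h_i)$-sized contributions of $\Psi_{i,\ell-L_0}$ and $\Pi$.
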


\begin{proof}

Assume to the contrary that the two-letter subword $Q'Q$ of the base of $\Gamma_{i-1}$ exists as in the statement.

As in analogous settings in previous literature, let $\pazocal{Q}'$ and $\pazocal{Q}$ be the maximal $q$-bands of $E_i^0$ which are subbands of the $q$-spokes of $\Pi$ corresponding to the appropriate coordinate shifts of $Q'$ and $Q$, respectively, and let $\textbf{x}$ be the subpath of $\textbf{z}_i$ between $\pazocal{Q}'$ and $\pazocal{Q}$.

Let $\Gamma_{i+1}'$ be the copy of $\Gamma_{i+1}$ in $\Gamma_i$ given by \Cref{nabla combs}.  Note then that the complement $\Gamma_i\setminus\Gamma_{i+1}'$ is a trapezium with history $H''$ and top $\textbf{z}_i$, and so by hypothesis we have $|\textbf{x}|_a\geq\|H''\|=h_{i+1}-h_{i+2}$.  Hence, \Cref{h_i shrinkage} implies $|\textbf{x}|_a>h_{i+1}/20c_2N$.

Let $\nabla$ be the comb contained in $E_i^0$ bounded by $\pazocal{Q}'$, $\pazocal{Q}$, $\textbf{x}$, and $\textbf{q}_{i,i+1}$.  \Cref{nabla combs} tells us a copy of $\nabla$ exists in $\Gamma_{i-1}$ as part of the subtrapezium with history $H'$, so that our hypothesis implies every corresponding cell of $\pazocal{Q}$ is the end of a maximal $a$-band of $\nabla$ which also ends on either $\pazocal{Q}'$ or $\textbf{q}_{i,i+1}$.  But \Cref{scope-theta-bands} implies the length of $\pazocal{Q}$ is at least that of $\pazocal{Q}'$ and the number of $\theta$-edges of $\textbf{q}_{i,i+1}$ is the difference between these two values.  \Cref{lengths} then implies there are at least $|\textbf{x}|_a$ $a$-edges of the portion of $\textbf{q}_{i,i+1}$ shared with $\partial\nabla$ which contribute $\delta$ to $|\textbf{q}_{i,i+1}|$.  Hence, we have
$$|\textbf{p}_{i,\ell-L_0}|\geq|\textbf{q}_{i,\ell-L_0}|\geq h_i+h_{\ell-L_0}+N(\ell-L_0-i)+\frac{\delta h_{i+1}}{20c_2N}$$
On the other hand, note that $L-\ell=k$, so that \Cref{scope-path-inequalities} implies $$|\bar{\textbf{p}}_{i,\ell-L_0}|\leq h_i+h_{\ell-L_0}+N(k+L_0+i)+(k+L_0+i)\delta|V|_a$$
Since $i\leq L_0$, the parameter choices $L>>L_0>>k$ then imply
$$\ell-L_0-i\geq\ell-2L_0=L-2L_0-k\geq k+2L_0\geq k+L_0+i$$
so that \Cref{h lower bound} and the parameter choices $L_0>>c_2>>N$ imply
$$|\textbf{p}_{i,\ell-L_0}|-|\bar{\textbf{p}}_{i,\ell-L_0}|\geq\delta\left(\frac{h_{i+1}}{20c_2N}-3L_0|V|_a\right)>\delta\left(\frac{h_{i+1}}{20c_2N}-\frac{3h}{L_0}\right)\geq\delta h_{i+1}/40c_2N$$
Cutting along $\bar{\textbf{p}}_{i,\ell-L_0}$ to remove $\bar{\Delta}_{i,\ell-L_0}$, this implies $|\partial\Psi_{i,\ell-L_0}'|<|\partial\Delta|-\delta h_{i+1}/40c_2N$.

\Cref{p_ij upper bound} then implies $n(\Psi_{i,\ell-L_0}')<n(\Delta)-\delta h_{i+1}/40c_2N$.  In particular, if $\Delta$ is annular, then $\Psi_{i,\ell-L_0}'$ contradicts the $n$-minimality of $\Delta$, and thus we may assume henceforth that $\Delta$ is a (weakly minimal) circular diagram.

So, letting $n=n(\Delta)$, either $\Psi_{i,\ell-L_0}'$ is diskless or is weakly minimal diagram that satisfies $n(\Psi_{i,\ell-L_0}')<n-\delta h_{i+1}/40c_2N$, and so either the minimality of $\Delta$ as a counterexample or \Cref{circular diskless} implies
\begin{align*}
\text{Area}_G(\Psi_{i,\ell-L_0}')&\leq N_4\phi(N_4(n-\delta h_{i+1}/40c_2N))+N_3\mu(\Psi_{i,\ell-L_0}')g(N_3(n-\delta h_{i+1}/40c_2N)) \\
&\leq N_4\phi(N_4n)-\frac{N_4^3\delta}{40c_2N}h_{i+1}ng(N_4n)+N_3\mu(\Psi_{i,\ell-L_0}')g(N_3n)
\end{align*}
Many of the estimates necessary for the rest of the proof follow in just the same way as in \cite{OS20}.  For example, we have
$$\mu(\Delta)-\mu(\Psi_{i,\ell-L_0}')\geq-2Jn(h_{i+1}+h_{\ell-L_0})+\frac{1}{20c_2N}h_{i-1}(h_i+h_{\ell-L_0})$$
where the only deviations come from the use of $\ell$ in place of $L-3$ and the difference between the estimate of \Cref{h_i shrinkage} and that of its analogue.  Further, note that $h_{i+1}\geq h\geq h_{\ell-L_0}$ by assumption, and so 
$$\mu(\Delta)-\mu(\Psi_{i,\ell-L_0}')\geq-4Jh_{i+1}n+\frac{1}{20c_2N}h_{i-1}h_i$$

Further, similar parameter estimates as in \cite{OS20} yield $|\bar{\textbf{p}}_{i,\ell-L_0}|\leq2.1h_i$, so that taking $N_4$ large enough means \Cref{p_ij upper bound} implies $|\textbf{p}_{i,\ell-L_0}|\leq2.2h_i$.

Conversely, $|\partial\Pi|=LN+L\delta|V|_a$, so that \Cref{h lower bound}, \Cref{h_i lower bound}, and the parameter choices $\delta^{-1}>>L>>N$ imply $|\partial\Pi|\leq\delta^{-1}/4+h_i/L_0^2\leq h_i/2$.

As a result, $|\partial\Psi_{i,\ell-L_0}|\leq|\textbf{p}_{i,\ell-L_0}|+|\bar{\textbf{p}}_{i,\ell-L_0}|+|\partial\Pi|\leq5h_i$, so that \Cref{circular diskless} implies
$$\text{Area}_G(\Psi_{i,\ell-L_0})\leq N_2\phi(5N_2h_i)+N_1\mu(\Psi_{i,\ell-L_0})g(5N_1h_i)$$
while the assignment of weights implies
$$\text{wt}(\Pi)\leq C_1\phi(C_1h_i/2)$$
Note that \Cref{mixtures} implies $\mu(\Psi_{i,\ell-L_0})\leq J|\partial\Psi_{i,\ell-L_0}|^2\leq25Jh_i^2$, so that the parameter choice $N_1>>J$ implies $N_1\mu(\Psi_{i,\ell-L_0})g(5N_1h_i)\leq N_1\phi(5N_1h_i)$.  In particular, the parameter choices $N_2>>N_1>>C_1$ yield
$$\text{Area}_G(\Psi_{i,\ell-L_0})+\text{wt}(\Pi)\leq2N_2\phi(5N_2h_i)$$
Hence, as \Cref{G-area subdiagrams} implies $\text{Area}_G(\Delta)\leq\text{Area}_G(\Psi_{i,\ell-L_0}')+\text{Area}_G(\Psi_{i,\ell-L_0})+\text{wt}(\Pi)$, it suffices to show that
\begin{equation}\label{eqn-one-step-scope-1} 
\frac{N_4^3\delta}{40c_2N}h_{i+1}ng(N_4n)+\frac{N_3}{20c_2N}h_{i-1}h_ig(N_3n)\geq4N_3Jh_{i+1}ng(N_3n)+2N_2\phi(5N_2h_i)
\end{equation}
First, the parameter choices $N_4>>N_3>>\delta^{-1}>>J>>c_2>>N$ allow us to assume $$\frac{N_4^3\delta}{40c_2N}h_{i+1}ng(N_4n)\geq4N_3Jh_{i+1}ng(N_3n)$$
Thus, in place of \Cref{eqn-one-step-scope-1} it suffices to show that
\begin{equation}\label{eqn-one-step-scope-2}
\frac{N_3}{20c_2N}h_{i-1}h_ig(N_3n)\geq2N_2\phi(5N_2h_i)
\end{equation}
But $\phi(5N_2h_i)=25N_2^2h_i^2g(5N_2h_i)$, so that this amounts to showing 
$$N_3h_{i-1}h_ig(N_3n)\geq1000N_2^3c_2Nh_i^2g(5N_2h_i)$$
which follows from the parameter choices $N_3>>N_2>>c_2>>N$ since $n\geq h_{i-1}\geq h_i$.

\end{proof}

We now reach the ultimate contradiction of the section, proving the statements in \Cref{sec-counterexamples}.

\begin{lemma}[Compare with Lemma 7.41 of \cite{OS20}] \label{scope-contradiction}

The counterexample diagrams do not exist.

\end{lemma}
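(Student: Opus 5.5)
The plan mirrors the proof of Lemma 7.41 of \cite{OS20}: find an index $i\in\{2,\dots,L_0-2\}$ at which \Cref{one-step scope} applies, and contradict it.

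First I fix $i$ in the middle of the range $[2,L_0-2]$, writing $H_i=H_{i+1}H'=H_{i+2}H''H'$. I then take the computation $\pazocal{C}$ associated to $\Gamma_{i-1}$ by \Cref{trapezia are computations}, with $\pazocal{D}$ its subcomputation of history $H''H'$. By \Cref{h_i shrinkage}, $\|H''H'\|=h_i-h_{i+2}$ is at least a fixed fraction $\frac{1}{20c_2N}h_i$ of $h_i$.

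Second, I control the step history of $\pazocal{D}$ using two bounds. \Cref{z_i bounded by h_i} gives $|\textbf{z}_{i-1}|_a,|\textbf{z}_i|_a<4Nh$, and \Cref{h lower bound} gives $|V|_a<h/L_0^2$. With these, $\pazocal{D}$ runs between admissible words of $a$-length $O(Nh_i)$ while having length comparable to $h_i$. I use \Cref{z_i lower bound} together with the iterated shrinkage $h_{i+1}<(1-\frac{1}{20c_2N})h_i$ to argue that the lengths decrease geometrically in $i$. Choosing $i$ appropriately, and pigeonholing over the $L_0$ available indices using $L_0>>c_2>>N$, I aim to force $\pazocal{D}$ to have step history of length one, in particular to lie inside a single submachine.

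Third, inside a single submachine $\textbf{E}_{\textbf{S},k}(j)$ there is always a two-letter subword $Q'Q$ of the base into which every rule inserts a letter on the left. For $j\in\{1,5\}$ this is a historical $Q_{x,\ell}Q_{x,r}$-sector. For $j=3$ it is a historical sector via \Cref{multiply two letters}. For $j\in\{2,4\}$ I restrict to a maximal piece between connecting rules and invoke the structure from \Cref{primitive computations} and \Cref{LR_k analogue}. The mirror symmetry handles the left/right orientation issue, as in \Cref{standard one-step}. This contradicts \Cref{one-step scope}, so the counterexample diagrams (both circular and annular) cannot exist.

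The main obstacle is the second step: excluding the case where $\pazocal{D}$ passes through transition rules. A long step history would force an accepted configuration with controlled history (\Cref{long history controlled}). That would make $\pazocal{Q}_{i}$ or $\pazocal{Q}_{i+1}$ a $\lambda$-spear or $\lambda$-shaft of length at least $\delta h$, which \Cref{no lambda-spear} forbids. I would try this route first. A step history of length $2$ or $3$ would be handled by shrinking to a subcomputation of proportion at least $1/3$ and reapplying \Cref{h_i shrinkage}.
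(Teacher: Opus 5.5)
Your dichotomy has the right shape: a one-step stretch contradicts \Cref{one-step scope}, and a stretch with many steps yields a long $\lambda$-spear contradicting \Cref{no lambda-spear}. However, neither branch works as you set it up.

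\textbf{The multi-step branch.} You invoke \Cref{long history controlled}, but a long step history does not give its hypothesis $t>c_1\max(\|W_0\|,\|W_t\|)$, and you cannot verify that hypothesis here. The only information on the endpoints is $h_{i+1}/c_2<|\textbf{z}_i|_a<4Nh_i$ (\Cref{z_i lower bound}, \Cref{z_i bounded by h_i}), so the endpoint words may be comparable to the length of the computation. Indeed, \Cref{z_i lower bound} is exactly what remains once that route has been used. The paper instead uses \Cref{(1) to (5) mostly (4)}, which needs no endpoint bound. Enough step changes force, via \Cref{E primitive step history} and \Cref{E run step history}, a full pass $(12)(2)(23)(3)(34)(4)(45)$ or its reverse. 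Then the maximal subcomputation $W_r\to\dots\to W_s$ running from a $(12)/(54)$ letter to a $(45)/(21)$ letter consists, up to a fraction $\frac{5}{2k}$, of $(34)(4)(45)$ and $(54)(4)(43)$ pieces.

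You also ignore that a $\lambda$-spear is an \emph{initial} subband of the spoke. Condition (iv) concerns every prefix of length at least $\frac12\lambda\|H'\|$, so controlled subwords must be dense starting from $\Pi$; a long step history in the middle of $H_i$ gives nothing. The paper places the stretch at the $\Pi$-end, namely $V_m\to\dots\to V_{m-41}$. It then uses the geometric decay of the heights to get $s\geq 12c_2Nr$, so $W_0\to\dots\to W_r$ is negligible and the prefix of $\pazocal{Q}_{j_2}$ of length $s\geq h$ is a $\lambda$-spear.

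\textbf{The one-step branch.} The hypothesis of \Cref{one-step scope} is in effect a growth condition, since its proof uses $|\textbf{x}|_a\geq\|H''\|$. Your third step claims such a $Q'Q$ always exists within one submachine, and that is false. The multiplied letter can cancel against the tape word, e.g.\ in the historical sectors under $(1)$, $(3)$, $(5)$. Growth comes only from \Cref{standard one-step}, which needs $|W_z|_a>3|W_0|_a$ and delivers growth only after $W_z$. This is why the paper proceeds as follows:
\begin{enumerate}
\item It fixes $\eta$ with $(1-\frac{1}{20c_2N})^\eta<\frac{1}{12c_2N}$.
\item It chooses indices $j_1<\dots<j_m$ with gaps exceeding $\eta$, so that \Cref{z_i lower bound} and \Cref{z_i bounded by h_i} give $|\textbf{z}_{j_i}|_a>3|\textbf{z}_{j_{i+1}}|_a$.
\item It transports these to words $V_i$ of the computation of $\Gamma_{j_2}$ by \Cref{nabla combs}.
\item It applies \Cref{standard one-step} to $V_{i+2}\to\dots\to V_i$. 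Since $\eta\geq2$, the growing tail $V_{i+1}\to\dots\to V_i$ contains a window of exactly the form $H''H'$ that \Cref{one-step scope} requires.
\end{enumerate}
Your pigeonholing to force a single $\pazocal{D}$ to be one-step, and your shrinking of a length-$2$ or $3$ step history to a third, supply neither the growth nor the $H''H'$ form. The right split is over windows. Either some $V_{i+2}\to\dots\to V_i$ is one-step, or every $V_{i+20}\to\dots\to V_i$ has at least $10$ maximal one-step pieces, which feeds the spear argument above. Also, \Cref{z_i bounded by h_i} gives $|\textbf{z}_i|_a<4Nh_i$, not $4Nh$.
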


\begin{proof}

As in similar settings in previous literature, we begin by fixing a positive integer $\eta$ such that $(1-\frac{1}{20c_2N})^\eta<\frac{1}{12c_2N}$.  While $\eta$ is not a listed parameter, we may take $L_0>>\eta$ since $\eta$ is dependent only on $c_2$ and $N$ while we take $L_0>>c_2>>N$.

For $1\leq i<j\leq L_0-1$ with $j-i-1\geq\eta$, \Cref{h_i shrinkage} implies $h_j\leq(1-\frac{1}{20c_2N})^\eta h_{i+1}<\frac{1}{12c_2N}h_{i+1}$.  \Cref{z_i lower bound} then implies $|\textbf{z}_i|_a>h_{i+1}/c_2>12Nh_j$, while \Cref{z_i bounded by h_i} implies $|\textbf{z}_j|_a<4Nh_j$.  Hence, $|\textbf{z}_i|_a>3|\textbf{z}_j|_a$.

Using $L_0>>\eta$, fix indices $2\leq j_1<j_2<\dots<j_m\leq L_0-1$ such that $m\geq c_0$ and $j_{i+1}-j_i-1\geq\eta$ for all $i$.  So, $|\textbf{z}_{j_i}|_a>3|\textbf{z}_{j_{i+1}}|_a$ and $h_{j_i+1}\geq12c_2Nh_{j_{i+1}}$.

Let $\pazocal{C}:W_0\to\dots\to W_t$ be the reduced computation corresponding to the trapezium $\Gamma_{j_2}$.  By \Cref{nabla combs}, $\Gamma_{j_2}$ contains a copy of $\Gamma_j$ for all $j\geq j_2$.  So, there exist words $V_i$ in $\pazocal{C}$ for $i=2,\dots,m$ which are coordinate shifts of the labels of $\textbf{z}_{j_i}$.  As above, $|V_i|_a>3|V_{i+1}|_a$ for each $i$.

If for some $i$ the subcomputation $V_{i+2}\to\dots\to V_i$ of $\pazocal{C}$ has step history of length 1, then \Cref{standard one-step} yields a two-letter subword $Q'Q$ of the base of $\pazocal{C}$ such that each transition of the subcomputation inserts a letter to the left of $Q$ and increases the sector's length.  But then $\eta\geq2$ implies the existence of a subcomputation which contradicts \Cref{one-step scope}.

Hence, for any $i$ the subcomputation $V_{i+20}\to\dots\to V_i$ of $\pazocal{C}$ has at least 10 distinct maximal one-step subcomputations.  Lemmas \ref{E primitive step history} and \ref{E run step history} then imply that the step history of such a subcomputation has a subword of the form $(12)(2)(23)(3)(34)(45)$ or $(54)(4)(43)(3)(32)(2)(21)$.

Taking $c_0$ sufficiently large, consider the subcomputation $\pazocal{D}:V_m\to\dots\to V_{m-41}$ of $\pazocal{C}$ and let $\pazocal{D}':W_r\to\dots\to W_s$ be the maximal subcomputation of $\pazocal{D}$ such that the first letter of the step history of $\pazocal{D}'$ is either $(12)$ or $(54)$ and the last letter is either $(45)$ or $(21)$.  Note then that $s\geq h_{j_{m-21}}$ and $r\leq h_{j_{m-20}}$, so that $s\geq12c_2Nr$.

Applying \Cref{(1) to (5) mostly (4)} to $\pazocal{D}'$ then implies the length of its subcomputations whose step histories are either $(34)(4)(45)$ or $(54)(4)(43)$ is at least $(1-\frac{5}{2k})(s-r)$.  So, considering the subcomputation $\pazocal{E}:W_0\to\dots\to W_s$ the parameter choices $c_2>>N>>k$ then allow us to assume the sum of the subcomputations of $\pazocal{E}$ with step history $(34)(4)(45)$ or $(54)(4)(43)$ is at least $(1-\frac{3}{k})s$.  

As in the proof of \Cref{long history controlled}, this implies that for any fixed subcomputation $\pazocal{F}:W_x\to\dots\to W_y$ of $\pazocal{E}$ with $y-x\geq\frac{1}{4}\lambda s$, any subword of the history of $\pazocal{F}$ of length at least $(1-\lambda)(y-x)$ has a controlled subword.

In particular, the subband of $\pazocal{Q}_{j_2}$ corresponding to the history of $\pazocal{E}$ satisfies condition (iv) of being a $\lambda$-spear.  As $\Gamma_{j_2-1}$ shows that condition (i) is satisfied and (ii) and (iii) are given by the properties of the scope $\Psi$, it then follows that the band is indeed a $\lambda$-spear.

Hence, $\pazocal{Q}_{j_2}$ contains a maximal $\lambda$-spear (and $\lambda$-shaft) of length at least $s\geq h_{j_{m-21}}\geq h$.  But this contradicts \Cref{no lambda-spear}, completing the proof.

\end{proof}

\begin{remark}

Note that these final arguments crucially use the mirror construction of the machine $\textbf{M}_\textbf{S}$: We assumed throughout the arguments that $h_{L_0+1}\geq h_{\ell-L_0}$, which we claimed did not lose generality since otherwise we could pass to the mirror diagram (see the discussion preceding \Cref{number of trapezia}).  However, it is crucial for the proof of \Cref{one-step scope} that one letter is being inserted to the right of the sector, and so taking a mirror diagram would require one letter to be inserted to the left of the sector.  The mirror construction of the machine achieves this, as we can then pass to the mirror copy of the sector in question.

\end{remark}

\bigskip


\section{Annular diagrams with disks}

While the arguments outlined in the previous section are sufficient for our study of circular diagrams, more work is needed for annular diagrams.  

To lay the framework for the arguments that follow, recall from the previous section (see \Cref{sec-counterexamples}) that a minimal annular diagram $\Delta$ is called $n$-minimal if the value $n(\Delta)=|\partial\Delta|+\rho_\lambda(\Delta)$ is minimal among all minimal annular diagrams realizing the same conjugacy relation in $G(\textbf{M}_\textbf{S})$.  Fixing an $n$-minimal diagram $\Delta$ and assuming $s_1(\Delta)\geq1$, our goal through the rest of this section is to find a path $\textbf{t}$ in $\Delta$ between the two boundary components with $|\textbf{t}|\leq N_4|\partial\Delta|+N_4$.

To present this efficiently, we fix a potential counterexample, that is, an (annular) $n$-minimal diagram $\Delta$ with $s_1(\Delta)\geq1$ such that any path $\textbf{t}$ between its two boundary components satisfies $$|\textbf{t}|>N_4|\partial\Delta|+N_4$$  
Through the rest of this section, we study the makeup of this hypothetical diagram $\Delta$, culminating with the conclusion that it cannot exist in the first place (see \Cref{annular disks}).

\medskip


\subsection{Basic properties} \

The next pair of statements, our first observations about the counterexample diagram $\Delta$, are immediate consequences of the definition of $n$-minimal diagrams.

\begin{lemma} \label{n-minimal trivial path}

There is no non-contractible loop $\textbf{s}$ in $\Delta$ with $\lab(\textbf{s})=_{G(\textbf{M}_\textbf{S})}1$.

\end{lemma}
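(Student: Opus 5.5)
We argue by contradiction. Suppose $\textbf{s}$ is a non-contractible loop in $\Delta$ with $\lab(\textbf{s})=_{G(\textbf{M}_\textbf{S})}1$. Since $\textbf{s}$ is non-contractible in the annulus, it separates the two boundary components. Cutting $\Delta$ along $\textbf{s}$ therefore yields two annular subdiagrams $\Delta_1$ and $\Delta_2$, one for each boundary component. Let $u$ and $v$ be the labels of the outer and inner boundary components, and $w=\lab(\textbf{s})$. Then $\Delta_1$ shows that $u$ is conjugate to $w^{\pm1}$, hence $u=_{G(\textbf{M}_\textbf{S})}1$. Likewise $\Delta_2$ gives $v=_{G(\textbf{M}_\textbf{S})}1$.

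So $\Delta$ realizes the conjugacy relation between the trivial element and itself. This relation is also realized by an annular diagram with both boundary labels empty and no cells: a single vertex, or a simple loop of length zero, viewed as the degenerate annulus. We take the convention of the paper that realizing the pair $(1,1)$ allows the words $u,v$ to be replaced by the empty words. Such a diagram $\Gamma$ has $|\partial\Gamma|=0$ and $\rho_\lambda(\Gamma)=0$. It is minimal, since $s(\Gamma)=(0,0)$.

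We now compare with $\Delta$. Since $s_1(\Delta)\geq1$, $\Delta$ contains a disk, and a disk relator has a nonempty label. This forces $\partial\Delta$ to be nonempty or the diagram to be non-trivial. In either case $n(\Delta)\geq|\partial\Delta|>0$. Indeed, if $|\partial\Delta|=0$, then the two boundary labels are empty, so the conjugator length question is trivial. In that case the path $\textbf{t}$ can be taken of length $0\leq N_4$, contradicting the counterexample assumption directly. Otherwise $n(\Gamma)=0<n(\Delta)$, contradicting the $n$-minimality of $\Delta$.

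The main subtlety is the degenerate case. The paper defines annular diagrams for a pair of words, and we need that the pair of trivial elements admits a minimal annular diagram with $n=0$. If the degenerate annulus is disallowed, we would instead use one-letter words $x,x$ for a tape letter $x$. The diagram is then the cell-free annulus with $n=2\delta$, so we must ensure $|\partial\Delta|>2\delta$. Alternatively, we fall back on the counterexample hypothesis: the boundary labels of $\Delta$ are trivial, so any vertex path across $\Delta$ can be rerouted. Concretely, we glue in van Kampen diagrams for $u$ and $v$ and take $\textbf{t}$ to be the trivial path. This contradicts $|\textbf{t}|>N_4|\partial\Delta|+N_4$ as soon as conjugator paths are measured in $G(\textbf{M}_\textbf{S})$.
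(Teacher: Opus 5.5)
Your overall route is the paper's: cut along $\textbf{s}$ to see that both boundary labels are trivial in $G(\textbf{M}_\textbf{S})$, compare $\Delta$ with a cell-free annular diagram with empty boundary labels (which has $n=0$), and invoke $n$-minimality, which ranges over all minimal annular diagrams realizing the same pair of elements. Before cutting, the paper also uses $0$-refinement and a combinatorial homotopy to make $\textbf{s}$ simple and disjoint from $\partial\Delta$; you should say this too.

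The gap is the last step, $n(\Delta)>0$. You argue that a disk relator has a nonempty label, so $\partial\Delta$ is nonempty ``or the diagram is non-trivial.'' The second alternative is compatible with $|\partial\Delta|=0$. In that case $n(\Delta)=\rho_\lambda(\Delta)$ may be $0$, and $n$-minimality gives no contradiction.

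Your treatment of the case $|\partial\Delta|=0$ also fails. The counterexample hypothesis constrains paths \emph{inside} $\Delta$ joining its two boundary components. When both labels are empty, these components are two vertices of $\Delta$ that need not be close. The triviality of the conjugacy question does not produce a path of length $0$ in $\Delta$.

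The fallbacks have similar problems. The pair $(x,x)$ realizes a different conjugacy relation, since $x\neq1$ in $G(\textbf{M}_\textbf{S})$, so it cannot be compared with $\Delta$ under $n$-minimality. Gluing van Kampen diagrams onto $\Delta$ changes the diagram, so the hypothesis on paths in $\Delta$ says nothing about the new one.

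The missing ingredient is a lower bound on $|\partial\Delta|$ coming from the disk. The paper uses \Cref{annular-graph}: since $s_1(\Delta)\geq1$, some disk has at least $L-18$ $t$-spokes ending on $\partial\Delta$. Each contributes a distinct $q$-edge of length $1$, so $|\partial\Delta|\geq L-18>0$, whence $n(\Delta)>0$, contradicting $n$-minimality.

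Alternatively, you can turn your own observation $s(\Gamma)=(0,0)$ against the minimality of $\Delta$. If $|\partial\Delta|=0$, then $\Delta$ and $\Gamma$ have identical (empty) boundary labels. Minimality then forces $s(\Delta)\leq(0,0)$, so $s_1(\Delta)=0$, contradicting $s_1(\Delta)\geq1$. Either fix closes the argument.
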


\begin{proof}

Suppose $\textbf{s}$ is a counterexample to the statement.  Through $0$-refinement we may assume this loop is disjoint from the boundary components of $\Delta$.  Further, as combinatorial homotopies do not alter the label's value in $G(\textbf{M}_\textbf{S})$, we may assume $\textbf{s}$ is simple.

So, cutting along $\textbf{s}$ separates $\Delta$ into two annular diagrams, each of which has one boundary component labelled by $\lab(\textbf{s})$ and the other labelled by that of a component of $\partial\Delta$.  van Kampen's Lemma then implies the label of each component of $\partial\Delta$ is conjugate to $1$ in $G(\textbf{M}_\textbf{S})$, and so is itself trivial in $G(\textbf{M}_\textbf{S})$.

As a result, an annular diagram $\Lambda$ consisting entirely of $0$-cells and with trivial boundary labels realizes the same conjugacy relation as $\Delta$.  Since $n(\Lambda)=|\partial\Lambda|+\rho_\lambda(\Lambda)=0$, we must also have $n(\Delta)=0$.  But since we assume $s_1(\Delta)\geq1$, \Cref{annular-graph} implies $|\partial\Delta|\geq L-18>0$.

\end{proof}

\begin{lemma} \label{n-minimal boundary ends}

If the maximal $\theta$-band $\pazocal{T}$ of $\Delta$ crosses the same $q$-band at least twice, then it has ends on each boundary component of $\Delta$.

\end{lemma}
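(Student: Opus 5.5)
We argue by contradiction, combining \Cref{G(S) annuli} with the $n$-minimality of $\Delta$. Suppose the maximal $\theta$-band $\pazocal{T}$ crosses the $q$-band $\pazocal{Q}$ at least twice. Choose two consecutive crossings, at cells $\pi_1,\pi_2$. Let $\pazocal{T}'$ and $\pazocal{Q}'$ be the subbands of $\pazocal{T}$ and $\pazocal{Q}$ running between $\pi_1$ and $\pi_2$. By \Cref{G(S) annuli}(1), $\pazocal{T}'\cup\pazocal{Q}'$ is not a $(\theta,q)$-annulus bounding a contractible region. Hence the closed path formed from a side of $\pazocal{T}'$ and a side of $\pazocal{Q}'$ is a non-contractible loop in $\Delta$, and it separates the two boundary components. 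Because $\pazocal{T}$ and $\pazocal{Q}$ are maximal bands that cannot cross themselves, the remainder of $\pazocal{T}$ on each side of $\pazocal{T}'$ lies in one of the two annular regions this loop cuts out.

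The main claim is that $\pazocal{T}$ cannot be annular. We also claim that both of its ends cannot lie on the same boundary component. Following $\pazocal{T}$ beyond $\pi_2$ (respectively before $\pi_1$), the band stays in the region on the corresponding side of the loop. It cannot recross $\pazocal{T}'$, and it can cross $\pazocal{Q}$ only on the far side of $\pazocal{Q}'$.

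There are two ways $\pazocal{T}$ could end: in a $\theta$-annulus, or on some component of $\partial\Delta$. We rule out a $\theta$-annulus by showing that its side would be a non-contractible loop. To do this, we exploit the spiraling structure: iterating the crossings with $\pazocal{Q}$, the band winds around the hole. We compare the labels of the successive loops built from $\pazocal{T}$ and $\pazocal{Q}$, in the same manner as the second case of Lemma 5.3 of \cite{GW} and via \Cref{history as powers}, and aim to show that the band must exit through both boundary components.

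The step I expect to be the main obstacle is excluding the configuration in which $\pazocal{T}$ forms (part of) a $\theta$-annulus, or ends twice on the same boundary component after spiraling. For a $\theta$-annulus, the crossings of $\pazocal{T}$ with $\pazocal{Q}$ give a trapezium whose top and bottom labels are identical after one turn. Cutting along this trapezium and regluing via \Cref{exciseTwoPaths} would then remove cells, or else produce a non-contractible loop with label trivial in $G(\textbf{M}_\textbf{S})$, which contradicts \Cref{n-minimal trivial path}. For both ends on one component, the tail of $\pazocal{T}$ past its last crossing together with a boundary subpath bounds a region. Removing the spiral portion would then shorten that boundary component or decrease $\rho_\lambda$, contradicting $n$-minimality in the same way as in \Cref{counterexample-long-theta}.
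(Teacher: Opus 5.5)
Your proposal does not prove the statement. The two cases you single out as the main obstacle are handled only by appeals that do not go through.
\begin{itemize}
\item Showing that the side of a putative $\theta$-annulus is a non-contractible loop is no contradiction. Non-contractible $\theta$-annuli are a priori possible in an annular diagram; the paper has to exclude the ones crossing $\pazocal{Q}_{m,1}$ later, by a long argument that itself uses this lemma.
\item The spiral you want to iterate is not available. The hypothesis gives only two crossings, so there is no power $H_0^k$ to which \Cref{history as powers} could apply. Nothing you write shows that labels coincide after one turn, and that coincidence is exactly what \Cref{exciseTwoPaths} or \Cref{n-minimal trivial path} would need.
\item In the case of two ends on one component, ``removing the spiral portion'' is not a surgery known to keep the boundary labels in the same conjugacy classes while lowering $n(\Delta)$. \Cref{counterexample-long-theta} concerns rim $\theta$-bands of short base, which $\pazocal{T}$ need not be.
\end{itemize}

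None of this machinery is needed. The observation in your second paragraph is the paper's entire proof, and it should be the end of your argument rather than a side remark. Let $\pazocal{S}=\pazocal{T}'\cup\pazocal{Q}'$ be the non-contractible strip. Suppose the tail of $\pazocal{T}$ past $\pi_2$ leaves $\pazocal{S}$ on one side and the tail before $\pi_1$ leaves on the other. Each tail is then confined to its complementary annulus: it cannot cross $\pazocal{T}'$, and it cannot cross $\pazocal{Q}'$ because the crossings are consecutive along $\pazocal{Q}$. A $\theta$-band can only end on $\partial\Delta$. Hence $\pazocal{T}$ cannot close up into an annulus, since it would have to recross $\pazocal{S}$, and its two ends lie on different boundary components. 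The paper expresses this by cutting along each side of $\pazocal{S}$ in turn.

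What genuinely needs proof is the ``opposite sides'' claim. You assert it with ``the region on the corresponding side'', and the paper also takes it for granted when it says $\pazocal{T}'$ has an end on the cut side. In each corner cell $\pi_j$, the free $\theta$-edge and the free $q$-edge lie on the same side of $\pazocal{S}$, because $\theta$- and $q$-edges alternate around a $(\theta,q)$-cell. The free $\theta$-edges of $\pi_1$ and $\pi_2$ lie on opposite sides exactly when $\pazocal{T}$ crosses $\pazocal{Q}$ in the same direction at both cells, i.e.\ when the corresponding letters in the history of $\pazocal{Q}$ are equal rather than mutually inverse. Non-contractibility of $\pazocal{S}$, which is all that \Cref{G(S) annuli} gives you, does not force this. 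So the case of oppositely oriented crossings must be dealt with by a separate argument.
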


\begin{proof}

Let $\pazocal{Q}$ be a maximal $q$-band of $\Delta$ which $\pazocal{T}$ crosses at least twice.  Then, let $\pi_1$ and $\pi_2$ be two $(\theta,q)$-cells of $\pazocal{Q}$ which are crossed by $\pazocal{T}$ and such that the maximal subband $\pazocal{Q}'$ of $\pazocal{Q}$ between these two cells does not cross $\pazocal{T}$.  

Let $\pazocal{T}'$ be the minimal subband of $\pazocal{T}$ containing $\pi_1$ and $\pi_2$.  The combination of $\pazocal{T}'$ and $\pazocal{Q}'$ then form a sequence of cells $\pazocal{S}$ which can be viewed as an annular band whose `sides' are simple loops in $\Delta$.  If either of these loops is contractible, then there must exist a $(\theta,q)$-annulus formed by subbands of $\pazocal{T}$ and $\pazocal{Q}$, contradicting \Cref{G(S) annuli}.  

Hence, cutting along one `side' of $\pazocal{S}$ separates $\Delta$ into two annular subdiagrams, one of which, $\Delta_1$, has outer component arising from that of $\Delta$ and contains $\pazocal{T}'$.  Let $\pazocal{T}_1$ be the maximal $\theta$-band in $\Delta_1$ for which $\pazocal{T}'$ as a subband.  Then $\pazocal{T}_1$ can be identified with a subband of $\pazocal{T}$ in $\Delta$.  Note that $\pazocal{T}'$ has one end on the inner component, while $\pazocal{T}_1$ does not have another end on this component since it cannot cross itself and does not cross $\pazocal{Q}'$.  Hence, $\pazocal{T}_1$ has an end on the outer component of $\Delta_1$, and so $\pazocal{T}$ has an end on the outer component of $\Delta$.

Cutting along the other `side' of $\pazocal{S}$, a symmetric argument implies $\pazocal{T}$ also has an end on the inner component of $\Delta$. 

\end{proof}

Now, note that \Cref{scope-contradiction} implies $\Delta$ has no scope of width at least $L-k$, while \Cref{annular-graph} promises a disk $\Pi$ such that at least $L-18$ $t$-spokes of $\Pi$ end on $\partial\Delta$.  Letting $\textbf{C}$ be one of the two boundary components of $\Delta$, let $\pazocal{Q}_1,\dots,\pazocal{Q}_\ell$ be the $t$-spokes of $\Pi$ which end on $\textbf{C}$.  As there is just one hole in the annulus, these $t$-bands, a subpath of $\partial\Pi$, and a subpath of $\textbf{C}$ bound a circular subdiagram $\Psi$ of $\Delta$.  

Suppose $\ell\geq2$ and $\Psi$ contains a disk.  Then \Cref{circular-graph} yields a disk $\Pi'$ with $L-3$ consecutive $t$-spokes all ending on $\partial\Psi$ and no disks between them.  \Cref{t-spokes between disks} implies at most two of these $t$-spokes end on $\partial\Pi$, and so since $q$-bands cannot cross at least $L-5$ of them end on $\textbf{C}$.  But then these spokes bound a scope of $\Pi'$ on $\textbf{C}$ in $\Delta$, contradicting \Cref{scope-contradiction} if we take $k\geq6$.

Hence, $\Psi$ contains no disk, and so is itself a scope of $\Pi$ on $\textbf{C}$ in $\Delta$.  As its width is $\ell$, we must then have $\ell<L-k$.  Taking $k\geq20$, at least two $t$-spokes of $\Pi$ must then end on each component of $\partial\Delta$.  As above, these $t$-spokes bound scopes $\Psi_1,\Psi_2$ of $\Pi$ on the boundary components $\textbf{C}_1,\textbf{C}_2$ in $\Delta$, respectively.  Letting $\ell_1,\ell_2$ be the widths of these scopes, we have $\ell_1+\ell_2\geq L-18$ and $\ell_j\geq k-18\geq2$.

\begin{lemma} \label{n-minimal subcombs}

No subcomb of $\Delta$ is contained in $\Psi_j$.

\end{lemma}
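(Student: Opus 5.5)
The plan is to repeat the argument of \Cref{minimal-subcombs}(1), as extended in \Cref{rmk-annular-combs}, for the scopes $\Psi_1,\Psi_2$ of the counterexample diagram $\Delta$. We argue by contradiction: suppose some subcomb $\Gamma$ of $\Delta$ lies in $\Psi_j$, and let $\pazocal{B}$ be its handle. Since $\Gamma$ is a subcomb, $\pazocal{B}$ has both ends on $\textbf{C}_j$. Because $\Psi_j$ contains no disks, $\Gamma$ is a diagram over $M(\textbf{M}_\textbf{S})$, and there are no $(\theta,q)$-cells between $\textbf{C}_j$ and the side of $\pazocal{B}$ facing away from $\Gamma$'s complement. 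This is exactly the setting of \Cref{minBoundaryLengths}.

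Cut $\Delta$ along the side of $\pazocal{B}$ bounding $\Gamma$, using $0$-refinement, and remove $\Gamma$. This gives a reduced annular diagram $\Delta'$ over the disk presentation. By \Cref{minBoundaryLengths}, the boundary labels of $\Delta'$ represent the same elements of $G(\textbf{M}_\textbf{S})$ as those of $\Delta$, and $|\partial\Delta'|\le|\partial\Delta|-2$. Next we check that $\Delta'$ is minimal. Its disks are exactly those of $\Delta$, and since $\Gamma$ is diskless, the count of $(\theta,t)$-cells can only drop. If $\Delta'$ were not minimal, pasting a copy of $\Gamma$ back onto a diagram with smaller signature would contradict the minimality of $\Delta$. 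Since $\Delta$ is $n$-minimal, we may assume the signature is preserved, and $\Delta'$ is a minimal annular diagram realizing the same conjugacy relation.

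The main point is to show $\rho_\lambda(\Delta')\le\rho_\lambda(\Delta)$. Every disk $\Pi''$ of $\Delta'$ is a disk of $\Delta$, and its $t$-spokes have the same histories, because $\Gamma$ contains no $t$-spoke of a disk: every maximal $q$-band of $\Gamma$ other than the handle has both ends on $\textbf{C}_j$. Each maximal $\theta$-band of $\Delta'$ is either a maximal $\theta$-band of $\Delta$ or is obtained from one by deleting the subbands that enter $\Gamma$ through $\pazocal{B}$. The comb structure forces every such band to end on $\textbf{bot}(\pazocal{B})$, so each deleted piece is a tail ending on the boundary. Consequently:
\begin{itemize}
\item a band is annular in $\Delta'$ if and only if it is annular in $\Delta$;
\item it crosses each $t$-spoke the same number of times, with the same letters.
\end{itemize}
So conditions (i)--(iv) for a $\lambda$-spear hold in $\Delta$ whenever they hold in $\Delta'$, and $\rho_\lambda(\Delta')\le\rho_\lambda(\Delta)$.

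It follows that $n(\Delta')<n(\Delta)$, contradicting the $n$-minimality of $\Delta$. The step needing the most care is the $\theta$-band bookkeeping in the previous paragraph, specifically that truncation cannot turn an annular band into a non-annular one or create a new double crossing. This is handled by observing that all removed pieces are tails terminating on $\textbf{bot}(\pazocal{B})\subset\textbf{C}_j$, exactly as in the proof of \Cref{minimal-subcombs}(1).
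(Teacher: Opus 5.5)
Your proposal is correct and follows the paper's proof, which is exactly the annular argument of \Cref{minimal-subcombs}(1): remove the subcomb, observe that $s_1$ is unchanged and $\Delta'$ is still minimal, get $|\partial\Delta'|<|\partial\Delta|$ from \Cref{minBoundaryLengths}, and note that every $\lambda$-spear of $\Delta'$ is already a $\lambda$-spear of $\Delta$, so $n(\Delta')<n(\Delta)$. One small repair is needed: your claim that no $(\theta,q)$-cells lie between $\pazocal{B}$ and $\textbf{C}_j$ holds only when $\Gamma$ has basic width $1$, so first pass to an innermost subcomb (any other maximal $q$-band of $\Gamma$ has both ends on $\textbf{C}_j$ and cuts off a smaller subcomb); also, the remark that the signature is preserved is not needed and should be dropped.
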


\begin{proof}

The proof proceeds in exactly the same way as the annular case for \Cref{minimal-subcombs}: If such a subcomb existed, then cutting along the bottom of its handle produces an annular diagram $\Delta'$ representing the same conjugacy relation in $G(\textbf{M}_\textbf{S})$ with $s_1(\Delta')=s_1(\Delta)$, $|\partial\Delta'|<|\partial\Delta|$, and $\rho_\lambda(\Delta')\leq\rho_\lambda(\Delta)$.  But this contradicts the $n$-minimality of $\Delta$.

\end{proof}

For $j=1,2$, enumerate the consecutive $t$-spokes $\pazocal{Q}_{1,j},\dots,\pazocal{Q}_{\ell_j,j}$ of $\Pi$ bounding the scope $\Psi_j$ on the boundary component $\textbf{C}_j$ (see \Cref{fig-n-minimal}).

\begin{lemma} \label{n-minimal theta-bands}

Every maximal $\theta$-band in $\Psi_j$ crosses $\pazocal{Q}_{1,j}$ or $\pazocal{Q}_{\ell_j,j}$.

\end{lemma}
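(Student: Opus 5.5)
The plan is to adapt the proof of \Cref{scope-theta-bands}(1), i.e. Lemma 7.22 of \cite{OS20}, to the scope $\Psi_j$. The difference from the previous section is that this scope has width $\ell_j$, which may be much smaller than $L-k$. The claim, though, is only about bands inside $\Psi_j$ and does not use a large width. So I will argue by contradiction, starting from a maximal $\theta$-band $\pazocal{T}$ of $\Psi_j$ (equivalently, a maximal $\theta$-band of $\Delta$ restricted to $\Psi_j$) that crosses neither $\pazocal{Q}_{1,j}$ nor $\pazocal{Q}_{\ell_j,j}$.

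\textbf{Step 1: locate the ends of $\pazocal{T}$.} Since $\Psi_j$ is a circular diagram containing no disks, \Cref{M(S) annuli} shows that $\Psi_j$ has no $\theta$-annulus. Hence $\pazocal{T}$ has two ends on $\partial\Psi_j$. By assumption neither end lies on $\pazocal{Q}_{1,j}$ or $\pazocal{Q}_{\ell_j,j}$, so each end lies on the subpath of $\textbf{C}_j$ or on the subpath of $\partial\Pi$ that bound $\Psi_j$.

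\textbf{Step 2: rule out the three cases.}

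Case (a): both ends of $\pazocal{T}$ lie on $\textbf{C}_j$. Then $\pazocal{T}$ together with the part of $\textbf{C}_j$ between its ends cuts off a subdiagram of $\Psi_j$. If that subdiagram contains a $q$-edge, I take an innermost maximal $q$-band inside it. Such a band cannot end on $\pazocal{T}$ twice, so it must have both ends on $\textbf{C}_j$. That makes it a subcomb contained in $\Psi_j$, or else a rim $q$-band. A subcomb contradicts \Cref{n-minimal subcombs}. A rim $q$-band can be removed using \Cref{minBoundaryLengths}, exactly as in \Cref{minimal-subcombs}(1), and this contradicts $n$-minimality. If the subdiagram contains no $q$-edge, then $\pazocal{T}$ (or a rim $\theta$-band inside the region) is a rim $\theta$-band with short base. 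Removing it via \Cref{minBoundaryLengths-theta} decreases $|\partial\Delta|$ and does not increase $\rho_\lambda$, as in \Cref{counterexample-long-theta}(1). Again this contradicts $n$-minimality.

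Case (b): both ends of $\pazocal{T}$ lie on $\partial\Pi$. This is impossible for an innermost band. Such a band would have a side lying on $\partial\Pi$, so it could be transposed with $\Pi$ as in \Cref{sec-transposition}. That operation decreases the number of $(\theta,t)$-cells without changing the number of disks, which contradicts minimality of the signature. The same $t$-edge-counting argument as in \cite{OS20} applies here.

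Case (c): one end of $\pazocal{T}$ lies on $\partial\Pi$ and the other on $\textbf{C}_j$. Then $\pazocal{T}$ separates $\pazocal{Q}_{1,j}$ from $\pazocal{Q}_{\ell_j,j}$ inside $\Psi_j$. Because $\pazocal{Q}_{1,j}$ joins $\partial\Pi$ to $\textbf{C}_j$, the band $\pazocal{T}$ would have to cross it. This contradicts the choice of $\pazocal{T}$.

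\textbf{Main obstacle.} I expect the main difficulty to be case (a) when $\pazocal{T}$ is long or its base is long. There one has to check carefully that the surgery keeps the diagram minimal and does not create new $\lambda$-spears. I would handle this with the same bookkeeping as \Cref{minimal-subcombs}(1): removing tails of $\theta$-bands that end on $\textbf{C}_j$ preserves whether each band is annular and how many times it crosses each $t$-spoke. Hence $\rho_\lambda$ does not increase.
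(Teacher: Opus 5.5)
There is a genuine gap, and it sits in case (a), which is the only case with real content. You claim that an innermost maximal $q$-band of the region cut off by $\pazocal{T}$ must have both ends on $\textbf{C}_j$. That is false: such a band can have one end on the side of $\pazocal{T}$ and the other on $\textbf{C}_j$, i.e.\ it simply crosses $\pazocal{T}$ once. In fact \Cref{n-minimal subcombs} forces every maximal $q$-band of $\Psi_j$ to end on $\partial\Pi$. So this is exactly what every $q$-band entering that region does, and your two sub-cases miss the generic situation. What is needed there is to take an innermost $\theta$-band of the region with both ends on $\textbf{C}_j$, which is a rim $\theta$-band of $\Delta$, and remove it. But \Cref{minBoundaryLengths-theta} only applies when its base has length at most $K$, and you assert this (``short base'') without proof. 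Without that bound, removing the band need not shorten $\partial\Delta$ at all. Your ``main obstacle'' paragraph about $\rho_\lambda$-bookkeeping therefore addresses the wrong issue.

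The missing ingredient, which is exactly the paper's argument, is a bound on the base length.
By \Cref{n-minimal subcombs}, every maximal $q$-band of $\Psi_j$ has an end on $\partial\Pi$, so there are at most $|\partial\Pi|_q=LN$ of them. By \Cref{M(S) annuli}, each crosses a given $\theta$-band of $\Psi_j$ at most once. Hence every $\theta$-band of $\Psi_j$ has base length at most $LN\leq K$, and the argument of \Cref{counterexample-long-theta}(1) then rules out any maximal $\theta$-band with both ends on $\textbf{C}_j$.

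Your cases (b) and (c) are vacuous. Since $\partial\Pi$ is labelled by a configuration, $|\partial\Pi|_\theta=0$, so no $\theta$-band ends on $\partial\Pi$ at all. The arguments you give for those cases would not work anyway. Transposition needs the band to cross at least two consecutive $t$-spokes and is not shown to lower the signature. In (c), a band from $\partial\Pi$ to $\textbf{C}_j$ separates $\Psi_j$ without having to cross $\pazocal{Q}_{1,j}$.

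Since $\Psi_j$ has no $\theta$-annuli, each maximal $\theta$-band therefore ends on a side of $\pazocal{Q}_{1,j}$ or $\pazocal{Q}_{\ell_j,j}$.
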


\begin{proof}

An identical proof to that of \Cref{counterexample-long-theta} implies the base of any rim $\theta$-band in $\Psi_j$ has length greater than $K$.  In particular, if a maximal $\theta$-band of $\Psi_j$ has two ends on $\textbf{C}_j$, then the length of its base must be greater than $K$.  

By \Cref{M(S) annuli}, any $\theta$-band and $q$-band in $\Psi_j$ can cross at most once, while \Cref{n-minimal subcombs} implies every maximal $q$-band of $\Psi_j$ has an end on $\partial\Pi$.  But this implies the length of the base of any $\theta$-band in $\Psi_j$ is at most $LN\leq K$ by the  parameter choices $K>>L>>N$.  

Hence, no maximal $\theta$-band can have two ends on $\textbf{C}_j$, and so must have at least one end on the side of the bounding $t$-spokes since $|\partial\Pi|_\theta=0$.

\end{proof}

\begin{figure}
\centering
\includegraphics[scale=2.5]{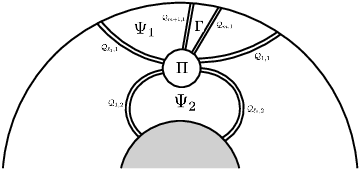} 
\caption{} \label{fig-n-minimal}
\end{figure}

Let $H_{i,j}$ be the history of the band $\pazocal{Q}_{i,j}$, $h_{i,j}=\|H_{i,j}\|$ be the length of the band, and $h=\max(h_{i,j})$.  Perhaps passing to a mirror copy of $\Delta$, we may assume without loss of generality that $h=h_{m,1}$ for some $m\leq\ell_1-1$, where $\textbf{C}_1$ is the outer boundary component.  Denote by $\Gamma$ the subdiagram of $\Psi_1$ bounded by $\pazocal{Q}_{m,1}$ and $\pazocal{Q}_{m+1,1}$.

Let $W$ be the accepted configuration of $\textbf{M}_\textbf{S}$ corresponding to $\lab(\partial\Pi)$.  As in the previous section, the components of $W$ are all coordinate shifts of one another, and all a copy of the same accepted configuration $V$ of $\textbf{E}_{\textbf{S},k}$.  Then, there exists a factorization $\partial\Gamma=\textbf{u}^{-1}\textbf{p}^{-1}\textbf{r}\textbf{q}$ where:

\begin{itemize}

\item $\textbf{u}$ is a side of $\pazocal{Q}_{m+1,1}$.

\item $\textbf{r}$ is a side of $\pazocal{Q}_{m,1}$.

\item $\textbf{p}$ is a subpath of $\partial\Pi$ with $\lab(\textbf{p})^{\pm1}\equiv W(i_m)t(i_m+1)$ for some $i_m$.

\item $\textbf{q}$ is a subpath of $\textbf{C}_1$.

\end{itemize}

\begin{lemma} \label{n-minimal first bound}

$2h+\delta L|V|_a>N_4|\partial\Delta|+3N_3$.

\end{lemma}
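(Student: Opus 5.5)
The plan is to exhibit an explicit path $\textbf{t}$ joining the two boundary components of $\Delta$ and to feed it into the standing counterexample assumption $|\textbf{t}|>N_4|\partial\Delta|+N_4$. The path passes along the side of a $t$-spoke from $\textbf{C}_1$ to $\partial\Pi$, then around $\partial\Pi$, then along the side of a $t$-spoke from $\partial\Pi$ to $\textbf{C}_2$.

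For the first segment take $\pazocal{Q}_{1,1}$, and for the last take $\pazocal{Q}_{1,2}$. By \Cref{lengths}(b), a side of a $q$-band has length equal to its number of $\theta$-edges, which is the length of the band. So these two pieces have lengths $h_{1,1}$ and $h_{1,2}$, and each is at most $h$. The middle piece is a subpath of $\partial\Pi$ joining the ends of these two spokes. Since $\lab(\partial\Pi)\equiv W^{\pm1}$ consists of $L$ components, each a coordinate shift of $V$ plus a $t$-letter, and since $|\cdot|$ charges $1$ per $q$-letter and $\delta$ per $a$-letter, the whole of $\partial\Pi$ has length at most $L(N+1)+\delta L|V|_a$. \Cref{lengths}(c) lets us concatenate these pieces at the cost of at most additive errors.

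Combining the pieces gives
$$N_4|\partial\Delta|+N_4<|\textbf{t}|\leq 2h+L(N+1)+\delta L|V|_a.$$
The parameter choices $N_4>>N_3>>L>>N$ give $N_4\geq 3N_3+L(N+1)$, and the claimed inequality $2h+\delta L|V|_a>N_4|\partial\Delta|+3N_3$ follows. To get a slightly cleaner $q$-count, one could instead route the middle piece along the shorter arc of $\partial\Pi$, but this is not needed given the slack in $N_4$.

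The only step needing care is that the $t$-spokes chosen genuinely end on the two different components. This holds because $\Psi_1$ and $\Psi_2$ are scopes on $\textbf{C}_1$ and $\textbf{C}_2$ respectively, with $\ell_j\geq2$. One should also check that the concatenation is an honest path in $\Delta$, which holds since the spokes share endpoints with $\partial\Pi$. I expect no real obstacle beyond bookkeeping the constant $L(N+1)$ against $N_4-3N_3$.
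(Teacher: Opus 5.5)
Your argument is correct and is essentially the paper's proof: the paper also concatenates a side of a $t$-spoke ending on $\textbf{C}_1$ (it takes the side $\textbf{r}$ of $\pazocal{Q}_{m,1}$, of length exactly $h$), a subpath of $\partial\Pi$ of length at most $NL+\delta L|V|_a$, and a side of some $\pazocal{Q}_{i,2}$. It then applies the counterexample hypothesis together with $N_4>>N_3>>L>>N$. Your choice of $\pazocal{Q}_{1,1}$ instead of $\pazocal{Q}_{m,1}$, and your slightly cruder count $L(N+1)$ of $q$-letters, change nothing.
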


\begin{proof}

Let $\textbf{x}$ be the side of any $t$-spoke $\pazocal{Q}_{i,2}$ starting at $\Pi$.  Then there exists a subpath $\textbf{p}'$ of $\partial\Pi$ such that $\textbf{r}^{-1}\textbf{p}'\textbf{x}$ forms a path between $\textbf{C}_1$ and $\textbf{C}_2$. Hence, \Cref{lengths} implies $$2h+\delta L|V|_a+NL\geq 2h+|\partial\Pi|\geq|\textbf{r}|+|\textbf{p}'|+|\textbf{x}|\geq|\textbf{r}^{-1}\textbf{p}'\textbf{x}|>N_4|\partial\Delta|+N_4$$
The parameter choices $N_4>>N_3>>L>>N$ then implies the statement.

\end{proof}

\begin{lemma} \label{n-minimal first step} \

\begin{enumerate}[label=(\alph*)]

\item $|V|_a\leq c_0h$

\item $h>N_3|\partial\Delta|+N_3$

\end{enumerate}

\end{lemma}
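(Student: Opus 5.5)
The plan is to prove (a) first and then deduce (b) from (a) together with \Cref{n-minimal first bound}. For (b), (a) gives $\delta L|V|_a\leq\delta Lc_0h\leq h$, using the parameter choice $\delta^{-1}>>L>>c_0$. Then \Cref{n-minimal first bound} yields $3h>N_4|\partial\Delta|+3N_3$. Hence $h>\frac{N_4}{3}|\partial\Delta|+N_3\geq N_3|\partial\Delta|+N_3$, since $N_4>>N_3$. So everything reduces to (a).

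For (a), I argue by contradiction: suppose $|V|_a>c_0h$. I would exploit the fact that $\Pi$ is a disk whose weight is large compared to what could replace it.

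The natural tool is \Cref{E1 generalized time}, combined with the fact that the $t$-spokes of $\Pi$ are short. Every maximal $q$-band of each $\Psi_j$ ends on $\partial\Pi$ by \Cref{n-minimal subcombs}. Every spoke of $\Pi$ that reaches $\partial\Delta$ therefore has length at most $h$. Consider the trapezium $\Gamma$, or rather the trapezium inside $\Psi_1$ between $\pazocal{Q}_{m,1}$ and $\pazocal{Q}_{m+1,1}$ cut off by the $\theta$-bands crossing both spokes. By \Cref{trapezia are computations} this gives a reduced computation in the base $\{t\}B_{std}$ starting from $V$, of length at most $h$. Each rule changes the $a$-length by at most $2N$. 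So the top label $\textbf{z}$ of that trapezium satisfies $|\textbf{z}|_a\geq |V|_a-2Nh$.

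The boundary path $\textbf{q}\subset\textbf{C}_1$ carries essentially all these $a$-edges. I would get this from the comb or $a$-band counting in \Cref{a-bands in scopes}. There are at most $2Nh$ $a$-bands ending on $(\theta,q)$-cells of the at most $h$ $\theta$-bands. Hence $|\textbf{q}|_a\geq |V|_a-4Nh$, so $|\partial\Delta|\geq\delta(|V|_a-4Nh)$.

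Combining this with \Cref{n-minimal first bound}: $2h+\delta L|V|_a>N_4\delta(|V|_a-4Nh)$. Using $|V|_a>c_0h$ and $N_4>>\delta^{-1}>>L,c_0,N$, the right-hand side dominates $\delta L|V|_a+2h$. This is a contradiction.

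The main obstacle is the counting step transferring $a$-letters of $V$ to $a$-edges of $\textbf{C}_1$. This requires every $a$-band starting on $\textbf{p}$ to end either on a $(\theta,q)$-cell of a short $\theta$-band or on $\textbf{q}$. The tool for this is \Cref{n-minimal theta-bands}: every $\theta$-band crosses a bounding spoke, so there are at most $2h$ of them in $\Psi_1$. Each such band absorbs $O(N)$ $a$-bands through $(\theta,q)$-cells and $O(1)$ through $(\theta,a)$-cells. I would use all the $t$-spokes of $\Psi_1$, not just one sector, to ensure the lost $a$-letters total $O(LNh)$ against $L|V|_a$ available. This keeps the constants consistent with (a) being stated with $c_0$.
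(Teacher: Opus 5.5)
Your proposal is correct and follows the paper's route. Part (b) is derived from (a) and \Cref{n-minimal first bound} exactly as in the paper, and (a) is the same contradiction obtained by counting $a$-bands in $\Gamma$: there are at most $2h$ $\theta$-bands by \Cref{n-minimal theta-bands}, hence at most $4Nh$ $a$-bands end on $(\theta,q)$-cells, so $|\textbf{q}|_a\geq|V|_a-4Nh$ and $|\partial\Delta|\geq\frac{\delta}{2}|V|_a$. The intermediate trapezium with bottom labelled by $V$ and top $\textbf{z}$ is unnecessary, and at this stage nothing guarantees that its bottom lies on $\textbf{p}$; your direct count from $\textbf{p}$ to $\textbf{q}$ in the last paragraph, together with the easy fact that no $a$-band returns to $\textbf{p}$, is precisely the paper's argument.
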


\begin{proof}

(a) By \Cref{n-minimal subcombs}, $\Gamma$ has at most $N+1$ maximal $q$-bands, two of which are $t$-bands.  Further, \Cref{n-minimal theta-bands} implies every maximal $\theta$-band of $\Gamma$ crosses at least one of $\pazocal{Q}_{m,1}$ or $\pazocal{Q}_{m+1,1}$, while \Cref{M(S) annuli} implies each such $\theta$-band crosses any of the maximal $q$-bands at most once.  This means there are at most $2h$ distinct maximal $\theta$-bands in $\Gamma$, and hence at most $2Nh$ $(\theta,q)$-cells which are not $(\theta,t)$-cells.

Suppose $|V|_a>c_0h$.  By the structure of the rules and \Cref{simplify rules}, at most $4Nh$ maximal $a$-bands in $\Gamma$ end on a $(\theta,q)$-cell.  In particular, since no maximal $a$-band of $\Gamma$ can have two ends on $\textbf{p}^{-1}$, at least $|V|_a-4Nh\geq(1-\frac{4N}{c_0})|V|_a$ maximal $a$-bands in $\Gamma$ have one end on $\textbf{p}^{-1}$ and one end on $\textbf{q}$.  In particular, $|\textbf{q}|_a\geq(1-\frac{4N}{c_0})|V|_a$ and $|\textbf{q}|_\theta\leq2h$.  Hence, by the parameter choice $c_0>>N$ and \Cref{lengths}, at least $\frac{1}{2}|V|_a$ $a$-edges of $\textbf{q}$ contribute $\delta$ to $|\partial\Delta|$, \frenchspacing{i.e. $|\partial\Delta|\geq\frac{\delta}{2}|V|_a$}.

This implies $2h+\delta L|V|_a\leq(\frac{2}{c_0}+\delta L)|V|_a\leq2\delta^{-1}(\frac{2}{c_0}+\delta L)|\partial\Delta|$.  But then the parameter choices $N_4>>\delta^{-1}>>L>>c_0$ imply $2h+\delta L|V|_a\leq N_4|\partial\Delta|$, contradicting \Cref{n-minimal first bound}.

(b) Part (a) implies $2h+\delta L|V|_a\leq (2+\delta Lc_0)h$.  The parameter choices $\delta^{-1}>>L>>c_0$ then allow us to assume $\delta Lc_0\leq1$, and so $2h+\delta L|V|_a\leq3h$.

\Cref{n-minimal first bound} then implies $3h>N_4|\partial\Delta|+3N_3$, so that $h>\frac{1}{3}N_4|\partial\Delta|+N_3$.  Thus, the statement follows from the parameter choice $N_4>>N_3$.

\end{proof}

\medskip


\subsection{Annular bands} \

The rest of our argument proceeds in two cases.  As a first case, we assume in this section that one of the maximal $\theta$-bands that crosses $\pazocal{Q}_{m,1}$ is annular.  The presence of this $\theta$-annulus implies the next statement regarding the structure of the bands in $\Delta$.

\begin{lemma} \label{n-minimal annuli crossings}

Any maximal $\theta$-band of $\Delta$ crosses any maximal $q$-band at most once.

\end{lemma}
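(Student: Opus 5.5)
The plan is to argue by contradiction, using the fixed $\theta$-annulus $\pazocal{S}$ that crosses $\pazocal{Q}_{m,1}$. Suppose some maximal $\theta$-band $\pazocal{T}$ of $\Delta$ crosses a maximal $q$-band $\pazocal{Q}$ at least twice. Then \Cref{n-minimal boundary ends} tells us that $\pazocal{T}$ has an end on each boundary component of $\Delta$, so $\pazocal{T}$ is a radial, non-annular band.

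The key geometric step concerns the annulus $\pazocal{S}$. By \Cref{G(S) annuli}, together with the absence of contractible $\theta$-annuli in a minimal diagram (the annular analogue of \Cref{minimal annuli}, via transposition), $\pazocal{S}$ is not contractible. Since $\pazocal{S}$ is a non-contractible $\theta$-annulus, it separates $\textbf{C}_1$ from $\textbf{C}_2$. Any path between the two boundary components must therefore meet $\pazocal{S}$, and in particular the radial band $\pazocal{T}$ must cross $\pazocal{S}$. But two distinct maximal $\theta$-bands cannot cross, since they share no cells. Hence $\pazocal{T}=\pazocal{S}$, which is impossible because $\pazocal{S}$ is annular and $\pazocal{T}$ is not. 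This contradiction proves the statement.

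The main point requiring care is the claim that $\pazocal{S}$ is non-contractible, i.e. that it genuinely encircles the hole. If $\pazocal{S}$ bounded a disk subdiagram, I would argue as follows. That subdiagram cannot contain a disk, by the transposition argument of \Cref{minimal annuli} applied inside the $n$-minimal (hence minimal) diagram $\Delta$. It is then a reduced circular diagram over $M(\textbf{M}_\textbf{S})$ bounded by a side of a $\theta$-annulus, which contradicts \Cref{M(S) annuli}(5). If instead the contractible side contains disks, I would use \Cref{circular-graph} on the circular subdiagram: a disk there would have $L-3$ $t$-spokes ending on the side of $\pazocal{S}$, so $\pazocal{S}$ crosses many consecutive $t$-spokes of that disk. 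Transposing $\pazocal{S}$ past the disk via \Cref{two theta-bands about disk} then reduces $s_2$ or otherwise contradicts minimality, as in \cite{OS20}.

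The remaining possibility is that $\pazocal{T}$ meets $\pazocal{S}$ only at the diagram's boundary. This cannot happen, because $\pazocal{S}$ lies in the interior of $\Delta$: its cells are bounded by edges on both sides, so it contains no boundary $\theta$-edge.
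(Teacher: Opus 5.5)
Your argument is correct and is essentially the paper's proof. A $\theta$-band crossing a $q$-band twice is radial by \Cref{n-minimal boundary ends}, so it would have to cross the assumed $\theta$-annulus, which is impossible because maximal $\theta$-bands cannot cross. The paper takes the non-contractibility of that annulus for granted. Your justification works, though it is cleaner to note that the circular subdiagram bounded by a contractible $\theta$-annulus is itself minimal, so \Cref{minimal annuli} rules it out directly without your case split on disks.
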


\begin{proof}

Assuming to the contrary that there exists a maximal $\theta$-band $\pazocal{T}$ in $\Delta$ which crosses a $q$-band at least twice, \Cref{n-minimal boundary ends} implies $\pazocal{T}$ ends on both $\textbf{C}_1$ and $\textbf{C}_2$.  But then an annular $\theta$-band in $\Delta$ would have to cross $\pazocal{T}$, which is not possible since $\theta$-bands cannot cross.

\end{proof}

Let $\pazocal{T}_1,\pazocal{T}_2$ be two annular $\theta$-bands in $\Delta$ which cross $\pazocal{Q}_{m,1}$ at the $(\theta,t)$-cells $\pi_1,\pi_2$, respectively.  Suppose $\pi'$ is a cell of $\pazocal{Q}_{m,1}$ between $\pi_1$ and $\pi_2$ and let $\pazocal{T}'$ be the maximal $\theta$-band of $\Delta$ crossing $\pi'$.  Since $\theta$-bands cannot cross, $\pazocal{T}'$ must then also be annular.  Hence, the cells of $\pazocal{Q}_{m,1}$ which are crossed by annular $\theta$-bands form a subband $\pazocal{Q}_{m,1}'$.

By construction, the maximal $\theta$-bands of $\Gamma$ which cross $\pazocal{Q}_{m,1}'$ also cross $\pazocal{Q}_{m+1,1}$.  These $\theta$-bands, which are subbands of the $\theta$-annuli which cross $\pazocal{Q}_{m,1}$, form a trapezium $\Gamma'$ which is a subdiagram of $\Gamma$.  One side $q$-band of $\Gamma'$ is $\pazocal{Q}_{m,1}'$, while the other is a subband $\pazocal{Q}_{m+1,1}'$ of $\pazocal{Q}_{m+1,1}$.  We may then assume the standard factorization of $\partial\Gamma'$ is $\textbf{s}_{m+1}^{-1}\textbf{y}\textbf{s}_m\textbf{z}^{-1}$ of $\partial\Gamma'$ such that $\textbf{s}_m$ and $\textbf{s}_{m+1}$ are sides of $\pazocal{Q}_{m,1}'$ and $\pazocal{Q}_{m+1,1}'$, respectively. 

\begin{lemma} \label{n-minimal annuli h'}

Let $H'$ be the history of $\Gamma'$ and $h'=\|H'\|$.

\begin{enumerate}[label=(\alph*)]

\item $h>N_3(h-h')$

\item $h'>N_2|\partial\Delta|+N_2$

\item $H'$ has no controlled subword.

\end{enumerate}

\end{lemma}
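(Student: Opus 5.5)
The plan is to prove (a) first, derive (b) from (a) together with \Cref{n-minimal first step}(b), and obtain (c) from a surgery that contradicts $n$-minimality (or \Cref{n-minimal trivial path}).

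For (a), I would bound the length $h-h'$ of the portion of $\pazocal{Q}_{m,1}$ not crossed by $\theta$-annuli. Each cell of $\pazocal{Q}_{m,1}\setminus\pazocal{Q}_{m,1}'$ is crossed by a non-annular maximal $\theta$-band. By \Cref{n-minimal annuli crossings} such a band crosses $\pazocal{Q}_{m,1}$ only once. Since it cannot cross the $\theta$-annuli, it lies entirely on one side of the annular subdiagram formed by them, so it has both ends on the same boundary component. On the side of $\textbf{C}_1$, \Cref{n-minimal theta-bands} forces every such band in $\Gamma$ to end on $\textbf{q}$. The side towards $\partial\Pi$ contributes no $\theta$-edges, because $|\partial\Pi|_\theta=0$. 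The outer-side bands therefore either end on $\textbf{q}$, or leave $\Psi_1$ and end again on $\textbf{C}_1$. In either case each contributes at least one $\theta$-edge to $|\partial\Delta|$, and distinct bands contribute distinct edges. This gives $h-h'\leq 2|\partial\Delta|$, counting the part of $\pazocal{Q}_{m,1}$ below $\pazocal{Q}_{m,1}'$ (near $\Pi$) and the part above it. Combining this with \Cref{n-minimal first step}(b), $h>N_3|\partial\Delta|+N_3$, and the parameter choice $N_3\gg 1$ yields (a).

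For (b), I would write $h'\geq h-2|\partial\Delta|>(N_3-2)|\partial\Delta|+N_3$ and use $N_3\gg N_2$.

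For (c), which I expect to be the main obstacle, suppose $H'$ contains a controlled subword. Because the bands crossing $\pazocal{Q}_{m,1}'$ are annular, $\Gamma'$ is a trapezium cut from an annular subdiagram of $\theta$-annuli whose base is circular. Forgetting coordinates, the corresponding computation is one of $\textbf{E}_{\textbf{S},k}$ with circular base, and its history contains a controlled subword. By \Cref{enhanced controlled} the base is reduced, and by \Cref{reduced revolving controlled}(b),(c) the top and bottom labels are cyclic permutations of powers of accepted configurations. The boundary label of the annular subdiagram swept out by these $\theta$-annuli is then a power of an accepted configuration, which is trivial in $G(\textbf{M}_\textbf{S})$ by \Cref{disks are relations}. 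This non-contractible loop has trivial label, contradicting \Cref{n-minimal trivial path}. The delicate point is checking that the circular base of the $\theta$-annuli really passes through a full revolving copy of the standard base. I would obtain this from the fact that the annuli surround the hole and cross all $L-18$ spokes of $\Pi$, in particular $t$-spokes on both boundary components. If a defective-base case remains, I would rule it out directly with \Cref{enhanced controlled}, which forbids controlled histories with unreduced base.
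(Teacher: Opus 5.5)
Parts (a) and (b) follow the paper's route. Part (c) has a genuine gap, and there are a few smaller slips.

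\textbf{The gap in (c).} You assume that the $\theta$-annuli crossing $\pazocal{Q}_{m,1}'$ are stacked directly on one another. That is what lets you say that cutting them open along $\pazocal{Q}_{m,1}$ gives a trapezium with circular base, and hence a reduced computation of $\textbf{E}_{\textbf{S},k}$ whose history contains the controlled subword.

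This is only guaranteed inside $\Gamma'\subset\Psi_1$, which has no disks. The annuli leave $\Psi_1$ through $\pazocal{Q}_{1,1}$ and $\pazocal{Q}_{\ell_1,1}$ and run around the rest of $\Delta$. There, the region between two consecutive annuli contains no $\theta$-cells but may contain disks. When it does, the top label of one annulus is not the bottom label of the next. Then there is no computation with circular base and history $H'$ (or a controlled subword of it) to which \Cref{enhanced controlled} could be applied.

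The paper closes this step by transposition:
\begin{enumerate}
\item Take the subband $\pazocal{Q}_{m,1}''$ whose history is exactly a controlled subword $H''$, and let $\Delta''$ be the annular subdiagram bounded by the annuli crossing it.
\item Cut $\Delta''$ along $\pazocal{Q}_{m,1}''$ and paste a copy of that $t$-band, giving a quasi-trapezium $\Gamma''$ with circular base and history $H''$.
\item Apply \Cref{quasi-trapezia} to obtain a minimal diagram $\Sigma$ with the same boundary label. It consists of a genuine trapezium $\Sigma_0$ with that base and history, plus disks attached to its top and bottom.
\item \Cref{enhanced controlled} now applies to $\Sigma_0$. Up to inversion, its bottom label is a power of a cyclic permutation of a disk relation with a $t$-letter appended.
\item The side $t$-bands are untouched and the extra cells of $\Sigma$ are disks. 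So the bottom label of $\Gamma''$ has the form $(t(i_m)w''t(i_m))^{\pm1}$ with $t(i_m)w''=_{G(\textbf{M}_\textbf{S})}1$.
\item Hence the inner boundary of $\Delta''$ is a non-contractible loop with trivial label, contradicting \Cref{n-minimal trivial path}.
\end{enumerate}

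\textbf{Smaller points.}

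In (a), drop the factor $2$. The $h-h'$ non-annular bands are pairwise distinct and each ends on $\partial\Delta$, so $h-h'\le|\partial\Delta|_\theta\le|\partial\Delta|$ directly. With your bound $h-h'\le2|\partial\Delta|$ and $h>N_3|\partial\Delta|+N_3$, you only get $h>\frac{N_3}{2}(h-h')$. Enlarging $N_3$ does not fix this, since $N_3$ appears on both sides.

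In (c), conclusions (b) and (c) of \Cref{reduced revolving controlled} hold only in one branch of its dichotomy. The fact you need is the final clause of \Cref{enhanced controlled}, applied to each standard-base segment of the reduced circular base.

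Your ``delicate point'' is automatic: a reduced circular base is a cyclic permutation of a power of $\{t\}B_{std}$. Your proposed justification for it is also wrong. An annulus crossing $\pazocal{Q}_{m,1}$ separates $\Pi$ from $\textbf{C}_1$, so it need not cross any spoke ending on $\textbf{C}_2$. This is why \Cref{n-minimal transposition}(b) uses two different annuli, one on each side.
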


\begin{proof}

(a) \Cref{n-minimal annuli crossings} implies the cells of $\pazocal{Q}_{m,1}$ not contained in $\pazocal{Q}_{m,1}'$ correspond to distinct maximal $\theta$-bands of $\Delta$ that end on the boundary components.  As such, $h-h'\geq|\partial\Delta|$, and hence the statement is given by \Cref{n-minimal first step}(b).

(b) By \Cref{n-minimal first step}(b) and part (a), $h'>(1-\frac{1}{N_3})h>(N_3-1)(|\partial\Delta|+1)$.  The statement thus follows by the parameter choice $N_3>>N_2$.

(c) Suppose $H'$ has a controlled subword $H''$ and let $\pazocal{Q}_{m,1}''$ be the subband with this history.  Let $\Delta''$ be the annular subdiagram of $\Delta$ bounded by the annular $\theta$-bands which cross $\pazocal{Q}_{m,1}''$.  Then let $\Gamma''$ be the circular diagram obtained by cutting along a side of $\pazocal{Q}_{m,1}''$ and pasting a copy of this $t$-band to the side on which it is removed.

By construction, $\Gamma''$ is then a quasi-trapezium with circular base and history $H''$.  \Cref{quasi-trapezia} then provides a minimal diagram $\Sigma$ with the same boundary label as $\Gamma''$ consisting of a trapezium $\Sigma_0$ with the same base and history as $\Gamma''$ along with some disks attached to the top and bottom.

Applying \Cref{enhanced controlled} to the reduced computation corresponding to $\Sigma_0$ through \Cref{trapezia are computations}, up to inversion the bottom label of $\Sigma_0$ is the power of a cyclic permutation of a disk relation with the appropriate $t$-letter appended at the end.  As the $t$-bands remain intact when passing from $\Gamma''$ to $\Sigma$, it follows that the label of the bottom of $\Sigma$ (and so of $\Gamma''$) is of the form $(t(i_m)w''t(i_m))^{\pm1}$ such that $t(i_m)w''=_{G(\textbf{M}_\textbf{S})}1$.  

Hence, as $\Delta''$ is obtained from $\Gamma''$ by pasting the side $t$-bands together, the label of its inner boundary component is trivial in $G(\textbf{M}_\textbf{S})$.

But then this is a non-contractible loop in $\Delta$, yielding a contradicting to \Cref{n-minimal trivial path}.

\end{proof}

\begin{lemma} \label{n-minimal annuli h' bounds} \

\begin{enumerate}[label=(\alph*)]

\item $h'>c_1\|\textbf{z}\|$

\item $h'\leq c_1\|\textbf{y}\|$

\item $h'\leq c_2|V|_a+c_2$

\item $|V|_a>N_1|\partial\Delta|+N_1$

\end{enumerate}

\end{lemma}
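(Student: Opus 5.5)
The plan is to treat (a) and (b) together through the controlled-history dichotomy, and then derive (c) and (d) from (b) by counting $a$-bands in $\Gamma$. Throughout I orient things so that $\textbf{y}$ is the side of the innermost $\theta$-annulus of $\Gamma'$, facing $\textbf{p}\subset\partial\Pi$. Then $\textbf{z}$ is the side of the outermost one, facing $\textbf{q}\subset\textbf{C}_1$; if the labelling is the other way round, the argument is symmetric.

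Let $\pazocal{C}:W_0\to\dots\to W_{h'}$ be the reduced computation associated to $\Gamma'$ by \Cref{trapezia are computations}. Its base is $\{t(i_m)\}B_{std}(i_m)\{t(i_m+1)\}$, so after forgetting coordinates it is a computation of $\textbf{E}_{\textbf{S},k}$ whose base is a reduced revolving word (up to the empty $t$-sectors). By \Cref{n-minimal annuli h'}(c), $H'$ has no controlled subword, so \Cref{reduced revolving controlled} leaves only one alternative:
$$h'\leq c_1\max(\|\textbf{y}\|,\|\textbf{z}\|).$$
Hence (b) follows once (a) is established, and I prove (a) first.

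For (a), suppose instead that $h'\leq c_1\|\textbf{z}\|$, so that $|\textbf{z}|_a\geq h'/c_1-N-1$. Every maximal $\theta$-band of $\Gamma$ between $\textbf{z}$ and $\textbf{q}$ crosses $\pazocal{Q}_{m,1}$ outside $\pazocal{Q}_{m,1}'$. By \Cref{n-minimal annuli crossings}, these are distinct non-annular bands ending on $\partial\Delta$, so there are at most $h-h'\leq|\partial\Delta|$ of them. By \Cref{simplify rules}, at most $4N|\partial\Delta|$ maximal $a$-bands starting on $\textbf{z}$ can end on $(\theta,q)$-cells of these bands. No $a$-band can return to $\textbf{z}$, since otherwise we would get an $a$-annulus or a $(\theta,a)$-annulus, which \Cref{M(S) annuli} forbids. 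So the remaining $a$-bands end on $\textbf{q}$, and $|\textbf{q}|_\theta\leq|\partial\Delta|$. Applying \Cref{lengths}, we obtain
$$|\partial\Delta|\geq\delta\left(\frac{h'}{c_1}-N-1-5N|\partial\Delta|\right)-|\partial\Delta|.$$
This gives $h'\leq c_1(2\delta^{-1}+5N)|\partial\Delta|+c_1(N+1)$, which contradicts \Cref{n-minimal annuli h'}(b) by the parameter choices $N_2\gg\delta^{-1}\gg c_1\gg N$. I expect this counting to be the main obstacle. The delicate points are checking that the $a$-bands from $\textbf{z}$ really have nowhere else to go, and that the bound $h-h'\leq|\partial\Delta|$ counts each boundary-ending $\theta$-band only once.

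For (c), I run the same count on the other side. The cells of $\pazocal{Q}_{m,1}$ between $\textbf{p}$ and $\textbf{y}$ again belong to at most $|\partial\Delta|$ boundary-ending $\theta$-bands. Each $a$-band starting on $\textbf{y}$ either ends on $\textbf{p}$ (at most $|V|_a$ of them), or on a $(\theta,q)$-cell of those bands (at most $4N|\partial\Delta|$), or on $\textbf{C}_1$ (at most $|\partial\Delta|/\delta$). Hence $\|\textbf{y}\|\leq|V|_a+(4N+\delta^{-1})|\partial\Delta|+N+1$. Combining this with (b) and with $|\partial\Delta|<h'/N_2$ from \Cref{n-minimal annuli h'}(b), we get
$$h'\leq c_1|V|_a+\frac{c_1(4N+\delta^{-1})}{N_2}h'+c_1(N+1).$$
Absorbing the middle term (using $N_2\gg\delta^{-1}\gg c_1$) yields $h'\leq2c_1|V|_a+2c_1(N+1)\leq c_2|V|_a+c_2$.

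Finally, for (d), chain (c) with \Cref{n-minimal annuli h'}(b): $c_2|V|_a+c_2\geq h'>N_2|\partial\Delta|+N_2$. Hence $|V|_a>\frac{N_2}{c_2}(|\partial\Delta|+1)-1\geq N_1|\partial\Delta|+N_1$, by the parameter choices $N_2\gg N_1\gg c_2$.
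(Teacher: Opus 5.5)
Your proof is correct and follows essentially the paper's route. Part (b) comes from the controlled-history dichotomy combined with (a); your appeal to Lemma \ref{reduced revolving controlled} is equivalent to the paper's use of Lemma \ref{long history controlled}. Parts (a) and (c) come from counting where the $a$-bands leaving $\textbf{z}$ and $\textbf{y}$ can end, and (d) follows by chaining (c) with Lemma \ref{n-minimal annuli h'}(b).

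One small slip: a maximal $\theta$-band of $\Gamma$ outside $\Gamma'$ may cross only $\pazocal{Q}_{m+1,1}$ and not $\pazocal{Q}_{m,1}$. The correct count is therefore $2(h-h')$, as in the paper, or directly at most $|\partial\Delta|$, since each non-annular maximal $\theta$-band of $\Delta$ contributes at most two pieces to $\Gamma$. This only changes constants.
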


\begin{proof}

(a) Suppose to the contrary that $h'\leq c_1\|\textbf{z}\|$.  Since $\|\textbf{z}\|=|\textbf{z}|_a+N+1$, \Cref{n-minimal annuli h'}(b) implies $c_1|\textbf{z}|_a+c_1(N+1)>N_2|\partial\Delta|+N_2$, so that the parameter choices $N_2>>N_1>>c_1>>N$ yield $|\textbf{z}|_a>N_1|\partial\Delta|+N_1$.

In particular, $|\textbf{z}|_a>N_1$, so that the parameter choice $N_1>>N$ implies $$\|\textbf{z}\|=|\textbf{z}|_a+N+1\leq2|\textbf{z}|_a$$

Now, \Cref{n-minimal annuli h'}(a) implies $h'>(1-\frac{1}{N_3})h$, so that $h-h'<\frac{1}{N_3}h<\frac{1}{N_3-1}h'$.  As a result, we have $\|\textbf{z}\|\geq\frac{1}{c_1}h'>\frac{N_3-1}{c_1}(h-h')$, so that the parameter choices $N_3>>N_2>>c_1$ imply $\|\textbf{z}\|>2N_2(h-h')$.  Hence, $|\textbf{z}|_a>N_2(h-h')$.

Let $\Gamma_1$ be the subdiagram of $\Gamma$ bounded by $\textbf{z}$ and the subpath $\textbf{q}$ of $\textbf{C}_1$.  As in the proof of \Cref{n-minimal first step}(a), the number of maximal $a$-bands of $\Gamma_1$ which have an end on a $(\theta,q)$-cell is at most $2N(h-h')$, while $|\textbf{q}|_\theta\leq2(h-h')$.  The parameter choice $N_2>>N$ then implies at least $\frac{1}{2}|\textbf{z}|_a$ $a$-edges of $\textbf{C}_1$ contribute $\delta$ to $|\partial\Delta|$.  In particular, $|\partial\Delta|\geq\frac{\delta}{2}|\textbf{z}|_a$.  

But then the parameter choice $N_1>>\delta^{-1}$ implies $|\textbf{z}|_a\leq N_1|\partial\Delta|$, yielding a contradiction.

(b) Assuming the statement is false, $h'>c_1\max(\|\textbf{y}\|,\|\textbf{z}\|)$.  In particular, the reduced computation corresponding to $\Gamma'$ by \Cref{trapezia are computations} satisfies the hypotheses of \Cref{long history controlled}.  But then $H'$ must have a controlled subword, contradicting \Cref{n-minimal annuli h'}(c).

(c) Let $\Gamma_2$ be the subdiagram of $\Gamma$ bounded by $\textbf{y}$ and the subpath $\textbf{p}$ of $\partial\Pi$.  As in part (a) above, at least half of the maximal $a$-bands of $\Gamma_2$ with one end on $\textbf{y}$ have another end on $\textbf{p}$, so that $|V|_a=|\textbf{p}|_a\geq\frac{1}{2}|\textbf{y}|_a$.  

Hence, (b) implies $h'\leq 2c_1|V|_a+c_1(N+1)$, and thus the statement is given by the parameter choices $c_2>>c_1>>N$.

(d) Combining \Cref{n-minimal annuli h'}(b) and (c) we have $c_2|V|_a+c_2>N_2|\partial\Delta|+N_2$.  

Thus, the statement is given by the parameter choices $N_2>>N_1>>c_2$.

\end{proof}

%
%
%
%
%

For $j\in\{1,2\}$ and $1\leq i\leq \ell_j-1$, let $\Psi_{i,j}$ be the subdiagram of $\Psi_j$ bounded by $\pazocal{Q}_{i,j}$ and $\pazocal{Q}_{i+1,j}$.  Further, let $h_{i,j}''=\max(h_{i,j},h_{i+1,j})$.

\begin{lemma} \label{n-minimal annuli r} 

For all $j\in\{1,2\}$ and $1\leq i\leq \ell_j-1$, we have:

\begin{enumerate}[label=(\alph*)]

\item $|V|_a\leq c_0h_{i,j}''$.

\item $h_{i,j}''>C_3|\partial\Delta|+C_3$.

\end{enumerate}

\end{lemma}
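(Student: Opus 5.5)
Part (a) for an arbitrary pair $(i,j)$ will be proved by re-running the argument of \Cref{n-minimal first step}(a) with $\Gamma$ replaced by $\Psi_{i,j}$. Part (b) will then follow by combining (a) with the lower bound on $|V|_a$ from \Cref{n-minimal annuli h' bounds}(d).

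\textbf{Part (a).} Fix $j$ and $i$, and suppose toward contradiction that $|V|_a>c_0h_{i,j}''$.

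1. By \Cref{n-minimal subcombs}, $\Psi_{i,j}$ has at most $N+1$ maximal $q$-bands, each joining $\partial\Pi$ to $\textbf{C}_j$.

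2. By \Cref{n-minimal theta-bands}, every maximal $\theta$-band of $\Psi_{i,j}$ crosses $\pazocal{Q}_{i,j}$ or $\pazocal{Q}_{i+1,j}$. So there are at most $2h_{i,j}''$ of them, and by \Cref{M(S) annuli} each crosses each $q$-band at most once.

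3. Hence at most $4Nh_{i,j}''$ maximal $a$-bands of $\Psi_{i,j}$ end on $(\theta,q)$-cells.

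4. The boundary subpath of $\partial\Pi$ has label a coordinate shift of $W(i')t(i'+1)$, so it carries $|V|_a$ $a$-edges. No $a$-band has two ends there, so at least $(1-4N/c_0)|V|_a$ $a$-bands run from $\partial\Pi$ to $\textbf{C}_j$.

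5. The subpath of $\textbf{C}_j$ in question has $\theta$-length at most $2h_{i,j}''<\frac{2}{c_0}|V|_a$. By \Cref{lengths} and $c_0>>N$, at least $\frac12|V|_a$ of these $a$-edges each contribute $\delta$, giving $|\partial\Delta|\geq\frac{\delta}{2}|V|_a$.

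6. This contradicts \Cref{n-minimal annuli h' bounds}(d), which gives $|V|_a>N_1|\partial\Delta|$ with $N_1>>\delta^{-1}$.

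\textbf{Part (b).} By (a) and \Cref{n-minimal annuli h' bounds}(d),
$$c_0h_{i,j}''\geq|V|_a>N_1|\partial\Delta|+N_1.$$
The parameter choice $N_1>>C_3>>c_0$ then gives $h_{i,j}''>C_3|\partial\Delta|+C_3$.

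\textbf{Main obstacle.} The delicate point is step 4 when $\Psi_{i,j}$ lies on the inner component $\textbf{C}_2$. One must check that the label of the $\partial\Pi$-portion of $\partial\Psi_{i,j}$ is still a full copy of a component $W(i')$, so that its $a$-length is exactly $|V|_a$. This holds because consecutive $t$-spokes bound exactly one component of the disk word, whichever side of the annulus they sit on. The rest is the same counting as in \Cref{n-minimal first step}(a), with $h$ replaced by $h_{i,j}''$.
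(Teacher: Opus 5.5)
Your proposal is correct and follows the paper's proof essentially step for step. For (a), rerun the counting from \Cref{n-minimal first step}(a) on $\Psi_{i,j}$ to force $|\partial\Delta|\geq\frac{\delta}{2}|V|_a$, contradicting \Cref{n-minimal annuli h' bounds}(d); for (b), combine (a) with that same bound and use $N_1>>C_3>>c_0$. Your count of $4Nh_{i,j}''$ $a$-bands ending on $(\theta,q)$-cells, against the paper's $2Nh_{i,j}''$, makes no difference since $c_0>>N$.
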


\begin{proof}

(a) As in the proof of \Cref{n-minimal first step}(a), the number of maximal $a$-bands of $\Psi_{i,j}$ which have an end on a $(\theta,q)$-cell is at most $N(h_{i,j}+h_{i+1,j})\leq2Nh_{i,j}''$.

Assuming to the contrary that $|V|_a>c_0h_{i,j}''$ for some $i,j$, then arguments analogous to those in previous statements apply to $\Psi_{i,j}$.  In particular, the parameter choice $c_0>>N$ implies at least $\frac{1}{2}|V|_a$ contribute $\delta$ to $|\partial\Delta|$, and so $|V|_a\leq2\delta^{-1}|\partial\Delta|$.  

But then the parameter choice $N_1>>\delta^{-1}$ leads to a contradiction of \Cref{n-minimal annuli h' bounds}(d).

(b) Combining \Cref{n-minimal annuli h' bounds}(d) with (a) we have $h_{i,j}''\geq\frac{1}{c_0}|V|_a>\frac{N_1}{c_0}(|\partial\Delta|+1)$.  Hence, the parameter choices $N_1>>C_3>>c_0$ imply the statement.

\end{proof}

\begin{lemma} \label{n-minimal 2-annuli}

For any $1\leq i\leq \ell_2-1$, at least one $\theta$-annulus crosses $\pazocal{Q}_{i,2}$.

\end{lemma}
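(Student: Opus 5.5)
We argue by contradiction: fix $i\in\{1,\dots,\ell_2-1\}$ such that no $\theta$-annulus crosses $\pazocal{Q}_{i,2}$, and likewise none crosses $\pazocal{Q}_{i+1,2}$. Since $\theta$-annuli cannot cross one another and the $\theta$-annuli crossing $\pazocal{Q}_{m,1}$ separate the two boundary components, every such annulus must wind around the hole. It would therefore have to cross every $t$-spoke of $\Pi$ that joins $\Pi$ to $\textbf{C}_2$, unless $\Pi$ lies on the $\textbf{C}_2$-side of the annulus.

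The first step is to pin down this geometry. A $\theta$-annulus $\pazocal{T}$ crossing $\pazocal{Q}_{m,1}$ is a non-contractible loop disjoint from $\Pi$. Hence $\Pi$ lies on one side of $\pazocal{T}$, and since $\pazocal{Q}_{m,1}$ joins $\Pi$ to $\textbf{C}_1$ through $\pazocal{T}$, that side is the $\textbf{C}_2$-side. The spokes $\pazocal{Q}_{i,2}$ then stay on the $\textbf{C}_2$-side, so they need not meet $\pazocal{T}$. The bare separation argument is therefore insufficient, and we must use length estimates instead.

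The key estimate comes from \Cref{n-minimal annuli r}(b), which gives $h_{i,2}''>C_3|\partial\Delta|+C_3$. Say $h_{i,2}=h_{i,2}''$. By hypothesis, each cell of $\pazocal{Q}_{i,2}$ lies on a non-annular maximal $\theta$-band. Every such band must end on $\partial\Delta$: it cannot end on $\Pi$, since $|\partial\Pi|_\theta=0$, and it is not annular. By \Cref{n-minimal annuli crossings}, each such band crosses $\pazocal{Q}_{i,2}$ at most once. So distinct cells of $\pazocal{Q}_{i,2}$ correspond to distinct $\theta$-bands, each with two ends on $\partial\Delta$. This gives $|\partial\Delta|\geq|\partial\Delta|_\theta\geq h_{i,2}$, and combined with the bound above, $h_{i,2}>C_3|\partial\Delta|$, a contradiction. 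Thus at least one cell of $\pazocal{Q}_{i,2}$, and in fact all but at most $|\partial\Delta|$ of its cells, lie on $\theta$-annuli.

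The main obstacle is ensuring that the counting in the previous step genuinely uses \Cref{n-minimal annuli crossings}. That lemma is available precisely because we are in the case where a $\theta$-annulus crosses $\pazocal{Q}_{m,1}$. We also need every non-annular band to end on $\partial\Delta$ rather than on a disk. This should follow since $\theta$-edges never lie on disk boundaries, so a maximal $\theta$-band only terminates on $\partial\Delta$. If $h_{i+1,2}$ is the larger of the two lengths, the same argument applies with $\pazocal{Q}_{i+1,2}$ in place of $\pazocal{Q}_{i,2}$. We then transfer the conclusion to $\pazocal{Q}_{i,2}$ using \Cref{n-minimal theta-bands}: every annulus crossing $\pazocal{Q}_{i+1,2}$ passes through $\Psi_{i,2}$. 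Being annular, it cannot end on $\textbf{C}_2$, so it must exit $\Psi_{i,2}$ across $\pazocal{Q}_{i,2}$.
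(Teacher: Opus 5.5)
Your argument is correct and is essentially the paper's proof. Assuming no $\theta$-annulus crosses the spoke, \Cref{n-minimal annuli crossings} puts the $h_{i,2}''$ cells of the longer of $\pazocal{Q}_{i,2},\pazocal{Q}_{i+1,2}$ on distinct non-annular $\theta$-bands ending on $\partial\Delta$, so $h_{i,2}''\leq|\partial\Delta|$, contradicting \Cref{n-minimal annuli r}(b). Your transfer from $\pazocal{Q}_{i+1,2}$ to $\pazocal{Q}_{i,2}$ makes explicit a step the paper leaves implicit; the fact that the annulus cannot leave $\Psi_{i,2}$ back through $\pazocal{Q}_{i+1,2}$ is again \Cref{n-minimal annuli crossings}, and the geometric aside in your second paragraph is not needed.
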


\begin{proof}

Assuming this is not the case, then \Cref{n-minimal annuli crossings} implies the $h_{i,2}''$ cells of $\pazocal{Q}_{i,2}$ (or of $\pazocal{Q}_{i+1,2}$ depending on which one is longer) are crossed by distinct maximal $\theta$-bands, each of which has ends on $\partial\Delta$.  But then $h_{i,2}''\leq|\partial\Delta|$, contradicting \Cref{n-minimal annuli r}(b) for $C_3\geq1$.

\end{proof}

Just as in the discussion of $\pazocal{Q}_{m,1}$, the cells of any of the spokes $\pazocal{Q}_{i,j}$ which are crossed by $\theta$-annuli form a subband $\pazocal{Q}_{i,j}'$.  Indeed, any $\theta$-annulus that crosses $\pazocal{Q}_{n,j}'$ for some $n$ crosses $\pazocal{Q}_{i,j}'$ for all $i$.  So, for $j=1,2$, these $\theta$-annuli bound a trapezium $\Gamma_j$ in $\Psi_j$.

Let $\partial\Gamma_j=\textbf{t}_j^{-1}\textbf{y}_j\textbf{s}_j\textbf{z}_j^{-1}$ be the standard factorization of the trapezium such that $\textbf{s}_j$ is a side of $\pazocal{Q}_{1,j}'$ and $\textbf{t}_j$ is a side of $\pazocal{Q}_{\ell_j,j}'$.  Further, let $H_j'$ be the history of $\Gamma_j$.

\begin{lemma} \label{n-minimal transposition}

For $j=1,2$, we have:

\begin{enumerate}[label=(\alph*)]

\item $\textbf{y}_j^{-1}$ is a subpath of $\partial\Pi$

\item The first letters of $H_1'$ and $H_2'$ are not the same.

\item $H_j'$ does not have a controlled subword.

\end{enumerate}

\end{lemma}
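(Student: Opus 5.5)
I would prove the three parts in order, each by a surgery or structural argument patterned on results already proved for the subdiagrams $\Gamma$ and $\Gamma'$.

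For (a), the aim is to show that the bottom of the $\theta$-annuli stack in $\Psi_j$ lies directly on $\Pi$. Suppose some cell of $\pazocal{Q}_{1,j}$ lies between $\partial\Pi$ and the first $\theta$-annulus crossing $\pazocal{Q}_{1,j}'$. Its maximal $\theta$-band is not annular. By \Cref{n-minimal theta-bands} and \Cref{n-minimal annuli crossings}, it crosses each $t$-spoke at most once and has ends on $\partial\Delta$. Since $\theta$-bands cannot cross, such a band would have to pass between $\Pi$ and the annuli while reaching $\textbf{C}_j$, and that would force it to cross an annulus. So the cells of $\pazocal{Q}_{i,j}$ adjacent to $\Pi$ are exactly those lying in $\theta$-annuli. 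Hence the first $\theta$-annulus has a side running along $\partial\Pi$ on the arc spanned by the scope, and $\textbf{y}_j^{-1}$ is a subpath of $\partial\Pi$. The point to check carefully is that an annulus must surround the hole and therefore enclose $\Pi$ entirely, so the order along every spoke is annuli first, then boundary-ending bands.

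For (b), I would mirror \Cref{first rules different} and use the transposition lemma \Cref{two theta-bands about disk}. By (a), the first $\theta$-annulus $\pazocal{T}$ around $\Pi$ is the same band for $j=1$ and $j=2$: both spoke families meet the innermost annulus first, since it encircles $\Pi$. Its side is shared with $\partial\Pi$ along all $\ell_1+\ell_2\geq L-18$ spokes. If the first letters of $H_1'$ and $H_2'$ coincided, this single band would cross more than $L/2$ $t$-spokes of $\Pi$ with a side on $\partial\Pi$. More directly, a $\theta$-annulus wrapping around a disk with its side on $\partial\Pi$ can be transposed with $\Pi$, which removes $(\theta,t)$-cells and contradicts minimality through the signature. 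I would realize this concretely by transposing $\Pi$ with $\pazocal{T}$ (\Cref{sec-transposition}), which decreases $s_2$ while keeping $s_1$. So either the first letters differ, or we reach a contradiction. Getting the exact formulation right, whether it is ``two bands'' or ``one annulus'', is routine.

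For (c), I would follow \Cref{n-minimal annuli h'}(c) verbatim. A controlled subword $H''$ of $H_j'$ gives a stack of $\theta$-annuli crossing a subband of $\pazocal{Q}_{1,j}'$. Cutting along a $t$-spoke produces a quasi-trapezium with circular base, which \Cref{quasi-trapezia} turns into a trapezium with disks attached. \Cref{enhanced controlled} then shows its bottom label is a power of a cyclic permutation of a disk relator, so the inner boundary of the annular stack is a non-contractible loop trivial in $G(\textbf{M}_\textbf{S})$. This contradicts \Cref{n-minimal trivial path}. I expect (b) to be the main obstacle: (a) and (c) are adaptations of existing arguments, while (b) requires correctly identifying which band to transpose and checking that the transposition applies with the annulus's side along $\partial\Pi$.
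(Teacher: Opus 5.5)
Your proposal has a genuine gap in (b), caused by a wrong picture of where the $\theta$-annuli sit.

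\textbf{Where the annuli actually are.} A minimal annular diagram has no contractible $\theta$-annuli, since such an annulus would lie in a minimal circular subdiagram, contradicting \Cref{minimal annuli}. So every $\theta$-annulus of $\Delta$ goes around the hole, not around $\Pi$. Now $\Pi$ has $t$-spokes ending on both $\textbf{C}_1$ and $\textbf{C}_2$, and each $\theta$-band crosses each $q$-band at most once (\Cref{n-minimal annuli crossings}). Hence an annulus meeting a spoke to $\textbf{C}_1$ separates $\Pi$ from $\textbf{C}_1$ and meets no spoke to $\textbf{C}_2$, and symmetrically for $\textbf{C}_2$. So the innermost annuli $\pazocal{T}_1$ (carrying $\textbf{y}_1$) and $\pazocal{T}_2$ (carrying $\textbf{y}_2$) are two \emph{distinct, disjoint} bands on opposite sides of $\Pi$. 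Each shares a side with $\partial\Pi$ only along its own $\ell_j$ spokes.

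\textbf{Why your (b) fails.} Your claim that one band runs along all $\ell_1+\ell_2$ spokes is false. Worse, if it were true, the first letters of $H_1'$ and $H_2'$ would automatically coincide, so (b) itself would be false. The choice between ``two bands'' and ``one annulus'' is therefore not routine. Your fallback, transposing $\Pi$ with a single annulus, also fails. It never uses the first-letter hypothesis. Transposing with one band crossing $\ell_j$ spokes trades $\ell_j$ $(\theta,t)$-cells for roughly $L-\ell_j$, which lowers $s_2$ only if $\ell_j>L/2$. Only $\ell_1+\ell_2\geq L-18$ is known.

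\textbf{The fix for (b).} Use the two-band argument, as the paper does. By (a), $\pazocal{T}_1$ and $\pazocal{T}_2$ are disjoint $\theta$-bands, each with a side on $\partial\Pi$. The first letter of $H_j'$ is the letter of $\pazocal{T}_j$. So if the first letters agreed, the two bands would correspond to the same letter of $\Theta(\textbf{M}_\textbf{S})$. Then \Cref{two theta-bands about disk} gives $\ell_1+\ell_2\leq L/2$, contradicting $\ell_1+\ell_2\geq L-18$ once $L\geq 37$. The equal-letter hypothesis is exactly what lets the second band cancel against the transposed first one.

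\textbf{The same correction in (a).} A non-annular band lying between $\Pi$ and $\pazocal{T}_j$ is blocked from $\textbf{C}_j$ by $\pazocal{T}_j$. It is blocked from the other component only by the annulus $\pazocal{T}_{3-j}$ on the other side of $\Pi$. That annulus exists by the standing assumption of this subsection and \Cref{n-minimal 2-annuli}, not because one annulus ``encloses $\Pi$''. The paper's argument: $\pazocal{T}_1$ and $\pazocal{T}_2$ bound an annular subdiagram containing $\Pi$. Any other maximal $\theta$-band inside it can end neither on $\partial\Delta$ nor on a disk, so it must be an annulus. Such an annulus would cross the spokes closer to $\Pi$ than $\pazocal{T}_j$, contradicting the choice of $\pazocal{T}_j$.

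Your argument for (c) is correct and is the paper's, namely the proof of \Cref{n-minimal annuli h'}(c) repeated.
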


\begin{proof}

(a) Let $\pazocal{T}_j$ be the $\theta$-annulus for which $\textbf{y}_j$ is a subpath of one side.  Then $\pazocal{T}_1$ and $\pazocal{T}_2$ bound an annular subdiagram $\Delta_a$ of $\Delta$.  As $\theta$-bands cannot cross, any other maximal $\theta$-band of $\Delta_a$ must be annular.  But then it must cross $\pazocal{Q}_{i,j}$ closer to $\Pi$ than $\pazocal{T}_j$, contradicting the definition of $\pazocal{T}_j$.  Hence, the crossing of $\pazocal{T}_j$ with $\pazocal{Q}_{i,j}$ must be the first cell of the band, implying the statement.

(b) Assuming the statement is false, (a) implies $\pazocal{T}_1$ and $\pazocal{T}_2$ satisfy the hypotheses of \Cref{two theta-bands about disk}.  But these bands cross at least $L-18$ of the spokes of $\Pi$ in total, so that we reach a contradiction by taking $L\geq37$.

(c) follows from an identical proof to \Cref{n-minimal annuli h'}(c): A controlled subword would yield a non-contractible loop with label that represents the identity in $G(\textbf{M}_\textbf{S})$, contradicting \Cref{n-minimal trivial path}.

\end{proof}

For $i=1,\dots,\ell_j-1$, let $\Gamma_{i,j}$ be the subdiagram of $\Gamma_j$ shared with $\Psi_{i,j}$.  Note then that $\Gamma_{i,j}$ is also trapezium with history $H_j'$.  

Let $\partial\Gamma_{i,j}=\textbf{t}_{i,j}^{-1}\textbf{y}_{i,j}\textbf{s}_{i,j}\textbf{z}_{i,j}^{-1}$ be the standard factorization of the trapezium where $\textbf{y}_{i,j}$ and $\textbf{z}_{i,j}$ are subpaths of $\textbf{y}_j$ and $\textbf{z}_j$, respectively.

Note then that $\Gamma_{m,1}=\Gamma$, so that $H_1'=H'$.  Let $h_j'=\|H_j'\|$, so that $h_1'=h'$.


\begin{lemma} \label{n-minimal annuli r'}

For $j\in\{1,2\}$ and $1\leq i\leq \ell_j-1$, we have:

\begin{enumerate}[label=(\alph*)]

\item $C_3(h_{i,j}''-h_j')<h_{i,j}''$.

\item $h_j'>C_2|\partial\Delta|+C_2$.

\item $h_j'>c_1\|\textbf{z}_{i,j}\|$.

\end{enumerate}

\end{lemma}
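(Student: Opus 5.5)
Part (a) is the same kind of statement as \Cref{n-minimal annuli h'}(a), now proved for every spoke. Fix $j$ and $i$, and let $\pazocal{Q}$ be whichever of $\pazocal{Q}_{i,j},\pazocal{Q}_{i+1,j}$ has length $h_{i,j}''$. By \Cref{n-minimal 2-annuli} (for $j=2$) and the standing assumption (for $j=1$), some $\theta$-annulus crosses the scope $\Psi_j$. The $\theta$-annuli crossing $\pazocal{Q}$ are then exactly those bounding $\Gamma_j$, so they account for exactly $h_j'$ cells of $\pazocal{Q}$. Every remaining cell of $\pazocal{Q}$ lies on a non-annular maximal $\theta$-band. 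By \Cref{n-minimal annuli crossings} such a band crosses $\pazocal{Q}$ only once, so these bands are pairwise distinct, and each of them ends on $\partial\Delta$. Since a $\theta$-band has two ends, this gives $h_{i,j}''-h_j'\leq|\partial\Delta|_\theta\leq|\partial\Delta|$. Combining with \Cref{n-minimal annuli r}(b), $h_{i,j}''>C_3|\partial\Delta|+C_3\geq C_3(h_{i,j}''-h_j')$, which is (a).

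Part (b) follows from (a). It gives $h_j'>(1-\frac{1}{C_3})h_{i,j}''$, and \Cref{n-minimal annuli r}(b) gives $h_{i,j}''>C_3(|\partial\Delta|+1)$. Together these yield $h_j'>(C_3-1)(|\partial\Delta|+1)$. The parameter choice $C_3>>C_2$ then gives (b).

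Part (c) follows the argument of \Cref{n-minimal annuli h' bounds}(a), with $\Gamma_{i,j}$ in place of $\Gamma'$. Suppose toward contradiction that $h_j'\leq c_1\|\textbf{z}_{i,j}\|=c_1(|\textbf{z}_{i,j}|_a+N+1)$. By (b) and $C_2>>c_1>>N$, we get $|\textbf{z}_{i,j}|_a$ large, so $\|\textbf{z}_{i,j}\|\leq2|\textbf{z}_{i,j}|_a$. By (a), $h_{i,j}''-h_j'<\frac{1}{C_3-1}h_j'$, so $|\textbf{z}_{i,j}|_a\geq\frac{1}{2c_1}h_j'>C_2(h_{i,j}''-h_j')$ using $C_3>>C_2>>c_1$.

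Now consider the part $\Lambda$ of $\Psi_{i,j}$ between $\textbf{z}_{i,j}$ and $\textbf{C}_j$. Its $\theta$-bands are the non-annular ones, so by \Cref{n-minimal theta-bands} and \Cref{n-minimal annuli crossings} there are at most $2(h_{i,j}''-h_j')$ of them. Also $\Lambda$ has at most $N+1$ maximal $q$-bands, by \Cref{n-minimal subcombs}. Hence at most $4N(h_{i,j}''-h_j')$ maximal $a$-bands starting on $\textbf{z}_{i,j}$ end on $(\theta,q)$-cells, and $|\textbf{q}_{i,j}|_\theta\leq2(h_{i,j}''-h_j')$, where $\textbf{q}_{i,j}$ denotes the subpath of $\textbf{C}_j$ on $\partial\Lambda$. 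Since $C_2>>N$, at least half of the $a$-bands from $\textbf{z}_{i,j}$ reach $\textbf{C}_j$ and contribute $\delta$ each by \Cref{lengths}. This gives $|\partial\Delta|\geq\frac{\delta}{2}|\textbf{z}_{i,j}|_a$, hence $h_j'\leq 2c_1|\textbf{z}_{i,j}|_a\leq4c_1\delta^{-1}|\partial\Delta|$. That contradicts (b), since $C_2>>\delta^{-1}>>c_1$.

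The main obstacle is the $a$-band counting in (c). One must check that no subcomb hides inside $\Lambda$, so that \Cref{n-minimal subcombs} really bounds the $(\theta,q)$-cells. One must also check that $a$-bands cannot return to $\textbf{z}_{i,j}$, which holds because $\textbf{z}_{i,j}$ is the side of a $\theta$-band and \Cref{M(S) annuli} forbids $(\theta,a)$-annuli.
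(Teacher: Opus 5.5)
Your proposal is correct and follows the paper's proof essentially step for step. Part (a) counts the non-annular $\theta$-bands crossing the longer spoke, which are pairwise distinct by \Cref{n-minimal annuli crossings}, and then invokes \Cref{n-minimal annuli r}(b); part (b) combines (a) with that same lemma; and part (c) reruns the $a$-band counting of \Cref{n-minimal annuli h' bounds}(a) in the region between $\textbf{z}_{i,j}$ and $\textbf{C}_j$. The only differences are cosmetic: you argue (a) directly rather than by contradiction, use the looser (and slightly more careful) constant $4N$ in the $a$-band count, and close (c) against (b) directly instead of via the intermediate bound $|\textbf{z}_{i,j}|_a>C_1|\partial\Delta|+C_1$.
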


\begin{proof}

(a) By \Cref{n-minimal annuli crossings}, the $h_{i,j}''-h_j'$ $(\theta,t)$-cells of $\pazocal{Q}_{i,j}$ (or of $\pazocal{Q}_{i+1,j}$ if it is longer) not crossed by $\theta$-annuli are part of distinct maximal $\theta$-bands that end on $\partial\Delta$.  So, assuming the inequality is false, $h_{i,j}''\leq C_3(h_{i,j}''-h_j')\leq C_3|\partial\Delta|$.  But then this contradicts \Cref{n-minimal annuli r}(b).

(b) Combining (a) with \Cref{n-minimal annuli r}(b), we have $h_j'>(1-\frac{1}{C_3})h_{i,j}''>(C_3-1)(|\partial\Delta|+1)$, so that the statement follows from the parameter choice $C_3>>C_2$.

(c) follows in much the same way as the proof of \Cref{n-minimal annuli h' bounds}(a): 

Supposing $h_j'\leq c_1\|\textbf{z}_{i,j}\|=c_1|\textbf{z}_{i,j}|_a+c_1(N+1)$, the parameter choices $C_2>>C_1>>c_1>>N$ and (b) yield $|\textbf{z}_{i,j}|_a>C_1|\partial\Delta|+C_1$.  In particular, $|\textbf{z}_{i,j}|_a>C_1$, so that $\|\textbf{z}_{i,j}\|\leq2|\textbf{z}_{i,j}|_a$.

The inequality given in part (a) implies $h_j'>(1-\frac{1}{C_3})h_{i,j}''$, so that $h_{i,j}''-h_j'<\frac{1}{C_3}h_{i,j}''<\frac{1}{C_3-1}h_j'$.  So, $\|\textbf{z}_{i,j}\|\geq\frac{1}{c_1}h_j'>\frac{C_3-1}{c_1}(h_{i,j}''-h_j')$, and hence the parameter choices $C_3>>C_2>>c_1$ imply $|\textbf{z}_{i,j}|_a>C_2(h_{i,j}''-h_j')$.

Letting $\Gamma_{i,j}'$ be the subdiagram of $\Gamma_{i,j}$ bounded by $\textbf{z}_{i,j}$ and a subpath of $\textbf{C}_j$, then the number of maximal $a$-bands of $\Gamma_{i,j}'$ which have an end on a $(\theta,q)$-cell is at most $2N(h_{i,j}''-h_j')$.  The parameter choice $C_2>>N$ then implies at least $\frac{1}{2}|\textbf{z}_{i,j}|_a$ $a$-edges of $\textbf{C}_j$ contribute $\delta$ to $|\partial\Delta|$, so that $|\partial\Delta|\geq\frac{\delta}{2}|\textbf{z}_{i,j}|_a$.  But then the parameter choice $C_1>>\delta^{-1}$ implies $|\textbf{z}_{i,j}|_a\leq2\delta^{-1}|\partial\Delta|\leq C_1|\partial\Delta|$, yielding a contradiction.

\end{proof}

We finally reach a contradiction to our assumption that there exists an annular $\theta$-band in $\Delta$ which crosses $\pazocal{Q}_{m,1}$.

\begin{lemma} \label{n-minimal annular contradiction}

No annular $\theta$-band in $\Delta$ crosses $\pazocal{Q}_{m,1}$.

\end{lemma}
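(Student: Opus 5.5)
We argue by contradiction, continuing the setup of this subsection: assume some $\theta$-annulus crosses $\pazocal{Q}_{m,1}$. Then Lemmas \ref{n-minimal annuli h'}--\ref{n-minimal annuli r'} apply, so each $\Gamma_j$ is a trapezium whose history $H_j'$ has length $h_j'>C_2|\partial\Delta|+C_2$ and contains no controlled subword. The plan is to splice the two trapezia $\Gamma_1$ and $\Gamma_2$ along the disk $\Pi$, producing a single reduced computation whose history is long compared with its end words. Lemma \ref{long history controlled} will then force a controlled subword, contradicting Lemma \ref{n-minimal transposition}(c).

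\emph{Step 1 (the splice).} Fix indices $i_1,i_2$ and consider the trapezia $\Gamma_{i_1,1}$ and $\Gamma_{i_2,2}$. By Lemma \ref{n-minimal transposition}(a), their bottoms $\textbf{y}_{i_j,j}$ are subpaths of $\partial\Pi$, read in opposite directions. Their labels are therefore coordinate shifts of $W(i)t(i+1)$, i.e. of the accepted configuration $V$ up to coordinates. Apply Lemma \ref{trapezia are computations} to both trapezia and shift coordinates so that the bottoms agree. We then obtain a computation $\pazocal{C}:U_0\to\dots\to U_t$ of $\textbf{E}_{\textbf{S},k}$ in the standard base with history $(H_1')^{-1}H_2'$. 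By Lemma \ref{n-minimal transposition}(b) the junction does not cancel, so this history is reduced. Its endpoints $U_0$ and $U_t$ are coordinate shifts of $\lab(\textbf{z}_{i_1,1})$ and $\lab(\textbf{z}_{i_2,2})$, so $t=h_1'+h_2'$.

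\emph{Step 2 (length comparison).} By Lemma \ref{n-minimal annuli r'}(c), $h_j'>c_1\|\textbf{z}_{i,j}\|$ for each $j$. Hence
\[
t=h_1'+h_2'>c_1\max(\|U_0\|,\|U_t\|).
\]
Assume without loss of generality that $h_1'\ge h_2'$, so $h_1'\ge t/2\ge\frac14\lambda t$. Apply Lemma \ref{long history controlled} with $\pazocal{D}$ the subcomputation with history $(H_1')^{-1}$. Taking the trivial factorization, with $H'$ and $H'''$ empty, shows that $(H_1')^{-1}$, and hence $H_1'$, contains a controlled subword. This contradicts Lemma \ref{n-minimal transposition}(c).

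\emph{Main obstacle.} The delicate point is Step 1: checking that the spliced word sequence really is a reduced computation in the standard base. Two things need justification.
\begin{itemize}
\item The two bottoms are literally the same configuration up to coordinate shift and inversion. This should follow from the disk label being a product of coordinate shifts of one configuration, together with the parallel construction of $\textbf{M}_\textbf{S}$. Care is needed because the two scopes sit on opposite sides of $\Pi$, which may require passing to the mirror copy.
\item The restriction to one component yields a computation of $\textbf{E}_{\textbf{S},k}$ to which Lemma \ref{long history controlled} applies. This is the same maneuver used in the proof of Lemma \ref{h lower bound}, and I would mirror that argument, using Lemma \ref{n-minimal transposition}(b) in place of Lemma \ref{first rules different}.
\end{itemize}
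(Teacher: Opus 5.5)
Your proposal is correct and follows essentially the same route as the paper: splice the inverse of the computation for $\Gamma_{i,1}$ with a coordinate shift of the one for $\Gamma_{i,2}$ using Lemma \ref{n-minimal transposition}(a),(b), then use Lemma \ref{n-minimal annuli r'}(c) to meet the hypotheses of Lemma \ref{long history controlled}, which forces a controlled subword in the longer of $H_1'$, $H_2'$ and contradicts Lemma \ref{n-minimal transposition}(c). Your worry about mirroring is unnecessary: both trapezia lie outside $\Pi$, so in each case $\textbf{y}_{i,j}^{-1}$ is a subpath of $\partial\Pi$, and the two bottoms are direct coordinate shifts of each other with no inversion needed.
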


\begin{proof}

Assuming the statement is false, we have trapezia $\Gamma_{1,1}$ and $\Gamma_{1,2}$ with histories $H_1'$ and $H_2'$, respectively.  Let $\pazocal{C}_1$ and $\pazocal{C}_2$ be the reduced computations corresponding to the trapezia through \Cref{trapezia are computations}.  By \Cref{n-minimal transposition}(a), the initial admissible words of these computations are coordinate shifts of one another (with the appropriate $t$-letter appended).  So, \Cref{n-minimal transposition}(b) implies the inverse computation of $\pazocal{C}_1$ can be concatenated with the appropriate coordinate shift of $\pazocal{C}_2$ to produce a reduced computation $\pazocal{C}'':U_0\to\dots\to U_t$ with history $(H_1')^{-1}H_2'$.  

By construction, $\|U_0\|=\|\textbf{z}_{i,1}\|$ and $\|U_t\|=\|\textbf{z}_{i,2}\|$.  So, \Cref{n-minimal annuli r'}(c) implies $\pazocal{C}''$ can be identified with a reduced computation of the $k$-enhanced machine satisfying the hypotheses of \Cref{long history controlled}.  But then assuming without loss of generality that $h_1'\geq h_2'$, it follows that $H_1'$ has a controlled subword, contradicting \Cref{n-minimal transposition}(c).

\end{proof}


\subsection{Spirals} \label{sec-n-minimal-spirals} \

The arguments of the previous section allow us to assume now that every maximal $\theta$-band in $\Delta$ that crosses $\pazocal{Q}_{m,1}$ has two ends on $\partial\Delta$.  

\begin{lemma} \label{n-minimal spiral existence}

There exists a maximal $\theta$-band of $\Delta$ which crosses $\pazocal{Q}_{m,1}$ more than $N_3$ times.

\end{lemma}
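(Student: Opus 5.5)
We argue by counting. By \Cref{n-minimal annular contradiction}, every maximal $\theta$-band of $\Delta$ crossing $\pazocal{Q}_{m,1}$ is non-annular, so it has two ends on $\partial\Delta$. Each $\theta$-edge of $\partial\Delta$ is the end of exactly one maximal $\theta$-band. Hence the number of distinct maximal $\theta$-bands crossing $\pazocal{Q}_{m,1}$ is at most $\frac{1}{2}|\partial\Delta|_\theta\leq|\partial\Delta|$, using \Cref{lengths}(a) to bound the number of $\theta$-edges by the length.

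Now suppose, toward a contradiction, that every such band crosses $\pazocal{Q}_{m,1}$ at most $N_3$ times. Each of the $h=h_{m,1}$ cells of $\pazocal{Q}_{m,1}$ lies on exactly one maximal $\theta$-band crossing $\pazocal{Q}_{m,1}$. Summing over these bands gives $h\leq N_3\cdot|\partial\Delta|$.

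This contradicts \Cref{n-minimal first step}(b), which gives $h>N_3|\partial\Delta|+N_3$. Hence some maximal $\theta$-band crosses $\pazocal{Q}_{m,1}$ more than $N_3$ times.

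The only step needing care is the accounting of ends. A maximal $\theta$-band with two ends on $\partial\Delta$ uses two distinct boundary $\theta$-edges, so the bands are injectively indexed by pairs of boundary $\theta$-edges. If one prefers not to track the factor $\frac{1}{2}$, the cruder bound (number of bands) $\leq|\partial\Delta|_\theta\leq|\partial\Delta|$ already suffices. We must also make sure that a band may cross $\pazocal{Q}_{m,1}$ several times without creating a $(\theta,q)$-annulus. This is consistent with \Cref{G(S) annuli}, since the repeated crossings wind around the hole, as in \Cref{n-minimal boundary ends}. No further input is needed, so we do not expect a real obstacle beyond verifying that \Cref{n-minimal first step}(b) still holds under the present case assumption. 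It does, since its proof did not use anything about annular bands.
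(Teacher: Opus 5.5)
Your proposal is correct and is essentially the paper's argument. Every maximal $\theta$-band crossing $\pazocal{Q}_{m,1}$ has distinct ends on $\partial\Delta$, so if each crossed at most $N_3$ times we would get $h\leq N_3|\partial\Delta|$, contradicting \Cref{n-minimal first step}(b).
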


\begin{proof}

Assuming the statement is false, at least $h/N_3$ distinct $\theta$-bands cross $\pazocal{Q}_{m,1}$.  But since these $\theta$-bands have distinct ends on $\partial\Delta$, we have $|\partial\Delta|\geq h/N_3$, contradicting \Cref{n-minimal first step}(b).

\end{proof}

Let $\pazocal{T}$ be the maximal $\theta$-band of $\Delta$ satisfying \Cref{n-minimal spiral existence} which crosses $\pazocal{Q}_{m,1}$ closest to $\Pi$.  That is, for any `initial' subband $\pazocal{U}$ of $\pazocal{Q}_{m,1}$ {\frenchspacing(i.e. which ends on $\Pi$)} that is not crossed by $\pazocal{T}$, any maximal $\theta$-band of $\Delta$ that does cross $\pazocal{U}$ crosses $\pazocal{Q}_{m,1}$ less than $N_3$ times.

\begin{lemma} \label{n-minimal spiral no annuli}

There are no annular $\theta$-bands in $\Delta$.

\end{lemma}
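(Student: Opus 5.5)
The argument is short and should mirror the proof of \Cref{n-minimal annuli crossings}. We argue by contradiction: suppose $\Delta$ contains an annular $\theta$-band, i.e. a $\theta$-annulus $\pazocal{A}$. By construction, $\pazocal{T}$ crosses the $t$-band $\pazocal{Q}_{m,1}$ more than $N_3\geq2$ times. In particular, it crosses the same maximal $q$-band at least twice. So \Cref{n-minimal boundary ends} applies and shows that $\pazocal{T}$ has one end on $\textbf{C}_1$ and another on $\textbf{C}_2$.

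Next we observe that $\pazocal{A}$ is non-contractible. Indeed, $\Delta$ is minimal, and (as in the proof of \Cref{n-minimal trivial path}) cutting along a side of a contractible $\theta$-annulus would give a circular subdiagram; \Cref{minimal annuli} forbids such annuli in minimal circular diagrams. Since $\pazocal{A}$ is non-contractible in the annulus, either of its sides separates $\textbf{C}_1$ from $\textbf{C}_2$.

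The band $\pazocal{T}$ joins the two boundary components, so its middle line is a path from $\textbf{C}_1$ to $\textbf{C}_2$. Such a path must meet any non-contractible loop separating the two components, and in particular must meet $\pazocal{A}$. But $\pazocal{T}\neq\pazocal{A}$: the band $\pazocal{T}$ has ends on the boundary, while $\pazocal{A}$ has none. Distinct maximal $\theta$-bands cannot cross or share cells, so this is the desired contradiction.

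The only step needing care is that $\pazocal{A}$ is genuinely non-contractible, so that the separation argument applies. Here we rely on the minimality of $\Delta$, via \Cref{minimal annuli} applied to the disk bounded by a contractible side, in the same way the paper already uses it implicitly in \Cref{n-minimal annuli crossings}. Everything else is immediate from \Cref{n-minimal boundary ends} and the fact that $\theta$-bands do not cross.
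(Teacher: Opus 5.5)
Your proof is correct and is essentially the paper's argument: \Cref{n-minimal boundary ends} forces $\pazocal{T}$ to join $\textbf{C}_1$ to $\textbf{C}_2$, so any $\theta$-annulus would have to cross it, which is impossible. Your extra step ruling out contractible $\theta$-annuli makes explicit what the paper leaves implicit. It works by applying \Cref{minimal annuli} to the circular subdiagram bounded by the outer side of such an annulus, which is minimal because it is a subdiagram of a minimal diagram.
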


\begin{proof}

\Cref{n-minimal boundary ends} implies $\pazocal{T}$ has one end on $\textbf{C}_1$ and another on $\textbf{C}_2$.  But then any $\theta$-annulus in $\Delta$ would have to cross $\pazocal{T}$, which is impossible since $\theta$-bands cannot cross.

\end{proof}

Let $\pazocal{T}_1,\dots,\pazocal{T}_n$ be the minimal subbands of $\pazocal{T}$ which cross both $\pazocal{Q}_{m,1}$ and $\pazocal{Q}_{m+1,1}$, enumerated by proximity to $\Pi$ {\frenchspacing (i.e. there exists an initial subband of $\pazocal{Q}_{m,1}$ which $\pazocal{T}_i$ crosses but $\pazocal{T}_{i+1}$ does not)}.  Note that a subband of $\pazocal{T}$ which crosses $\pazocal{Q}_{m,1}$ twice must cross $\pazocal{Q}_{m+1,1}$, and so \Cref{n-minimal spiral existence} implies $n$ may be taken sufficiently large (with respect to $N_3$).

So, each $\pazocal{T}_i$ is a maximal $\theta$-band in $\Gamma$.  Indeed, these bands bound a trapezium $\Gamma'$ in $\Gamma$ with standard factorization $\partial\Gamma'=\textbf{s}_{m+1}^{-1}\textbf{y}\textbf{s}_m\textbf{z}^{-1}$ such that:

\begin{itemize}

\item $\textbf{s}_j$ is the side of the minimal subband $\pazocal{Q}_{j,1}'$ of $\pazocal{Q}_{j,1}$ which is crossed by each $\pazocal{T}_i$.


\item $\textbf{y}$ is a side of $\pazocal{T}_1$.

\item $\textbf{z}$ is a side of $\pazocal{T}_n$.

\end{itemize}

\begin{lemma} \label{n-minimal spiral h'}

Let $H'$ be the history of $\Gamma'$ and $h'=\|H'\|$.

\begin{enumerate}[label=(\alph*)]

\item $h>\frac{1}{2}N_3(h-h')$

\item $h'>N_2|\partial\Delta|+N_2$

\end{enumerate}

\end{lemma}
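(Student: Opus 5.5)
The plan is to mirror the proof of \Cref{n-minimal annuli h'}(a),(b), but now in the spiral setting. Recall that $h=h_{m,1}$ is the length of $\pazocal{Q}_{m,1}$, and that $\pazocal{Q}_{m,1}'$ is the minimal subband crossed by every $\pazocal{T}_i$, so $h-h'$ counts the cells of $\pazocal{Q}_{m,1}$ lying outside $\pazocal{Q}_{m,1}'$. These cells split into two families. The first is an initial segment $\pazocal{U}$ near $\Pi$, lying before the crossing with $\pazocal{T}_1$. The second is a terminal segment, lying after the crossing with $\pazocal{T}_n$.

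\textbf{Initial segment.} By the choice of $\pazocal{T}$ as the band satisfying \Cref{n-minimal spiral existence} that crosses closest to $\Pi$, every maximal $\theta$-band crossing $\pazocal{U}$ crosses $\pazocal{Q}_{m,1}$ fewer than $N_3$ times. By \Cref{n-minimal spiral no annuli} each such band is non-annular, so it has two ends on $\partial\Delta$. Hence the number of distinct maximal $\theta$-bands crossing $\pazocal{U}$ is at least $|\pazocal{U}|/N_3$. Distinct bands have distinct ends, so $|\partial\Delta|\geq |\pazocal{U}|/N_3$, giving $|\pazocal{U}|\leq N_3|\partial\Delta|$.

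\textbf{Terminal segment.} Here I would use that $n$ is the number of crossings of $\pazocal{T}$ with $\pazocal{Q}_{m+1,1}$ within the relevant stretch, and that $\pazocal{T}_n$ is the last such subband. After $\pazocal{T}_n$, $\pazocal{T}$ crosses $\pazocal{Q}_{m,1}$ at most once more before reaching its end on a boundary component. Any cell of $\pazocal{Q}_{m,1}$ beyond that last crossing is crossed by a band disjoint from $\pazocal{T}$. Such a band cannot spiral, because spiralling would force it to cross $\pazocal{T}$, which ends on both components by \Cref{n-minimal boundary ends}. So such bands cross $\pazocal{Q}_{m,1}$ a bounded number of times, say at most $2$, which yields a bound $\leq 2|\partial\Delta|+1$. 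The cells between consecutive crossings of $\pazocal{T}$ itself are handled in the same way.

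\textbf{Main obstacle.} The delicate step is controlling the terminal part without the minimality choice that governed $\pazocal{U}$. Bands crossing $\pazocal{Q}_{m,1}$ beyond $\pazocal{T}$ could in principle also spiral. I expect to argue that two distinct bands spiralling around the hole would both have to connect $\textbf{C}_1$ to $\textbf{C}_2$ while winding many times, and they cannot interleave without crossing. Concretely, the winding of the bands between $\textbf{C}_1$ and $\textbf{C}_2$ is constrained once $\pazocal{T}$ does so. Failing a clean topological argument, I would redefine the terminal cutoff symmetrically, choosing $\pazocal{T}$'s last crossing in the same extremal way as the first. The factor $\frac12$ in (a) suggests exactly this: splitting the deficit into the two ends, each bounded by $N_3|\partial\Delta|$.

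\textbf{Conclusion.} Combining the two segments gives $h-h'\leq 2N_3|\partial\Delta|+O(1)$. By \Cref{n-minimal first step}(b), $h>N_3|\partial\Delta|+N_3$, which turns this into $h-h'<\frac{2}{N_3}h$, i.e.\ (a). For (b), rearranging gives $h'>(1-\frac{2}{N_3})h>(N_3-2)(|\partial\Delta|+1)$, and the parameter choice $N_3>>N_2$ then yields $h'>N_2|\partial\Delta|+N_2$.
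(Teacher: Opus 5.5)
There is a genuine gap, in two places.

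\textbf{The final arithmetic does not give (a).} From $h-h'\le 2N_3|\partial\Delta|+O(1)$ and \Cref{n-minimal first step}(b), $h>N_3|\partial\Delta|+N_3$, you only get $h-h'<2h+O(1)$, which is vacuous. To reach $h-h'<\frac{2}{N_3}h$ you need the deficit bounded by $2|\partial\Delta|$, with a constant independent of $N_3$. Your initial-segment estimate uses only that each band crossing $\pazocal{U}$ crosses $\pazocal{Q}_{m,1}$ fewer than $N_3$ times, and it loses exactly this factor of $N_3$. This quantitative loss alone could be patched by returning to $3h>N_4|\partial\Delta|+3N_3$ from the proof of \Cref{n-minimal first step}(b) and using $N_4>>N_3$, but that is not what you wrote.

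\textbf{The terminal-segment argument rests on a false claim.} A $\theta$-band disjoint from $\pazocal{T}$ can perfectly well spiral. It can run parallel to $\pazocal{T}$ from $\textbf{C}_1$ to $\textbf{C}_2$, with its crossings of $\pazocal{Q}_{m,1}$ alternating with those of $\pazocal{T}$. So neither ``spiralling would force it to cross $\pazocal{T}$'' nor ``spiralling bands cannot interleave without crossing'' holds. Your fallback of redefining the terminal cutoff is also unavailable. The trapezium $\Gamma'$ is fixed by $\pazocal{T}_1,\dots,\pazocal{T}_n$, and \Cref{n-minimal spiral H_0} needs all of these to be subbands of the single band $\pazocal{T}$.

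\textbf{The missing idea: cut along $\pazocal{T}$.} By \Cref{n-minimal boundary ends}, $\pazocal{T}$ runs from $\textbf{C}_1$ to $\textbf{C}_2$. Cut $\Delta$ along a side of $\pazocal{T}$ to obtain a circular diagram $\Delta'$; since $\theta$-bands do not cross, the maximal $\theta$-bands of $\Delta'$ are identified with those of $\Delta$. Next, a subband of $\pazocal{T}$ crossing $\pazocal{Q}_{m,1}$ twice must cross $\pazocal{Q}_{m+1,1}$, and $\pazocal{T}_1,\dots,\pazocal{T}_n$ exhaust such subbands. Hence $\pazocal{T}$ crosses each of the two components of $\pazocal{Q}_{m,1}\setminus\pazocal{Q}_{m,1}'$ at most once, so these components become at most four maximal $q$-bands of $\Delta'$. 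By \Cref{G(S) annuli} applied in $\Delta'$, every maximal $\theta$-band crosses each of these at most once. So every band crosses $\pazocal{Q}_{m,1}\setminus\pazocal{Q}_{m,1}'$ at most four times, uniformly and with no dependence on $N_3$. Therefore at least $(h-h')/4$ distinct bands cross it, each with two ends on $\partial\Delta$, which gives $|\partial\Delta|\ge (h-h')/2$. Then (a) follows from \Cref{n-minimal first step}(b).

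This treats both ends at once and uses no extremal property of $\pazocal{T}$. The factor $\frac12$ comes from four crossings against two ends per band, not from splitting the deficit into two pieces each of size $N_3|\partial\Delta|$. Your derivation of (b) from (a) is fine.
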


\begin{proof}

(a) Since $\pazocal{T}$ has an end on each boundary component, cutting along a side this $\theta$-band produces a circular diagram $\Delta'$.  Since $\theta$-bands do not cross, the maximal $\theta$-bands of $\Delta'$ can be identified with those of $\Delta$.  On the other hand, the cells of $\pazocal{Q}_{m,1}$ are split into several bands corresponding to the times $\pazocal{T}$ crosses it in $\Delta$.  By the maximality of $n$, though, $\pazocal{T}$ crosses the two components of $\pazocal{Q}_{m,1}\setminus\pazocal{Q}_{m,1}'$ at most once, so that these components correspond to at most four distinct maximal $q$-bands in $\Delta'$.  As such, \Cref{G(S) annuli} implies any maximal $\theta$-band of $\Delta$ crosses $\pazocal{Q}_{m,1}\setminus\pazocal{Q}_{m,1}'$ at most four times (note that this bound can be sharpened, but such improvement is immaterial).

Hence, at least $(h-h')/4$ maximal $\theta$-bands cross $\pazocal{Q}_{m,1}\setminus\pazocal{Q}_{m,1}'$, each of which has two ends on $\Delta$.  But then $|\partial\Delta|>(h-h')/2$, so that the inequality is given by \Cref{n-minimal first step}(b).

(b) Combining (a) with \Cref{n-minimal first step}(b) we have $h'>(1-\frac{2}{N_3})h>(N_3-2)(|\partial\Delta|+1)$.  Hence, the inequality is given by the parameter choice $N_3>>N_2$.

\end{proof}

%
%
%
%

\begin{lemma} \label{n-minimal spiral H_0}

There exists a word $H_0\in F(\Theta^+)$ such that:

\begin{enumerate}[label=(\alph*)]

\item $H'\equiv H_0^{n-1}\theta$, where $\theta$ is the first letter of $H_0$.

\item $H_0^2$ has no controlled subword

\end{enumerate}

\end{lemma}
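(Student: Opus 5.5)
Part (a) is a statement about how the single band $\pazocal{T}$ traverses $\Gamma'$; part (b) then follows from the earlier non-triviality result \Cref{n-minimal trivial path}.

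For (a), read the history of $\Gamma'$ along $\pazocal{Q}_{m,1}'$ from $\Pi$ outward. Its letters are the rules of the cells where $\pazocal{Q}_{m,1}$ meets the subbands $\pazocal{T}_1,\dots,\pazocal{T}_n$. All of these are subbands of the one maximal $\theta$-band $\pazocal{T}$, so every $\theta$-edge of $\pazocal{T}$ carries the same letter $\theta_j$. I therefore expect a shift of viewpoint: the history should be taken along $\pazocal{T}$ itself, as in the spiral case of Lemma 5.3 of \cite{GW}.

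Cut $\Delta$ along a side of $\pazocal{T}$, which is possible because $\pazocal{T}$ ends on both $\textbf{C}_1$ and $\textbf{C}_2$ by \Cref{n-minimal boundary ends}. The result is a circular diagram in which the pieces of $\pazocal{Q}_{m,1}$ between consecutive crossings become maximal $q$-bands. Each piece between $\pazocal{T}_i$ and $\pazocal{T}_{i+1}$ is crossed by $\theta$-bands that are not annular (\Cref{n-minimal spiral no annuli}) and that, apart from $\pazocal{T}$, spiral the same way. These $\theta$-bands wind around the hole alongside $\pazocal{T}$, so the same sequence of maximal $\theta$-bands crosses each piece in the same order. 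Consequently the history of each piece is one common word $H_0$ beginning with the letter $\theta$ of $\pazocal{T}$.

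Concatenating the $n-1$ full pieces with the final crossing by $\pazocal{T}_n$ gives $H'\equiv H_0^{n-1}\theta$. The step that needs care is showing that the pieces' histories literally coincide. For this I would argue that the $\theta$-bands between consecutive turns of $\pazocal{T}$ cannot end on $\partial\Delta$ in between without forcing a crossing with $\pazocal{T}$. This uses that $\theta$-bands do not cross, together with the choice of $\pazocal{T}$ as the spiraling band closest to $\Pi$.

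For (b), suppose $H_0^2$ has a controlled subword. Two consecutive turns form the trapezium of history $H_0^2$ inside $\Gamma'$, and \Cref{enhanced controlled} applies to its computation. The base is reduced, and the computation is determined by $H_0$ and the base. Also, \Cref{history as powers} (option (1)) or the controlled property makes $\lab(\textbf{tbot}(\pazocal{T}_i))$ a power of a cyclic permutation of an accepted configuration with a $t$-letter appended. Gluing one turn along $\pazocal{T}$ then yields a non-contractible loop whose label is a product of disk relators, hence trivial in $G(\textbf{M}_\textbf{S})$. This contradicts \Cref{n-minimal trivial path}, exactly as in \Cref{n-minimal annuli h'}(c).

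The main obstacle is the periodicity argument in (a). I would try first to handle it by working in the circular diagram obtained by cutting along $\pazocal{T}$.
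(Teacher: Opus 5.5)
Your part (a) is essentially the paper's argument. Consider the region $\Lambda_i$ enclosed by two consecutive turns of $\pazocal{T}$ and by the two pieces $\textbf{s}_i',\textbf{s}_{i+1}'$ of the side $\textbf{r}$ of $\pazocal{Q}_{m,1}$ between consecutive crossings. Every maximal $\theta$-band in $\Lambda_i$ runs from $\textbf{s}_i'$ to $\textbf{s}_{i+1}'$, so $\lab(\textbf{s}_i')\equiv\lab(\textbf{s}_{i+1}')$. What is really used is that such a $\theta$-band:
\begin{itemize}
\item cannot cross $\pazocal{T}$;
\item cannot end on a disk, since disk labels have no $\theta$-letters;
\item cannot return to the piece it started from, since there are no $(\theta,q)$-annuli (\Cref{G(S) annuli}).
\end{itemize}
The choice of $\pazocal{T}$ closest to $\Pi$ plays no role here.

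Part (b), however, has a genuine gap. You are transplanting the proof of \Cref{n-minimal annuli h'}(c). That proof uses the side of a $\theta$-\emph{annulus}, which is a closed loop labelled only by $q$- and $a$-letters. In the spiral case there are no $\theta$-annuli (\Cref{n-minimal spiral no annuli}). A non-contractible loop assembled from one turn of $\pazocal{T}$ must therefore be closed up along $\pazocal{Q}_{m,1}$. Its label is a word in $q$- and $a$-letters followed by the $\theta$-word read along $\textbf{r}$ over one period.

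Such a label is never trivial in $G(\textbf{M}_\textbf{S})$. Kill all $q$- and $a$-letters and send each $\vartheta_j\in\Theta^+_*$ to $\vartheta\in\Theta^+$. This is a homomorphism $G(\textbf{M}_\textbf{S})\to F(\Theta^+)$, because every relator, including the hub and all disk relators, maps to $1$. It sends your loop label to a conjugate of $H_0^{\pm1}\neq1$. So no appeal to accepted configurations can make \Cref{n-minimal trivial path} apply.

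There is a second problem. The trapezium of history $H_0^2$ inside $\Gamma'$ has base only $\{t(i_m)\}B_{std}(i_m)\{t(i_m+1)\}$. So \Cref{enhanced controlled} applied to it says nothing about the rest of each turn, and the regions between turns may contain disks.

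What is needed is a pumping argument against the minimality of the signature.
\begin{itemize}
\item The paper glues four consecutive inter-turn regions, together with a copy of the corresponding piece of $\pazocal{Q}_{m,1}$ and the turns of $\pazocal{T}$, into a quasi-trapezium. Its base is a full turn and its history is $H_0^4\theta$.
\item \Cref{quasi-trapezia} replaces this by a trapezium $\Sigma_0$ with disks attached, yielding a minimal annular diagram $\Delta'$.
\item If $H_0^2$ has a controlled subword, it occurs at two positions of $H_0^4\theta$ differing by $2\|H_0\|$. A computation with controlled history is determined by its history and base (\Cref{enhanced controlled}). Together with the periodicity of the history, this forces the first and third turns of $\Sigma_0$ to have identical side labels.
\item Closing each of these turns with a piece of the $t$-band of history $H_0$ gives two disjoint non-contractible loops with identical labels. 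They bound an annulus containing $(\theta,t)$-cells.
\item \Cref{exciseTwoPaths} removes that annulus, lowering $s_2$ and contradicting the minimality of $\Delta'$.
\end{itemize}
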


\begin{proof}

(a) For $1\leq i\leq n-1$, let $\pazocal{T}_i'$ be the maximal subband of $\pazocal{T}$ between $\pazocal{T}_i$ and $\pazocal{T}_{i+1}$.  Note that $\pazocal{T}_i'$ then shares an end with both $\pazocal{T}_i$ and $\pazocal{T}_{i+1}$, with one of these ends an edge of the side $\textbf{r}$ of $\pazocal{Q}_{m,1}$ and the other an edge of the side $\textbf{u}$ of $\pazocal{Q}_{m+1,1}$ (see the definition of $\Gamma$ with \Cref{fig-n-minimal}).

If the end shared with $\pazocal{T}_i$ is an edge of $\textbf{u}$, then let $\pazocal{T}_i''$ to be the subband of $\pazocal{T}$ consisting of the `concatenation' of the subbands $\pazocal{T}_i$ and $\pazocal{T}_i'$.  Otherwise, the end shared with $\pazocal{T}_{i+1}$ is an edge of $\textbf{u}$, and so we let $\pazocal{T}_i''$ be the subband of $\pazocal{T}$ consisting of the `concatenation' of $\pazocal{T}_i'$ and $\pazocal{T}_{i+1}$.

Note that in either case, both ends of $\pazocal{T}_i''$ are edges of $\textbf{r}$.
Let $\textbf{s}_i'$ be the maximal subpath of $\textbf{r}$ between the ends of $\pazocal{T}_i$ and $\pazocal{T}_{i+1}$, {\frenchspacing i.e. the maximal subpath between the two ends of $\pazocal{T}_i''$.}

Now, for $1\leq i\leq n-2$, let $\Lambda_i$ be the circular subdiagram of $\Delta$ bounded by $\textbf{s}_i'$, $\textbf{s}_{i+1}'$, and the appropriate sides of $\pazocal{T}_i''$ and $\pazocal{T}_{i+1}''$ (see \Cref{fig-n-minimal-Lambda}).  As $\theta$-bands cannot cross, the maximal $\theta$-bands in $\Lambda_i$ all have ends on both $\textbf{s}_i'$ and $\textbf{s}_{i+1}'$.  But since also these $\theta$-bands cross $\pazocal{Q}_{m,1}$ at most once, this means $\lab(\textbf{s}_i')\equiv\lab(\textbf{s}_{i+1}')$.

In particular, the history of the subband of $\pazocal{Q}_{m,1}$ between any consecutive crossings of $\pazocal{T}$ is identical.  Letting $\theta$ be the history of $\pazocal{T}$, the statement then follows.

\begin{figure}
\centering
\includegraphics[scale=1.5]{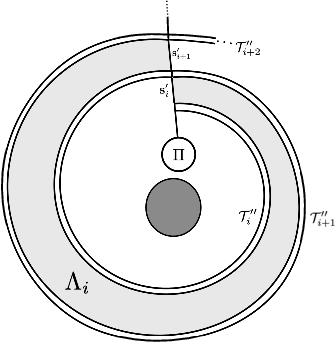} 
\caption{\Cref{n-minimal spiral H_0}(a)} \label{fig-n-minimal-Lambda}
\end{figure}

(b) Since $\lab(\textbf{s}_i')\equiv\lab(\textbf{s}_{i+1}')$, we may paste a copy of the subband of $\pazocal{Q}_{m,1}'$ contained in $\Lambda_i$ to the `other side', {\frenchspacing i.e. to the} subpath of $\partial\Lambda_i$ corresponding to $\textbf{s}_i'$ or $\textbf{s}_{i+1}'$ (depending on whether $\pazocal{T}_i''$ is the concatenation of $\pazocal{T}_i'$ with $\pazocal{T}_i$ or with $\pazocal{T}_{i+1}$).  Let $\Lambda_i'$ be the resulting circular diagram.

Further, letting $\pazocal{T}_i'''$ be the subband of $\pazocal{T}$ given by extending $\pazocal{T}_i''$ by the single $(\theta,t)$-cell corresponding to the crossing of $\pazocal{T}_i$ or $\pazocal{T}_{i+1}$ with $\pazocal{Q}_{m,1}$ (again, depending on how $\pazocal{T}_i''$ is formed), we may paste $\pazocal{T}_i'''$ to the `bottom' of $\Lambda_i'$ and $\pazocal{T}_{i+1}'''$ to the `top'.  Let $\bar{\Lambda}_i$ be the resulting circular diagram.

Now, let $\bar{\Lambda}$ be the circular diagram obtained from pasting together $\bar{\Lambda}_1,\dots,\bar{\Lambda}_4$ along the common bands arising from $\pazocal{T}_i'''$.  By construction, $\bar{\Lambda}$ is a quasi-trapezium with circular base starting and ending with $t(i_m)^{\pm1}$ and history $H_0^4\theta$.

Applying \Cref{quasi-trapezia}, there then exists a minimal circular diagram $\Sigma$ with $\lab(\partial\Sigma)\equiv\lab(\bar{\Lambda})$ consisting of a trapezium $\Sigma_0$ with the same base and history as $\bar{\Lambda}$ along with some disks attached to the top and bottom.  Let $\pazocal{U}_i$ be the maximal $\theta$-band of $\Sigma_0$ arising from $\pazocal{T}_i'''$ from these operations.

Folding and pasting $\Sigma$ along the side $t$-bands in the appropriate way, we obtain an annular diagram with the same boundary labels as an annular subdiagram of $\Delta$.  Let $\Delta'$ be the diagram obtained from $\Delta$ by removing this annular subdiagram and pasting in its place the annular diagram arising from $\Sigma$.  Note that the minimality of both $\Delta$ and $\Sigma$ imply $\Delta'$ is also minimal.  

Now, assuming $H_0^2$ has a controlled subword, there exist distinct controlled subwords of the history of $\Sigma_0$ in the two factors of $H_0^2$.  Applying \Cref{enhanced controlled} to the appropriate subcomputations of the reduced computation corresponding to $\Sigma_0$ through \Cref{trapezia are computations}, the labels of the side of $\pazocal{U}_1$ and $\pazocal{U}_3$ are identical.

In $\Delta'$, these paths can be closed with the side of a subband of $\pazocal{Q}_{m,1}'$ with history $H_0$.  In particular, these closed paths are disjoint, have identical labels, and bound an annular subdiagram containing at least one $(\theta,t)$-cell.  But then by \Cref{exciseTwoPaths} this contradicts the minimality of $\Delta'$.

\end{proof}

\begin{lemma} \label{n-minimal spiral controlled}

$H'$ has no controlled subword.

\end{lemma}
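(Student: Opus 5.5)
The plan is to argue by contradiction, in the same way as \Cref{n-minimal annuli h'}(c) and \Cref{n-minimal transposition}(c). Suppose $H'\equiv H_0^{n-1}\theta$ contains a controlled subword $H''$. Since a controlled history has the rigid form $\zeta_{i-1}H'''\zeta_i$, where $H'''$ is a run of rules of $\textbf{E}_{\textbf{S},k}(4)$ with no connecting rules, $H''$ meets at most two consecutive copies of $H_0$. Hence $H''$ is a subword of $H_0^2$, unless $H''$ overlaps the final letter $\theta$. In the first case this contradicts \Cref{n-minimal spiral H_0}(b) directly.

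So the only real case is when $H''$ is a suffix involving $\theta$. The plan is to use the periodicity established in \Cref{n-minimal spiral H_0}(a). Since $\theta$ is the first letter of $H_0$, the suffix $H_0\theta$ is a prefix of $H_0^2$. Therefore every subword of $H_0^{n-1}\theta$ of length at most $\|H_0\|+1$ occurs inside $H_0^2$.

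The step I expect to be the obstacle is showing that $\|H''\|\leq\|H_0\|+1$. I would get this by contradiction. If $\|H''\|>\|H_0\|+1$, then $H''$ contains a full period $H_0$, so $H''$ contains two distinct occurrences of the same letter shifted by $\|H_0\|$. A controlled word contains each connecting rule $\zeta_{i-1},\zeta_i$ only at its ends. So $H_0$ would consist only of rules of $\textbf{E}_{\textbf{S},k}(4)$ with no connecting rule, and $H''$ could not then end in a connecting rule, a contradiction. The exception is $\|H_0\|=\|H''\|-1$, and then the wraparound places $H''$ inside $H_0\theta\subset H_0^2$ anyway.

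In every case, then, $H''$ is a subword of $H_0^2$, which \Cref{n-minimal spiral H_0}(b) forbids. The technical point to check carefully is the claim that a controlled word has no interior connecting or transition rule. I would take this from the definition of controlled history together with \Cref{LR_k analogue}.
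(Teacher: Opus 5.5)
Your argument is correct and is essentially the paper's: because $H'\equiv H_0^{n-1}\theta$ is periodic, every letter of $H'$ recurs in every window of length $\|H_0\|$, including the end letters $\zeta_{i-1}^{\pm1},\zeta_i^{\pm1}$ of the controlled subword, which are forbidden in its interior; so that subword cannot properly contain a full period, and hence lies in $H_0^2$ or in $H_0\theta$ (a prefix of $H_0^2$), contradicting \Cref{n-minimal spiral H_0}(b). Two minor points do not affect correctness: your opening claim that $H''$ meets at most two copies of $H_0$ is only justified by your later length bound, and the ``exception'' sentence is vacuous, since the case $\|H''\|=\|H_0\|+1$ is already covered by your window observation.
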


\begin{proof}

Suppose to the contrary that $H'$ has a controlled subword $H_c$.  Recall that the definition of controlled histories implies $H_c=\zeta_1w\zeta_2$, where $\zeta_1\neq\zeta_2^{\pm1}$ and $w$ has no occurrence of $\zeta_i^{\pm1}$.

Now, \Cref{n-minimal spiral H_0}(a) implies any letter that occurs in $H'$ occurs in a single factor of $H_0$.  In particular, no factor of $H_0$ can be a proper subword of $H_c$.  So, $H_c$ must be a subword of a factor $H_0^2$ or of the suffix $H_0\theta$, which is itself a subword of $H_0^2$.  But then this contradicts \Cref{n-minimal spiral H_0}(b).

\end{proof}

\begin{lemma} \label{n-minimal spiral h' bounds} \

\begin{enumerate}[label=(\alph*)]

\item $h'>c_1\|\textbf{z}\|$

\item $h'\leq c_1\|\textbf{y}\|$

\item $h'\leq c_2|V|_a+c_2$

\item $|V|_a>N_1|\partial\Delta|+N_1$

\end{enumerate}

\end{lemma}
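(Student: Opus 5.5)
The plan is to mirror the proof of \Cref{n-minimal annuli h' bounds} almost verbatim, now using \Cref{n-minimal spiral h'} in place of \Cref{n-minimal annuli h'} and \Cref{n-minimal spiral controlled} in place of \Cref{n-minimal annuli h'}(c). Throughout, $\textbf{y}$ and $\textbf{z}$ are the sides of $\pazocal{T}_1$ and $\pazocal{T}_n$ bounding the trapezium $\Gamma'$ inside $\Gamma$. We use that $\Gamma$ has at most $N+1$ maximal $q$-bands by \Cref{n-minimal subcombs}, so $\|\textbf{y}\|=|\textbf{y}|_a+N+1$ and similarly for $\textbf{z}$.

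\textbf{Part (a).} Suppose instead that $h'\leq c_1\|\textbf{z}\|$.
\begin{enumerate}[label=(\roman*)]
\item \Cref{n-minimal spiral h'}(b) and $N_2>>N_1>>c_1>>N$ give $|\textbf{z}|_a>N_1|\partial\Delta|+N_1$, hence $\|\textbf{z}\|\leq2|\textbf{z}|_a$.
\item \Cref{n-minimal spiral h'}(a) gives $h-h'<\frac{2}{N_3-2}h'$. Therefore $|\textbf{z}|_a>N_2(h-h')$ by $N_3>>N_2>>c_1$.
\item Consider the part of $\Gamma$ lying between $\textbf{z}$ and $\textbf{q}\subset\textbf{C}_1$. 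It is crossed only by the subbands of $\theta$-bands crossing $\pazocal{Q}_{m,1}\setminus\pazocal{Q}_{m,1}'$ or $\pazocal{Q}_{m+1,1}\setminus\pazocal{Q}_{m+1,1}'$. So it has at most about $2(h-h')$ $\theta$-bands, hence at most $2N(h-h')$ $a$-bands ending on $(\theta,q)$-cells.
\end{enumerate}
By \Cref{lengths}, at least half of the $a$-edges of $\textbf{z}$ are then matched with $a$-edges of $\textbf{C}_1$ each contributing $\delta$. This yields $|\partial\Delta|\geq\frac{\delta}{2}|\textbf{z}|_a$, which contradicts $N_1>>\delta^{-1}$.

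One subtlety in step (iii): $\pazocal{T}$ itself may re-enter this region between its crossings. We will handle it by noting that $\pazocal{T}_n$ is the last crossing, so the tail of $\pazocal{T}$ beyond $\textbf{z}$ crosses $\pazocal{Q}_{m,1}$ at most once more. This is the same bookkeeping as in the proof of \Cref{n-minimal spiral h'}(a).

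\textbf{Part (b).} Otherwise $h'>c_1\max(\|\textbf{y}\|,\|\textbf{z}\|)$. Then the computation of $\Gamma'$, identified via \Cref{trapezia are computations} with a computation of $\textbf{E}_{\textbf{S},k}$ in the standard base (up to the appended $t$-letter), satisfies the hypotheses of \Cref{long history controlled}. Taking $\pazocal{D}$ to be the whole computation, $H'$ contains a controlled subword, contradicting \Cref{n-minimal spiral controlled}.

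\textbf{Part (c).} Apply the $a$-band count to the subdiagram of $\Gamma$ between $\textbf{y}$ and $\textbf{p}\subset\partial\Pi$. Its $\theta$-bands are subbands of $\theta$-bands crossing the initial segments of the two $t$-spokes before $\pazocal{T}_1$. By the choice of $\pazocal{T}$, each of these crosses $\pazocal{Q}_{m,1}$ fewer than $N_3$ times.

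This is the step I expect to be the main obstacle, since there is no clean trapezium structure below $\pazocal{T}_1$. The plan is to show that few $a$-bands starting on $\textbf{y}$ end on $(\theta,q)$-cells there, relative to $|\textbf{y}|_a$, so that $|V|_a=|\textbf{p}|_a\geq\frac12|\textbf{y}|_a$ up to an additive term that is at most $h-h'$. That term is controlled by (a) of \Cref{n-minimal spiral h'}. As a fallback, replacing $\textbf{y}$ by the top of the first full crossing band still gives the estimate after adjusting constants. Combining with (b) yields $h'\leq2c_1|V|_a+c_1(N+1)+(\text{small})$, and the bound follows by $c_2>>c_1>>N$.

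\textbf{Part (d).} Combine (c) with \Cref{n-minimal spiral h'}(b) to get $c_2|V|_a+c_2>N_2|\partial\Delta|+N_2$, then use $N_2>>N_1>>c_2$.
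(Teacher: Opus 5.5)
Your proposal is correct and follows the paper's route. The paper's proof says only to rerun the argument of \Cref{n-minimal annuli h' bounds} with \Cref{n-minimal spiral h'} and \Cref{n-minimal spiral controlled} in place of \Cref{n-minimal annuli h'}, and that is what you do part by part.

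The two places you flag are not real obstacles. Every maximal $\theta$-band of the region between $\textbf{p}$ and $\textbf{y}$, or between $\textbf{z}$ and $\textbf{q}$, must cross one of the two spokes outside $\pazocal{Q}_{m,1}'\cup\pazocal{Q}_{m+1,1}'$. Counting $\theta$-edges on those segments bounds the number of such bands by $2(h-h')$, however $\pazocal{T}$ re-enters. The resulting $O(N(h-h'))$ error in (c) is then absorbed into the left-hand side using $h-h'<\frac{2}{N_3-2}h'$ and $N_3>>c_1,N$.
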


\begin{proof}

The argument proceeds in the same way as that presented for the proof of \Cref{n-minimal annuli h' bounds}(a), using Lemmas \ref{n-minimal spiral h'} and \ref{n-minimal spiral controlled} in place of \Cref{n-minimal annuli h'}.

\end{proof}

As in the annular case, we set $\Psi_{i,j}$ to be the subdiagram of $\Psi_j$ bounded by $\pazocal{Q}_{i,j}$ and $\pazocal{Q}_{i+1,j}$, then let $h_{i,j}''=\max(h_{i,j},h_{i+1,j})$.  The next statement then follows in just the same way as \Cref{n-minimal annuli r}.

\begin{lemma} \label{n-minimal spiral r} 

For all $j\in\{1,2\}$ and $1\leq i\leq \ell_j-1$, we have:

\begin{enumerate}[label=(\alph*)]

\item $|V|_a\leq c_0h_{i,j}''$.

\item $h_{i,j}''>C_3|\partial\Delta|+C_3$.

\end{enumerate}

\end{lemma}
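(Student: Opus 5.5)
The plan is to repeat the proof of \Cref{n-minimal annuli r} verbatim, substituting \Cref{n-minimal spiral h' bounds}(d) for \Cref{n-minimal annuli h' bounds}(d). That substitution is the only place the earlier argument used the annular hypothesis.

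For (a), fix $j$ and $i$ and suppose toward contradiction that $|V|_a>c_0h_{i,j}''$. First count cells in $\Psi_{i,j}$. By \Cref{n-minimal subcombs}, every maximal $q$-band of $\Psi_{i,j}$ joins $\partial\Pi$ to $\textbf{C}_j$, so there are at most $N+1$ of them. By \Cref{n-minimal theta-bands}, every maximal $\theta$-band of $\Psi_{i,j}$ crosses $\pazocal{Q}_{i,j}$ or $\pazocal{Q}_{i+1,j}$. By \Cref{M(S) annuli} (valid since $\Psi_{i,j}$ has no disks), each such $\theta$-band crosses each $q$-band at most once. There are therefore at most $h_{i,j}+h_{i+1,j}\leq2h_{i,j}''$ maximal $\theta$-bands, and the $(\theta,q)$-cells that are not $(\theta,t)$-cells number $O(Nh_{i,j}'')$.

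By \Cref{simplify rules}, at most $4Nh_{i,j}''$ maximal $a$-bands of $\Psi_{i,j}$ end on a $(\theta,q)$-cell. No $a$-band has both ends on the subpath of $\partial\Pi$. Hence at least $|V|_a-4Nh_{i,j}''\geq(1-4N/c_0)|V|_a$ maximal $a$-bands run from $\partial\Pi$ to $\textbf{C}_j$. The subpath of $\textbf{C}_j$ in $\partial\Psi_{i,j}$ has $\theta$-length at most $2h_{i,j}''<\frac{2}{c_0}|V|_a$. Applying \Cref{lengths}(a) with $c_0>>N$, at least $\frac12|V|_a$ of these $a$-edges each contribute $\delta$ to $|\partial\Delta|$. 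This gives $|V|_a\leq2\delta^{-1}|\partial\Delta|$, which contradicts \Cref{n-minimal spiral h' bounds}(d) by the parameter choice $N_1>>\delta^{-1}$.

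For (b), combine (a) with \Cref{n-minimal spiral h' bounds}(d) to get $h_{i,j}''\geq|V|_a/c_0>\frac{N_1}{c_0}(|\partial\Delta|+1)$. The conclusion then follows from $N_1>>C_3>>c_0$.

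The only point needing care is the bookkeeping in (a). One must check that the $\theta$-edges on the boundary subpath are few compared with the $a$-edges, so that \Cref{lengths}(a) really charges $\delta$ per surplus $a$-edge. This is the same estimate as in \Cref{n-minimal first step}(a), so I expect no genuine obstacle.
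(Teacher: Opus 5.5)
Your proposal is correct and is the paper's argument. The paper proves this lemma by repeating the proof of \Cref{n-minimal annuli r} with \Cref{n-minimal spiral h' bounds}(d) in place of \Cref{n-minimal annuli h' bounds}(d), both to get the contradiction in (a) and to get the lower bound in (b). Your more explicit count of $a$-bands and $\theta$-edges in $\Psi_{i,j}$ (using the constant $4N$ instead of the paper's $2N$) is fine.
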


For $j\in\{1,2\}$ and $1\leq i\leq \ell_j-1$, fix $p(i,j)\in\{i,i+1\}$ such that $\pazocal{Q}_{p(i,j),j}$ has length $h_{i,j}''$.  Note then that there exists a side $\textbf{r}_{i,j}$ of $\pazocal{Q}_{p(i,j),j}$ which is a subpath of $(\partial\Psi_{i,j})^{\pm1}$.  For example, in the setting above we have $p(m,1)=m$ and the side $\textbf{r}_{m,1}=\textbf{r}$ of $\pazocal{Q}_{m,1}$ is a subpath of $\partial\Psi_{m,1}=\Gamma$.

\begin{lemma} \label{n-minimal spiral existence 2}

There exists a maximal $\theta$-band of $\Delta$ which crosses $\pazocal{Q}_{p(i,j),j}$ more than $C_3$ times.

\end{lemma}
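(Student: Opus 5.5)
The plan is to copy the proof of \Cref{n-minimal spiral existence}, with $\pazocal{Q}_{m,1}$ replaced by $\pazocal{Q}_{p(i,j),j}$ and the estimate of \Cref{n-minimal first step}(b) replaced by that of \Cref{n-minimal spiral r}(b). The setting of \Cref{sec-n-minimal-spirals} is in force, so \Cref{n-minimal spiral no annuli} tells us that $\Delta$ contains no $\theta$-annuli. Hence every maximal $\theta$-band of $\Delta$ crossing $\pazocal{Q}_{p(i,j),j}$ is non-annular. Since it cannot end on a disk, and $|\partial\Pi|_\theta=0$, it must have two ends on $\partial\Delta$.

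Next I argue by contradiction. Suppose every maximal $\theta$-band of $\Delta$ crosses $\pazocal{Q}_{p(i,j),j}$ at most $C_3$ times. The band $\pazocal{Q}_{p(i,j),j}$ has $h_{i,j}''$ cells, and each cell lies on exactly one maximal $\theta$-band. So at least $h_{i,j}''/C_3$ distinct maximal $\theta$-bands cross it. Maximal $\theta$-bands are disjoint and each of these has its own ends on $\partial\Delta$, so their end-edges are distinct $\theta$-edges of $\partial\Delta$. By \Cref{lengths}(a), every $\theta$-edge contributes at least $1$ to the length of a boundary path, which gives $|\partial\Delta|\geq h_{i,j}''/C_3$. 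In fact we get the stronger bound $|\partial\Delta|\geq 2h_{i,j}''/C_3$, since each band has two ends, but the weaker one suffices.

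This yields $h_{i,j}''\leq C_3|\partial\Delta|<C_3|\partial\Delta|+C_3$, contradicting \Cref{n-minimal spiral r}(b). That proves the statement.

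The only point needing care is the claim that no crossing band is annular and that each has genuine ends on the boundary. This follows from \Cref{n-minimal spiral no annuli} together with the observation that disks contain no $\theta$-edges. With that in place, counting distinct boundary ends works exactly as in \Cref{n-minimal spiral existence}.
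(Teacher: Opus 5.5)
Your proposal is correct and matches the paper's proof. The paper uses the same counting argument transplanted from \Cref{n-minimal spiral existence}: if no band crosses more than $C_3$ times, then at least $h_{i,j}''/C_3$ distinct $\theta$-bands with ends on $\partial\Delta$ cross $\pazocal{Q}_{p(i,j),j}$, which forces $|\partial\Delta|\geq h_{i,j}''/C_3$ and contradicts \Cref{n-minimal spiral r}(b). You also make explicit a step the paper leaves implicit, namely using \Cref{n-minimal spiral no annuli} to guarantee the boundary ends, and you rightly cite part (b) where the paper's text refers to a nonexistent part (c).
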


\begin{proof}

This follows in much the same way as \Cref{n-minimal spiral existence}: If the statement is false, at least $h_{i,j}''/C_3$ distinct $\theta$-bands cross $\pazocal{Q}_{p(i,j),j}$, meaning $|\partial\Delta|\geq h_{i,j}''/C_3$, contradicting \Cref{n-minimal spiral r}(c).

\end{proof}

As with $\pazocal{T}$, let $\pazocal{S}$ be the maximal $\theta$-band of $\Delta$ satisfying \Cref{n-minimal spiral existence 2} for $\pazocal{Q}_{p(1,2),2}$ which crosses the band closest to $\Pi$.

Again, let $\pazocal{S}_1,\dots,\pazocal{S}_l$ be the minimal subbands of $\pazocal{S}$ which cross both $\pazocal{Q}_{1,2}$ and $\pazocal{Q}_{2,2}$.  \Cref{n-minimal spiral existence 2} then allows $l$ to be taken to be sufficiently large (with respect to $C_3$).

Each $\pazocal{S}_i$ is then a maximal $\theta$-band in $\Psi_{1,2}$, and bound a trapezium $\Gamma_2'$ in $\Psi_{1,2}$ with standard factorization $\partial\Gamma_2'=\textbf{t}_2^{-1}\textbf{y}_2\textbf{t}_1\textbf{z}_2^{-1}$ such that:

\begin{itemize}

\item $\textbf{t}_j$ is the side of the minimal subband $\pazocal{Q}_{j,2}'$ of $\pazocal{Q}_{j,2}$ which is crossed by each $\pazocal{S}_i$.

\item $\textbf{y}_2$ is a side of $\pazocal{S}_1$.

\item $\textbf{z}_2$ is a side of $\pazocal{S}_r$.

\end{itemize}

Repeating the arguments above relating to $\Gamma'$, we quickly obtain the following analogues of Lemmas \ref{n-minimal spiral h'}--\ref{n-minimal spiral h' bounds} for $\Gamma_2'$.

\begin{lemma} \label{n-minimal spiral h' 2}

Let $H_2'$ be the history of $\Gamma_2'$ and $h_2'=\|H'\|$.

\begin{enumerate}[label=(\alph*)]

\item $h_{1,2}''>\frac{1}{2}C_3(h_{1,2}''-h_2')$

\item $h_2'>C_2|\partial\Delta|+C_2$

\end{enumerate}

\end{lemma}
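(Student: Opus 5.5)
The plan is to repeat the proof of \Cref{n-minimal spiral h'} word for word, replacing the band $\pazocal{T}$ by $\pazocal{S}$, the spoke $\pazocal{Q}_{m,1}$ by $\pazocal{Q}_{p(1,2),2}$ (whose length is $h_{1,2}''$), the subband $\pazocal{Q}_{m,1}'$ by the minimal subband of $\pazocal{Q}_{p(1,2),2}$ crossed by every $\pazocal{S}_i$ (of length $h_2'$), the constant $N_3$ by $C_3$, and $N_2$ by $C_2$. The input \Cref{n-minimal first step}(b) is replaced by \Cref{n-minimal spiral r}(b).

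For (a), I first note that by \Cref{n-minimal spiral no annuli} there are no $\theta$-annuli. By \Cref{n-minimal boundary ends}, $\pazocal{S}$ has ends on both $\textbf{C}_1$ and $\textbf{C}_2$. I cut $\Delta$ along a side of $\pazocal{S}$ to obtain a circular diagram $\Delta'$, whose maximal $\theta$-bands are identified with those of $\Delta$. By the choice of $\pazocal{S}$ as the band closest to $\Pi$ and by the maximality of $l$, $\pazocal{S}$ crosses each of the (at most two) components of $\pazocal{Q}_{p(1,2),2}$ minus the subband crossed by all $\pazocal{S}_i$ at most once. These components therefore split into at most four maximal $q$-bands of $\Delta'$.

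\Cref{G(S) annuli} then shows that any maximal $\theta$-band crosses these leftover cells at most four times. Hence at least $(h_{1,2}''-h_2')/4$ distinct maximal $\theta$-bands cross them, and each has two ends on $\partial\Delta$. This gives $|\partial\Delta|>(h_{1,2}''-h_2')/2$. Combined with $h_{1,2}''>C_3|\partial\Delta|+C_3$ from \Cref{n-minimal spiral r}(b), this yields (a).

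For (b), I rearrange (a) to get $h_2'>(1-\tfrac{2}{C_3})h_{1,2}''$. Applying \Cref{n-minimal spiral r}(b) again gives $h_2'>(C_3-2)(|\partial\Delta|+1)$. The bound $h_2'>C_2|\partial\Delta|+C_2$ then follows from the parameter choice $C_3>>C_2$.

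The only delicate point is the ``at most once'' claim in (a): that $\pazocal{S}$ meets each tail of $\pazocal{Q}_{p(1,2),2}$ outside the trapezium $\Gamma_2'$ at most once. This relies on the enumeration $\pazocal{S}_1,\dots,\pazocal{S}_l$ being \emph{all} the minimal subbands of $\pazocal{S}$ crossing both $\pazocal{Q}_{1,2}$ and $\pazocal{Q}_{2,2}$. It also needs the observation, used for $\pazocal{T}$, that a subband of $\pazocal{S}$ crossing $\pazocal{Q}_{p(1,2),2}$ twice must cross the adjacent spoke. The argument there transfers directly, since it used only the structure of the scope $\Psi_2$ (\Cref{n-minimal subcombs} and \Cref{n-minimal theta-bands}). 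The same structure is available for $\Psi_2$ as for $\Psi_1$, so I expect no new difficulty.
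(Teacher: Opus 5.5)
Your proposal is correct and follows the paper's route: the paper proves this lemma simply by repeating the proof of \Cref{n-minimal spiral h'}, with $\pazocal{S}$, $\pazocal{Q}_{p(1,2),2}$, $C_3$, $C_2$ and \Cref{n-minimal spiral r}(b) in place of $\pazocal{T}$, $\pazocal{Q}_{m,1}$, $N_3$, $N_2$ and \Cref{n-minimal first step}(b), which is exactly what you do. You also correctly read $h_2'$ as $\|H_2'\|$; the statement's $\|H'\|$ is a typo.
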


\begin{lemma} \label{n-minimal spiral H_0 2}

There exists a word $H_{0,2}\in F(\Theta^+)$ such that:

\begin{enumerate}[label=(\alph*)]

\item $H_2'\equiv H_{0,2}^{n-1}\theta_2$, where $\theta_2$ is the first letter of $H_{0,2}$.

\item $H_{0,2}^2$ has no controlled subword

\end{enumerate}

\end{lemma}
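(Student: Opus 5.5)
The plan is to mirror the proof of \Cref{n-minimal spiral H_0} verbatim, replacing $\pazocal{T}$, $\pazocal{Q}_{m,1}$, $\Gamma'$ by $\pazocal{S}$, $\pazocal{Q}_{p(1,2),2}$, $\Gamma_2'$. The exponent in (a) should be read as $l-1$, where $l$ is the number of subbands $\pazocal{S}_i$.

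\textbf{Part (a).} For $1\leq i\leq l-1$, let $\pazocal{S}_i'$ be the maximal subband of $\pazocal{S}$ between $\pazocal{S}_i$ and $\pazocal{S}_{i+1}$. Concatenate $\pazocal{S}_i'$ with whichever of $\pazocal{S}_i,\pazocal{S}_{i+1}$ makes both ends lie on the side $\textbf{r}_{1,2}$ of $\pazocal{Q}_{p(1,2),2}$, and call the result $\pazocal{S}_i''$. Let $\textbf{s}_i'$ be the subpath of $\textbf{r}_{1,2}$ between the two ends.

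Consecutive such bands bound a circular subdiagram $\Lambda_i$. By \Cref{n-minimal spiral no annuli} and the fact that $\theta$-bands do not cross, every maximal $\theta$-band of $\Lambda_i$ runs from $\textbf{s}_i'$ to $\textbf{s}_{i+1}'$. By \Cref{M(S) annuli} inside the circular diagram, each such band crosses the $t$-band at most once. Hence $\lab(\textbf{s}_i')\equiv\lab(\textbf{s}_{i+1}')$, so the history between consecutive crossings of $\pazocal{S}$ is a fixed word $H_{0,2}$. This gives $H_2'\equiv H_{0,2}^{l-1}\theta_2$, with $\theta_2$ the letter of $\pazocal{S}$, which is the first letter of $H_{0,2}$.

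\textbf{Part (b).} Using the equality of labels, paste copies of the relevant subbands to build circular diagrams $\bar\Lambda_i$. Glue four of them into a quasi-trapezium $\bar\Lambda$ with circular base beginning and ending with a $t$-letter and history $H_{0,2}^4\theta_2$. Then:

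\begin{enumerate}
\item Apply \Cref{quasi-trapezia} to obtain a minimal $\Sigma$ consisting of a trapezium $\Sigma_0$ with disks attached.
\item Fold $\Sigma$ back into an annular diagram and substitute it into $\Delta$, giving a minimal annular $\Delta'$ with the same boundary labels.
\item If $H_{0,2}^2$ had a controlled subword, then two distinct copies of it would occur in the history of $\Sigma_0$. \Cref{enhanced controlled}, applied to the corresponding subcomputations, forces the sides of the first and third $\theta$-bands to have identical labels.
\item In $\Delta'$ these close up, via a subband of history $H_{0,2}$, into two disjoint freely conjugate loops bounding an annulus that contains $(\theta,t)$-cells.
\item \Cref{exciseTwoPaths} then removes cells and lowers the signature, contradicting minimality.
\end{enumerate}

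\textbf{Main obstacle.} The delicate point is the same as in \Cref{n-minimal spiral H_0}: checking that the maximal $\theta$-bands in $\Lambda_i$ cross $\pazocal{Q}_{p(1,2),2}$ at most once. This should follow from the choice of $\pazocal{S}$ as the spiral closest to $\Pi$, together with the absence of $(\theta,q)$-annuli (\Cref{G(S) annuli}). One must also confirm that $l\geq 5$, which holds since $l$ is large relative to $C_3$.
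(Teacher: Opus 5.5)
Your proposal is correct and follows the paper's own route. The paper proves this lemma simply by repeating the proof of \Cref{n-minimal spiral H_0} with $\pazocal{S}$, $\pazocal{Q}_{p(1,2),2}$, $\Gamma_2'$ in place of $\pazocal{T}$, $\pazocal{Q}_{m,1}$, $\Gamma'$, and you are right that the exponent should read $l-1$.

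One small correction to your ``main obstacle'': the single crossing of the $t$-band by each $\theta$-band of $\Lambda_i$ does not come from choosing $\pazocal{S}$ closest to $\Pi$; that choice is used later, in \Cref{n-minimal spiral subpaths}. It comes from $\Lambda_i$ being a circular subdiagram with no $\theta$-annuli (\Cref{n-minimal spiral no annuli}) and no $(\theta,q)$-annuli, exactly as in your part (a). For the latter, cite \Cref{G(S) annuli} rather than \Cref{M(S) annuli}, since $\Lambda_i$ is not known to be diskless.
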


\begin{lemma} \label{n-minimal spiral controlled 2}

$H_2'$ has no controlled subword.

\end{lemma}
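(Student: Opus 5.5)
The statement to prove is that $H_2'$ has no controlled subword. This is the exact analogue of \Cref{n-minimal spiral controlled}, with $\Gamma_2'$, $\pazocal{S}$ and $H_{0,2}$ in place of $\Gamma'$, $\pazocal{T}$ and $H_0$. So I would simply transcribe that argument, using \Cref{n-minimal spiral H_0 2} in place of \Cref{n-minimal spiral H_0}.

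First, suppose toward contradiction that $H_2'$ contains a controlled subword $H_c$. By the definition of a controlled history (possibly after inverting), $H_c\equiv\zeta_{i-1}w\zeta_i$. Here $\zeta_{i-1}$ and $\zeta_i$ are copies of consecutive connecting rules of $\textbf{LR}_k$ (or the transitions $\sigma(34)$, $\sigma(45)$), so $\zeta_{i-1}\neq\zeta_i^{\pm1}$. The word $w$ is a history of step history $(4)$ containing no transition or connecting rules, so neither $\zeta_{i-1}^{\pm1}$ nor $\zeta_i^{\pm1}$ occurs in $w$.

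Next, \Cref{n-minimal spiral H_0 2}(a) gives $H_2'\equiv H_{0,2}^{n-1}\theta_2$, with $\theta_2$ the first letter of $H_{0,2}$. Any subword of $H_2'$ that contains a full factor $H_{0,2}$ together with at least one further letter must repeat some letter. In particular it repeats the letter $\zeta_{i-1}$ or $\zeta_i$ bounding it, or a letter occurring inside both copies. The structure $\zeta_{i-1}w\zeta_i$ with $w$ free of $\zeta_{i-1}^{\pm1},\zeta_i^{\pm1}$ forbids the first option. It also forbids a proper containment of $H_{0,2}$: if $H_c$ properly contained a factor $H_{0,2}$, then the letter $\zeta_{i-1}$ would reappear one period later, either inside $w$ or as the distinct letter $\zeta_i$. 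Hence $H_c$ spans at most one period plus a letter, so it is a subword of two consecutive factors, i.e. of $H_{0,2}^2$; the case of the tail $H_{0,2}\theta_2$ is itself a subword of $H_{0,2}^2$. This contradicts \Cref{n-minimal spiral H_0 2}(b).

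The only point needing care is this periodicity step: checking that a controlled word cannot properly contain a full period of $H_{0,2}$. The remaining steps are routine, and the argument should go through verbatim.
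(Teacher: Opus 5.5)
Your proposal is correct and is the argument the paper intends: it proves this lemma by repeating the proof of \Cref{n-minimal spiral controlled} with \Cref{n-minimal spiral H_0 2} in place of \Cref{n-minimal spiral H_0}. By periodicity of $H_{0,2}^{n-1}\theta_2$, the first letter of $H_c$ would recur one period later, either inside $w$ or as the last letter $\zeta_i\neq\zeta_{i-1}$, so $H_c$ lies in $H_{0,2}^2$, contradicting part (b). One small correction: your phrase ``at most one period plus a letter'' is loose, since the recurrence argument actually gives $\|H_c\|\leq\|H_{0,2}\|$. Your conclusion that $H_c$ sits inside two consecutive factors is nonetheless right.
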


\begin{lemma} \label{n-minimal spiral h' bounds 2} \

\begin{enumerate}[label=(\alph*)]

\item $h_2'>c_1\|\textbf{z}_2\|$

\item $h_2'\leq c_1\|\textbf{y}_2\|$

\end{enumerate}

\end{lemma}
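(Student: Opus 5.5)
This is the exact analogue of \Cref{n-minimal spiral h' bounds}(a),(b), and I would prove it by repeating the proof of \Cref{n-minimal annuli h' bounds}(a),(b). The quantities change as follows: $\Gamma'$, $h'$, $h$ become $\Gamma_2'$, $h_2'$, $h_{1,2}''$; the bounds of \Cref{n-minimal spiral h'} are replaced by \Cref{n-minimal spiral h' 2}; and the absence of controlled subwords is now supplied by \Cref{n-minimal spiral controlled 2}. The parameter choices are correspondingly one level lower: $C_3>>C_2>>C_1>>c_1>>N$ in place of $N_3>>N_2>>N_1>>c_1$. This matches how \Cref{n-minimal annuli r'}(c) was adapted.

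For (a), I argue by contradiction and suppose $h_2'\leq c_1\|\textbf{z}_2\|=c_1|\textbf{z}_2|_a+c_1(N+1)$.
\begin{enumerate}
\item By \Cref{n-minimal spiral h' 2}(b) and $C_2>>C_1>>c_1>>N$, this gives $|\textbf{z}_2|_a>C_1|\partial\Delta|+C_1$, and in particular $\|\textbf{z}_2\|\leq2|\textbf{z}_2|_a$.
\item By \Cref{n-minimal spiral h' 2}(a), $h_{1,2}''-h_2'<\frac{2}{C_3-2}h_2'$. Combined with the assumption, $C_3>>C_2>>c_1$ yields $|\textbf{z}_2|_a>C_2(h_{1,2}''-h_2')$.
\item Let $\Gamma_2''$ be the part of $\Psi_{1,2}$ between $\textbf{z}_2$ and $\textbf{C}_2$. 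Its $\theta$-bands are those crossing the complements of $\pazocal{Q}_{1,2}'$ and $\pazocal{Q}_{2,2}'$. As in the proof of \Cref{n-minimal spiral h'}(a), each such band crosses these complements a bounded number of times. So there are $O(h_{1,2}''-h_2')$ $(\theta,q)$-cells in $\Gamma_2''$, and, by \Cref{simplify rules}, at most $O(N(h_{1,2}''-h_2'))$ maximal $a$-bands from $\textbf{z}_2$ end on $(\theta,q)$-cells.
\item Since $|\textbf{C}_2\cap\partial\Gamma_2''|_\theta$ is also $O(h_{1,2}''-h_2')$, \Cref{lengths} shows that at least $\frac12|\textbf{z}_2|_a$ $a$-edges of $\textbf{C}_2$ each contribute $\delta$. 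Hence $|\textbf{z}_2|_a\leq2\delta^{-1}|\partial\Delta|\leq C_1|\partial\Delta|$, contradicting step 1.
\end{enumerate}

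For (b), suppose instead that $h_2'>c_1\|\textbf{y}_2\|$. Together with (a), the reduced computation attached to $\Gamma_2'$ by \Cref{trapezia are computations} has length greater than $c_1\max(\|\textbf{y}_2\|,\|\textbf{z}_2\|)$. Forgetting coordinates, it is a computation of $\textbf{E}_{\textbf{S},k}$ whose base is the copy of $\{t\}B_{std}$ between the $t$-letters, since the sectors next to $t$-letters have empty tape alphabets. \Cref{long history controlled}, applied with $\pazocal{D}$ the whole computation, then forces $H_2'$ to contain a controlled subword, contradicting \Cref{n-minimal spiral controlled 2}.

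The main obstacle is step 3 of (a): controlling the $\theta$-bands of the region between $\textbf{z}_2$ and $\textbf{C}_2$, since $\pazocal{S}$ spirals and the band-structure bound must be taken from the cutting argument of \Cref{n-minimal spiral h'}(a) rather than from distinctness of bands. Everything else is parameter bookkeeping.
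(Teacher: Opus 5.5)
Your proof is correct and follows the paper's route: the paper obtains this lemma by rerunning the proof of \Cref{n-minimal annuli h' bounds}(a),(b) (as already done for \Cref{n-minimal spiral h' bounds}), with \Cref{n-minimal spiral h' 2} and \Cref{n-minimal spiral controlled 2} as inputs and the parameters shifted down to $C_3>>C_2>>C_1>>c_1$, just as in \Cref{n-minimal annuli r'}(c). The one quibble is the ``obstacle'' you flag in step 3, which is not actually an obstacle. The count there is over maximal $\theta$-bands of the disk-free circular subdiagram cut off by $\textbf{z}_2$, not over maximal $\theta$-bands of $\Delta$. Each such band has an end on one of the two spoke remnants, since it cannot have both ends on $\textbf{C}_2$, as in \Cref{n-minimal theta-bands}. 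By \Cref{M(S) annuli} it crosses each of the at most $N+1$ maximal $q$-bands there at most once. So the $O(h_{1,2}''-h_2')$ bound is immediate, and neither the spiraling of $\pazocal{S}$ nor the cutting argument of \Cref{n-minimal spiral h'}(a) plays any role.
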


Recall that in the proof of \Cref{n-minimal spiral H_0}, for $1\leq i\leq n-1$ we define $\pazocal{T}_i'$ to be the subband of $\pazocal{T}$ between $\pazocal{T}_i$ and $\pazocal{T}_{i+1}$, then extend $\pazocal{T}_i'$ to a subband $\pazocal{T}_i''$ which has two ends on the side $\textbf{r}$ of $\pazocal{Q}_{m,1}$.  Further, we let $\textbf{s}_i'$ be the subpath of $\textbf{r}$ between the ends of $\pazocal{T}_i''$.  Note then that a side of $\pazocal{T}_i''$, $\textbf{s}_i'$, and one end of $\pazocal{T}_i''$ form a simple, non-contractible loop $\a_i$ in $\Delta$.

We now do the same for the band $\pazocal{S}$: For $1\leq i\leq r-1$, define $\pazocal{S}_i'$ to be the subband of $\pazocal{S}$ between $\pazocal{S}_i$ and $\pazocal{S}_{i+1}$, $\pazocal{S}_i''$ to be the extension which has two ends on a side $\textbf{r}_{1,2}$ of $\pazocal{Q}_{p(1,2),2}$ (formed by concatenating $\pazocal{S}_i'$ and either $\pazocal{S}_i$ or $\pazocal{S}_{i+1}$), and $\textbf{t}_i'$ to be the subpath of $\textbf{r}_{1,2}$ between the ends of $\pazocal{S}_i''$.  Then a side of $\pazocal{S}_i''$, $\textbf{t}_i'$, and one end of $\pazocal{S}_i''$ form a simple, non-contractible loop $\b_i$ in $\Delta$.

\begin{lemma} \label{n-minimal spiral loop separation}

Let $\textbf{s}_i''$ the maximal initial subpath of $\textbf{r}$ disjoint from $\a_i$.  Similarly, let $\textbf{t}_i''$ be the maximal initial subpath of $\textbf{r}_{1,2}$ disjoint from $\b_i$.

\begin{enumerate}[label=(\alph*)]

\item $\a_i$ separates the outer component of $\Delta$ from every edge of $\textbf{r}_{1,2}$ and of $\textbf{s}_i''$.

\item $\b_i$ separates the inner component of $\Delta$ from every edge of $\textbf{r}$ and of $\textbf{t}_i''$.

\end{enumerate}

\end{lemma}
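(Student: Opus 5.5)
The plan is purely topological: we use the structure of the annulus $\Delta$, the disk $\Pi$, and the two scopes $\Psi_1,\Psi_2$. Cutting $\Delta$ along the non-contractible simple loop $\a_i$ separates it into two annular pieces, $\Delta_{out}$ containing the outer component $\textbf{C}_1$ and $\Delta_{in}$ containing $\textbf{C}_2$. We will show that $\Pi$, together with the initial segment $\textbf{s}_i''$ of $\textbf{r}$ and the whole spoke $\pazocal{Q}_{p(1,2),2}$, lies in $\Delta_{in}$. Part (b) then follows by the symmetric argument with $\b_i$, $\textbf{C}_2$, $\textbf{r}_{1,2}$ and $\textbf{t}_i''$.

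First, $\a_i$ is built only from a side of the subband $\pazocal{T}_i''$ of $\pazocal{T}$, the subpath $\textbf{s}_i'$ of $\textbf{r}$, and one end edge. So $\a_i$ meets $\partial\Pi$ in at most a vertex of the side of $\pazocal{Q}_{m,1}$, and in fact it does not meet $\partial\Pi$ at all, since $\textbf{s}_i'$ lies strictly between crossings of $\pazocal{T}$ with $\pazocal{Q}_{m,1}$ and so away from the first cell at $\Pi$. The disk $\Pi$ together with the initial subpath $\textbf{s}_i''$ of $\textbf{r}$ starting at $\Pi$ is connected and disjoint from $\a_i$, so it lies entirely in one of $\Delta_{out}$, $\Delta_{in}$. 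To see which, we use the spoke $\pazocal{Q}_{m,1}$ itself. Its terminal part, beyond the last crossing with $\pazocal{T}$, reaches $\textbf{C}_1$. Each traversal of $\textbf{r}$ across $\a_i$ occurs at an end of $\pazocal{T}_i''$ or along $\textbf{s}_i'$. Because $\a_i$ winds once around the hole and $\pazocal{T}$ runs from $\textbf{C}_1$ to $\textbf{C}_2$ (\Cref{n-minimal boundary ends}), the portion of $\textbf{r}$ beyond $\textbf{s}_i'$ lies on the $\textbf{C}_1$ side. Hence the portion before $\textbf{s}_i'$, namely $\textbf{s}_i''$ and $\Pi$, lies on the opposite side, $\Delta_{in}$. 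Here we use that $\textbf{r}$ crosses the annular band formed by $\pazocal{T}_i''$ and $\textbf{s}_i'$ only there, and that $\theta$-bands and $q$-bands meet at most once within any disk region (\Cref{G(S) annuli}).

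Next, $\pazocal{Q}_{p(1,2),2}$ is a $t$-spoke of $\Pi$ running to $\textbf{C}_2$. It cannot cross $\pazocal{Q}_{m,1}$, since distinct maximal $q$-bands are disjoint. The only remaining way for $\textbf{r}_{1,2}$ to reach $\Delta_{out}$ would be to cross the $\theta$-side of $\a_i$. Any crossing of $\pazocal{Q}_{p(1,2),2}$ with $\pazocal{T}_i''$ would force $\textbf{r}_{1,2}$ to return, since it starts on $\Pi\subset\Delta_{in}$ and ends on $\textbf{C}_2\subset\Delta_{in}$. So it would cross $\a_i$ an even number of times, with a subpath entering $\Delta_{out}$ and coming back through the side of $\pazocal{T}_i''$. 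We rule this out by noting that the two crossing cells, together with the subbands of $\pazocal{Q}_{p(1,2),2}$ and $\pazocal{T}_i''$ between them, would bound a disk, giving a $(\theta,q)$-annulus and contradicting \Cref{G(S) annuli}.

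The main obstacle is making the side-determination rigorous, that is, showing that the segment of $\textbf{r}$ past $\textbf{s}_i'$ really lies on the $\textbf{C}_1$ side. We would handle it by the enumeration of the $\pazocal{T}_j$ by proximity to $\Pi$ and the fact that $\pazocal{T}$ spirals monotonically, as shown in the proof of \Cref{n-minimal spiral H_0}: consecutive crossings have identical intervening labels, so the loops $\a_1,\a_2,\dots$ are nested, with $\a_{i+1}$ lying between $\a_i$ and $\textbf{C}_1$.
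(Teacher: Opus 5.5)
Cutting along $\a_i$ is a legitimate alternative to the paper's cut along the radial path $\textbf{x}$ (formed by $\textbf{r}^{-1}$, a subpath of $\partial\Pi$, and $\textbf{r}_{1,2}$). As written, however, your decisive step is not proved.

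\textbf{The gap.} That the part of $\textbf{r}$ beyond $\textbf{s}_i'$ lies in $\Delta_{out}$ is automatic, since it reaches $\textbf{C}_1$ without meeting $\a_i$. Your ``hence'' then presupposes that $\textbf{r}$ actually passes through $\a_i$ along $\textbf{s}_i'$, rather than running along it and returning to the side it came from. That is exactly the content of the claim about $\textbf{s}_i''$. Neither ``$\a_i$ winds once around the hole'' nor ``$\pazocal{T}$ runs from $\textbf{C}_1$ to $\textbf{C}_2$'' decides it. Your proposed remedy places the difficulty on the segment past $\textbf{s}_i'$, which is the easy part. Nestedness of $\a_1,\a_2,\dots$ (itself unproved) also says nothing about which complementary component of $\a_i$ contains $\Pi$. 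Your treatment of $\textbf{r}_{1,2}$ starts from $\Pi\subset\Delta_{in}$, so it inherits this gap.

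\textbf{How to close it.} The paper gets the crossing for free. Cutting along $\textbf{x}$ gives a disk with boundary $\textbf{x}'\textbf{i}(\textbf{x}'')^{-1}\textbf{o}$, in which $\a_i$ becomes a path from $\textbf{x}'$ to $\textbf{x}''$ with both endpoints in the $\textbf{r}$-portion. Such a crosscut separates $\textbf{o}$ from $\textbf{i}$ together with everything on $\textbf{x}$ between its endpoints and $\textbf{i}$, namely $\textbf{s}_i''$, the piece of $\partial\Pi$, and all of $\textbf{r}_{1,2}$.

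Within your framework, a local check would also work:
\begin{itemize}
\item $\textbf{r}$ is a side of a $t$-band, so it consists of $\theta$-edges only and shares no edge with the side of $\pazocal{T}_i''$.
\item At its two ends on $\textbf{r}$, the band $\pazocal{T}_i''$ lies on opposite sides of $\textbf{r}$. One end belongs to $\pazocal{T}_i$ or $\pazocal{T}_{i+1}$, inside $\Gamma$; the other belongs to $\pazocal{T}_i'$, which reaches $\textbf{r}$ from outside $\Gamma$.
\item Hence $\a_i$ leaves the segment $\a_i\cap\textbf{r}$ to opposite sides of $\textbf{r}$ at its two extremities. So $\textbf{r}$ genuinely crosses $\a_i$ there, which puts $\textbf{s}_i''$ and $\Pi$ in $\Delta_{in}$.
\end{itemize}

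\textbf{Your third paragraph.} It is worth keeping, because it addresses something the paper takes for granted: that $\a_i$ meets $\textbf{x}$ only along $\textbf{s}_i'$. It needs one more step. An excursion of $\textbf{r}_{1,2}$ into $\Delta_{out}$ cuts off a disk bounded by one of the two arcs of $\a_i$ between the crossing points. You must exclude the arc containing $\textbf{s}_i'$ before a $(\theta,q)$-annulus appears. For instance, in that case the part of $\textbf{r}$ heading to $\textbf{C}_1$ would begin inside that disk and would have to exit it.

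\textbf{Minor point.} Your assertion that $\a_i$ avoids $\partial\Pi$ is unjustified; for $i=1$ a side of $\pazocal{T}_1$ may run along $\partial\Pi$. It is also unnecessary, since the interior of $\Pi$ is connected and disjoint from $\a_i$.
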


\begin{proof}

Let $\textbf{x}$ be the path formed by $\textbf{r}^{-1}$, a subpath of $\partial\Pi$, and $\textbf{r}_{1,2}$.  Then $\textbf{x}$ is a path from the outer boundary component of $\Delta$ to the inner component.

Cutting along $\textbf{x}$ then produces a circular diagram $D$ with $\partial D=\textbf{x}'\textbf{i}(\textbf{x}'')^{-1}\textbf{o}$ where $\textbf{x}'$ and $\textbf{x}''$ are identified with $\textbf{x}$, $\textbf{i}$ with the inner component of $\Delta$, and $\textbf{o}$ with the outer component.

In $D$, $\a_i$ and $\b_i$ correspond to paths between $\textbf{x}'$ and $\textbf{x}''$ which share no edges with $\textbf{i}$ and $\textbf{o}$.  The statement then follows from noting the order in which the edges of $\textbf{r}^{-1}$ and $\textbf{r}_{1,2}$ arise in $\textbf{x}$.

\end{proof}

\begin{lemma} \label{n-minimal spiral subpaths} \

\begin{enumerate}[label=(\alph*)]

\item The $(\theta,t)$-cell of $\pazocal{Q}_{m,1}$ crossed by $\pazocal{T}_1$ is adjacent to $\Pi$.

\item The $(\theta,t)$-cell of $\pazocal{Q}_{p(1,2),2}$ crossed by $\pazocal{S}_1$ is adjacent to $\Pi$.

\end{enumerate}

\end{lemma}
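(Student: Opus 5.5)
The plan is to argue (a) by contradiction; (b) is identical with $\pazocal{S}$, $\pazocal{Q}_{p(1,2),2}$, $\textbf{C}_2$ and the inner component in place of $\pazocal{T}$, $\pazocal{Q}_{m,1}$, $\textbf{C}_1$ and the outer component. Suppose the $(\theta,t)$-cell $\pi$ of $\pazocal{Q}_{m,1}$ crossed by $\pazocal{T}_1$ is not adjacent to $\Pi$. Then there is a nonempty initial subband $\pazocal{U}$ of $\pazocal{Q}_{m,1}$, ending on $\Pi$, that $\pazocal{T}$ does not cross.

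By the choice of $\pazocal{T}$ as the band satisfying \Cref{n-minimal spiral existence} closest to $\Pi$, every maximal $\theta$-band crossing $\pazocal{U}$ crosses $\pazocal{Q}_{m,1}$ fewer than $N_3$ times. By \Cref{n-minimal spiral no annuli}, each such band is non-annular and so has two ends on $\partial\Delta$. The key observation is topological and uses \Cref{n-minimal spiral loop separation}(a). The loop $\a_1$, built from a side of $\pazocal{T}_1''$ and the subpath $\textbf{s}_1'$ of $\textbf{r}$, separates the outer component $\textbf{C}_1$ from the edges of $\textbf{s}_1''$, which contains the side of $\pazocal{U}$. A $\theta$-band crossing $\pazocal{U}$ cannot cross $\pazocal{T}$, so it cannot pass through $\a_1$ except along $\textbf{s}_1'$, i.e. by crossing $\pazocal{Q}_{m,1}$ inside $\pazocal{Q}_{m,1}'$. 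Its ends therefore lie on the region of the annulus cut off by the consecutive loops $\a_i$ and $\Pi$.

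I would then refine this by iterating over $\a_1,\a_2,\dots$. Between $\a_i$ and $\a_{i+1}$, \Cref{n-minimal spiral H_0} shows the $\theta$-bands of $\Lambda_i$ repeat exactly. So a band crossing $\pazocal{U}$ that reached the outer or inner boundary would have to traverse the spiral region and cross $\pazocal{Q}_{m,1}$ about $n>N_3$ times. This contradicts the choice of $\pazocal{T}$. Hence such bands would have both ends on the inner component $\textbf{C}_2$, within a region enclosed by $\Pi$ and $\a_1$ together with the spokes.

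There are two ways to finish from here. The first is to note that the band corresponding to the first letter of the history of $\pazocal{U}$ then forms, with $\pazocal{Q}_{m,1}$ and the spokes, a subcomb contained in the scope, contradicting \Cref{n-minimal subcombs}. Alternatively, it must have two ends on $\textbf{C}_2$, and by the argument of \Cref{n-minimal theta-bands} its base would have length greater than $K$, which is impossible since its base is bounded by $LN$.

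The main obstacle is making the separation argument precise. The delicate part is showing that a $\theta$-band crossing $\pazocal{U}$ and reaching $\textbf{C}_2$ must cross $\pazocal{Q}_{p(1,2),2}$ or other spokes in a way that forces it to follow $\pazocal{T}$'s spiral. For this I would use the cut diagram $D$ from the proof of \Cref{n-minimal spiral loop separation}, together with the fact that $\theta$-bands do not cross, and count crossings with $\pazocal{Q}_{m,1}$ to reach the contradiction with the minimality of $\pazocal{T}$.
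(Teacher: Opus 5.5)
Your handling of the outer boundary is correct and is the paper's first step. Let $\pazocal{T}'$ be the maximal $\theta$-band through a cell of $\pazocal{Q}_{m,1}$ lying between $\Pi$ and the crossing with $\pazocal{T}_1$. If $\pazocal{T}'$ had an end on $\textbf{C}_1$, then every $\alpha_i$ separates the outer component from that edge of $\textbf{r}$ (\Cref{n-minimal spiral loop separation}(a)), and $\pazocal{T}'$ cannot cross $\pazocal{T}$. So $\pazocal{T}'$ would cross $\pazocal{Q}_{m,1}$ inside every $\textbf{s}_i'$, i.e. at least $n\geq N_3$ times, while crossing closer to $\Pi$ than $\pazocal{T}$; this contradicts the choice of $\pazocal{T}$. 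However, your claim that the same holds for a band reaching the \emph{inner} boundary is unfounded, because the $\alpha_i$ only cut off $\textbf{C}_1$. Indeed, you then land in the remaining case, where $\pazocal{T}'$ has both ends on $\textbf{C}_2$.

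That remaining case is a genuine gap, since neither of your finishes works. There is no subcomb: $\pazocal{Q}_{m,1}$ is a spoke with one end on $\Pi$, not a maximal $q$-band with two ends on $\partial\Delta$, and $\pazocal{T}'$ is not contained in $\Psi_1$. The base-length argument of \Cref{n-minimal theta-bands} does not apply either. Its bound $LN$ uses that every maximal $q$-band of the circular diagram $\Psi_j$ joins $\Pi$ to $\textbf{C}_j$ and is crossed at most once, whereas $\pazocal{T}'$ is a band of $\Delta$ that leaves $\Psi_1$. Moreover, by \Cref{n-minimal boundary ends}, $\pazocal{T}'$ crosses $\pazocal{Q}_{m,1}$ only once, and $\pazocal{Q}_{m,1}$ joins $\Pi$ to $\textbf{C}_1$. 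Hence $\pazocal{T}'$ and an arc of $\textbf{C}_2$ bound a disk containing $\Pi$, so $\pazocal{T}'$ is not a rim band and \Cref{minBoundaryLengths-theta} cannot be invoked.

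The missing ingredient is the second spiral, which is why $\pazocal{S}$ and the loops $\beta_i$ are built just before this lemma. By \Cref{n-minimal spiral loop separation}(b), every $\beta_i$ separates the inner component from every edge of $\textbf{r}$. Since $\pazocal{T}'\neq\pazocal{S}$, $\pazocal{T}'$ cannot cross $\pazocal{S}$, so it must cross $\pazocal{Q}_{p(1,2),2}$ in the subband with side $\textbf{t}_i'$ for each $i$, hence at least twice. \Cref{n-minimal boundary ends} then forces an end of $\pazocal{T}'$ on the outer component, a contradiction. So the inner case is killed by $\pazocal{S}$ and \Cref{n-minimal boundary ends}, not by the spiral of $\pazocal{T}$ and the minimality of $\pazocal{T}$ as your last paragraph suggests. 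Symmetrically, your part (b) needs the loops $\alpha_i$ for its second step.
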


\begin{proof}

(a) Suppose to the contrary that there is another cell $\pi$ of $\pazocal{Q}_{m,1}$ between $\Pi$ and the cell crossed by $\pazocal{T}_1$.  Let $\textbf{e}$ be the boundary edge of $\pi$ shared with $\textbf{r}$ and $\pazocal{T}'$ be the maximal $\theta$-band which crosses $\pazocal{Q}_{m,1}$ in this cell.

First, suppose $\pazocal{T}'$ has an end on the outer boundary component.  By \Cref{n-minimal spiral loop separation}, for all $1\leq i\leq n-1$ the loop $\a_i$ separates $\textbf{e}$ from the outer boundary component of $\Delta$.  Since $\theta$-bands cannot cross, it follows that $\pazocal{T}'$ must cross $\pazocal{Q}_{m,1}$ in the subband with side path $\textbf{s}_i'$.  In particular, $\pazocal{T}'$ crosses $\pazocal{Q}_{m,1}$ at least $n\geq N_3$ times.  But it also crosses $\pazocal{Q}_{m,1}$ closer to $\Pi$ than $\pazocal{T}$, contradicting the choice of $\pazocal{T}$.  

Hence, $\pazocal{T}'$ must have two ends on the inner component of $\Delta$.

Again, though, \Cref{n-minimal spiral loop separation} implies that each loop $\b_i$ separates $\textbf{e}$ from the inner component of $\Delta$, so that $\pazocal{T}'$ must cross $\pazocal{Q}_{p(1,2),2}$ in the subband with side path $\textbf{t}_i'$.  In particular, $\pazocal{T}'$ crosses $\pazocal{Q}_{p(1,2),2}$ at least twice, contradicting \Cref{n-minimal boundary ends}.

(b) follows from a symmetric argument.

\end{proof}

\begin{lemma} \label{n-minimal spiral first rule} \

\begin{enumerate}[label=(\alph*)]

\item There exists a subband $\pazocal{T}_0$ of $\pazocal{T}$ containing $\pazocal{T}_1$ which is a maximal $\theta$-band of $\Psi_1$, crosses every maximal $q$-band of $\Psi_1$, and has a side which is a subpath of $\partial\Pi$.

\item There exists a subband $\pazocal{S}_0$ of $\pazocal{S}$ containing $\pazocal{S}_1$ which is a maximal $\theta$-band of $\Psi_2$, crosses every maximal $q$-band of $\Psi_2$, and has a side which is a subpath of $\partial\Pi$.

\item When read away from $\Pi$, the history of $\pazocal{T}_0$ is different from that of $\pazocal{S}_0$.

\end{enumerate}

\end{lemma}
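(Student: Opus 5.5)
For (a), I will take $\pazocal{T}_0$ to be the maximal subband of $\pazocal{T}$ that contains $\pazocal{T}_1$ and lies in $\Psi_1$. By \Cref{n-minimal spiral subpaths}(a), $\pazocal{T}_1$ crosses $\pazocal{Q}_{m,1}$ at the cell adjacent to $\Pi$. I will then propagate this through $\Psi_1$. By \Cref{n-minimal subcombs}, every maximal $q$-band of $\Psi_1$ joins $\partial\Pi$ to $\textbf{C}_1$. By \Cref{n-minimal theta-bands}, every maximal $\theta$-band of $\Psi_1$ crosses $\pazocal{Q}_{1,1}$ or $\pazocal{Q}_{\ell_1,1}$.

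The key claim is that no cell lies between $\pazocal{T}_0$ and $\partial\Pi$. Suppose some maximal $q$-band $\pazocal{Q}$ of $\Psi_1$ had a cell between $\Pi$ and its crossing with $\pazocal{T}_0$, and let $\pazocal{T}''$ be the $\theta$-band through that cell. Since $\theta$-bands cannot cross, $\pazocal{T}''$ would be trapped in the region bounded by $\pazocal{T}_0$ and $\partial\Pi$. It would therefore have to leave through one of the side $t$-spokes, and continuing it in $\Delta$, we follow the argument of \Cref{n-minimal spiral subpaths}:
\begin{itemize}
\item if $\pazocal{T}''$ ends on the outer component, the loops $\a_i$ force it to cross $\pazocal{Q}_{m,1}$ more than $N_3$ times, closer to $\Pi$ than $\pazocal{T}$, contradicting the choice of $\pazocal{T}$;
\item otherwise the loops $\b_i$ force it to cross $\pazocal{Q}_{p(1,2),2}$ twice, contradicting \Cref{n-minimal boundary ends}.
\end{itemize}
The same trapping shows that $\pazocal{T}_0$ crosses every $q$-band of $\Psi_1$ and ends on the side $t$-spokes. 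Hence a side of $\pazocal{T}_0$ runs along $\partial\Pi$, which gives (a). The main obstacle is making this trapping argument rigorous, namely identifying which loop separates which edge. I expect to reuse \Cref{n-minimal spiral loop separation} directly, applied to the first cells of the spokes $\pazocal{Q}_{i,1}$. Part (b) follows symmetrically, using \Cref{n-minimal spiral subpaths}(b).

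For (c), suppose the two histories agree when read away from $\Pi$. Then $\pazocal{T}_0$ and $\pazocal{S}_0$ are disjoint $\theta$-bands corresponding to the same letter of $\Theta(\textbf{M}_\textbf{S})$, each with a side on $\partial\Pi$. They are disjoint because they lie in the distinct scopes $\Psi_1,\Psi_2$; if they were subbands of one maximal band, disjointness as subbands still suffices. So \Cref{two theta-bands about disk} applies, and the numbers of $t$-spokes they cross total at most $L/2$. But by (a) and (b) they cross all $t$-spokes of $\Psi_1$ and $\Psi_2$, that is, $\ell_1+\ell_2\geq L-18>L/2$ spokes, once $L\geq37$. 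This is a contradiction.

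One point needs checking: \Cref{two theta-bands about disk} is stated for bands in a reduced diagram, with orientation conventions. "Same letter" there must match "same history read away from $\Pi$", and I would verify this via the mirror symmetry of the disk relator.
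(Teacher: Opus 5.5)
Part (c) matches the paper's argument via \Cref{two theta-bands about disk}, and (b) is symmetric to (a). Part (a), however, has a gap at its central step.

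\textbf{The gap.} The central step of (a) is to show that $\pazocal{T}_0$ has no end on $\textbf{C}_1$. Equivalently, $\pazocal{T}_0$ runs from $\pazocal{Q}_{1,1}$ to $\pazocal{Q}_{\ell_1,1}$ and so crosses every maximal $q$-band of $\Psi_1$. Your proposal does not prove this.

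\begin{itemize}
\item Your trapping argument for $\pazocal{T}''$ presupposes that $\pazocal{T}_0$, $\partial\Pi$ and the two side spokes enclose a region. That is exactly the claim in question, so ``the same trapping shows that $\pazocal{T}_0$ crosses every $q$-band'' is circular.
\item Your first bullet cannot be applied to $\pazocal{T}_0$ itself. $\pazocal{T}_0$ is part of $\pazocal{T}$, and $\pazocal{T}$ really does end on $\textbf{C}_1$, so there is no conflict with the choice of $\pazocal{T}$.
\item \Cref{n-minimal spiral loop separation} only concerns edges of $\textbf{r}$ and $\textbf{r}_{1,2}$. It says nothing about ``first cells of the spokes $\pazocal{Q}_{i,1}$'', so your proposed remedy for the obstacle you flagged does not apply as stated.
\end{itemize}

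\textbf{The missing idea.} Apply the separation lemma to the end $\textbf{e}_1$ of $\pazocal{T}_1$ on $\textbf{r}$.
\begin{itemize}
\item Since $\textbf{s}_2'$ lies between the ends of $\pazocal{T}_2$ and $\pazocal{T}_3$, the edge $\textbf{e}_1$ lies in $\textbf{s}_2''$. Hence $\alpha_2$ separates $\textbf{e}_1$ from the outer component.
\item $\pazocal{T}_0$ cannot get past $\alpha_2$. First, $\theta$-bands do not cross. Second, meeting $\textbf{s}_2'$, or running into $\pazocal{T}_2''$ along $\pazocal{T}$, would force $\pazocal{T}_0$ to cross $\pazocal{Q}_{m,1}$ a second time inside the diskless subdiagram $\Psi_1$, contrary to \Cref{G(S) annuli}.
\item So $\pazocal{T}_0$ has no end on $\textbf{C}_1$. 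Since $\partial\Pi$ has no $\theta$-edges and $\pazocal{T}_0$ cannot cross one spoke twice, it runs from $\pazocal{Q}_{1,1}$ to $\pazocal{Q}_{\ell_1,1}$. By \Cref{n-minimal subcombs} it then crosses every maximal $q$-band of $\Psi_1$.
\end{itemize}

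Only at this point is the region between $\pazocal{T}_0$ and $\Pi$ enclosed, and the rest is short. Any $\theta$-band in that region must span $\Psi_1$. It therefore crosses $\pazocal{Q}_{m,1}$ between $\Pi$ and the crossing with $\pazocal{T}_1$, which contradicts \Cref{n-minimal spiral subpaths}(a) directly.

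Your re-run of the $\alpha_i$/$\beta_i$ argument for an arbitrary $q$-band $\pazocal{Q}$ is therefore unnecessary. As written, it would also need a separation statement for edges not on $\textbf{r}$, and no such statement is available.
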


\begin{proof}

(a) Let $\textbf{e}_1$ be the end of $\pazocal{T}_1$ on $\textbf{r}$.  Note that since $\textbf{s}_2'$ is the subpath of $\textbf{r}$ between the ends of $\pazocal{T}_2$ and $\pazocal{T}_3$, by construction $\textbf{e}_1$ is disjoint from $\a_2$.  In particular, $\textbf{e}_1$ is an edge of $\textbf{s}_2''$, so that \Cref{n-minimal spiral loop separation} implies $\a_2$ separates $\textbf{e}_1$ from the outer boundary component of $\Delta$.

Let $\pazocal{T}_0$ be the maximal $\theta$-band of $\Psi_1$ for which $\pazocal{T}_1$ is a subband.  Note that since $\pazocal{T}_0$ crosses the subband of $\pazocal{Q}_{m,1}$ with side $\textbf{s}_2''$, \Cref{G(S) annuli} implies it cannot cross a subband whose side is a subpath of $\a_2$.  Hence, as $\theta$-bands cannot cross, $\pazocal{T}_0$ cannot end on the subpath of $\partial\Psi_1$ shared with the outer boundary component of $\Delta$.  

As such, $\pazocal{T}_0$ must cross both $\pazocal{Q}_{1,1}$ and $\pazocal{Q}_{\ell_1,1}$, and hence by \Cref{n-minimal subcombs} must cross every maximal $q$-band of $\Psi_1$.  Finally, \Cref{n-minimal spiral subpaths} implies no $\theta$-band can sit between $\pazocal{T}_0$ and $\Pi$, and hence its side must be shared with $\partial\Pi$.

(b) follows by a symmetric argument.

(c) As in the proof of \Cref{n-minimal transposition}(b), we see from (a) and (b) that $\pazocal{T}_0$ and $\pazocal{S}_0$ combine to cross at least $L-18$ spokes of $\Pi$, and so would provide a counterexample to \Cref{two theta-bands about disk} if they were to have the same history.

\end{proof}

We finally arrive at the main contradiction, achieving the ultimate goal of this section.

\begin{lemma} \label{annular disks}

The counterexample diagram does not exist.  That is, for any $n$-minimal diagram $\Delta$ with $s_1(\Delta)\geq1$, there exists a path $\textbf{t}$ between its boundary components with $|\textbf{t}|\leq N_4|\partial\Delta|+N_4$.

\end{lemma}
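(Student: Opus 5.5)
We finish the spiral case by mirroring the argument of \Cref{n-minimal annular contradiction}, now with the trapezia $\Gamma'$ (in $\Psi_{m,1}$) and $\Gamma_2'$ (in $\Psi_{1,2}$) built from the spiralling bands $\pazocal{T}$ and $\pazocal{S}$.

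By \Cref{n-minimal spiral first rule}(a), $\pazocal{T}_0$ crosses every $q$-band of $\Psi_1$ and has a side on $\partial\Pi$. Hence the bottom $\textbf{y}$ of $\Gamma'$ is, up to inversion, a subpath of $\partial\Pi$ with label $W(i_m)t(i_m+1)$. Indeed, by \Cref{n-minimal spiral subpaths}(a) the first cell of $\pazocal{Q}_{m,1}$ crossed by $\pazocal{T}_1$ is adjacent to $\Pi$, and $\pazocal{T}_1\subset\pazocal{T}_0$. Symmetrically, \Cref{n-minimal spiral first rule}(b) and \Cref{n-minimal spiral subpaths}(b) give that $\textbf{y}_2^{-1}$ is a subpath of $\partial\Pi$.

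Consequently, the reduced computations $\pazocal{C}_1,\pazocal{C}_2$ associated to $\Gamma'$ and $\Gamma_2'$ by \Cref{trapezia are computations} have initial words that are coordinate shifts of one another, after appending the appropriate $t$-letter. Their first rules are the histories of $\pazocal{T}_0$ and $\pazocal{S}_0$, which differ by \Cref{n-minimal spiral first rule}(c). So, as in the proof of \Cref{h lower bound}, we may concatenate the inverse of $\pazocal{C}_1$ with a coordinate shift of $\pazocal{C}_2$. This gives a reduced computation $\pazocal{C}'':U_0\to\dots\to U_t$ with history $(H')^{-1}H_2'$, where $U_0$ is a coordinate shift of $\lab(\textbf{z})$ and $U_t\equiv\lab(\textbf{z}_2)$.

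By \Cref{n-minimal spiral h' bounds}(a) and \Cref{n-minimal spiral h' bounds 2}(a), we have $t=h'+h_2'>c_1\max(\|U_0\|,\|U_t\|)$. Forgetting coordinates, $\pazocal{C}''$ is a computation of $\textbf{E}_{\textbf{S},k}$ in the standard base, so \Cref{long history controlled} applies with $\pazocal{D}$ equal to whichever of the two halves is longer. That half has length at least $t/2\geq\frac14\lambda t$. Taking $H'=H''$ itself (so $H'''$ and the leftover piece are empty), \Cref{long history controlled} shows that the longer of $H'$ and $H_2'$ contains a controlled subword. This contradicts \Cref{n-minimal spiral controlled} or \Cref{n-minimal spiral controlled 2}.

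Combined with \Cref{n-minimal annular contradiction}, which rules out the annular case, every case yields a contradiction, so the counterexample $\Delta$ cannot exist.

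The main obstacle is justifying the concatenation step. One must make sure that $\textbf{y}$ and $\textbf{y}_2$ really begin at $\Pi$, which is what \Cref{n-minimal spiral subpaths} provides. One must also check that the side-$t$-band structure makes $\lab(\textbf{y})$ and $\lab(\textbf{y}_2)$ genuine coordinate shifts, so that gluing produces a reduced history. A further point to check is that $U_0$ and $U_t$ have standard (reduced) base, since the bases of $\Gamma'$ and $\Gamma_2'$ are $\{t(i)\}B_{std}(i)\{t(i+1)\}$. This holds because, by \Cref{n-minimal subcombs}, every $q$-band in $\Psi_{m,1}$ and $\Psi_{1,2}$ is a spoke of $\Pi$.
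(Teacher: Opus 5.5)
Your proposal is correct and follows the paper's proof essentially verbatim. You use \Cref{n-minimal spiral first rule} and \Cref{n-minimal spiral subpaths} to put the bottoms of $\Gamma'$ and $\Gamma_2'$ on $\partial\Pi$ with distinct first rules, concatenate to get the reduced history $(H')^{-1}H_2'$, and apply \Cref{long history controlled} via \Cref{n-minimal spiral h' bounds}(a) and \Cref{n-minimal spiral h' bounds 2}(a), which contradicts \Cref{n-minimal spiral controlled} or \Cref{n-minimal spiral controlled 2}. One small caution: in ``taking $H'=H''$'' you reuse $H'$, which already denotes the history of $\Gamma'$, so rename the factors of \Cref{long history controlled} to avoid the clash.
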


\begin{proof}

The idea of the proof is the same as that for \Cref{n-minimal annular contradiction}:

By \Cref{n-minimal spiral first rule}(a),(b) the bottom of the trapezia $\Gamma'$ and $\Gamma_2'$ are subpaths of $\partial\Pi$.  Letting $\pazocal{C}'$ and $\pazocal{C}_2'$ be the reduced computations corresponding to these trapezia by \Cref{trapezia are computations}, it follows from the definition of the disk relations that the initial admissible words of these computations are coordinate shifts of one another.   

Hence, by \Cref{n-minimal spiral first rule}(c) we may construct a reduced computation $\pazocal{C}''$ with history $(H')^{-1}H_2'$ by concatenating the inverse computation of $\pazocal{C}'$ with a coordinate shift of $\pazocal{C}_2'$.  Note then that $\pazocal{C}''$ is between admissible words which are copies of the labels of $\textbf{z}$ and $\textbf{z}_2$.  

Lemmas \ref{n-minimal spiral h' bounds}(a) and \ref{n-minimal spiral h' bounds 2}(a) then imply that $\pazocal{C}''$ can be identified with a reduced computation of the $k$-enhanced machine satisfying the hypotheses of \Cref{long history controlled}.  But then whichever of $H'$ or $H_2'$ is longer must certainly contain a controlled subword, contradicting \Cref{n-minimal spiral controlled} or \Cref{n-minimal spiral controlled 2}.

\end{proof}

\bigskip


\section{Proof of Theorem \ref{main-theorem}} \label{sec-main-proof}

In this section, we put together the ingredients developed in the previous sections, showing that the finitely presented group $G(\textbf{M}_\textbf{S})$ is sufficient for the proof of Theorem \ref{main-theorem}.  As indicated by the structure of the statement, the proof proceeds in two major steps: (1) That the Dehn function is equivalent to $\TM_\textbf{S}(n)^2$, and (2) That the conjugator length is equivalent to $n^2$.

However, it is convenient to only prove the relevant upper bound on the conjugator length in place of (2).  The lower bound is then achieved by taking the direct product with an appropriate finitely presented group, producing the group satisfying the statement.

\medskip


\subsection{Dehn function} \

To understand the Dehn function of the group $G(\textbf{M}_\textbf{S})$, it suffices to get an upper bound on the area of a circular diagram over the finite presentation with an arbitrary boundary label.  The groundwork for achieving this has been laid by our study of certain circular diagrams over the disk presentation of $G(\textbf{M}_\textbf{S})$.

However, while \Cref{disks are relations} implies the disk presentation is a presentation for $G(\textbf{M}_\textbf{S})$, it has infinitely many relations, making the area of diagrams over this presentation a poor measure for the Dehn function of the group.  Fortunately, this is accounted for by the concept of $G$-area.

First, we justify the carefully chosen assignments defining the $G$-area of a diagram.

\begin{lemma} \label{disk weight}

For any configuration $W$ accepted by $\textbf{M}_\textbf{S}$, there exists a reduced circular diagram $\Delta$ over the finite presentation of $G(\textbf{M}_\textbf{S})$ such that $\lab(\partial\Delta)\equiv W$ and $\text{Area}(\Delta)\leq C_1\phi(C_1|W|)$.

\end{lemma}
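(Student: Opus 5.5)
The plan is the standard one for disk relations (cf.\ Lemma 7.2 of \cite{OS20} and Lemma 6.1 of \cite{O18}). We build a van Kampen diagram for $W$ from a single hub and trapezia that realize an accepting computation, and then bound its area by the length of that computation times the maximal length of the intermediate configurations.

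\textbf{Step 1: choose a short accepting computation.} Since $W$ is accepted by $\textbf{M}_\textbf{S}$, each component $W(i)$ is a coordinate shift of one accepted configuration $V$ of $\textbf{E}_{\textbf{S},k}$ (equivalently, of $\textbf{E}^1_{\textbf{S},k}$). By \Cref{E1 generalized time}, there is an accepting computation $\pazocal{C}:W\equiv W_0\to\dots\to W_t\equiv W_{ac}$ with
\[
t\leq c_0\TM_\textbf{S}(c_0\|V\|)+c_0|V|_a+2c_0 .
\]
By the parallel construction of $\textbf{M}_\textbf{S}$, this is simultaneously a computation on all $L$ components. Next I bound the space of $\pazocal{C}$. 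Each rule changes the $a$-length by at most $2N L$ by \Cref{simplify rules}, which gives $\|W_i\|\leq\|W\|+2NLt$. To keep the bound linear in $t$, I take $\pazocal{C}$ of minimal length, so that the crude estimate $\max_i\|W_i\|\leq \|W\|+2NLt$ suffices.

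\textbf{Step 2: build the diagram.} By \Cref{computations are trapezia}, $\pazocal{C}$ yields a trapezium $\Gamma$ with bottom label $W$, top label $W_{ac}$, and area at most $t\max_i\|W_i\|\leq t(\|W\|+2NLt)$. The standard base is circular, with $t$-sectors having empty tape alphabet. Hence the two side $t$-bands of $\Gamma$ carry identical labels and can be glued, turning $\Gamma$ into an annular diagram with boundary labels $W$ and $W_{ac}$. Attaching a hub along $W_{ac}$ gives a circular diagram $\Delta$ over the finite presentation with $\lab(\partial\Delta)\equiv W$. We then make it reduced; cancelling pairs only lowers the area. This gives
\[
\text{Area}(\Delta)\leq 1+t\|W\|+2NLt^2 .
\]

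\textbf{Step 3: compare with $\phi$.} This is the step I expect to be the main obstacle, since it is pure bookkeeping with $\sim$-constants. We have $\|V\|\leq\|W\|\leq\delta^{-1}|W|$. The assignment of $c_0$ gives
\[
\TM_\textbf{S}(m)\leq c_0f(c_0m)+c_0m+c_0 ,
\]
and \Cref{mixture quotient} (with the chosen constants) gives $f(x)^2\leq c_0x^2g(c_0x)+c_0x+c_0$. Consequently $t\leq C\,(f(C|W|)+|W|+1)$ for a constant $C$ depending only on $c_0,\delta,N$. Using $f(n)\geq n$, superadditivity, and $g\geq1$, I get
\[
t^2\leq C'\phi(C'|W|)+C' \quad\text{and}\quad t\|W\|\leq t^2+\|W\|^2 ,
\]
with the term $\|W\|^2$ absorbed into $\phi$ because $g\geq1$. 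The additive constants are absorbed using $|W|\geq L$, so that $\phi(C|W|)\geq1$. Choosing $C_1\gg c_0,\delta^{-1},L,N$, as permitted by the parameter order, yields $\text{Area}(\Delta)\leq C_1\phi(C_1|W|)$. The delicate point is making sure that every constant is fixed before $C_1$ in the parameter hierarchy; they are, since all of them come from $c_0,\delta,L,N$.
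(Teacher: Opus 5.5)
Your proposal is correct and follows essentially the same route as the paper: you take a short accepting computation from the generalized time lemma and build a trapezium glued along its side $t$-bands and capped with a hub. You then compare with $\phi$ via $f(n)^2\leq c_0n^2g(c_0n)+c_0n+c_0$ and $\|W\|\leq\delta^{-1}|W|$. The only cosmetic differences are that the paper bounds $|W_j|_a\leq 2(t-j)LN$ backward from $W_{ac}$ (so no separate $\|W\|$ term arises), and that your assumption that $\pazocal{C}$ has minimal length is not needed for the crude space bound.
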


\begin{proof}

By the parallel construction of the main machine, the admissible subword of $W(i)$ whose base is a copy of the standard $k$-enhanced machine is a copy of an accepted configuration $V$ of $\textbf{E}_{\textbf{S},k}^0$.  \Cref{E generalized time} then produces a reduced computation of $\textbf{E}_{\textbf{S},k}^0$ accepting $V$ of length at most $c_0\TM_\textbf{S}(2c_0\|V\|)+c_0|V|_a+2c_0$.  The parallel construction of the machine then allows us to `extend' this to a reduced computation $\pazocal{C}:W\equiv W_0\to\dots\to W_t\equiv W_{ac}$ of $\textbf{M}_\textbf{S}$ accepting $W$.

Now, recall from the proof of \Cref{mixture quotient} that $f(n)\geq n$ for all $n$.  The bound on $\TM_\textbf{S}$ in terms of $f$ (see \Cref{sec-disks-weights}) and the parameter choice $c_1>>c_0$ then imply $t\leq c_1f(c_1\|V\|)\leq c_1f(c_1\|W\|)$.

As the base of a configuration of $\textbf{M}_\textbf{S}$ has length $LN$, \Cref{simplify rules} implies the application of any rule alters the $a$-length by at most $2LN$.  So, since $|W_{ac}|_a=0$, $|W_j|_a\leq 2(t-j)LN\leq 2tLN$ for all $j$, and so the parameter choices $K>>L>>c_1>>N$ imply $\|W_j\|\leq Kf(c_1\|W\|)$.

\Cref{computations are trapezia} then produces a trapezium $\Gamma$ with $\lab(\textbf{bot}(\Gamma))\equiv W$, $\lab(\textbf{top}(\Gamma))\equiv W_{ac}$, and $\text{Area}(\Gamma)\leq c_1Kf(c_1\|W\|)^2$.

Since the $(P_0'(L))^{-1}\{t(1)\}$-sector has empty tape alphabet, the side labels of $\Gamma$ are identical.  Hence, we may paste the sides together and paste a hub in the middle, producing a circular diagram $\Delta$ over the finite presentation of $G(\textbf{M}_\textbf{S})$ with $\lab(\partial\Delta)\equiv W$ and $\text{Area}(\Delta)\leq c_1Kf(c_1\|W\|)^2+1$.

The bound on $f$ in terms of $g$ and the definition of $\phi$ (again, see \Cref{sec-disks-weights}) then yield 
\begin{align*}
\text{Area}(\Delta)&\leq c_1K(c_0c_1^2\|W\|^2g(c_0c_1\|W\|)+c_0c_1\|W\|+c_0)+1 \\
&\leq c_1K(c_1\phi(c_0c_1\|W\|)+c_0c_1\|W\|+c_0)+1
\end{align*}
Since $g(x)\geq1$ for all $x\neq0$, $\phi(n)\geq1$ for all $n\in\N\setminus\{0\}$.  Hence, the parameter choices $J>>K>>c_2>>c_1>>c_0$ imply $\text{Area}(\Delta)\leq J\phi(c_2\|W\|)$.

Finally, note from the definition of the length function that $|W|=\delta|W|_a+LN\geq\delta\|W\|$, so that $c_2\|W\|\leq c_2\delta^{-1}|W|$.  Thus, noting that \Cref{phi properties} implies $\phi$ is superadditive and so non-decreasing, the parameter choices $C_1>>\delta^{-1}>>J>>c_2$ yield $\text{Area}(\Delta)\leq C_1\phi(C_1|W|)$.

\end{proof}

\begin{lemma} \label{big trapezia G-area assignment}

Suppose $\Gamma$ is a big trapezium whose $G$-area is not assigned to be half its area.  Let $\Gamma_0$ be the subdiagram of $\Gamma$ given by cutting off the rim $q$-bands $\pazocal{Q}_1$ and $\pazocal{Q}_2$.  Then there exists a circular diagram $\Delta_0$ over the finite presentation of $G(\textbf{M}_\textbf{S})$ such that: 

\begin{itemize}

\item $\lab(\partial\Delta_0)\equiv\lab(\partial\Gamma_0)$

\item $\text{Area}(\Delta_0)\leq\text{Area}_G(\Gamma)-2h$

\end{itemize}

\end{lemma}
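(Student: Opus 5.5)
We use that the $G$-area of $\Gamma$ is then $3h+C_2\phi(C_2M)$, where $h$ is the height and $M=\max(\|\textbf{bot}(\Gamma)\|,\|\textbf{top}(\Gamma)\|)$. Since $\Gamma$ is big, its history $H$ contains a controlled subword and its base is revolving. The idea is the standard one from \cite{O18,OS20}: trade the long trapezium for two disk diagrams plus a thin strip.

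\textbf{Step 1.} Let $\pazocal{C}:W_0\to\dots\to W_h$ be the computation of $\Gamma$ given by \Cref{trapezia are computations}, and let $\pazocal{C}_0$ be its restriction to $\Gamma_0$. Since the base is revolving and the history has a controlled subword, \Cref{enhanced controlled} shows that the base is reduced, hence a cyclic permutation of the standard base with one extra letter. It also shows that the configuration at the start of the controlled subword is accepted.

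Every configuration of a reduced computation is accepted once one of them is. So $W_0$ and $W_h$, after closing up along the repeated end letter (the sector adjacent to the $t$-letter has empty tape alphabet), are cyclic permutations of accepted configurations $V_0$ and $V_h$.

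\textbf{Step 2.} Removing the rim $q$-bands, $\lab(\partial\Gamma_0)$ has the form $U_1^{-1}w_0U_2w_h^{-1}$. Here $w_0,w_h$ are $\lab(\textbf{bot}(\Gamma_0))$ and $\lab(\textbf{top}(\Gamma_0))$, and $U_1,U_2$ are the $\theta$-words along the sides, both copies of $H$.

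Because the trimmed configurations are accepted disk words with one $q$-letter removed, the argument of \Cref{disk weight} gives two pieces. First, a diagram $E_0$ over the finite presentation with boundary label $\lab(\textbf{bot}(\Gamma))$ closed up, of area at most $C_1\phi(C_1|W_0|)$. Second, a similar diagram $E_h$ for the top.

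To get $\Delta_0$, we exploit that the sides of $\Gamma_0$ are labelled by $\theta$-words adjacent to sectors with empty or matching tape. The words $w_0$ and $w_h$ are then conjugate in the finite presentation via $U_1$. Concretely, $\lab(\partial\Gamma_0)$ equals a product of conjugates of $\lab(\partial E_0)^{\pm1}$ and $\lab(\partial E_h)^{\pm1}$, joined by a strip of $(\theta,q)$-cells. This strip comes from a single $q$-band (the $t$-band) of length at most $h$, since the $t$-sectors have empty tape.

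We therefore build $\Delta_0$ by gluing $E_0$, $E_h$, and one $t$-band with history $H$. This gives
\[\text{Area}(\Delta_0)\le h+C_1\phi(C_1|W_0|)+C_1\phi(C_1|W_h|).\]

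\textbf{Step 3.} We convert to $M$. We have $|W_i|\le\|W_i\|\le M$, and $\phi$ is non-decreasing, so $2C_1\phi(C_1M)\le C_2\phi(C_2M)$ by \Cref{phi properties}(1) and $C_2>>C_1$. Hence
\[\text{Area}(\Delta_0)\le h+C_2\phi(C_2M)=\text{Area}_G(\Gamma)-2h.\]

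\textbf{Main obstacle.} Step 2 is the delicate part. We must check carefully that, after deleting the rim $q$-bands, the side labels really are matched by a single $t$-band (and its mirror). This uses the empty tape alphabets of the $t$-adjacent sectors. We also need the disk diagrams of \Cref{disk weight} to apply to the trimmed configurations with the correct orientation (possibly after a cyclic permutation and inversion). I expect to handle this exactly as in the proof of \Cref{t-spokes between disks}.
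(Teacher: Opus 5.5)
Your proposal follows the paper's proof. You close up the bottom and top labels into cyclic permutations of disk relators via \Cref{enhanced controlled}, take the diagrams of \Cref{disk weight} for them, and glue them to a single $q$-band with history $H$, getting area at most $h+2C_1\phi(C_1M)$, which is absorbed into $C_2\phi(C_2M)$.

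One thing needs fixing: that band is not in general a $t$-band. A revolving base may begin and end with any letter of the standard base, so $\pazocal{Q}_1,\pazocal{Q}_2$ need not be $t$-bands, and the side labels of $\Gamma_0$ may contain $a$-letters rather than being copies of $H$. The gluing does not rely on empty tape alphabets. It works because the base begins and ends with the same letter and each $W_i'$ begins and ends with the same state letter, so $\pazocal{Q}_1$ and $\pazocal{Q}_2$ are identical bands. You glue in one copy of that rim band, exactly as the paper does.
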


\begin{proof}

By the definition of $G$-area, we have $\text{Area}_G(\Gamma)=3h+C_2\phi(C_2M)$, where $h$ is the height of the trapezium and $M=\max(\|\textbf{bot}(\Gamma)\|,\|\textbf{top}(\Gamma)\|)$.


Let $\pazocal{C}:W_0'\to\dots\to W_h'$ be the reduced computation corresponding to $\Gamma$ by \Cref{trapezia are computations}.  As $\Gamma$ is big, there exist $q$-letters $q_i$ and words $U_i$ such that $W_i'\equiv q_iU_iq_i$ for all $i$.  \Cref{enhanced controlled} then implies $W_i\equiv q_iU_i$ is a cyclic permutation of a disk relation (or its inverse).


For all $0\leq i\leq h$, \Cref{disk weight} then yields a reduced circular diagram $\Sigma_i$ over the finite presentation of $G(\textbf{M}_\textbf{S})$ with $\lab(\partial\Sigma_i)\equiv W_i$ and $\text{Area}(\Sigma_i)\leq C_1\phi(C_1|W_i|)$.  In particular, since $|W_i|\leq\|W_i\|$ and $\phi$ is non-decreasing, we have $\text{Area}(\Sigma_0),\text{Area}(\Sigma_h)\leq C_1\phi(C_1M)$.

Now, let $\pazocal{Q}$ be a $q$-band corresponding to the same part of the standard base as the first or last letter of the base of $\pazocal{C}$ and whose history is the same as that of $\Gamma$.  By construction, $\pazocal{Q}$ is a copy of both $\pazocal{Q}_1$ and $\pazocal{Q}_2$.

Hence, combining $\Sigma_0$ with mirror copies of $\Sigma_h$ and of $\pazocal{Q}$ (and perhaps $0$-refining), we obtain a reduced circular diagram $\Delta_0$ with $\lab(\partial\Delta_0)\equiv\lab(\partial\Gamma_0)$ and $\text{Area}(\Delta_0)\leq2C_1\phi(C_1M)+h$.  Thus, the parameter choice $C_2>>C_1$ implies $\text{Area}(\Delta_0)\leq\text{Area}_G(\Gamma)-2h$.

\end{proof}

\begin{lemma} \label{Dehn upper bound}

For any reduced circular diagram $\Delta$ over the disk presentation of $G(\textbf{M}_\textbf{S})$, there exists a reduced circular diagram $\Delta_0$ over the finite presentation of $G(\textbf{M}_\textbf{S})$ such that:

\begin{itemize}

\item $\lab(\partial\Delta_0)\equiv\lab(\partial\Delta)$ 

\item $\text{Area}(\Delta_0)\leq2\text{Area}_G(\Delta)$

\end{itemize}

\end{lemma}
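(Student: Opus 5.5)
We fix a covering $\textbf{P}$ of $\Delta$ realizing $\text{Area}_G(\Delta)$ and build $\Delta_0$ by replacing each element of $\textbf{P}$ with a diagram over the finite presentation. Each element $P\in\textbf{P}$ is either a single cell or a big trapezium, and distinct elements meet only along $q$-bands.

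For a single non-disk cell we keep the cell; its contribution to $\text{Area}(\Delta_0)$ is $1=\text{wt}(\Pi)$. For a disk $\Pi$ with label $W$, \Cref{disk weight} gives a diagram over the finite presentation with area at most $C_1\phi(C_1|\partial\Pi|)=\text{wt}(\Pi)$, which we paste in place of $\Pi$.

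For a big trapezium $\Gamma\in\textbf{P}$ there are two cases. If $\text{Area}_G(\Gamma)$ equals half its area, we keep $\Gamma$ as is. Its area is $2\text{Area}_G(\Gamma)$, which is where the factor $2$ in the statement comes from. Otherwise we cut off the two rim $q$-bands $\pazocal{Q}_1,\pazocal{Q}_2$ to get $\Gamma_0$ and replace $\Gamma_0$ by the diagram $\Delta_0(\Gamma)$ from \Cref{big trapezia G-area assignment}, whose area is at most $\text{Area}_G(\Gamma)-2h$. The rim $q$-bands themselves have exactly $2h$ cells, so that case also contributes at most $\text{Area}_G(\Gamma)$.

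The main obstacle is the overlaps. A cell in $P_1\cap P_2$ lies in a $q$-band shared by two big trapezia, and we must not double count or double paste it. We handle this by assigning each shared $q$-band to only one of the two trapezia and treating it as removed from the other. For a trapezium of the second type this removed band is one of its rim bands $\pazocal{Q}_i$, whose $h$ cells were already budgeted, so the count still closes. For a trapezium of the first type, removing a rim band leaves a trapezium whose area is at most its original area, hence at most $2\text{Area}_G(\Gamma)$.

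Gluing the replacement pieces along the unchanged boundary labels of the pieces, which are $q$-band sides and disk boundaries, gives a circular diagram with boundary label $\lab(\partial\Delta)$. Cancelling any reducible pairs only lowers the area, so we may take the result reduced. Summing the contributions gives $\text{Area}(\Delta_0)\le\sum_{P\in\textbf{P}}2\text{Area}_G(P)=2\text{Area}_G(\Delta)$.
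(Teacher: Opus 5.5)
Your proof is correct and follows the paper's argument: fix a covering realizing $\text{Area}_G(\Delta)$, replace each disk via \Cref{disk weight} and the interior $\Gamma_0$ of each big trapezium whose $G$-area is not half its area via \Cref{big trapezia G-area assignment}, and bound the piece corresponding to each element $P$ by $2\text{Area}_G(P)$. The only difference is your explicit reassignment of shared $q$-bands, which the paper does without: summing the per-element bounds only overcounts a cell lying in two elements, and overcounting is harmless for an upper bound.
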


\begin{proof}

Let $\textbf{P}$ be a covering of $\Delta$ realizing the $G$-area of $\Delta$.  We then construct from $\Delta$ the (potentially unreduced) diagram $\tilde{\Delta}_0$ over the finite presentation of $G(\textbf{M}_\textbf{S})$ as follows:

\begin{itemize}

\item Excise any disk $\Pi$ and paste in its place the corresponding diagram given by \Cref{disk weight}.

\item For any big trapezium $\Gamma\in\textbf{P}$ with $\text{Area}_G(\Gamma)<\frac{1}{2}\text{Area}(\Gamma)$, excise the subdiagram $\Gamma_0$ given by cutting off the rim $q$-bands and paste in its place the diagram given by \Cref{big trapezia G-area assignment}.

\end{itemize}

Note that Lemmas \ref{disk weight} and \ref{big trapezia G-area assignment} imply that for any element $P\in\textbf{P}$, the subdiagram of $\tilde{\Delta}_0$ corresponding to $P$ has area at most $2\text{Area}_G(P)$, and so $\text{Area}(\tilde{\Delta}_0)\leq2\text{Area}_G(\textbf{P})$.  Hence, the reduced diagram $\Delta_0$ obtained from $\tilde{\Delta}_0$ by making any necessary cancellations satisfies the statement.

\end{proof}

Recall that for every input configuration $V$ of $\textbf{S}$, there exists a corresponding input configuration $W\equiv I(V)$ of $\textbf{M}_\textbf{S}$ formed by letting $W(i)$ be the concatenation of $t(i)$ with the copy of the configuration $I_0(V)$ of the standard $k$-enhanced machine $\textbf{E}_{\textbf{S},k}^0$ and its mirror copy.  By construction, $|I(V)|_a=2L|V|_a$.

\begin{lemma}[Compare with Lemma 10.2 of \cite{O18}] \label{Dehn lower bound}

Given an accepted input configuration $V$ of $\textbf{S}$, for any reduced circular diagram $\Delta$ over the finite presentation of $G(\textbf{M}_\textbf{S})$ with $\lab(\partial\Delta)\equiv I(V)$, we have $\text{Area}(\Delta)\geq \frac{1}{2}\tm_{\textbf{S}}(V)^2$.

\end{lemma}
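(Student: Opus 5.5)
We follow the argument of Lemma 10.2 of \cite{O18}, in the form it takes in \cite{OS20}: we show that any reduced circular diagram with boundary label $I(V)$ contains a disjoint family of $t$-bands whose combined length is forced to be large. Fix such a diagram $\Delta$ over the finite presentation, and write $W\equiv I(V)$. It has exactly $L$ $t$-edges, one in each component $W(i)$, and by \Cref{G(S) annuli} every maximal $t$-band starting on $\partial\Delta$ must end on a hub (since $\partial\Delta$ carries no other $t$-edges).

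\textbf{Step 1: the hub structure.} Suppose $\Delta$ has no hubs. Then it is a diagram over $M(\textbf{M}_\textbf{S})$, so no $t$-band can end, which is impossible for the $L\geq1$ boundary $t$-edges. Hence hubs exist. Next consider the graph whose vertices are the hubs plus an exterior vertex, with edges given by $t$-bands. As in \Cref{circular-graph} (reduced, degree $L$, no $1$-gons), one obtains a hub $\Pi$ with $L-3$ consecutive $t$-spokes ending on $\partial\Delta$, with the regions between consecutive spokes containing no hubs. Since $\partial\Delta$ has only $L$ $t$-edges, I would aim instead for the cleaner conclusion that a single hub receives at least two of these $t$-bands, say $\pazocal{Q}_i,\pazocal{Q}_j$, bounding a hubless subdiagram together with a subpath of $\partial\Delta$. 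Its $q$-bands connect $\partial\Delta$ to the hub, so by \Cref{M(S) annuli} and \Cref{trapezia are computations} it is a trapezium.

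\textbf{Step 2: the trapezium gives an accepting computation.} This trapezium has bottom $W(i)\cdots$ and top a coordinate shift of $W_{ac}$. By the parallel construction of $\textbf{M}_\textbf{S}$, the corresponding computation restricts to a reduced computation of $\textbf{E}_{\textbf{S},k}$, and hence of $\textbf{E}_{\textbf{S},k}^0$, taking $I_0(V)$ to the accept configuration. Its length is therefore at least the minimal accepting time, which is $\geq\tm_\textbf{S}(V)$: by the proof of \Cref{E0 language}, an accepting computation of $I_0(V)$ must contain a subcomputation of step history $(3)$ simulating an accepting computation of $\textbf{S}$. Thus each such $t$-band $\pazocal{Q}_i$ has length $h_i\geq T:=\tm_\textbf{S}(V)$.

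\textbf{Step 3: area count.} The maximal $\theta$-bands crossing $\pazocal{Q}_i$ have both ends on $\partial\Delta$ or on hubs; hubs have no $\theta$-edges, and $|\partial\Delta|_\theta=0$. So every $\theta$-band crossing $\pazocal{Q}_i$ must cross another $t$-band, with no annuli by \Cref{G(S) annuli}. Between consecutive spokes $\pazocal{Q}_i,\pazocal{Q}_{i+1}$ the trapezium has height $\geq T$, and its $j$-th $\theta$-band contains at least one cell per $a$-letter of the intermediate configuration. The plan is to bound the area of the trapezium below by $\sum_j|W_j|_a$, which is $\gtrsim T^2$. This is the main obstacle, because the $a$-length need not be large throughout the computation. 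I would try using the structure of accepting computations from the input configuration, namely step history $(1)(2)(3)(4)(5)$ via \Cref{E generalized time}. In step $(1)$ the history word of length $\|H\|\geq T$ is written letter by letter, and in steps $(2)$–$(4)$ it stays of length $\|H\|$ in the historical sectors for roughly $(2k+3)\|H\|$ rules. This gives area $\geq \|H\|\cdot(2k+3)\|H\|\geq\frac12 T^2$ from a single trapezium, with care needed to rule out shortcuts via \Cref{unreduced base} and the reduced-history lemmas.
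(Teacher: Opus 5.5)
There is a genuine gap in Step 1, and the rest of your argument depends on it. \Cref{circular-graph} is not a statement about arbitrary reduced diagrams. Its Euler-characteristic count needs the hub graph to have no $2$-gons, and in the paper that comes from $s_1$-minimality via \Cref{t-spokes between disks}. Your hypothesis list ``reduced, degree $L$, no $1$-gons'' omits this condition, and nothing in your argument supplies it. In a reduced diagram over the finite presentation, two hubs may be joined by two consecutive $t$-bands enclosing a hubless trapezium of positive height. This only requires a nonempty reduced computation of $\textbf{M}_\textbf{S}$ from $W_{ac}$ back to $W_{ac}$, which exists, e.g., whenever some input has two different reduced accepting histories in $\textbf{S}$. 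You also cannot simply replace $\Delta$ by a diagram with fewer hubs. The lemma bounds the area of the \emph{given} $\Delta$, and the surgery of \Cref{t-spokes between disks}, like a transposition, can increase area, so a lower bound for the modified diagram does not transfer back. Without the trapezium promised in Step 1, Steps 2 and 3 have nothing to act on.

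The missing idea is the paper's weighted minimization.
\begin{itemize}
\item Cover a diagram by \emph{disk subdiagrams} (a hub together with the $\theta$-annuli surrounding it) and by single cells.
\item Set $A(\textbf{P})$ to be the total area of the disk subdiagrams plus \emph{twice} the number of single cells. Then $A(\textbf{P})\leq 2\,\text{Area}(\Delta)$ for every covering of every $\Delta$ with boundary label $I(V)$.
\item Choose a pair $(\Delta_0,\textbf{P}_0)$ minimizing $A$. In it, two disk subdiagrams sharing a whole component can be cancelled.
\item Also in the minimizer, a $\theta$-annulus of single cells can be transposed past a disk subdiagram carrying $L-3$ consecutive $t$-edges. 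This enlarges that disk subdiagram by at most $L(m+1)$ cells but trades roughly $(L-4)(m+1)$ single cells for roughly $4(m+1)$. Since single cells count twice, $A$ strictly drops.
\item Hence the minimizer has no single cells and no $2$-gons, so the graph argument applies. Counting $t$-edges ($L\geq 2L-8$) then leaves exactly one hub surrounded by $\theta$-annuli, i.e.\ an honest accepting computation from $I(V)$.
\end{itemize}
This comparison is exactly where the factor $\frac12$ in the statement comes from. Your route, if Step 1 held as stated, would give $\tm_\textbf{S}(V)^2$ with no loss, which signals that a step was skipped.

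Once you reach that single-hub configuration, your Step 3 is simpler than you expect. During the step-$(3)$ subcomputation in the standard (reduced) base, each $Q_{i,\ell}Q_{i,r}$-sector has tape length exactly $\|H\|$, since each rule deletes one left letter and adds one right letter. That sub-trapezium alone therefore has at least $\|H\|^2\geq\tm_\textbf{S}(V)^2$ cells, and no appeal to \Cref{unreduced base} is needed.
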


\begin{proof}

For simplicity, denote $W\equiv I(V)$.

First, note that since $|W|_\theta=0$, \Cref{M(S) annuli} implies $W$ cannot be trivial over $M(\textbf{M}_\textbf{S})$.  Hence, any diagram $\Delta$ as in the statement must have a hub.

Now, we say that a subdiagram $\Gamma$ of such a diagram $\Delta$ is a disk subdiagram if it has exactly one hub and a number of $\theta$-annuli surrounding it.  \Cref{trapezia are computations} then implies that the boundary label of any disk subdiagram is an accepted configuration of $\textbf{M}_\textbf{S}$. 

Similar to the definition of $G$-area, we define a covering $\textbf{P}$ of $\Delta$ to be a family of subdiagrams, each of which is either a disk subdiagram or a single $(\theta,q)$- or $(\theta,a)$-cell, such that every cell of $\Delta$ belongs to exactly one member of $\textbf{P}$.  In this case, we define $A(\textbf{P})$ to be the sum of the areas of the disk subdiagrams comprising $\textbf{P}$ along with twice the number of single cells.

Fix a reduced circular diagram $\Delta_0$ with $\lab(\partial\Delta_0)\equiv W$ along with a covering $\textbf{P}_0$ of $\Delta_0$ such that $A(\textbf{P}_0)$ is minimal among all such diagrams.  Then for any $\Delta$ with $\lab(\partial\Delta)\equiv W$ and any covering $\textbf{P}$ of $\Delta$, we have $2\text{Area}(\Delta)\geq A(\textbf{P})\geq A(\textbf{P}_0)$.  Hence, it suffices to show $A(\textbf{P}_0)\geq\tm_{\textbf{S}}(V)^2$.

Suppose $\Gamma_1,\Gamma_2\in\textbf{P}_0$ are disk subdiagrams that share consecutive boundary $t$-edges so that the subdiagram bounded by the subpaths between these two $t$-edges contains no hub.  Then \Cref{M(S) annuli} implies the entire subpath between these two $t$-edges is shared.  But then the parallel construction of $\textbf{M}_\textbf{S}$ implies $\partial\Gamma_1$ and $\partial\Gamma_2$ have inverse labels, so that these two subdiagrams may be cancelled to reduce $A(\textbf{P}_0)$.

Next, assume $\textbf{P}_0$ has a single cell and let $\pazocal{T}$ be the maximal $\theta$-band of $\Delta_0$ crossing this cell.  As $|\partial\Delta|_\theta=0$, $\pazocal{T}$ must be a $\theta$-annulus, and so by \Cref{M(S) annuli} must bound a subdiagram $\Delta_1$ containing a disk subdiagram of $\textbf{P}_0$.  We may then select $\pazocal{T}$ to be `minimal' in the sense that $\Delta_1$ has no single cell of $\textbf{P}_0$.

By construction, the boundary $t$-edges of any disk subdiagram contained in $\Delta_1$ are all either shared with another such disk subdiagram or on the boundary of $\Delta_1$.  Construct the auxiliary graph to $\Delta_1$ as in the discussion of \Cref{circular-graph}, placing an interior vertex inside each disk subdiagram in $\textbf{P}_0$ and connecting an edge between two vertices if the corresponding disk subdiagrams have a common boundary $t$-edge.  This graph does not have $1$-gons or $2$-gons, so that adding an exterior vertex we may apply an analogue of \Cref{circular-graph}.  This yields a disk subdiagram $\Gamma_1$ sharing a boundary subpath with a side of $\pazocal{T}$ which contains at least $L-3$ consecutive $t$-edges.

Let $m$ be the number of cells of $\pazocal{T}$ between consecutive $(\theta,t)$-cells.  We then perform the transposition of $\pazocal{T}$ about $\Gamma_1$ as in \Cref{sec-transposition}, producing a new disk subdiagram $\Gamma_1'$ along with a new $\theta$-annulus $\pazocal{T}'$.  Note then that by construction $\text{Area}(\Gamma_1')\leq\text{Area}(\Gamma_1)+L(m+1)$, while passing from $\pazocal{T}$ to $\pazocal{T}'$ removes $(L-4)(m+1)+1$ single cells and adds back $4(m+1)-1$ single cells.  

Let $\textbf{P}_0'$ be the covering of $\Delta_0$ given replacing $\Gamma_1$ with $\Gamma_1'$ and changing the single cells accordingly.  As single cells are counted twice, we then have $$A(\textbf{P}_0')\leq A(\textbf{P}_0)+L(m+1)-2(L-4)(m+1)-2+8(m+1)-2<A(\textbf{P}_0)$$ contradicting the choice of $\textbf{P}_0$.  

Hence, $\textbf{P}_0$ can contain no single cell.  In particular, this means $A(\textbf{P}_0)=\text{Area}(\Delta_0)$, and so it suffices to show $\text{Area}(\Delta_0)\geq\tm_\textbf{S}(V)^2$.

We may then apply the same argument as above to achieve an analogue of \Cref{circular-graph}, producing a disk subdiagram sharing a boundary subpath with $\partial\Delta_0$ containing at least $L-3$ consecutive $t$-edges.  Excising this disk subdiagram and considering what remains, the presence of another disk subdiagram allows another application of this argument, producing another disk subdiagram sharing a boundary subpath with $\partial\Delta_0$ containing at least $L-5$ consecutive $t$-edges.  But then $L=|\partial\Delta_0|_t\geq2L-8$, which is false for sufficiently large $L$.

Thus, $\textbf{P}_0$ has exactly one member, which is a disk subdiagram.  In particular, $\Delta_0$ consists of a single hub surrounded by several $\theta$-annuli.

Let $\textbf{p}_1$ be the subpath of the boundary of the hub in $\Delta_0$ whose label (or its inverse) is $W_{ac}(1)$.  Then, let $\textbf{p}$ be the subpath of $\textbf{p}_1$ such that the base of $\lab(\textbf{p})$ is a copy of the standard $k$-enhanced machine $\textbf{E}_{\textbf{S},k}^0$.  Then, let $\Gamma_0$ be the subdiagram bounded by $\textbf{p}$, all $q$-bands that have an end on $\textbf{p}$, and a subpath of $\partial\Delta_0$.

Note then that $\Gamma_0$ is a trapezium. By \Cref{trapezia are computations} and the construction of $\textbf{M}_\textbf{S}$, $\Gamma_0$ corresponds to a reduced computation $\pazocal{C}:U\equiv U_0\to\dots\to U_t$ of $\textbf{E}_{\textbf{S},k}^0$ accepting the input configuration $U$.  

\Cref{E0 language} implies $U\equiv I_0(V)$.  Lemmas \ref{E primitive step history} and \ref{E run step history} further imply there exists a maximal initial subcomputation $\pazocal{C}':U_0\to\dots\to U_y$ with step history $(1)(2)(3)$.  Letting $\pazocal{C}_3':U_x\to\dots\to U_y$ be the maximal subcomputation with step history $(3)$, the operation of $\pazocal{C}_3'$ in the working sectors corresponds to a reduced computation $\pazocal{D}$ of $\textbf{S}$ accepting $V$.  As such, the tape word of $V_x$ in any $Q_{i,\ell}Q_{i,r}$-sector must be a copy of the history $H$ of $\pazocal{D}$ in the left historical alphabet.  Note then that $y-x=\|H\|$.

Let $\pazocal{C}_3''$ be the restriction of $\pazocal{C}_3'$ to any $Q_{i,\ell}Q_{i,r}$-sector. Then $\Gamma_0$ contains a copy of the trapezium $\Gamma_3''$ corresponding to $\pazocal{C}_3''$ by \Cref{computations are trapezia}.  Every maximal $\theta$-band of $\Gamma_3''$ has two $(\theta,q)$-cells, and $\|H\|-1$ $(\theta,a)$-cells corresponding to the rest of the rules of $H$.  As such, $$\text{Area}(\Delta_0)\geq\text{Area}(\Gamma_0)\geq\text{Area}(\Gamma_3'')\geq\|H\|^2$$
By the definition of the time complexity, though, $\|H\|\geq\tm_\textbf{S}(V)$, thus implying the statement.

\end{proof}


\begin{lemma} \label{Dehn}

$\TM_\textbf{S}(n)^2\preceq\delta_{G(\textbf{M}_\textbf{S})}(n)\preceq f(n)^2$.

\end{lemma}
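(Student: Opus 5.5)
The lower bound follows from \Cref{Dehn lower bound}. Fix $n$ and let $V$ be an accepted input configuration of $\textbf{S}$ with $|V|_a\leq n$ and $\tm_\textbf{S}(V)=\TM_\textbf{S}(n)$. The word $I(V)$ is trivial in $G(\textbf{M}_\textbf{S})$, since it is accepted by $\textbf{M}_\textbf{S}$ via \Cref{inputs accepted} extended in parallel and then \Cref{disks are relations}. Its combinatorial length is $2L|V|_a+LN\leq 2Ln+LN$. Hence
$$\delta_{G(\textbf{M}_\textbf{S})}(2Ln+LN)\geq\tfrac12\TM_\textbf{S}(n)^2,$$
which gives $\TM_\textbf{S}(n)^2\preceq\delta_{G(\textbf{M}_\textbf{S})}(n)$ with constant $C=\max(2,2L+LN)$.

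For the upper bound, take a word $w$ of combinatorial length at most $n$ that is trivial in $G(\textbf{M}_\textbf{S})$.
\begin{enumerate}
\item By \Cref{vK-minimal}(1), choose a minimal circular diagram $\Delta$ over the disk presentation with $\lab(\partial\Delta)\equiv w$. By definition of the modified length, $|\partial\Delta|\leq n$.
\item If $\Delta$ has no disks, it is a diagram over $M(\textbf{M}_\textbf{S})$. \Cref{circular diskless} together with \Cref{mixtures}(1), which gives $\mu(\Delta)\leq Jn^2$, yields
$$\text{Area}_G(\Delta)\leq N_2\phi(N_2n)+N_1Jn^2g(N_1n).$$
\item Otherwise $\Delta$ is weakly minimal, and the counterexample analysis closed off by \Cref{scope-contradiction} gives
$$\text{Area}_G(\Delta)\leq N_4\phi(N_4n(\Delta))+N_3\mu(\Delta)g(N_3n(\Delta)),$$
where $n(\Delta)=|\partial\Delta|+\sigma_\lambda(\Delta^*)$. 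Since $\Delta^*=\Delta$ is minimal, \Cref{weakly-minimal}(b) (or \Cref{G design}) bounds $\sigma_\lambda(\Delta)\leq K|\partial\Delta|$, so $n(\Delta)\leq(K+1)n$. As $g$ is non-decreasing and $\mu(\Delta)\leq Jn^2$, in both cases $\text{Area}_G(\Delta)\leq C\phi(Cn)=C^3n^2g(Cn)$ for a constant $C$ depending only on the parameters.
\item \Cref{Dehn upper bound} converts this to a diagram over the finite presentation with $\text{Area}\leq 2C\phi(Cn)$. By the choice of $c_0$ in \Cref{sec-disks-weights}, $m^2g(m)\leq c_0f(c_0m)^2+c_0m+c_0$, and superadditivity gives $f(n)\geq n$. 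So the area is at most a constant multiple of $f(C'n)^2+C'n+C'$, i.e.\ $\delta_{G(\textbf{M}_\textbf{S})}\preceq f^2$.
\end{enumerate}

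The main obstacle is step 3: confirming that \Cref{scope-contradiction} really establishes the claimed $G$-area inequality for every weakly minimal circular diagram. The counterexample was phrased as a minimal counterexample with respect to $n(\Gamma)$, so the induction has to be checked. Its base case is diskless diagrams, handled by \Cref{circular diskless}. Every reduction used in Section~\ref{sec-scopes} either strictly decreases $n$ while preserving weak minimality, or lands in the diskless case. I would also check that a minimal diagram counts as weakly minimal, so that it falls within the induction's scope, and that the passage from combinatorial length to modified length only costs constants: $|\partial\Delta|\leq n$ since $\delta<1$.
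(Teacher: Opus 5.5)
Your proposal is correct and follows the paper's proof essentially step for step. The upper bound uses a minimal diagram, the $G$-area bound from \Cref{circular diskless} and \Cref{scope-contradiction} with $\mu(\Delta)\le Jn^2$ and $\sigma_\lambda\le K|\partial\Delta|$, then \Cref{Dehn upper bound} and the constants fixed via \Cref{mixture quotient}; the lower bound applies \Cref{Dehn lower bound} to $I(V)$. One small inaccuracy: a minimal diagram can have crowns, so $\Delta^*=\Delta$ need not hold, but this is harmless because \Cref{weakly-minimal}(b) bounds $\sigma_\lambda(\Delta^*)\le K|\partial\Delta|$ directly.
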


\begin{proof}

Let $W$ be a freely reduced word over $\pazocal{X}$ which represents the identity in $G(\textbf{M}_\textbf{S})$.  By van Kampen's Lemma we may then find a minimal circular diagram $\Delta$ with $\lab(\partial\Delta)\equiv W$.  Lemmas \ref{circular diskless} and \ref{scope-contradiction} imply $\text{Area}_G(\Delta)\leq N_4\phi(N_4n)+N_3\mu(\Delta)g(N_3n)$, where $n=|\partial\Delta|+\sigma_\lambda(\Delta^*)$ (taking $\sigma_\lambda(\Delta^*)=0$ if $\Delta$ is diskless).

Now, \Cref{mixtures} implies $\mu(\Delta)\leq J|\partial\Delta|_\theta^2\leq Jn^2$, while \Cref{G design} and the definition of length imply $\sigma_\lambda(\Delta)\leq C_1|\partial\Delta|\leq\delta^{-1}C_1|W|_\pazocal{X}$.  Hence, \Cref{mixture quotient} implies that for $N_5$ sufficiently large: $$2\text{Area}_G(\Delta)\leq N_5f(N_5|W|_\pazocal{X})^2+N_5|W|_\pazocal{X}+N_5$$
But then \Cref{Dehn upper bound} provides a reduced circular diagram $\Delta_0$ over the finite presentation of $G(\textbf{M}_\textbf{S})$ such that $\lab(\partial\Delta_0)\equiv W$ and $\text{Area}(\Delta_0)\leq2\text{Area}_G(\Delta)$.  Thus, $\delta_{G(\textbf{M}_\textbf{S})}(n)\preceq f(n)^2$.

On the other hand, fix $n\in\N$.  By the definition of the time function, we fix an input configuration $V$ of $\textbf{S}$ with $|V|_a\leq n$ and $\tm_\textbf{S}(V)=\TM_\textbf{S}(n)$.  Then, the input configuration $I(V)$ of $\textbf{M}_\textbf{S}$ is a word over $\pazocal{X}$ representing the identity in $G(\textbf{M}_\textbf{S})$ with $\|I(V)\|=LN+2L|V|_a\leq 2Ln+LN$.  By \Cref{Dehn lower bound}, the area of the word $I(V)$ is at least $\frac{1}{2}\tm_\textbf{S}(V)^2=\frac{1}{2}\TM_\textbf{S}(n)^2$.  Thus, the parameter choices $K>>L>>N$ imply $\delta_{G(\textbf{M}_\textbf{S})}(Kn)\geq2\TM_\textbf{S}(n)^2$, so that $\TM_\textbf{S}(n)^2\preceq\delta_{G(\textbf{M}_\textbf{S})}(n)$.

\end{proof}

\medskip


\subsection{Conjugator length function} \

We now study the conjugator length function of $G(\textbf{M}_\textbf{S})$, achieving a quadratic upper bound.

\begin{lemma} \label{CL}

$\CL_{G(\textbf{M}_\textbf{S})}(n)\preceq n^2$

\end{lemma}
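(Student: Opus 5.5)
Fix words $u,v$ over $\pazocal{X}$ representing conjugate elements of $G(\textbf{M}_\textbf{S})$ with $|u|_\pazocal{X}+|v|_\pazocal{X}\leq n$. The plan is to produce a conjugator of length $O(n^2)$ by splitting into cases according to whether some diagram realizing the conjugacy needs disks. Combinatorial and modified lengths are comparable: $\delta\|w\|\leq|w|\leq\|w\|$. So it suffices to find a conjugator $w$ with $|w|\leq C(|u|+|v|)^2+C$ for a constant $C$ independent of $n$.

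First, suppose $u$ and $v$ represent the identity in $G(\textbf{M}_\textbf{S})$. Then the empty word is a conjugator.

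Second, suppose $u$ and $v$ are already conjugate in $M(\textbf{M}_\textbf{S})$. Then \Cref{annular diskless} either returns the identity case or gives $w$ with $wuw^{-1}=v$ in $M(\textbf{M}_\textbf{S})$, hence in $G(\textbf{M}_\textbf{S})$, and $|w|\leq N_1n^2+N_1n+N_1$.

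Otherwise, pass to the conjugacy classes $g,h$ of $u,v$ and choose an $n$-minimal annular diagram $\Delta$ for the pair $(g,h)$; such a diagram exists by the discussion in \Cref{sec-counterexamples}. Its boundary labels $u',v'$ represent elements conjugate to $g,h$, but we must relate them back to $u,v$. The cleaner route is to argue within minimal annular diagrams with boundary labels exactly $u,v$ and to check that the arguments of Sections 8--9 only used $n$-minimality through surgeries that shorten $|\partial\Delta|+\rho_\lambda$ while preserving boundary conjugacy classes. Concretely, I take $\Delta$ $n$-minimal and observe that $n(\Delta)\leq n(\Delta_1)$, where $\Delta_1$ is any minimal diagram with boundary labels $u,v$. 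This gives
\[|\partial\Delta|\leq |\partial\Delta_1|+\rho_\lambda(\Delta_1)\leq (1+K)(|u|+|v|),\]
using \Cref{G directed design}.

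If $s_1(\Delta)=0$, then $u',v'$ are conjugate in $M(\textbf{M}_\textbf{S})$ and we are in the second case. If $s_1(\Delta)\geq1$, then \Cref{annular disks} gives a path $\textbf{t}$ between the boundary components with $|\textbf{t}|\leq N_4|\partial\Delta|+N_4$, so $\lab(\textbf{t})$ conjugates $u'$ to a cyclic permutation of $v'$. Either way, $u'$ and $v'$ admit a conjugator of length $O(n^2)$.

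The main obstacle is the transfer from $u',v'$ back to $u,v$. I would handle it by conjugating: $u$ is conjugate to $u'$ and $v$ to $v'$, and we need these auxiliary conjugators to be linearly bounded. For this I would not pass to other words at all. Instead I would use that every surgery in Sections 7--9 (removing rim bands, subcombs, replacing $\textbf{p}_{ij}$ by $\bar{\textbf{p}}_{ij}$, cutting along $\textbf{q}_{i,i+1}$) removes a circular subdiagram attached along a boundary arc. Such a removal changes a boundary label by a conjugation via a subpath of the removed piece's boundary. The length of that subpath is bounded by the boundary length of $\Delta_1$ plus $\rho_\lambda$, hence is $O(n)$, and because the sequence of surgeries strictly decreases $n(\cdot)$, the total accumulated conjugator length is also $O(n)$. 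Cyclic permutations add at most $n$. Altogether this gives a conjugator of length $O(n)+O(n^2)=O(n^2)$, proving $\CL_{G(\textbf{M}_\textbf{S})}(n)\preceq n^2$.
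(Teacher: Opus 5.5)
Your case split (trivial; conjugate in $M(\textbf{M}_\textbf{S})$ via \Cref{annular diskless}; otherwise an $n$-minimal diagram and \Cref{annular disks}) is the paper's. Two of your additions are genuinely useful. The first is the bound $|\partial\Delta|\le n(\Delta)\le n(\Delta_1)\le(1+K)(|u|+|v|)$ via \Cref{G directed design}. The second is the remark that the $n$-minimal diagram may have $s_1(\Delta)=0$, in which case you apply \Cref{annular diskless} to its boundary labels. The paper's proof glosses over both when it speaks of an $n$-minimal diagram ``with boundary labels $u$ and $v$''.

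The gap is your transfer step from $(u',v')$ back to $(u,v)$. You read $n$-minimality as relative to conjugacy classes, so $u',v'$ are only conjugate to $u,v$. You then try to bound the auxiliary conjugators by accumulating the effect of the surgeries of Sections 7--9. But an $n$-minimal $\Delta$ is a global minimizer. It is not produced from $\Delta_1$ by any sequence of surgeries; those surgeries occur only inside proofs by contradiction. So there is nothing to accumulate, and under your reading nothing controls a conjugator between $u$ and $u'$, since a short representative of the class may be far from $u$.

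A ``local'' version would not rescue the argument as written either. That version would reduce $\Delta_1$ by surgeries until none applies and then re-verify Sections 8--9 for such diagrams; you assert this but do not check it. Even then, strict decrease of $n(\cdot)$ bounds neither the number of steps nor the total length of the conjugating subpaths. You would need a uniform lower bound on each decrease, e.g.\ from discreteness of the length function, to get $O(n)$ steps of cost $O(n)$ each. That gives $O(n^2)$, not the $O(n)$ you claim.

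The missing observation is simpler. The paper defines $n$-minimality relative to the pair of \emph{elements} $g,h$ represented by $u,v$: it stresses that the notion ``depends on the pair of elements,'' and its surgeries are asserted to preserve the elements represented by the boundary labels. Hence $u'=u$ and $v'=v$ in $G(\textbf{M}_\textbf{S})$. The path $\textbf{t}$ of \Cref{annular disks}, extended along the boundary to the base points, is then itself a conjugator for $u$ and $v$. With your estimate on $|\partial\Delta|$ this conjugator is linear. Your $s_1(\Delta)=0$ subcase gives a quadratic conjugator for $u',v'$, hence for $u,v$. No transfer step is needed.
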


\begin{proof}

Let $u,v$ be two words over $\pazocal{X}$ which represent conjugate elements of $G(\textbf{M}_\textbf{S})$ and set $m=\|u\|+\|v\|$.  It then suffices to find a word $w$ over $\pazocal{X}$ realizing this conjugation relation with $\|w\|\leq N_5m^2+N_5m+N_5$.  

Of course, we may assume $u$ and $v$ do not represent the identity element of $G(\textbf{M}_\textbf{S})$, as otherwise the empty word is a conjugator.  So, if $u$ and $v$ are conjugate in $M(\textbf{M}_\textbf{S})$, then \Cref{annular diskless}(2) provides a word $w$ over $\pazocal{X}$ with $|w|\leq N_1n^2+N_1n+N_1$ for $n=\max(|u|,|v|)$.  By the definition of the length function, $|u|\leq\|u\|$, $|v|\leq\|v\|$, and $\|w\|\leq\delta^{-1}|w|$.  Hence, $\|w\|\leq\delta^{-1}N_1m^2+\delta^{-1}N_1m+\delta^{-1}N_1$, so that the desired inequality follows from the parameter choices $N_5>>N_1>>\delta^{-1}$.

Thus, we may assume $u$ and $v$ do not represent conjugate elements of $M(\textbf{M}_\textbf{S})$ at all.  As such, fixing an $n$-minimal diagram $\Delta$ with boundary labels $u$ and $v$, we must have $s_1(\Delta)\geq1$. \Cref{annular disks} then provides a path $\textbf{t}$ in $\Delta$ between between its boundary components with $|\textbf{t}|\leq N_4n+N_4$.  Extending this path in both directions along the boundary components so that the endpoints coincide with those of the boundary paths whose labels are $u$ and $v$, we then have a conjugator $w$ with $|w|\leq(N_4+1)n+N_4$.  The desired inequality then follows in just the same way as above through the parameter choices $N_5>>N_4>>\delta^{-1}$.

\end{proof}

\begin{proof}[Proof of \Cref{main-theorem}] \

Lemmas \ref{Dehn} and \ref{CL} imply the group $G(\textbf{M}_\textbf{S})$ suffices as long as its conjugator length is asymptotically bounded below by the quadratic function $n^2$.  While this can be shown to hold for almost any relevant choice of $\textbf{S}$, it is simpler to slightly adjust the group.

Suppose $H$ is a finitely presented group with $\delta_H(n)\sim n^2\sim \CL_H(n)$ and define $G_\textbf{S}$ to be the direct product $G(\textbf{M}_\textbf{S})\times H$.  Straightforward combination theorems {\frenchspacing(e.g. an argument of Brick for Dehn functions \cite{Brick})} then implies the Dehn and conjugator length functions for $G_\textbf{S}$ are the maxima of the corresponding functions for the two factors.  As $\TM_\textbf{S}(n)\geq n$ for all $n$ by hypothesis, $\TM_\textbf{S}(n)^2\geq n^2$, and so \Cref{Dehn} implies $\TM_\textbf{S}(n)^2\preceq\delta_{G_\textbf{S}}(n)\preceq f(n)^2$.  Moreover, \Cref{CL} implies $\CL_{G_\textbf{S}}(n)\sim n^2$.

Hence, it suffices simply to show the existence a finitely presented group $H$ with both quadratic Dehn and conjugator length functions.  But this is true for Stallings' group (\cite{DERY}, \cite{BRS}) or Thompson's group $F$ (\cite{GubaThompsonF}, \cite{BelkMatucci}).

\end{proof}

\bigskip

\bibliographystyle{plain}
\bibliography{biblio}

\end{document}